\newcommand*{\addFileDependency}[1]{
  \typeout{(#1)}
  \@addtofilelist{#1}
  \IfFileExists{#1}{}{\typeout{No file #1.}}
}
\newcommand*{\myexternaldocument}[1]{%
    \externaldocument{#1}%
    \addFileDependency{#1.tex}%
    \addFileDependency{#1.aux}%
}
\numberwithin{equation}{section}
\begin{document}
\begin{frontmatter}


\title{On Fisher Consistency of Surrogate Losses for  Optimal Dynamic Treatment Regimes with Multiple Categorical Treatments per Stage}
%

  \begin{aug}
\author{\fnms{Nilanjana} \snm{Laha}\thanksref{b,cor}\ead[label=e1]{nlaha@tamu.edu}}
\thankstext{cor}{Corresponding author.},
\author{\fnms{Nilson} \snm{Chapagain}\thanksref{b}\ead[label=e2]{nchapagain@stat.tamu.edu}}, 
\author{\fnms{Victoria} \snm{Cicherski}\thanksref{b}\ead[label=e3]{cicherskiv114@tamu.edu}}, 
\and
\author{\fnms{Aaron} \snm{Sonabend-W}\thanksref{c}\ead[label=e4]{asonabend@gmail.com}}
\address[b]{Department of Statistics, Texas A\&M,  College Station, TX 77843\printead[presep={,\\ }]{e1,e2,e3}}
\address[c]{Google Research, Mountain View, CA 94043, USA\printead[presep={,\ }]{e4}}

\end{aug}

\runauthor{}


\begin{abstract}
   Patients with chronic diseases often receive treatments at multiple time points, or stages. Our goal is to learn  the optimal dynamic treatment regime (DTR) from longitudinal patient data.  When both the number of stages and the number of treatment levels per stage are arbitrary, estimating the optimal DTR reduces to a sequential, weighted, multiclass classification problem \citep{kosorok2019}.  In this paper, we aim to  solve the above-mentioned classification problem, simultaneously across all stages,  through Fisher consistent surrogate losses.  Although computationally feasible Fisher consistent surrogates exist in special cases—e.g., the binary treatment setting—a unified theory of Fisher consistency remains largely unexplored. 
    We establish necessary and sufficient conditions for DTR Fisher consistency within the class of non-negative, stagewise separable surrogate losses. To our knowledge, this is the first result in the DTR literature to provide necessary conditions for Fisher consistency within a non-trivial surrogate class. Furthermore, we show that many convex surrogate losses fail to be Fisher consistent for the DTR classification problem, and we formally establish this inconsistency for  smooth, permutation equivariant, and relative-margin-based convex losses. Building on this, we propose  SDSS (Simultaneous Direct Search with Surrogates), which uses smooth, non-concave surrogate losses to learn the optimal DTR. We design a computationally fast, gradient-based algorithm for implementing SDSS. When the optimization error is small, we establish a sharp upper bound on SDSS's regret decay rate. We assess the numerical performance of SDSS through simulations. Finally, we demonstrate its real-world performance by applying it to estimate optimal fluid resuscitation strategies for severe septic patients using electronic health record data.
    

\end{abstract}
\begin{keyword}
\kwd{Dynamic treatment regimes}
\kwd{Classification}
\kwd{Fisher consistency}
\kwd{Classification calibration}
\kwd{Non-convex optimization}
\end{keyword}
  \end{frontmatter}  
  
\section{Introduction}
\label{sec: introduction}

Personalized medicine seeks to customize the treatments according to individual patient characteristics \citep{kosorok2019,deliu2022reinforcement}. During the treatment of chronic diseases, such as cancer, sepsis, and diabetes, patients may receive treatments multiple times \citep{johnson2018comparative,Raghu,sonabendw2021semisupervised}. In recent times, plenty of such longitudinal data has been available in the form of electronic health records (EHR) \citep{Raghu}. 
Our objective is to learn the optimal sequence of treatments, collectively referred to as a treatment policy or dynamic treatment regime (DTR), from such patient data. In line with the contemporary DTR literature, we define the optimal DTR to be the DTR that maximizes the expected counterfactual outcome, also known as the value function \citep{chakraborty2013,tsiatis2019dynamic}. We refer to the corresponding problem as the DTR learning problem. We consider the setting where patient data are collected over $T$ stages for some $T>1$, and at each stage $t\in\{1,\ldots,T\}$, there are $k_t\geq 2$  available treatment options. We assume that each $k_t$ is fixed and does not grow with the sample size $n$. To distinguish this setting from the binary-treatment setting, i.e., where   $k_t=2$ for all  $t\in\{1,\ldots,T\}$, we refer to it as the general DTR setting throughout this paper.


Plenty of approaches have been proposed in recent decades for estimating the optimal DTR. Traditional methods either model the full data distribution \citep{xu2016, zajonc2012} or focus on partial modeling.  Popular examples of the latter include Q-learning \citep{moodie2014q,murphy2005,Watkins1989,zhang2018interpretable,chakraborty2013},  stochastic tree search algorithm \citep{sun2021stochastic}, A-learning,
and marginal structural mean models \citep{chakraborty2013,moodie2007demystifying,murphy2003,Robins2004,schulte2014}.
Model-based methods can be computationally  efficient. 
 However, in this case, modeling carries a substantial risk of misspecification \citep{kosorok2019}. Model-based approaches typically fit the models using parametric or nonparametric methods via backward recursion: the model for stage $T$ is fitted first, and models for stages $T-1$, $T-2$, etc., are recursively fitted using the estimates from later stages, causing model misspecification errors to propagate and accumulate over stages  \citep{murphy2005}. 
 A-learning and marginal structural mean models are more robust to model misspecification than Q-learning. However, they still require the correct specification of the contrasts of certain functions called Q-functions \citep{schulte2014}. The cost of model misspecification is expected to be higher in high-dimensional data or when the optimal DTR has a nonlinear decision boundary, as is likely in real-world medical datasets  \citep{Laha2024}.

   Our work focuses on a more recent alternative approach to learning the optimal DTR. This approach, called the direct search or value search, frames the DTR learning problem as a weighted, sequential, multiclass classification\footnote{Multiclass classification is a generalization of binary classification where the number of classes can exceed two \citep{tewari2007,zhang2004}.} problem \citep{kosorok2019, zhao2015}.
    The advantage of this approach is that it  does not require modeling any quantities except the treatment assignment probabilities, which are also known in experimental designs. 
    Hereafter, we refer to the above sequential multiclass classification  as the sequential DTR classification, or simply the DTR classification.
     The  DTR classification problem transforms into  a discontinuous and non-convex loss-minimization problem, which is atypical of classification problems \citep{bartlett2006,tewari2007,ramaswamy2016convex}.  A common strategy to address this is to replace the discontinuous loss with a well-behaved surrogate loss, leading to the surrogate loss minimization approach 
\citep{bartlett2006,tewari2007,ramaswamy2016convex}. The  surrogate losses that lead to the same population-level solution as the original discontinuous loss are  called calibrated or Fisher consistent for that loss \citep{bartlett2006,tewari2007,ramaswamy2016convex}.  For example, binary classification  leads to an optimization problem with the discontinuous 0-1 loss $x\mapsto 1[x\leq 0]$.   The support vector machine replaces the  0-1 loss with the hinge loss $x\mapsto \max\{0,x\}$, which is Fisher consistent for the 0-1 loss  \citep{bartlett2006}.  
     
      However, the  discontinuous loss  corresponding to the sequential DTR classification, referred to hereafter as the DTR loss, is more convoluted than the discontinuous loss of the binary or multiclass classification.
      Since  no general recipe exists for constructing computationally efficient Fisher consistent surrogates for an arbitrary loss  \citep{finocchiaro2019embedding}, finding a Fisher-consistent surrogate loss for the DTR loss is not straightforward \citep{kosorok2019}.
    As a consequence, to date, Fisher consistent surrogate losses for  the sequential DTR classification have been obtained only in  special cases, e.g.,
 the single-stage case ($T=1$) \citep{bennett2020efficient, chen2018estimating, luedtke2016, meng2020near, wang2018learning,pan2021improved,zhang2020multicategory},  in the context of maximizing the conditional survival function \citep{xue2022multicategory}, and, most notably, the widely studied  binary-treatment case  \citep{liu2018augmented, zhao2015,Laha2024,jiang2019entropy,liu2024controlling,liu2024learning,zhao2020constructing}. As a result, the general DTR case $(k_t\geq 2)$ has received much less attention in the direct search literature, despite its practical relevance  \citep{tao2017adaptive,xue2022multicategory}. 

 In addition, most Fisher-consistency-related work on DTR focuses on identifying Fisher consistent surrogate losses. In contrast, research across machine learning has moved beyond  finding Fisher consistent surrogates to understanding the specific properties of surrogates that drive Fisher consistency. For instance, in binary and multiclass classification, considerable effort has been devoted to establishing necessary and sufficient conditions for Fisher consistency \citep{bartlett2006,tewari2007,ramaswamy2016convex,wang2023unified}.
 Other areas where  Fisher consistency of surrogate losses has been investigated in depth include multi-label learning \citep{gao2011}, ranking \citep{duchi2010,calauzenes2012}, and structured prediction \citep{osokin2017structured}. In comparison, the properties of surrogate losses that elicit Fisher consistency for the sequential DTR classification remain poorly understood even in the binary treatment case. Yet, such knowledge is important for understanding the scope of  model-free  DTR learning. In addition to constructing computationally feasible Fisher consistent surrogate problems for DTR classification, this paper also attempts to address this conceptual gap. Before outlining our contributions, we provide some important clarifications.

 First, many DTR approaches break the sequential DTR classification problem into $T$ single-stage DTR classification problems, which are then solved recursively using tree-based search \citep{tao2017adaptive,tao2018tree} or Fisher consistent surrogates for the single-stage case \citep{liu2024learning,liu2018augmented,hager2018optimal}. We refer to this approach as the stagewise direct search, which bypasses the need to solve the full DTR classification problem. Most existing work on direct search for the general DTR setting falls within this category.  Although computationally efficient, these methods either incur a loss of sample size \citep[e.g., backward outcome weighted learning of][]{zhao2015} or depend on model-based objects such as pseudo-outcomes (e.g., the augmented outcome weighted learning of \cite{liu2018augmented} or the tree-based methods of \cite{tao2017adaptive,tao2018tree}). Thus, stagewise direct search, a hybrid between direct search and model-based methods, does not align with the goal of this paper. We aim to directly attack the sequential DTR classification, and learn the optimal treatment assignment for each stage simultaneously without estimating any pseudo-outcomes. To emphasize the distinction from stagewise direct search, we refer to our approach as Simultaneous Direct Search with Surrogates (SDSS).

Second, the surrogate loss theory for the general DTR setting differs substantially  from the binary treatment setting. The binary treatment case  turns into a sequential binary classification, where the general case turns into a sequential multiclass classification. An advantage of binary classification is that it facilitates a special class of surrogate losses called margin-based losses, which has no direct extension to  the multiclass setting    \citep{wang2023unified}. 
 Moreover, DTR classifications in the binary  and  general settings are at least as different as binary  and multiclass classification, which have spurred separate research trajectories \citep{tewari2007, zhang2004, zou2008, ramaswamy2016convex}.

Finally, Fisher consistency is quite well understood in the special case when $T=1$ (also referred to as individualized treatment regimes or ITR) due to the efforts of \cite{zhang2020multicategory, pan2021improved, bennett2020efficient}, among others. However, the theoretical insights from this setting do not directly extend to the general $T$-stage case. The primary reason is that when $T=1$, DTR classification reduces to  a weighted multiclass classification problem, allowing the exploitation of the  extensive Fisher consistency literature on multiclass classification \citep[cf.][]{tewari2007, ramaswamy2016convex, zhang2004, zhang2014multicategory}.
 In contrast, sequential DTR classification differs substantially from multiclass classification and, consequently, from single-stage DTR. We will discuss these differences in detail in  Section \ref{sec: necessity}.

   \subsection{Main contributions}
 Given the main focus of this paper is the Fisher consistency of   surrogate losses, similar to \cite{liu2024learning,liu2024controlling,zhao2015,Laha2024}, we restrict our attention to the case where the propensity scores, i.e., the  treatment assignment probabilities, are known. Our contribution is two-fold. The first part of the paper focuses on the subject of Fisher consistency in the  DTR classification problem. The second part focuses on the construction of our method, SDSS, and investigates its theoretical and empirical performance.

 \subsubsection{Thoretical results on Fisher consistency}
 \label{sec: contribution: part 1}
  \paragraph*{Concave losses} 
 
  Concave surrogate losses  are attractive because they result in a convex optimization problem.
      Since DTR learning is a maximization problem, concave—rather than convex—surrogates are more appropriate.  In Section \ref{sec: convex loss}, we show that many popular concave surrogates that are Fisher consistent in the  single-stage setting fail to be  Fisher consistent when extended to the $T= 2$ case. Our examples include both smooth and non-smooth (e.g., hinge loss) losses, as well as losses using sum-zero constraints.  In particular, Theorem \ref{theorem: CC} shows that, under  smoothness conditions,   permutation equivariant and  relative-margin-based (PERM)  surrogate losses can not be Fisher consistent if they are also concave  \citep{wang2023unified}. 
Here, we use  PERM losses because (a)
  they provide enough structure that can be exploited to derive interpretable Fisher consistency-related results and (b) it is a rich class including numerous popular  multivariate losses (see Section \ref{sec: convex loss} for details).

Our previous work in  \cite{Laha2024} established the inconsistency of  margin-based, smooth, concave losses in the binary treatment setting.  However, margin-based losses constitute an useful but small subset of all possible surrogate losses in that setting \citep{wang2023unified}. Hence, our Theorem \ref{theorem: CC} does not follow from \cite{Laha2024}'s impossibility results. Moreover, our proof technique differs substantially from that of \cite{Laha2024} because multiclass PERM losses  exhibit more degrees of variability than the margin-based losses of the binary treatment setting (detailed comparison with \cite{Laha2024} is provided in Section \ref{sec: related: laha 2021}; see also Section \ref{sec: CC: proof outline}).

  At a high level, these results provide a negative answer to the possibility of convexifying sequential DTR classification.  This negative finding has two implications. First, model-free DTR learning will likely incur the computational burden of nonconvex optimization. Second, since single-stage DTR classification problems can be convexified \citep[cf.][]{zhang2020multicategory,pan2021improved}, this highlights the intrinsic difficulty of the multi-stage ($T>1$) setting compared to the single-stage setting.

    \paragraph*{Necessary and sufficient conditions} 
 The  negative results on concave surrogates forces us to look beyond concave surrogate losses. In Section \ref{sec: necessity}, we consider the class of non-negative stagewise separable surrogates, i.e., surrogates that can be written as a product of non-negative single-stage surrogates (see \eqref{def: product psi}). This class  was chosen because it has   been used in the DTR Fisher-consistency literature due to its computational and theoretical advantages \citep{xue2022multicategory,Laha2024}. Informally, we show that surrogates in this class are Fisher consistent if and only if the corresponding single-stage surrogates  (a) are Fisher consistent for single-stage DTR and (b) for $t \geq 2$, have image sets with  sufficiently large convex hulls (see Theorems~\ref{theorem: sufficient conditions} and~\ref{theorem: necessity} for the precise result).  
    The second condition restricts the class of Fisher consistent separable surrogates for $T \geq 2$, which underscores  the inherent challenge of the multi-stage setting compared to the single-stage setting. To the best of our knowledge, this is the first result establishing necessary conditions for DTR Fisher consistency in non-trivial surrogate loss classes.
    In Section~\ref{sec: examples of psi satisfying N1 and N2}, we establish the existence of smooth, non-convex  Fisher consistent losses. This finding indicates that sequential DTR classification can at least be smoothed, even though convexification is not currently attainable.
   Moreover, we show that our surrogates preserve Fisher consistency even when the search space of policies is restricted to a smaller class, e.g., the class of linear policies, provided the optimal DTR lies in that class  (Lemma \ref{lemma: theorem with linear policy}). 

    \subsubsection{Methodological contribution}
 
The proposed DTR learning method, SDSS, solves a surrogate value optimization problem based on the above-mentioned  smooth, non-convex, Fisher consistent surrogate losses. The smoothness of the surrogates allows for gradient-based optimization, enabling fast implementation that scales to the sample sizes  of modern EHR data. However, the non-convexity of these losses introduces optimization challenges, illustrated with a toy example in Section~\ref{sec: implementation}. To mitigate these issues, we incorporate strategies such as random reinitialization and minibatching, which help prevent optimization iterates from stagnating in suboptimal regions. 

SDSS can accommodate any number of stages and any number of treatments per stage. It is simultaneous in that it learns the treatment assignments for all stages simultaneously. Therefore, unlike the stagewise methods, SDSS does not require modeling pseudo-outcomes and is completely model-free when the propensity scores are known.   SDSS is flexible with policy classes in that it can be implemented with any policy class. 

 
\paragraph*{Regret decay rate} In Section \ref{sec: regret decay}, we show that if the policy class is sufficiently rich, e.g., neural network classes, and the optimization error is small, then we can provide a tight upper bound on the regret of SDSS—defined as the difference between the expected total reward of the optimal DTR and that of the DTR estimated by SDSS.   To this end, we use a small noise assumption \citep{tsybakov2004} (see Assumption \ref{assump: small noise}) and standard smoothness assumptions similar to those in \cite{zhao2015,sun2021stochastic}. For specific surrogates and neural network policy classes, SDSS's regret decay rate matches the minimax rate of risk decay for binary classification under similar assumptions  (Corollary \ref{cor: neural network}). Our empirical analysis in Section \ref{sec: empirical} suggests that despite the non-convexity, SDSS may asymptotically outperform traditional DTR learning methods in scenarios with high model misspecification risk.

\subsubsection{Organization}  In Section \ref{sec: set-up}, we discuss the mathematical formulation of the DTR classification                                  problem and formally define  Fisher consistency for the DTR classification problem. Sections \ref{sec: convex loss} and \ref{sec: necessity} describe the Fisher consistency properties of concave and separable losses, respectively, as discussed under Contribution \ref{sec: contribution: part 1}. Section  \ref{sec: method} introduces SDSS and discusses its implementation. In Section \ref{sec: regret decay}, we provide the regret analysis of SDSS.  Section \ref{sec: empirical} presents an empirical comparison of SDSS with other DTR methods using various simulation setups (see Section \ref{sec: simulation}) and EHR data from sepsis patients (see Section \ref{sec: application}). Section \ref{sec: related} compares our work with related literature. We conclude with some discussion and suggestions for future research in section \ref{sec: discussion}. For the reader’s convenience, a list of notation is provided in Table \ref{tab: list of notation}.
 
 \subsection{Notation}
We let $\NN$ denote the set of natural numbers.  For any integer $k\in\NN$, let $[k]$ denote the set $\{1,2,\ldots,k\}$.  For any vector $\mx\in\RR^k$ and any $j\in[k]$, $\mx_j$ denotes the $j$th element of $\mx$, so that $\mx=(\mx_1,\ldots,\mx_k)$. For a vector $\mx\in\RR^k$, $\argmax(\mx)$ denotes the set $\argmax_{i\in[k]} \mx_i$, and in particular, $\max(\argmax(\mx))=\max\{i\in[k]:\mx_i=\max(\mx)\}$. 

We let $\RR$ denote the set of real numbers and $\bRR$ the extended real line, i.e., $\RR\cup{\pm \infty}$. Denote by $\RR_{\geq 0}$ and $\RR_{>0}$ the sets $[0,\infty)$ and $(0,\infty)$, respectively.  For any $k\in\NN$, define $\mathcal S^{k-1}$ to be the simplex in $\RR^k$, i.e. $\S^{k-1}:=\{\mp\in\RR^k: \mp_i\geq 0\text{ for all }i\in[k],\ \sum_{i=1}^k\mp_i=1\}$.  The $k$-dimensional vectors of all zeros and all ones are denoted by $\mz_k$ and $\mo_k$, respectively, and $I_k$ denotes the identity matrix of order $k$. A permutation $v$ on $[k]$ is a bijection from $[k]$ to itself.  By an abuse of notation, for any $\mx\in\RR^k$,  we will denote by $v(\mx)$ the $k$-dimensional vector
 $(\mx_{v(1)},\ldots, \mx_{v(k)})$, which extends  the permutation $v$  to a map from $\RR^k$ to 
 itself. For $k\in\NN$, we let $|\cdot|_k$ denote the $\ell_k$ norm; that is, for $\mx\in\RR^k$, $\|\mx\|_k=\sum_{i=1}^k (|\mx_i|^k)^{1/k}$. If $\mx\in\NN^k$, we denote by $|\mx|_1$ the sum $\sum_{i=1}^k\mx_i$. Unless otherwise mentioned, lowercase letters such as $x$ and $y$ will denote real numbers, whereas boldface letters such as $\mx$, $\my$, $\mp$, $\mw$, $\mbu$, and $\mv$ will denote vectors.
 For any set $\C$, the indicator function $1[X\in \C]$ takes the value one if $X\in \C$ and zero otherwise. We denote by $\iint(\C)$ and $\overline{C}$ the interior and the closure of the set $\C$, respectively, and  its cardinality is denoted by $|\C|$. When  $\C \subset \RR^k$, for any  $x \in \RR$, we define  
$
x\C = \{\, x\mx : \mx \in \C \}
$. The closed convex hull of $\C$ is denoted by $\conv(\C)$. For a finite set $\C$, $\max(\C)$ denotes its maximum element. If $\C=\emptyset$, we define $\inf(\C)=\infty$ and $\sup(\C)=-\infty$. 

For any $k\in\NN$, a probability measure $P$ on $\mathcal X$, and any $P$-measurable function $h:\mathcal X\mapsto\RR$, we denote the $L_k(P)$ norm of $h$ by  $\|h\|_{P,k}=\left(\int|h(x)|^k dP(x)\right)^{1/k}$ and the uniform norm by $\|h\|_\infty=\sup_{x\in\X}|h(x)|$.
  Also,  $P[h]$ denotes the integral $\int h dP$.  We define the domain (effective domain) of a convex function  $h:\RR^k\mapsto\RR$ as in page 23 of \cite{rockafellar}, i.e.
$\dom(h)=\{\mx\in\RR^k: h(\mx)<\infty\}$. For any set $\C\subset \RR^k$, we define its support function $ \varsigma_\C$ as in Definition 2.1.1 (p. 104) of \cite{hiriart}:
\begin{equation}
    \label{def: support function}
    \varsigma_{\C}(\mx)=\sup_{\mw\in \C} \mx^T\mw \quad\text{ for all }\mx\in\RR^k.
\end{equation}
For any differentiable function $h:\RR^k\mapsto\RR$, $\grad h$ will denote the gradient of $h$.  Throughout this paper, we use the convention $\pm\infty\times 0=0$. In addition, we use $C$ and $c$ to denote generic constants that may vary from line to line.

 Many  results in this paper are  asymptotic (in $n$) in nature and thus require some standard asymptotic notations.  If $\myb_n$ and $b_n$ are two sequences of real numbers then $\myb_n \gg b_n$ (and $\myb_n \ll b_n$) implies that ${\myb_n}/{b_n} \rightarrow \infty$ (and ${\myb_n}/{b_n} \rightarrow 0$) as $n \rightarrow \infty$, respectively. Similarly $\myb_n \gtrsim b_n$ (and $\myb_n \lesssim b_n$) implies that $\liminf_{n \rightarrow \infty} {{\myb_n}/{b_n}} = C$ for some $C \in (0,\infty]$ (and $\limsup_{n \rightarrow \infty} {{\myb_n}/{b_n}} =C$ for some $C \in \RR_{\geq 0}$). Alternatively, $\myb_n = o(b_n)$ will also imply $\myb_n \ll b_n$ and $\myb_n=O(b_n)$ will imply that $\limsup_{n \rightarrow \infty} \ \myb_n / b_n = C$ for some $C \in \RR_{\geq 0}$.

\section{Mathematical setup}
\label{sec: set-up}

We assume that each patient receives $T$ treatments $A_1,\ldots,A_T$ at stages $1,\ldots,T$, respectively. 
Each treatment $A_t$ is a random variable taking value in a finite set $\mathcal A_t$ with cardinality $k_t$.
For sake of simplicity, we let $\mathcal A_t=\{1,\ldots, k_t\}$ for $t\in[T]$. We denote $\sum_{t=1}^T k_t=\K$. After each treatment, the patient observes  responses (outcomes) $Y_1,\ldots,Y_T$, which are real-valued random variables. Without loss of generality, we assume higher values of outcomes  are desirable. Hence the outcomes are also referred to as rewards. We also observe  covariates $O_1,\ldots,O_T$ for every patient, which are random vectors. The dimension of $O_t$ can change with stage $t$. The following diagram gives the sequence of events:
\[O_1\rightarrow A_1\rightarrow Y_1\rightarrow \cdots \rightarrow O_{T}\rightarrow A_{T}\rightarrow Y_{T}.\]
The $t$-th stage history $H_t$ contains everything observed before $A_t$, that is $H_1=O_1$, and $H_t=(O_1,A_1,Y_1,\ldots,O_t)$ for $2\leq t\leq T$. We denote the space of each $H_t$ by $\H_t$, i.e.,  $H_t\in\H_t$ for all $t\in[T]$. Therefore, $\H_t\subset \mathcal O_1\times \mathcal A_1\times Y_1\times \ldots\times\A_{t-1}\times  Y_{t-1}\times \mathcal O_t$, where the inclusion can be strict. Let $q$ denote the dimension of $H_T$.  The $t$-th stage individualized treatment assignment $d_t$ is a map from $\H_t$ to the treatment space $\A_t$. 
A treatment policy or DTR  is the collection of all such $T$ treatments; that is, $d=(d_1,\ldots,d_T)$ is a DTR. 

For a patient, the trajectory $\D$  will denote the sequence of all covariates, actions, and outcomes, i.e., 
\begin{equation}
    \label{def: trajectory}
    \D=(O_1,A_1,Y_1,\ldots,O_T,A_T,Y_T).
\end{equation}
Suppose $\PP$ is the  distribution of $\D$, i.e.,  $\PP$ is the distribution of the observed data. The expectation corresponding to $\PP$ will be denoted  by $\E$. We  observe $n$ independent trajectories $\D_i$'s from $n$ patients. 
The empirical distribution of $\D_1,\ldots,\D_n$ will be denoted by $\Pm$. For any function $h$ mapping $\D$ to a real number, we denote by
$\Pm[h]$ the average $n^{-1}\sum_{i=1}^nh(\D_i)$. We will denote the $t$-th stage propensity score, i.e., the  treatment assignment probability  $\PP(A_t=a_t|H_t)$, by $\pi_t(a_t\mid H_t)$.

We define the value function of a DTR or policy $d$ by
$V(d)=\E_{d}[\sum_{t=1}^TY_t]$, 
where the expectation $\E_d$ is with respect to  a distribution which is similar to $\PP$, except the treatment assignments follow   the potentially unobserved  DTR  $d$. Thus $\E_{d}$ is different from $\E$, the expectation with respect to  the observed data. Therefore, 
we need additional assumptions on the observed data distribution $\PP$ so that $V(d)$ becomes identifiable under $\PP$. \\

\noindent \textbf{Assumptions for identifiability:}
\begin{itemize}
    \item[I.] \textbf{Positivity:} There exists a constant $C_{\pi}\in(0,1)$ so that $\pi_t(a_t \mid h_t)>C_\pi$ for all 
$a_t\in[k_t]$, $h_t\in\H_t$, and $t\in[T]$.
    \item[II.]\textbf{Consistency:} For all $t\in[T]$, the observed outcomes $Y_t$ and covariates $O_t$ agree with the potential outcomes and covariates  under the treatments actually received. See \cite{schulte2014, robins1994estimation} for more details.
    \item[III.]\textbf{Sequential ignorability:} For each $t=1,\ldots,T$, the treatment assignment $A_t$ is conditionally independent of the future potential outcomes $Y_{t}$ and future potential clinical profile $O_{t+1}$ given $H_t$. Here we take $O_{T+1}$ to be the empty set.
\end{itemize}
Our version of sequential ignorability follows from  \cite{robins1997causal,MurphySA2001MMMf}. Assumptions I-III are standard in DTR literature  \citep[cf.][]{schulte2014,MurphySA2001MMMf,tsiatis2019dynamic}. Assumption III is generally satisfied under sequentially randomized trials \citep{chakraborty2013}. Recent research shows promise for relaxing Assumption III in observational data, provided proxy variables are available \citep{han2021identification,zhang2024identification}. However, 
we do not pursue this direction in the current paper, as we first need to understand Fisher consistency in the simpler unconfounded setting before tackling the added complexity of unmeasured confounders.

Under Assumptions I-III, it holds that \citep[cf. p. 224 of][]{tsiatis2019dynamic}
\begin{equation}
\label{identification: value function}
    V(d)=\E\left[\prod_{t=1}^T\frac{1[d_t(H_t)=A_t]}{\pi_t(A_t\mid H_t)}\sum_{j=1}^TY_j\right].
\end{equation}

We define the optimal policy or DTR $d^*$ to be a  maximizer of $V(d)$ over all possible DTRs, i.e., $ d^*\in \argmax_{d}V(d)$. We denote the optimal value function $V(d^*)$ by $V_*$. 
Under Assumptions I-III, $d^*$ can be represented using the optimal Q-functions. We define the $T$-stage optimal Q-function as
 \[Q_T^*(H_T,A_T)=\E[Y_T\mid H_{T},A_T],\]
 and for
 $t\in\{1,\ldots,T-1\}$, we define the  optimal Q-functions recursively as follows:
 \[Q_t^*(H_{t}, A_t)=\E\left[Y_{t}+\max_{i\in[k_{t+1}]}Q_{t+1}^*(H_{t+1},i)\mid H_{t},A_t\right].\]
It can be shown that \citep[cf.][]{chakraborty2013} under Assumptions I-III, any $d$ satisfying
\begin{align}
  \label{def: d star argmax form}
    d_t(H_t)\in\argmax_{i\in[k_t]}Q_t^*(H_t,i)\quad \text{ for all }t\in[T]
\end{align}
is a candidate for the optimal DTR $d^*$, and any version of the optimal DTR $d^*$ satisfies \eqref{def: d star argmax form} with probability one.

 It is common to transform the maximization of  $V(d)$ into an optimization  problem with real-valued functions, thereby avoiding optimization over discrete-valued maps \citep{tewari2007,ramaswamy2016convex}. To formalize, for any $t\in[T]$, let $\F_{t}$ denote the space of all Borel measurable functions from $\H_t$ to $\Rt$. We consider  $T$-fold product  of these function-classes, denoted by $\F=\F_1\times\ldots\times\F_T$. Any $f\in\F$ is of the form $f=(f_1,\ldots,f_T)$, where   $f_t:\H_t\mapsto \RR^{k_t}$ is Borel measurable. Thus any $f\in\F$ is a map from $\H_1\times \ldots\times\H_T$ to $\RR^\K$, where we denoted $\K=\sum_{t=1}^Tk_t$. For any $(h_1,\ldots,h_T)\in \H_1\times\ldots\times \H_T$, 
\[f(h_1,\ldots,h_T)=\left(f_1(h_1),\ldots,f_T(h_T)\right),\text{ which is in }\RR^{\sum_{t\in[T]}k_t}=\RR^{\K}.\]
 For all $i\in[k_t]$ and $t\in[T]$, the $i$-th element of  $f_t(h_t)$ will be denoted as $f_{ti}(h_t)$.

Similar to \cite{wang2023classification,wang2023unified,zhang2004}, we will refer to $f$ as the class score function, and  $f_t$ as the class score function for stage $t$. In the classification literature, these functions are also known as the prediction function, decision function vector, or classification function. We require a link function to map each class score function to a treatment. Following contemporary classification  literature \citep{zhang2004,wang2023classification,wang2023unified}, we call this link function the pred function, defined by the operator $\pred(\mx)=\max(\text{argmax}_{1\leq j\leq k_t}\mx_j)$  for each vector $\mx$.
  The outer max operator ensures that $\pred$ returns a number rather than a set.   We omit the dimension of $\mx$ from the notation of $\pred$ because, similar to $\max$, the domain of $\pred$  should be understandable from the context. 
Notice that each $f_t$  defines a treatment assignment $d_t:\H_t\mapsto[k_t]$ via $\pred(f_t)$. Let $\pred(f)$ denote the function $(\pred(f_1),\ldots,\pred(f_T))$. Then each $f\in\F$ defines a DTR (or policy) $d$ via $\pred(f)$. We say that a DTR $d$ is linear if each $f_t$ is linear; otherwise, it is non-linear.

 By an abuse of notation, we denote $V(\pred(f))$ by  $V(f)$, so that
\begin{align}\label{def: V(f) original}
    V(f)=\E\lbt \prod_{t=1}^T\frac{1[A_t=\pred(f_t(H_t))]}{\pi_t(A_t\mid H_t)}\sum_{j=1}^TY_j\rbt.
\end{align}
If $f^*=(f^*_1,\ldots,f^*_T)$ is a maximizer of \eqref{def: V(f) original}, then it can be shown that  $d^*=\pred(f^*)$ is an optimal DTR, and vice versa \citep{tsiatis}. Furthermore,  $\sup_{f\in\F}V(f)=\Vt$. We say that $d_t^*$ is linear if there exists $f_t^*$ such that $f^*_{t}:\H_t\mapsto\RR^{k_t}$ is linear for all $t\in[T]$, and non-linear otherwise.
Similar to \cite{zhao2015,jiang2019entropy}, we  use the inverse probability weighted (IPW)  estimator to estimate  $V(f)$,  given by \begin{align}\label{def: V(f) }
    \Vipw(f)=\Pm\lbt \prod_{i=1}^T\frac{1[A_t=\pred(f_t(H_t))]}{\pi_t(A_t\mid H_t)}\sum_{j=1}^TY_j\rbt. 
 \end{align}
 When $T=1$,  the maximization of \eqref{def: V(f) } corresponds to a multiclass classification problem with class labels $1,\ldots,k_1$, feature space $\H_1$, and weight $\E[Y_1\mid H_1]/\pi_1(A_1\mid H_1)$  \citep{tao2017adaptive,xue2022multicategory}. Assigning treatment $A_1$ to a patient with history $H_1$ corresponds to classifying this patient to one label from the set  $[k_1]$.  
 When $T>1$, the maximization  of \eqref{def: V(f) } can be described as a weighted sequential multiclass classification problem, where  a patient is assigned to the class $\pred(f_t(H_t))$ at the $t$-th stage \citep[cf.][]{tsiatis}. Hereafter,  DTR classification refers to the maximization of \eqref{def: V(f) }. Before moving on to the surrogate problem construction, we introduce two additional assumptions on $\PP$. Assumption IV is a boundedness assumption on the outcomes.
\begin{itemize}
    \item \textbf{Assumption IV.}  The $Y_t$'s are bounded above by a constant, potentially depending on $\PP$, for all  $t\in[T]$. 
\end{itemize}
Boundedness assumption on the rewards is common in the DTR and ITR literature \citep{jiang2019entropy,zhao2019efficient,Laha2024}. 
This is not an overly stringent assumption since, in most of our applications, $Y_t$'s  represent medical measurements, and are automatically bounded.  Weaker versions of Assumption IV, such as the boundedness of the optimal Q-functions, may be sufficient for some of our results. However, the boundedness of the $Y_t$'s is required  for showing the sufficiency of the proposed necessary conditions.
It is unclear whether interpretable sufficient conditions can still be obtained for the separable surrogates in Section~\ref{sec: necessity} without Assumption~IV.

Since maximizing $\E_d[\sum_{t=1}^TY_t]$ is equivalent to maximizing $\E_d[\sum_{t=1}^T(Y_t+c)]$ for any $c\in\RR$, without loss of generality, we take $Y_t>C_{min}$ for all $t=1,\ldots,T$ for some $C_{min}>0$, leading to the following assumption.
\begin{itemize}
    \item \textbf{Assumption V.} 
   There is a constant $C_{min}>0$, potentially depending on $\PP$, so that $Y_t>C_{min}$ for all $t\in[T]$.
\end{itemize}
Although we refer to the above as an assumption, it is not technically an assumption on $\PP$ since a location transformation of any outcome data will ensure Assumption V holds, provided their supports are bounded below.  Assumption V has appeared in related DTR literature \citep{Laha2024,zhao2015,zhao2012}.

\smallskip
\subsubsection{Construction of the surrogate problem} 
\label{sec: surrogate construction}

 For any $k\in\NN$, $i\in[k]$, and $\mx\in\RR^k$, let us define the operator  $\phi_{\text{dis}}$ as $\phi_{\text{dis}}(\mx;i)=1[i=\pred(\mx)]$, where ``dis" in the subscript stands for ``discontinuous". Although $\phi_{\text{dis}}$ depends on the dimension of $\mx$, we suppress this dependence to streamline the notation.
For $\mx_1\in\RR^{k_1},\ldots,\mx_T\in\RR^{k_T}$, and $a_1\in[k_1],\ldots,a_T\in[k_T]$, let us define the loss
\begin{equation}
    \label{def: 01 loss}
\psid(\mx_1,\ldots,\mx_T;a_1,\ldots,a_T)=\prod_{t=1}^T\phi_{\text{dis}}(\mx_t;a_t).
\end{equation}
Equation \ref{def: V(f) } implies that 
\[\Vipw(f)=\PP_n\lbt\psid(f_1(H_1),\ldots,f_T(H_T);A_1,\ldots,A_T)\frac{\sum_{j=1}^TY_j}{\prod_{t=1}^T\pi_t(A_t\mid H_t)}\rbt.\]
An ideal estimator of $f^*$ would be the maximizer of $\Vipw(f)$. However, direct optimization of $\widehat V(f)$ is computationally challenging due to the discontinuity of $\psid$, which arises from the $\pred$ operator and the indicator function \citep{tewari2007,bhattacharyya2018hardness}. In both machine learning and DTR literature, direct optimization of discontinuous losses is typically avoided, and they are  replaced by a more well-behaved function,  called a surrogate loss \citep{xu2014model,horowitz1992smoothed,feng2022nonregular,gao2011,calauzenes2012}. Moreover, empirical evidence from simpler machine learning problems, such as binary classification, shows no significant advantage in directly optimizing the 0-1 loss over using certain surrogates like the sigmoidal loss \citep{nguyen2013algorithms}. 
 In view of the above, we adopt the surrogate loss approach,  replacing  $\psid$ with a   surrogate loss $\psi:\RR^{\K}\times \prod_{t=1}^T[k_t]\mapsto\RR$.

The resulting surrogate  problem maximizes the surrogate value function
\begin{align}\label{def:  surrogate V(f) estimated}
    \widehat V^\psi(f)=\PP_n\lbt\psi(f_1(H_t),\ldots,f_T(H_T);A_1,\ldots,A_T)\frac{\sum_{j=1}^TY_j}{\prod_{t=1}^T\pi_t(A_t\mid H_t)}\rbt.
 \end{align}
However, the surrogate $\psi$ needs to be chosen carefully so that the DTR resulting from the maximization of $ \widehat V^\psi_{}(f)$ is a consistent estimator of $d^*$. The relevant concept in literature is Fisher consistency or calibration \citep{bartlett2006,tewari2007,liu2018augmented,zhao2015}.
We need to introduce some terminology before formally defining Fisher consistency for the DTR classification problem.

For $f\in\F$, we define the population-level surrogate value function by 
\begin{align}\label{def:  surrogate V(f)}
V^\psi(f)=\E\lbt\psi(f_1(H_t),\ldots,f_T(H_T);A_1,\ldots,A_T)\frac{\sum_{j=1}^TY_j}{\prod_{t=1}^T\pi_t(A_t\mid H_t)}\rbt.
 \end{align}
 For any surrogate $\psi$, let us denote $\Vst=\sup_{f\in\F}V^\psi(f)$. Let $\tilde f=(\tilde f_1,\ldots,\tilde f_T)$ be a maximizer of $V^\psi(f)$ over $\F$. Note that $\tilde f$ may not exist or be unique. In some cases, the maximum of $V^\psi(f)$ is not attained in $\F$, but there exists an extended-real-valued score function $\tilde f$ such that $V^\psi(\tilde f)=V^\psi_*$. We will denote $\pred(\tf)$ by $\tilde d$. 
 
\begin{definition}[DTR Fisher Consistency w.r.t. $\mP$]
\label{def: Fisher consistency multiclass} Let $\mP$ be a set of distributions satisfying Assumptions I-V. 
We say that a surrogate $\psi$ is Fisher consistent with respect to $\mP$ if, for any probability measure $\PP \in \mP$, every sequence $\{f_m\}_{m \geq 1} \subset \F$ satisfying $V^\psi(f_m) \to_m \Vst$ also satisfies $V(f_m) \to_m \Vt$, where the expectations in $V^\psi$ and $V$ are taken with respect to $\PP$.

\end{definition}
Denote by $\mP^0$ the class of all probability distribution functions  satisfying Assumptions I-V. Throughout this paper,  we say \textit{ $\psi$ is Fisher consistent if it is Fisher consistent  with respect to $\mP^0$. } Our definition of Fisher consistency allows  $\argmax_{f\in\F}V^\psi(f)$ to be empty, aligning with the definition of Fisher consistency in broader machine learning literature \citep{bartlett2006,tewari2007,ramaswamy2016convex}.
If  $\tilde f$ exists, then Fisher consistency implies that $\tilde d=\pred(\tilde f)$ is a candidate for the optimal DTR $d^*$. We say that a surrogate loss is single-stage Fisher consistent if it is Fisher consistent for the DTR classification problem when $T=1$.
Fisher consistency is an important requirement for  surrogate losses because otherwise the optimization of $\Vipw^{\psi}$ can lead to sub-optimal DTRs, even if the latter efficiently estimates $ V^\psi$.

In the multiclass classification literature, surrogate frameworks often involve the sum-zero constraint, requiring the class score functions to sum to zero \citep{tewari2007,zhang2004}. However, in this paper, we do not consider a constrained optimization framework because constraints can complicate the optimization, especially given that DTR classification would impose 
$T$ sum-zero constraints—one for each stage \citep{xue2022multicategory}. 

Although the
$\pred$ function considered above is the most traditional link function for policy learning and classification, alternatives exist.
Notable among these is the recently proposed angle-based link function, which, although originally developed for multiclass classification, has been applied to ITR \citep{zhang2020multicategory}, and to  DTR for  maximizing survival probability  \citep{xue2022multicategory}. In this framework, $\pred(\mx)$ does not rely on $\argmax(\mx)$; instead, it depends on the angle between $\mx$ and a set of prespecified vectors determined by the dimension of $\mx$. See Remark \ref{remark: angle-based framework} for further discussion, where we explain why we adopt the $\argmax$-based $\pred$ over the angle-based alternative for our problem.

Before closing this section, we note the distinction between the general and the binary-treatment case. In the latter case, treatments $A_t$ can be represented as ${\pm 1}$ rather than $\{1,2\}$, so that $A_1,\ldots, A_T\in\{\pm 1\}$. This embedding has an advantage. Letting $g_t:\H_t\mapsto\RR$ for $t\in[T]$, we can represent $d_t$  by $\sign(g_t)$. Consequently,  $1[A_t=\pred(f_t(H_t))]$ can be replaced with the 0-1 loss function $1[A_tg_t(H_t)>0]$ in \eqref{def: V(f) original} and \eqref{def: V(f) }. Therefore, $\phi_{\text{dis}}$ simplifies to the  univariate 0-1 loss $x\mapsto 1[x\geq 0]$,  allowing margin-based surrogates.  See \cite{wang2023unified} for discussion on the advantages of margin-based surrogate losses.

 \section{Concave surrogates}
 \label{sec: convex loss}
Convex surrogates are attractive because they yield convex optimization problems with unique minima, facilitating efficient optimization techniques \citep{calauzenes2012}. This section presents some negative results regarding the Fisher consistency of concave surrogates in DTR classification. Readers primarily interested in Fisher consistent surrogates may skip to Section~\ref{sec: necessity}.

If   concave surrogates fail to be Fisher consistent in the $T=2$ case, they can't succeed in the $T\geq 2$ case.  Hence,   we will use $T=2$ throughout this section  to streamline our presentation.  To our knowledge, of date, no concave surrogate loss has been proposed for the DTR classification in the general case, described in Section \ref{sec: surrogate construction}. Consequently, no reference concave surrogate  exists for our setting.  However, Fisher consistent surrogates for multiclass classification yield Fisher consistent surrogates for the $T=1$ case, since, as noted in Section \ref{sec: set-up}, these two problems have a direct correspondence  \citep[see also][]{zhang2020multicategory,tao2017adaptive}.  The concave surrogate losses considered in this section are direct extensions of these single-stage losses to the $T=2$ case.  

The corresponding losses in multiclass classification are convex, as that setting involves minimization. Throughout this section, references to adaptations from multiclass classification should be understood to mean the concave versions of those losses.

  We begin this section with an example of the concave version of the exponential loss, which is Fisher inconsistent when $T=2$. Then we establish the  inconsistency of concave PERM losses under smoothness conditions. Finally, we present examples of non-PERM concave surrogates, demonstrating that they may also fail to be Fisher consistent.

 \begin{exampleboxenv}[Exponential loss]
 \label{ex: Example 1}
Let us consider the concave surrogate loss
\begin{align}
\label{inex: loss: exp non-additive}
    \psi(\mx,\my; a_1,a_2)=-\sum_{i\in[k_1]}  \sum_{j\in[k_2]}\exp(-(\mx_{a_1}-\mx_i+\my_{a_2}-\my_j)),
\end{align}
where $\mx\in\RR^k$, $a_1\in[k_1]$, and $a_2\in[k_2]$. It is a concave two-stage extension of the following exponential surrogate loss:
\begin{align}
\label{inex: single-stage: pairwise: exp}
   \phi(\mx;a)=-\sum_{i\in[k_1],i\neq a}\exp(-(\mx_{a}-\mx_i))\quad\text{for all } \mx\in\RR^k, a\in[k_1],
\end{align}
which is a Fisher consistent  surrogate for multiclass classification with $k_1$ classes  \citep{zhang2004,tewari2007}.
Using the above, it can be shown that the  surrogate  in \eqref{inex: single-stage: pairwise: exp} is also single-stage Fisher consistent, i.e., Fisher consistent for DTR classification when $T=1$. Result \ref{result: ex: exp non-additive},  proved in Supplement \ref{sec: proof of result ex: exp non-additive}, provides the formula of $\tilde d=\pred(\tilde f)$ when $T=2$.

\begin{result}
    \label{result: ex: exp non-additive}
   Under Assumptions I-V, 
     the surrogate loss defined in \eqref{inex: loss: exp non-additive} satisfies
    \begin{align}
    \label{inresult: example: exp loss}
 \tilde d_1(H_1)\in  &\ \argmax_{a_1\in[k_1]}\E\lbt \slb\sum_{a_2\in[k_2]}\sqrt{Y_1+Q_2^*(H_2,a_2)}\srb^2\bl H_1, A_1=a_1\rbt,\nn\\
 \tilde d_2(H_2)\in &\   \argmax_{a_2\in[k_2]}Q_2^*(H_2,a_2).
\end{align}

\end{result}
Equation \ref{def: d star argmax form} implies that the second stage treatment assignment $\tilde d_2(H_2)$ is optimal but 
the first stage treatment assignment $\tilde d_1(H_1)$  may be suboptimal 
because it need not agree with maximizer of $Q_1^*(H_1,a_1)$, i.e., $\E[ Y_1+\max_{i\in [k_2]}Q_2(H_2,i)\mid H_1,A_1=a_1]$, over $a_1\in[k_1]$. 
 Since Fisher consistency implies $\tilde d=d^*$, the loss in \eqref{inex: loss: exp non-additive} can not be Fisher inconsistent. 
\end{exampleboxenv}

Example~\ref{ex: Example 1} is not an isolated case. In all our examples (see Section~\ref{sec: concave: additional examples} for illustrations when $T = 2$), $T$-stage extensions of  Fisher consistent surrogates from multiclass classification yield optimal treatment only in the final stage, while earlier treatment assignments become suboptimal.  This probably occurs because the last stage of DTR classification resembles a multiclass classification problem.

\subsection{PERM losses}
\label{sec: PERM}
Now, we will show that a large class of concave surrogates can not be Fisher consistent for the sequential DTR classification. To this end, we first define the class of permutation equivariant and relative margin-based surrogate losses, i.e., PERM losses, which were introduced in \cite{wang2023unified} for the multiclass classification problem. This requires introducing the notion of permutation symmetry. 
We say 
$\eta:\RR^{k_1+k_2-2}\mapsto\RR$ is permutation symmetric if for any  permutations $\ppi:[k_1-1]\mapsto[k_1-1]$ and $\ppi':[k_2-1]\mapsto[k_2-1]$,
\begin{equation}
    \label{intheorem: cc: permutation symmetry of phi}
\eta(\mbu,\mv)=\eta(\ppi(\mbu),\ppi'(\mv))\text{ for all }\mbu\in\RR^{k_1-1},\mv\in\RR^{k_2-1}.
\end{equation}
Extending the definition from \cite{wang2023unified} to the two-stage case, we say that a surrogate loss $\psi$ is PERM if there exists a permutation-symmetric function $\eta$ such that for all $a_1\in[k_1]$, $a_2\in[k_2]$, $\mx\in\RR^{k_1}$, and $\my\in\RR^{k_2}$,
\begin{align}
    \label{def: PERM}
    \psi(\mx,\my;a_1,a_2)=\eta(\mx_{a_1}-\mx_1,\ldots,\mx_{a_1}-\mx_{k_1}, \my_{a_2}-\my_1,\ldots,\my_{a_2}-\my_{k_2}) 
\end{align}

Terms such as $\mx_i-\mx_i$ and $\my_j-\my_j$ are omitted from the expression in \eqref{def: PERM}. PERM losses are basically surrogate losses that are both permutation equivariant and relative margin-based. 
They  are  permutation equivariant in that they do not give special weight to any specific treatment label. This is a mild restriction because, to the best of our knowledge, most surrogate losses in practical use are  permutation equivariant. PERM losses are relative-margin-based in the sense that $\psi(\mx, \my; a_1, a_2)$ depends on $\mx$ (or $\my$) only through pairwise differences of its elements, as evident from \eqref{def: PERM}. Relative margin-based losses are also common in the multiclass classification literature \citep{rosset2003margin, glasmachers2016unified, fathony2016adversarial}.

 We used the class of PERM losses here because  demonstrating Fisher inconsistency for all concave surrogates, even under smoothness conditions, is  challenging, because the class of all concave surrogates is too broad. As demonstrated by the insightful work of  \cite{wang2023unified}, 
the PERM losses provide enough structure that can be manipulated to establish interpretable theoretical results on Fisher consistency. Moreover, PERM losses include  well-known  surrogate  loss classes. Below, we present some examples of surrogate loss classes that are PERM. See \cite{wang2023unified} for more examples.

\begin{exampleboxenv}[Pairwise comparison loss \citep{weston1999support}]
    \label{sec: pairwise comparison}
    In the multiclass classification context, this loss is defined by
\begin{equation}
    \label{eq: CC: single-stage pairwise}
    \phi(\mx;a)=\sum_{i\in[k_1],i\neq a}\tph(\mx_{a}-\mx_i)\quad\text{ for }\mx\in\RR^{k_1},a\in[k_1],
\end{equation}
where $\tph$ is a real-valued function. 
From \cite{zhang2004} \citep[see also][]{tewari2007}, it follows that if $\tph$ is bounded above, non-decreasing, concave, and $\tph'(0)>0$ then the $\phi$ in \eqref{eq: CC: single-stage pairwise}
is Fisher consistent for the single-stage DTR classification.  Two-stage extension of $\phi$ can be defined as 
 \begin{equation}
\label{def: psi pairwise}
    \psi(\mx,\my; a_1,a_2)=\sum_{i\in[k_1]}  \sum_{j\in[k_2]}\tph(\mx_{a_1}-\mx_i, \my_{a_2}-\my_j)
\end{equation}
\begin{align}
\label{def: psi pairwise 2}
  \text{or }\quad  \psi(\mx,\my; a_1,a_2)=\sum_{i\in[k_1],i\neq a_1}\tph_1(\mx_{a_1}-\mx_i)+\sum_{j\in[k_2],j\neq a_2}\tph_2(\my_{a_2}-\my_j),
\end{align}
where $\tph_1$ and $\tph_2$ are real valued functions. 
The exponential loss in Example \ref{ex: Example 1} is of the form \eqref{def: psi pairwise}. 

\end{exampleboxenv}

\begin{exampleboxenv}[Gamma-Phi losses \citep{beijbom2014guess}]
 \label{sec: gamma-phi losses}
 In the multiclass classification context, the Gamma-Phi loss is defined as
\begin{equation}
    \label{def:cc: gamma phi: T=1}
    \phi(\mx;a_1)=\tpho\slb \sum_{i\in[k_1]:j\neq a_1}\tphi(\mx_{a_1}-\mx_i)\srb\quad \text{for all }\mx\in\RR^{k-1} \text{ and }a_1\in[k_1],
\end{equation}
where $\tphi:\RR\mapsto\RR$ and $\tpho:\RR\mapsto\RR$ are  real-valued functions. Many popular multiclass losses  are Gamma-Phi losses.  For example, when $\tpho(x)=\log(1+x)$ and $\tphi(t)=\exp(-x)$, \eqref{def:cc: gamma phi: T=1} reduces to the cross-entropy loss or the multinomial logistic loss. When $\tpho(x)=c\log(1+x)$ and  $\tphi(x)=\exp((1-x)/c)$ for some $c>0$, \eqref{def:cc: gamma phi: T=1} reduces to the coherence loss, which is used in multiclass boosting algorithms \citep{zhang2009coherence}.   Both these losses are convex and Fisher consistent for multiclass classification \citep{wang2023classification}. In the DTR problem, however, the concave version of $\phi$ is relevant. 
From \cite{wang2023classification}, it follows that if (a) $\tphi$ and $\tpho$ are strictly decreasing, real-valued, and continuously differentiable functions, (b) $\tpho$ is bounded above, and (c) $\inf_{x\in\RR}\tphi(x)=0$, then the   Gamma-Phi loss in \eqref{def:cc: gamma phi: T=1} is Fisher consistent for the single-stage DTR case. Two possibilities of two-stage extensions of \eqref{def:cc: gamma phi: T=1} are given below:
\begin{gather*}
    \phi(\mx,\my;a_1,a_2)=\tpho\slb \sum_{i\in[k_1]:j\neq a_1}\sum_{j\in[k_2]:j\neq a_2}\tphi(\mx_{a_1}-\mx_i,\my_{a_2}-\my_j)\srb,\\
    \phi(\mx,\my;a_1,a_2)=\tpho\slb \sum_{i\in[k_1]:j\neq a_1}\tphi(\mx_{a_1}-\mx_i)+\sum_{j\in[k_2]:j\neq a_2}\tphi(\my_{a_2}-\my_j)\srb,
\end{gather*}
where $a_1\in\RR^{k_1}$, $a_2\in\RR^{k_2}$, $\mx\in\RR^{k_1}$, and $\my\in\RR^{k_2}$.
Pairwise losses are a special case of gamma-phi losses when $\tpho$ is the identity function.
\end{exampleboxenv}

 Now we are ready to state the main theorem of this section. To do so, we first review some definitions from convex analysis. A convex function $f:\RR^k\mapsto\RR$ is called proper if $f(\mx) > -\infty$ for all $\mx \in \RR^k$ and if $\dom(f)$ is non-empty \citep[cf. pp. 24,][]{rockafellar}. Also, $f$ is called closed if its sublevel sets are either closed or empty \citep[cf. Definition 1.2.3, pp. 78,][]{hiriart}. Note that a PERM loss is concave if its template $\eta$ is concave. Therefore, we will assume  that the template $\eta$ is concave. Because we define domain, closedness, and properness in the context of convex functions, the statement of Theorem \ref{theorem: CC} below uses $-\eta$ and $-\psi$ instead of $\eta$ and $\psi$, respectively.
\begin{theorem}
\label{theorem: CC}
   Suppose $T=2$, $k_1,k_2\geq 2$,   and $\psi$ is an above-bounded concave PERM loss such that $\cap_{i=1}^{k_1}\cap_{j=1}^{k_2}\iint(\dom(-\psi(\cdot;i,j))\neq \emptyset$. Further suppose $-\eta$ is proper, closed, and strictly convex,  where $\eta$ is the template of $\psi$. Also, we assume $\eta$ is thrice continuously differentiable on $\iint(\dom(-\eta))$. Then $\psi$ can not be Fisher consistent.
\end{theorem}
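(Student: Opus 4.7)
The plan is to argue by contradiction: assuming $\psi$ is Fisher consistent on $\mP^0$, I would exhibit a particular $\PP \in \mP^0$ for which the maximizer $\tilde f=(\tilde f_1,\tilde f_2)$ of $V^\psi$ fails the optimality characterization \eqref{def: d star argmax form} at some $h_1$, in the same spirit as Example~\ref{ex: Example 1} but for a generic concave PERM template.

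The first reduction would rewrite $V^\psi(f)$ as a pointwise-in-$(h_1,h_2)$ objective. Using Assumptions II--III together with the fact that $Y_1$ is $H_2$-measurable, iterated expectation yields
\begin{align*}
V^\psi(f)=\E\!\left[\sum_{a_1,a_2}\psi\bigl(f_1(H_1),f_2(H_2);a_1,a_2\bigr)\bigl(Y_1+Q_2^*(H_2,a_2)\bigr)\,\Omega(H_1,H_2,a_1)\right],
\end{align*}
for a non-negative weight $\Omega$ that depends only on propensities and transition densities. Under Assumptions I, IV, V and the hypothesis $\cap_{i,j}\iint(\dom(-\psi(\cdot;i,j)))\neq\emptyset$, this pointwise objective admits an interior maximizer at (almost) every $(h_1,h_2)$, and strict convexity of $-\eta$ plus smoothness gives uniqueness. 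I would characterize the maximizer through the stationarity conditions $\nabla_{\mx}[\cdots]=\mathbf 0$ and $\nabla_{\my}[\cdots]=\mathbf 0$, rewritten in terms of the relative-margin variables $(\mbu,\mv)$ using the PERM representation \eqref{def: PERM}.

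Next, exploiting the PERM structure, I would ``solve out'' the stage-$2$ problem as a smooth implicit function $\my=\my^\star(\mx;\,Q_2^*(h_2,\cdot),y_1)$ and substitute back to obtain an effective stage-$1$ objective $\Phi_\eta(\mx;\,Q_2^*(h_2,\cdot),y_1)$. The key structural fact (already visible in Example~\ref{ex: Example 1}, where $\Phi_\eta$ becomes $\bigl(\sum_{a_2}\sqrt{Y_1+Q_2^*(H_2,a_2)}\bigr)^2$) is that strict concavity of $\eta$ makes $\Phi_\eta$ a strictly concave functional of the profile $Q_2^*(h_2,\cdot)$, so by a Jensen-type strict inequality it lies \emph{strictly} below the pointwise maximum $\max_{a_2}Q_2^*(h_2,a_2)$ that governs $Q_1^*(h_1,a_1)$. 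Consequently the stage-$1$ first-order condition ranks $a_1$-values through a weighted average of $\Phi_\eta$ rather than the true max, and Fisher consistency would force this weighting to coincide with $\argmax_{a_1}Q_1^*(h_1,a_1)$ on every $\PP\in\mP^0$.

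The main obstacle, and the bulk of the work, is showing that one can always find $\PP \in \mP^0$ for which these two rankings disagree \emph{uniformly} over templates $\eta$ satisfying the hypotheses. I would use the thrice continuous differentiability of $\eta$ to Taylor-expand $\Phi_\eta$ around a permutation-symmetric reference point (e.g.\ $\mbu=\mz,\mv=\mz$): the first-order term only reflects the stage-$2$ argmax and is compatible with consistency, but the second-order ``Hessian'' term—strictly negative definite by strict convexity of $-\eta$ and symmetric under coordinate permutations by PERM—forces a non-trivial coupling between $\tilde\mx$ and the \emph{spread} of $Q_2^*(h_2,\cdot)$, not merely its maximum; the third-order term is controlled by smoothness and provides a robust remainder bound. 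Concretely, I would take $H_1$ degenerate, $k_1=k_2=2$, and pick $Q_2^*(H_2,\cdot)$ together with the transition kernel of $H_2\mid A_1$ so that the two values of $a_1$ induce the same $\E[\max_{a_2}Q_2^*(H_2,a_2)\mid H_1,A_1=a_1]$ (so $Q_1^*$ either ties or slightly favours one treatment) but different \emph{spreads} of $Q_2^*(H_2,\cdot)$; the Hessian term then tilts $\tilde f_1$ toward the \emph{opposite} treatment, producing the contradiction. For general $k_1,k_2\geq 2$ the reduction to $k_1=k_2=2$ is immediate by permutation equivariance, which lets the two ``active'' treatments be embedded into the larger simplex while the remaining coordinates cancel.
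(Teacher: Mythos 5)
Your high-level scaffold — pick a degenerate distribution with no covariates, pass to a reduced scalar objective, Taylor-expand around a permutation-symmetric reference point, and play the second-order term against Fisher consistency — does line up with what the paper does, and the observation that strict convexity of $-\eta$ gives uniqueness of the maximizer is correct. But two steps in your sketch hide the genuine difficulties that the paper's proof spends most of its length resolving, and one of them is actually wrong as stated.

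The most serious gap is the claim that ``for general $k_1,k_2\geq 2$ the reduction to $k_1=k_2=2$ is immediate by permutation equivariance, which lets the two active treatments be embedded\dots while the remaining coordinates cancel.'' This does not work. The PERM template $\eta$ lives on $\RR^{k_1-1}\times\RR^{k_2-1}$ and the maximizing score vectors do \emph{not} automatically put themselves on a two-coordinate slice; one has to prove that they do, which is exactly what the paper's Lemma~\ref{lemma: cc: supremum} does (via a careful symmetrization argument using the permutation matrices $\AA_i,\BB_j$). Crucially, even after this reduction the resulting four-function loss $\vartheta$ in \eqref{intheorem: CC: def: vartheta} is \emph{not} a $2\times2$ PERM loss — permutation equivariance is lost, because $\vartheta(\cdot;1,1)$ and $\vartheta(\cdot;2,2)$ involve a single evaluation of $\eta$ while $\vartheta(\cdot;1,2)$ and $\vartheta(\cdot;2,1)$ involve sums over $k_2-1$ (resp.~$k_1-1$) evaluations. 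So the ``remaining coordinates cancel'' is false: they contribute asymmetrically and are precisely why the paper's Step~2 has to work with a non-equivariant $\vartheta$ rather than invoking the binary-treatment result. This is flagged explicitly in Section~\ref{sec: CC: proof outline}.

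The second gap is the solve-out-and-Jensen argument. You assert that the reduced stage-$1$ objective $\Phi_\eta$ is a \emph{strictly} concave functional of the profile $Q_2^*(h_2,\cdot)$; in the exponential-loss example this is $\bigl(\sum_{a_2}\sqrt{c_{a_2}}\bigr)^2$, whose Hessian in $(c_1,\ldots,c_{k_2})$ is negative \emph{semi}-definite with a nontrivial kernel (scaling direction), so strict concavity fails in exactly the direction you need for the Jensen comparison to be strict. More fundamentally, the paper's proof shows that the contradiction does not come from a single Jensen inequality but from a case analysis on the sign of $\partial\vartheta(0,0;i,j)/\partial x$ (cases (i), (ii)(a), (ii)(b)): in some of these, the first-order term in the Taylor expansion is already nonzero and the argument is about forcing the \emph{wrong} sign; in others (all first partials zero) the conclusion is obtained directly from strict convexity. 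You assume the first-order term is automatically ``compatible with consistency,'' which is not the case. Finally, your remainder control requires the Lipschitz continuity of $\mp\mapsto(y^*(\mp),z^*(\mp))$ near $\mp^0$ (paper's Lemma~\ref{lemma: CC: Lipschitz}) and the cancellation $M_{12}(0,0)=N_{12}(0,0)=0$ (Lemma~\ref{lemma: CC: M12=0}); neither is a free consequence of smoothness and permutation symmetry, and your sketch leaves them unaddressed.
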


The conditions in Theorem \ref{theorem: CC} merits some discussion. (i) \textbf{Closedness and properness:} These are mild regularity assumptions on $-\eta$ intended to exclude pathological cases. Improper convex functions are often omitted from formal definition of convex functions \cite[cf. p.~74][]{hiriart}. Closedness affects behavior only at the boundary and, like properness, is a standard assumption for convex functions in the statistical literature \citep{saumard2014log, laha2021adaptive, seregin2010nonparametric, dossglobal, dosssymmetric}. To our knowledge, all convex  surrogates in practical use are proper and closed. 
(ii) \textbf{Domain condition:}
The condition $\bigcap_{i=1}^{k_1} \bigcap_{j=1}^{k_2} \iint(\dom(-\psi(\cdot; i, j))) \neq \emptyset$ is a mild technical requirement. The proof of Theorem~\ref{theorem: CC} involves the construction of a bad set of distributions. This condition ensures the existence of $\tilde f_1$ and $\tilde f_2$ for such distributions, where $(\tilde f_1, \tilde f_2) \in \arg\max_{f \in \F} V^\psi(f)$. While the theorem may still hold without this assumption, the associated analysis becomes technically more involved.  All multiclass PERM  losses in practical use satisfy this domain assumption. (iii) \textbf{Boundedness and strict convexity: }
These conditions are mainly required for establishing the existence and uniqueness of $\tilde f_1$ and $\tilde f_2$ for the above-mentioned bad set of distributions. \cite{wang2023classification, wang2023unified} also used such assumptions on PERM losses. (iv) \textbf{Smoothness condition:} The proof of Theorem \ref{theorem: CC} depends crucially on the smoothness condition. However, we are not aware of any non-smooth concave loss, PERM or otherwise, that is Fisher consistent for $T \geq 2$.  Later in this section, we provide an example of a popular non-smooth concave surrogate that is Fisher consistent in the single-stage case but inconsistent in two-stage DTR.  Smoothness-related assumptions on surrogates are common in the Fisher consistency literature \citep{zhang2004,xue2022multicategory,wang2023classification,wang2023unified,zhang2020multicategory}.  Supplement~\ref{sec: cond of Theorem cc} discusses the conditions under which our PERM loss examples satisfy the assumptions of Theorem~\ref{theorem: CC}.

 \subsubsection{Proof outline and challenges} 
\label{sec: CC: proof outline}
 
 Theorem~\ref{theorem: CC} is proved in Supplement~\ref{sec: pf of theorem cc}. The proof, which occupies a substantial part of our proof sections, draws  on tools from convex analysis.
  First, we identify a subset of $\mathcal{P}^0$ where $\tilde{f}$ exists and is analytically tractable. Fisher inconsistency is then demonstrated within this subset. The next step  is a reduction to the binary treatment case.  At a high level, we show that under the conditions of Theorem~\ref{theorem: CC}, the existence of a Fisher consistent, PERM loss $\psi$ in the general setting implies the existence of a Fisher consistent, relative-margin-based surrogate $\vartheta$ in the binary treatment setting.   Note that, we lose the permutation equivariance property of  $\psi$ in this reduction.  The remainder of the proof establishes that if $\vartheta$ is concave, it cannot be Fisher consistent.   Loss of permutation equivariance implies $\vartheta$ has more degrees of variability than a PERM loss in the binary treatment case. Interestingly, in the binary treatment case,  PERM losses  coincide with \cite{Laha2024}'s margin-based losses. Therefore, while \cite{Laha2024} developed tools to prove the inconsistency of margin-based concave surrogates in the binary-treatment case, those techniques fail to apply here. The loss of permutation equivariance substantially alters the structure of the problem, making our analysis significantly more intricate than that in \cite{Laha2024}. See Section \ref{sec: related: laha 2021} for further comparisons between the current paper and \cite{Laha2024}.

\subsubsection{Additional examples of non-PERM concave surrogates}
\label{sec: concave: additional examples}
 The negative result in Theorem~\ref{theorem: CC} is unlikely to be an artifact of smoothness or the PERM structure. In our search across other classes of concave losses, we found no surrogate for which $\tilde d_1$ agrees with $d_1^*$. Below, we provide examples of inconsistency arising from non-PERM concave surrogate classes. Our examples  include a non-smooth loss, which is a two-stage extension of  multivariate hinge loss.

 \begin{exampleboxenv}[One vs all loss]
\label{ex 4: cc: one v all}
    Here we consider a two-stage concave version of the one versus all surrogate loss of \cite{zhang2004}. Following the multiclass classification literature, in the single-stage case, we can define the one vs all loss by
    \[\phi(\mx;a_1)=\tph(\mx_{a_1})+\sum_{i\in[k_1]:i\neq a_1}\tph(-\mx_i)\text{ for all }\mx\in\RR^{k_1}\text{ and }a_1\in[k_1],\]
    where $\tph$ a univariate function. 
  Using \cite{zhang2004}, it can be shown that  $\phi$ is single-stage Fisher consistent if $\tph$ is concave, bounded above, differentiable, and $\tph(-x)<\tph(x)$ for all $x>0$.
  A two stage extension of the above surrogate is given by 
\begin{align}
\label{inex: psi: one vs all: general}
    \psi(\mx,\my; a_1,a_2)=&\ \tph(\mx_{a_1},\my_{a_2})+\sum_{j\in[k_2]:j\neq  a_2}\tph(\mx_{a_1},-\my_{j})+\sum_{i\in[k_1]:i\neq  a_1}\tph(-\mx_{i},\my_{a_2})\nn\\
    &\ +\sum_{i\in[k_1]:i\neq  a_1}\sum_{j\in[k_2]:j\neq  a_2}\tph(-\mx_{i},-\my_{j}),
\end{align}
 where $\mathbf{x} \in \mathbb{R}^{k_1}$, $\mathbf{y} \in \mathbb{R}^{k_2}$, $a_1 \in [k_1]$, and $a_2 \in [k_2]$, and $\tph$ is a bivariate function. In particular, let us take   
$\tph(x,y)=-\exp(-x-y)$\footnote{Note that this $\tph$ is an above-bounded, differentiable concave function satisfying $\tph(x,y)>\tph(x,-y)$ when $y>0$ and $\tph(x,y)>\tph(-x,y)$ when $x>0$.}. In that case, the loss in \eqref{inex: psi: one vs all: general} takes the form 
\begin{align}
\label{inex: psi: one vs all}
    \psi(\mx,\my; a_1,a_2)=&\ -\slb e^{-\mx_{a_1}}+\sum_{i\in[k_1]:i\neq  a_1}e^{\mx_i}\srb\slb e^{-\my_{a_2}}+\sum_{j\in[k_2]:j\neq  a_2}e^{\my_j}\srb.
\end{align}
Result \ref{result: conv: one vs all}, proved in Supplement \ref{sec: proof of result one vs all},  implies that, similar to Examples \ref{ex: Example 1} and \ref{ex 3: cc: additive},   the first stage treatment assignment may be suboptimal  for the loss in \eqref{inex: psi: one vs all}.
\begin{result}
\label{result: conv: one vs all}
    Consider the surrogate $\psi$ in \eqref{inex: psi: one vs all}. Under the setup of Result~\ref{result: ex: exp non-additive},  $\tilde d_2(H_2)$ is as in \eqref{inresult: example: exp loss}, and any element of the following set is a  candidate for  $\tilde d_1(H_1)$:
     \[\argmax_{i\in[k_1]}\sum_{a_2\in[k_2]}\E\lbt \sqrt{\E[Y_1+Y_2\mid H_2,a_2]\sum_{j\in[k_2]:j\neq  a_2}\E[Y_1+Y_2\mid H_2,j]}\bl H_1, A_1=i\rbt.\]
\end{result}

\end{exampleboxenv}

\begin{exampleboxenv}[Additive losses]
\label{ex 3: cc: additive}
A simple way of constructing a two-stage concave surrogate loss is adding two concave single-stage surrogate losses. 
We consider losses of the form
    \begin{align}
        \label{inex: additive psi}
        \psi(\mx,\my;a_1,a_2)=\phi^{(1)}(\mx,a_1)+\phi^{(2)}(\my,a_2)
   \end{align}
   for all $\mx\in\RR^{k_1}$, $\my\in\RR^{k_2}$ ,$a_1\in[k_1]$, and $a_2\in[k_2]$.
   The loss $\psi$ in \eqref{inex: additive psi} will be concave if both $\phi^{(1)}$ and $\phi^{(2)}$ are concave.
Suppose $\phi^{(1)}$ and $\phi^{(2)}$ are single-stage Fisher consistent losses. Result \ref{result: single-stage FC} below, which is proved in Supplement \ref{sec: pf Result: additive convex loss}, shows that the first stage treatment assignment $\tilde d_1$ does not match $d_1^*$ although  the second stage treatment assignment remains optimal. This result is general in that concavity of the single-stage losses is not required.

 \begin{result}
 \label{result: single-stage FC}
  Suppose Assumptions I-V hold and     $\psi$ is as  in \eqref{inex: additive psi}. Further suppose,  $\phi^{(1)}$ and $\phi^{(2)}$ are each single-stage Fisher consistent surrogates. Then whenever they exist, $\tilde d_2$ is as in \eqref{inresult: example: exp loss} and
     \begin{align*}
 \tilde d_1(H_1)\in  \argmax_{a_1\in[k_1]}\E\lbt \sum_{a_2\in[k_2]}\E[Y_1+Y_2\mid H_2, A_2=a_2]\bl H_1, A_1=a_1\rbt.
\end{align*}
 \end{result}

\end{exampleboxenv} 

\begin{exampleboxenv}[Sum-zero constraints]
\label{ex: sum zero}
    In multiclass classification, a popular method is the constrained comparison method, where the sum-zero constraint $\sum_{i=1}^{k_1}f_{1i}=0$ is imposed on the score function   when the surrogate risk is minimized \citep{zhang2004}. The corresponding surrogate is given by  
    \begin{equation}
     \label{def: cc: sum-zero constraint loss single-stage}
     \phi(\mx;a_1)=\sum_{i\in[k_1]:i\neq a_1}\tph(-\mx_i)\quad\text{ for all }\mx\in\RR^{k_1} \text{ and }a_1\in[k_1],
    \end{equation}
where $\tph$ is a univariate function. 
    An example is the multi-category support vector machine (SVM) of \cite{lee2004multicategory}, where $\tph$ is the convex loss $\tph(x)=\max(1-x,0)$ for all $x\in\RR$, also known as the hinge loss.  \cite{zhang2004,liu2007} show that the resulting surrogate is  Fisher consistent for the multiclass classification problem.
A two-stage extension  of \eqref{def: cc: sum-zero constraint loss single-stage} is given by
    \begin{equation}
     \label{def: cc: sum-zero constraint loss two stage}
     \psi(\mx,\my;a_1,a_2)=\sum_{i\in[k_1]:i\neq a_1}\sum_{j\in[k_2]: j\neq a_2}\tph(-\mx_i,-\my_j)
    \end{equation}
   for all $\mx\in\RR^{k_1}$, $\my\in\RR^{k_2}$ ,$a_1\in[k_1]$, and $a_2\in[k_2]$, where the constrained maximization problem becomes
    \begin{maxi}|l|
  {f_1\in\F_1,f_2\in\F_2}{V^\psi(f_1,f_2)}{\label{opti: hinge loss: surrogate value fn}}{}
  \addConstraint{\sum_{i=1}^{k_t}f_{ti}(h_t)}{=0,}{\text{ for all }h_t\in\H_t\text{ and } t\in\{1,2\}.}
 \end{maxi}
   Let us consider $\tph(x,y)=\min(x-1,y-1,0)$, the concave version of the multivariate hinge loss \citep{zhao2015}. 
  Exact solutions of \eqref{opti: hinge loss: surrogate value fn}  with this choice of $\psi$ is analytically intractable and beyond the scope of this paper. Instead, we consider toy settings where the solutions $\tilde f_1$ and $\tilde f_2$ can be found numerically.

    To make the calculations simple, we consider the scenario where  $O_1,O_2=\emptyset$, and $Y_1=0$. In this case, $H_2\equiv A_1$. Since $H_1=O_1=\emptyset$, $\tilde f_1$ is a fixed vector in $\RR^3$ and $d_1^*$ and $\tilde d_1$ are fixed numbers. 
   It can be shown that $d^*$ is totally determined by the   expected (conditional) outcome matrix $(\E[Y_2\mid A_1=i,A_2=j])_{i\in[k_3],j\in[k_2]}$ in this setting. Letting  $k_1=3$ and $k_2=2$, we consider 3 choices of this matrix,  which leads to the three settings  in Table \ref{tab:hinge}. In all these settings, $d_1^*$ is unique. Table \ref{tab:hinge}  displays $\tilde f_1$, $\argmax(\tilde f_1)$, and $d_1^*$ for each setting. The calculations behind the derivation of $\tilde f$ for the settings in  Table \ref{tab:hinge} can be found in Supplement  \ref{secpf: hinge loss}. Table \ref{tab:hinge} shows that  for the first two settings, $\tilde d_1=\pred(\tilde f_1)$ disagrees with $d_1^*$. It can also be shown that $V(\tilde f)<V_*$ in these two settings. Therefore, the surrogate  under consideration is not Fisher consistent.  Nevertheless, in the third setting,  $\tilde d_1$ agrees with $d_1^*$. 

   In this case, it turns out that $Q_1^*(1)=2$,  $Q_1^*(2)=80$ ,but  $Q_1^*(3)=3$,  indicating treatment level 2, which is also the optimal treatment for the first stage, has much higher treatment effect compared to the 
 remaining treatments. The higher effect size makes optimal treatment assignment an easier task in the first stage of third setting. 

\begin{table}[H]
    \centering
    \renewcommand{\arraystretch}{1.2}
    \resizebox{\textwidth}{!}{%
    \begin{tabular}{|c|ccccccccc|cc|}
       \hline
       Setting  &  $\E[Y_2\mid 1,1]$ & $\E[Y_2\mid 1,2]$ & $\E[Y_2\mid 2,1]$ & $\E[Y_2\mid 2,2]$ & $\E[Y_2\mid 3,1]$ & $\E[Y_2\mid 3,2]$ & $\tilde f_{11}$ & $\tilde f_{12}$ & $\tilde f_{13}$ & $\argmax(\tilde f_1)$ & $d_1^*$\\
       \hline
       \large \rule{0pt}{2.5ex}1  & \large \rule{0pt}{2.5ex}5 & \large 1 & \large 3 & \large 4 & \large 4 & \large 4 & \large 0 & \large 0 & \large 0 & \large (1,2,3) & \large 1\\
       \large \rule{0pt}{2.5ex}2  & \large \rule{0pt}{2.5ex}5 & \large 7 & \large 3 & \large 4 & \large 4 & \large 4 & \large 0 & \large 0 & \large 0 & \large (1,2,3) & \large 1\\
       \large \rule{0pt}{2.5ex}3  & \large \rule{0pt}{2.5ex}1 & \large 2 & \large 80 & \large 1 & \large 3 & \large 3 & \large -1 & \large 2 & \large -1 & \large 2 & \large 2 \\
       \hline
    \end{tabular}
    }
    \caption{{\bf Toy settings for Example~\ref{ex: sum zero}.} This table displays three settings under the toy setup of Example~\ref{ex: sum zero}. It shows the $\E[Y_2\mid A_1=i, A_2=j]$ values and the corresponding optimal first-stage treatment $d_1^*$. The provided $\tilde f_1 \equiv (\tilde f_{11}, \tilde f_{12}, \tilde f_{13})$ is the solution to~\eqref{opti: hinge loss: surrogate value fn}, using the multivariate hinge loss surrogate of Example \ref{ex: sum zero}. Here, $\E[Y_2 \mid i, j]$ abbreviates $\E[Y_2 \mid A_1 = i, A_2 = j]$ for $i \in [3]$ and $j \in [2]$.}
    \label{tab:hinge}
\end{table}
\vspace{-10pt} 
\end{exampleboxenv}
To conclude, our results and examples in this section do not show much promises for concave surrogates in the DTR classification problem.  In fact, it may be possible that convexifying simultaneous direct search is fundamentally infeasible without sacrificing Fisher consistency. Thus,  the  computational burden of non-convex optimization may be necessary for theoretically valid simultaneous direct search, which can be viewed as the cost of reducing reliance on models. This limitation may narrow the scope of simultaneous DTR direct search, but Fisher inconsistency of concave/convex surrogate losses is neither surprising nor uncommon.  Convex surrogates, especially smooth ones, struggle with Fisher consistency across a range of machine learning problems, including multilabel classification with ranking loss \citep{gao2011}, ranking with pairwise disjoint loss \citep{calauzenes2012,duchi2010}, and maximum score estimation \citep{feng2022nonregular}. Even in DTR classification with risk constraints, no concave surrogates have been identified to date, and non-concave losses such as the ramp loss remain in use \citep{liu2024learning,liu2024controlling}.
According to \cite{calauzenes2012}, the failure of concave loss occurs in problems where the convex surrogate objective function ($V^\psi(f)$ in our case) fails to closely approximate the discontinuous objective function  ($V(f)$ in our case). 


We note that properties related to Fisher consistency rely   on  the choice of the link function $\pred$ \citep{ramaswamy2016convex}. Consequently, the aforementioned negative results are specific to our  $\argmax$-based $\pred$ function, which is arguably the most common $\pred$ function in DTR, ITR, and even classification literature.   Smooth concave surrogate losses may achieve Fisher consistency with alternative choices of $\pred$, although whether such $\pred$ functions can yield computationally feasible methods is a separate question.

\section{Necessary and Sufficient  Conditions for Fisher Consistency}
\label{sec: necessity}
The inconsistency results for  concave surrogates in Section \ref{sec: convex loss} prompt us to look beyond the concave surrogates. In this section, we establish necessary and sufficient conditions for Fisher consistency among  separable  surrogates, which we define as
\begin{equation}
    \label{def: product psi}
    \psi(\mx_1,\ldots,\mx_T;a_1,\ldots, a_T)=\prod_{t=1}^T\phi_t(\mx_t;a_t)\text{ for all }\mx_t\in\RR^{k_t},\ a_t\in[k_t],\  t\in[T], 
\end{equation}
where each $\phi_t:\RR^{k_t}\times[k_t]\mapsto \RR$ can be thought of as a single-stage surrogate loss. We also assume $\phi_t\geq 0$ for each $t\in[T]$. We then present examples of Fisher-consistent surrogates within this class and discuss their properties. Here we consider products of single-stage surrogates rather than sums, as Example~\ref{ex 3: cc: additive} shows that addition can lead to Fisher inconsistency.

Non-negativity of the $\phi_t$’s offers analytical advantages. In particular, 
it enables iterative closed-form expressions for $V^\psi(f)$ and $V^\psi_*$, facilitating a  detailed analysis of the $\psi$-regret $V^\psi_*-V^\psi(f)$. However, non-negativity implies non-concavity for non-constant $\phi_t$'s. That said, concavity of the $\phi_t$’s may offer limited benefit, since a separable $\psi$ can still be non-concave even if each $\phi_t$ is concave. Notably, the original discontinuous surrogate $\psi_{\text{dis}}$ is  separable with non-negative $\phi_{\text{dis}}$.
Non-negative separable surrogates have been used in DTR classification, particularly for survival data \citep{xue2022multicategory} and binary treatments \citep{Laha2024}. 
Even in  single-stage direct search, non-negative, non-concave surrogates  are  used  when  the presence of risk constraints complicate the classification \citep{liu2024learning,wang2018learning}. Similar surrogates  also appear  in the multiclass classification literature \citep{liu2006multicategory,wu2007robust}.

\subsubsection{Single-stage setting $(T=1)$}
The necessary and sufficient condition of Fisher consistency in the single-stage case is analogous to multiclass classification, and can be concisely expressed using functionals of $\phi_t$'s. Let $k\in\NN$. For any single-stage surrogate $\phi:\RR^k\times [k]\mapsto\RR$, $\mx\in\RR^k$, and any $\mp\in\RR^k$, let us define the functionals
\begin{equation}
    \label{def: Psi and psi star main text}
\Psi(\mx;\mp)=\sum_{i=1}^k\mp_i\phi(\mx;i)\quad \text{and}\quad\Psi^*(\mp)=\sup_{\mx\in\RR^k}\Psi(\mx;\mp).
\end{equation}
Equation \ref{def: support function} implies  that $\Psi^*$ is simply the support function of the set 
 \begin{align}
     \label{def: image set}
     \mathcal V_\phi=\lbs \mv\in\RR^k: \mv_i=\phi(\mx;i)\text{ for some }\mx\in\RR^k\rbs.
 \end{align} 
 By a slight abuse of terminology, we will refer to $\mathcal V_\phi$ as the image set of $\phi$.
When $\phi=\phi_t$, $\Psi$ and $\Psi^*$  will be denoted by $\Psi_t$ and $\Psi_t^*$, respectively. We will now introduce a condition.
\begin{assumptionp}{N1}
\label{assump: N1}
  For any $\mp\in\RR_{\geq 0}^{k}$ and $\mx\in\RR^{k}$,  $\phi:\RR^{k}\times[k]\mapsto\RR_{\geq 0}$ satisfies 
\begin{align}
\label{inlemma: necessity: single-stage FC}
  \Psi^*(\mp)- \sup_{\mx\in\RR^k:\mp_{\pred(\mx)}<\max(\mp) }\Psi(\mx;\mp)>0.
\end{align}
\end{assumptionp}
Since we defined the supremum of an empty set to be $-\infty$,  \eqref{inlemma: necessity: single-stage FC} holds trivially when the elements of $\mp$ are all equal. Condition~\ref{assump: N1} essentially states that if $\pred(\mx) \notin \argmax(\mp)$, then $\mx$ can not maximize $\Psi(\cdot; \mp)$.  Such conditions are common in multiclass classification literature \citep{ramaswamy2016convex,tewari2007}. \cite{tewari2007} also provides  geometric interpretations of conditions similar to  Condition \ref{assump: N1}, though under the link function $\pred(\mx) = \argmax_{i \in [k]} \phi(\mx; i)$. Lemma~\ref{lemma: single-stage} below shows that Condition~\ref{assump: N1} is both necessary and sufficient for Fisher consistency in the single-stage setting. The analogous result for multiclass classification has been proved by \cite{tewari2007}. 

\begin{lemma}
\label{lemma: single-stage}
    Condition \ref{assump: N1} is  necessary and sufficient for the Fisher consistency of $\phi:\RR^k\times[k]\mapsto\RR_{\geq 0}$ in the single-stage $(T=1)$ setting  with respect to $\mP_0$, i.e., the class of all distributions satisfying Assumptions I-V.
\end{lemma}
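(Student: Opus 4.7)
The plan is to reduce Fisher consistency to a pointwise calibration condition on $\mp(h) = (\E[Y_1 \mid H_1 = h, A_1 = a])_{a \in [k]}$ via iterated conditioning, then handle necessity by constructing a degenerate witness distribution and establish sufficiency by a subsequence-plus-dominated-convergence argument built on N1. Under Assumptions I--III, standard IPW manipulations give, for every $f \in \F$,
\[
V^\phi(f) = \E[\Psi(f(H_1); \mp(H_1))], \qquad V(f) = \E[p_{\pred(f(H_1))}(H_1)],
\]
and pointwise maximization yields $V^\phi_* = \E[\Psi^*(\mp(H_1))]$ together with $V_* = \E[\max_{a \in [k]} p_a(H_1)]$. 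Assumptions IV--V further place $\mp(H_1) \in [C_{\min}, M]^k$ almost surely, so both regrets $V^\phi_* - V^\phi(f) \ge 0$ and $V_* - V(f) \ge 0$ are expectations of bounded, nonnegative integrands.

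For necessity (contrapositive), suppose N1 fails at some $\mp^* \in \RR_{\geq 0}^k$ with non-constant entries, witnessed by a sequence $\{\mx_m\} \subset \RR^k$ satisfying $p^*_{\pred(\mx_m)} < \max(\mp^*)$ and $\Psi(\mx_m; \mp^*) \to \Psi^*(\mp^*)$. After a small perturbation that moves any zero entries of $\mp^*$ into $(0,\infty)$ while preserving a nearby witness sequence (exploiting $\phi \ge 0$ and the sublinearity of $\Psi^*$ as the support function of $\mathcal V_\phi$), I construct a degenerate $\PP \in \mP_0$ in which $H_1$ and $O_1$ are constants, $A_1$ is uniform on $[k]$ (so $\pi_1 \equiv 1/k$), and $Y_1 \mid A_1 = a$ is a point mass at the perturbed $p^*_a$. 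On this $\PP$ every $f \in \F$ collapses to a constant vector, and the choice $f_m \equiv \mx_m$ yields $V^\phi(f_m) \to V^\phi_*$ while $V_* - V(f_m) \ge \max(\mp^*) - \max\{p^*_j : p^*_j < \max(\mp^*)\} > 0$ (a strictly positive gap because $\mp^*$ has only finitely many distinct coordinates), contradicting Fisher consistency.

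For sufficiency, assume N1 and fix any $\PP \in \mP_0$ together with $\{f_m\} \subset \F$ satisfying $V^\phi(f_m) \to V^\phi_*$. Set $R_m(h) = \Psi^*(\mp(h)) - \Psi(f_m(h); \mp(h)) \ge 0$; the hypothesis gives $\E[R_m(H_1)] \to 0$, hence $R_m(H_1) \to 0$ in probability. The pointwise implication I extract from N1 is: if $\mp(h)$ has non-constant entries, then the strict gap $\gamma_h > 0$ supplied by N1 forces $\pred(f_m(h)) \in \argmax(\mp(h))$ whenever $R_m(h) < \gamma_h$; if $\mp(h)$ has constant entries the conclusion is vacuous. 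Given any subsequence of $\{f_m\}$, extract a further sub-subsequence along which $R_{m_{k_j}}(H_1) \to 0$ almost surely; the pointwise implication then yields $\max_a p_a(H_1) - p_{\pred(f_{m_{k_j}}(H_1))}(H_1) \to 0$ a.s., and dominated convergence (with majorant $M - C_{\min}$) gives $V(f_{m_{k_j}}) \to V_*$. The subsequence principle closes the argument.

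The most delicate step is the necessity perturbation: N1 is stated over $\RR_{\geq 0}^k$, while Assumption V forces $\mp(H_1) \in (0, \infty)^k$ for every $\PP \in \mP_0$, so a failure witnessed at a boundary $\mp^*$ with zero coordinates must be transferred to a nearby interior point without losing its suboptimality structure. Ancillary measurability questions around $h \mapsto \Psi^*(\mp(h))$ and the witnesses $f_m$ are routine via standard measurable-selection arguments.
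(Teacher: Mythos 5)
Your proof is essentially correct, and for the sufficiency half you take a genuinely different route than the paper. The reduction $V^\psi(f)=\E[\Psi(f(H_1);\mp(H_1))]$, $V^\psi_*=\E[\Psi^*(\mp(H_1))]$, and the construction of a degenerate constant-history distribution with $\E[Y_1\mid H_1,A_1=i]=\mp_i$ for necessity, all match the paper's approach. Where you differ is sufficiency: the paper invokes Theorem~\ref{theorem: sufficient conditions}, whose $T=1$ instance goes through the Bartlett-style calibration function $\varrho$ of Lemma~\ref{lemma: necessity: Bartlett stuff} and a Jensen argument to turn $\E[R_m(H_1)]\to 0$ into $\E[\max(\mp(H_1))-\mp(H_1)_{\pred(f_m(H_1))}]\to 0$. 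You instead pass from $\E[R_m(H_1)]\to 0$ to convergence in probability of $R_m(H_1)$, extract a.s.-convergent sub-subsequences, apply the pointwise N1 gap to force $\pred(f_m(H_1))\in\argmax(\mp(H_1))$ eventually on an almost-sure event, and finish by dominated convergence and the subsequence principle. That argument is sound, more elementary (no convex calibration function to construct and no attendant measurability bookkeeping), and for the bare Fisher-consistency claim it is arguably cleaner; the paper's route is heavier but produces the explicit $\varrho$ that is reused later for quantitative regret bounds, so both approaches have a purpose.

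The one step you yourself flag as delicate---the boundary perturbation in necessity---is not actually closed by your sketch, and the argument you gesture at does not obviously work. You need to turn a failure of N1 at some $\mp^*\in\RR_{\geq 0}^k$ with zero entries into a failure at a nearby interior point $\mp^\e\in(0,\infty)^k$ so that the degenerate witness distribution satisfies Assumption~V. But if you pass to a subsequence of the witnesses along which $\phi(\mx_m;i)\to c_i$ for each $i$, failure at $\mp^*$ gives $\sum_i c_i\mp^*_i=\Psi^*(\mp^*)$, while for $\mp^\e=\mp^*+\e\mo_k$ the same sequence gives only $\lim_m\Psi(\mx_m;\mp^\e)=\Psi^*(\mp^*)+\e\sum_i c_i$, which can be strictly below $\Psi^*(\mp^\e)$. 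Sublinearity of $\Psi^*$ alone therefore does not guarantee that the old witness survives the perturbation, and you have not exhibited a new one. To be fair to you, you inherit this issue from the paper: its own proof of this lemma constructs witness distributions only for $\mp\in\RR^k_{>0}$ and then appeals to the necessity proof of Theorem~\ref{theorem: necessity}, where the distribution class is relaxed to $Y_t\geq 0$ precisely so that vectors with zero coordinates are reachable; as written, both proofs establish Condition~\ref{assump: N1} only at interior $\mp$. To make the argument airtight you would need either an explicit continuity-plus-compactness argument that N1 on $(0,\infty)^k$ extends to $\RR^k_{\geq 0}$, or a restatement of the necessity claim with the interior restriction made explicit.
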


Under Condition \ref{assump: N1}, $\phi$ satisfies several interesting properties. For example, it is bounded (cf. Supplementary Lemma \ref{lemma: necessity: psi bounded}), and, given any bounded set $\mathfrak{ B}\subset \RR_{\geq 0}^k$, 
  there exists a non-negative, non-decreasing,  convex function $\varrho$ such that 
 \begin{align}
     \label{ineq: cond N1 equivalence}
     \Psi^*(\mp)-\Psi(\mx;\mp)\geq \varrho(\max(\mp)-\mp_{\pred(\mx)})
 \end{align}
    for  all $\mx\in\RR^{k}$ and all  $\mp\in\mathfrak{B}$   (cf. Supplementary Lemma \ref{lemma: necessity}). Analogous results also appear in the multiclass classification literature \citep{zhang2004}.

\subsubsection{Multi-stage setting $(T\geq 2)$}
\label{sec: necessetity T geq 2}
As we shall see, Condition~\ref{assump: N1} is no longer sufficient for Fisher consistency of separable surrogates when $T \geq 2$. In this case, we will require more restrictions on the $\phi_t$'s. To this end, we introduce another condition on the $\Psi^*$-transformation.
\begin{assumptionp}{N2}
\label{assump: N2}
 Let $k\in\NN$. Then  $\phi:\RR^{k}\times[k]\mapsto\RR$ satisfies \begin{align}
            \label{cond: N2}
            \Psi^*(\mp)=C_{\phi} \max(\mp) \text{ for all }\mp\in\RR_{\geq 0}^{k} \text{ for some }C_{\phi}>0.
        \end{align}  
\end{assumptionp}
It is straightforward to verify that $C_{\phi} = \Psi^*(\mo_k)$. Interestingly, when $\phi = \phi_{\text{dis}}$, we have $\Psi^*(\mp) = \max(\mp)$. Thus Condition~\ref{assump: N2} essentially requires the $\Psi^*$-functionals of $\phi$ and $\phi_{\text{dis}}$ to be proportional.  Lemma~\ref{lemma: necessity: cond N2 and Cond N2'} below shows that Condition~\ref{assump: N2} can also be characterized in terms of $\conv(\mV_\phi)$, the closed convex hull of the image set $\mV_\phi$ defined in \eqref{def: image set}.

\begin{lemma}
\label{lemma: necessity: cond N2 and Cond N2'}
   Suppose  $k\in\NN$ and   $\phi:\RR^{k}\times[k]\mapsto\RR_{\geq 0}$ satisfies  Condition \ref{assump: N1}. Then $\phi$ satisfies Condition \ref{assump: N2} if and only if 
   $C_\phi\S^{k-1}\subset \conv(\mV_{\phi})$, where $\S^{k-1}$ is the simplex in $\RR^k$, $C_\phi=\Psi^*(\mo_k)$, and $\mV_\phi$ is as in \eqref{def: image set}.

\end{lemma}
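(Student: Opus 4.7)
The plan is to recognize $\Psi^*$ as the support function of $\conv(\mV_\phi)$, so that the lemma reduces to comparing this with the support function of $C_\phi\S^{k-1}$, which equals $C_\phi\max(\mp)$ on $\RR_{\geq 0}^{k}$. Indeed, by definition $\Psi^*(\mp)=\sup_{\mx}\sum_{i}\mp_i\phi(\mx;i)=\sup_{\mv\in\mV_\phi}\mp^T\mv=\varsigma_{\mV_\phi}(\mp)=\varsigma_{\conv(\mV_\phi)}(\mp)$, using the standard fact that a set and its closed convex hull share the same support function. The equivalence in the lemma therefore becomes a statement about how much of $\conv(\mV_\phi)$ can be pinned down by the values of its support function on the non-negative orthant.

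For the $(\Leftarrow)$ direction, suppose $C_\phi\S^{k-1}\subset\conv(\mV_\phi)$. Monotonicity of support functions immediately gives $\Psi^*(\mp)\geq\varsigma_{C_\phi\S^{k-1}}(\mp)=C_\phi\max(\mp)$ for every $\mp\in\RR_{\geq 0}^{k}$. For the reverse inequality, I would use the definition $C_\phi=\Psi^*(\mo_k)=\sup_{\mx}\sum_{i=1}^{k}\phi(\mx;i)$, so that every $\mv\in\mV_\phi$ satisfies $\sum_i\mv_i\leq C_\phi$. Combined with $\phi\geq 0$, for any $\mp\in\RR_{\geq 0}^{k}$ and $\mv\in\mV_\phi$ one has $\mp^T\mv\leq\max(\mp)\sum_i\mv_i\leq C_\phi\max(\mp)$. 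Taking the supremum over $\mv\in\mV_\phi$ delivers $\Psi^*(\mp)\leq C_\phi\max(\mp)$, establishing Condition~\ref{assump: N2}.

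For the $(\Rightarrow)$ direction, suppose Condition~\ref{assump: N2} holds and fix $i\in[k]$. Applying the condition with $\mp=\mathbf{e}_i$ gives $\sup_{\mx}\phi(\mx;i)=\Psi^*(\mathbf{e}_i)=C_\phi$, so there is a sequence $\{\mx^{(n)}\}_{n\geq 1}$ with $\phi(\mx^{(n)};i)\to C_\phi$. Applying it with $\mp=\mo_k$ gives $\sum_{j=1}^{k}\phi(\mx;j)\leq C_\phi$ for every $\mx$. Combining these two facts,
\[
0\;\leq\;\sum_{j\neq i}\phi(\mx^{(n)};j)\;\leq\; C_\phi-\phi(\mx^{(n)};i)\;\to\;0,
\]
and non-negativity of $\phi$ then forces $\phi(\mx^{(n)};j)\to 0$ for each $j\neq i$. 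Hence the vector $(\phi(\mx^{(n)};1),\ldots,\phi(\mx^{(n)};k))$ converges in $\RR^{k}$ to $C_\phi\mathbf{e}_i$, so $C_\phi\mathbf{e}_i\in\overline{\mV_\phi}\subset\conv(\mV_\phi)$. Since this holds for every $i$, and $C_\phi\S^{k-1}$ is the convex hull of $\{C_\phi\mathbf{e}_1,\ldots,C_\phi\mathbf{e}_k\}$, the inclusion $C_\phi\S^{k-1}\subset\conv(\mV_\phi)$ follows from convexity of $\conv(\mV_\phi)$.

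The only genuinely delicate point is in the $(\Rightarrow)$ direction: one must upgrade the scalar convergence $\phi(\mx^{(n)};i)\to C_\phi$ to the vector-valued convergence $\phi(\mx^{(n)})\to C_\phi\mathbf{e}_i$. This is precisely where the two specializations of Condition~\ref{assump: N2}, at $\mathbf{e}_i$ and at $\mo_k$, must be used together, relying crucially on non-negativity of $\phi$ to eliminate sign cancellations. Condition~\ref{assump: N1} itself does not seem to enter the argument directly; it is the standing hypothesis that gives the surrounding framework its Fisher-consistency meaning, and in particular guarantees that $C_\phi=\Psi^*(\mo_k)$ is finite (cf.\ the boundedness of $\phi$ noted after Lemma~\ref{lemma: single-stage}).
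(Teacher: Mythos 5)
Your proposal is correct, and the $(\Rightarrow)$ direction takes a genuinely more elementary route than the paper. The paper's ``only if'' argument invokes Lemma~\ref{lemma: sufficiency: tilde f form} — a structure theorem whose proof requires both Conditions~\ref{assump: N1} and~\ref{assump: N2} — applied at a strictly positive $\mp$ with $\argmax(\mp)=\{1\}$, to produce a sequence $\mvm\in\mV_\phi$ with $\mvm\to\mathbf{e}_k^{(1)}$. You instead extract the same conclusion directly from Condition~\ref{assump: N2} evaluated at two specific probes, $\mp=\mathbf{e}_i$ (giving $\sup_{\mx}\phi(\mx;i)=C_\phi$) and $\mp=\mo_k$ (giving $\sum_j\phi(\mx;j)\leq C_\phi$), and then use non-negativity to squeeze $\phi(\mx^{(n)};j)\to 0$ for $j\neq i$. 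This avoids the heavier supporting-face machinery entirely and, as you observe, shows that Condition~\ref{assump: N1} does not enter the logic of this particular lemma beyond guaranteeing $0<C_\phi<\infty$ (via Lemma~\ref{lemma: necessity: psi bounded}). The $(\Leftarrow)$ direction is essentially identical to the paper's: support-function monotonicity for $\Psi^*\geq C_\phi\max$ and the bound $\sum_i\mv_i\leq C_\phi$ for the reverse inequality. One small point worth retaining from the paper's normalization habit: everything is cleanest once one fixes $C_\phi=1$ by rescaling, though your argument does not require it.
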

Note that the $C_\phi$ in Lemma \ref{lemma: necessity: cond N2 and Cond N2'} agrees with that in Condition \ref{assump: N2}.
The lemma further implies that when $C_\phi = 1$, Condition~\ref{assump: N2} reduces to $\conv(\mV_\phi) \supset \S^{k-1}$. The appearance of $\S^{k-1}$ may seem unexpected, but as shown in the proof of Lemma~\ref{lemma: necessity: cond N2 and Cond N2'}, it is simply the closed convex hull of the image set for $\phi_{\text{dis}}$. Thus, Condition~\ref{assump: N2} requires the image set of $\phi$ to contain, in convex hull, the image set of the original discontinuous surrogate $\phi_{\text{dis}}$.

    If a surrogate satisfies both Conditions \ref{assump: N1} and \ref{assump: N2},
then the $\varrho$ in 
\eqref{ineq: cond N1 equivalence} 
becomes linear. In particular,  Supplementary Lemma \ref{lemma: necessity: linear bound} shows that there exists $\CC_\phi\in(0,1]$, depending only on $\psi$, so that 
\begin{align}
    \label{def of mathcal C psi}
    \Psi^*(\mp)-\Psi(\mx;\mp)\geq C_\phi\CC_\phi(\max(\mp)-\mp_{\pred(\mx)})
\end{align}
for all $\mx\in\RR^k$ and $\mp\in\RR^k_{\geq 0}$, where $C_\phi$ is the constant appearing in Condition \ref{assump: N2}. The original discontinuous loss $\phi_{\text{dis}}$ also satisfies \eqref{def of mathcal C psi}.  
Thus Conditions \ref{assump: N1} and \ref{assump: N2} together enforce a stricter restriction on the surrogates than  Condition  \ref{assump: N1} alone. 
Theorem~\ref{theorem: sufficient conditions} below establishes that these two conditions are sufficient for Fisher consistency.
\begin{theorem}[Sufficient conditions]
\label{theorem: sufficient conditions}
    Suppose  $\psi$ is a separable surrogate, i.e., it is as in \eqref{def: product psi}, where the $\phi_t$'s are non-negative. Then the followings are sufficient conditions for Fisher consistency (with respect to $\mP_0$, i.e., the set of all distributions satisfying Assumptions I-V):
    \begin{itemize}
        \item[F1.] Each $\phi_t$ satisfies Condition \ref{assump: N1} for $t\in[T]$.
        \item[F2.] For $t\geq 2$, $\phi_t$ also satisfies Condition \ref{assump: N2}.
        \end{itemize}
    \end{theorem}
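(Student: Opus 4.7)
The plan is to derive parallel backward-recursive decompositions of $V^\psi_* - V^\psi(f)$ and $V_* - V(f)$ into sums of non-negative stagewise terms, and then use Conditions \ref{assump: N1} and \ref{assump: N2} to show that the former controls the latter in the limit.

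First, by the tower property and the separability of $\psi$, I would condition successively on $(H_T, A_T), (H_{T-1}, A_{T-1}), \ldots, (H_1, A_1)$. Define stagewise weights $\mp^{(t)}_{a_t}(H_t) := \sum_{j<t} Y_j + Q_t^*(H_t, a_t)$. Condition \ref{assump: N2} on $\phi_T$ identifies $\Psi_T^*(\mp^{(T)}) = C_{\phi_T}\max(\mp^{(T)}) = C_{\phi_T}(\sum_{j<T} Y_j + V_T^*(H_T))$, and then the Bellman-type identity $\E[Y_{T-1}+V_T^*(H_T)\mid H_{T-1}, A_{T-1}] = Q_{T-1}^*(H_{T-1}, A_{T-1})$ lets the expectation telescope backward. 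Iterating this for each $t \geq 2$ yields
\begin{equation*}
V^\psi_* - V^\psi(f) \;=\; \sum_{t=1}^T \Bigl(\prod_{s>t} C_{\phi_s}\Bigr)\, \E\!\left[\prod_{s<t} \frac{\phi_s(f_s(H_s); A_s)}{\pi_s(A_s\mid H_s)}\; \Delta_t(f_t, H_t)\right],
\end{equation*}
with $\Delta_t := \Psi_t^*(\mp^{(t)}) - \Psi_t(f_t; \mp^{(t)}) \geq 0$. An entirely analogous recursion, applied to $\phi_{\text{dis}}$ (for which $\Psi^* = \max$ and $C_{\phi_{\text{dis}}} = 1$), yields the same formula for $V_* - V(f)$ with $\Delta_t$ replaced by the true stagewise regret $\max(\mp^{(t)}) - \mp^{(t)}_{\pred(f_t)}$ and $\phi_s$ replaced by the indicator $1[A_s = \pred(f_s)]$.

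Second, I would bound each $\Delta_t$ pointwise in terms of the true stagewise regret. For $t \geq 2$, Conditions \ref{assump: N1} and \ref{assump: N2} together with \eqref{def of mathcal C psi} give $\Delta_t \geq C_{\phi_t}\CC_{\phi_t}\bigl(\max(\mp^{(t)}) - \mp^{(t)}_{\pred(f_t)}\bigr)$. For $t = 1$, only \ref{assump: N1} is available; \eqref{ineq: cond N1 equivalence} supplies a non-decreasing convex $\varrho_1$ with $\varrho_1(0) = 0$ that is positive off zero, such that $\Delta_1 \geq \varrho_1\bigl(\max(\mp^{(1)}) - \mp^{(1)}_{\pred(f_1)}\bigr)$ uniformly on the bounded range of $\mp^{(1)}$ (boundedness is guaranteed by Assumption IV). Given any sequence $f_m$ with $V^\psi(f_m) \to V^\psi_*$, every summand in the first display vanishes. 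For $t = 1$, Jensen's inequality applied to the convex $\varrho_1$ yields $\E[\max(\mp^{(1)}) - \mp^{(1)}_{\pred(f_{1,m})}] \leq \varrho_1^{-1}(\E[\Delta_1(f_{1,m})]) \to 0$, so the stage-1 contribution to $V_* - V(f_m)$ vanishes. For $t \geq 2$, the $\phi_s$-weighted surrogate regret vanishing must be converted into vanishing of the indicator-weighted true regret. Using boundedness of $\phi_s$ (Supplementary Lemma \ref{lemma: necessity: psi bounded}), the uniform lower bound on $\pi_s$ (Assumption I), and the uniform upper bound on the true regret (Assumption IV), a subsequence argument delivers a.s.\ convergence of each non-negative summand; a margin-based truncation of the events $\{\max(\mp^{(s)}) - \mp^{(s)}_{\pred(f_{s,m})} \geq \epsilon\}$ combined with Markov's inequality applied to the (already-vanishing) stage-$s$ surrogate regret then bridges the gap between $\phi_s$-weights and indicator-weights, and dominated convergence delivers the conclusion.

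The main obstacle is this last conversion between $\phi_s$-weighted and indicator-weighted regrets, particularly because Condition \ref{assump: N2} is not imposed on $\phi_1$, so the surrogate weights need not concentrate on $\pred(f_s)$. The remedy exploits Assumption V's strict positivity of each coordinate of $\mp^{(t)}$ (ruling out degenerate concentrations), the non-negativity of every summand in the recursion, and the margin-based truncation just mentioned. For general $T$, an induction on the number of stages carries the $T = 2$ argument through, with base case $T = 1$ supplied by Lemma \ref{lemma: single-stage}.
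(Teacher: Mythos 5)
Your overall architecture---the backward recursion decomposing $V^\psi_*-V^\psi(f)$ into stagewise surrogate-regret terms $\Delta_t=\Psi_t^*(\mp_t^*)-\Psi_t(f_t;\mp_t^*)$ weighted by products of earlier $\phi_s$-weights, the parallel decomposition of $V_*-V(f)$ under $\phi_{\text{dis}}$, the $\varrho$/Jensen step at stage~1, and the linear bound \eqref{def of mathcal C psi} at stages $t\geq 2$---matches the paper's proof (cf.\ Supplementary Lemmas~\ref{lemma: sufficiency: Induction: non-negative} and~\ref{lemma: suff: main lemma}, and the closing telescope). The gap is precisely in the conversion you yourself flag as the main obstacle. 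To pass from vanishing of the $\phi_s$-weighted quantity to vanishing of the indicator-weighted one, you need $\phi_s\bigl(f_s^{(m)}(H_s);\pred(f_s^{(m)}(H_s))\bigr)$ to stay bounded away from zero a.s.\ for large $m$, so that $\phi_s(f_s^{(m)};A_s)\geq c\cdot 1[A_s=\pred(f_s^{(m)})]$. Condition~\ref{assump: N1} by itself does \emph{not} furnish a uniform lower bound on $\phi(\mx;\pred(\mx))$ --- the paper is explicit that $\J_t>0$ is an \emph{extra} hypothesis in Proposition~\ref{prop: multi-cat FC}, not a consequence of Conditions~N1--N2 --- and your proposed ``margin-based truncation plus Markov's inequality'' does not manufacture one, because for $s\geq 2$ the unweighted surrogate regret $\E[\Delta_s]$ is not known to vanish (only the $w_s$-weighted version is), so Markov cannot be applied to control $\PP(\Delta_s\geq C\epsilon)$.

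The missing ingredient is the \emph{sequential} lower bound of Supplementary Lemma~\ref{lemma: necessity: phi bdd away from zero}: along any sequence with $\Psi^*(\mp^{(m)})-\Psi(\mx^{(m)};\mp^{(m)})\to 0$ and $\|\mp^{(m)}\|_1$ bounded away from zero (this is where Assumption~V actually enters, via Lemma~\ref{lemma: suff: p t star is bdd}), one has $\liminf_m\phi(\mx^{(m)};\pred(\mx^{(m)}))>0$. The paper activates this through a counterfactual change of measure (Lemma~\ref{lemma: suff: lemma with counterfactuals}): $\prod_{s<t}\phi_s/\pi_s$ is interpreted as the Radon--Nikodym weight for the policy $d^{(m)}=\pred(f^{(m)})$, so the $\phi$-weighted surrogate regret becomes an $L^1$ norm under that measure; a subsequence argument then delivers a.s.\ convergence, triggers the lower bound, and dominated convergence finishes. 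Without this device, the ``a.s.\ subsequence plus dominated convergence'' plan you invoke is not justified: a priori $\phi_s(f_s^{(m)};\pred(f_s^{(m)}))$ could vanish on a non-null set while the $\phi_s$-weighted expectation still tends to zero, leaving the indicator-weighted true regret unbounded away from zero.
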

The proof of Theorem \ref{theorem: necessity} can be found in Supplement \ref{secpf: necessity}. The proof exploits properties of surrogates satisfying Conditions \ref{assump: N1} and \ref{assump: N2}, such as  inequalities \eqref{ineq: cond N1 equivalence} and \eqref{def of mathcal C psi}. However, the main technical  challenge lies in showing that $\phi_t(f_t;\pred(f_t(H_t))$ stays bounded away from zero when $ V^\psi_*-V^\psi(f)$ is small.
 Section \ref{sec: examples of psi satisfying N1 and N2} provides examples of $\psi$ that satisfy conditions F1 and F2. It turns out that these conditions are also necessary for Fisher consistency if we slightly relax Assumption V, which requires $Y_t > 0$, to allow $Y_t \geq 0$. The resulting class of distributions is slightly larger than $\mathcal{P}_0$.  
 
    \begin{theorem}[Necessary conditions]
\label{theorem: necessity}
    Suppose  $\psi$ is a separable surrogate with non-negative $\phi_t$'s. Further suppose $\psi$ is Fisher consistent with respect to the set of all distributions satisfying Assumptions I-IV and $Y_t\geq 0$ for all $t\in[T]$. Then $\psi$ must satisfy the conditions F1 and F2 of Theorem \ref{theorem: sufficient conditions}.
    \end{theorem}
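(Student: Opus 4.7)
The plan is to prove necessity by contrapositive: if $\phi_t$ fails Condition~\ref{assump: N1} for some $t$, or $\phi_t$ fails Condition~\ref{assump: N2} for some $t\geq 2$, then I construct a distribution satisfying Assumptions I--IV and $Y_t\geq 0$ for which Fisher consistency breaks. The main tool is the recursive decomposition of $V^\psi(f)$ induced by the product form~\eqref{def: product psi}: iterated conditioning shows that the pointwise supremum over $f_t(H_t)$ reduces to $\Psi_t^*(\mp(H_t))$ for a specific vector $\mp(H_t)\in\RR_{\geq 0}^{k_t}$ of downstream weighted rewards (a non-negative analogue of the $Q$-functions). This collapse lets us reduce the multi-stage surrogate problem to shorter-horizon sub-problems that can be analyzed via Lemma~\ref{lemma: single-stage}.

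For necessity of Condition~\ref{assump: N1} at stage $t$, I would construct a distribution in which the DTR problem effectively reduces to a single-stage problem at stage $t$. Take all propensities $\pi_s(\cdot\mid H_s)\equiv 1/k_s$, let $H_s$ be a single trivial value for every $s\neq t$, let each $A_s$ with $s\neq t$ have no causal effect on any future variable, and set $Y_s\equiv 0$ for $s\neq t$. The product form of $\psi$ and the induced independence then yield
\[V^\psi(f) \;\propto\; \Bigl(\prod_{s\neq t}\Psi_s(\mx_s;\mo_{k_s})\Bigr)\cdot \E\bigl[\Psi_t(f_t(H_t);\mp(H_t))\bigr],\]
where $\mx_s\in\RR^{k_s}$ is the constant value of $f_s$ and $\mp(H_t)_a=\E[Y_t\mid H_t,A_t=a]$. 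Each factor $\Psi_s(\mx_s;\mo_{k_s})$ can be made strictly positive (else $\psi\equiv 0$, a degenerate case), so maximizing over $f$ reduces to maximizing $\E[\Psi_t(f_t(H_t);\mp(H_t))]$, which is a weighted single-stage DTR classification with surrogate $\phi_t$. The value function $V(f)$ collapses analogously. Fisher consistency of $\psi$ therefore forces single-stage Fisher consistency of $\phi_t$ in this reduced problem, which by Lemma~\ref{lemma: single-stage} yields Condition~\ref{assump: N1}.

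For necessity of Condition~\ref{assump: N2} at stage $t\geq 2$, I would assume every $\phi_s$ satisfies Condition~\ref{assump: N1} (by the previous step) and derive a contradiction when $\phi_t$ violates Condition~\ref{assump: N2}. Using the trivialization strategy for stages $s\notin\{t-1,t\}$, the problem reduces to a two-stage problem on stages $t-1$ and $t$. Fix a single value of $H_{t-1}$, let $A_{t-1}$ be uniform on $[k_{t-1}]$, and set $H_t=(H_{t-1},A_{t-1})$ deterministically. After the pointwise optimization over $f_t(H_t)$, the surrogate objective becomes proportional to $\Psi_{t-1}(f_{t-1}(H_{t-1});\mathbf{r})$, where $\mathbf{r}_{a_{t-1}}=\Psi_t^*(\mp^{(a_{t-1})})$ and $\mp^{(a_{t-1})}_a=\E[Y_t\mid H_t=(H_{t-1},a_{t-1}),A_t=a]$. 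Condition~\ref{assump: N1} applied to $\phi_{t-1}$ (via Lemma~\ref{lemma: single-stage}) implies that any approximately optimal $\tilde f_{t-1}$ satisfies $\pred(\tilde f_{t-1}(H_{t-1}))\in\argmax_{a_{t-1}}\Psi_t^*(\mp^{(a_{t-1})})$, whereas $d_{t-1}^*(H_{t-1})\in\argmax_{a_{t-1}}\max(\mp^{(a_{t-1})})$. By Lemma~\ref{lemma: necessity: cond N2 and Cond N2'}, failure of Condition~\ref{assump: N2} means the ratio $\Psi_t^*(\mp)/\max(\mp)$ is non-constant on $\RR_{\geq 0}^{k_t}$; using the positive homogeneity of $\Psi_t^*$, we can select $\mp^{(1)},\mp^{(2)}\in\RR_{\geq 0}^{k_t}$ with $\max(\mp^{(1)})<\max(\mp^{(2)})$ yet $\Psi_t^*(\mp^{(1)})>\Psi_t^*(\mp^{(2)})$, producing the desired mismatch $\tilde d_{t-1}\neq d_{t-1}^*$.

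The main obstacle is twofold. First, we must verify that the trivialized constructions produce legitimate members of the enlarged distribution class (Assumptions I--IV together with $Y_t\geq 0$) while preserving the clean factorization of $V^\psi$; this requires care about the support of $Y_t$, the boundedness imposed by Assumption~IV, and the genuine independence of the trivialized stages from the active ones. Second, because Fisher consistency is defined via sequences $\{f_m\}$ rather than attained maximizers, we must invoke the quantitative characterization of Condition~\ref{assump: N1} (cf.\ inequality~\eqref{ineq: cond N1 equivalence}) to argue that the prediction mismatch persists along every near-maximizing sequence, ultimately producing a value-function gap bounded away from zero that rules out Fisher consistency.
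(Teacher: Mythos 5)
Your proposal is correct and follows essentially the same strategy as the paper's own proof: a single-stage embedding (padded with trivial stages) to force Condition~\ref{assump: N1} via Lemma~\ref{lemma: single-stage}, a two-stage embedding on consecutive stages to show that Fisher consistency constrains $\argmax_a \Psi_t^*(\mp^{(a)})$ to agree with $\argmax_a \max(\mp^{(a)})$, and positive homogeneity of $\Psi_t^*$ to conclude Condition~\ref{assump: N2}. The paper organizes the second half as ``FC $\Rightarrow$ weak monotonicity $\Rightarrow$ $\Psi_t^*$ depends only on $\max(\cdot)$ $\Rightarrow$ linearity'' rather than your contrapositive, but the constructions, the role of Lemma~\ref{lemma: single-stage}, and the use of homogeneity are the same, and you correctly flag the two delicate points (legitimacy of the trivialized distributions and the sequence/limit bookkeeping) that the paper resolves with care.
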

 
We chose to work with the slightly broader class that allows $Y_t \geq 0$ in Theorem~\ref{theorem: necessity} for technical convenience in the proof. It may be possible to show that
F1 and F2 are necessary for Fisher consistency even under $\mathcal{P}_0$ (i.e., when $Y_t > 0$), but the corresponding proof would be considerably more technical without offering additional conceptual insight. 
Despite this minor discrepancy, we will refer to F1 and F2 as the necessary and sufficient conditions for Fisher consistency within the class of separable surrogates in the sequel.

Similar to Theorem~\ref{theorem: sufficient conditions}, the proof of Theorem~\ref{theorem: necessity} is non-trivial, and together they occupy a substantial portion of the Supplement.
The proof of Theorem~\ref{theorem: necessity} relies on identifying good sets of distribution under which the optimal policy and  quantities such as  $\tilde f$ and $V^\psi(f)$ are tractable. To prove Condition F1, we consider distributions that resemble a one-stage DTR, where treatment affects the outcome at only one stage. However, proving Condition F2 requires distributions that embed a two-stage DTR structure, and the corresponding good set is adjusted accordingly.  A key step in proving F2 is to show that for any $t\geq 2$, and $\mx\in\RR^{k_t}_{\geq 0}$,  $\Psi_t^*(\mx)=G_t(\max(\mx))$ for some univariate function $G_t$. The linearity of  $G_t$ is then derived from the positive homogeneity of  $\Psi_t^*$.

  
While Theorems~\ref{theorem: sufficient conditions} and~\ref{theorem: necessity} do not explicitly refer to the original loss, they can be interpreted as suggesting that, compared to the single-stage setting, Fisher consistency in the multi-stage setting requires the $\phi_t$'s to more closely resemble the original discontinuous loss $\phi_{\text{dis}}$ for $t \geq 2$. 
 As a downside, similar to $\psi_{\text{dis}}$, such surrogates do not preserve the ranking among suboptimal treatments. In particular, for two suboptimal treatments $i$ and $j$, it is possible that $\tilde f_{ti}(H_t) < \tilde f_{tj}(H_t)$ even when $Q_t^*(H_t, i) > Q_t^*(H_t, j)$, where $\tilde f$ satisfies $V^\psi(\tilde f) = V^\psi_*$.   In fact, our proof implies that, if $\tilde f$ exists, $\phi_t(\tilde f_t(H_t),j)=0$ for any suboptimal treatment $j\in[k_t]$.  

Theorems \ref{theorem: sufficient conditions}-\ref{theorem: necessity} imply that Fisher consistency does not require  
$\phi_1$ to satisfy Condition \ref{assump: N2}. However, if $\phi_1$ does satisfy Condition \ref{assump: N2}, 
we  can obtain a stronger result. 
\begin{proposition}
    \label{prop: multi-cat FC}
 Suppose Assumptions I-V are satisfied and $\psi$ is as in Theorem \ref{theorem: sufficient conditions} except $\phi_1$ also satisfies Condition \ref{assump: N2}. Further suppose for each $t\in[T]$, there exists $\J_t\geq 0$ so that  $\phi_t(\mx,\pred(\mx))\geq \J_t$  for all $\mx\in\RR^{k_t}$.
 Then for all $f\in\F$,
 \begin{align}
     \label{instatement: sufficiency: lower bound linear}
       V^\psi_*-V^\psi(f)\geq C_*\slb V_*-V(f)\srb\text{ where }C_*=\slb\prod_{t=1}^T\J_t \srb\min_{1\leq t\leq T}\CC_{\phi_t}.
 \end{align}
 Here  the $C_{\phi_t}$'s are  as in Condition \ref{assump: N2},  and the $\CC_{\phi_t}$'s are as in \eqref{def of mathcal C psi}.
\end{proposition}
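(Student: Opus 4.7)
The plan is to derive a per-stage decomposition of $V^\psi_* - V^\psi(f)$ via backward induction on the $T$ stages, and then invoke at each stage the linear pointwise inequality from Supplementary Lemma~\ref{lemma: necessity: linear bound}, namely
\begin{align*}
\Psi_t^*(\mp) - \Psi_t(\mx;\mp) \geq C_{\phi_t}\CC_{\phi_t}\slb\max(\mp)-\mp_{\pred(\mx)}\srb
\end{align*}
for all $\mx\in\RR^{k_t}$ and $\mp\in\RR_{\geq 0}^{k_t}$. The first step is to derive a backward recursive representation of $V^\psi(f)$: integrating out the propensities stagewise, one shows inductively that the tail $\psi$-value
\begin{align*}
W_t^\psi(f;H_t) := \E\lbt\prod_{s\geq t}\frac{\phi_s(f_s(H_s);A_s)}{\pi_s(A_s\mid H_s)}\sum_{j\geq t}Y_j \,\bl\, H_t\rbt
\end{align*}
can be written as $\Psi_t(f_t(H_t);\mp_t^\psi(f, H_t))$ for a vector $\mp_t^\psi(f, H_t)\in\RR^{k_t}_{\geq 0}$ depending only on $f_{t+1}, \ldots, f_T$. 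Under Condition~\ref{assump: N2} at every stage, $\Psi_t^*(\mp) = C_{\phi_t}\max(\mp)$ on $\RR_{\geq 0}^{k_t}$; substituting this into the backward recursion collapses the surrogate Bellman recursion to the ordinary Bellman recursion for the $Q_t^*$'s, producing the identity $V^\psi_* = \slb\prod_{t=1}^T C_{\phi_t}\srb V_*$.

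The core of the argument is to telescope $V^\psi_* - V^\psi(f)$ by introducing hybrid score functions $f^{(t)}$ that agree with a near-maximizer $\tilde f$ of $V^\psi$ on stages $s\geq t$ and with $f$ on stages $s < t$; thus $V^\psi_* - V^\psi(f) = \sum_{t=1}^T [V^\psi(f^{(t)}) - V^\psi(f^{(t+1)})]$. Each summand differs only at stage $t$ and, by the recursive representation, equals the expectation of $\Psi_t^*(\mp) - \Psi_t(f_t(H_t);\mp)$ reweighted by the previous-stage factors $\phi_s(f_s(H_s); A_s)/\pi_s(A_s\mid H_s)$ for $s<t$. Applying the pointwise inequality and identifying $\max(\mp)-\mp_{\pred(f_t(H_t))}$ with $\max_{a_t}Q_t^*(H_t,a_t)-Q_t^*(H_t, \pred(f_t(H_t)))$ up to a factor of $\prod_{s>t}C_{\phi_s}$ from the backward recursion, the stage-$t$ term is bounded below by $\slb\prod_{s<t}\J_s\srb\slb\prod_{s\geq t}C_{\phi_s}\srb\CC_{\phi_t}$ times the $t$-th stagewise contribution in the Bellman decomposition of $V_* - V(f)$. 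The bounds $\phi_s(\cdot;\pred(\cdot))\geq \J_s$ supply the factors from earlier stages, while for $s\geq t$ one uses the elementary fact $C_{\phi_s} = \Psi_s^*(\mo_{k_s}) = \sup_\mx\sum_j\phi_s(\mx;j)\geq \J_s$. Consequently every stagewise coefficient is at least $(\prod_s\J_s)\CC_{\phi_t}$, and extracting $\min_t\CC_{\phi_t}$ across $t$ yields the announced $C_* = (\prod_s\J_s)\min_t\CC_{\phi_t}$.

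The main obstacle is the combinatorial bookkeeping required to identify each $\max(\mp_t^\psi) - \mp^\psi_{t,\pred(f_t)}$ term with the standard stagewise Bellman regret while tracking the correct powers of $C_{\phi_s}$ and $\J_s$ that arise from the hybrid-policy construction. A secondary technical point is that $\tilde f$ may fail to exist in $\F$; in that case the argument proceeds via a sequence of near-maximizers $\{\tilde f^{(m)}\}\subset \F$ with $V^\psi(\tilde f^{(m)})\to V^\psi_*$, and the final inequality is obtained by passing to the limit.
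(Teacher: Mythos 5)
Your high-level plan — decompose the $\psi$-regret into $T$ stage-wise pieces, apply Lemma~\ref{lemma: necessity: linear bound} at each stage, and track the $\J_s$, $C_{\phi_s}$, and $\CC_{\phi_t}$ factors — is the right strategy and matches the paper's, and your accounting of constants (in particular the observation $C_{\phi_s}\geq\J_s$, which makes the uniform constant $C_*=\prod_s\J_s\min_t\CC_{\phi_t}$ work out) is correct. The paper organizes the same calculation slightly differently: it normalizes to $C_{\phi_t}=1$ up front, peels off the stage-1 term $\E[\Psi_1^*(\mp_1^*)-\Psi_1(f_1;\mp_1^*)]$ separately, and packages the remaining telescoping into the separately proved inductive Lemma~\ref{lemma: sufficiency: Induction: main step}, whereas you write out all $T$ stages explicitly and carry the $C_{\phi_s}$ factors directly.

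The one concrete gap is in the ``hybrid near-maximizer'' construction. You define $f^{(t)}$ by splicing in a near-maximizing sequence $\tilde f^{(m)}$ on stages $\geq t$ and then assert the bound can be obtained ``by passing to the limit.'' But for finite $m$, the stage-$t$ summand
$V^\psi(f^{(t,m)})-V^\psi(f^{(t+1,m)})$
involves $\Psi_t(\tilde f^{(m)}_t(H_t);\mp_t^\psi(\tilde f^{(m)},H_t))$ rather than $\Psi_t^*(\mp_t^*(H_t))$, and Lemma~\ref{lemma: necessity: linear bound} only applies to $\Psi_t^*-\Psi_t(\mx;\cdot)$. Getting the required convergences $\Psi_t(\tilde f^{(m)}_t;\mp_t^{(m)})\to\Psi_t^*(\mp_t^{(m)})$ and $\mp_t^{(m)}\to\mp_t^*$ in a form that permits the limit interchange is a genuine issue; for an arbitrary near-maximizing sequence, pointwise convergence of these stage-wise objects is not immediate from $V^\psi(\tilde f^{(m)})\to V^\psi_*$ (the paper addresses the analogous issue with substantial effort when proving Lemma~\ref{lemma: suff: main lemma}). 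The fix is to bypass near-maximizers entirely and define the telescoping algebraically via
\begin{align*}
G_t:=\E\lbtt\prod_{s<t}\frac{\phi_s(f_s(H_s);A_s)}{\pi_s(A_s\mid H_s)}\,\Psi_t^*\slb\mp_t^*(H_t)\srb\rbtt,\qquad t\in[T+1],
\end{align*}
with $G_1=V^\psi_*$ by Lemma~\ref{lemma: sufficiency} and $G_{T+1}=V^\psi(f)$; then
\begin{align*}
G_t-G_{t+1}=\E\lbtt\prod_{s<t}\frac{\phi_s(f_s(H_s);A_s)}{\pi_s(A_s\mid H_s)}\slb\Psi_t^*(\mp_t^*(H_t))-\Psi_t(f_t(H_t);\mp_t^*(H_t))\srb\rbtt
\end{align*}
is exactly the stage-$t$ term you want, with no limits taken. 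With that replacement your constant-tracking, the identification of $\max(\mp_t^*)-(\mp_t^*)_{\pred(f_t)}$ with $\prod_{s>t}C_{\phi_s}$ times the Q-function gap (Lemma~\ref{lemma: approx: p t star q t star} after normalization), and the lower bound $\phi_s(f_s(H_s);A_s)\geq\J_s\,1[A_s=\pred(f_s(H_s))]$ complete the argument as you describe.
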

The proof of Proposition~\ref{prop: multi-cat FC} is provided in Supplement~\ref{sec: proof of dorollary multi-cat FC}. The constant $C_*$  depends on $\psi$, but we omit this dependence from notaion for the sake of simplicity.
The lower bound in \eqref{instatement: sufficiency: lower bound linear} is nontrivial only when $\J_t > 0$, which requires the $\phi_t(\mx; \pred(\mx))$'s  to be  bounded away from zero. The original discontinuous loss $\phi_{\text{dis}}$ and all surrogates considered in the next section satisfy this lower boundedness condition. Supplementary Lemma~\ref{lemma: necessity psi(xk, pred xk) is lower bounded}  implies that this condition is automatically met under Condition~\ref{assump: N1} if the $\phi_t$'s are symmetric, i.e., if $\sum_{i=1}^{k_t} \phi_t(\mx; i)$ is constant for all $\mx \in \RR^{k_t}$. In general, however, Conditions~\ref{assump: N1} and~\ref{assump: N2} do not ensure that $\phi_t(\mx, \pred(\mx))$ is bounded away from zero uniformly over $\mx \in \RR^{k_t}$, and counterexamples can be easily constructed.

There exist surrogates for which \eqref{instatement: sufficiency: lower bound linear} holds with equality, e.g., when the $\phi_t$'s are constant multiples of $\phi_{\text{dis}}$. To see this, note that $\J_t = \CC_{\phi_t} = 1$ for $\phi_{\text{dis}}$. However, for specific surrogates, the constant $C_*$ may not be tight, and sharper constants may exist. As such, $C_*$ may not serve as a reliable criterion for selecting among surrogates. Moreover, since our separable surrogates are  non-convex, a surrogate that is easier to optimize are  ultimately more practical choice. Further discussion on $C_*$, including upper bounds on $C_*$ and the roles of $\J_t$ and $\CC_{\phi_t}$, is provided in Supplement~\ref{sec: supple: discussion on C star}. 

 \begin{remark}
 We anticipate that non-separable Fisher consistent surrogates may also exist. From \cite{liu2024controlling}, it follows that  the multivariate ramp loss—which is non-separable—is Fisher consistent for the DTR classification problem in the binary-treatment setting. However, this loss is margin-based, and its extension to the $k_t > 2$ case  remains an open question.
  \end{remark} 

\paragraph*{Location and scale transformation} Lemma~\ref{lemma: location-scale transformation} below shows that if a single-stage surrogate $\phi_t$ satisfies Conditions~\ref{assump: N1} and~\ref{assump: N2}, it continues to do so under a scale transformation. However, a location transformation by a positive constant violates Condition~\ref{assump: N2} although it preserves Condition~\ref{assump: N1}. The proof of Lemma~\ref{lemma: location-scale transformation} is provided in Supplement~\ref{secpf: lemma location-scale}.

\begin{lemma}
    \label{lemma: location-scale transformation}
   Suppose $\phi: \RR^k \times [k] \rightarrow \RR_{\geq 0}$ satisfies Conditions~\ref{assump: N1} and~\ref{assump: N2}. Then, for any $a, b > 0$, the scaled surrogate defined by $\phi_{a,b}(\mx; i) = b\phi(a\mx; i)$ for $\mx \in \RR^k$ and $i \in [k]$ also satisfies Conditions~\ref{assump: N1} and~\ref{assump: N2}. The shifted surrogate $\phi + c$ satisfies Condition~\ref{assump: N1} for all $c\in\RR$ but fails to satisfy Condition~\ref{assump: N2} unless $c=0$. 
\end{lemma}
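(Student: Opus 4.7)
The proof is almost entirely a direct bookkeeping calculation on the functionals $\Psi$ and $\Psi^*$ defined in \eqref{def: Psi and psi star main text}. The plan is to first treat the scale transformation $\phi_{a,b}$, then the shift $\phi+c$, verifying Conditions~\ref{assump: N1} and~\ref{assump: N2} in each case.

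For the scaling, first I would denote the $\Psi$-functional associated with $\phi_{a,b}$ by $\Psi_{a,b}$. A direct expansion gives $\Psi_{a,b}(\mx;\mp)=\sum_{i=1}^k\mp_i\, b\,\phi(a\mx;i)=b\,\Psi(a\mx;\mp)$. The \emph{key structural observation} is that since $a>0$, the map $\mx\mapsto a\mx$ is a bijection of $\RR^k$ and, crucially, satisfies $\pred(a\mx)=\pred(\mx)$ for every $\mx\in\RR^k$. Using this bijection in the supremum defining $\Psi^*$, one gets $\Psi_{a,b}^*(\mp)=b\,\Psi^*(\mp)$. Combined with Condition~\ref{assump: N2} for $\phi$, this yields $\Psi_{a,b}^*(\mp)=bC_\phi\max(\mp)$, so Condition~\ref{assump: N2} holds for $\phi_{a,b}$ with constant $C_{\phi_{a,b}}=bC_\phi>0$. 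The invariance $\pred(a\mx)=\pred(\mx)$ also ensures that the constraint set $\{\mx:\mp_{\pred(\mx)}<\max(\mp)\}$ in \eqref{inlemma: necessity: single-stage FC} is preserved under the substitution $\my=a\mx$, so the corresponding constrained supremum also scales by $b$. Hence the gap in \eqref{inlemma: necessity: single-stage FC} gets multiplied by $b>0$ and remains strictly positive, proving Condition~\ref{assump: N1} for $\phi_{a,b}$.

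For the shift, the computation is even more transparent: $\sum_{i=1}^k\mp_i(\phi(\mx;i)+c)=\Psi(\mx;\mp)+c|\mp|_1$, where the added term $c|\mp|_1$ does not depend on $\mx$. Taking suprema over $\mx\in\RR^k$ and over the restricted set $\{\mx:\mp_{\pred(\mx)}<\max(\mp)\}$ both shift $\Psi^*$ and the constrained supremum by the same quantity $c|\mp|_1$, leaving their difference unchanged. Hence Condition~\ref{assump: N1} is preserved for every $c$ for which $\phi+c$ remains a valid (non-negative) surrogate. For Condition~\ref{assump: N2}, the same computation gives $\Psi_{\phi+c}^*(\mp)=C_\phi\max(\mp)+c|\mp|_1$. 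This equals $\tilde C\max(\mp)$ for some constant $\tilde C>0$ and all $\mp\in\RR_{\geq 0}^k$ only if $c=0$, which I would demonstrate by a simple two-point test: plugging in $\mp=\me_1$ (a standard basis vector) and $\mp=\mo_k$ forces $\tilde C=C_\phi+c$ and $\tilde C=C_\phi+ck$ simultaneously, hence $c(k-1)=0$, i.e. $c=0$ since $k\geq 2$.

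There is no substantive obstacle; the only point that requires a moment's thought is the role of the sign of $a$, since it is precisely positivity of $a$ that preserves the $\pred$-operator and hence the constraint set in Condition~\ref{assump: N1}. A negative scalar would reverse argmax, so the restriction $a>0$ in the statement is essential.
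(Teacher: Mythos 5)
Your proof is correct and takes essentially the same route as the paper: compute $\Psi_{a,b}^*=b\Psi^*(\mp)$ using linearity in $\phi$, use $\pred(a\mx)=\pred(\mx)$ for $a>0$ to preserve the constraint set in Condition~\ref{assump: N1}, and expand $\Psi_{\phi+c}^*(\mp)=\Psi^*(\mp)+c|\mp|_1$ for the shift. Your explicit two-point test ($\mp=\mathbf{e}^{(1)}_k$ vs.\ $\mp=\mo_k$) for why Condition~\ref{assump: N2} fails when $c\neq0$ is a small, welcome addition of detail that the paper leaves implicit.
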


Lemma \ref{lemma: location-scale transformation} proves the existence of non-negative $\phi_t$'s that satisfy Condition \ref{assump: N1} but not  Condition \ref{assump: N2}. Conversely, there are surrogates that satisfy Condition~\ref{assump: N2} but not Condition~\ref{assump: N1}. As a trivial example, consider $\phi_t(\mx, i) = 1[\pred(\mx) = k_t - i+1]$ for $\mx \in \RR^{k_t}$ and $i \in [k_t]$. Thus, neither Condition~\ref{assump: N1} nor Condition~\ref{assump: N2} implies the other.

\subsection{Restricted classes}
\label{sec: linear policies}

In the upcoming Section~\ref{sec: regret decay}, we will see that if we search for the best DTR within a sufficiently rich policy class, e.g., neural networks, decision trees, or basis expansion classes, we may expect a DTR with high value in large samples. However, in many applications, practitioners may be interested in simpler, interpretable policy classes that may not be universal approximation classes. For instance, linear policies are often favored for their interpretability. Suppose $\mL\subset \F$ is such a restricted class of policies and $f^*\in\mL$ is the maximizer of $V(f)$ over $\F$. Informally, we say $\psi$ is Fisher consistent over $\mL$ if, for any sequence $\{f_n\}_{n \geq 1} \subset \mL$, the convergence $V^\psi(f_n) \to \sup_{f \in \mL} V^\psi(f)$ implies $V(f_n) \to \sup_{f \in \mL} V(f)$.  An interesting question is: under what conditions a surrogate $\psi$ is Fisher consistent over restricted classes? The answer depends on the specific structure of $\mL$, but we anticipate that Conditions~\ref{assump: N1} and~\ref{assump: N2}—which are essentially restrictions on $\Psi^*$—will generally not suffice. These conditions sufficed in our earlier discussion because the maximizer of $V^\psi(f)$ over $\F$ can be explicitly characterized via $\Psi^*$ (see Lemma~\ref{lemma: sufficiency: p t star and Q t star} in the Supplement). 
 The maximizer of $V^\psi(f)$ over  $\mL$, in general, has little connection with $\Psi^*$. 

Developing a comprehensive theory for restricted classes is out of the scope of the present paper.  However, in this section, we show that in the special case where the optimal DTR  within $\mL$ coincides with $d^*$, Conditions~\ref{assump: N1} and~\ref{assump: N2} can still be used to investigate the DTR obtained by maximizing $V^\psi(f)$ over $\mL$.
Nevertheless, even in this case, we need more structure on $\psi$. 
 There are many ways to impose such structures. We choose to restrict the behavior of the $\phi_t(\mx;j)$'s when the elements of $\mx$ are large, since the upcoming examples in Section~\ref{sec: examples of psi satisfying N1 and N2} satisfy this condition. This leads to Condition~\ref{assump: N3} below.
\begin{assumptionp}{N3}
 \label{assump: N3}
  Suppose $\mx\in\RR^k$ is such that $\argmax(\mx)$ is singleton. Then  for any real sequence $\{\myb_n\}_{n\geq 1}\subset \RR_{\geq 0}$ such that $\myb_n\to\infty$, 
        \[\phi(\myb_n\mx;j)\to_n C_\phi 1[j=\argmax(\mx)]\text{ for all }j\in[k],\]
   where $C_\phi$ is as in Condition \ref{assump: N2}.
 \end{assumptionp}

  Lemma~\ref{lemma: theorem with linear policy} below shows that, when the optimal DTR within $\mL$ coincides with $d^*$, and $\mL$ is closed under multiplication by positive scalars, Conditions~\ref{assump: N1}, \ref{assump: N2}, and \ref{assump: N3} together guarantee a regret bound analogous to Proposition~\ref{prop: multi-cat FC}.

\begin{lemma}
\label{lemma: theorem with linear policy}
Suppose $\mL$ is closed under multiplication by positive scalars, i.e., if $f \in \mL$, then $af \in \mL$ for any $a > 0$.  
Assume there exists $f^* \in \mL$ such that, for all $t \in [T]$, the set $\argmax(f^*_t(H_t))$ is a singleton and agrees with $d_t^*(h_t)$ with $\PP$-probability one. Let $\psi$ be as in Proposition~\ref{prop: multi-cat FC}, and suppose it also satisfies Condition~\ref{assump: N3}. Let $C_*$ be as in Proposition~\ref{prop: multi-cat FC}.
Then the following assertion holds for any $f \in \mL$:
\[
\sup_{f \in \mL} V^\psi(f) - V^\psi(f) \geq C_* \left(V_* - V(f)\right).
\]
\end{lemma}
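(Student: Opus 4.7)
The plan is to reduce the statement to Proposition \ref{prop: multi-cat FC} by showing that, under the given hypotheses, the restricted and unrestricted optima of $V^\psi$ coincide, i.e., $\sup_{f\in\mL} V^\psi(f) = V^\psi_*$. Once this is established, the desired inequality is immediate: for any $f\in\mL\subset\F$,
\[
\sup_{g\in\mL} V^\psi(g) - V^\psi(f) \;=\; V^\psi_* - V^\psi(f) \;\geq\; C_*\bigl(V_* - V(f)\bigr),
\]
where the inequality is exactly \eqref{instatement: sufficiency: lower bound linear} from Proposition \ref{prop: multi-cat FC}.

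To show $\sup_{f\in\mL} V^\psi(f) = V^\psi_*$, I first construct an approximating sequence inside $\mL$. Set $g_n = n f^*$ for $n\in\NN$; by closure of $\mL$ under multiplication by positive scalars, each $g_n\in\mL$. Because $\argmax(f_t^*(H_t)) = \{d_t^*(H_t)\}$ almost surely, Condition \ref{assump: N3} applied with $\mx = f_t^*(H_t)$ and $\myb_n = n$ gives the pointwise convergence
\[
\phi_t\bigl(n f_t^*(H_t); j\bigr) \;\longrightarrow\; C_{\phi_t}\,\mathbb 1\bigl[j = d_t^*(H_t)\bigr] \qquad \text{a.s., for all } j\in[k_t],\, t\in[T].
\]
Taking the product across stages yields, for every realization of $(H,A_1,\ldots,A_T)$,
\[
\psi\bigl(g_n(H); A_1,\ldots,A_T\bigr) \;\longrightarrow\; \Bigl(\prod_{t=1}^T C_{\phi_t}\Bigr)\prod_{t=1}^T \mathbb 1\bigl[A_t = d_t^*(H_t)\bigr].
\]

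Next I upgrade this to convergence of $V^\psi(g_n)$. Under Condition \ref{assump: N1}, each $\phi_t$ is bounded (Supplementary Lemma \ref{lemma: necessity: psi bounded}), so $\psi$ is bounded. Combined with Assumptions I (propensities bounded below by $C_\pi$) and IV (bounded rewards), the integrand $\psi(g_n(H);A)\cdot\frac{\sum_j Y_j}{\prod_t \pi_t(A_t|H_t)}$ has a fixed integrable envelope. Dominated convergence, together with the identification formula \eqref{identification: value function}, then gives
\[
V^\psi(g_n) \;\longrightarrow\; \Bigl(\prod_{t=1}^T C_{\phi_t}\Bigr)\,V(d^*) \;=\; \Bigl(\prod_{t=1}^T C_{\phi_t}\Bigr)\,V_*.
\]
Hence $\sup_{f\in\mL} V^\psi(f) \geq \prod_{t=1}^T C_{\phi_t}\cdot V_*$.

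For the matching upper bound $V^\psi_* \leq \prod_{t=1}^T C_{\phi_t}\cdot V_*$, I use the iterated closed-form representation of $V^\psi_*$ derived in the proof of Theorem \ref{theorem: sufficient conditions} (Supplementary Lemma \ref{lemma: sufficiency: p t star and Q t star}). Under Condition \ref{assump: N2}, the stagewise functional satisfies $\Psi_t^*(\mp) = C_{\phi_t}\max(\mp)$ for $\mp\in\RR_{\geq 0}^{k_t}$, and backward induction over the stages collapses the iterated $\sup$ to the Bellman recursion for $V_*$, multiplied by $\prod_{t=1}^T C_{\phi_t}$. Combining the two directions yields $\sup_{f\in\mL} V^\psi(f) = V^\psi_* = \prod_{t=1}^T C_{\phi_t}\cdot V_*$, and the lemma follows from Proposition \ref{prop: multi-cat FC} as noted at the outset.

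The main technical obstacle is the invocation of Condition \ref{assump: N3} together with dominated convergence: one must verify that the pointwise limit is the discontinuous loss $\psi_{\text{dis}}$ scaled by $\prod_t C_{\phi_t}$, and that the singleton-argmax hypothesis on $f_t^*$ propagates almost surely so the limit identifies with $V_*$. The remaining ingredients---the upper bound via iterated stagewise suprema under Condition \ref{assump: N2}, and scale-invariance of $\pred$ that makes the approximation $g_n = nf^*$ preserve $V(g_n)=V_*$---are routine given the earlier lemmas.
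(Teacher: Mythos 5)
Your proposal is correct and takes essentially the same route as the paper: show that $\sup_{f\in\mL}V^\psi(f)=V^\psi_*$ by sending $nf^*\in\mL$ to infinity, apply Condition~\ref{assump: N3} with the singleton-argmax hypothesis to get pointwise convergence of $\phi_t(nf_t^*(H_t);\cdot)$ to the scaled indicator of $d_t^*(H_t)$, pass to the limit with dominated convergence using the envelope coming from bounded $\phi$ (Lemma~\ref{lemma: necessity: psi bounded}), Assumption~I, and Assumption~IV, and then invoke the identity $V^\psi_*=V_*$ from Lemmas~\ref{lemma: sufficiency} and~\ref{lemma: sufficiency: p t star and Q t star} together with Proposition~\ref{prop: multi-cat FC}. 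The one small difference is cosmetic: the paper normalizes $C_{\phi_t}=1$ at the outset (which is needed for a literal application of Lemma~\ref{lemma: sufficiency: p t star and Q t star}), whereas you carry the constants $\prod_t C_{\phi_t}$ through and cancel them at the end, which is logically equivalent but slightly less careful in matching the stated hypotheses of the cited lemmas.
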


Lemma~\ref{lemma: theorem with linear policy} is proved in Supplement~\ref{sec: pf of linear}. 
The assumptions in Lemma~\ref{lemma: theorem with linear policy} imply that the optimal treatment assignments are unique with probability one. This condition often arises in the theoretical analysis of model-based DTR methods, such as parametric Q-learning and A-learning, which require uniqueness of the optimal DTR for efficient estimation of model parameters \citep{laber2014,schulte2014,robins1994estimation,chakraborty2013,tsiatis}. It has also appeared in the context of direct search with linear policies \citep{Laha2024}. An important example of $\mL$ is the class of linear policies \citep{murphy2005,wallace2015doubly,robins1994estimation,sonabendw2021semisupervised}. Here, the scores are linear functions of $H_t$ or its feature transformations. Lemma~\ref{lemma: theorem with linear policy} implies that if the optimal DTR $d^*$ is linear, then direct search over $\mL$ with suitable surrogates can, in principle, recover the optimal DTR.

\subsection{Examples of Fisher-consistent $\phi$}
\label{sec: examples of psi satisfying N1 and N2}

In this section, we present examples of surrogates $\phi_t$ that satisfy Conditions~\ref{assump: N1}, \ref{assump: N2}, and \ref{assump: N3}, and that $\phi(\mx; \pred(\mx)) > \J$ for some $\J > 0$. These surrogates are defined for a general dimension $k$, and the corresponding expression for $\phi_t$ can be obtained by replacing $k$ with $k_t$.  Note that the scale of $\phi$ can be adjusted by multiplying it by any positive constant;  however, such scaling has no impact on the  theoretical or practical performance of our method.


 \subsubsection{Kernel-based surrogates}
 \label{sec: kernel based surrogates}

For any $k\in\NN$, let 
$K:\RR^k\mapsto\RR_{\geq 0}$ be  a kernel which is positive everywhere and integrates to one.  For sake of simplicity, suppose $K$ is the joint distribution of $k$ i.i.d. random variables $Z_1$, $\ldots$, $Z_k$ with density \KK. This implies  $K(\mx)=\prod_{i=1}^k \KK(\mx_i)$ for each $\mx\in\RR^k$. Examples of $\KK$ include densities such as Gaussian, logistic, etc. 
For $\mx\in\RR^k$ and $j\in[k]$, consider the single-stage  surrogate loss 
\begin{align}
\label{def: psi: smoothed pred}
    \phi(\mx;j)=C\dint_{\RR^k} 1[\pred(\mx-\mbu)=j]K(\mbu)d\mbu=C\dint_{\RR^k} 1[\pred(\mbu)=j]K(\mx-\mbu)d\mbu.
\end{align}
For this surrogate,  $\Psi^*(\mo_k) = C$.
To see this, observe that $\sum_{i \in [k]} \phi(\mx; i) = C$ for all $\mx \in \RR^k$. Since the sum is constant, $\phi$ is a symmetric surrogate loss, as previously discussed in Section \ref{sec: necessetity T geq 2}. Symmetric losses are known to be more robust to label corruption in multiclass classification problems \citep{patrini2017making, charoenphakdee2019symmetric}. As previously mentioned, symmetric losses also satisfy $\inf_{\mx \in \RR^k} \phi(\mx, \pred(\mx)) > 0$ (see Lemma~\ref{lemma: necessity psi(xk, pred xk) is lower bounded} in the Supplement). In fact, as shown in Lemma~\ref{lemma: kernel based satisfies J condition}, this quantity is lower bounded by $1/k$ for the kernel-based surrogates.
 Lemma~\ref{lemma: kernel based satisfies J condition} is proved in Supplement \ref{secpf: kernel based}.
\begin{lemma}
\label{lemma: kernel based satisfies J condition}
Suppose $\phi$ is as in  \eqref{def: psi: smoothed pred}.  Then 
\begin{align}
   \label{kernel based: alt form}
   \phi(\mx,j)=CE_{\KK}\lbt \prod_{i\neq j}\slb  1-F_{\KK}(Z+\mx_i-\mx_j)\srb\rbt, 
\end{align}
where $Z$ is a random variable with density $\KK$, and $F_{\KK}$ and $E_{\KK}$ are the distribution function and the expectation operators corresponding to $\KK$, respectively. 
    Moreover, $\phi(\mx,\pred(\mx))>\J$ for all $\mx\in\RR^k$ where $\J=C/k$.
\end{lemma}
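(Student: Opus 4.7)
The plan has two components, corresponding to the two claims of the lemma. Both parts hinge on interpreting the convolution definition \eqref{def: psi: smoothed pred} probabilistically: letting $Z=(Z_1,\ldots,Z_k)$ have i.i.d.\ components with density $\KK$, the substitution $\mbv=\mx-\mbu$ shows $\phi(\mx;j)=C\,\PP(\pred(\mx-Z)=j)$.

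For the alternative representation \eqref{kernel based: alt form}, I would exploit the continuity of $K$: ties in $\argmax(\mx-Z)$ occur with probability zero, so almost surely $\{\pred(\mx-Z)=j\}$ coincides with $\{\mx_j-Z_j>\mx_i-Z_i\ \text{for all }i\neq j\}=\{Z_i>Z_j+\mx_i-\mx_j\ \text{for all }i\neq j\}$. Conditioning on $Z_j=z$ and using the independence of the $Z_i$'s gives
\[
\PP\bigl(\pred(\mx-Z)=j\,\big|\,Z_j=z\bigr)=\prod_{i\neq j}\PP(Z_i>z+\mx_i-\mx_j)=\prod_{i\neq j}\bigl(1-F_{\KK}(z+\mx_i-\mx_j)\bigr),
\]
and integrating against the marginal density $\KK$ of $Z_j$ yields \eqref{kernel based: alt form}.

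For the lower bound, I would set $j^*=\pred(\mx)$, so that $\mx_{j^*}=\max_i\mx_i$ and hence $\mx_i-\mx_{j^*}\leq 0$ for every $i\neq j^*$. Since $F_{\KK}$ is non-decreasing, each factor in the product representation satisfies $1-F_{\KK}(Z+\mx_i-\mx_{j^*})\geq 1-F_{\KK}(Z)$ almost surely, so
\[
\phi(\mx;j^*)\geq C\,E_{\KK}\bigl[(1-F_{\KK}(Z))^{k-1}\bigr].
\]
Because $\KK>0$ everywhere, $U=F_{\KK}(Z)$ is uniform on $[0,1]$, and a one-line calculation gives $E[(1-U)^{k-1}]=\int_0^1(1-u)^{k-1}\,du=1/k$. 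This already establishes $\phi(\mx;\pred(\mx))\geq C/k=\J$.

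The main obstacle will be promoting this inequality to the strict form claimed in the lemma. Whenever some $\mx_i<\mx_{j^*}$, the assumption $\KK>0$ makes $F_{\KK}$ strictly increasing, so $F_{\KK}(Z+\mx_i-\mx_{j^*})<F_{\KK}(Z)$ almost surely, and the corresponding factor is strictly larger than $1-F_{\KK}(Z)$ on a set of positive probability; taking expectations then yields strict inequality. The delicate case is when $\argmax(\mx)$ is not a singleton (for instance $\mx$ with all equal coordinates), where the direct argument only delivers equality; I would either invoke a continuity/limiting argument on $\mx$ or clarify that the strict inequality is to be understood with a suitable tie-breaking/perturbation convention. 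This degenerate case is the principal technical subtlety of the proof.
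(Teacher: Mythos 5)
Your derivation of the representation \eqref{kernel based: alt form} (substitution $\mbv=\mx-\mbu$, note that ties have probability zero, condition on $Z_j$, use independence) and your lower bound (monotonicity of $F_\KK$ together with $F_\KK(Z)\sim\mathbb{U}(0,1)$ so that $E_\KK[(1-F_\KK(Z))^{k-1}]=1/k$) are exactly the paper's proof.

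You are right, however, to flag the strictness of the inequality as the one genuine subtlety: in fact the strict bound $\phi(\mx;\pred(\mx))>C/k$ is simply false when $\mx=c\mo_k$, since then every factor $1-F_\KK(Z+\mx_i-\mx_j)$ equals $1-F_\KK(Z)$ exactly and $\phi(c\mo_k;\pred(c\mo_k))=C E_\KK[(1-F_\KK(Z))^{k-1}]=C/k$ with equality. No limiting or tie-breaking device can rescue the strict version at this point. The paper's own proof only establishes $\geq$ (the word ``Hence'' at its end silently conflates $\geq$ with $>$), and every downstream use --- Proposition~\ref{prop: multi-cat FC}, Lemma~\ref{lemma: theorem with linear policy}, Theorem~\ref{thm: est error} --- assumes $\phi(\mx;\pred(\mx))\geq\J$ with $\J>0$, which $\J=C/k$ does satisfy. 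So rather than trying to promote your bound to strict, the correct repair is to weaken the lemma's displayed inequality to $\geq$; your argument then proves the corrected statement completely, and nothing in the rest of the paper is affected.
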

Using \eqref{kernel based: alt form}, we can derive closed-form expressions for $\phi$ in specific cases. In particular, Supplement~\ref{sec: short: calc for kernel logistic} provides explicit formulas for $\phi$ when $k = 3$ and $\KK$ is either the standard logistic or standard Gumbel density.
Lemma~\ref{lemma: FC of smoothed FC}, proved in Supplement~\ref{secpf: kernel based surrogate N1 and N2}, establishes that the kernel-based surrogate $\phi$ satisfies the conditions in Proposition~\ref{prop: multi-cat FC} and Lemma~\ref{lemma: theorem with linear policy}.
\begin{lemma}
 \label{lemma: FC of smoothed FC}
   The  kernel-based  $\phi$ in \eqref{def: psi: smoothed pred}  satisfies Conditions \ref{assump: N1}, \ref{assump: N2}, and  \ref{assump: N3}, provided $\KK(x)>0$ for each $x\in\RR$. Moreover, the constant $C_\phi$ in Condition~\ref{assump: N2} equals $C$, and the constant $\CC_\phi$ in \eqref{def of mathcal C psi} equals $2^{-(k-1)}$.

 \end{lemma}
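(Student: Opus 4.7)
The plan is to work throughout with the identity $\Psi(\mx;\mp) = C\int \mp_{\pred(\mx-\mbu)} K(\mbu)\,d\mbu$, obtained by substituting \eqref{def: psi: smoothed pred} into \eqref{def: Psi and psi star main text}; the coordinates of $\mbu$ are iid with density $\KK$, so all computations reduce to expectations over a product measure.

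For Conditions~\ref{assump: N2} and~\ref{assump: N3}, the arguments are easy applications of dominated convergence. The upper bound $\Psi(\mx;\mp) \leq C\max(\mp)$ is immediate from $\mp_{\pred(\mx-\mbu)} \leq \max(\mp)$ and $\int K(\mbu)d\mbu=1$. For a matching lower bound I would fix $j^* \in \argmax(\mp)$ and take $\mx^{(n)}$ with $\mx^{(n)}_{j^*} = n$ and other coordinates equal to zero; for each fixed $\mbu$, $\pred(\mx^{(n)}-\mbu) = j^*$ eventually, and dominated convergence gives $\Psi(\mx^{(n)};\mp) \to C\mp_{j^*} = C\max(\mp)$, proving $C_\phi = C$. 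Condition~\ref{assump: N3} follows similarly: if $\argmax(\mx) = \{j^*\}$, then for each fixed $\mbu$, $\pred(\myb_n\mx - \mbu) = \pred(\mx - \mbu/\myb_n) = j^*$ eventually, whence $\phi(\myb_n\mx; j) \to C\,1[j = j^*]$.

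The heart of the proof is Condition~\ref{assump: N1} and the value $\CC_\phi = 2^{-(k-1)}$; both follow from a single uniform estimate: whenever $\pred(\mx) = j$,
\begin{equation*}
    p_j(\mx) := \int 1[\pred(\mx-\mbu)=j]\,K(\mbu)\,d\mbu \geq 2^{-(k-1)}.
\end{equation*}
To prove this, set $\delta_i = \mx_j - \mx_i \geq 0$ for $i \neq j$; up to a measure-zero set of ties (which is irrelevant under the absolutely continuous $K$), the event $\{\pred(\mx-\mbu) = j\}$ equals $\{\mbu_i > \mbu_j - \delta_i,\; i \neq j\}$. Conditioning on $\mbu_j$ and exploiting independence of the coordinates,
\begin{equation*}
    p_j(\mx) = E_\KK\Bigl[\prod_{i\neq j}\bigl(1 - F_\KK(\mbu_j - \delta_i)\bigr)\Bigr] \geq E_\KK\bigl[(1-F_\KK(\mbu_j))^{k-1}\bigr],
\end{equation*}
where the inequality uses $\delta_i \geq 0$ and monotonicity of $F_\KK$. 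Since $F_\KK(\mbu_j)$ is uniformly distributed on $[0,1]$ and $u \mapsto (1-u)^{k-1}$ is convex on $[0,1]$ for $k \geq 2$, Jensen's inequality delivers $p_j(\mx) \geq (1 - 1/2)^{k-1} = 2^{-(k-1)}$.

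With this estimate in hand, the proof concludes by decomposing $\Psi(\mx;\mp)/C = \sum_i p_i(\mx)\mp_i \leq p_j(\mx)\mp_{\pred(\mx)} + (1 - p_j(\mx))\max(\mp)$, which rearranges to
\begin{equation*}
    \Psi^*(\mp) - \Psi(\mx;\mp) \geq C\,p_j(\mx)\bigl(\max(\mp) - \mp_{\pred(\mx)}\bigr) \geq C\cdot 2^{-(k-1)}\bigl(\max(\mp) - \mp_{\pred(\mx)}\bigr).
\end{equation*}
This is \eqref{def of mathcal C psi} with $\CC_\phi = 2^{-(k-1)}$; taking the supremum over $\mx$ with $\mp_{\pred(\mx)} < \max(\mp)$ then yields Condition~\ref{assump: N1}, since the right-hand side is uniformly bounded below by $C\cdot 2^{-(k-1)}(\max(\mp) - \max_{i\notin\argmax(\mp)}\mp_i) > 0$ whenever $\mp$ is non-constant. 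The main obstacle is the careful handling of the tie-breaking convention in $\pred$; once the measure-zero tie set is discarded, the remainder reduces to the elementary Jensen computation described above.
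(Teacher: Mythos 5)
Your proof is correct, and the overall structure matches the paper's: the same upper bound $\Psi(\mx;\mp)\leq C\max(\mp)$, the same sequence $n\mathbf{e}^{(j^*)}_k$ to show the supremum is attained, the same dominated-convergence argument for Condition~\ref{assump: N3}, and the same decomposition of $\Psi(\mx;\mp)$ into the contribution of $\{\pred(\mx-\mbu)=\pred(\mx)\}$ and its complement. The genuine difference is how you obtain the key probability bound $p_{\pred(\mx)}(\mx)\geq 2^{-(k-1)}$, which underlies both Condition~\ref{assump: N1} and the value of $\CC_\phi$. The paper compares the event $\{\pred(\mx-\vec{Z})=\pred(\mx)\}$ with $\{\pred(-\vec{Z})=\pred(\mx)\}$ and asserts $P_Z(-Z_1>-Z_i,\ i\in[2:k])=2^{-(k-1)}$; this probability is actually $1/k$ (the chance that one of $k$ iid continuous variables is the strict maximum), so the paper's intermediate identity is off for $k\geq 3$, though the conclusion survives because $1/k\geq 2^{-(k-1)}$. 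You instead start from the explicit product form $p_j(\mx)=E_{\KK}\left[\prod_{i\neq j}\left(1-F_{\KK}(Z-\delta_i)\right)\right]$ (already recorded in Lemma~\ref{lemma: kernel based satisfies J condition}), drop the nonnegative $\delta_i$'s by monotonicity of $F_{\KK}$, and apply Jensen's inequality to the convex map $u\mapsto(1-u)^{k-1}$. This route is cleaner: it establishes $p_j(\mx)\geq 2^{-(k-1)}$ honestly and shows exactly where the stated constant comes from---the true infimum over $\mx$ with $\pred(\mx)=j$ is $1/k$, and $2^{-(k-1)}$ is its Jensen relaxation. If you wished, you could record the sharper value $\CC_\phi=1/k$.
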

Suppose the $C$ in \eqref{def: psi: smoothed pred} is one. Since $\J=1/k$ and $\CC_\phi=2^{-(k-1)}$ by Lemmas \ref{lemma: kernel based satisfies J condition} and \ref{lemma: FC of smoothed FC}, respectively, we have
\[C_*=\prod_{t=1}^T\frac{1}{k_t}\min_{t\in[T]}2^{-k_t}=\frac{2^{-\max_{t\in[T]}{k_t}}}{\prod_{t=1}^T k_t},\text{ where }C_* \text{ is as in Proposition \ref{prop: multi-cat FC}}.\]
 In particular, if $k_t=k$ for all $i\in[T]$, then $C_*=2^{-k}/k^T$.

\subsubsection{Product-based surrogates}
\label{sec: product-type surrogate loss}
Suppose  $\KK$ is a density supported on $\RR$ as in Section \ref{sec: kernel based surrogates}. For $\mx\in\RR^k$ and $C>0$, the product-based  surrogate is of the form
\begin{equation}
 \label{def: multi-cat: Fisher consistent psi}
     \phi(\mx;j)=\prod_{i\neq j, 1\leq i\leq k}\tau(\mx_j-\mx_i) \quad\text{ where } \quad\tau(x)=C(1-F_{\KK}(x)),
 \end{equation}
with $F_\KK$ being the  distribution function of  the density $\KK$.
  To understand the intuition behind this surrogate, assuming $C=1$, note that  $\tau(x)=\int_{\RR} 1[x-u\geq 0]\KK(u)du$ can be represented as a smoothed (using kernel $\KK$)  version of the univariate 0-1 losses $1[x\geq 0]$ and $1[x> 0]$. The  discontinuous loss $I[j=\pred(x)]$ can be written as a product of the univariate 0-1 losses as follows:
 \begin{equation}
  \label{def: multi-cat: motivation behind FC}
  I[j=\pred(\mx)]=\prod_{1\leq i<j}I[x_j-x_i\geq 0]\prod_{j+1\leq i\leq k}I[x_j-x_i> 0].
 \end{equation}
 If we smooth  each 0-1 loss in \eqref{def: multi-cat: motivation behind FC} using the kernel $\KK$, we  obtain the product-based surrogate in \eqref{def: multi-cat: Fisher consistent psi} with $C=1$. The main difference between the kernel-based surrogates and the product-based surrogates is as follows: the kernel-based surrogate smooths  the multivariate 0-1 loss $1[\pred(\mx)=j]$ at once, where the product-based surrogate splits this loss into a product of univariate 0-1 losses, and smooths each loss separately. Unlike the kernel-based surrogate,  the product-based surrogate may not satisfy that $\sum_{i\in[k]}\phi(\mx;i)$ is a constant.

Lemma \ref{lemma: FC: product type surrogate loss} below implies that the product-based  surrogate  satisfies Conditions \ref{assump: N1}, \ref{assump: N2}, and \ref{assump: N3}. This lemma is proved in Supplement \ref{secpf: product loss FC}.

\begin{lemma}
     \label{lemma: FC: product type surrogate loss}
  Suppose $k \in \NN$. If the density $\KK$ is symmetric about zero, then the product-based surrogate loss $\phi$, as defined in \eqref{def: multi-cat: Fisher consistent psi}, satisfies Assumptions~\ref{assump: N1}, \ref{assump: N2}, and~\ref{assump: N3} with $C_\phi = C$. Moreover, it satisfies \eqref{def of mathcal C psi} with $\CC_\phi = 1/2$, and $\phi(\mx, \pred(\mx)) \geq \J$ with $\J=C2^{-(k-1)}$ for all $\mx \in \RR^k$.

 \end{lemma}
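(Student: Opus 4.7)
My plan is to derive all the claims from a single algebraic identity implied by the symmetry of $\KK$, namely $\tau(x)+\tau(-x)=C$ for every $x\in\RR$, together with the monotonicity of $\tau$ and its boundary values $\tau(0)=C/2$ and $\tau(\pm\infty)\in\{0,C\}$. Throughout, I write $j^\star=\pred(\mx)$ for brevity. The lower bound on $\phi(\mx,\pred(\mx))$ is immediate: since $\mx_{j^\star}-\mx_i\geq 0$ for every $i\neq j^\star$, monotonicity of $\tau$ forces each factor in $\phi(\mx;j^\star)=\prod_{i\neq j^\star}\tau(\mx_{j^\star}-\mx_i)$ to be at least $\tau(0)=C/2$, so $\phi(\mx;j^\star)\geq(C/2)^{k-1}$, matching the stated $\J$.

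\textbf{Conditions~\ref{assump: N3} and~\ref{assump: N2}.} For Condition~\ref{assump: N3}, fix $\mx$ with $\argmax(\mx)=\{j^\star\}$ and let $\myb_n\to\infty$; then $\myb_n(\mx_{j^\star}-\mx_i)\to+\infty$ for $i\neq j^\star$, while $\myb_n(\mx_j-\mx_{j^\star})\to-\infty$ for $j\neq j^\star$. Monotonicity of $\tau$ and its endpoint values then give $\phi(\myb_n\mx;j^\star)\to C_\phi$ and $\phi(\myb_n\mx;j)\to 0$ for $j\neq j^\star$. The same computation, applied to an $\mx$ concentrated on a coordinate $j^\star\in\argmax(\mp)$, yields $\Psi^*(\mp)\geq C_\phi\max(\mp)$. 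For the matching upper bound, I would establish the auxiliary inequality
\[
\sum_{j=1}^k \phi(\mx;j)\leq C_\phi\qquad\text{for every }\mx\in\RR^k
\]
by induction on $k$: the base case $k=2$ reduces to $\tau(x)+\tau(-x)=C$, and the inductive step uses the identity to pair factors across neighbouring summands before invoking the hypothesis on a reduced system. Combined with $\mp_j\leq\max(\mp)$, this yields $\Psi(\mx;\mp)\leq C_\phi\max(\mp)$, completing Condition~\ref{assump: N2}.

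\textbf{Regret inequality~\eqref{def of mathcal C psi} and Condition~\ref{assump: N1}.} Fix $\mx\in\RR^k$ and $\mp\in\RR_{\geq 0}^k$, set $j^\star=\pred(\mx)$, and choose any $j'\in\argmax(\mp)$. Using $\Psi^*(\mp)=C_\phi\max(\mp)$, I would split
\[
\Psi^*(\mp)-\Psi(\mx;\mp)=C_\phi\bigl(\max(\mp)-\mp_{j^\star}\bigr)+\Bigl(C_\phi\mp_{j^\star}-\sum_{j=1}^k\mp_j\phi(\mx;j)\Bigr),
\]
and bound the second bracket from below. The key move is to isolate the matched pair of factors $\tau(\mx_{j^\star}-\mx_{j'})$ and $\tau(\mx_{j'}-\mx_{j^\star})$ appearing inside $\phi(\mx;j^\star)$ and $\phi(\mx;j')$ respectively; by symmetry their sum is exactly $C$. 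Controlling the remaining $k-2$ factors of each product uniformly via the monotonicity argument from the first paragraph, and then rearranging, delivers a linear lower bound $C_\phi\CC_\phi(\max(\mp)-\mp_{j^\star})$ with $\CC_\phi=1/2$, which is~\eqref{def of mathcal C psi}. Condition~\ref{assump: N1} is then an immediate corollary: if $\mp_{\pred(\mx)}<\max(\mp)$, the right-hand side of~\eqref{def of mathcal C psi} is strictly positive, so $\Psi(\mx;\mp)<\Psi^*(\mp)$.

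\textbf{Main obstacle.} The hardest piece is extracting the sharp constant $\CC_\phi=1/2$: a naive application of $\tau(x)+\tau(-x)=C$ inside the full $k$-fold product dilutes the constant by extra factors of $1/2$, so the pairing must be restricted to exactly one index pair while the remaining $k-2$ factors are handled separately by monotonicity. The inductive upper bound $\sum_j\phi(\mx;j)\leq C_\phi$ for Condition~\ref{assump: N2} is similarly delicate when $k\geq 3$: unlike the kernel-based surrogate of Section~\ref{sec: kernel based surrogates}, this sum is not constant in $\mx$, so the symmetry identity must be deployed several times with careful bookkeeping of which factors are paired across which summands.
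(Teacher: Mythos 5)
Your handling of the lower bound $\J$, Condition~\ref{assump: N3}, and the inequality $\Psi^*(\mp)\geq C\max(\mp)$ matches the paper and is fine. But the crux — the inequality that simultaneously delivers Condition~\ref{assump: N2}, Condition~\ref{assump: N1}, and $\CC_\phi=1/2$ — is not actually established in your plan, and your own decomposition reveals why. After rearranging your split
\[
\Psi^*(\mp)-\Psi(\mx;\mp)=C_\phi\bigl(\max(\mp)-\mp_{j^\star}\bigr)+\Bigl(C_\phi\mp_{j^\star}-\textstyle\sum_{j}\mp_j\phi(\mx;j)\Bigr),
\]
getting the claimed bound $\geq C_\phi\CC_\phi\bigl(\max(\mp)-\mp_{j^\star}\bigr)$ with $\CC_\phi=1/2$ is exactly equivalent to proving $\Psi(\mx;\mp)\leq C_\phi\bigl(\max(\mp)+\mp_{\pred(\mx)}\bigr)/2$, so the decomposition is circular and all the work remains. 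Your proposed mechanism for that work — pairing $\tau(\mx_{j^\star}-\mx_{j'})$ and $\tau(\mx_{j'}-\mx_{j^\star})$ for some $j'\in\argmax(\mp)$ — leaves the remaining sum $\sum_{j\neq j^\star,j'}\mp_j\phi(\mx;j)$ entirely uncontrolled; you assert it can be handled "via the monotonicity argument from the first paragraph," but that argument only gives lower bounds on the winning product and says nothing about the losing ones. You flag this as the "main obstacle" yourself, which is accurate: as written, the argument does not close.

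The paper closes it by a different inductive structure that you should be aware of. Rather than pairing with respect to $\argmax(\mp)$, it picks $r$ to be the \emph{second-largest index of} $\mx$ (not of $\mp$), uses monotonicity of $\tau$ to dominate every factor $\tau(\mx_j-\mx_{i^*})$ ($j\neq i^*$) by the common factor $\tau(\mx_r-\mx_{i^*})$, pulls this factor out of the entire sum over $j\neq i^*$, and recognizes what remains as $\Psi(\mx';\mp')$ on a $(k-1)$-dimensional subproblem; after applying the induction hypothesis there, one is left with a one-variable function $\mp_{i^*}\tau(x)+\max(\mp)\tau(-x)$ over $x\geq 0$, whose derivative (using $\tau(x)+\tau(-x)=C$) has constant sign, so the supremum is at $x=0$ and equals $(\mp_{i^*}+\max(\mp))/2$. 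Your auxiliary inequality $\sum_j\phi(\mx;j)\leq C_\phi$ is then a trivial corollary (take $\mp=\mo_k$), whereas trying to prove it first and upgrade it to the sharp constant, as you suggest, loses precisely the information you need. If you want to salvage your route, you would have to replace the $\mp$-based pairing with the $\mx$-based factoring; the two are not interchangeable.
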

Lemma~\ref{lemma: FC: product type surrogate loss} implies Proposition~\ref{prop: multi-cat FC} and 
Lemma~\ref{lemma: theorem with linear policy} apply for the product-based surrogates with  $C_* = C_\phi^T2^{-\sum_{t=1}^T k_t + T - 1}$.
In particular, when $C_\phi=1$ and $k_t = k$ for all $t \in [T]$, we have $C_* = 2^{-kT + T - 1}$.
Examples of  $\KK$ that are symmetric about zero include  the logistic, Cauchy, and Student’s $t$-density (for suitable degrees of freedom).
In the binary treatment case, the product-based surrogate  with  symmetric $\KK$ corresponds to the margin-based Fisher consistent surrogates studied by \citet{Laha2024}. We will revisit this connection in Section~\ref{sec: relative margin based}.

To the best of our knowledge, our kernel-based and  product-based surrogates have not been used previously in DTR or ITR research. However, smoothing the 0-1 loss is a widely adopted approach for constructing surrogate methods, particularly in machine learning and statistics problems where suitable convex surrogates are unavailable.
For example, smoothed 0-1 loss  has been used for constructing surrogates in multilabel classification with ranking loss \citep{gao2011}, maximum score estimation \citep{feng2022nonregular,xu2014model}, covariate-adjusted Youden index estimation, and one-bit compressed sensing \citep{feng2022nonregular}.  In the context of dynamic treatment regimes, the smoothed 0-1 loss appears in \cite{Laha2024} and \cite{xue2022multicategory}. In binary classification, where Fisher-consistent convex surrogates are available, \cite{nguyen2013algorithms} demonstrate that the smoothed 0–1 loss exhibits greater robustness to outliers and data contamination compared to popular convex surrogates.

\subsection{Relative-margin-based representation} 
\label{sec: relative margin based}
The product-based and kernel-based surrogates share a useful property. They are relative-margin-based, which means that  $\phi$ depends on $\mx$ only through the pairwise  differences $\mx_i-\mx_j$'s. This property can lead to a dimension reduction during the optimization. 
\begin{definition}[Relative-margin-based losses]
\label{def: relative margin}
 Let $k\in\NN$. The surrogate    $\phi:\RR^k\times[k]\mapsto\RR$ is relative-margin-based if there exists a function $\Gamma:\RR^{k-1}\times[k]\mapsto\RR$, so that for each $i\in[k]$ and $\mx\in\RR^k$,
 \[\phi(\mx;i)=\Gamma(\Delta \mx_1;i),\quad\text{ where }\quad \Delta \mx_1=(\mx_1-\mx_2,\ldots,\mx_1-\mx_k).\]
 The function $\Gamma$ is referred to as the template for $\phi$.
\end{definition}
Relative margin-based surrogate losses are quite common in multiclass classification theory, where they have appeared in the context of multiclass support vector machines \citep{glasmachers2016unified}, Gamma-Phi losses \citep{wang2023classification}, and PERM losses  \citep{wang2023unified}.
Lemma \ref{lemma: relative margin} below states that the product-based and kernel-based surrogates are relative margin-based, and provides the corresponding templates. We will say that the separable $\psi$ defined in \eqref{def: product psi} is relative-margin-based if each $\phi_t$ is relative-margin based.  We will denote the corresponding templates by $\Gamma_1,\ldots,\Gamma_T$, respectively.

 \begin{lemma}
     \label{lemma: relative margin}
  If $\phi:\RR^k\times[k]\mapsto\RR_{\geq 0}$ is either the kernel-based surrogate in \eqref{def: psi: smoothed pred}  or the product-based surrogate in \eqref{def: multi-cat: Fisher consistent psi}, then $\phi$ is relative-margin-based. 
For the product-based surrogate, $\Gamma$ takes the form
\begin{align}
\label{def: Gamma: product bases}
 \Gamma (\my;i)= \begin{cases}
       \prod_{j\in[k]} \tau(\my_j) & \text{if }i=1\\
        \tau(-\my_i)\prod_{j\in[k]\setminus\{1,i\}}\tau(\my_j-\my_i) & \text{if }i\neq 1,
    \end{cases}
\end{align}
where $\tau$ is as in \eqref{def: multi-cat: Fisher consistent psi}.
For the kernel-based surrogate loss, $\Gamma$ takes the form
\begin{align}
\label{def: Gamma: kernel based}
     \Gamma (\my;i)= \begin{cases}
        C_\phi E_{\KK}\lbt \prod_{j\in[k]\setminus\{i\}} \slb 1- F_{\KK}(Z-\my_j)\srb\rbt & \text{if }i=1\\
        C_\phi E_{\KK}\lbt \slb 1- F_{\KK}(Z+\my_i)\srb\prod_{j\in[k]\setminus\{1,i\}}^k \slb 1- F_{\KK}(Z+\my_i-\my_j)\srb\rbt & \text{if }i\neq 1,
    \end{cases}
\end{align}
where $Z$ is a random variable with density $\KK$, as in \eqref{kernel based: alt form}. 
 \end{lemma}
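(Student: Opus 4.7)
The proof is essentially a direct algebraic substitution: both surrogates are already written as products (or expected products) of functions of pairwise differences $\mx_j-\mx_i$, so expressing these differences in terms of $\my_l := \mx_1 - \mx_l$ will immediately exhibit the relative-margin structure. The only real task is bookkeeping: the template $\Gamma$ treats the index $i=1$ asymmetrically because $\my_1$ is effectively $0$ (it is not among the components of $\Delta\mx_1$), so the $i=1$ factor in each product must be peeled off and handled separately.

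For the product-based surrogate \eqref{def: multi-cat: Fisher consistent psi}, I would start from $\phi(\mx;j)=\prod_{i\neq j}\tau(\mx_j-\mx_i)$ and use the identities
\[
\mx_1-\mx_l=\my_l,\qquad \mx_j-\mx_1=-\my_j,\qquad \mx_j-\mx_i=\my_i-\my_j\ \text{for } i,j\in[k]\setminus\{1\}.
\]
When $j=1$, every factor is of the form $\tau(\mx_1-\mx_i)=\tau(\my_i)$ with $i\in[k]\setminus\{1\}$, giving the first branch of \eqref{def: Gamma: product bases}. When $j\neq 1$, I split the product into the single $i=1$ term, which becomes $\tau(-\my_j)$, and the remaining factors over $i\in[k]\setminus\{1,j\}$, each of which becomes $\tau(\my_i-\my_j)$; this yields the second branch after a swap of the dummy indices $i\leftrightarrow j$.

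For the kernel-based surrogate, I would first invoke the alternative representation \eqref{kernel based: alt form} established in Lemma~\ref{lemma: kernel based satisfies J condition}, namely
\[
\phi(\mx;j)=C\,E_{\KK}\!\left[\prod_{i\neq j}\bigl(1-F_{\KK}(Z+\mx_i-\mx_j)\bigr)\right].
\]
Applying exactly the same substitutions to the $\mx_i-\mx_j$ inside the expectation and splitting off the $i=1$ factor when $j\neq 1$ produces \eqref{def: Gamma: kernel based}. Since $E_{\KK}$ and the $1-F_{\KK}$ factors do not touch the $\mx$'s except through these differences, pulling the expectation out is legitimate and no new analytic issues arise.

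The main (and only) obstacle is purely notational: the vector $\my=\Delta\mx_1$ lives in $\RR^{k-1}$ but is naturally indexed over $[k]\setminus\{1\}$, so one must be careful that the stated template uses $\my_i$ with $i\in[k]\setminus\{1\}$ for the original factor involving $\mx_1$, while the remaining factors are indexed over $[k]\setminus\{1,i\}$. Once this indexing convention is fixed at the start, the verification is a one-line substitution in each of the four cases (two surrogates $\times$ two branches $i=1$ vs.\ $i\neq 1$), and no additional properties of $\tau$, $F_{\KK}$, or $\KK$ beyond those stated in Section~\ref{sec: examples of psi satisfying N1 and N2} are needed.
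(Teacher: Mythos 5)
Your proof is correct and takes the same route the paper implicitly uses: the paper omits this proof precisely because it is the direct substitution $\mx_j-\mx_i \mapsto \my_i-\my_j$ (with the $i=1$ factor peeled off when $j\neq 1$), applied to \eqref{def: multi-cat: Fisher consistent psi} and to the representation \eqref{kernel based: alt form}. Your handling of the asymmetric indexing (using $\my_l=\mx_1-\mx_l$ for $l\in[k]\setminus\{1\}$) is exactly the convention the stated templates presuppose.
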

 The proof of Lemma \ref{lemma: relative margin} follows directly from the definitions of the surrogates in \eqref{kernel based: alt form} and \eqref{def: multi-cat: Fisher consistent psi}, and is therefore omitted.  In the binary treatment case, i.e., when $k_t = 2$ for all $t \in [T]$, $\Delta \mx_1$ reduces to a real number,  leading to the margin-based formulation of the surrogate problem. In this case, the $\Gamma$ corresponding to the product-based surrogate leads to \cite{Laha2024}'s surrogate losses, provided the smoothing kernel $\KK$ is symmetric. 

  \begin{figure}
    \centering
    \includegraphics[width=1\linewidth,height=2.3in]{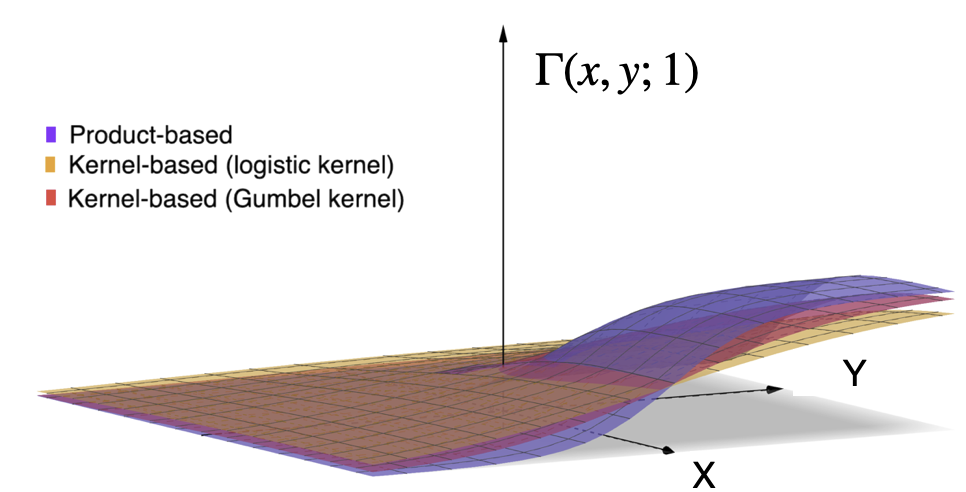}
    \caption{{\bf Plot of $\Gamma(x,y;1)$ when  $k=3$.}  For the product-based surrogate, $\Gamma$ is as in \eqref{def: Gamma: product bases}, and  $\tau(x)=(1+\tanh(x))/2$, which is the distribution function of the centered logistic distribution with scale $2$. For kernel-based surrogates, the template $\Gamma$ is provided in \eqref{def: Gamma: kernel based}. Its closed formulas for the logistic and Gumbel densities are provided in Supplement~\ref{sec: short: calc for kernel logistic}. }
    \label{Plot: Gamma function}
\end{figure}
  See Figure \ref{Plot: Gamma function} for the plot of $\Gamma(x, y; 1)$ for some product-based and kernel-based surrogates when $k = 3$. The function has no local extrema but attains its maximum at $(\infty, \infty)$, resulting in a bump in the first quadrant. The plots of $\Gamma(x, y; 2)$ and $\Gamma(x, y; 3)$ are similar, except their bumps appear in the second and fourth quadrants, respectively, since they are maximized at $(-\infty, \infty)$ and $(\infty, -\infty)$.


\begin{remark}
\label{remark: angle-based framework}
In Section \ref{sec: set-up}, we mentioned the angle-based framework, an alternative surrogate framework based  on an angle-based $\pred$ function.  Similar to the relative-margin-based surrogates, the angle-based framework  requires only $k_t-1$ scores. To the best of our knowledge, under this framework, concave Fisher consistent surrogates are currently  available only for the $T=1$ case \citep{zhang2020multicategory}. For $T>1$, Fisher consistency guarantees exist for some non-concave surrogates, albeit under a setting quite different from ours \citep[see Section \ref{sec: lit: existing research on kt geq 2} for further discussion on][]{xue2022multicategory}.   Similar to our product-based losses, these surrogates rely on smoothed 0-1 loss functions. In this paper, we focus on the argmax-based link function, as it is better suited to the mathematical manipulations needed for deriving the necessary and sufficient conditions. We anticipate that some parallels to our results would hold under the angle-based framework as well, although the corresponding calculations can be more analytically challenging.
    \end{remark}

   \section{SDSS method}
   \label{sec: method}

 In this section, we introduce the SDSS method. Section~\ref{sec: implementation} then addresses the optimization challenges posed by the non-convexity of our Fisher consistent surrogates and presents an algorithm for the optimization component of SDSS.
 In what follows, we assume that $\psi$ is separable as in  \eqref{def: product psi}, with each $\phi_t$ being non-negative and relative-margin-based. As we will see, relative-margin-based forms offer a computational advantage. 

If $\psi$ is relative-margin-based, i.e., if each $\phi_t$ is relative-margin-based for $t \in [T]$, then by Definition~\ref{def: relative margin}, $\phi_t(f_t(H_t); A_t)$ depends only on the pairwise differences among the components of $f_t(H_t)$, not on their absolute values.
As a result, $\widehat{V}^\psi(f)$ can not have a unique maximizer, even when maximizers exist. 
  For such surrogates, imposing linear constraints on $f_t$, such as $\sum_{i=1}^{k_t} f_{ti} = 0$ or $f_{t1} = 0$, does not affect the supremum of $\widehat{V}^\psi(f)$.
We prefer  the constraint  $f_{t1}=0$ because then we need to optimize only over the remaining $k_t-1$ many score functions, thereby lowering the problem dimension. Instead of setting $f_{t1} = 0$, one could equivalently choose $f_{t2} = 0$ or, more generally, $f_{ti} = 0$ for any $i \in [k_t]$. This choice is unlikely to impact SDSS's computational complexity or theoretical performance.   For simplicity, we take $i = 1$. Specifically, we replace the maximization of $\widehat{V}^\psi(f)$ with the constrained problem: \begin{maxi}|l| {f \in \F}{\widehat{V}^\psi(f)}{\label{maxi: V psi over constrained F}}{} \addConstraint{f_{t1}(H_t)}{= 0}{\quad \text{for all } t \in [T],} \end{maxi} whose optimal value coincides with $\sup_{f \in \F} \widehat{V}^\psi(f)$ when $\psi$ is relative-margin-based.
Similarly, maximizing the constrained version of $V^\psi(f)$ yields $V^\psi_*$ for relative-margin-based $\psi$'s.

 We now reformulate \eqref{maxi: V psi over constrained F} as an unconstrained optimization problem.
To this end, we introduce some notation. For each $t\in[T]$, let $\W_t$ denote the class of all Borel measurable functions from $\H_t$ to $\RR^{k_t - 1}$, and define $\W = \W_1\times\ldots\times \W_T$. Let $g = (g_1, \ldots, g_T)$, where each $g_t \in \W_t$. We refer to $g$ as the relative class score function and each $g_t$ as the relative score function at stage $t$.  For all $t\in[T]$ and $i\in[k_t-1]$, the $i$-th element of the vector-valued function $g_t$ will be denoted by  $g_{ti}$.
 For any $g_t \in \W_t$, define the transformation $\trans(g_t) = (0, -g_t)$. Also, we let $\trans(g)=(\trans(g_1),\ldots,\trans(g_T))$. Thus, if $g\in\W$, then $\trans(g)\in\F$. Then it is straightforward to verify that \eqref{maxi: V psi over constrained F} is equivalent to maximizing $\widehat{V}^\psi(\trans(g))$ over $g \in \W$.  In particular, if $\widehat{f}$ solves \eqref{maxi: V psi over constrained F} and $\widehat{g} \in \W$ maximizes $\widehat{V}^\psi(\trans(g))$, then they are related by
 \begin{equation}
 \label{eq: relative class cores} \widehat{f}_t = \trans(\widehat{g}_t) = (0, -\widehat{g}_t) \quad \text{for all } t \in [T]. \end{equation}
For any $g\in\W$, let us denote $\widehat{V}^{\psi,\text{rel}}(g) := \widehat{V}^\psi(\trans(g))$. Here the word ``rel" stands for relative-margin-based.  Straightforward algebra yields that if  $\psi$ is relative margin-based, then 
\begin{equation}
  \label{relation: relative margin}  
\widehat V^{\psi,\text{rel}}(g) =\PP_n\lbt \overbrace{\sum_{t=1}^TY_{t} \prod_{t=1}^T\frac{\Gamma_t(g_t;A_t)}{\pi_t(A_t\mid H_t)}}^{\L(\D;g)}\rbt.
 \end{equation}
Therefore, solving \eqref{maxi: V psi over constrained F} reduces to maximizing $\widehat{V}^{\psi,\text{rel}}(g)$ over $g \in \W$.
Note that the latter maximization is over $\sum_{t=1}^T (k_t - 1)$ score functions, whereas maximizing $\widehat{V}^\psi(f)$ over $f \in \F$ would involve $\sum_{t=1}^T k_t$ score functions.

In practice, maximization over $\W$ is generally infeasible. Therefore, SDSS restricts optimization to a subset $\U_n \subset \W$.   Let $\U_{tn}$ be a subset of the space of all Borel measurable functions from $\H_t$ to $\RR$.  Then, we define $\U_n$ to be  
$\U_n=\U_{1n}^{k_1-1}\times\ldots\times\U_{Tn}^{k_t-1}$. For the sake of simplicity, we will refer to $\U_n$, the search space of relative class scores, as the policy class.
Algorithm \ref{alg: SDSS} provides a pseudo-code for  SDSS.  The optimization part of SDSS will be discussed in detail in Section \ref{sec: implementation}.
\begin{algorithm}[H]
\caption{SDSS}
\label{alg: SDSS}
\begin{algorithmic}[1]
\Require (i)  $\U_n$: function class for relative class scores, (ii) $\psi$: a relative-margin-based, non-negative, smooth, separable surrogate (typically chosen to be Fisher consistent),  and  (iii) $\{\D_i\}_{i \in [n]}$: $n$ i.i.d. trajectories.
\State Let $\widehat{g}$ denote an approximate minimizer of $-\widehat{V}^{\psi,\text{rel}}(g)$ over $g \in \U_n$, where $\widehat{V}^{\psi,\text{rel}}(g)$ is as in \eqref{relation: relative margin}. We use  Algorithm \ref{alg:multi_stage_opt} for the minimization, which will be explained later.

\State Set $\widehat{f} = \trans(\widehat{g})$.
\State Return estimated policy: $\widehat{d} = \pred(\widehat{f})$.
\end{algorithmic}
\end{algorithm}

\subsection{Optimization part of SDSS}
\label{sec: implementation}

If $\psi$ is smooth, then $\widehat{V}^{\psi,\text{rel}}(g)$ is a smooth functional of $g$ and can be optimized using standard gradient-based methods. Gradient-based methods scale well to large sample sizes and are fast \citep{bottou2018optimization}.  Therefore, we  implement SDSS with smooth $\psi$ (cf. examples in Figure~\ref{Plot: Gamma function}).  
In this section, we  discuss the optimization part of SDSS for a  pre-fixed $\psi$. A discussion on the choice of surrogate and when to use SDSS is provided in Section \ref{sec: practical guidelines}. Throughout this section, any reference to gradient descent specifically refers to the minimization problem in Step 1 of the SDSS Algorithm (Algorithm \ref{alg: SDSS}).

 We assume $\U_n$ is parameterized by a real vector $\theta $ so that $\U_n \subset \{g_\theta : \theta \in \RR^{k_{\texttt{dim}}}\}$, where $k_{\texttt{dim}}$ is the dimension of $\theta$. 
 While $\theta$ and $k_{\texttt{dim}}$  may depend on $n$, we omit this dependence for notational simplicity. In the case of linear relative class scores, we may take $g_{ti}(H_t) = \theta_{ti}^\top H_t$ for $t \in [T]$ and $i \in [k_t - 1]$, where each $\theta_{ti}$ is a vector of the same dimension as $H_t$, and $\theta = (\theta_{ti})_{t \in [T],i \in [k_t - 1]}$.
 The function-class $\U_n$ can also be non-linear, e.g., neural networks, decision trees, lists, basis expansion classes, etc.  In the neural network case, $\theta$ contains the associated weight and bias parameters. In the basis expansion case, we may take $g_{ti}(H_t) = \theta_{ti}^\top J(H_t)$, where $J(H_t)$ is a vector of basis functions.

While gradient-based methods can speed up optimization, they still face challenges due to non-convexity.
We illustrate this with a toy example where $T = 1$, $k_1 = 3$, $n = 7$, and $\H_1 = \RR$, using the dataset in Table~\ref{table:toy_data}. We assume $\pi(A_1 = i \mid H_1) = 1/3$ for all $i \in [3]$.
In this illustration, we use the product-based surrogate in \eqref{def: multi-cat: Fisher consistent psi} with $\tau(x)=1+\tanh(x)$, though the same challenges arise for all Fisher consistent surrogates considered in this paper. For each $z \in \RR$, let $g_{11}(z) = xz$ and $g_{12}(z) = yz$ for $x, y \in \RR$, so that $\theta = (x, y)$ and
\[\U_{n}=\{ g_\theta: g_\theta(z)=(xz, yz)\text{ for all }z\in\RR\equiv\H_1, \text{ where } x,y\in\RR\}.\]
This parameterization reduces $-\widehat{V}^{\psi,\text{rel}}(g_\theta)$ to a function of only two variables, $x$ and $y$, with the form
  \begin{align}
     \label{opti: value fn: toy data}
    \widehat V^{\psi,\text{rel}}(x,y)\equiv \widehat V^{\psi,\text{rel}}(g_\theta)\equiv   =\frac{3}{n}\sum_{i=1}^nY_{1i}\Gamma(xH_{1i},yH_{1i};A_{1i}),
 \end{align}
where $\theta=(x,y)$, allowing visual exploration of the surface of $\widehat{V}^{\psi,\text{rel}}(g_\theta)$. 

\begin{table}[H]
    \centering
    \begin{tabular}{|c|ccccccc|}
        \hline
       i & 1 & 2 & 3 & 4 & 5 & 6 & 7 \\ \hline
        $H_{1i}$ & 2 & 1 & -1 & 0.5 & -0.5 & -1 & 0.5 \\
        $A_{1i}$ & 1 & 2 & 3 & 1 & 2 & 2 & 3 \\ 
        $Y_{1i}$ & 0.33 & 0.67 & 0.67 & 0.33 & 0.23 & 1 & 0.13 \\ \hline
    \end{tabular}
    \caption{Toy Data of $(H_1,A_1,Y_1)$ triplets  when $T=1$, $k_1=3$, and $H_1\in\RR$. Here $n=7$.}
    \label{table:toy_data}
\end{table}

The surface  plot of $ \widehat V^{\psi,\text{rel}}$ for this toy dataset is given in Figure \ref{fig:value function for toy data}. In this plot,  $ \widehat V^{\psi,\text{rel}}$ does not have any local optimum but it has a horizontal asymptote.  
Moreover, $\widehat{V}^{\psi,\text{rel}}$ exhibits several plateau regions for the toy data. While the gradient in these regions is not exactly zero, it can be very small, as illustrated in Figure~\ref{fig:contour: grad}.  Gradient descent iterates can become trapped if they enter these regions, leading to what is known as the vanishing gradient problem \citep{hochreiter2001gradient,ven2021regularization}. However, for our toy data, getting trapped in a plateau region is not always bad.  Figure \ref{fig:value function for toy data} shows that there is a conical plateau region in the first quadrant  (yellow in Figure \ref{fig: contour}), where   $ \widehat V^{\psi,\text{rel}}$ becomes concave, and slowly plateaus to the maxima. The contour plot of $\widehat{V}_\psi$ in Figure~\ref{fig: contour} shows that $\widehat{V}^{\psi,\text{rel}}(x, y)$ can be very close to the optimal value when $(x, y)$ lies inside the optimal plateau. For instance, at the point $(10, 4)$ inside the plateau, the value is 3.24993, while the global optimum, attained at infinity, is approximately 3.26. The main concern is whether gradient descent iterates get trapped in suboptimal plateaus.

The sample size was seven in our toy data, but  plateau regions and horizontal asymptotes  still persist  in larger samples.   This is unsurprising because 
the surface of $\widehat V^{\psi,\text{rel}}$ in Figure \ref{fig:value function for toy data} is not not an artifact of the toy data, but is inherited from the geometry  of $\Gamma$ (see Figure \ref{Plot: Gamma function}). Moreover, all surrogate functions we are aware of that satisfy Conditions~\ref{assump: N1}–\ref{assump: N3} exhibit similar suboptimal plateau regions, and the vanishing gradient issue persists across them.  Supplement~\ref{supp: vanishing gradient} explores in more detail how the geometry of these surrogates gives rise to this behavior. We now illustrate the optimization challenges the plateau regions pose and demonstrate how we address them using the toy dataset.  To this end, we use Figure~\ref{fig: grad descent plots}, which displays the optimization trajectories of several gradient-based methods initialized from six different starting points in our toy data example. 

 \paragraph*{Slow convergence} The presence of  plateau regions imply that  gradient descent iterates can move very slowly, even when approaching the optimal region. For example, see Path 6 in Figure~\ref{fig: grad descent 0.05}, traced by iterates initialized at (5, –5) in our toy data example. This issue can be partially addressed by  momentum-based gradient descent techniques, such as Adaptive Moment Estimation or ADAM  (see Figure \ref{fig: ADAM 0.05} for its application on our toy data example),  because momentum helps in  accelerating the convergence \citep[cf.][]{Boyd}.

  \paragraph*{Iterates diverging to suboptimal region} 
  Suboptimal plateau regions and valleys can cause gradient descent iterates to diverge in suboptimal directions.  Figures~\ref{fig: grad descent 0.05} and~\ref{fig: ADAM 0.05} show that, both vanilla gradient descent and ADAM diverge toward plateau regions with suboptimal value (around 2.29) when initialized at  $(-1, -1)$ (Path 1) and $(-5, 8)$ (Path 4) in our toy data example. We verified that tuning the learning rate or increasing iterations does not prevent the  divergence along Paths 1 and 4 for either vanilla gradient descent or ADAM. This finding  also establishes that SDSS with vanilla gradient descent or  ADAM may fail to reach the global optimum  if initiated from certain suboptimal regions. Such sensitivity to initialization is typical in non-convex optimization. In our implementation, we use random initialization, but we adopt two strategies to prevent stagnation of the gradients:
(i) restarting gradient descent with new initial points if the iterates get stuck, and
(ii) injecting noise via minibatches, i.e., computing gradients using small, random subsets of the data. For example, in Figures \ref{fig: sgd lr 0.05} and \ref{fig: ADAm w SGD lr 0.10}, we use stochastic gradient descent with minibatch size one.  In  Figure \ref{fig: ADAm w SGD lr 0.10}, when we use SGD with ADAM,  all six paths of the iterates enter the optimal region eventually.
\vspace{10pt}

While our toy example did not exhibit local optima and saddle points, they may appear in more complex settings.  To address this, alongside the earlier strategies, we use ReduceLROnPlateau \citep{goodfellow2016deep}, a learning rate scheduler that adaptively adjusts the learning rate based on validation-set performance. Below, we detail how these strategies are integrated into the optimization step of SDSS, which is summarized in  Algorithm~\ref{alg:multi_stage_opt}. In our empirical study in Section~\ref{sec: empirical}, $\U_n$ consists of either linear functions or deep neural networks with ELU/ReLU activations. Accordingly, the discussion below focuses on these function classes, though Algorithm~\ref{alg:multi_stage_opt} is applicable to any choice of $\U_n$.

\subsubsection{SDSS Optimization procedure}
\label{sec: measures we take}

First, SDSS splits the data into two parts: 80\% for training (\(\mathcal{D}_{\text{train}}\)) and 20\% for validation (\(\mathcal{D}_{\text{val}}\)). 
As we will see shortly, the validation set is used to monitor loss improvement and trigger necessary adjustments.

\paragraph*{Initialization}

In our empirical study, we implement SDSS with random initialization.  Random initialization is a  popular strategy for non-convex problems, especially when deep neural networks are involved  \citep{chen2019gradient,he2015delving, goodfellow2016deep}. An advantage of random-initialization is that it is  agnostic to the model or policy class being used \citep{chen2019gradient}.  However, when $\theta$ is high-dimensional—as in the case where $\U_n$ consists of deep neural networks—we need to scale the variance of the randomly initiated  parameter  $\theta^{(0)}$ appropriately. Otherwise, the variance of the gradient  may explode \citep{he2015delving}. Several initialization methods account for this, including He initialization \citep{he2015delving} and Xavier initialization \citep{glorot2010understanding}, both widely used to stabilize the variance of the gradients during training.
Supplement~\ref{supp: he and xavier} details how these initializers sample $\theta^{(0)}$ for neural network and linear policy classes.   In our numerical experiments, we used the He initializer, as it consistently performed better or comparably to the Xavier initializer for both linear and neural network policies.
 It may be  possible to customize  the initialization for specific policy classes, which can be an interesting direction of future research. 

\paragraph*{Minibatch stochastic gradient descent with {ADAM}}
For the minimization of  $- \widehat V^{\psi,\text{rel}}$, we use minibatch stochastic gradient descent with {ADAM}  \citep{kingma2014adam}. The momentum helps to accelerate  convergence, where the noise due to minibatching  helps prevent stagnation in suboptimal regions \citep{goodfellow2016deep, bottou2018optimization}.  ADAM is widely used in non-convex optimization  \citep{kingma2014adam, goodfellow2016deep}. Alternative adaptive learning rate optimization algorithms, such as RMSprop exist, but ADAM consistently outperformed RMSprop in our simulations.

We now explain how ADAM is implemented in SDSS, as detailed in Phase~\ref{phase: tarining} of Algorithm~\ref{alg:multi_stage_opt}.
At iteration \( r\in\{1,2,\ldots,\} \), we sample a minibatch \( \mathcal{B}  \) from the training set, and the gradient of the  loss function is computed as:
\( 
\mgrad_r = -\sum_{i \in \mathcal{B}} \nabla_{\theta} \L(\D_i;\theta^{(r-1)})/|\B|
\) for $r\geq 1$, where $\L$ is as defined in \eqref{relation: relative margin}.  
ADAM uses estimators of the first and second (elementwise) moments of $\mgrad_r$, which we denote by \(\texttt{mom}_{1r}\) and \(  \texttt{mom}_{2r}\), respectively \citep[cf.][for details]{kingma2014adam}. These terms are initialized at zero, and at each iteration \( r \), are updated as 
\begin{equation}
   \label{inalg: adam moments first def}
   \texttt{mom}_{1r}= D_1   \texttt{mom}_{1,r-1} + (1 - D_1) \mgrad_r,\quad
  \texttt{mom}_{2r} = D_2   \texttt{mom}_{2,r-1} + (1 - D_2) \mgrad_r^2,
\end{equation}
where $\mgrad_r^2$ is the vector obtained by squaring each element of $\mgrad_r$. The constants $D_1>0$ and $D_2>0$, 
known as weight decay parameters, control the decay rates of the moment estimates. They are typically set as \( D_1 = 0.9 \) and \( D_2 = 0.999 \) \citep{kingma2014adam}. 

Both moment estimates are typically initialized at zero, which introduces a bias toward smaller values in early iterations \citep{kingma2014adam}. To address this, ADAM applies the following bias corrections to $\texttt{mom}_{1r}$ and $  \texttt{mom}_{2r}$:
\begin{align}
    \label{inalg: ADAM: bias correction}
    \texttt{mom}_{1r} \gets\frac{\texttt{mom}_{1r}}{1 - D_1^r}\quad\text{and}\quad
\texttt{mom}_{2r} \gets\frac{  \texttt{mom}_{2r}}{1 - D_2^r}.
\end{align}

The $r$-th parameter update is then computed as:
\[
\theta^{(r)} \leftarrow \theta^{(r-1)} - \texttt{lr}_r \frac{{\texttt{mom}^{(1)}}_r}{\sqrt{{\texttt{mom}^{(2)}}_r} + \epsilon_{\text{num}}}.
\]
Here $\texttt{lr}_r$ is the learning rate and \( \epsilon_{\text{num}} \) is a small constant (typically \( 10^{-8} \)) added for numerical stability to prevent division by zero. The updating schedule of $\texttt{lr}_r$ is discussed below.

\paragraph*{Learning rate scheduler  ReduceLROnPlateau}
 
We start with an initial learning rate $\texttt{lr}_0$, 
which is then dynamically adjusted using a learning rate scheduler called ReduceLROnPlateau \citep{goodfellow2016deep}.
This scheduler tracks an exponential moving average (EMA) of the validation loss, as described in Phase \ref{phase: validation} of Algorithm \ref{alg:multi_stage_opt}. After every $r_{\text{eval}}$ iterations, the EMA is updated as
 \[
        \text{EMA}_{\text{val}}^{(r)} := \varkappa\,\hL_{\text{val}} (\theta^{(r)})+ (1-\varkappa)\,\text{EMA}_{\text{val}}^{(r-1)},
        \]
      where $\hL_{\text{val}}$ is the validation loss defined in \eqref{def: validation loss}, and the constant $\varkappa$ is  called the EMA smoothing parameter. $\varkappa$  is typically chosen from $(0.1, 0.8)$.
      The number $r_{\text{eval}}$, called the evaluation frequency, is typically chosen from $\{2, 3, 4\}$. If  the previously saved best  value of the EMA is larger than $\text{EMA}_{\text{val}}^{(r)}$, we update it to $\text{EMA}_{\text{val}}^{(r)}$,  and set the $r$-th iterate $\theta^{(r)}$ as the best iterate $\theta_{\text{best}}$ up to iteration $r$. Otherwise, the best EMA and  $\theta_{\text{best}}$ remain unchanged.

If the EMA fails to improve beyond a small threshold for $N_{\text{patience}}$ (typically set to  ten) consecutive updates of $\theta^{(r)}$, the learning rate $\texttt{lr}$ is reduced to $R_F \times \texttt{lr}$, where the reduction factor $R_F$ is usually chosen from $(0.5, 0.9)$ \citep{goodfellow2016deep}. This update rule is described in Phase \ref{phase: learning rate} of Algorithm \ref{alg:multi_stage_opt}. Adaptive learning rate reduction helps prevent overshooting and improves stability as parameters approach a local optimum.

\paragraph*{Random re-initialization}
If learning rate reductions fail to improve the validation loss, we infer that the iterates are  trapped in a plateau region \citep{dauphin2014identifying} and reinitiate  from a new random starting point. 
Formally, reinitialization is triggered  when the EMA fails to improve beyond a small threshold  after $N_{\text{restart}}$ many  learning rate updates, where $N_{\text{restart}}$ is typically chosen from $\{3,\ldots,10\}$  (see Phase \ref{phase: random  restart} of Algorithm \ref{alg:multi_stage_opt}). Upon reinitiation, the learning rate resets to $\texttt{lr}_0$, but SDSS retains the best $\theta$, i.e., $\theta_{\text{best}}$, and EMA found so far.  

\paragraph*{Stopping rule}
The total number of iterations, $N_{\text{epoch}}$, is pre-specified. SDSS tracks the best-performing parameter throughout optimization using the validation loss. After $N_{\text{epoch}}$ iterations, the algorithm terminates and returns the best recorded value of $\theta$.

\vspace{10pt}

\begin{spacing}{0.907} 
\small 

\refstepcounter{algorithm}
\noindent\textbf{Algorithm \thealgorithm: SDSS-OPTIM (Optimization part of SDSS)}\label{alg:multi_stage_opt}

\begin{algorithmic}[1]

\State \textbf{Input:} 
\begin{enumerate}
    \setlength\itemsep{0.3em} 
    \item Training set: $\mathcal{D}_{\text{train}}$, Validation set: $\mathcal{D}_{\text{val}}$
    \item Maximum epochs: $N_{\text{epoch}}$
    \item Initial learning rate: $\texttt{lr}_0 \in [10^{-3},0.2]$, reduction factor: $R_F \in (0.5,0.8)$
    \item Evaluation frequency: $r_{\text{eval}} \in \{2,\ldots,4\}$
    \item Patience: $N_{\text{patience}} \in \mathbb{N}$ (default 10) and Restart Threshold: $N_{\text{restart}} \in \{3,\ldots,10\}$
    \item ADAM parameters: $D_1 > 0$ (default $0.9$), $D_2 > 0$ (default $0.99$)
    \item Minibatch size: $n_{\text{mini}} \in \mathbb{N}$
    \item EMA smoothing parameter: $\varkappa \in (0.1,0.8)$ (default 0.8)
    \item Numerical constant: $\epsilon_{\text{num}} > 0$ (default $10^{-8}$)
    \item Improvement threshold: $\delta_{\text{imp}} \in (0,0.001)$ (default 0)
\end{enumerate}

\State \textbf{Initialize:} 
\[
\theta^{(0)} \text{ using He or Xavier initializer},\quad \texttt{mom}^{(1)}_0 \gets \mathbf{0},\quad \texttt{mom}^{(2)}_0 \gets \mathbf{0},\quad \texttt{lr} \gets \texttt{lr}_0,
\]
\[
R_2 \gets 0,\quad R_1 \gets 0,\quad \text{best\_val} \gets (+\infty)^{\dim(\theta^{(0)})},\quad \text{EMA}_{\text{val}}^{(0)} \gets +\infty.
\]

\For{\(r = 1,2,\dots, N_{\text{epoch}}\)}
    
    \phase{Training Step}
    \label{phase: tarining}
    \Procedure{TrainingStep}{}
        \State Sample a minibatch \(\mathcal{B} \subset \mathcal{D}_{\text{train}}\) of size \(n_{\text{mini}}\).
        \State Compute gradient:
        \[
        \texttt{grad}_r \gets -\frac{1}{|\mathcal{B}|}\sum_{i \in \mathcal{B}} \nabla_{\theta}\L(\mathcal{D}_{i}; \theta^{(r-1)}).
        \]
        \State {Gradient clipping:} 
        $
        \texttt{grad}_r \gets \frac{\texttt{grad}_r}{\max\{1, \|\texttt{grad}_r\|_2\}}.
        $
        \State Update  ADAM moments  ${\texttt{mom}}^{(1)}_r$ and ${\texttt{mom}}^{(2)}_r$ using $D_1$ and $D_2$ as in \eqref{inalg: adam moments first def}.
        \State Bias-correct moments as in \eqref{inalg: ADAM: bias correction}. 

        \State Update parameters:
        $
        \theta^{(r)} \gets \theta^{(r-1)} - \texttt{lr}\,\frac{{\texttt{mom}}^{(1)}_r}{\sqrt{v_r}+\epsilon_{\text{num}}}$.
    \EndProcedure

    \If{$r$ is a multiple of $r_{\text{eval}}$}
        \phase{Updating validation loss}
        \label{phase: validation}
        \Procedure{ValidationStep}{}
            \State Compute validation loss:
            \begin{equation}
                \label{def: validation loss}
                 \hL_{\text{val}}(\theta^{(r)}) \gets -\frac{1}{|\mathcal{D}_{\text{val}}|}\sum_{i \in \mathcal{D}_{\text{val}}} \L(\mathcal{D}_{i}; \theta^{(r)}) \text{ where }\L\text{ is as in \eqref{relation: relative margin}.}
            \end{equation}
            \State EMA update:
           $ \text{EMA}_{\text{val}}^{(r)} \gets \varkappa\,\hL_{\text{val}} (\theta^{(r)})+ (1-\varkappa)\,\text{EMA}_{\text{val}}^{(r-1)}$. 
           
            \If{\(\text{EMA}_{\text{val}}^{(r)} < \text{best\_val} - \delta_{\text{imp}}\)}
                \State \(\text{best\_val} \gets \text{EMA}_{\text{val}}^{(r)}\).
                \State Save model: \(\theta_{\text{best}} \gets \theta^{(r)}\).
                \State Reset counters: \(R_2 \gets 0,\quad R_1 \gets 0\).
            \Else
                \State \(R_2 \gets R_2 + 1\).
                \If{\(R_2 \ge N_{\text{patience}}\)}
                    \phase{Learning Rate Reduction}
                    \label{phase: learning rate}
                    \Procedure{ReduceLearningRate}{}
                        \State \(\texttt{lr} \gets R_F \times \texttt{lr}\).
                        \State Reset \(R_2 \gets 0\).
                        \State \(R_1 \gets R_1 + 1\).
                    \EndProcedure
                \EndIf
                \If{\(R_1 \ge N_{\text{restart}}\)}
                    \phase{Reinitialization}
                    \label{phase: random  restart}
                    \Procedure{ReinitializeModel}{}
                        \State Reinitialize \(\theta^{(r)}\) using He or Xavier initialization.
                        \State Reset ADAM moments: 
                        $
                        \texttt{mom}_{1r}\gets \mathbf{0},\quad   \texttt{mom}_{2r} \gets \mathbf{0}
                      $.
                        \State Reset learning rate: \(\texttt{lr} \gets \texttt{lr}_0\).
                        \State Reset counters: \(R_2 \gets 0,\quad R_1 \gets 0\).
                    \EndProcedure
                \EndIf
            \EndIf
        \EndProcedure
    \Else
        \State \(\text{EMA}_{\text{val}}^{(r)} \gets \text{EMA}_{\text{val}}^{(r-1)}\).
    \EndIf
\EndFor

\State \textbf{Output:} \(\theta^{*} \gets \theta_{\text{best}}\).
\end{algorithmic}
\end{spacing}

\vspace{5pt}

\section{Regret decay}
\label{sec: regret decay}

In this section, we analyze the regret decay rate of the SDSS policy $\widehat{d}\equiv\pred(\hf)$ from Algorithm~\ref{alg: SDSS}. The regret of  $\hd$ equals $V_*-V(\widehat d)$, which can also be presented using the estimated class scores as $V_*-V(\widehat f)$. Our theoretical results focus on non-linear function classes, where the sequence $\{\U_n\}_{n\geq 1}$ is assumed to be dense in $\W$. In the following discussion, the $\psi$-regret of any relative class score $g$ will be defined as $V^\psi_*-V^\psi(f)$, where $f=\trans(g)$ is the  class score corresponding to $g$. 

\subsection{Regret decomposition}
\label{sec: regret decomposition}
For any $g \in \W$, define $\Vr(g) := V^\psi(\trans(g))$. Since $\Vr(g)=\E[\hVr(g)]$, $\Vr(g)$ can be interpreted as the population version of $\hVr(g)$. The regret of the SDSS-estimated policy $\hd$ can be bounded using $\Vr(g)$ and $\hVr(g)$.
Lemma~\ref{lemma: error decomp}, proved in Supplement~\ref{secpf: error decomp}, provides a decomposition of the regret.

\begin{lemma}
\label{lemma: error decomp}
Suppose $\psi$ is relative-margin-based, and it satisfies  the conditions of Proposition \ref{prop: multi-cat FC} with $\J_t>0$ for all $t\in[T]$.  Suppose $\U_n\subset\W_n$. Let $\hf$ and $\hg$ denote the class scores and relative class scores, respectively, associated with the policy class $\U_n$, as defined in Algorithm \ref{alg: SDSS}.
Then for   any $\tilde g\in \U_n$, 
    \begin{align}
    \label{regret decomposition}
    V_*-V(\hf)\leq &\  C_*^{-1}  \underbrace{\lbs \sup_{g\in\W}\Vr(g)-\Vr(\tilde g)}_{\text{Approximation error}}+\underbrace{(\Vr-\hVr)(\tga-\hg)}_{\text{Estimation error}}\nn\\
    &\ +\underbrace{\sup_{g\in\U_n}\hVr(g)-\hVr(\hg)}_{\text{Optimization error: }\Opn}\rbs, \text{ where }C_*\text{ is as in Proposition \ref{prop: multi-cat FC}.}
    \end{align}
\end{lemma}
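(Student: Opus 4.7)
The plan is to leverage Proposition~\ref{prop: multi-cat FC} to translate the original regret $V_*-V(\hf)$ into the $\psi$-regret $V_*^\psi-V^\psi(\hf)$ and then perform a standard empirical-risk-minimization style add-and-subtract decomposition. Specifically, since $\psi$ satisfies the conditions of Proposition~\ref{prop: multi-cat FC} with $\J_t>0$, we immediately have $V_*-V(\hf)\leq C_*^{-1}\bigl(V_*^\psi-V^\psi(\hf)\bigr)$. So everything reduces to bounding $V_*^\psi-V^\psi(\hf)$ by the three error terms on the right-hand side of \eqref{regret decomposition}.

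Next I would translate from the class-score formulation to the relative-class-score formulation. Because $\psi$ is relative-margin-based, the constraint $f_{t1}=0$ does not reduce $\sup_{f\in\F}V^\psi(f)$, as discussed right before Algorithm~\ref{alg: SDSS}. In particular, $V_*^\psi=\sup_{g\in\W}\Vr(g)$, where $\Vr(g):=V^\psi(\trans(g))$. Also, by construction in Algorithm~\ref{alg: SDSS}, $\hf=\trans(\hg)$, so $V^\psi(\hf)=\Vr(\hg)$. Thus
\[
V_*-V(\hf)\leq C_*^{-1}\bigl(\sup_{g\in\W}\Vr(g)-\Vr(\hg)\bigr).
\]

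The third step is the add-and-subtract decomposition of the quantity $\sup_{g\in\W}\Vr(g)-\Vr(\hg)$. For any $\tga\in\U_n$, I write
\[
\sup_{g\in\W}\Vr(g)-\Vr(\hg)=\underbrace{\bigl(\sup_{g\in\W}\Vr(g)-\Vr(\tga)\bigr)}_{\text{approximation}}+\underbrace{\bigl(\Vr(\tga)-\hVr(\tga)-\Vr(\hg)+\hVr(\hg)\bigr)}_{=(\Vr-\hVr)(\tga-\hg)}+\underbrace{\bigl(\hVr(\tga)-\hVr(\hg)\bigr)}_{\leq\,\Opn},
\]
where the bound on the last term follows from $\tga\in\U_n$, so $\hVr(\tga)\leq\sup_{g\in\U_n}\hVr(g)$. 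Combining this identity with the Proposition~\ref{prop: multi-cat FC} bound yields \eqref{regret decomposition}.

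The proof is essentially mechanical once the two preparatory moves—invoking Proposition~\ref{prop: multi-cat FC} and switching from $\F$ to $\W$—are made. The only subtle point worth double-checking is that the relative-margin-based property really does imply $\sup_{f\in\F}V^\psi(f)=\sup_{g\in\W}\Vr(g)$; this was asserted informally in the text and follows because any $f\in\F$ can be shifted componentwise so that $f_{t1}\equiv 0$ without altering $V^\psi(f)$, giving a $g\in\W$ with $\Vr(g)=V^\psi(f)$, and conversely $\trans(g)\in\F$ for every $g\in\W$. No other step presents a real obstacle.
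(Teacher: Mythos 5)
Your proof is correct and follows essentially the same route as the paper: invoke Proposition~\ref{prop: multi-cat FC} to bound the regret by $C_*^{-1}$ times the $\psi$-regret, rewrite $V^\psi_*-V^\psi(\hf)$ as $\sup_{g\in\W}\Vr(g)-\Vr(\hg)$ using $\hf=\trans(\hg)$ and the relative-margin-based identity $V^\psi_*=\sup_{g\in\W}\Vr(g)$, then add-and-subtract $\Vr(\tga)$ and $\hVr(\tga)$ and use $\tga\in\U_n$ to bound $\hVr(\tga)\le\sup_{g\in\U_n}\hVr(g)$. The decomposition identity and the final inequality match the paper's proof exactly.
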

Our definitions of approximation and estimation errors depend on the choice of $\tilde{g}$. Ideally, these definitions should use $\tilde{g} = \argmax_{g \in \U_n} \Vr(g)$. However, since the maximizer may not always exist,  in our theoretical analysis, we  use a   $\tilde{g} \in \U_n$ with small  approximation error. This motivates our use of a more general definition of approximation and estimation errors.

The approximation error arises because SDSS optimizes $\hVr$ over the policy class $\U_n$ rather than the full function space $\W$.  
The estimation error arises due to the finite sample size, and it depends  on the complexity of $\U_n$.
 The optimization error occurs because our problem is non-convex.  There is a trade-off between the approximation error and the estimation  error. On one hand, complex universal approximation classes $\U_n$ may have lower approximation error, although the estimation error increases with the complexity of $\U_n$.   On the other hand, a simple $\U_n$, e.g., class of linear functions, may have lower estimation error, but its approximation error can be high if the optimal policy does not belong to that class. 

 We have discussed the optimization procedure in detail in Section~\ref{sec: implementation}. 
Since our optimization problem is non-convex, providing global-convergence-type results on the optimization error is challenging.  
In fact, as noted in Section~\ref{sec: implementation}, global convergence does not hold if initiated from  bad starting points even in simple toy settings. Moreover, it can be easily shown that the  Polyak- \L{}ojasiewicz inequality, which  guarantees global convergence for gradient-based algorithms, fails to hold in our setting. On the other hand, $ -\widehat V^{\psi,\text{rel}}$ is unlikely to be strongly convex near the optimum  since we expect $ \widehat V^{\psi,\text{rel}}$ to plateau as in the toy example. This lack of strong convexity prevents application of traditional  results, such as those in \cite{bottou2018optimization},  for the convergence of gradient descent to local optimum. Similar issues were encountered in \cite{Laha2024} due to the same structural limitation.  To the best of our knowledge, the recent work of \cite{dudik2022convex} is the only study on gradient descent's optimization error that is  relevant to our setting.
 Although their focus is convex optimization, they consider functions without finite minimizer, similar to ours. However,  they rely on a new theoretical framework based on astral planes to treat such functions. While it may be possible to analyze $\Opn$ using their approach, extending their astral plane theory to our setting would likely require a separate paper. Although this is an interesting direction for future research, it is beyond the scope of the current work.

In light of the above, this paper focuses only on the approximation and estimation errors.
Our regret bound is derived through a sharp analysis of both components, under Tsybakov’s small noise condition \citep{tsybakov2004} and complexity assumptions on $\U_n$. We will see that the resulting bound matches the best known regret decay rates for DTR under these conditions.

\begin{remark}[Requirement of relative-margin-based surrogates]
   The rate results presented in this paper do not require $\psi$ to be relative-margin-based. Analogous results to Theorems~\ref{theorem: approx error theorem}–\ref{thm: est error} and Corollary~\ref{cor: neural network} can be established for potentially non-relative-margin-based surrogates with minimal modifications to the proofs, provided the other conditions are satisfied.
However, since the SDSS method explicitly uses relative-margin-based surrogates, we restrict our attention to those surrogates in this section.
\end{remark}

\subsection{Approximation error}
\label{sec: approximation error}

In this section, we derive a sharp upper bound on SDSS's approximation error, which will be used in Section~\ref{sec: estimation error} to bound the regret of $\widehat{d}$.
While the approximation error primarily depends on the choice of $\U_n$, it also depends on the surrogate $\psi$ and the underlying distribution $\PP$. In particular, the approximation error can be small even for simpler policy classes if the optimal treatment is well-separated from suboptimal ones with high probability. To obtain a sharp upper bound, we therefore impose appropriate conditions on both $\psi$ and $\PP$.

\subsubsection{Assumptions on $\psi$}
Even under Conditions~\ref{assump: N1}–\ref{assump: N3}, the class of relative-margin-based surrogates $\psi$ is too broad to permit a sharp analysis of the approximation error. However, for specific surrogates where the minimizer of $\Vr(g)$ over $g \in \W$ is better understood, a more detailed analysis becomes possible.
To this end, we use a stronger version of Condition~\ref{assump: N3}. Recall that, if a single-stage surrogate $\phi$ satisfies Condition~\ref{assump: N3}, then $\phi(\mx; j)$ is small for large values of $\mx$ when $j \notin \argmax(\mx)$. The stronger version, Condition~\ref{assump: N3: strong}, imposes tighter control by requiring a polynomial decay of $\phi(\mx; j)$ whenever $j \neq \pred(\mx)$. If a surrogate is symmetric, i.e., if $\sum_{i \in [k]} \phi(\mx; i)$ is constant in $\mx$, then Condition~\ref{assump: N3: strong} implies Condition~\ref{assump: N3}.

\begin{assumptionp}{ N3.s}
\label{assump: N3: strong}
Consider  $\phi:\RR^k\times[k]\mapsto\RR$.  There exists $C_a>0$ (``a" in $C_a$ refers to approximation error) so that
     $\phi(\mx;j)\leq C_a (\max(\mx)- \mx_j)^{-2}$
    for all $\mx\in\RR^k$ and $j\in[k_t]$.
\end{assumptionp}
We assume that the $\phi_t$'s satisfy Condition~\ref{assump: N3: strong} and are symmetric. The symmetry assumption is used because symmetric surrogates yield sharper upper bounds on the approximation error. We can still bound the approximation error without the symmetry assumption, but the bounds will be looser. We revisit this point in the discussion following Theorem~\ref{theorem: approx error theorem}.
The kernel-based surrogate losses introduced in Section~\ref{sec: kernel based surrogates} are symmetric, and Result~\ref{result: smoothed psi satisfies approx error conditions} (proved in Supplement~\ref{secpf: kernel loss satisfies approx condition}) shows that they also satisfy Condition~\ref{assump: N3: strong}. Thus, the rate results developed in this section apply to these kernel-based surrogates.
\begin{result}
\label{result: smoothed psi satisfies approx error conditions}
  Suppose $\phi$ is a kernel-based surrogate as defined in \eqref{def: psi: smoothed pred}, and the random variable $Z$ associated with the density $\KK$ satisfies $0<E_\KK[Z^2]<\infty$.  Then $\phi$ satisfies Condition \ref{assump: N3: strong} with $C_a=2E_\KK[Z^2]$.
\end{result} 
The finite second moment condition is mild. Especially, the examples in Figure \ref{Plot: Gamma function} satisfy this condition.

\subsubsection{Small noise condition on $\PP$}
 Although it was introduced by \cite{tsybakov2004} in the context of binary classification, we will use the multiclass version of the small noise condition, which was also used by \cite{Rigollet2010NonparametricBW,qian2011,liu2006multicategory}. To introduce this condition, we define the operator
 \begin{equation}
     \label{def: mu}
     \mu(\mx)=\max(\mx)-\max_{i\in[k]:i\neq\pred(\mx)}\mx_i,
 \end{equation}
 where $\mx$ is any real vector.
 Note that $\mu(\mx)>0$ implies $\mx$ has a unique maximum, and $\mu(\mx)=0$ implies $\max(\mx)$ is attained at multiple elements of $\mx$. 
\begin{assumption}[Small noise]
\label{assump: small noise}
For each $t \in [T]$, let $Q_t^*(H_t)$ denote the vector of optimal Q-functions, i.e.,
\begin{equation}
    \label{def: vec of q functions}
    Q_t^*(H_t) = (Q_t^*(H_t,1), \ldots, Q_t^*(H_t,k_t)).
\end{equation} 
Suppose $\mu$ is as defined in \eqref{def: mu}. 
Then there exists $C>0$ such that for all sufficiently small $z>0$, 
\[\PP\slb\mu(Q_t^*(H_t))\leq z\text{ for some }t\in[T]\srb\leq Cz^{1+\alpha}.\]
\end{assumption}
The optimal Q-function $Q_t^*(H_t,a_t)$ can be seen as  the overall utility of assigning treatment $a_t$ at stage $t$ provided the patient receives the optimal treatment in the subsequent stages. A large value of $\mu(Q_t^*(H_t))$ indicates that, for a patient with history $H_t$, the optimal treatment at stage $t$ yields substantially higher utility than the second-best option—making the optimal treatment easier to detect. Assumption \ref{assump: small noise}  ensures that the probability of this utility gap being small at any stage is small. An assumption of this type is typically required to obtain sharp theoretical guarantees on the estimated policy,  as the problem becomes too hard if the utility gap is small with high probability \citep{Robins2004,luedtke2016}.   Therefore, the   small noise assumption, or similar assumptions, e.g., the geometric noise assumption  \citep{steinwart2007}, is common in the DTR and ITR literature \citep{luedtke2016,qian2011,liu2024controlling,liu2024learning,zhao2015}.

Now we are ready to state the main theorem on approximation error, which essentially says that  this error can be made small  if for each $j\in[k_t-1]$, functions of the form
\begin{equation}
    \label{def: blip cont}
    \texttt{blip}_{tj}(\mx)=Q_t^*(\mx;1)-Q_t^*(\mx;1+j),\text{  }\mx\in\H_{t}
\end{equation}
 are well approximated within the class $\U_{tn}$ for all  $t\in[T]$. Following the DTR literature, we  refer to the  above functions as blip functions \citep{schulte2014}. 

\begin{theorem}
\label{theorem: approx error theorem}
Suppose $\psi$ is as in \eqref{def: product psi} with non-negative $\phi_t$'s, and each $\phi_t$ satisfies Conditions \ref{assump: N1}, \ref{assump: N2}, and \ref{assump: N3: strong}. Moreover, for each  $t\in[T]$, we assume $\phi_t$ to be relative-margin-based (Definition \ref{def: relative margin}) and  symmetric.
 Further suppose $\PP$ satisfies  Assumptions I-V and Assumption \ref{assump: small noise}. Let $\tga=(\tga_1,\ldots,\tga_T)\in\mathcal U_n$ be such that
 \begin{equation}
     \label{instatement: approx error}
     \max_{t\in[T]}\max_{j\in[k_t-1]}\|\tga_{tj}- \texttt{blip}_{tj}\|_\infty\leq \delta_n.
 \end{equation}
    Then there exists a constant $C>0$, depending only on $\psi$ and $\PP$, so that for any $\myb_n>\delta_n^{-(1+\alpha/2)}$, 
     $\sup_{g\in\W}\Vr(g)-\Vr(\myb_n\tga)\leq C\delta_n^{1+\alpha}$.
\end{theorem}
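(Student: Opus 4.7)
The plan is to bound $V^\psi_* - V^\psi(f)$, where $f=\trans(\myb_n\tga)$, by (i) reducing it, via backward induction, to a weighted sum of single-stage surrogate regrets
\[
\Delta_t(f_t) := \Psi_t^*(Q_t^*(H_t)) - \Psi_t(f_t(H_t);Q_t^*(H_t)),
\]
and (ii) controlling each $\E[\Delta_t(f_t)]$ by splitting on the event $\{\mu(Q_t^*(H_t))>z_n\}$ with $z_n\propto\delta_n$. For step (i), I will exploit Condition~\ref{assump: N2} and the symmetry assumption, which respectively give $\Psi_t^*(Q_t^*(H_t))=C_{\phi_t}\max_a Q_t^*(H_t,a)$ and $\sum_a\phi_t(\mx;a)=C_{\phi_t}$ (so, in particular, $\phi_t(\mx;a)\le C_{\phi_t}$). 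Conditioning sequentially from stage $T$ down to stage $1$, symmetry causes the cross terms involving $Y_s$ with $s<t$ to telescope into factors of $C_{\phi_t}$, while Condition~\ref{assump: N2} makes the remaining $Q_t^*$-dependent terms align with the DTR recursion $Q_t^*(H_t,a)=\E[Y_t+\max_b Q_{t+1}^*(H_{t+1},b)\mid H_t,A_t=a]$. This yields the identity $V^\psi_*=\bigl(\prod_t C_{\phi_t}\bigr)V_*$ together with a decomposition $V^\psi_*-V^\psi(f)\le \sum_{t=1}^T C_t\,\E[\Delta_t(f_t)]$, where each $C_t$ depends only on the $C_{\phi_s}$'s and on $C_\pi$; the propagation factors $\phi_s(\cdot;A_s)/\pi_s(A_s\mid H_s)\le C_{\phi_s}/C_\pi$ absorb the cross-stage dependence.

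For step (ii), I use symmetry once more to write
\[
\Delta_t(f_t) = \sum_{a\ne a_t^*(H_t)} \phi_t(f_t(H_t);a)\,\Delta_{ta}(H_t),\qquad \Delta_{ta}(H_t) := Q_t^*(H_t,a_t^*(H_t))-Q_t^*(H_t,a),
\]
with $a_t^*(H_t):=\pred(Q_t^*(H_t))$. Fix $z_n=c\delta_n$ for some $c>2$ (say $c=4$) and set $\mathcal E_t(z_n)=\{\mu(Q_t^*(H_t))>z_n\}$. On $\mathcal E_t(z_n)$, the hypothesis $\|\tga_{tj}-\texttt{blip}_{tj}\|_\infty\le\delta_n$, combined with the identity $\texttt{blip}_{t,a-1}(H_t)-\texttt{blip}_{t,a_t^*-1}(H_t)=Q_t^*(H_t,a_t^*)-Q_t^*(H_t,a)$ (using the convention $\tga_{t,0}\equiv\texttt{blip}_{t,0}\equiv 0$), gives
\[
f_{t,a_t^*(H_t)}-f_{t,a}\;\ge\;\myb_n\bigl(\Delta_{ta}(H_t)-2\delta_n\bigr) \;>\; 0,
\]
so $\pred(f_t)=a_t^*(H_t)$, and Condition~\ref{assump: N3: strong} bounds $\phi_t(f_t;a)$ by $C_a/\{\myb_n(\Delta_{ta}-2\delta_n)\}^2$. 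Since $\Delta_{ta}\ge z_n\ge 4\delta_n$, a short calculation gives $\phi_t(f_t;a)\Delta_{ta}\le C/(\myb_n^2\delta_n)$, whence $\E[\Delta_t(f_t)\,1[\mathcal E_t(z_n)]]\le C/(\myb_n^2\delta_n)$. On the complement, Assumption~IV bounds $\Delta_t(f_t)$ by a constant and Assumption~\ref{assump: small noise} gives $\PP(\mathcal E_t(z_n)^c)\le Cz_n^{1+\alpha}=O(\delta_n^{1+\alpha})$, so $\E[\Delta_t(f_t)\,1[\mathcal E_t(z_n)^c]]=O(\delta_n^{1+\alpha})$.

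Combining the two contributions yields $\E[\Delta_t(f_t)]\le C\myb_n^{-2}\delta_n^{-1}+C\delta_n^{1+\alpha}$; the hypothesis $\myb_n>\delta_n^{-(1+\alpha/2)}$ makes the first term $O(\delta_n^{1+\alpha})$, so summing over $t$ delivers the claimed bound. The main obstacle I anticipate is the bookkeeping in step (i): because $\sum_j Y_j$ couples the stages, the backward reduction is not a plain iteration, and one must verify carefully that, after using symmetry to collapse the $Y_s$-terms ($s<t$) into the constants $C_{\phi_t}$ and Condition~\ref{assump: N2} to identify the residual with the DTR recursion, the final upper bound really is the stated weighted sum of stage-wise surrogate regrets. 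Once the decomposition is in place, the step~(ii) good/bad-event split is essentially routine, and the balance exponent $\myb_n\asymp\delta_n^{-(1+\alpha/2)}$ arises naturally by equating the good-event contribution $\myb_n^{-2}\delta_n^{-1}$ against the small-noise contribution $\delta_n^{1+\alpha}$.
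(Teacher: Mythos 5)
Your proposal is correct and follows essentially the same path as the paper. Step (i) is exactly the identity proved in the paper as a standalone lemma (the ``difference decomposition in terms of Q-functions''), after which the weights $\prod_{i<t}\phi_i(\cdot;A_i)/\pi_i(A_i\mid H_i)$ are crudely bounded by $\pb^{-T}$ using $\phi_i\le C_{\phi_i}$ and $\pi_i>C_\pi$; this reduces the $\psi$-regret to exactly your $\sum_t\E[\Delta_t(f_t)]$. Your worry about the $\sum_j Y_j$ coupling is real but is resolved, as you anticipate, by the telescoping identity $\sum_a\phi_t(\mx;a)=C_{\phi_t}$ that symmetry supplies: the paper's lemma proof is precisely that backward-induction/telescoping bookkeeping. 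Your step (ii) differs slightly, and is if anything cleaner: the paper introduces an intermediate point $\myb_n Q_t^*(H_t)$, splits into a term $S_1=\sum_{a\neq a_t^*}\Delta_{ta}\,\phi_t(\myb_n Q_t^*;a)$ and a remainder $S_2$ measuring the perturbation $\phi_t(\myb_n Q_t^*)-\phi_t(\myb_n\hn_t)$, and uses two separate good events (with thresholds $z$ and $3\delta_n$) before ultimately setting $z=\delta_n$. You skip the intermediate point, lower-bound $\max(f_t)-f_{t,a}\ge\myb_n(\Delta_{ta}-2\delta_n)$ directly from the blip-approximation hypothesis (this inequality is exactly the paper's auxiliary lemma ``connecting $f$ and $Q$ on $\mathcal E$''), apply Condition~\ref{assump: N3: strong} once, and use a single event with threshold $4\delta_n$. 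The resulting bounds $\myb_n^{-2}\delta_n^{-1}$ on the good event and $\delta_n^{1+\alpha}$ on the bad event, and the balance condition $\myb_n>\delta_n^{-(1+\alpha/2)}$, are identical to the paper's. So the proposal is sound; the only substantive difference is that your single-split treatment of the stage-wise regret avoids the paper's $S_1/S_2$ telescoping, which buys a somewhat shorter step (ii) without changing the rate or the constants beyond universal factors.
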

The proof of Theorem \ref{theorem: approx error theorem} can be found in Supplement \ref{sec: approx error proofs}. The $\tilde{g}$ in Theorem~\ref{theorem: approx error theorem} may depend on $n$, but we omit this dependence from the notation for simplicity. We will now briefly discuss the conditions of Theorem \ref{theorem: approx error theorem}.  
The condition in \eqref{instatement: approx error} will be satisfied if, for example, the blip function belongs to a smoothness class such as the \Holder\ class, and $\U_n$ is dense within these classes.  Examples of such $\U_n$ include  neural network, cubic spline,  wavelet, and  decision tree classes \citep{sun2021stochastic}. See Section \ref{sec: neural network} for more details on the neural network example.  The conditions on the surrogates are satisfied by the kernel-based surrogates, as shown by Result \ref{result: smoothed psi satisfies approx error conditions}. 
The symmetry condition plays an important role. In its absence, we obtain a looser upper bound on the approximation error. For instance, the product-based surrogate violates the symmetry condition, and it can be shown that its approximation error bound becomes $\delta_n^{\alpha}$, leading to slower overall regret decay when combined with the estimation error. However, as previously discussed, the relative-margin-based condition is not necessary. 
 Finally, the original discontinuous loss $\phi_{\text{dis}}$ satisfies  Conditions \ref{assump: N1}, \ref{assump: N2}, and \ref{assump: N3: strong} with $C_a=0$.
Therefore,   the assertions of Theorem \ref{theorem: approx error theorem}  hold for this loss.

\subsection{Combined regret decay rate}
\label{sec: estimation error}
In this section, we present the final regret bound by combining the approximation and estimation errors. Recall that $\U_n = \U_{1n}^{k_1} \times \ldots \times \U_{Tn}^{k_T}$, where each $\U_{tn}$ is a class of  Borel measurable functions from $\H_t$ to $\RR$. We begin with a general theorem that provides a regret bound under complexity restriction on the $\U_{tn}$ classes (Theorem~\ref{thm: est error}). Then, in Section~\ref{sec: neural network},  we  consider the special case when the $\U_{tn}$'s correspond to ReLU neural networks and the blip functions are smooth.

\paragraph*{Complexity condition on $\U_{tn}$} Complexity assumptions on  policy classes are standard in reinforcement learning and DTR literature  for bounding the regret \citep{jin2021bellman,athey2021policy,sun2021stochastic}.
In this paper, we use covering numbers to measure the complexity of function classes. 
  For any $\epsilon>0$, the covering number \( N(\epsilon, \mathcal{U}_{tn}, \|\cdot\|_\infty) \) is the minimum number of balls of radius \( \epsilon \) 
(with respect to the uniform norm \( \|\cdot\|_\infty \)) needed to cover $\U_{tn}$. We will assume $\U_{tn}$ satisfies
\begin{equation}
    \label{ineq: bracketing entropy: ub}
    N(\e,\U_{tn},\|\cdot\|_\infty)\lesssim (\I_n/\epsilon)^{\rho_n}\quad \text{for all }t\in[T]
\end{equation}
for some $\I_n,\rho_n>0$, which will be specified later. Function classes satisfying \eqref{ineq: bracketing entropy: ub} are referred to as VC-type classes \citep[p. 41]{koltchinskii2009}. 

As an example, suppose $\U_{tn}$ is a class of ReLU networks with depth $\N_n \in \mathbb{N}$, width vector $W_n$, sparsity $s_n \in \mathbb{N}$, a linear output layer, and weights bounded by one \citep[cf.][]{schmidt2020}. Here, sparsity refers to the number of non-zero weights in the network. We denote this class by $\mathcal{F}(\N_n, W_n, s_n)$.
From Lemma 5 of \cite{schmidt2020} (see Supplementary Fact~\ref{fact: NN covering number}), it follows that $\U_{tn} = \mathcal{F}(\N_n, W_n, s_n)$ satisfies \eqref{ineq: bracketing entropy: ub} with $\rho_n = s_n + 1$ and $\I_n = (\N_n + 1)(s_n + 1)^{2\N_n + 4}$.
Other important examples of classes satisfying \eqref{ineq: bracketing entropy: ub} include wavelets \citep{GineNickl, Laha2024} and classification and regression trees  \citep{athey2021policy}.

\paragraph*{Lipschitz condition on the $\phi_t$'s} In addition to the complexity condition, we require a Lipschitz condition on the surrogates. 
\begin{condition}
\label{cond: estimation error}
For each $t\in[T]$ and $i\in[k_t]$, the function $\mx\mapsto \phi_t(\mx;i)$ is Lipschitz with constant $C>0$ in that  for all $\mx,\my\in\RR^{k_t}$,
        \[|\phi_t(\mx;i)-\phi_t(\my;i)|\leq C\|\mx-\my\|_2.\]
\end{condition}
Condition \ref{cond: estimation error}  ensures that the complexity restrictions on $\U_{tn}$'s transfer to function-classes of the form  $\mx\mapsto \phi_t(f_t(\mx);i)$, which is required for bounding the Rademacher complexity of some classes during our regret analysis. Lipschitz assumptions on surrogate losses are common in theoretical analyses within both DTR and classification literature  \citep{xue2022multicategory,bartlett2006}.
If the functions $\phi_t(\cdot; i)$ are differentiable with gradients bounded in $\ell_2$, then Condition~\ref{cond: estimation error} is automatically satisfied. Under mild differentiability conditions, the kernel-based surrogates satisfy this requirement, as will be discussed following Theorem~\ref{thm: est error}.

Theorem~\ref{thm: est error} shows that, when the optimization error is negligible and both the above conditions and those in Theorem~\ref{theorem: approx error theorem} are satisfied, the regret of SDSS is of order $O_p(n^{-(1+\alpha)/(2+\alpha)})$, up to a polylogarithmic factor.  This matches the regret decay rate obtained by \cite{Laha2024} in the binary-treatment case.

\begin{theorem}
    \label{thm: est error}
    Suppose $\mathbb P$ satisfies Assumptions I-V and Assumption \ref{assump: small noise} with $\alpha>0$.  Let $\psi$ be as in Theorem~\ref{theorem: approx error theorem}. Additionally, the $\phi_t$'s satisfy Condition~\ref{cond: estimation error} and that $\phi_t(\mx;\pred(\mx))>\J$ for  for all $t\in[T]$ and $\mx\in\RR^{k_t}$, where $\J>0$ is a constant.   Let  $\U_n=\U_{1n}^{k_1}\times\ldots\times\U_{Tn}^{k_T}$. For all $t\in[T]$, suppose the $\U_{tn}$'s satisfy \eqref{ineq: bracketing entropy: ub}  with constants $\I_n>0$ and  $\rho_n>0$ such that   $\liminf_n\rho_n>0$, $\rho_n\log \I_n=o(n)$, and  $\liminf_n\rho_n\log \I_n>0$. 
  Further suppose there exists $\tilde{g} \in \U_n$ so that $\tilde g/\myb_n$ satisfies \eqref{instatement: approx error} with $\delta_n=(\rho_n n^{-1} \log \I_n)^{1/(2+\alpha)}$
for some $\myb_n>\sqrt{n/(\rho_n\log(\I_n))}$. 
 Then there exist constants $C > 0$ and $N_0 \geq 1$ such that, for all $n \geq N_0$ and all $x > 0$, the SDSS score estimator $\hf$ (as defined in Algorithm~\ref{alg: SDSS}) satisfies
\[V_*-V(\hf)\leq  C\max\lbs (1+x)^2 (\log n)^2 \lb \frac{\rho_n\log \I_n}{n}\rb^{\frac{1+\alpha}{2+\alpha}},\Opn\rbs\]
with probability at least $1-\exp(-x)$.
\end{theorem}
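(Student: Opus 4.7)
The plan is to combine Lemma~\ref{lemma: error decomp} with the approximation bound from Theorem~\ref{theorem: approx error theorem} and a Talagrand/localization argument for the estimation error. Take $\delta_n = (\rho_n n^{-1}\log\I_n)^{1/(2+\alpha)}$ so that $\delta_n^{-(1+\alpha/2)} = \sqrt{n/(\rho_n\log\I_n)}$; the hypothesis on $\myb_n$ then puts us in the regime of Theorem~\ref{theorem: approx error theorem}, giving the approximation bound $\sup_{g \in \W}\Vr(g) - \Vr(\myb_n \tga) \lesssim \delta_n^{1+\alpha} = (\rho_n \log\I_n / n)^{(1+\alpha)/(2+\alpha)}$. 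Fix $\tgn := \myb_n \tga \in \U_n$ (using that $\U_n$ is closed under positive scaling in the expected sense; if not, one reparameterizes so that the scaling is absorbed into the free parameters).

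For the estimation error $(\Vr - \hVr)(\tgn - \hg)$, first observe the loss $\L(\D;g)$ in \eqref{relation: relative margin} is uniformly bounded: by Assumption~IV and the boundedness of $\phi_t$ under Condition~\ref{assump: N1} (Supplementary Lemma~\ref{lemma: necessity: psi bounded}), together with Assumption~I's lower bound $C_\pi$ on the propensities, we have $\|\L(\cdot;g)\|_\infty \leq M$ for some constant $M$ depending only on $(\psi, \PP)$ and independent of $g$. Condition~\ref{cond: estimation error} (Lipschitzness of the $\phi_t$'s) then implies that the class $\{\L(\cdot; g) - \L(\cdot; \tgn) : g \in \U_n\}$ is Lipschitz (in $g_1,\ldots,g_T$) and inherits the VC-type covering bound \eqref{ineq: bracketing entropy: ub} with entropy exponent $O(\sum_t (k_t-1)\rho_n) = O(\rho_n)$ and envelope $O(\I_n)$, where the product structure across stages contributes only constants.

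The central step is to establish a variance--mean inequality
\begin{equation}
\label{prop: var mean}
\Var\bigl(\L(\D;g) - \L(\D;\tgn)\bigr) \;\lesssim\; \bigl(\Vr(\tgn) - \Vr(g) + \delta_n^{1+\alpha}\bigr)^{\alpha/(1+\alpha)}
\end{equation}
for all $g \in \U_n$. To derive this, one uses the $\Psi^*$--machinery from Section~\ref{sec: necessetity T geq 2}: inequality \eqref{def of mathcal C psi} together with Condition~\ref{assump: N2} lets one relate $\phi_t$-level gaps to multiplicative gaps in $\max(Q_t^*(H_t))$. Expanding the product-form loss and iterating over stages (as in the proof of Proposition~\ref{prop: multi-cat FC}), the pointwise deviation of $\L(\D;g)$ from its optimum is controlled by $\mu(Q_t^*(H_t))$ on the events where $\pred(g_t(H_t))$ differs from $d_t^*(H_t)$. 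Assumption~\ref{assump: small noise} then converts this into the Tsybakov-type variance bound \eqref{prop: var mean} in the standard way (the $\delta_n^{1+\alpha}$ slack accounts for the fact that $\tgn$ is only an approximate surrogate minimizer).

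With boundedness, VC-type entropy, and the variance bound \eqref{prop: var mean} in hand, the conclusion follows from a peeling/localization argument with Talagrand's (or Bousquet's) concentration inequality for empirical processes, exactly as in the proofs of the classification rates in Tsybakov's setting. Specifically, one decomposes $\U_n$ into shells $\{g: 2^{j-1}r_n \leq \Vr(\tgn) - \Vr(g) + \delta_n^{1+\alpha} \leq 2^j r_n\}$, bounds the empirical process on each shell by Dudley's entropy integral with variance input from \eqref{prop: var mean}, and solves the fixed-point equation $r_n \asymp (\rho_n \log\I_n/n)^{(1+\alpha)/(2+\alpha)}$ (up to $(\log n)^2$ factors arising from the entropy integral and the sub-Gaussian tail). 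Summing approximation, estimation, and optimization contributions and dividing by $C_*$ yields the stated high-probability bound. The main obstacle is establishing \eqref{prop: var mean} under the product-of-stages surrogate --- single-stage classification variance bounds must be lifted to the sequential setting while keeping the constants independent of $\myb_n$; this relies crucially on the Lipschitz bound on $\phi_t$ combined with the lower bound $\phi_t(\mx;\pred(\mx))>\J$, which together prevent the product structure from degenerating as the scale $\myb_n \to \infty$.
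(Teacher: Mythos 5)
Your plan matches the paper's proof almost step for step: decompose via Lemma~\ref{lemma: error decomp}, invoke Theorem~\ref{theorem: approx error theorem} for the approximation term, control the estimation term through a Tsybakov-type second-moment (Bernstein) inequality combined with VC-type entropy of the loss class (obtained from Condition~\ref{cond: estimation error} via Lipschitz composition and stage-product lemmas), and close with symmetrization, Talagrand-type concentration, and localization. The paper phrases the key margin inequality (Lemma~\ref{lemma: est: l2l1}) as a bound on $\|\mathfrak{U}_{\fs,f}\|_{\PP,2}^2$ relative to the extended-valued global optimizer $\fs$ and then uses the triangle inequality, rather than directly proving a variance bound centered at the approximate minimizer $\tgn$ with the $\delta_n^{1+\alpha}$ slack as you do; these two formulations are equivalent, and the paper's version avoids having to carry the slack through the peeling argument. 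One small slip: the theorem's $\tga$ is already the element of $\U_n$ (it is $\tga/\myb_n$ that satisfies \eqref{instatement: approx error}), so there is no need to form $\myb_n\tga$ or appeal to closure of $\U_n$ under scaling.
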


The proof of Theorem~\ref{thm: est error} is provided in Supplement~\ref{sec: est error proof} and relies on tools from empirical risk minimization theory. The rate-related conditions in Theorem~\ref{thm: est error} merits some discussion. The conditions on $\rho_n$ and $\I_n$ are required to apply  Rademacher complexity bound results on certain function classes in our proof.
 The choice of $\delta_n$  ensures that the estimation error is of  order $O_p(\delta_n^{1+\alpha})$ up to a poly-logarithmic term.  Theorem \ref{theorem: approx error theorem} established that the approximation error of $\U_n$ is of the same order. Hence, the approximation error  matches the estimation error under the conditions of Theorem \ref{theorem: approx error theorem}. 
 Theorem~\ref{thm: est error} also requires that $\U_n$ contain a function $\tilde{g}$ such that $\tilde{g}/\myb_n$ satisfies \eqref{instatement: approx error}. This implies $\mathcal{U}_n$  includes functions with magnitudes at least on the order of $\myb_n$.  The result in Theorem~\ref{thm: est error} is nonparametric, as it imposes no  other structural restrictions on the function classes.

 \paragraph*{Example of surrogates satisfying the conditions of Theorem \ref{thm: est error}} 
Result \ref{result: kernel surrogate is Lipshitcz} show that kernel-based surrogates satisfy Condition \ref{cond: estimation error} under mild conditions.
\begin{result}
\label{result: kernel surrogate is Lipshitcz}
Suppose $\phi : \mathbb{R}^k \times [k] \to \mathbb{R}$ is a kernel-based surrogate as defined in \eqref{def: psi: smoothed pred}, where the kernel $\KK$ is bounded and continuously differentiable, and  $\KK'$ is bounded and integrable with respect to the Lebesgue measure. Then $\phi$ satisfies Condition \ref{cond: estimation error}.
\end{result}
The proof of Result~\ref{result: kernel surrogate is Lipshitcz} in Supplement~\ref{secpf: result kernel surrogate is Lipshitcz} relies on showing that the gradient of $\phi$ is bounded, which requires the integrability of $\KK'$. We require $\KK$ and $\KK'$ to be bounded for applying the Leibniz rule for differentiating an integral while deriving the partial derivatives of $\phi$.
A kernel-based surrogate satisfies all  conditions of Theorem~\ref{thm: est error} if $\KK$ satisfies the assumptions of both Results~\ref{result: smoothed psi satisfies approx error conditions} and~\ref{result: kernel surrogate is Lipshitcz}. Examples of such $\KK$ include the logistic and Gumbel densities (cf. Figure~\ref{Plot: Gamma function}), as well as the Gaussian density, Weibull and  Gamma densities with shape parameter greater than two, and the beta density with both parameters greater than one.

\begin{remark}[Product-based surrogates] As noted earlier, their approximation error is of order $O_p(\delta_n^{\alpha})$. Applying  proof techniques similar to Theorem~\ref{thm: est error}, one would obtain a slower regret decay rate of $O_p(n^{-\alpha/(2+\alpha)})$ for the product-based surrogates under conditions similar to  Theorem~\ref{thm: est error}.
    \end{remark}
    
 \subsubsection{Special case: neural network}
 \label{sec: neural network}
In this example, we will consider the neural network class ${\mathcal F}(\N_n,W_n,s_n)$ as $\U_{tn}$. 
As previously noted, \eqref{ineq: bracketing entropy: ub} holds for this class. To ensure that \eqref{instatement: approx error} is satisfied, i.e., that the blip functions are well-approximable by $\U_{tn}$, we impose structural assumptions on the blip functions. Specifically, we assume they belong to a \Holder\ class.
 \Holder\  assumption on the Q- and blip functions are quite common in the DTR literature \citep{zhao2015,sun2021stochastic,Laha2024}. Moreover, such smoothness assumptions are weaker than parametric restrictions on the Q- and blip functions, which are also common in the DTR literature \citep{schulte2014,kosorok2019}.

We now define a \Holder\ class. Let $k \in \NN$. Recall that for any multi-index $\mv \in \NN^k$, we denote $|\mv|_1 = \sum_{i \in [k]} \mv_i$. For any $\X \subset \RR^k$, a function $u: \X \to \RR$ is said to have \Holder\ smoothness index $\mybeta > 0$ if, for all multi-indices $\mv = (\mv_1, \ldots, \mv_k) \in \NN^k$ satisfying $|\mv|_1 < \mybeta$, the partial derivative $\partial^\mv u = \partial^{\mv_1} \cdots \partial^{\mv_k} u$ exists, and there exists a constant $C > 0$ such that
\[\frac{|\partial^\mv u(\mx)-\partial^\mv u(\my)|}{\|\mx-\my\|_2^{\mybeta-\floor*{\mybeta}}}<C\quad \text{ for all }\mx,\my\in \X\text{ such that }\mx\neq\my.\]
Here, and throughout, $\floor*{\mybeta}$ denotes the largest integer less than or equal to $\mybeta$.
For some constant $\fy > 0$, we define the \Holder\  class $\C^\mybeta(\X, \fy)$ as
\begin{align*}
\C^\mybeta(\X, \fy)=\lbs u: \X\mapsto\RR\ \bl\   \sum_{\mv:|\mv|_1<\mybeta}\|\partial^\mv u\|_\infty+\sum_{\mv:|\mv|_1=\floor*{\mybeta}}\sup_{\substack{\mx,\my\in \mathcal \X\\ 
\mx\neq \my}}\frac{|\partial^\mv u(\mx)-\partial^\mv u(\my)|}{|\mx-\my|^{\mybeta-\floor*{\mybeta}}}\leq \fy\rbs.
\end{align*}
Since $H_t$ may include categorical variables, we separate its continuous and categorical components. To this end, for each $t \in [T]$, we write $H_t = (H_{ts},\ H_{tc})$, where $H_{ts} $ and $H_{tc}$ denote the continuous and categorical parts of $H_t$, respectively. We also assume that $H_{ts} \in \H_{ts}$ and $H_{tc} \in \H_{tc}$ where $\H_t=\H_{ts}\times \H_{tc}$ for each $t\in[T]$.
Our smoothness assumption, presented in Assumption \ref{assumption: smmothness}, is on the blip function restricted to $\H_{ts}$. 
\begin{assumption}[Smoothness assumption]
\label{assumption: smmothness}
 For each $t \in [T]$, $\H_t$ is compact and $\H_{tc}$ is finite. There exist constants $\mybeta > 0$ and $\fy > 0$ such that, for all  $t\in[T]$, for any fixed $h_t \in \H_{tc}$, and any $i \in [k_t]$, the function $\texttt{blip}_{ti}(\cdot, h_t)$ is in $\C^\mybeta(\H_{ts}, \fy)$. Here, the blip function  $\texttt{blip}_{tj}$ is as defined in \eqref{def: blip cont}.
\end{assumption}
Recall that we denote the dimension of $H_T$ by $q$.  
Corollary \ref{cor: neural network} below implies  that under Assumption \ref{assumption: smmothness}, the regret decays at the rate $O_p(n^{-\frac{1+\alpha}{2+\alpha+q/\mybeta}})$ up to a logarithmic factor, modulo the optimization error. 
\begin{corollary}
 \label{cor: neural network}
 Suppose $\mathbb{P}$ and $\psi$ are as in Theorem~\ref{thm: est error}, and that $\mathbb{P}$ also satisfies Assumption~\ref{assumption: smmothness} with parameter $\mybeta > 0$.
Let $\mathcal{U}_{tn}$ be the neural network class $\mathcal{F}(\N_n, \p_n, s_n)$ with depth $\N_n = c_1 \log n$, sparsity $s_n = c_2 n^{q/((2 + \alpha)\mybeta + q)}$, and maximal width satisfying $\max W_n \leq c_3 s_n / \N_n$, where $c_1, c_2, c_3 > 0$.
 Then there exist constants $N_0>0$ and $C>0$, depending on $\PP$ and $\psi$,  such that if $c_1,c_2,c_3>C$,  then for any $n\geq N_0$ and $x>0$, the following holds with probability at least $1-\exp(-x)$:
 \[V^*-V(\hf)\leq C\max\lbs (1+x)^2(\log n)^{\frac{6+4\alpha}{2+\alpha}}n^{-\frac{1+\alpha}{2+\alpha+q/\mybeta}},\ \Opn\rbs.\]
 \end{corollary}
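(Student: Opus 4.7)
The plan is to derive Corollary~\ref{cor: neural network} as a direct application of Theorem~\ref{thm: est error} with $\U_{tn} = \mathcal{F}(\N_n, W_n, s_n)$, the only new ingredient being a standard approximation guarantee for sparse ReLU networks on H\"older smooth functions (e.g., Theorem~5 of \cite{schmidt2020}). The rate falls out by balancing the neural network approximation error against the estimation rate $\delta_n^{1+\alpha}$ determined by the covering numbers of $\U_n$.

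The first step is to instantiate the complexity parameters. By Supplementary Fact~\ref{fact: NN covering number}, $\U_{tn}$ satisfies \eqref{ineq: bracketing entropy: ub} with $\rho_n = s_n + 1$ and $\I_n = (\N_n+1)(s_n+1)^{2\N_n+4}$. Plugging in $\N_n = c_1 \log n$ and $s_n = c_2 n^{q/((2+\alpha)\mybeta + q)}$ gives $\rho_n \asymp s_n$ and $\log \I_n \asymp (\log n)^2$, so the rate-side hypotheses $\rho_n \log \I_n = o(n)$ and $\liminf_n \rho_n \log \I_n > 0$ of Theorem~\ref{thm: est error} hold when $c_1, c_2$ are large enough. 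A direct computation then yields
\[
\delta_n \;=\; \Big(\tfrac{\rho_n \log \I_n}{n}\Big)^{1/(2+\alpha)} \;\asymp\; n^{-\mybeta/((2+\alpha)\mybeta+q)}\,(\log n)^{2/(2+\alpha)}.
\]

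The second step is to exhibit $\tilde g \in \U_n$ with $\|\tilde g_{tj}/\myb_n - \texttt{blip}_{tj}\|_\infty \lesssim \delta_n$. By Assumption~\ref{assumption: smmothness}, for each $h_{tc}$ in the finite set $\H_{tc}$ the map $h_{ts} \mapsto \texttt{blip}_{tj}(h_{ts}, h_{tc})$ lies in $\C^\mybeta(\H_{ts}, \fy)$. A Schmidt-Hieber-type approximation theorem supplies a ReLU network in $\mathcal{F}(\N_n, W_n, s_n)$ that approximates each such fiber in $\|\cdot\|_\infty$ at rate $\lesssim s_n^{-\mybeta/q}$, up to polylog factors, provided $\N_n \gtrsim \log n$, $s_n$ is of the chosen order, and $\max W_n \gtrsim s_n/\N_n$---all of which hold for suitably large $c_1, c_2, c_3$. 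The finitely many per-fiber networks are assembled into a single network via indicator gating on the categorical coordinate, at constant cost in depth/width/sparsity. Multiplying by $\myb_n$ (taken polynomial in $n$, which suffices since $\sqrt{n/(\rho_n \log \I_n)}$ is polynomial in $n$) produces the desired $\tilde g$; the associated output bound $\myb_n$ only inflates the covering-number constant $\I_n$ by a $\log \myb_n = O(\log n)$ factor, which is absorbed into the existing $(\log n)^2$ term.

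With the hypotheses of Theorem~\ref{thm: est error} verified, its conclusion gives, on an event of probability at least $1 - \exp(-x)$,
\[
V_* - V(\hf) \;\leq\; C\,\max\big\{(1+x)^2 (\log n)^2\,\delta_n^{1+\alpha},\ \Opn\big\}.
\]
Substituting the value of $\delta_n$ and simplifying the logs yields the bound $(1+x)^2 (\log n)^{(6+4\alpha)/(2+\alpha)}\,n^{-(1+\alpha)/(2+\alpha+q/\mybeta)}$, exactly as claimed. The main technical obstacle is the careful alignment of the Schmidt-Hieber approximation theorem's depth/width/sparsity/output-bound requirements with the rate-balance constraints of Theorem~\ref{thm: est error}: one must inflate $c_1, c_2, c_3$ enough so that a single network simultaneously lies in the prescribed $\mathcal{F}(\N_n, W_n, s_n)$ and achieves the $s_n^{-\mybeta/q}$ approximation rate on the entire product history space $\H_t = \H_{ts} \times \H_{tc}$ after indicator gating.
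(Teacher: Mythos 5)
Your proposal follows essentially the same route as the paper: instantiate the covering-number parameters via the ReLU network entropy bound, verify the rate-side hypotheses of Theorem~\ref{thm: est error}, produce the approximating network (the paper cites Supplementary Lemma~M.2 of \cite{Laha2021surrogateu}, which already handles the discrete-covariate gating you describe explicitly), take $\myb_n$ polynomial in $n$, and substitute $\delta_n$ into the conclusion of Theorem~\ref{thm: est error}. One small inaccuracy worth flagging: the covering-number constant $\I_n$ is a property of the class $\mathcal{F}(\N_n,W_n,s_n)$, which is fixed independently of $\myb_n$; the relevant question is merely whether a network with outputs on the scale of $\myb_n$ exists \emph{inside} that class, not whether $\I_n$ inflates—so the "$\log\myb_n$ factor absorbed into $(\log n)^2$" remark is a non-issue rather than something that needs absorbing. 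The conclusion is unaffected.
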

The proof of Corollary~\ref{cor: neural network} is provided in Supplement~\ref{secpf: cor: NN}. Under similar smoothness and small noise conditions, the rate $n^{-\frac{1+\alpha}{2+\alpha+q/\mybeta}}$ is  the minimax rate of risk decay in binary classification \citep{audibert2007}. It is also  the sharpest available rate, up to polylog terms, for DTR regret decay under analogous conditions \citep{Laha2024}. \cite{Laha2024} conjectured that this rate is  minimax-optimal for regret decay in DTR settings, under assumptions analogous to Assumptions~\ref{assump: small noise} and~\ref{assumption: smmothness}.
Under such conditions, the available  upper bound on the regret decay rate of nonparametric Q-learning is $n^{-\frac{(1+\alpha)}{2+\alpha}\frac{2}{2+q/\mybeta}}$. This upper bound is calculated using  \cite{qian2011}'s regret bounds for single-stage case; see \cite{Laha2024} for more details. The Q-learning regret decay rate is slower because it relies on the estimation of the Q-functions, and the minimax   $L_2(\PP)$  error rate of nonparametrically estimating  $\mybeta$-smooth functions is  $n^{-\frac{2}{2+q/\mybeta}}$ \citep[cf.][]{yang1999minimax}.  In principle, under general conditions such as ours, Q-function estimation   is a   harder nonparametric problem than  learning the optimal DTR, since the optimal DTR depends only on the signs of pairwise Q-function differences, not their exact values.
That said, sharper regret decay rates for Q-learning can be achieved under stronger restrictions on the Q-function class, as such restrictions simplify the underlying estimation problem   \citep{qian2011,Laha2024}.

\subsection{Practical guidance regarding SDSS}
\label{sec: practical guidelines}

Although the approximation and estimation errors of SDSS decays to zero, $\Opn$ can remain non-trivial. We are currently unable to reliably quantify $\Opn$ because the minimum attainable value of the loss $-\widehat V^\psi$ is unknown, unlike standard losses such as the squared error. Instead, we offer several guidelines for using SDSS.

First, although simultaneous optimization  uses the sample effectively by pulling all information together,  it can become highly overparameterized as $T$ increases, especially with nonparametric policy classes. Thus, we recommend using SDSS only when the sample size is large. For instance, when $T = 2$ and $k_1 = k_2 = 3$, SDSS typically needs several thousand observations to be competitive with stagewise methods. Such sample sizes are common in EHR data.

Second, SDSS generally incurs higher optimization error than stagewise methods, which solve separate single-stage optimization problems sequentially, each involving fewer parameters than SDSS.
 Therefore, in simpler settings, where all methods are expected to incur low approximation and estimation error,  SDSS may have no clear  advantage due to its higher optimization error. However, SDSS can have an advantage when the approximation and estimation errors are expected to be substantial for all methods, such as in cases with high model misspecification risk (e.g., with interpretable policies) or when covariates are high-dimensional or noisy. 
In such challenging scenarios, SDSS may outperform stagewise methods despite its higher optimization error because (i)  it does not need outcome or pseudo-outcome modeling,  (ii) it has no error propagation across stages due to its simultaneous optimization framework.\footnote{The stagewise methods generally rely on  a model-based quantity called pseudo-outcomes.  Model misspecification  for the pseudo-outcomes can result in higher estimation and approximation error, which also propagates through stages \citep{murphy2005}.} Moreover, both the kernel-based and product-based surrogates are bounded and taper off at infinity. In the context of classification, a growing body of research supports that such surrogates exhibit superior  robustness to outliers and data contamination (\citeauthor{liu2006multicategory}, \citeyear{liu2006multicategory}; \citeauthor{wu2007robust}, \citeyear{wu2007robust}; \citeauthor{masnadi2008design}, \citeyear{masnadi2008design}; \citeauthor{nguyen2013algorithms}, \citeyear{nguyen2013algorithms}; \citeauthor{wang2020robust}, \citeyear{wang2020robust}). 
     The challenges of model misspecification and noise are common in real-world EHR data. It is therefore unsurprising that SDSS outperforms both Q-learning and ACWL in our sepsis data example (Section \ref{sec: application}). Hence, we recommend SDSS in settings with noisy or high-dimensional covariates, or when model misspecification is a concern.
      \begin{figure}[!htb]
    \centering
       \begin{subfigure}{\textwidth}
        \centering
      \includegraphics[ height=1.7in]{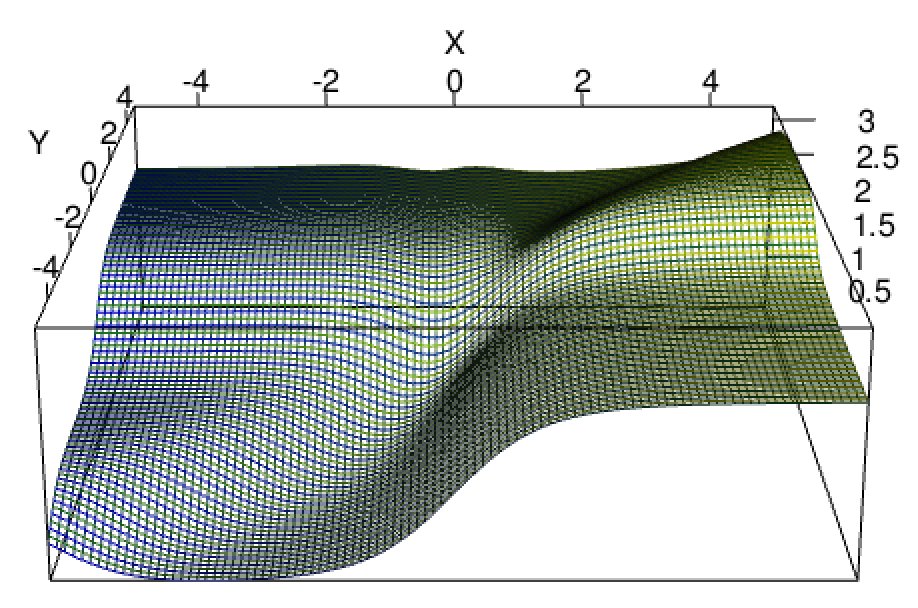}
        \caption{Plot of $ \widehat V^{\psi,\text{rel}}(x,y)$ (plotted in the Z axis) vs $x$ and $y$ }
        \label{fig:value function for toy data}
    \end{subfigure} \\
    \begin{subfigure}{0.44\textwidth}
        \centering
      \includegraphics[width=\linewidth]{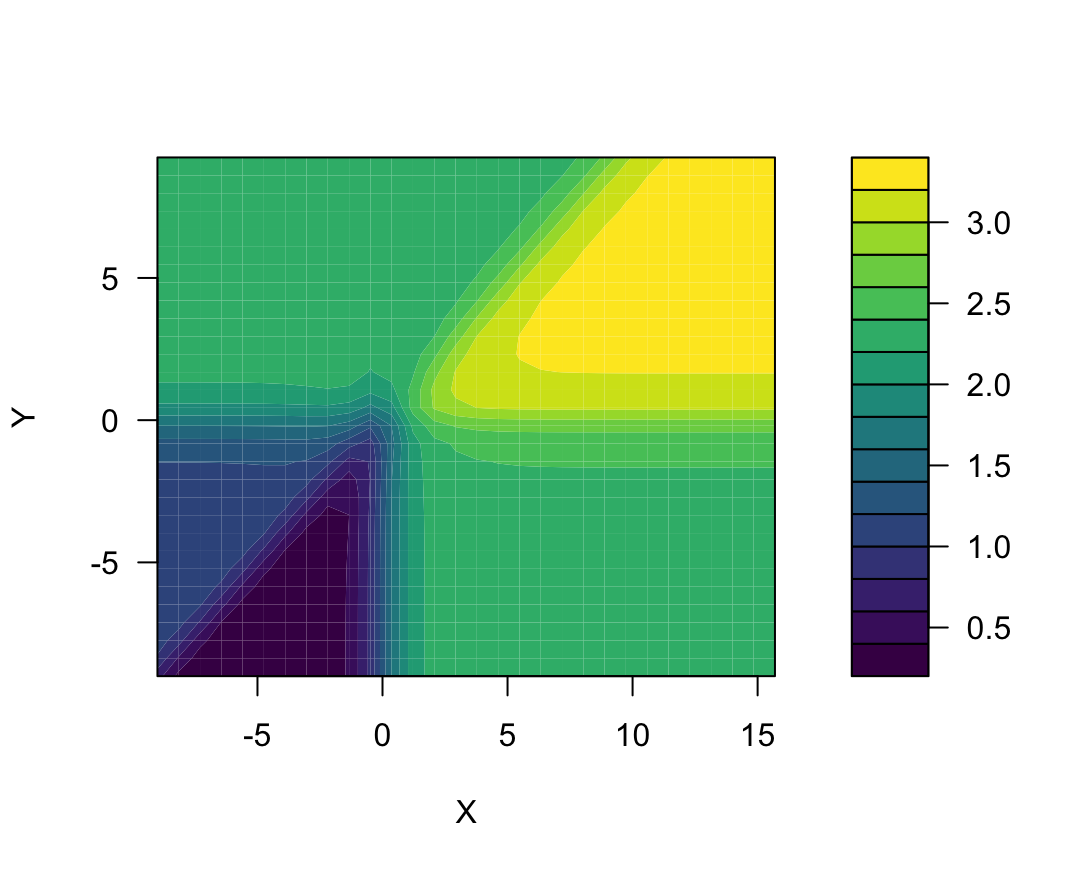} 
        \caption{Contour plot of   $ \widehat V^{\psi,\text{rel}}$ }
        \label{fig: contour}
    \end{subfigure} 
    \begin{subfigure}{0.44\textwidth}
        \centering
       \includegraphics[width=\linewidth]{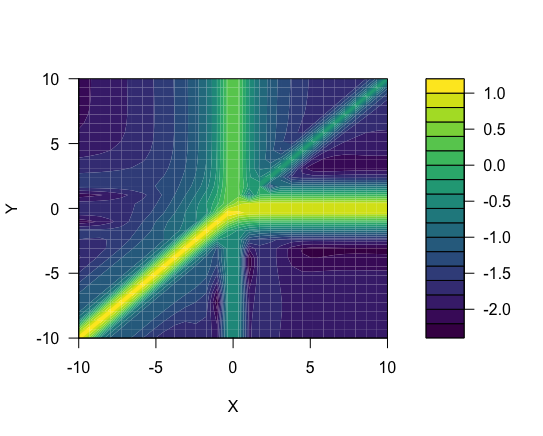} 
        \caption{Contour plot of the logarithm of the $\ell_2$ norm
of the gradient of  $ \widehat V^{\psi,\text{rel}}$ }
        \label{fig:contour: grad}
    \end{subfigure}
   \caption{{\bf Plots related to  $ \widehat V^{\psi,\text{rel}}$ for the toy example in Section \ref{sec: implementation}.} The formula of $ \widehat V^{\psi,\text{rel}}(x,y)$ is provided in \eqref{opti: value fn: toy data} and the toy data is provided in Table \ref{table:toy_data}. 
   In plot (c), the logarithm base is 10.  Higher negative values in this plot indicate plateau regions, where the gradients become  close to zero.}
    \label{fig:contour plot}
\end{figure}

 Finally, multiple Fisher consistent surrogates are available for SDSS. One way to combine several surrogates is through an ensemble approach, where we estimate the optimal DTR using multiple $\psi$'s, and the final treatment assignment for each subject is determined by the majority vote. However, ensembling may increase variance, particularly with non-linear policy classes (see Section \ref{sec: simulation} for details).
\begin{figure}[!htb]
     \centering
    \begin{subfigure}{0.47\textwidth}
        \centering
        \includegraphics[width=\linewidth]{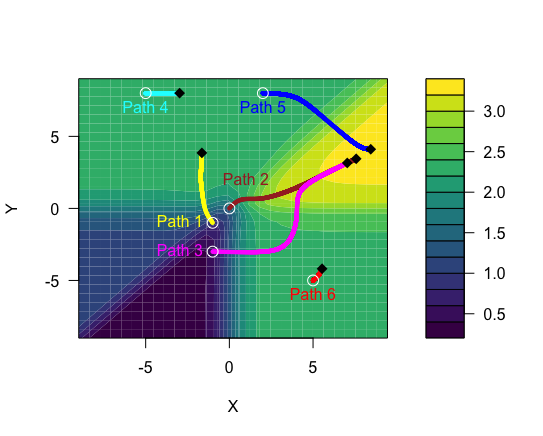} 
        \caption{Vanilla gradient descent (learning rate 0.05)}
        \label{fig: grad descent 0.05}
    \end{subfigure}
    ~
     \begin{subfigure}{0.47\textwidth}
        \centering
        \includegraphics[width=\linewidth]{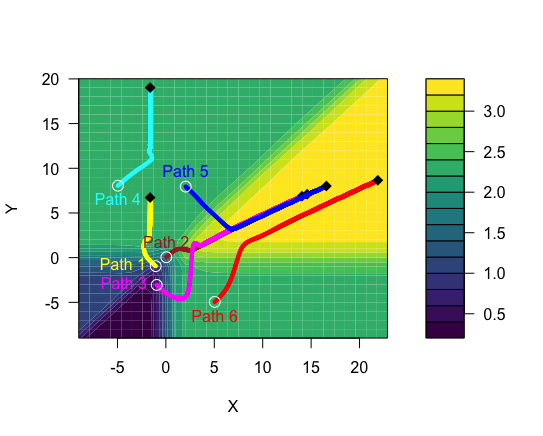} 
        \caption{ADAM without minibatches  (learning rate: 0.05)}
        \label{fig: ADAM 0.05}
    \end{subfigure}
    ~
     \begin{subfigure}{0.47\textwidth}
        \centering
      \includegraphics[width=\linewidth]{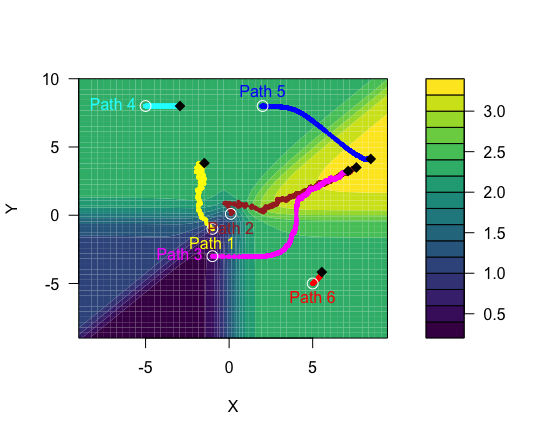} 
        \caption{SGD (learning rate: 0.05) }
        \label{fig: sgd lr 0.05}
    \end{subfigure} 
    ~
     \begin{subfigure}{0.47\textwidth}
        \centering
      \includegraphics[width=\linewidth]{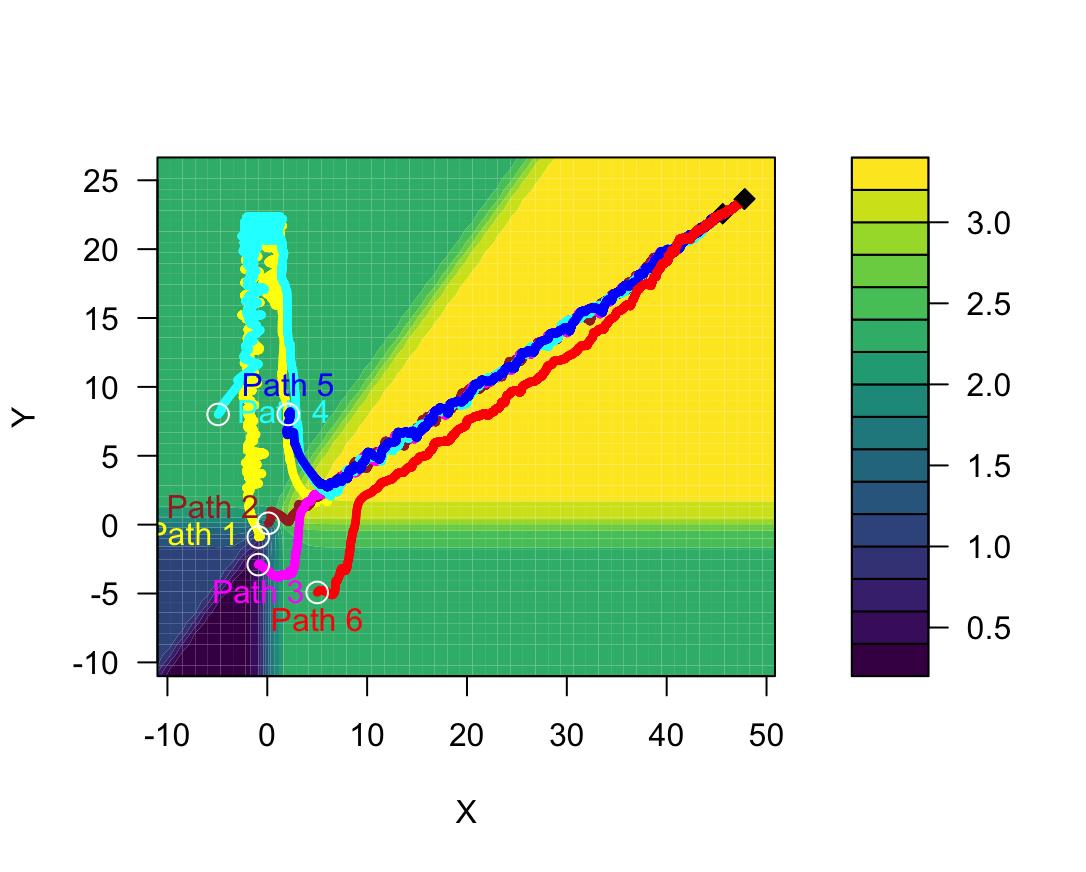} 
        \caption{ADAM with SGD (learning rate 0.10) }
        \label{fig: ADAm w SGD lr 0.10}
    \end{subfigure} 
     \caption{{\bf Gradient descent for the toy data in Section \ref{sec: implementation}}  The plots display the paths traced by iterates initiated from 6 different initialization points for (a) vanilla gradient descent, (b) ADAM (without minibatching), (c) SGD, and (d) ADAM with SGD. The white circle and the solid black rectangle mark the starting point and the end point of the paths, respectively. Although the algorithms minimize $-\hVr$, the paths are shown on the contour plot of $\hVr$ (also provided in Figure~\ref{fig: contour}).  The legend to the right presents the color scale. The yellow region indicates the optimal plateau, where the objective value is close to the optimum.      The first three plots used 5,000 iterations, while the last used 20,000. The SGD batch size was one. The ADAM parameters  were set to their default values as specified in Algorithm~\ref{alg:multi_stage_opt}.
     }
    \label{fig: grad descent plots}
\end{figure}

\section{Empirical analysis}
\label{sec: empirical}
As discussed in Section \ref{sec: implementation},  we recommend   SDSS  as an alternative to the conventional methods only in settings where the sample size is large and modeling may be challenging. Therefore, we focus only on such scenarios. 
Despite the non-convexity discussed in Section \ref{sec: implementation}, our optimization strategy involving ADAM, random restart, and minibatching consistently allowed SDSS to find high-value policies in these simulation settings, often outperforming comparators. We evaluate the performance of our method both in synthetic datasets (Section \ref{sec: simulation}) as well as EHR data on septic patients (Section \ref{sec: application}).   To investigate the benefit of our   SDSS  in such scenarios,  we compare   SDSS  with  two well-known DTR learning methods: (i)  Q-learning \citep{Watkins1989} and (ii) the Adaptive Contrast Weighted Learning or ACWL  \citep{tao2017adaptive}.

\subsection{Simulations}
\label{sec: simulation}

We consider two simulation schemes, each comprising two or three settings.
\paragraph*{Scheme 1}
It is a two-stage scenario with treatments  \(A_1, A_2 \in \{1, 2, 3\}\), assigned uniformly at random with  \(
\pi_1(A_1 = i \mid H_1) = \pi_2(A_2 = i \mid H_2) = 1/3 \) for $i\in\{1,2,3\}$. Therefore, $k_1=k_2=3$. The baseline covariates are
\[
O_1 = (X_{11}, X_{12}, X_{13}) \sim N(\mathbf{0}_3, 10 I_3),
\]
which have a substantial spread due to their high variance. 
 The first-stage outcome model is linear in the covariates. Specifically, 
\[
Y_1 = A_1 (X_{11} + X_{12} + X_{13}) + 3 + \varepsilon_1, \text{ where }
\quad \varepsilon_1 \sim N(0, 1)
\]
is the random error independent of the treatments and covariates. We let $O_2\sim N(0,1)$. 
The second-stage outcome model, which  includes a nonlinear component, is given by
\[
Y_2 = \omega X_{1A_2}^2  + O_2 + 3 + \varepsilon_2,
\]
where $\omega>0$ is a simulation parameter  and $\varepsilon_2\sim N(0,1)$ is the random error, which is  independent of the other random variables. 
Larger values of \(\omega\) increase the variance of \(Y_2\), as well as the nonlinear contribution of the second stage to the value function. Therefore, as we shall see, modeling  the Q-functions becomes more challenging as $\omega$ increases.  
Hence, we will refer to $\omega$ as the complexity parameter. 
We let $\omega$ vary in $\{10,20,40\}$, generating three distinct settings.
The optimal policies are given by
\[
d_1^*(H_1) = \begin{cases}
    3 & \text{ if }X_{11} + X_{12} + X_{13} > 0\\
    1 & \text{ o.w.}.
\end{cases}\quad\text{ and }\quad d_2^*(H_2) = \argmax_{i \in \{1,2,3\}} X_{1i}^2.
\]
According to our definition of linear policies in Section \ref{sec: set-up}, 
 $d_2^*$ is non-linear since the corresponding class scores are quadratic, but $d_1^*$ is linear because the corresponding class-scores are linear.

\paragraph*{Scheme 2}

This scheme has high-dimensional covariates. The baseline covariate \(O_1 \in \mathbb{R}^{n \times p}\) is distributed as a centered  $p$-variate Gaussian random vector with identity covariance matrix, where \(p \in \{50, 100\}\). The covariate space of the sepsis data in Section \ref{sec: application} has similar dimension. 
We provide the stage-specific outcome models of Scheme 2 below:

\[
Y_1 
= \slb\myD^{(1)}_{A_1} \sin(O_1^T \mathbf{1}_p)\srb^2 
  + \myE^{(1)}_{A_1} \cos(O_1^T \mathbf{1}_p) 
  + \varepsilon_1,
\]

\[
Y_2 
= \slb\myD^{(2)}_{A_2} \cos(O_1^T \mathbf{1}_p)\srb^2 
  + \myE^{(2)}_{A_2} \sin(O_1^T \mathbf{1}_p)
  + \varepsilon_2,
\]
where \(\varepsilon_1, \varepsilon_2 \sim N(0, 1)\). The stage-specific parameters $\myD^{(1)},\myD^{(2)},\myE^{(1)},\myE^{(2)}\in\RR^3$,  whose values are provided in  Supplement \ref{supp: simulation}. Here we  remind the readers that for a vector $\mx\in\RR^k$, $\mx_i$ represents its $i$-th element.   Scheme 2 outcome models are non-linear functions of the history.  
The resulting optimal treatment assignments are also non-linear. Their formulas are provided in Supplement~\ref{supp: simulation}, which shows that, at both stages, the optimal assignments $d_t^*(H_t)$ depend solely on $O_1^T \mo_p$ and take values between two and three.  Figure \ref{fig:scheme 2 optimal treatments} plots the optimal treatment assignments as a function of $O_1^T\mo_p$. As can be seen from Figure \ref{fig:scheme 2 optimal treatments}, for each stage $t$, the regions where the optimal treatment is two (or three) are disconnected sets  of the form $\cup_{i=1}^\infty\{h_t\in\H_t: c_i\leq o_1^T\mo_p\leq c_i',\ o_1\subset h_t\}$ where $\{c_i\}_{i\geq 1}$ and $\{c_i'\}_{i\geq 1}$ are sequences of distinct numbers. The disconnected components appear due to the periodic nature of the outcome models. Decision boundaries of this form are hard to approximate with linear policies, but they are  approximable with suitable non-linear policies.

\begin{figure}[H]
    \centering
    \includegraphics[width=\linewidth]{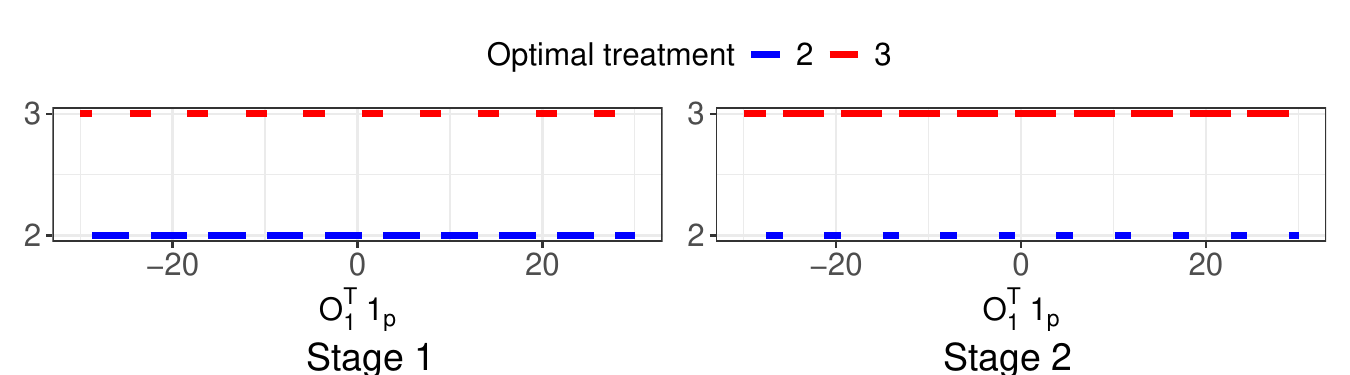}
    \caption{{\bf Optimal treatment assignments for Scheme 2 as a function of $O_1^T\mo_p$.} As mentioned in Section \ref{sec: simulation}, the optimal treatment assignments  in Scheme 2 are functions of  $O_1^T\mo_p$ only. The X-axis represents $O_1^T\mo_p$, while the Y-axis indicates the corresponding optimal treatment. Here, the optimal treatment assignments are either two or three, which are colored blue and red, respectively.   
    }
    \label{fig:scheme 2 optimal treatments}
\end{figure}

\emph{Propensity scores:} Here the propensity scores depend on the history. For $i\in[3]$, let us define the function $ \myeth_{t}:\H_t\times[3]\mapsto\RR^p$ as
\[
  \myeth_1(H_t;i) =
\begin{cases}
\mathbf{1_p} & \text{if } i = d_t^*(H_t) \\
-\mathbf{1_p} & \text{if } i \neq d_t^*(H_t).
\end{cases} 
\]
The propensity scores are given by
\begin{gather*}
   \pi_1(A_1 \mid H_1) = \frac{\exp\left(  O_1^T \myeth_1(H_1;A_1) \right)}{\sum_{i=1}^{3} \exp\left(  O_1 \myeth_1(H_1;i) \right)}\\
   \pi_2(A_2 \mid H_2) = \frac{\exp\left( O_1^\top \myeth_2(H_2;A_2) + 0.5 A_1 + 0.5 Y_1 \right)}{\sum_{i=1}^{3} \exp\left( O_1^\top \myeth_2(H_2,i) + 0.5 A_1 + 0.5 Y_1 \right)}.
\end{gather*}

\subsubsection{Common simulation setup}
For each of the 5 simulation settings resulting from the  2 schemes, we consider two sample sizes, \(n = 5,000\) and \(n = 15,000\). For each sample size and each setting, we generate 100  Monte Carlo samples. We compare the  methods using their value functions. We also use the value function of the optimal DTR as a benchmark. We estimate the value function of each DTR using an independent held-out sample of size \(n = 10,000\),  using the  formula $V(d)=\E_{d}[\sum_{t=1}^TY_t]$. 
  To ensure comparability across methods, the held-out set was the same for all methods in each Monte Carlo replication.

\begin{figure}[!htb]
    \centering
    \includegraphics[width=\textwidth]{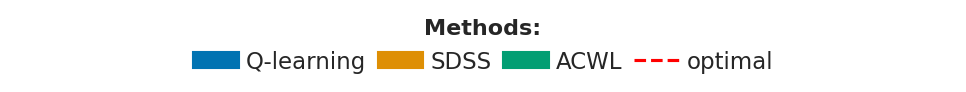}\\[1ex]
    \begin{subfigure}[b]{.49\textwidth}
        \includegraphics[width=\textwidth,height=1.5in]{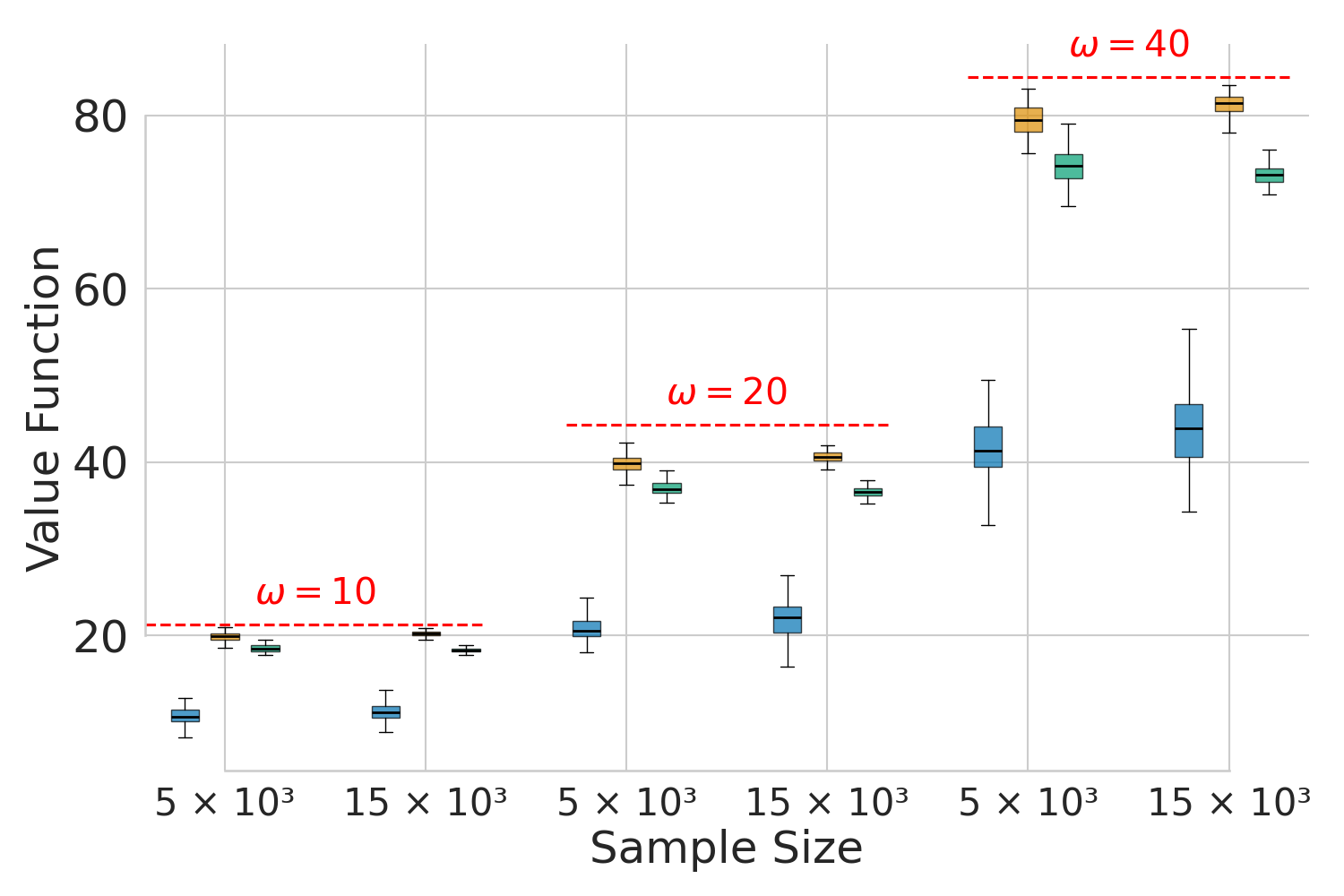}
        \caption{Non-linear policies (Scheme 1)}
        \label{fig:setup1_nonlinear}
    \end{subfigure}
    \hfill
    \begin{subfigure}[b]{.48\textwidth}
        \includegraphics[width=\textwidth,height=1.5in]{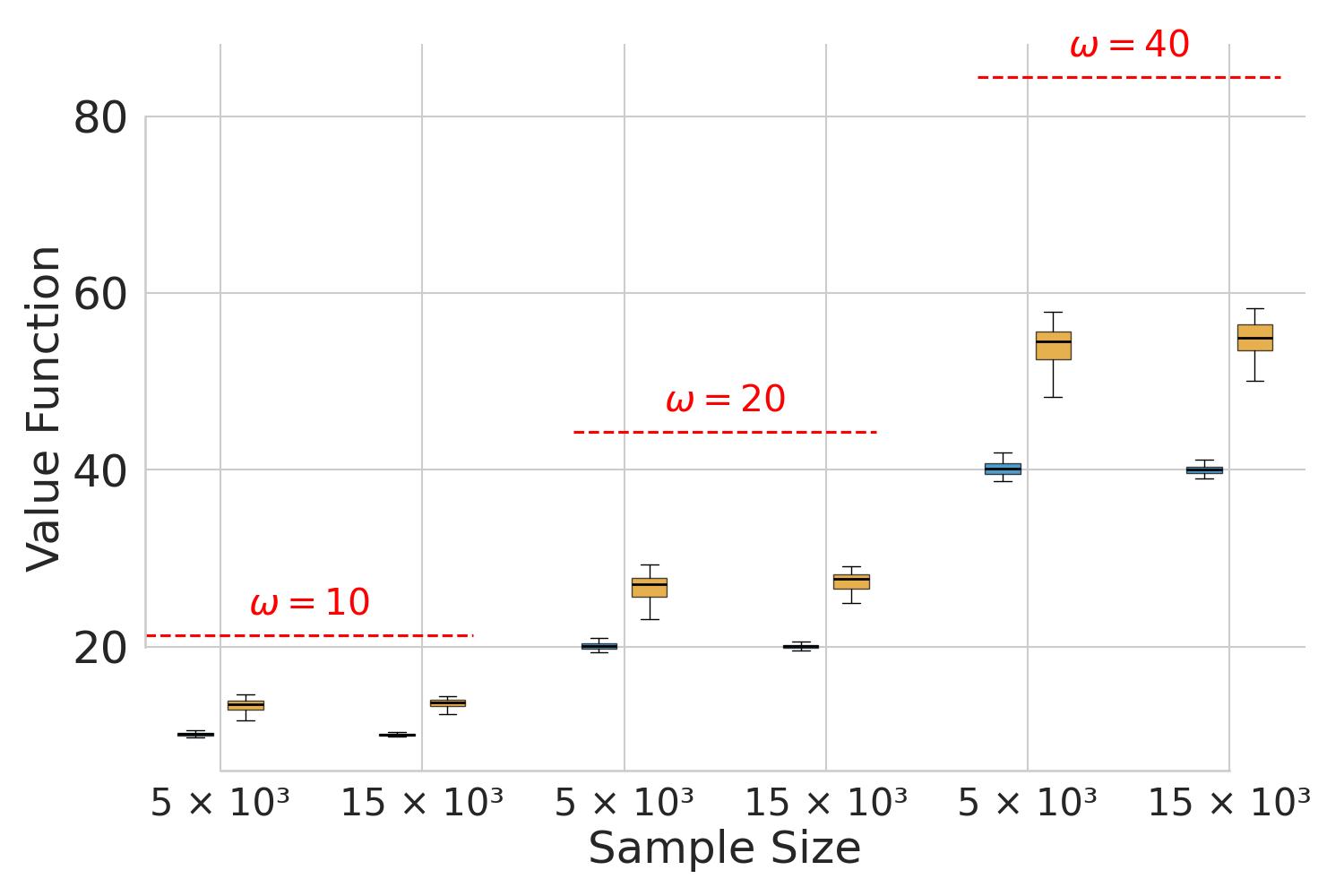}
        \caption{Linear policies (Scheme 1)}
        \label{fig:setup1_linear}
    \end{subfigure}
    \\
    \begin{subfigure}[b]{.49\textwidth}
        \includegraphics[width=\textwidth,height=1.5in]{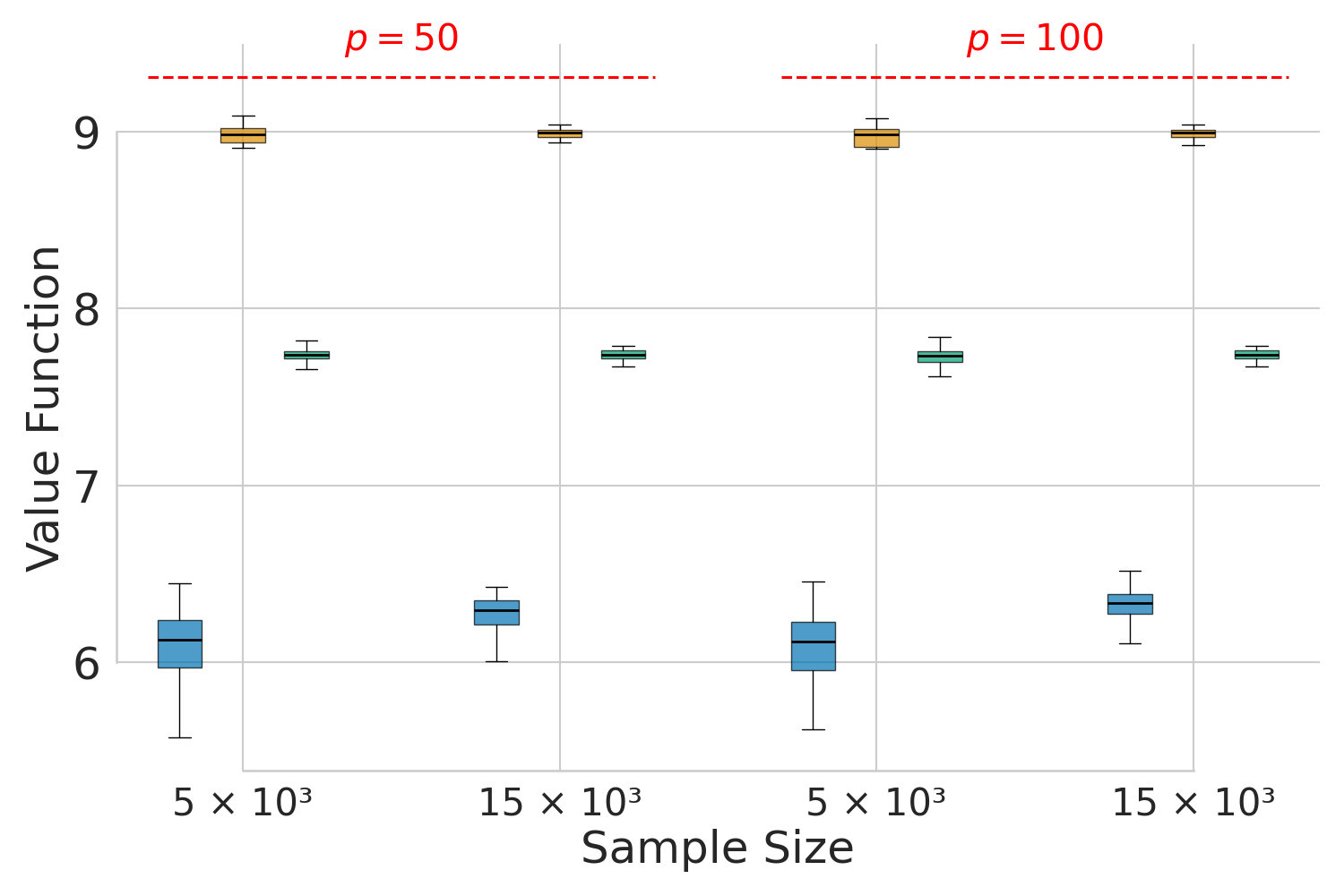}
        \caption{Non-linear policies (Scheme 2)}
        \label{fig:setup2_nonlinear}
    \end{subfigure}
    \hfill
    \begin{subfigure}[b]{.48\textwidth}
        \includegraphics[width=\textwidth,height=1.5in]{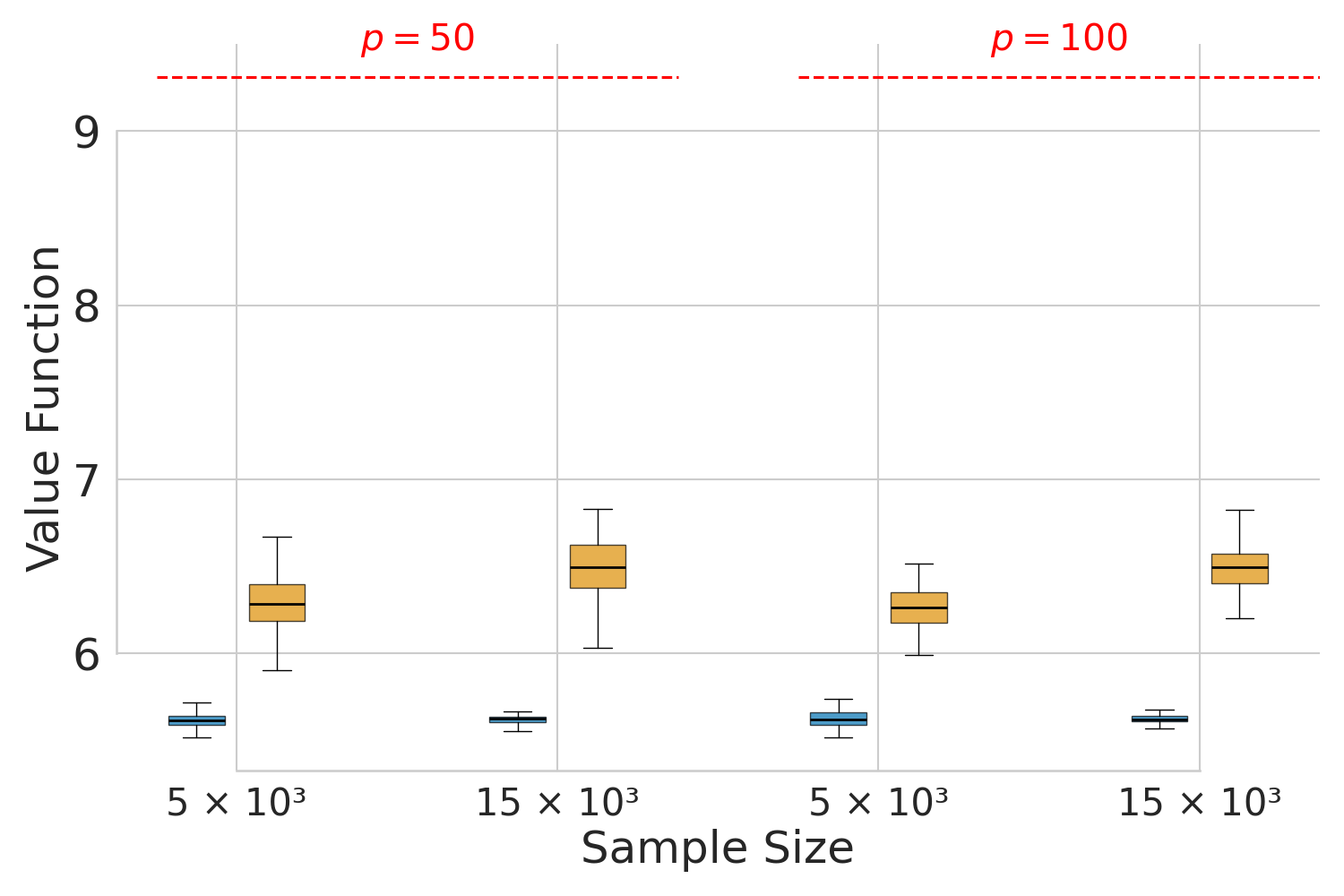}
        \caption{Linear policies (Scheme 2)}
        \label{fig:setup2_linear}
    \end{subfigure}
    \caption{\textbf{Boxplot of value functions:} Boxplot of estimated value function of different methods for 100 replications. The value functions in Scheme 1 are scaled by $10^{-2}$ for better visual comparability. 
     The X-axis indicates the sample size and the Y-axis represents the average value function over 100 replications. Each box shows the interquartile range (IQR), with edges at the 25th and 75th percentiles and a line for the median. Whiskers extend to 1.5$\cdot$IQR.
    The red dashed line represents the optimal value function. In Scheme one, $\omega$ corresponds to the complexity parameter, and in Scheme two,  $p$ corresponds to the dimension of the baseline covariate space.
    }
    \label{fig:simulation}
\end{figure}

\subsubsection{Methods}
 As mentioned previously, we benchmark the performance of SDSS against Q-learning and ACWL \citep{tao2017adaptive}. 
 All three methods, SDSS, Q-learning, and ACWL, are considered for nonlinear policies. The non-linear  DTR for SDSS  and Q-learning is implemented using deep neural network. For  ACWL, we use its default policy class  based on classification and regression trees (CART), which is nonlinear. Since currently ACWL  only uses tree-based policies, we exclude it from comparisons involving linear policies. For Scheme 1, whose first-stage outcome model includes  treatment-covariate interaction terms, we incorporated first-order interaction terms between treatment and covariates in the linear model used for Q-learning. In contrast, no interaction terms were included for Scheme 2 due to the high dimensionality of the covariates. The main implementation details for the methods are provided below.

\paragraph*{SDSS} SDSS was implemented using  Algorithm \ref{alg:multi_stage_opt} with 4:1 training:validation split. In case of the  neural network policies, the networks for the relative class score $g_{ti}$'s share parameters for each stage $t$, differing only at the outer layer of each stage.   This strategy was adopted to help reduce the burden of parameters.  For simplicity, we also used the same network configuration, e.g., depth, dropout rate, width, etc., for each relative score. SDSS was implemented using PyTorch, where the linear policy was implemented using neural network with no activation functions.  The SDSS hyperparameters are provided in Supplement  \ref{appendix:simulationParameters}.   The performance of SDSS was generally robust to moderate changes in hyperparameters such as initial learning rate, number of epochs, and the optimization parameters from Algorithm \ref{alg:multi_stage_opt}. Most optimization-related parameters were fixed throughout the simulations (see Table~\ref{tab: common parameters sim} in the Supplement), using default values from Algorithm~\ref{alg:multi_stage_opt}. For neural network policies, ReLU and ELU activation functions yielded the best, and comparable, results. In this case, varying the number of hidden layers between one and three, we observed the best performance in  Scheme 1 with one layer, while the high-dimensional Scheme 2  saw the best performance with three layers.   In practice, we recommend tuning the neural network  hyperparameters such as the number of layers, dropout rate,  learning rate, etc., by monitoring training and validation error curves. For the optimization-related hyperparameters, the values in Supplementary Table~\ref{tab: common parameters sim} may be used. The number of random restarts was capped at three in all replications. 
 The code for our simulation section, including the implementation of SDSS,  is available at the Github repository \cite{sdss-simu}. 

\subparagraph*{Choice of surrogate} 
Although the approximation and estimation error bounds for the kernel-based surrogates are sharper due to their symmetry property, they were outperformed by the product based surrogates in our simulations. We suspect that the simpler form of the product-based  surrogate may have optimization-related advantages, which makes this surrogate less sensitive to hyperparameters. 
 In view of the above, we present our results only for the product-based surrogate, where we take 
$
\tau(x) = 1+x/\sqrt{1+x^2}
$
as it performed slightly better than the other $\tau$'s we considered, which are 
$1 + \tanh(5x)$, $1 + x/(1+|x|)$, and $1 + 2\arctan((\pi x)/2)/\pi$.
That said, all four choices led to comparable performance.
If we ensemble these four surrogates using the strategy mentioned in Section \ref{sec: practical guidelines},  the   value function of the  estimated policy remains comparable to the above-mentioned best-performing $\tau$. However,  ensembling slightly increases  the variance of  $V(\widehat d)$ for  nonlinear policy classes, likely due to the higher parameter burden in this case.  Figure \ref{fig:setup2ensemble} in Supplement \ref{sec: simulation} illustrates this under  Scheme 2.

\paragraph*{Benchmark methods}
Both Q-learning and ACWL are stagewise methods relying on backpropagation. While Q-learning is purely model-based, ACWL uses both modeling and single-stage direct search. Q-learning requires modeling the Q-functions whereas ACWL require modeling its contrasts, both of which can become challenging when the outcome models are complex. However, since ACWL's stage-wise direct search is based on a doubly robust construction, it is supposed to be less affected by model misspecification as long as the propensity scores are consistently estimated. Here, the true propensity scores are provided to ensure comparability with SDSS.
\vspace{5pt}
\subparagraph*{\underline{Q-Learning}}
Similar to   SDSS, we implemented both linear and nonlinear versions of Q-learning using PyTorch. The nonlinear Q-learning uses a feed-forward ReLU neural network with two hidden layers with 64 and 32 neurons per layer. To ensure comparability with SDSS,   we used  the ADAM optimizer   with random restart and learning rate scheduler. We considered a training-validation split of $4:1$, because, similar to SDSS, the learning rate was updated when validation loss becomes stagnant. 
 More details on the network architecture and optimization hyperparameters are provided in Supplement \ref{supp: Q-learning simulation parameters}.  Q-learning was generally robust to moderate changes in neural network and optimization-related hyperparameters, and performed the best with ReLU networks with three hidden layers for the neural network policies.

\vspace{5pt}

\subparagraph*{\underline{ACWL}}\cite{tao2017adaptive}'s 
 ACWL is a stagewise method, which requires estimating contrasts of  Q-functions. ACWL uses tree-based policies to  directly optimize a doubly robust estimator of  single-stage value functions. 
We implemented ACWL using the authors’ publicly available R code, which uses the \texttt{rpart} package.   We implemented ACWL using the default setting. The provided functions for ACWL also do not take CART hyperparameters as input. 

  \subsubsection{Results}   
  Among the methods, Q-learning was fastest (4–5 seconds), followed by SDSS (7–8 seconds), and ACWL was slowest (about 11 seconds); overall, the runtimes were broadly comparable. The simulations were run on a MacBook Pro with a 2.6 GHz 6-core Intel Core i7 processor and 16GB RAM.
  
Figure \ref{fig:simulation} displays the boxplots of the  value function estimates. 
We discuss the case of non-linear policies first. Q-learning has the lowest value-function estimate, with the highest Monte Carlo variance, among all methods. This observation indicates that Q-learning may still be affected by model-misspecification even when we use universal approximation classes such as deep neural networks and fairly large sample sizes.  Although the stagewise method  ACWL is competitive with   SDSS, on average, the value function estimates of   SDSS  are consistently the highest among all methods, underscoring the advantage of simultaneous optimization in our simulation settings. Figure \ref{fig:setup1_nonlinear} shows that the gap between SDSS's  and the other methods' value function estimates slowly increases as the complexity parameter $\omega$ increases. To this end, note that both the range and variance of $Y_2$ increase with $\omega$. Since SDSS uses a bounded surrogate loss, it may be potentially less sensitive to higher variance in  $Y$ compared to the other methods, which use the squared error loss for modeling purposes. In a related context, bounded non-convex surrogate losses have been empirically demonstrated to offer increased robustness to label noise in classification tasks
\citep{natarajan2013learning,akhtar2024hawkeye}.

When considering linear policies,  Q-learning has lower variance than   SDSS, as shown by Figure \ref{fig:setup1_linear} and \ref{fig:setup2_linear}. This is unsurprising because being a linear-regression-based method,   linear Q-learning is computationally simpler than linear   SDSS, which still involves a non-convex optimization. In general, the value functions for linear policies are smaller than those of non-linear policies for both methods, which is expected because the optimal DTR is non-linear under both schemes. The value function estimates of SDSS are consistently higher than that of  Q-learning, suggesting that SDSS is less affected by the  policy class misspecification in our simulation settings. In the binary treatment setting, \cite{zhao2015,jiang2019entropy,Laha2024} also observed that direct-search-based methods often outperform or match Q-learning when the policy class is misspecified. 
As in the nonlinear case,  the performance gap between   SDSS and Q-learning increases as the complexity parameter $\omega$ increases in Scheme 1, demonstrating the advantage of SDSS as the variance of $Y$ increases. 

These  findings indicate that in  our simulation settings,   SDSS is either competitive to or exhibit better performance than the other methods, both with linear and non-linear policies.

\subsection{Application to an EHR Study of Sepsis}
\label{sec: application}
Sepsis is a critical condition where the body's immune response to infection causes severe, and potentially lethal, complications. Intravenous fluid (IV) resuscitation is often used to manage severe cases of sepsis \citep{Raghu}. Due to the significant variability in how sepsis affects individuals, personalized treatment approaches for IV fluid resuscitation are recommended   \citep{Lat2021, Komorowski2018}. Despite its critical importance, there still remains a lot of uncertainty about the best ways to administer  intravenous (IV) fluid, particularly across diverse patient subgroups and clinical conditions \citep{marik2015demise}. In this section, we investigate the effectiveness of SDSS and the comparators in estimating the optimal IV fluid resuscitation DTR using the Sepsis3 data from the Medical Information Mart for Intensive Care
version III (MIMIC-III) database \citep{Johnson2016MIMICIII}. It is an openly available  dataset comprised of
patients admitted to the Beth Israel Deaconess Medical Center.  Following  \cite{Raghu}, we use arterial lactate levels  to assess the effectiveness of IV-fluid resuscitation policies  since its elevated levels indicate worsening of sepsis.
Since the true value function of any policy/DTR is unknown,  we will estimate it using the doubly robust Augmented Inverse Probability Weighted (AIPW) estimator of \cite{kallus2020double}  \citep[][]{Jiang2016, Thomas2016}.

\paragraph*{Study Cohort and Data Description}
 We consider policies over two stages. 
We extracted a cohort of adult ICU patients from MIMIC-III, restricting the analysis to individuals with recorded IV fluid administration and relevant clinical measurements at two critical time points: admission (Stage 1) and four hours post-admission (Stage 2). The data was cleaned and pre-processed according to \cite{alistair_johnson_2018_1256723}.   
 We discretized the rate of IV fluid transfusion (in mL/Kg body weight) to apply our methods, which is a common practice in reinforcement learning \citep{Raghu,Komorowski2018}. 
We considered three treatment levels: no IV (no fluid administered), moderate (0-100 mL/Kg body weight), and high ($>$ 100 ml/Kg body weight). Therefore, $k_1=k_2=3$ in both stages.  
 Since lower values of lactate levels are associated with effective sepsis management, we transformed the lactate level $L_{\text{arterial}}$  as \(
\tilde{L}_{\text{arterial}} = 4 \, (22 - L_{\text{arterial}})
\) to use it as the reward or outcome. This transformation was selected because the original arterial lactate values range from 0.2 to 21.06. This transformation ensured a positive outcome variable with a broader range.

The covariates include diverse physiological measurements and laboratory values. 
A total of 98.42\% of patients had at least one missing value in their trajectory, with 27.95\% missing more than 50\% of their trajectory. 
Covariates  with more than 70\% missingness, e.g., temperature and PaO\(_2\)/FiO\(_2\) ratio, were excluded. 
Similar to prior works such as \cite{Raghu,johnson2018comparative} that used the Sepsis3 repository,  we imputed the remaining  missing values  using K-nearest neighbors (KNN) imputation with $K$=\textcolor{red}{3}.  Then, we estimated the stagewise propensity scores $\pi_t(A_t\mid H_t)$ of each patient  using multinomial logistic regression, and removed all patients with  propensity score $<$ 0.15 at any stage,  leading  to the exclusion of about 5\% patients. Clipping the observations with extreme probability estimates is  recommended for improving the prediction of logistic regression \citep{hirano2003efficient}. In our case,  this step also helps stabilize the inverse propensity scores, which are required not only during the training of SDSS and ACWL, but also at the testing phase, for estimating the value function using \cite{kallus2020double}'s AIPW estimator. 
After data pre-processing, we obtained a final sample of 16551 patients with complete data on 45 and 42 covariates in the first and second stages, respectively. The complete covariate list is provided in Supplement \ref{sec: list of covariates}. In the final data, percentages of patients who got no IV, moderate, and high dose of IV in stage 1 were  65.23\%, 12.80\%, and  21.97\%, respectively. The 
 corresponding percentages in stage 2 were 
37.51\%, 19.59\%, and  42.90\%.

\paragraph*{Training and testing}
We reshuffled the dataset 100 times and, after each shuffle, split the data into training and testing sets with a 50-50 ratio, resulting in 100 replications. 
In each replication, the training subset was used for training the policies following the procedures in Section \ref{sec: simulation}, and the test set was used exclusively for  estimating the value function of the trained policies. The hyperparameters for SDSS and Q-learning are provided in Supplement \ref{appendix:applicationParameters}. During training, the maximum number of random restarts was set to  six in all replications. 
 When $T=2$ and $k_1=k_2=3$, \cite{kallus2020double}'s AIPW estimator of the value function takes the form
 \begin{align}
 \label{def: AIPW}
     \widehat{V}_{AIPW}(d) = &\ \mathbb{P}_n\left[\widehat{Q}^d_1(H_1, d_1(H_1))\right] \nn\\
     &\ + \mathbb{P}_n \left[   \frac{1[A_1 = d_1(H_1)]}{\widehat\pi_1(A_1\mid H_1)} \left(Y_1 - \widehat{Q}^d_1(H_1, A_1) + \widehat{Q}_2(H_{2}, d_{2}(H_{2}))\right) \right]\nn\\
     &\ +  \mathbb{P}_n \left[  \prod_{i=1}^2 \frac{1[A_i = d_i(H_i)]}{\widehat\pi_2(A_i \mid H_i)} \left(Y_2 - \widehat{Q}_2(H_2, A_2) \right) \right],
 \end{align}
where  $\widehat\pi_1(A_1\mid H_1)$ and  $\widehat\pi_2(A_2\mid H_2)$ are the propensity score estimators, \(\mathbb{P}_n\) is the empirical distribution corresponding to the test data, and for any DTR $d$ and $i,j\in[3]$, the functions
$\widehat Q_2(H_2, i)$ and $\widehat Q_1^d(H_1, j)$ are the estimators of $Q_2(H_2, i)=\E[Y_2\mid H_2, A_2=i]$ and 
\[  \quad Q^{d}_1(H_1, j)=\E\slbt Y_1+ Q_2(H_2, d_2(H_2))\ \mid\ H_1,A_1=j\srbt,\]
respectively. The functions $Q_2$ and $Q_1^d$ are referred to as the Q-functions under the DTR $d$ \citep{sutton2018}. The above Q-functions were estimated nonparametrically using a 3-layer feed-forward neural network on the test data, whose details can be found in Supplementary Tables \ref{tab: app: validation network} and \ref{tab: common parameters apps}. The value function estimates of  different methods were stable for moderate variations of the parameters of this 3-layer neural network. For $\widehat\pi_1(A_1 \mid H_1)$ and $\widehat\pi_2(A_2 \mid H_2)$, we used probability estimates obtained from the  multinomial logistic regression fitted on the full data.

  Figure \ref{fig:application} provides the boxplots of the value function estimates obtained from the 100 replications. As in Section \ref{sec: simulation}, ACWL was excluded from  linear policy computations.  The Python code for the data application is available in the GitHub repository cited as \cite{sdss-app}.

\begin{figure}[!htb]
    \centering
    \includegraphics[width=\textwidth]{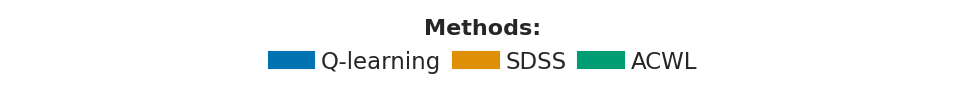}\\[1ex]
    \begin{subfigure}[t]{0.49\textwidth}
        \centering
        \includegraphics[width=\textwidth]{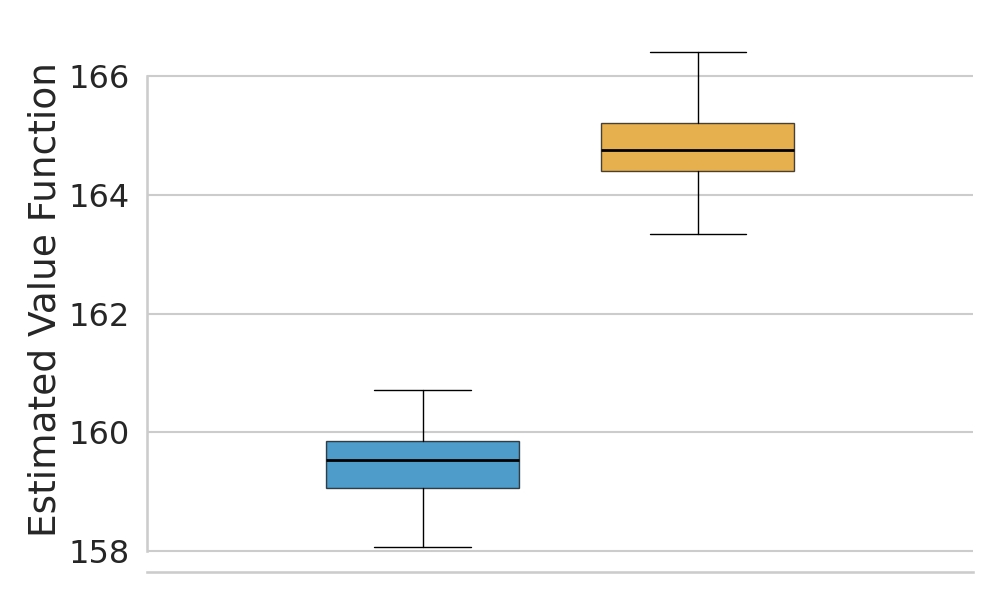}
        \caption{Linear policies}
    \end{subfigure}
    \hfill
    \begin{subfigure}[t]{0.49\textwidth}
        \centering
        \includegraphics[width=\textwidth]{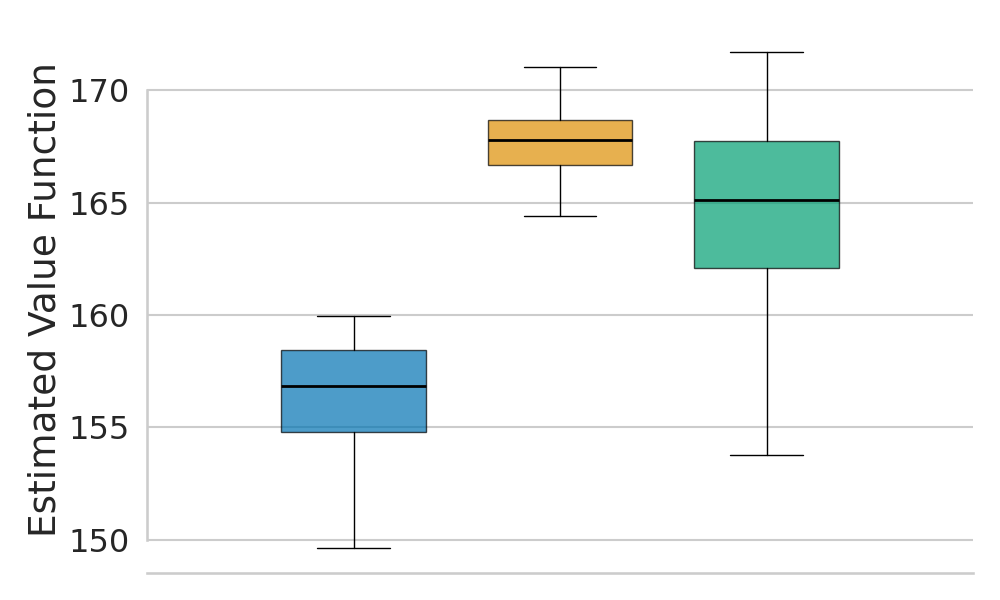}
        \caption{Nonlinear policies}
    \end{subfigure}
    \caption{{\bf Box plots of estimated value functions for the sepsis data.}  The y-axis shows AIPW estimates of the value function across 100 replications generated by reshuffling the data as described in Section \ref{sec: application}. Each box extends from the 25th to the 75th percentile, with the median indicated by the central line. 
     The whiskers extend up to 1.5 times the interquartile range. }
    \label{fig:application}
\end{figure}

\paragraph*{Results}

 In Figure \ref{fig:application}, non-linear SDSS exhibits higher  value function estimates with lower variability than the non-linear comparators.  SDSS also outperforms Q-learning when using linear policies. These findings aligns with our simulations   in Section \ref{sec: simulation},  where we saw that SDSS can be a competitive method for  high-dimensional data. Among Q-learning, SDSS, and ACWL, Q-learning exhibits the lowest value function estimates, which also aligh with our simulations  in Section \ref{sec: simulation}. 
 It appears that the policy estimated by Q-learning is more conservative than those by ACWL and SDSS. To illustrate this, we examine the SOFA (Sequential Organ Failure Assessment) score, which quantifies organ dysfunction in sepsis \citep{do2023sequential}. Encoding treatment levels as 1 (no IV), 2 (moderate), and 3 (high), Figure~\ref{fig:sofa_treatment_escalation} shows that, under non-linear policies, the average treatment level of Q-learning increases rather slowly with SOFA score, when compared with ACWL or SDSS. 
Figure \ref{fig:sofa_treatment_escalation} also shows that the SDSS policy exhibits the steepest escalation in treatment intensity as severity increases, aligning  with clinical expectations. The ACWL policy also increases treatment intensity with severity.

 While the value function estimate for non-linear policies is slightly higher than that of the linear policies for SDSS, the difference is modest. Linear SDSS exhibits slightly lower variability, as it involves optimization over fewer parameters. This observation suggests that in real-world applications, where interpretable policies are desirable but model misspecification is a concern, SDSS with a linear policy may be  promising.
\begin{figure}[!htb]
    \centering
    \includegraphics[width=\textwidth]{Plots/legendT.png}\\[1ex]
    \begin{subfigure}[t]{0.49\textwidth}
        \centering
        \includegraphics[height=1.5in]{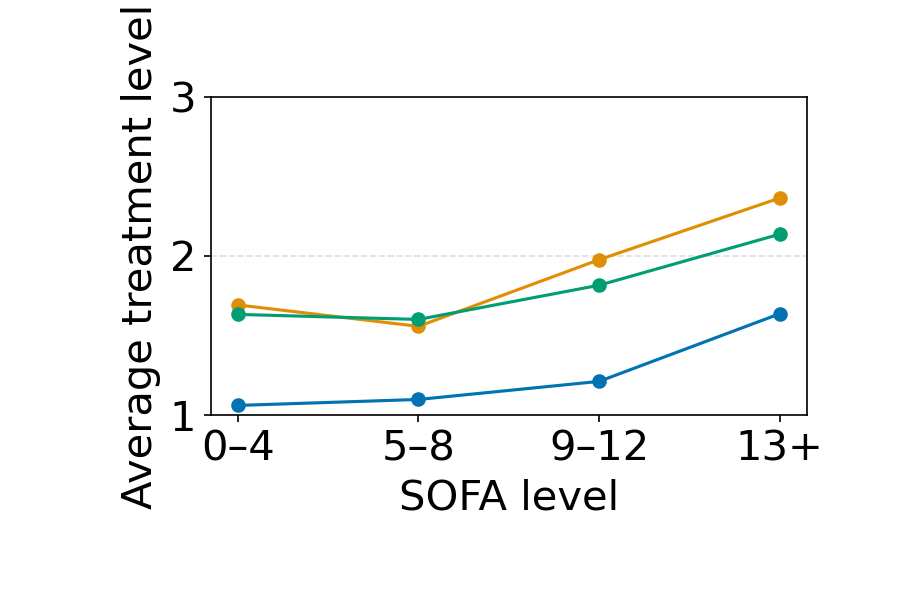}
        \caption{Stage 1}
    \end{subfigure}
    \hfill
    \begin{subfigure}[t]{0.49\textwidth}
        \centering
        \includegraphics[ height=1.5in]{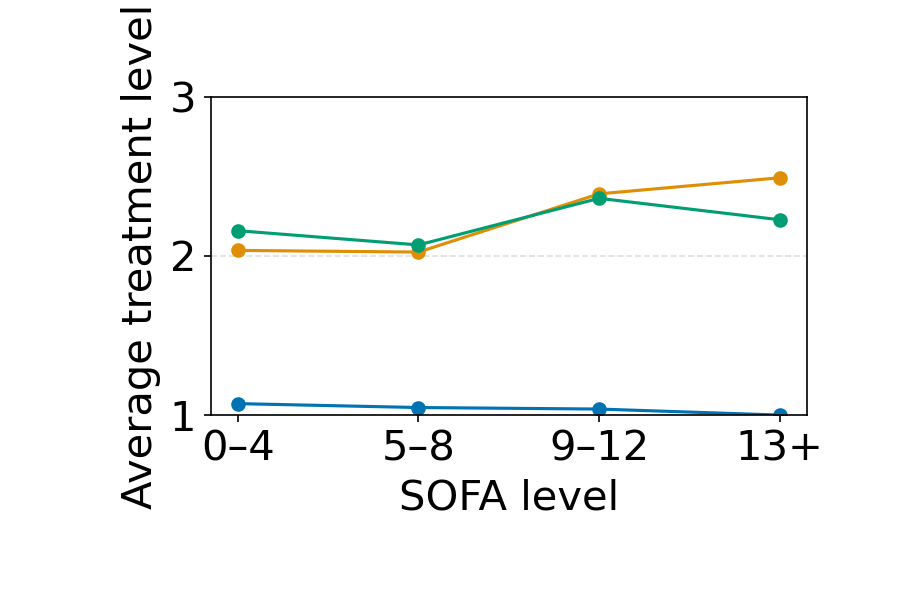}
        \caption{Stage 2}
    \end{subfigure}
    \caption{{\bf Plot of mean treatment level vs SOFA scores  in our data application for non-linear policies.}  
    We partition the SOFA score into four clinically meaningful bins, 0–4, 5–8, 9–12, and $\geq$13, corresponding to mild, moderate, severe, and critical organ dysfunction.  Treatment levels are encoded as 1 (no IV), 2 (moderate), and 3 (high). For each method and SOFA bin, the average treatment level, displayed on the Y axis, is computed using  the training set. Although SOFA scores range from 0 to 24, values above 13 were grouped due to their low frequency in the dataset.   }
    \label{fig:sofa_treatment_escalation}
\end{figure}

\section{Comparison with related literature}
\label{sec: related}
\subsection{Binary treatment setting} 
\label{sec: related: laha 2021}
Considerable work has been done on direct search methods for binary treatments, with extensions to specialized settings such as constrained and censored settings \citep{zhao2015, jiang2019entropy, liu2018augmented, zhao2020constructing, liu2024learning}. The key differences between the binary-treatment and the general DTR setting have already been discussed in Sections \ref{sec: introduction} and \ref{sec: set-up}. Here, we compare our work to \cite{Laha2024}, which is the most closely related to the current paper among works on the binary treatment setting.

\paragraph*{Comparison with \cite{Laha2024}} In \cite{Laha2024}, we characterized a class of margin-based, Fisher consistent surrogates for the sequential DTR classification in the binary treatment case, and provided sharp regret bounds for the resulting direct search method. 
In that paper, we also established the (Fisher)  inconsistency of margin-based, smooth, concave surrogates. Below, we highlight the major differences between \cite{Laha2024} and the present work.\\

 [1.]  The first major difference arises from the distinction between binary and multiclass classification, whose Fisher consistency literatures differ significantly. As discussed in Sections~\ref{sec: set-up} and~\ref{sec: convex loss}, binary classification—and by extension, binary-treatment DTRs— heavily exploit the embedding of the classes/treatments into $\{\pm 1\}$. As noted by \cite{zhou2022}, the absence of this structure complicates multiclass classification, and, consequently,  general DTR $ (k_t \geq 2)$ classification. As previously noted, the unavailability of margin-based losses in the general setting is a result of this.  More importantly,    
 tools for studying Fisher consistency in binary classification do not directly extend to multiclass settings, necessitating development of new techniques \citep{tewari2007,ramaswamy2016convex,neykov2016}. Likewise, while the insights from \cite{Laha2024} helped guide this work, their tools and methods were not directly applicable to the general setting.
     Nonetheless, we obtain regret decay rates  similar to \cite{Laha2024}'s, which indicates  that the general DTR problem is not inherently harder than the binary-treatment DTR case.

[2.]  Given the apparent failure of concave surrogates in the binary treatment case, \cite{Laha2024} speculated that DTR Fisher consistency might require stricter conditions on surrogate losses than in binary or multiclass classification, where concave, Fisher consistent surrogates exist. However, \cite{Laha2024} primarily focused on characterizing a class of Fisher consistent surrogates and did not fully explore necessary and sufficient conditions for Fisher consistency.
    Building on the initial findings of \cite{Laha2024}, the present paper shifts focus toward a deeper theoretical investigation of Fisher consistency by developing precise necessary and sufficient conditions. In Section~\ref{sec: necessity}, we show that for separable surrogates, DTR Fisher consistency indeed imposes strict restrictions on the image set of the surrogates. Moreover, \cite{Laha2024}'s surrogate losses satisfy these restrictions. In fact, they are a special case of our product-based surrogate losses (see Section~\ref{sec: relative margin based}).

 [3.] \cite{Laha2024} focused on the two-stage case ($T = 2$), whereas we address the general $T$-stage setting, which makes our theoretical analysis more technically challenging.

\subsection{ Direct search for general ($k_t\geq 2$) setting}
\label{sec: lit: existing research on kt geq 2}
As noted earlier, research on direct search in the general setting has been scarcer than in the binary treatment setting. The tree-based methods proposed by \cite{tao2017adaptive,tao2018tree} are among the few works in this category whose setting aligns with ours. Both methods follow the stagewise approach, which we have already extensively discussed  in Sections \ref{sec: introduction}, \ref{sec: simulation}, and \ref{sec: application}. Here, we instead focus on \cite{xue2022multicategory}, as it is, to the best of our knowledge, the only other DTR direct search paper adopting a simultaneous optimization framework. However, there are significant differences between \cite{xue2022multicategory} and our work.

\paragraph*{Comparison with \cite{xue2022multicategory}}
First,   \cite{xue2022multicategory}'s setting  and goal substantially differs from ours. Their focus is on censored survival data, with the goal of maximizing the conditional survival function of patients under censoring. While they demonstrate Fisher consistency of  their surrogate losses under conditions, exploring the theory of Fisher consistency was not a goal of that paper. 
Because the survival function is a probability, it factorizes into a product of stage-specific terms.
Taking the logarithm of this product yields a sum of $T$ terms, each corresponding to a different stage's treatment assignment.
This tensorization makes the discontinuous loss function additive in the $d_t$'s (the stage-specific treatment assignments). The clever work of \cite{xue2022multicategory} exploits this additive structure to gain a computational advantage. In contrast, our value function, which is the expected sum of potential rewards, does not tensorize.
Nevertheless, despite the advantage of tensorization,  \cite{xue2022multicategory}  requires non-convex surrogates, as we do. To this end, they adopt the angle-based $\pred$ function mentioned in Remark \ref{remark: angle-based framework}.

Second, the Fisher consistency results in \cite{xue2022multicategory} require the set of distributions $\mP$ (as denoted in our paper) to satisfy a set of inequalities involving discontinuous functions of the conditional survival probability (see their Condition 2). According to the authors, these restrictions ensure that receiving the optimal treatment increases the targeted survival probability at each stage. However, it is currently unknown whether these conditions are necessary, or whether they are verifiable under more standard assumptions of the DTR literature, as those in  \cite{zhao2015,murphy2005,kosorok2019,Robins2004}. These assumptions have no clear analogue in our setting and are therefore not directly comparable to ours.

 
\section{Discussion}
\label{sec: discussion}
In this paper, we have explored the limitations and possibilities of simultaneous direct search for finding the optimal DTR. When both the number of treatments and stages are arbitrary,  
simultaneous direct search reduces into an   optimization problem with a complex,  discontinuous loss.   
We begin by proving some negative results on the potential convexification of this problem through concave, Fisher consistent surrogates. Then, focusing on the class of separable surrogates, we show  that Fisher consistency  imposes restrictive geometric conditions on the 
 surrogates when $T\geq 2$. However, when $T=1$, the necessary conditions resemble those of multiclass classification. These  findings underscore  that, on top of the possible inconsistency of concave surrogates, the overall class of Fisher-consistent surrogates may be substantially narrower in multi-stage settings than in the single-stage (ITR) setting. 

Taken together, our findings suggest that computational challenges, particularly those stemming from non-convexity, may be an unavoidable cost of Fisher consistency in   simultaneous direct search.
 However, we demonstrate  existence of smooth, Fisher consistent  surrogate losses, thereby showing that the discontinuous optimization problem can be smoothed.  Building on this, we develop SDSS, a method for the surrogate optimization, that  exploits gradient-based optimizers to enable fast implementation. 
Our numerical experiments and real-data analysis suggest that, despite the higher computational burden, SDSS, when combined with ADAM, random restarts, and minibatching, may offer advantages in settings where modeling is difficult, provided the sample size is sufficiently large.

SDSS is the initial step towards DTR learning  with simultaneous optimization. There remains substantial room for improvement. Several modifications may further improve its performance, some of which are discussed below.

\paragraph*{\underline{ Tailored initialization}} We used He initialization for the non-convex optimization. Developing tailored initialization methods is a promising direction for improvement because  non-convex direct search has shown improved performance with customized initializations \citep{xue2022multicategory, liu2024learning}.

\paragraph*{\underline{ Location transformation}} We applied a location transformation to the $Y_t$’s when they were negative. However, this transformation can affect practical performance. It may be possible to avoid it by modifying our surrogates using the strategies in \cite{zhao2019efficient,liu2021outcome}.

\paragraph*{\underline{ Unknown propensity scores}} Our surrogate framework is based on an IPW estimator of value function, which can be unstable when the propensity scores need to be estimated   \citep{kosorok2019}. When propensity scores are unknown, the surrogate framework should ideally be based on the doubly robust estimator of the value function, as in \eqref{def: AIPW}. This presents an interesting avenue of future research. However, since the discontinuous loss in \eqref{def: AIPW} differs from $\psi_{\text{dis}}$ and involves Q-functions, it would alter the conditions required for Fisher consistency.






\begin{table}[!htb]
\caption{List of important notation}
\label{tab: list of notation}
\centering
\begin{tabular}{|cl|}
\hline
\textbf{Symbol} & \textbf{Description} \\
\hline
$t$ & Shorthand for stage\\
$T$ & Horizon or the total number of stages \\
($O_t$, $A_t$, $Y_t$) & covariate, treatment, and outcome triplet at stage $t$\\
$H_t$ & The history at stage $t$\\
$\D_i$ & Trajectory of the $i$-th patient, defined in \eqref{def: trajectory}\\
$k_t$ & Total number of treatments on stage $t$\\
$\K$ & the sum of all $k_t$'s; i.e., $\K=\sum_{t\in[T]}k_t$\\
$q$ & Dimension of $H_T$ \\
$\F$ & The class of all scores $f$\\
$\mP_0$ & Class of distributions satisfying Assumptions I-V\\
$d$ & A treatment policy or DTR\\
$d^*$ & The optimal DTR, need not be unique\\
$g$ & Relative score functions \\
$\pred$ & An operator defined as  $\pred(\mx)=\max(\argmax(\mx))$ for any vector $\mx$\\
$\trans$ & An operator, defined as $\trans(\mx)=(0,-\mx)$ for any vector $\mx$\\
$\psi$ & $T$-stage surrogate\\
$\phi$ & Single-stage surrogate, also denoted by $\phi_t$ when it varies with stage\\
$K,\KK$ & \makecell[l]{Kernels, where $\KK$ is generally a univariate\\ density  and $K$ is a multivariate density} \\
$V(f)$ & Value function of the score $f$\\
$V^\psi(f)$ & Surrogate value function of the score $f$, defined in \eqref{def:  surrogate V(f)}\\
$\tilde f$, $\tilde d$ & $\tilde f$ is the maximizer of $V^\psi(f)$ over $f\in\F$ and $\tilde d=\pred(\tilde f)$, both depend on $\psi$\\
$\widehat V^\psi(f)$ & Empirical surrogate value function of the score $f$, defined in \eqref{def:  surrogate V(f) estimated}\\
$\widehat{V}^{\psi,\text{rel}}(g)$ & \makecell[l]{Surrogate value function in the relative margin-based form,\\ defined as $\widehat{V}^{\psi,\text{rel}}(g) = \widehat{V}^\psi(\trans(g))$ for any relative score $g$}\\
$\L$ & A function related to  $\widehat{V}^{\psi,\text{rel}}$, see \eqref{relation: relative margin}\\
$Q^*$ & Optimal Q-function (state-action value function) \\
$Q^d$ & Q-function under policy $d$\\
$\mathcal{U}_n$ & Policy class, e.g., class of relative class scores $g$\\
$\mathcal{L}$ & Restricted classes of policies \\
$\mathcal{F}(\N, W, s)$ & ReLU network with depth $\N$, width $W$, and sparsity $s$\\
$\Gamma$ & Template of a relative margin-based surrogate (see Definition \ref{def: relative margin})\\
$\theta^{(r)}$ & The r-th parameter update in SDSS (see Algorithm \ref{alg: SDSS})\\
$\Opn$ & Optimization error\\
$\beta$ & Smoothness parameter for \Holder\ classes \\
\hline
\end{tabular}
\end{table}

\section{Acknowledgments}
 Nilanjana Laha and Nilson Chapagain's research was partially supported by National Science Foundation  grant DMS-2311098.  

\section*{Supplementary Material}
The supplementary material contains additional discussions, additional details on the tuning parameters used in our empirical study in Section \ref{sec: empirical}, and the proofs of the theorems and lemmas.

\textbf{Code:} The Python code for the simulations and data applications are  made available on Github \citep{sdss-simu,sdss-app}.
\FloatBarrier

\begin{center}
    \LARGE\textbf{Supplementary Material}
\end{center}



\setcounter{section}{0}
\renewcommand{\thesection}{S\arabic{section}}

\vspace{2em}
The supplement is organized as follows. Section~\ref{sec: additional discussion} provides additional discussion on various conditions related to our theoretical results, and  additional details on the simulations in Section~\ref{sec: simulation} and the data application in Section~\ref{sec: application}. Section~\ref{supp: simulation} presents tables listing the tuning parameters used for SDSS and Q-learning in Sections~\ref{sec: simulation} and~\ref{sec: application}. The remainder of the supplement contains the proofs.

\section{Additional discussion}
\label{sec: additional discussion}
 \subsection{Further discussion on the conditions in Theorem \ref{theorem: CC}}
  \label{sec: cond of Theorem cc}
  The  gamma-phi losses in Example \ref{sec: gamma-phi losses}  satisfy the boundedness condition if $\tpho$ is bounded above, and satisfy the domain condition if $\tpho$ and $\tphi$ are real-valued.  The smoothness condition  holds if both $\tphi$ and $\tpho$ are smooth. The pairwise loss in \eqref{def: psi pairwise} satisfies the conditions of Theorem~\ref{theorem: CC} if $-\tph$ is strictly convex, bounded below, closed, proper, thrice continuously differentiable on the interior of its domain, and $(0,0) \in \iint(\dom(-\tph))$. Similarly, the pairwise loss in \eqref{def: psi pairwise 2} satisfies all the conditions of Theorem~\ref{theorem: CC} if $-\tph_1$ and $-\tph_2$ are strictly convex, bounded below, closed, proper, thrice continuously differentiable on the interior of their respective domains, and $0 \in \iint(\dom(-\tph_1)) \cap \iint(\dom(-\tph_2))$. The exponential loss in Example~\ref{ex: Example 1}, as well as the concave versions of the cross-entropy and coherence losses in Example~\ref{sec: gamma-phi losses}, satisfy all conditions of Theorem~\ref{theorem: CC}.
\subsection{Further discussion on the constant $C_*$ in Proposition \ref{prop: multi-cat FC}} 
\label{sec: supple: discussion on C star}

Since the regret is bounded by $(V^\psi_* - V^\psi(f))/C_*$, a larger value of $C_*$ is preferable. This naturally raises the question: how large can $C_*$ be?
If $\phi_t$ is scaled by a constant $c > 0$,  $\inf_{\mx \in \RR^{k_t}} \phi_t(\mx; i)$ also scales by $c$, implying that $\J_t$ becomes $c\J_t$. From \eqref{def of mathcal C psi}, however, it follows that such scaling does not affect the positive fraction $\CC_{\phi_t}$. Consequently, $C_*$ scales linearly with $\phi_t$.
However, so does the $\psi$-regret term $V^\psi_* - V^\psi(f)$. For instance, multiplying $\phi_1$ by 2 results in both $C_*$ and the $\psi$-regret doubling, leaving the overall bound on the regret $V_*- V(f)$ in \eqref{instatement: sufficiency: lower bound linear} unchanged. Thus, inflating $\phi_t$ by a scaling factor does not change the rate of regret decay. Therefore, let us assume that the $\phi_t$'s are uniformly bounded  by one. In this case, the $\J_t$'s are also bounded by one. Since $\CC_{\phi_t} \leq 1$ by definition, it follows that $C_* \leq 1$ in this case. Therefore, although it may be possible for the true regret to be smaller than the $\psi$-regret, the bound in \eqref{instatement: sufficiency: lower bound linear} offers no such guarantee.

A few additional remarks on $C_*$ are in order.
  \textbf{(a) The role of $\CC_{\phi_t}$:} Large values of $\CC_{\phi_t}$ leads to large values of $C_*$. For a single-stage surrogate $\phi$, $\CC_{\phi}$ is large when disagreement between $\pred(\mx)$ and $\pred(\mp)$ causes a large gap between $\Psi^*(\mp)$ and  $\Psi(\mx; \mp)$. Such surrogates are likely to ensure that the surrogate value function of a suboptimal DTR is subatantially smaller than that of the optimal one. 
  \textbf{(d) Role of $\J_t$:} $C_*$ increases linearly with $\J_t$, hinting that surrogate losses for which  $\phi_t(\mx,\pred(\mx))$ is  larger compared to other  $\phi_t(\mx;i)$'s  ($i\neq \pred(\mx)$) may yield lower regret. The extreme case is $\phi_t=\phi_{dis}$, in which case $\phi(\mx;\pred(\mx))=1$  and all other $\phi(\mx;i)$'s are zero. Therefore, surrogate losses closer to $\phi_{dis}$ in geometry are expected to yield larger values of $C_*$. 
 
\subsection{Connection between our surrogates and the vanishing gradient problem}
\label{supp: vanishing gradient}
Figure \ref{fig:value function for toy data} shows that $\hVr$  exhibits horizontal asymptotes and plateau regions in the toy example from Section~\ref{sec: implementation}. These features, which lead to the vanishing gradient problem,  arise from $\Gamma$, since, as shown in \eqref{opti: value fn: toy data}, $\hVr$ is a positive linear combination of $\Gamma$ functions.
Figure \ref{Plot: Gamma function} shows that the maximum of $\Gamma(x,y;1)$ is not attained in $\RR^2$, and $\Gamma(x,y;1)$  exhibits horizontal asymptotes and  plateau regions for both the product-based and kernel-based surrogates.  The plots for $\Gamma(x,y;2)$ and  $\Gamma(x,y;3)$ are similar except the bumps (optimal plateau regions) are in the second and fourth quadrants, respectively. Although Figure \ref{Plot: Gamma function} uses a specific $\tau$ for the product-based surrogate, we verified that the surface plots have similar properties for the other examples of $\tau$ mentioned in Section \ref{sec: product-type surrogate loss}.

One may wonder whether an alternative surrogate could eliminate plateau-related issues entirely. If we could construct a $\phi$ satisfying Conditions~\ref{assump: N1} and~\ref{assump: N2} that also attains unique maximum in $\RR^k$, the issues could be at least partially eradicated. However, all Fisher consistent surrogates we have been able to construct that satisfy Conditions~\ref{assump: N1} and~\ref{assump: N2} also satisfy Condition~\ref{assump: N3}, and are variants of the kernel-based or product-based forms. 
 All these surrogates  lead to $\Gamma$'s similar to  Figure \ref{Plot: Gamma function}, i.e., they lack strong concavity, the optimum is attained at infinity, and  plateau regions exist. 
Therefore, the value functions resulting from these surrogates  have suboptimal plateau regions and the  vanishing gradient problem  persists. 
\subsection{More details on He and Xavier initialization}
\label{supp: he and xavier}
 When the relative class scores $g_{ti}(\cdot; \theta_{ti})$ are linear, He initialization samples the elements of the initial parameter, i.e.,  $\theta_{ti}^{(0)}$, independently from a centered Gaussian distribution with variance $2/n_f$, where $n_f$ is the dimension of the feature space. 
When  the relative class scores are deep neural network,   $\theta_{ti}$ consists of the weights and bias parameters of the network. Under He initialization, bias parameters in each layer are set to zero, while the weights are sampled independently from a centered Gaussian distribution with variance $2/n_f$, where $n_f$ is the number of input units in that layer. The Xavier initialization follows a similar approach in both cases, but samples weights from $U[-1/\sqrt{n_f}, 1/\sqrt{n_f}]$ instead of a normal distribution. 

\subsection{More details on Scheme 2}
The stage-specific parameters of Scheme 2 outcome models are given by
\begin{gather}
   \myD^{(1)} = (2.5, 2.7, 2.6), 
\quad
  \myD^{(2)} = (2.6, 2.5, 2.7),\\
 \myE^{(1)}=(2.1, 2.0, 2.2),\quad \myE^{(2)}=(2.2,2.1,2.3). 
\end{gather}
The optimal treatment assignments are given by
\[
d_2^*(H_2) 
\;=\; 
\underset{i \in \{1,2,3\}}{\mathrm{arg\,max}}
\Bigl\{
  \slb\myD^{(2)}_{i}\,\cos\bigl(O_1^T\mo_p\bigr)\srb^2 
  \;+\;
  \myE^{(2)}_{i}\,\sin\bigl(O_1^T\mo_p\bigr)
\Bigr\},
\]

  Then the optimal policies are:
\[
d_2^*(H_2)
= 
\underset{i\in\{1,2,3\}}{\arg\max}\,
\Bigl\{
  \bigl[\myD^{(2)}_{i}\cos(O_1^T \mathbf1_p)\bigr]^2
  + \myE^{(2)}_{i}\sin(O_1^T \mathbf1_p)
\Bigr\},
\]
\[
d_1^*(H_1)
= 
\underset{i\in\{1,2,3\}}{\arg\max}\,
\Bigl\{
  \bigl[\myD^{(1)}_{i}\sin(O_1^T \mathbf1_p)\bigr]^2
  + \myE^{(1)}_{i}\cos(O_1^T \mathbf1_p)
 \Bigr\}.
\]

\subsection{Comparing regular SDSS with ensembled SDSS for Scheme 2}
\begin{figure}[H]
    \centering
    \includegraphics[width=\textwidth]{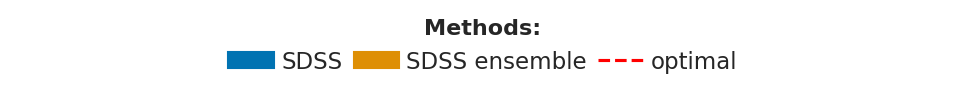}\\[1ex]    
    \begin{subfigure}[t]{0.49\textwidth}
        \centering
        \includegraphics[width=\textwidth]{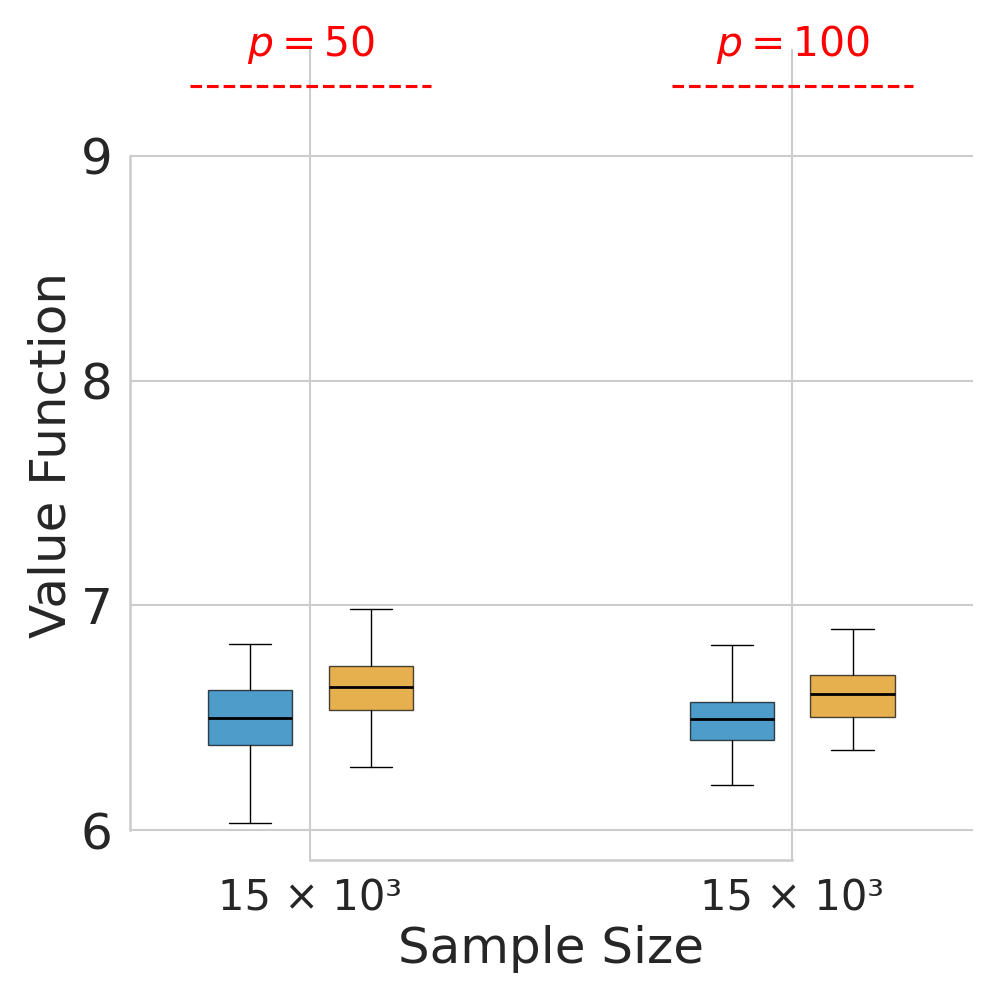}
        \caption{Linear policies.}
    \end{subfigure}
    \hfill
    \begin{subfigure}[t]{0.49\textwidth}
        \centering
        \includegraphics[width=\textwidth]{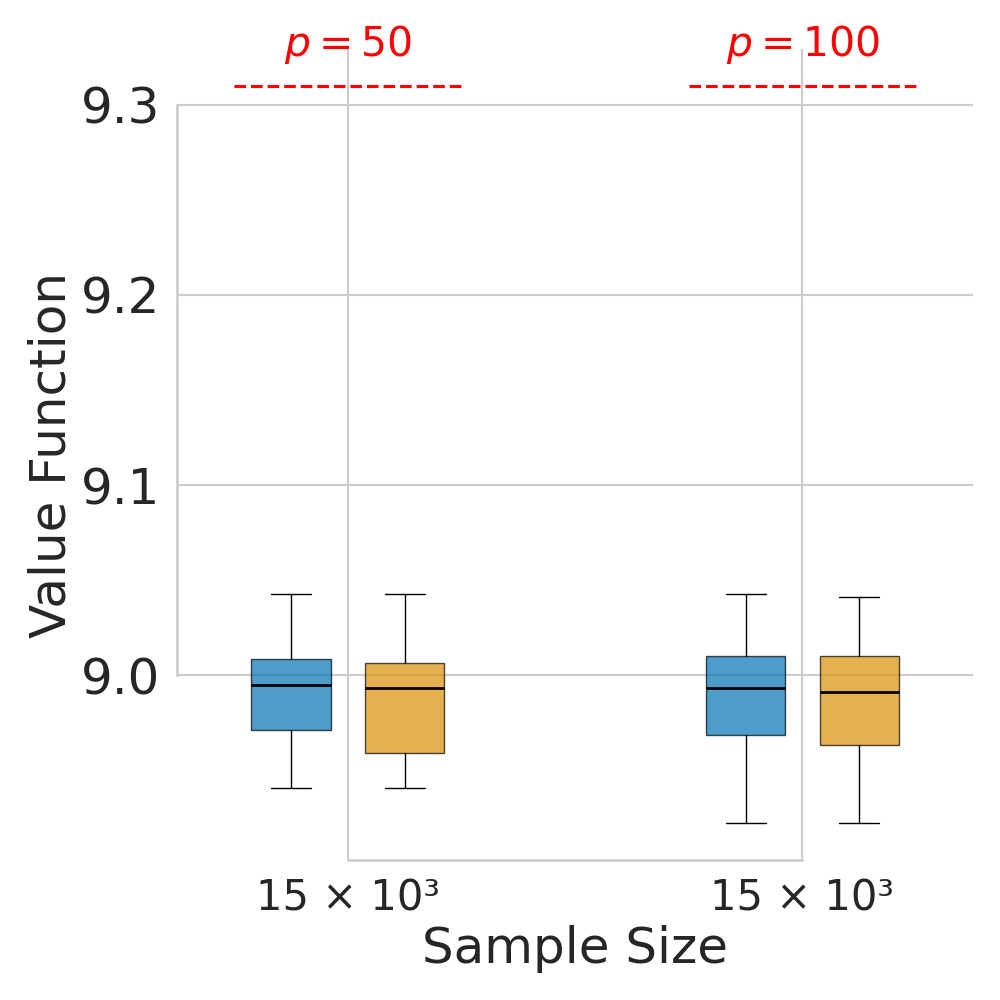}
        \caption{Nonlinear policies.}
    \end{subfigure}
    \caption{{\bf Value function of SDSS with and without ensembling under Scheme 2.} Ensembling is performed as described in Section \ref{sec: simulation}. The regular  SDSS (colored blue) uses $\tau(x) = 1 + x/\sqrt{1 + x^2}$. The left panel shows results for linear policies; the right panel shows results for nonlinear policies, both uses $p=50$ and $100$. The x-axis indicates the sample size and the y-axis represents the value function estimates from Monte Carlo 100 replications.  Each box spans the interquartile range (IQR) with edges at the 25th and 75th percentiles, a median line, and whiskers extending to 1.5\(\times\)IQR.  The red dashed line marks the optimal value. In all four cases, SDSS was implemented following the procedures in Section \ref{sec: simulation}. Ensembling slightly improves value function for linear policies and yields comparable value function for nonlinear policies, albeit with slightly higher variance. }
    \label{fig:setup2ensemble}
\end{figure}
\FloatBarrier

\subsection{The list of covariates for  sepsis data}
\label{sec: list of covariates}
After data pre-processing, we had 45 covariates in stage 1 and 42 covariates in stage 2. The  following 42 covariates were present in both stages: Mechanical ventilation status, maximum vasopressor dose, ICU readmission status, median vasopressor dose, Glasgow Coma Scale (GCS), heart rate (HR), systolic blood pressure (SysBP), mean arterial pressure (MeanBP), diastolic blood pressure (DiaBP), respiratory rate (RR), fraction of inspired oxygen (FiO2), potassium level, sodium level, chloride level, glucose level, magnesium level, calcium level, hemoglobin (Hb), white blood cell count (WBC), platelet count, partial thromboplastin time (PTT), prothrombin time (PT), arterial pH, arterial partial pressure of oxygen (paO2), arterial partial pressure of carbon dioxide (paCO2), arterial base excess (BE), serum bicarbonate (HCO3), arterial lactate, Sequential Organ Failure Assessment (SOFA) score, Systemic Inflammatory Response Syndrome (SIRS) score, shock index (HR/SBP), cumulative fluid balance, peripheral capillary oxygen saturation (SpO2), blood urea nitrogen (BUN), serum creatinine, serum glutamic oxaloacetic transaminase (SGOT/AST), serum glutamic pyruvic transaminase (SGPT/ALT), total bilirubin, international normalized ratio (INR), total fluid input, total fluid output, four-hourly fluid output. The extra three covariates in stage 1 were gender, age, and weight.


\section{Tuning parameters for Q-learning and SDSS}
\label{supp: simulation}
\subsection{Tuning parameters for the simulations in Section \ref{sec: simulation}}
\label{appendix:simulationParameters}
\subsubsection{Summary and definition of settings}
\begin{table}[!htb]
\centering
 \caption{Summary of Simulation schemes}
 \label{table: sim scheme summary}
\begin{tabular}{@{}llccc@{}}
\toprule
\makecell{\textbf{Simulation}\\{\bf Scheme}} & \makecell{\textbf{Policy}\\{\bf Type}} & \makecell{\textbf{Varied}\\{\bf Values}} & \makecell{\textbf{Sample}\\ {\bf Sizes}} & \textbf{Comparators} \\ \midrule
Scheme 1                 & Linear       & $\omega \in \{10,20,40\}$     & $\{5000,15000\}$     & SDSS, Q-learning         \\
Scheme 1                 & Non-linear   & $\omega \in \{10,20,40\}$     & $\{5000,15000\}$     & Q-learning, SDSS, ACWL   \\
Scheme 2                 & Linear       & $p \in \{50,100\}$            & $\{5000,15000\}$     & SDSS, Q-learning         \\
Scheme 2                 & Non-linear   & $p \in \{50,100\}$            & $\{5000,15000\}$     & Q-learning, SDSS, ACWL   \\ \midrule
\bottomrule
\end{tabular}
\label{tab:simulation_cases}
\end{table}

\begin{table}[!htb]
\centering
 \caption{Definition of the settings for the Simulation. These definition should be used in the rest of the tables in this section. }
 \label{tab: def simulation schemes}
\label{tab:settings-definition}
\begin{tabular}{ll}
\toprule
\textbf{Setting} & \textbf{Description} \\
\midrule
1 & Scheme 1, $\omega=10$, $n=5\,000$ \\
2 & Scheme 1, $\omega=10$, $n=15\,000$ \\
3 & Scheme 1, $\omega=20$, $n=5\,000$ \\
4 & Scheme 1, $\omega=20$, $n=15\,000$ \\
5 & Scheme 1, $\omega=40$, $n=5\,000$ \\
6 & Scheme 1, $\omega=40$, $n=15\,000$ \\
7 & Scheme 2,  $p=50$, $n=5\,000$  \\
8 & Scheme 2,  $p=50$, $n=15\,000$ \\
9 & Scheme 2,  $p=100$, $n=5\,000$ \\
10 & Scheme 2,  $p=100$, $n=15\,000$ \\
\bottomrule
\end{tabular}
\end{table}
 The common simulation parameters are provided in Table \ref{tab: common parameters sim}.
\begin{table}[!htb]
\centering
\caption{Tuning parameters that did not change across simulation settings. These values were used for both SDSS and Q-learning.}
\label{tab: common parameters sim}
\begin{tabular}{ll}
\toprule
Parameter & Value/Configuration \\
\midrule
Train/Validation Split & 70/30 \\
Weight Initialization & He initializer \\
$L_2$ Regularization (Weight Decay) Parameter & 0.001 \\
Reduction Factor ($R_F$) & 0.8 \\
Patience Threshold ($N_{\text{patience}}$) & 30 \\
Restart Threshold $(N_{\text{restart}})$ & 3\\
Evaluation Frequency ($r_{\text{eval}}$) & 2 \\
ADAM Parameter ($D_1$) & 0.9 \\
ADAM Parameter ($D_2$) & 0.999 \\
EMA Smoothing Parameter ($\varkappa$) & 0.8 \\
Numerical Constant ($\epsilon_{\text{num}}$) & $10^{-8}$ \\
Improvement Threshold ($\delta_{\text{imp}}$) & 0 \\
\bottomrule
\end{tabular}
\end{table}
\subsubsection{SDSS network parameters that varied across schemes}

\begin{table}[!htb]
\centering
\caption{Table of the neural network parameters for {\bf Linear SDSS} that varied across settings in our data simulations. Here hidden dim. corresponds to the number of units per hidden layer.}
\label{tab:sdss-linear}
\resizebox{\textwidth}{!}{%
\begin{tabular}{l*{10}{>{\large}c}}
\toprule
\large Parameter & Setting~1 & Setting~2 & Setting~3 & Setting~4 & Setting~5 & Setting~6 & Setting~7 & Setting~8 & Setting~9 & Setting~10 \\
\midrule
\large
\makecell[l]{Initial Learning\\ Rate $(\texttt{lr}_0 )$} & 0.07  & 0.07  & 0.07  & 0.07  & 0.07  & 0.07  & 0.1  & 0.1  & 0.2  & 0.2  \\
\large Batchsize ($n_{\text{mini}}$)                & 1000     & 1000     & 1000     & 1000      & 200      & 1000      & 800     & 800      & 1600     & 1600     \\
\large $N_{\text{epoch}}$ & 10  & 10  & 10  & 10  & 10  & 10  & 20  & 20  & 60  & 60  \\
\large Activation Function & None & None & None & None & None & None & None & None & None & None \\
\large Hidden Dim. Stage~1  & 10  & 10  & 20  & 20  & 20  & 20  & 20  & 20  & 10  & 20  \\
\large Hidden Dim. Stage~2  & 20  & 40  & 20  & 40  & 40  & 40  & 40  & 40  & 20  & 40  \\
\large Number of Layers & 1 & 1 & 1 & 1 & 1 & 1 & 1 & 1 & 1 & 1 \\
\bottomrule
\end{tabular}
}
\end{table}

\begin{table}[!htb]
\centering
\caption{Table of the neural network parameters for {\bf Non-linear SDSS} that varied across settings in our data simulations. Here hidden dim. corresponds to the number of units per hidden layer.}

\label{tab:sdss-nonlinear}
\resizebox{\textwidth}{!}{%
\begin{tabular}{l*{10}{c}}
\toprule
Parameter & Setting~1 & Setting~2 & Setting~3 & Setting~4 & Setting~5 & Setting~6 & Setting~7 & Setting~8 & Setting~9 & Setting~10 \\
\midrule
\makecell[l]{Initial Learning\\ Rate $(\texttt{lr}_0 )$} & 0.07  & 0.07  & 0.07  & 0.07  & 0.07  & 0.07  & 0.07  & 0.07  & 0.2  & 0.2  \\
\large Batchsize ($n_{\text{mini}}$)                & 5000     & 5000     & 5000     & 5000     & 5000     & 5000     & 5000     & 5000     & 1600     & 1600     \\
\large $N_{\text{epoch}}$ & 20  & 10  & 20  & 10  & 10  & 10  & 10  & 10  & 60  & 60  \\
Activation Function & ReLU  & ReLU  & ReLU  & ReLU  & ReLU  & ReLU  & eLU  & eLU  & eLU  & eLU  \\
Dropout Rate & 0 & 0 & 0 & 0 & 0 & 0 & 0.4 & 0.4 & 0.4 & 0.4 \\
Hidden Dim. Stage~1   & 40  & 20  & 20  & 20  & 40  & 40  & 40  & 40  & 40  & 40  \\
Hidden Dim. Stage~2   & 40  & 40  & 40  & 40  & 40  & 40  & 20  & 40  & 40  & 40  \\
Number of Layers & 1 & 1 & 1 & 1 & 1 & 1 & 3 & 3 & 3 & 3 \\
\bottomrule
\end{tabular}
}
\end{table}

\FloatBarrier


\subsubsection{Q-learning  parameters  that varied across schemes}
\label{supp: Q-learning simulation parameters}

\begin{table}[!htb]
\centering
\caption{Table of the neural network parameters for {\bf Non-linear Q-learning} that varied across settings in our data simulations. Here hidden dim. corresponds to the number of units per hidden layer.}

\label{tab:qnonlinear}
\resizebox{\textwidth}{!}{%
\begin{tabular}{l*{10}{c}}
\toprule
Parameter & Setting~1 & Setting~2 & Setting~3 & Setting~4 & Setting~5 & Setting~6 & Setting~7 & Setting~8 & Setting~9 & Setting~10 \\
\midrule
\makecell[l]{Initial Learning\\ Rate $(\texttt{lr}_0 )$} & 0.07  & 0.07  & 0.07  & 0.07  & 0.07  & 0.07  & 0.05  & 0.05  & 0.05  & 0.05  \\
\large Batchsize ($n_{\text{mini}}$)                 & 5000     & 5000     & 5000     & 5000     & 5000     & 5000     & 5000     & 5000     & 5000     & 5000     \\
\large $N_{\text{epoch}}$ & 20  & 10  & 20  & 10  & 10  & 10  & 10  & 10  & 20  & 10  \\
Activation Function & ReLU  & ReLU  & ReLU  & ReLU  & ReLU  & ReLU  & ReLU  & ReLU  & ReLU  & ReLU  \\
Dropout Rate & 0.1 & 0.1 & 0.1 & 0.1 & 0.1 & 0.1 & 0.1 & 0.1 & 0.1 & 0.1 \\
Hidden Dim. Stage~1   & 40  & 40  & 40  & 20  & 20  & 40  & 40  & 20  & 40  & 20  \\
Hidden Dim. Stage~2   & 20  & 40  & 40  & 40  & 40  & 40  & 40  & 40  & 40  & 40  \\
Number of Layers & 3 & 3 & 3 & 3 & 3 & 3 & 3 & 3 & 3 & 3 \\

\bottomrule
\end{tabular}
}
\end{table}

\FloatBarrier


\subsection{Tuning parameters for the sepsis data application}
\label{appendix:applicationParameters}

\begin{table}[!htb]
\centering
\caption{Tuning parameters that were common to all methods  in the application. These parameters did not vary across methods or policies 
unless otherwise specified on  Tables ~\ref{tab:appsLinear}--\ref{tab: app: validation network} 
}
\label{tab: common parameters apps}
\begin{tabular}{ll}
\toprule
Parameter & Value/Configuration \\
\midrule
Train/Validation Split & 70/30 \\
Weight Initialization & He initializer \\
$L_2$ Regularization (Weight Decay) Parameter & 0.001 \\
Reduction Factor ($R_F$) & 0.8 \\
Patience Threshold ($N_{\text{patience}}$) & 30 \\
Evaluation Frequency ($r_{\text{eval}}$) & 2 \\
ADAM Parameter ($D_1$) & 0.9 \\
ADAM Parameter ($D_2$) & 0.999 \\
EMA Smoothing Parameter ($\varkappa$) & 0.8 \\
Numerical Constant ($\epsilon_{\text{num}}$) & $10^{-8}$ \\
Improvement Threshold ($\delta_{\text{imp}}$) & 0 \\
\bottomrule
\end{tabular}
\end{table}

\FloatBarrier

\subsubsection{Parameters for SDSS in data application} 


\begin{table}[H]
\centering
\caption{Table of the neural network parameters for {\bf Linear SDSS} in our data application. Here hidden dimensions correspond to the number of units per hidden layer.}
\label{tab:appsLinear}
\begin{tabular}{ll}
\toprule
Parameter & Value/Configuration \\
\midrule
\makecell[l]{Initial Learning\\ Rate $(\texttt{lr}_0 )$}& 0.007 \\
\large $N_{\text{epoch}}$ & 120 \\
Train/Validation Split & 80/20 \\
Network Architecture & 3-layer network \\
Activation Function & None \\
Hidden Dimensions (Stage 1) & 40 \\
Hidden Dimensions (Stage 2) & 40 \\
\bottomrule
\end{tabular}
\end{table}

\begin{table}[!htb]
\centering
\caption{Table of the neural network parameters for {\bf Non-linear SDSS}  in our data application. Here hidden dimension corresponds to the number of units per hidden layer.}

\label{tab:appsNonLinear}
\begin{tabular}{ll}
\toprule
Parameter & Value/Configuration \\
\midrule
\makecell[l]{Initial Learning\\ Rate $(\texttt{lr}_0 )$} & 0.007 \\
\large $N_{\text{epoch}}$ & 120 \\
Train/Validation Split & 80/20 \\
Network Architecture & 3-layer network \\
Activation Function & ELU \\
Hidden Dimensions (Stage 1) & 40 \\
Hidden Dimensions (Stage 2) & 40 \\
Dropout Rate & 0.4 \\
\bottomrule
\end{tabular}
\end{table}
\FloatBarrier
\subsubsection{Parameters for Q-learning and Q-function estimation in data application}






\begin{table}[!htb]
\centering
\caption{Table of the neural network parameters for {\bf Non-linear Q-Learning} in our data application. Here hidden dimension corresponds to the number of units per hidden layer.}

\label{tab: non linear apps q learning}
\begin{tabular}{ll}
\toprule
Parameter & Value/Configuration \\
\midrule
Learning Rate & 0.07 \\
\large $N_{\text{epoch}}$ & 150 \\
Network Architecture & 1-layer network \\
Activation Function & ELU \\
Hidden Dimensions (Stage 1) & 4 \\
Hidden Dimensions (Stage 2) & 4 \\
Dropout Rate & 0 \\
\bottomrule
\end{tabular}
\end{table}



\begin{table}[!htb]
\centering
\caption{Table of neural network parameters for estimating \( \widehat{Q}^d_1 \text{ and } \widehat{Q}_2 \) during the computation of the AIPW value function estimator in our data application. Here hidden dimension corresponds to the number of units per hidden layer.}
\label{tab: app: validation network}
\begin{tabular}{ll}
\toprule
Parameter & Value/Configuration \\
\midrule
Learning Rate & 0.07 \\
Train/Validation Split & 70/30 \\
\large $N_{\text{epoch}}$ & 150 \\
Network Architecture & 3-layer network \\
Activation Function & ELU \\
Hidden Dimension (Stage 1) & 40 \\
Hidden Dimension (Stage 2) & 40 \\
Dropout Rate & 0.4 \\
\bottomrule
\end{tabular}
\end{table}
\FloatBarrier
\section{New notation for proofs}
\label{sec: notation app}
For two integers $m$ and $n$, $m \mod n$ denotes the remainder when $m$ is divided by $n$. For $m\leq n$, $[m:n]$ indicates the set $\{m,m+1,\ldots, n\}$.  
 We denote by $\mathbf e^{(i)}_k$ the unit vector of length $k$ with one in the $i$-th position and zero everywhere else. When $k$ is clear from the context, we will denote $\mathbf e^{(i)}_k$ only by $\mathbf e^{(i)}$.   $\mz_{m\times n}$ will denote a  matrix with $m$ rows and $n$ columns, whose all elements are zero. For two sets $A$ and $B$ in $\RR^k$, we define the Minkowski sum $A+B$ as the set $\{x+y:x\in A, y\in B\}$. For two real numbers $x$ snd $y$, we denote $\min(x,y)$ by $x\wedge y$ and $\max(x,y)$ by $x\vee y$.

 The order statistics of $x$ will be denoted by $(\mx_{[1]}, \mx_{[2]},\ldots,\mx_{[k]})$. 
We also introduce some shorthand. For any $t\in[T]$, we will denote the vector $(Q_t^*(H_t,i))_{i\in[k_t]}$ by $Q_t^*(H_t)$.  As mentioned earlier,  the $d^*$ defined by \eqref{def: d star argmax form} need not be unique. For ease of representation, unless otherwise mentioned, we will stick to a particular version of $d^*$: 
\begin{align}
    \label{def: d star from Q}
    d_t^*(H_t)=\max(\argmax_{a_t\in[k_t]}Q_t^*(H_t,a_t))\quad \text{ for all }t\in[T]. 
\end{align}
\section{Proofs for the convex surrogate  examples in Section \ref{sec: convex loss}}
\label{sec: proofs for convex surrogate loss example}
In the proofs for this section, $\tph$ will be used to denote an arbitrary function, whose definition can change from proof to proof.
 \subsubsection{Proof of Result \ref{result: ex: exp non-additive}}
\label{sec: proof of result ex: exp non-additive}
\begin{proof}[Proof of Result \ref{result: ex: exp non-additive}]
    Note that in this case $V^\psi(f)$ equals
\begin{align*}
    \MoveEqLeft \E\left[ -\sum\limits\limits_{i\in[k_1],j\in [k_2]}\frac{\exp\lb -\slb f_{1A_1}(H_1)-f_{1i}(H_1)+f_{2A_2}(H_2)-f_{2j}(H_2)\srb \rb(Y_1+Y_2)}{\pi_1(A_1\mid H_1)\pi_2(A_2\mid H_2)}\right]\\
    =&\ -\E\lbt \sum\limits\limits_{i\in[k_1],j\in [k_2]}\frac{e^{-\slb f_{1A_1}(H_1)-f_{1i}(H_1)+f_{2A_2}(H_2)-f_{2j}(H_2)\srb}\E[Y_1+Y_2\mid H_2, A_2]}{\pi_1(A_1\mid H_1)\pi_2(A_2\mid H_2)} \rbt\\
    =&\ \E\lbt-\sum\limits\limits_{a_2\in[k_2]} \lbs\sum\limits\limits_{i\in[k_1],j\in [k_2]}\exp\lb -\slb f_{1A_1}(H_1)-f_{1i}(H_1)+f_{2a_2}(H_2)-f_{2j}(H_2)\srb\rb\\
    &\ \times \frac{\E[Y_1+Y_2\mid H_2, A_2=a_2]}{\pi_1(A_1\mid H_1)} \rbs\rbt
\end{align*}
Therefore, for fixed $H_2$, if it exists, $\tilde f_2(H_2)$ is in  
\begin{align*}
\argmax_{\my\in\RR^{k_2}}\sum\limits\limits_{a_2\in[k_2]}\frac{\lbs\sum\limits\limits_{i\in[k_1],j\in [k_2]} -e^{ -f_{1A_1}(H_1)+f_{1i}(H_1)-\my_{a_2}+\my_j}\rbs\E[Y_1+Y_2\mid H_2, A_2=a_2]}{\pi_1(A_1\mid H_1)},
\end{align*}
provided  the above set is non-empty.
However,
\begin{align*}
   \MoveEqLeft  \sup_{\my\in\RR^{k_2}}\sum\limits\limits_{a_2\in[k_2]}\frac{\lbs\sum\limits\limits_{i\in[k_1],j\in [k_2]} -e^{ -f_{1A_1}(H_1)+f_{1i}(H_1)-\my_{a_2}+\my_j}\rbs\E[Y_1+Y_2\mid H_2, A_2=a_2]}{\pi_1(A_1\mid H_1)}\\
   =&\ -e^{-f_{1A_1}(H_1)}\inf_{\my\in\RR^{k_2}}\lb\sum\limits\limits_{a_2\in[k_2]} \frac{\E[Y_1+Y_2\mid H_2, A_2=a_2]}{\pi_1(A_1\mid H_1)}\sum\limits\limits_{i\in[k_1],j\in [k_2]}e^{f_{1i}(H_1)-\my_{a_2}+\my_j}\rb
   \end{align*}
which equals
\begin{align*}
\MoveEqLeft
 -e^{-f_{1A_1}(H_1)}\inf_{\my\in\RR^{k_2}}\lbs\sum\limits\limits_{a_2\in[k_2]} \frac{e^{-\my_{a_2}}\E[Y_1+Y_2\mid H_2, A_2=a_2]}{\pi_1(A_1\mid H_1)}\lb\sum\limits\limits_{j\in[k_2]}e^{\my_j} \sum\limits\limits_{i\in[k_1]}e^{f_{1i}(H_1)}\rb\rbs\\
   =&\ -\sum\limits\limits_{i\in[k_1]}e^{f_{1i}(H_1)-f_{1A_1}(H_1)} \inf_{\my\in\RR^{k_2}}\lbs\sum\limits\limits_{a_2\in[k_2]} \underbrace{\frac{\E[Y_1+Y_2\mid H_2, A_2=a_2]}{\pi_1(A_1\mid H_1)}}_{c_{a_2}(H_2)}\lb\sum\limits\limits_{j\in[k_2]}e^{\my_j-\my_{a_2}}\rb\rbs.
\end{align*}
Lemma \ref{lemma: exp opti} implies $\tilde f_2(H_2)$ uniquely exists and 
\[\argmax(\tilde f_2(H_2))=\argmax_{a_2\in[k_2]}\E[Y_1+Y_2\mid H_2,A_2=a_2].\]
Here Lemma \ref{lemma: exp opti} applies because $\E[Y_1+Y_2\mid H_2, A_2=a_2]>0$ by Assumption V, which implies  $c_{a_2}(H_2)$'s are positive real numbers. 
Since $Q_2^*(H_2,A_2)=\E[Y_1+Y_2\mid H_2,A_2]$ and $\tilde d_2=\pred(\tilde f_2)$, it follows that $\tilde d_2(H_2)\in\argmax_{a_2\in[k_2]}Q_2^*(H_2,a_2)$.

Lemma \ref{lemma: exp opti}  also implies that  $\sup_{f_2}V^\psi(f)$ equals
\begin{align*}
\MoveEqLeft -\E\lbt \sum\limits\limits_{i\in[k_1]}e^{f_{1i}(H_1)-f_{1A_1}(H_1)}\slb\sum\limits\limits_{j\in[k_2]}\sqrt{C_j(H_2)}\srb^2 \rbt\\
=&\ -\E\lbt \sum\limits\limits_{i\in[k_1]} \frac{e^{f_{1i}(H_1)}}{e^{f_{1A_1}(H_1)}}\E\lbt\slb\sum\limits\limits_{j\in[k_2]}\sqrt{C_j(H_2)}\srb^2\mid H_1,A_1\rbt \rbt\\
=&\ - \E\lbt\sum\limits\limits_{a_1\in [k_1]}\E\lbt\slb\sum\limits\limits_{j\in[k_2]}\sqrt{\E[Y_1+Y_2\mid H_2, A_2=j]}\srb^2\mid H_1,A_1=a_1\rbt \sum\limits\limits_{i\in[k_1]} \frac{e^{f_{1i}(H_1)}}{e^{f_{1A_1}(H_1)}}\rbt.
\end{align*}
Therefore, for fixed $H_1$, $\tilde f_1(H_1)$ exists if 
\begin{align*}
\argmin\limits_{\mx\in\RR^{k_1}}\sum\limits\limits_{a_1\in[k_1]} \E\slbt\slb\sum\limits\limits_{j\in[k_2]}\sqrt{\E[Y_1+Y_2\mid H_2, A_2=j]}\srb^2\mid H_1,A_1=a_1\srbt \sum\limits\limits_{i\in[k_1]} e^{\mx_i-\mx_{a_1}}
\end{align*}
is non-empty and in that case, any member of the above set is a candidate for $\tilde f_1(H_1)$.
Note that for any $H_1$ and $A_1=a_1$,
\[\slb\sum\limits\limits_{j\in[k_2]}\sqrt{\E[Y_1+Y_2\mid H_2, A_2=j]}\srb^2>0\]
because $\E[Y_1+Y_2\mid H_2, A_2]>0$ due to Assumption V.
Applying Lemma \ref{lemma: exp opti}, we can show that
the above set is non-empty  and 
\[\argmax(\tilde f_1(H_1))=\argmax_{a_1\in[k_1]}\E\lbt\slb\sum\limits\limits_{j\in[k_2]}\sqrt{\E[Y_1+Y_2\mid H_2, A_2=j]}\srb^2\mid H_1,A_1=a_1\rbt.\]
The proof follows noting
\[\E[Y_1+Y_2\mid H_2, A_2]=Y_1+Q^*_2(H_2,A_2)\]
and 
$\tilde d_1(H_1)=\pred(f_1(H_1))$. 
\end{proof}

\begin{lemma}
\label{lemma: exp opti}
    Suppose
    \begin{align*}
\tph(\my)=\sum\limits\limits_{i\in[k]}c_{i}\sum\limits\limits_{j\in[k]}e^{\my_j-\my_{i}},\quad \text{ for all }\my\in\RR^{k_2},
\end{align*}
where $k\in\NN$ and the $c_i$'s are positive real numbers.
Then $\inf_{\my\in\RR^{k}}\tph(\my)=(\sum\limits\limits_{i\in k}\sqrt{c_i})^2$ and $\argmin_{\my}\tph(\my)$ is a singletone set. Moreover, 
 $\my^*=\argmin_{\my}\tph(\my)$ satisfies  $\argmax(\my^*)=\argmax(\mathbf{c})$.
\end{lemma}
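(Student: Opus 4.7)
The plan is to exploit the substitution $z_i = e^{\my_i}$, which is a bijection from $\RR$ onto $(0,\infty)$. Under this change of variables, $\tph$ takes the separable product form
\[
\tph(\my) \;=\; \sum_{i\in[k]} c_i \sum_{j\in[k]} \frac{z_j}{z_i} \;=\; \Bigl(\sum_{i\in[k]} \frac{c_i}{z_i}\Bigr)\Bigl(\sum_{j\in[k]} z_j\Bigr) \;=:\; f(\mathbf z).
\]
So the problem reduces to minimizing $f$ over the positive orthant $(0,\infty)^k$. The key observation is that $f$ factors as a product of two linear forms, which invites a Cauchy--Schwarz style bound.

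Next, I would apply the Cauchy--Schwarz inequality to the decomposition $\sqrt{c_i} = \sqrt{c_i/z_i}\cdot\sqrt{z_i}$, giving
\[
\Bigl(\sum_{i\in[k]} \sqrt{c_i}\Bigr)^2 \;\le\; \Bigl(\sum_{i\in[k]}\frac{c_i}{z_i}\Bigr)\Bigl(\sum_{i\in[k]} z_i\Bigr) \;=\; f(\mathbf z),
\]
with equality if and only if the ratio $(\sqrt{c_i/z_i})/\sqrt{z_i}$ is constant in $i$, i.e.\ $z_i^2 \propto c_i$, i.e.\ $z_i = \lambda \sqrt{c_i}$ for some $\lambda > 0$. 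Taking $z_i = \sqrt{c_i}$ shows the bound is attained, so $\inf_\my \tph(\my) = \bigl(\sum_i \sqrt{c_i}\bigr)^2$.

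Transferring back through the logarithm, the set of minimizers in $\my$-space is precisely $\{\,\my : \my_i = \tfrac{1}{2}\log c_i + c,\ c\in\RR\}$. I note that $\tph$ is invariant under the translation $\my \mapsto \my + c\mathbf 1_k$, so the minimizer is unique only modulo this shift; the singleton statement in the lemma should be read in this quotient sense (or, equivalently, the minimizer is unique once one fixes a normalization such as $\my_1 = 0$). Since adding a constant to every coordinate does not alter the argmax, every minimizer $\my^*$ satisfies
\[
\argmax(\my^*) \;=\; \argmax\bigl(\tfrac{1}{2}\log c_1,\ldots,\tfrac{1}{2}\log c_k\bigr) \;=\; \argmax(\mathbf c),
\]
where the last equality uses that $\log$ is strictly increasing on $(0,\infty)$ and the $c_i$'s are positive. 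This gives all three conclusions. The only subtlety is the singleton claim, which is a non-issue once the translation invariance is noted; the substantive computation is the Cauchy--Schwarz step, which is routine.
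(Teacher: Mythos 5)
Your proof is correct, and it takes a genuinely different route from the paper's. After the shared substitution $z_i = e^{\my_i}$, the paper renormalizes to $\mbu_i = z_i/\sum_j z_j$ so that the problem becomes minimizing $\sum_i c_i/\mbu_i$ over the simplex, and then invokes Lagrange multipliers (plus convexity to upgrade the critical point to a global minimizer). You instead apply Cauchy--Schwarz directly to the factored product form $\bigl(\sum_i c_i/z_i\bigr)\bigl(\sum_j z_j\bigr)$, reading off both the optimal value and the equality locus $z_i\propto\sqrt{c_i}$ in one step. Your argument is shorter, avoids the multiplier calculus and the separate simplex reduction, and makes the equality condition transparent. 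The paper's route is slightly more mechanical but has the advantage of isolating an intermediate quantity (the simplex-constrained $\mbu$) that is genuinely unique, which is presumably what the lemma is gesturing at with the singleton claim.

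On that singleton claim: you are right to flag the issue. Since $\tph$ depends on $\my$ only through differences $\my_j - \my_i$, it is invariant under $\my\mapsto\my+c\mathbf 1_k$, so the argmin in $\my$-space is a one-dimensional translate, not a single point. The paper's own proof exhibits the same slip --- it derives the unique $\mbu^*$ on the simplex, picks one preimage $\my_i^* = \tfrac12\log c_i$, and then asserts uniqueness of $\my^*$. The statement is only literally true under a normalization such as $\sum_i\my_i=0$ or $\my_1=0$, or read modulo translation, exactly as you observe. The conclusion that actually gets used downstream --- $\argmax(\my^*)=\argmax(\mathbf c)$ for every minimizer $\my^*$ --- is unaffected, since translation by a constant preserves the argmax, and your proof handles this cleanly.
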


\begin{proof}[Proof of Lemma \ref{lemma: exp opti}]
    $\inf_{\my\in\RR^{k}}\tph(\my)$ is equivalent to the infimum of
\begin{align*}
g_1(\mx)=\sum\limits\limits_{i\in[k]}c_{i}\sum\limits\limits_{j\in[k]}\frac{\mx_j}{\mx_i},
\end{align*}
where $\mx_i=\exp^{\my_i}>0$ for each $i\in[k]$. However, the above is the infimum of
\begin{align}
\label{inlemma: CC: exp: opti 2nd stage}
g_2(\mbu)=\sum\limits\limits_{i\in[k]}c_{i}/\mbu_i
\end{align}
where $\mbu_i=\mx_i/\sum\limits\limits_{j\in[k]}\mx_j$ satisfies the constraints $\sum\limits\limits_{i=1}^{k}\mbu_i=1$ and $\mbu_i\in(0,1)$. Therefore, we need to
minimize the  function $\mbu\mapsto \sum\limits\limits_{i\in[k]}c_{i}/\mbu_i$ over the simplex  $\{\mbu\in\RR^k: \mbu^T\mo_{k}=1, \mbu_j\in(0,1)\text{ for all }j\in[k]\}$.   By Lagrange multiplier method, we can show that any critical point of the  minimization program 
\begin{mini}|s|
{\mbu\in\RR^{k}}{\sum\limits\limits_{i\in[k]}c_{i}/\mbu_i}
{\label{inlemma: cc: fact for result 1}}{}
\addConstraint{\sum\limits\limits_{i\in[k]}\mbu_i}{=1}
\addConstraint{\mbu_i}{\in(0,1)\quad\text{ for all }i\in[k].}
\end{mini}
satisfies
$\mbu_i=\sqrt{c_i/c_{\text{sum}}}$ where $c_{\text{sum}}\in \RR$ is such that $\sum\limits\limits_{i\in[k]}\mbu_i=1$ holds. This yields $c_{\text{sum}}=(\sum\limits\limits_{i\in k} \sqrt{c_i})^2$, leading to $\mbu_i=\sqrt{c_i}/\sum\limits\limits_{i\in[k]}\sqrt{c_i}$. Since $c_i>0$ for all $i\in[k]$, it follows that $\mbu_i\in(0,1)$ for $i\in[k]$. Therefore, $\mbu_i^*=\sqrt{c_i}/\sum\limits\limits_{i\in[k]}\sqrt{c_i}$ is also the unique  solution to the optimization problem \eqref{inlemma: cc: fact for result 1}.
 The minimum of \eqref{inlemma: cc: fact for result 1} is therefore $(\sum\limits\limits_{i\in[k]}\sqrt{c_i})^2$. The $\my^*_i$ corresponding to $\mbu_i=\sqrt{c_i}/\sum\limits\limits_{i\in[k]}\sqrt{c_i}$ satisfies $\my_i^*=\log(c_i)/2$. Thus $\my^*$ is the unique maximizer of  $\tph(\my)$. The proof is complete noticing  $\argmax(\my^*)=\argmax(c)$.
\end{proof}

\subsubsection{Proof of Result \ref{result: conv: one vs all}}
\label{sec: proof of result one vs all}
\begin{proof}[Proof of Result \ref{result: conv: one vs all}]
Note that $ \psi(\mx,\my;a_1,a_2)$ equals
\begin{align*}
  \MoveEqLeft -e^{-\my_{a_2}}\slb e^{-\mx_{a_1}}+\sum\limits\limits_{i\in[k_1]:i\neq a_1}e^{\mx_i}\srb+ \sum\limits\limits_{j\in[k_2]:j\neq a_2}e^{\my_j}\slb e^{-\mx_{a_1}}+\sum\limits\limits_{i\in[k_1]:i\neq a_1}e^{\mx_i}\srb\\
    =&\ -\slb e^{-\mx_{a_1}}+\sum\limits\limits_{i\in[k_1]:i\neq a_1}e^{\mx_i}\srb\slb e^{-\my_{a_2}}+\sum\limits\limits_{j\in[k_2]:j\neq a_2}e^{\my_j}\srb.
\end{align*}
    When $\psi$ is the loss in \eqref{inex: psi: one vs all}, $V^\psi(f)$ equals
    \begin{align*}
      \MoveEqLeft  \E\left[ \frac{-\slb e^{-f_{1A_1}(H_1)}+\sum\limits\limits_{i\in[k_1]:i\neq A_1}e^{f_{1i}(H_1)}\srb\slb e^{-f_{2A_2}(H_2)}+\sum\limits\limits_{j\in[k_2]:j\neq A_2}e^{f_{2j}(H_2)}\srb(Y_1+Y_2)}{\pi_1(A_1\mid H_1)\pi_2(A_2\mid H_2)}\right]\\
      =&\  \E\lbt-\slb e^{-f_{1A_1}(H_1)}+\sum\limits\limits_{i\in[k_1]:i\neq A_1}e^{f_{1i}(H_1)}\srb\\
      &\ \times \sum\limits\limits_{a_2\in[k_2]}\slb e^{-f_{2a_2}(H_2)}+\sum\limits\limits_{j\in[k_2]:j\neq a_2}e^{f_{2j}(H_2)}\srb\frac{\E[Y_1+Y_2\mid H_2, A_2=a_2]}{\pi_1(A_1\mid H_1)}\rbt
    \end{align*}
    Note that
    \begin{align}
    \label{inlemma: for result 3: def of V psi}
        \sup_{f_2} V^\psi(f_1,f_2)=&\ \E\lbt\slb e^{-f_{1A_1}(H_1)}+\sum\limits\limits_{i\in[k_1]:i\neq A_1}e^{f_{1i}(H_1)}\srb\nn\\
        &\ \times\sup_{\my\in\RR^{k_2}}\lbs -\sum\limits\limits_{a_2\in[k_2]}\slb e^{-\my_{a_2}}+\sum\limits\limits_{j\in[k_2]:j\neq a_2}e^{\my_j}\srb\frac{\E[Y_1+Y_2\mid h_2, a_2]}{\pi_1(a_1\mid h_1)}\rbs \rbt
    \end{align}
    Therefore, for fixed $h_2=(h_1,a_1,y_1,o_2)\in\H_2$,  $\tilde f_2(h_2)$ equals
    \begin{align*}
    \MoveEqLeft \argmax_{\my\in\RR^{k_2}}\lbs -\sum\limits\limits_{a_2\in[k_2]}\slb e^{-\my_{a_2}}+\sum\limits\limits_{j\in[k_2]:j\neq a_2}e^{\my_j}\srb\frac{\E[Y_1+Y_2\mid h_2, a_2]}{\pi_1(a_1\mid h_1)}\rbs\\
    =&\ \argmin\limits_{\my\in\RR^{k_2}}\sum\limits\limits_{a_2\in[k_2]}\slb e^{-\my_{a_2}}\E[Y_1+Y_2\mid H_2= h_2, A_2=a_2] \\
    &\ +e^{\my_{a_2}}\sum\limits\limits_{j\in[k_2]:j\neq a_2}\E[Y_1+Y_2\mid H_2= h_2,A_2=j] \srb\\
    =&\ \argmin\limits_{\my\in\RR^{k_2}}\sum\limits\limits_{a_2\in[k_2]}\slb e^{-\my_{a_2}}\mp_{a_2}(h_2) +e^{\my_{a_2}}(1-\mp_{a_2}(h_2)) \srb
    \end{align*}
    where
    \[\mp_j(h_2)=\frac{\E[Y_1+Y_2\mid H_2=h_2,A_2=j]}{\sum\limits\limits_{a_2\in[k_2]}\E[Y_1+Y_2\mid H_2=h_2, A_2=a_2]}\text{ and }\mp(h_2)=(\mp_j(h_2))_{j\in[k_2]}\]
     for all $h_2\in\H_2$. 
    For the rest of the proof, we will denote terms such as $\E[Y_1+Y_2\mid H_2=h_2, A_2=a_2]$  by $\E[Y_1+Y_2\mid h_2, a_2]$ for the sake of simplicity. 
Due to Assumption V, $\E[Y_1+Y_2\mid H_2,A_2]>0$ for all $H_2$ and $A_2$. Therefore,  $\mp_j(H_2)\in(0,1)$ for each $j\in[k_2]$. Hence, the vector $\mp(h_2)\in\S^{k_2-1}$. 
Fact \ref{fact: CC: one vs all} implies that $\argmax(\tilde f_2(h_2))\subset\argmax(\mp(h_2))=\argmax_{a_2\in[k_2]}\E[Y_1+Y_2\mid h_2, a_2]$. Therefore, $\tilde d_2(H_2)=\pred(\tilde f_2(H_2))\in\argmax_{a_2\in[k_2]}\E[Y_1+Y_2\mid H_2, a_2]$. Also, Fact \ref{fact: CC: one vs all} implies that  
\begin{align*}
 \MoveEqLeft \sup_{\my\in\RR^{k_2}}\lbs -\sum\limits\limits_{a_2\in[k_2]}\slb e^{-\my_{a_2}}+\sum\limits\limits_{j\in[k_2]:j\neq a_2}e^{\my_j}\srb\frac{\E[Y_1+Y_2\mid h_2, a_2]}{\pi_1(a_1\mid h_1)}\rbs\\ 
 =&\ 2 \frac{\sum\limits\limits_{j\in[k_2]}\E[Y_1+Y_2\mid h_2,j]}{\pi_1(a_1\mid h_1)}\sum\limits\limits_{j\in[k_2]}\sqrt{\mp(H_2)_j(1-\mp(H_2)_j)}\\
 =&\ 2\frac{\sum\limits\limits_{a_2\in[k_2]}\sqrt{\E[Y_1+Y_2\mid h_2,a_2]\sum\limits\limits_{j\in[k_2]:j\neq a_2}\E[Y_1+Y_2\mid h_2,j]}}{\pi_1(a_1\mid h_1)}.
\end{align*}
Therefore, \eqref{inlemma: for result 3: def of V psi} implies that $\sup_{f_2}V^\psi(f)$ equals
\begin{align*}
   2 & \E\lbt-\slb e^{-f_{1A_1}(H_1)}+\sum\limits\limits_{i\in[k_1]:i\neq A_1}e^{f_{1i}(H_1)}\srb\\
    &\ \times\sum\limits\limits_{a_2\in[k_2]}\frac{\sqrt{\E[Y_1+Y_2\mid H_2,A_2=a_2]\sum\limits\limits_{j\in[k_2]:j\neq a_2}\E[Y_1+Y_2\mid H_2,j]}}{\pi_1(A_1\mid H_1)}\rbt.
   \end{align*}
Denoting conditional expectation of a random variable $U$ with respect to $H_1$ and $A_1=a_1$ as $\E[U\mid H_1, a_1]$, we obtain that $-\sup_{f_2}V^\psi(f)$ equals
  \begin{align*}
  &\  2\E\lbt\sum\limits\limits_{a_1\in[k_1]}\slb e^{-f_{1a_1}(H_1)}+\sum\limits\limits_{i\in[k_1]:i\neq a_1}e^{f_{1i}(H_1)}\srb\\
   &\ \times\sum\limits\limits_{a_2\in[k_2]}\E\lbt \sqrt{\E[Y_1+Y_2\mid H_2,A_2=a_2]\sum\limits_{j\in[k_2]:j\neq a_2}\E[Y_1+Y_2\mid H_2,j]}\bl H_1, a_1\rbt\rbt\\
   =&\ 2\E\lbt\sum\limits_{a_1\in[k_1]} e^{-f_{1a_1}(H_1)}\sum\limits_{a_2\in[k_2]}\E\lbt \sqrt{\E[Y_1+Y_2\mid H_2,A_2=a_2]\sum\limits_{j\neq a_2}\E[Y_1+Y_2\mid H_2,j]}\bl H_1, a_1\rbt\rbt\\
  + &\ 2\E\lbt\sum\limits_{a_1\in[k_1]} e^{f_{1a_1}(H_1)}\sum\limits_{i\neq a_1}\sum\limits_{a_2\in[k_2]}\E\lbt \sqrt{\E[Y_1+Y_2\mid H_2,A_2=a_2]\sum\limits_{j\neq a_2}\E[Y_1+Y_2\mid H_2,j]}\bl H_1, i\rbt\rbt,
\end{align*}

where for any random variable $X$ and $i\in[k_1]$, we use the shorthand $\E[X\mid H_1,i]$ for $\E[X\mid H_1, A_1=i]$.
We overload notation and for all $h_1\in\H_1$ and $a_1\in[k_1]$, we define
\[\mp_{a_1}(h_1)=\frac{\sum\limits_{a_2\in[k_2]}\E\lbt \sqrt{\E[Y_1+Y_2\mid H_2,A_2=a_2]\sum\limits_{j\in[k_2]:j\neq a_2}\E[Y_1+Y_2\mid H_2,j]}\bl H_1, a_1\rbt}{\sum\limits_{i\in[k_1]}\sum\limits_{a_2\in[k_2]}\E\lbt \sqrt{\E[Y_1+Y_2\mid H_2,A_2=a_2]\sum\limits_{j\in[k_2]:j\neq a_2}\E[Y_1+Y_2\mid H_2,j]}\bl H_1, i\rbt}\]
and $\mp(h_1)=(\mp_i(h_1))_{i\in[k_1]}$.
That $\mp(h_1)\in\S^{k_1-1}$ follows since $\E[Y_1+Y_2\mid H_2,A_2]>0$  for all $H_2$ and $A_2$ by our assumption. 
Therefore, for fixed $h_1\in\H_1$, we observe that $\tilde f_1(h_1)$ equals
\begin{align*}
\argmin\limits_{\mx\in\RR^{k_1}}\sum\limits_{a_1\in[k_1]}\slb e^{-\mx_{a_1}}\mp_{a_1}(h_1)+e^{\mx_{a_1}}(1-\mp_{a_1}(h_1))\srb.
\end{align*}
Another application of Fact \ref{fact: CC: one vs all} implies that
$\tilde f_1(h_1)$ uniquely exists in $\RR^{k_1}$ and
\begin{align*}
& \argmax(\tilde f_1(h_1))\subset \argmax(\mp(h_1))\\
   &\  = \argmax\limits_{i\in[k_1]}\sum\limits_{a_2\in[k_2]}\E\lbt \sqrt{\E[Y_1+Y_2\mid H_2,A_2=a_2]\sum\limits_{j\neq a_2}\E[Y_1+Y_2\mid H_2,j]}\bl H_1, i\rbt.
\end{align*}
The proof follows noting $\tilde d_1(H_1)=\pred(\tilde f_1(H_1))$.
\end{proof}

\begin{fact}
\label{fact: CC: one vs all}
For any $\mp\in\S^{k-1}$ optimization problem
\begin{mini}|s|
    {\my\in\RR^{k}}{\sum\limits_{j\in[k]}\slb e^{-\my_j}\mp_j+(1-\mp_j)e^{\my_j}\srb}{\label{instatement: fact: cc: one vs all}}{}
\end{mini}
has a unique minimizer $\my^*$ satisfying $\argmax(\my^*)\subset\argmax(\mp)$ and the minimum value is $2\sum\limits_{j\in[k]}\sqrt{\mp_j(1-\mp_j)}$.
\end{fact}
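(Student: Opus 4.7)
The plan is to exploit the separable structure of the objective in \eqref{instatement: fact: cc: one vs all}. I would first observe that the sum $\sum_{j\in[k]}(\mp_j e^{-\my_j} + (1-\mp_j)e^{\my_j})$ decomposes into $k$ independent single-variable optimizations of the form $f_j(y) := \mp_j e^{-y} + (1-\mp_j) e^{y}$, since each term depends only on $\my_j$. This reduces the $k$-dimensional problem to $k$ scalar problems that can be solved coordinate-wise. Note that the fact is invoked in the proof of Result~\ref{result: conv: one vs all} with $\mp$ having all coordinates strictly in $(0,1)$, a consequence of Assumption~V combined with $k \geq 2$; I would carry out the argument in this regime, where the minimizer lies in $\RR^k$ rather than at an extended-real boundary.

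Next, for each $j$, I would apply the AM--GM inequality to obtain
\[
\mp_j e^{-y} + (1-\mp_j) e^{y} \;\geq\; 2\sqrt{\mp_j(1-\mp_j)},
\]
with equality if and only if $\mp_j e^{-y} = (1-\mp_j) e^{y}$, i.e., $y = \tfrac{1}{2}\log(\mp_j/(1-\mp_j))$. Since $f_j''(y) = \mp_j e^{-y} + (1-\mp_j)e^{y} > 0$ everywhere, each $f_j$ is strictly convex, so this critical point is the unique minimizer of $f_j$ on $\RR$. Summing the pointwise minima over $j$ then yields both the claimed minimum value $2\sum_{j\in[k]}\sqrt{\mp_j(1-\mp_j)}$ and the unique global minimizer $\my^* \in \RR^k$ with coordinates $\my_j^* = \tfrac{1}{2}\log(\mp_j/(1-\mp_j))$.

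Finally, to verify $\argmax(\my^*)\subset\argmax(\mp)$, I would invoke strict monotonicity of the logit map $p \mapsto \tfrac{1}{2}\log(p/(1-p))$ on $(0,1)$: its derivative $\tfrac{1}{2p(1-p)}$ is strictly positive, so $\my_j^* > \my_i^*$ if and only if $\mp_j > \mp_i$. Consequently $\argmax(\my^*) = \argmax(\mp)$, which is a fortiori the desired inclusion. There is no substantive obstacle here; the only delicate point is ensuring $\mp_j \in (0,1)$ so that the logits are finite, and this is automatic in the only setting where the fact is used in the paper.
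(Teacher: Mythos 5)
Your proof is correct, and it takes a genuinely different route from the paper's. The paper substitutes $\mx_j = e^{\my_j}$ to get the convex problem $\min_{\mx\in\RR^k_{>0}}\sum_j(\mp_j/\mx_j + (1-\mp_j)\mx_j)$, locates the critical point $\mx_j^* = \sqrt{\mp_j/(1-\mp_j)}$ via the gradient, and then invokes Theorem~9 of \cite{zhang2004} (a Fisher-consistency result for one-vs-all losses with decreasing convex $\phi$) to obtain $\argmax(\my^*)\subset\argmax(\mp)$. You instead apply AM--GM coordinate-wise to get both the value $2\sqrt{\mp_j(1-\mp_j)}$ and the equality condition in one step, and you prove the argmax inclusion by strict monotonicity of the logit $p\mapsto\tfrac12\log(p/(1-p))$, which actually gives the stronger conclusion $\argmax(\my^*)=\argmax(\mp)$. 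Your version is more self-contained (no change of variables, no appeal to an external classification-calibration theorem) and more elementary, at the cost of being specific to the exponential loss; the paper's route is slightly longer here but reuses machinery from \cite{zhang2004} that appears elsewhere. You are also more careful than the paper in flagging the boundary issue: as stated, the fact allows $\mp_j\in\{0,1\}$, in which case the $j$th coordinate of the minimizer escapes to $\pm\infty$ and ``unique minimizer $\my^*\in\RR^k$'' fails; both proofs implicitly work on $(0,1)^k$, but you say so explicitly and correctly note that this is the only regime in which the fact is invoked (via Assumption~V in Result~\ref{result: conv: one vs all}).
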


\begin{proof}[Proof of Fact \ref{fact: CC: one vs all}]

    The given optimization problem is equivalent to
 \begin{mini*}|s|
    {\mx\in\RR^{k}_{>0}}{\sum\limits_{j\in[k]}\slb \mp_j/\mx_j+(1-\mp_j)\mx_j\srb}{}{}
\end{mini*}   
where we used the variable transformation $\mx_j=e^{\my_j}$ for $j\in[k]$. The function $\tph(\mx)=\sum\limits_{j\in[k]}\slb \mp_j/\mx_j+(1-\mp_j)\mx_j\srb$ is convex on $\RR^{k}_{>0}$. Hence, any critical point $\mx^*$ of $\tph$ is the unique global minimum over $\RR^{k}_{>0}$. Since $\tph$ is differentiable on $\RR^{k}_{>0}$, it follows that $\mx^*$ satisfies $\grad \tph(\mx^*)=0$. Therefore,  $\mx^*_j=\sqrt{\mp_j/(1-\mp_j)}$ for all $j\in[k]$, which implies  $\tph(\mx^*)=2\sum\limits_{j\in[k]}\sqrt{\mp_j(1-\mp_j)}$. 
Note that the optimal solution of \eqref{instatement: fact: cc: one vs all} satisfies $\my^*_j=\log(\mx^*_j)$. Therefore, $\my^*$ is unique and $\my^*_j=\frac{1}{2}\log(\mp_j/(1-\mp_j))$. Since $\phi(x)=\exp(-x)$ is convex, bounded below, differentiable, and decreasing,  Theorem 9 of \cite{zhang2004} implies  that $\argmax(\my^*)\subset\argmax(\mp)$.

\end{proof}

\subsubsection{Proof of Result \ref{result: single-stage FC}}
\label{sec: pf Result: additive convex loss}
\begin{proof}[Proof of Result \ref{result: single-stage FC}] 
     $V^\psi(f)$ equals
     \begin{align*}
  \MoveEqLeft  \E\lbt  \frac{(Y_1+Y_2)\phi^{(1)}(f_1(H_1);A_1) }{\pi_1(A_1\mid H_1)\pi_2(A_2\mid H_2)}+ \frac{(Y_1+Y_2)\phi^{(2)}(f_1(H_1);A_2) }{\pi_1(A_1\mid H_1)\pi_2(A_2\mid H_2)}\rbt
\end{align*}
Since the propensity scores are positive by Assumption I, 
\[ \begin{split}
   \MoveEqLeft \E\lbt  \phi^{(1)}(f_1(H_1);A_1) \frac{Y_1+Y_2}{\pi_1(A_1\mid H_1)\pi_2(A_2\mid H_2)}\rbt\\
    &\ =\E\lbt \sum\limits_{a_1\in[k_1]} \E\lbt \frac{Y_1+Y_2}{\pi_2(A_2\mid H_2)}\bl H_1=h_1, A_1=a_1\rbt\phi^{(1)}(f_1(H_1);a_1)\rbt
\end{split}\]
Thus for any $h_1\in\H_1$, a version of $\tilde f_1(h_1)$ exists if
\begin{align*}
\argmax_{\mx\in\RR^{k_1}}\sum\limits\limits_{a_1\in[k_1]} \E\lbt \frac{Y_1+Y_2}{\pi_2(A_2\mid H_2)}\bl H_1=h_1, A_1=a_1\rbt\phi^{(1)}(\mx;a_1)
\end{align*}
is non-empty. Let us define 
\begin{align*}
    \mp^{(1)}_i=\E\lbt \frac{Y_1+Y_2}{\pi_2(A_2\mid H_2)}\bl H_1=h_1, A_1=a_1\rbt,
\end{align*}
which is positive because of Assumptions  V.
In that case, 
\[\tilde f_1(H_1)\in \argmax_{\mx\in\RR^{k_1}}\sum_{a_1\in[k_1]}\mp_{i}^{(1)}\phi^{(1)}(\mx;a_1).\]
However, since $\mp^{(1)}\in\RR^{k_1}_{>0}$,  Lemma \ref{lemma: CC: FC single stage} below implies that if
\[\mx^*\in \argmax_{\mx\in\RR^{k_1}}\sum_{a_1\in[k_1]}\mp_{i}^{(1)}\phi^{(1)}(\mx;a_1),\]
then $\pred(\mx^*)\in\argmax(\mp^{(1)})$. Therefore, we have shown that
whenever $\tilde f_1(h_1)$ exists,
\begin{align*}
 \pred(\tilde f_1(h_1))\in   \argmax(\mp^{(1)})= \argmax_{a_1\in[k_1]}\E\lbt \frac{(Y_1+Y_2)}{\pi_2(A_2\mid H_2)}\bl H_1=h_1, A_1=a_1\rbt.
\end{align*}
Similarly, since $\phi^{(2)}$ is also single-stage Fisher consistent, taking $k_1$ to be $k_2$ in Lemma \ref{lemma: CC: FC single stage}, we can show that for a fixed $h_2=(h_1,a_1,y_1,o_2)$, whenever $\tilde f_2(h_2)$ exists, 
\begin{align*}
\pred(\tilde f_2(h_2))\in  &\ \argmax_{a_2\in[k_2]}\E\lbt \frac{(Y_1+Y_2)}{\pi_1(A_1\mid H_1)}\bl H_2=h_2, A_2=a_2\rbt\\
=&\ \argmax_{a_2\in[k_2]}\frac{1}{\pi_1(a_1\mid h_1)}\slb y_1+\E[Y_2 \mid H_2=h_2, A_2=a_2]\srb\\
=&\ \argmax_{a_2\in[k_2]}\E[Y_2\mid H_2=h_2, A_2=a_2],
\end{align*}
where we  used the fact that $\pi_1(a_1\mid h_1)>0$ (by Assumption I). The proof follows noting $\tilde d_t(H_t)=\pred(f_t(H_t))$ for $t=1,2$. 
 \end{proof}

\begin{lemma}
\label{lemma: CC: FC single stage}
    Suppose $\phi:\RR^{k_1}\mapsto\RR$ is  a Fisher consistent surrogate for $T=1$. Then if $\argmax_{\mx\in\RR^{k_1}}\Psi(\mx;\mp)$ is non-empty for some $\mp\in\RR^{k_1}_{\geq 0}$ with $\Psi$ as defined in \eqref{def: Psi and psi star main text}, then for any $\mx^*\in\argmax_{\mx\in\RR^{k_1}}\Psi(\mx;\mp)$, it follows that $\mp_{\pred(\mx^*)}=\max(\mp)$.
    
\end{lemma}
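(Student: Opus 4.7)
\textbf{Proof plan for Lemma \ref{lemma: CC: FC single stage}.} The strategy is a contradiction argument: I will assume $\mx^* \in \argmax_\mx \Psi(\mx;\mp)$ but $\mp_{\pred(\mx^*)} < \max(\mp)$, and then build a distribution $\PP \in \mathcal P^0$ under which the constant policy $f \equiv \mx^*$ maximizes the surrogate value function while failing to maximize the true value function. This will contradict Fisher consistency, since the constant sequence $f_m \equiv \mx^*$ trivially satisfies $V^\psi(f_m) \to V^\psi_*$.

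\textbf{Construction of the bad distribution.} I will first handle the case $\mp \in \RR^{k_1}_{>0}$, which is the case used in the proof of Result \ref{result: single-stage FC} (there $\mp^{(1)}$ is strictly positive by Assumption V). Take $H_1$ degenerate (a single atom), let $A_1 \mid H_1$ be uniform on $[k_1]$ so that $\pi_1(i \mid H_1) = 1/k_1$, and set $Y_1 = k_1 \mp_{A_1}$ deterministically conditional on $A_1$. Assumptions I and III hold by construction; II is trivial; IV holds since $Y_1 \le k_1\max(\mp)$; and V holds because $\mp_i > 0$ for every $i$. Using the tower property as in the computation of $V(f)$ in the excerpt, a short calculation gives
\[
V^\psi(f) = \sum_{i \in [k_1]} \phi(f(H_1);i)\,k_1\mp_i = k_1\,\Psi(f(H_1);\mp), \qquad V(f) = k_1\,\mp_{\pred(f(H_1))}.
\]
Since $H_1$ is degenerate, each $f \in \F_1$ corresponds to a single vector in $\RR^{k_1}$. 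Thus $V^\psi_* = k_1\,\Psi^*(\mp)$ (attained at $\mx^*$) and $V_* = k_1\max(\mp)$.

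\textbf{Applying Fisher consistency.} Let $f_m \equiv \mx^*$ for every $m$. Then $V^\psi(f_m) = k_1\Psi(\mx^*;\mp) = V^\psi_*$, so the convergence $V^\psi(f_m) \to V^\psi_*$ holds trivially. Fisher consistency of $\phi$ with respect to $\mathcal P^0$ forces $V(f_m) \to V_*$, i.e., $k_1\mp_{\pred(\mx^*)} = k_1\max(\mp)$, contradicting the assumption $\mp_{\pred(\mx^*)} < \max(\mp)$. Hence $\mp_{\pred(\mx^*)} = \max(\mp)$, as claimed.

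\textbf{The main obstacle and the general $\mp \in \RR^{k_1}_{\geq 0}$ case.} The principal subtlety is Assumption V, which requires $Y_1 > C_{\min} > 0$ and therefore prevents the construction above from being carried out verbatim for weight vectors $\mp$ with zero entries. The cleanest fix is to note that the lemma is only invoked in this paper on strictly positive $\mp$ (since $\mp^{(1)}_i = \E[(Y_1+Y_2)/\pi_2(A_2 \mid H_2)\mid H_1,A_1=a_1] > 0$ under Assumption V), so the case $\mp \in \RR^{k_1}_{>0}$ is the operationally relevant one. If the $\mp \in \RR^{k_1}_{\geq 0}$ case is genuinely needed, one can apply the strictly positive argument to $\mp_\epsilon = \mp + \epsilon\mo$ to conclude $\pred(\mx^*_\epsilon) \in \argmax(\mp)$ for every $\mx^*_\epsilon \in \argmax_\mx \Psi(\mx;\mp_\epsilon)$, and then pass to the limit $\epsilon\downarrow 0$; however, since $\argmax_\mx \Psi(\mx;\mp_\epsilon)$ need not converge to $\argmax_\mx \Psi(\mx;\mp)$ without additional regularity on $\phi$, this limiting step would require a separate argument and is the only nontrivial technical point.
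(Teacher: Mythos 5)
Your argument is correct and matches the paper's own proof: construct a degenerate single-stage distribution with trivial covariate so that each class score is just a vector, read off $V^\psi$ and $V$ in terms of $\Psi(\cdot;\mp)$ and $\mp_{\pred(\cdot)}$, and apply Fisher consistency to the constant class score $\mx^*$. Your observation about Assumption~V is accurate and a useful addition: the paper's own proof silently makes the same restriction (it opens with ``Let $\mp\in\RR^{k_1}_{>0}$'' even though the lemma is stated for $\mp\in\RR^{k_1}_{\geq 0}$), and, as you note, strictly positive $\mp$ is all that is needed where the lemma is invoked in the proof of Result~\ref{result: single-stage FC}.
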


\begin{proof}
Let $\mp\in\RR^{k_1}_{>0}$. Consider the distribution $\PP\in\mP^0$ so that $H_1=\emptyset$ and  $\E[Y_1\mid  A_1=i]=\mp_i$ for all $i\in[k_1]$. For such $\PP$, $f_1:\H_1\mapsto\RR^{k_1}$ is just a vector in $\RR^{k_1}$. Since $\PP$ also satisfies Assumption I-III, for any such $f_1$, it holds that 
\begin{align*}
    V(f_1)= \E[Y_1 1[\pred(f_1)=A_1]/\pi_1(A_1\mid H_1)]=\mp_{\pred(f_1)},
\end{align*}
and
\begin{align*}
    V^\phi(f_1)= \E[Y_1 \phi(f_1,A_1)/\pi_1(A_1\mid H_1)]=\Psi(f_1;\mp).
\end{align*}
Let $\tilde f_1$ be the maximizer of $V^\phi(f_1)$ over $f_1\in\F_1$. 
Therefore, if  $\argmax_{\mx\in\RR^{k_1}}\Psi(\mx;\mp)$ is non-empty, then  $\tilde f_1$ exists. In that case, any  $\mx^*\in\argmax_{\mx\in\RR^{k_1}}\Psi(f_1;\mp)$ is a candidate of $\tilde f_1$. Since $\tilde f_1$ exists and $\phi$ is Fisher consistent for $T=1$, Definition \ref{def: Fisher consistency multiclass} (definition of  Fisher consistency)   implies $V(\tilde f_1)=\sup_{f_1\in\F_1}V(f_1)$. Therefore, it follows that  $\mp_{\pred(\mx^*)}=\max(\mp)$ for any  $\mx^*\in\argmax_{\mx\in\RR^{k_1}}\Psi(f_1;\mp)$.
\end{proof}

\subsubsection{Proofs for Example \ref{ex: sum zero}}
\label{secpf: hinge loss}
First, we derive a formula for $\tilde f_1(h_1)$ under the general two-stage setup. It will be used in Section \ref{sec: toy setup}, where we derive a simpler form of $\tilde f_1(h_1)$ under the toy setup. 
In this case, for all $\mx\in\RR^{k_1}$, $\my\in\RR^{k_2}$, $a_1\in[k_1]$, and $a_2\in[k_2]$,
\begin{align}
\label{def: cc: sum-zero constraint loss two stage hinge}
\psi(\mx,\my;a_1,a_2)=&\ \sum\limits_{i\in[k_1]:i\neq A_1}\sum_{j\in[k_2]: j\neq a_2}\min(-\mx_i-1,-\my_j-1,0)\nn\\
=&\ -\sum\limits_{i\in[k_1]:i\neq A_1}\sum_{j\in[k_2]: j\neq a_2}\max(1+\mx_i,1+\my_j,0).
\end{align}
For any set $\C\subset \F_1\times \F_2$, $\sup_{(f_1,f_2)\in\C}V^\psi(f_1,f_2)$ takes the form
\begin{align*}
   \MoveEqLeft\sup_{(f_1,f_2)\in\C}-\E\lbtt \frac{(Y_1+Y_2)\sum\limits_{i\in[k_1]:i\neq A_1}\sum\limits_{j\in[k_2]: j\neq A_2}\max(1+f_{1i}(H_1),1+f_{2j}(H_2),0)}{\pi_1(A_1\mid H_1)\pi_2(A_2\mid H_2)}\rbtt\\
   =&\ \sup_{(f_1,f_2)\in\C}-\E\lbtt \frac{(Y_1+Y_2)\sum\limits_{i\in[k_1]:i\neq A_1}\sum\limits_{j\in[k_2]: j\neq A_2}\max(1+f_{1i}(H_1),1+f_{2j}(H_2),0)}{\pi_1(A_1\mid H_1)\pi_2(A_2\mid H_2)}\rbtt\\
   =&\ - \inf_{(f_1,f_2)\in\C}\E\lbtt \frac{(Y_1+Y_2)\sum\limits_{i\in[k_1]:i\neq A_1}\sum\limits_{j\in[k_2]: j\neq A_2}\max(1+f_{1i}(H_1),1+f_{2j}(H_2),0)}{\pi_1(A_1\mid H_1)\pi_2(A_2\mid H_2)}\rbtt.
\end{align*}
Therefore, it suffices to consider the minimization problem. 
Let us denote the scores corresponding to $f_1$ as $f_{1j}$'s for $j\in[k_1]$ so that $f_1=(f_{11},\ldots,f_{1k_1})$. Similarly, let us denote the scores corresponding to $f_2$ as $f_{2j}$'s for $j\in[k_2]$ so that $f_2=(f_{21},\ldots,f_{2k_2})$.
Let us denote
\[\R(f_1,f_2)=\E\lbt (Y_1+Y_2)\frac{\sum\limits_{i\in[k_1]:i\neq A_1}\sum\limits_{j\in[k_2]: j\neq A_2}\max(1+f_{1i}(H_1),1+f_{2j}(H_2),0)}{\pi_1(A_1\mid H_1)\pi_2(A_2\mid H_2)}\rbt,\]
which equals
\begin{align*}
&\ \E\lbt \frac{\E[Y_1+Y_2\mid  H_2,A_2]}{\pi(A_1\mid H_1)} \frac{\sum\limits_{i\in[k_1]:i\neq A_1}\sum\limits_{j\in[k_2]: j\neq A_2}\max(1+f_{1i}(H_1),1+f_{2j}(H_2),0)}{\pi_2(A_2\mid H_2)}\rbt\\
  &= \E\lbt \sum_{a_2\in[k_2]}\frac{\E[Y_1+Y_2\mid  H_2, A_2=a_2]}{\pi(A_1\mid H_1)} \sum\limits_{i\in[k_1]:i\neq A_1}\sum\limits_{j\in[k_2]: j\neq a_2}\max(1+f_{1i}(H_1),1+f_{2j}(H_2),0)\rbt\\
  &= \E\lbt \sum_{j\in[k_2]}\sum\limits_{i\in[k_1]:i\neq A_1}\max(1+f_{1i}(H_1),1+f_{2j}(H_2),0)\sum\limits_{a_2\in[k_2]: a_2\neq j}\frac{\E[Y_1+Y_2\mid H_2, A_2=a_2]}{\pi(A_1\mid H_1)} \rbt.
\end{align*}
Then $\inf_{f_2\in \F_2: \sum_{j\in [k_2]}f_{2j}=0}\R(f_1,f_2)$ equals
\begin{align*}
    =&\ \E\lbt \inf_{\my\in\RR^{k_2}:\sum_{j=1}^{k_2}\my_j=0}\sum_{j\in[k_2]}\sum\limits_{i\in[k_1]:i\neq A_1}\max(1+f_{1i}(H_1),1+\my_j,0)\\
    &\  \times \sum\limits_{a_2\in[k_2]: a_2\neq j}\frac{\E[Y_1+Y_2\mid H_2, A_2=a_2]}{\pi(A_1\mid H_1)} \rbt
\end{align*}
For $h_2\in\H_2$, let us denote
\[\mp_{a_2}(h_2)=\frac{\E[Y_1+Y_2\mid h_2,A_2=a_2]}{\sum\limits_{j\in[k_2]}\E[Y_1+Y_2\mid h_2,A_2=j]}\quad\text{ for all }a_2\in[k_2]\text{ and }\mp(h_2)=(\mp_{a_2}(h_2))_{a_2\in[k_2]},\]
which are well-defined since $\E[Y_1+Y_2\mid h_2,A_2=j]>0$  for all $h_2$ and $j\in[k_2]$ due to Assumption V. 
Then
\begin{align*}
\MoveEqLeft\inf_{\my\in\RR^{k_2}:\sum_{j=1}^{k_2}\my_j=0}\lbs\sum_{j\in[k_2]}\sum\limits_{i\in[k_1]:i\neq A_1}\max(1+f_{1i}(H_1),1+\my_j,0)\\
&\ \times\sum\limits_{a_2\in[k_2]: a_2\neq j}\E[Y_1+Y_2\mid H_2, A_2=a_2] \rbs   \\
=&\ \lb\sum\limits_{j\in[k_2]}\E[Y_1+Y_2\mid h_2,A_2=j]\rb\\
&\ \times\inf_{\my\in\RR^{k_2}:\sum_{j=1}^{k_2}\my_j=0}\sum_{j\in[k_2]}\sum\limits_{i\in[k_1]:i\neq A_1}\max(1+f_{1i}(H_1),1+\my_j,0)\slb 1-p_j(H_2)\srb.
\end{align*}
For fixed $H_1$, 
\begin{align*}
\MoveEqLeft\sum_{j\in[k_2]}\slb 1-p_j(H_2)\srb\sum\limits_{i\in[k_1]:i\neq A_1}\max(1+f_{1i}(H_1),1+\my_j,0)\\
=&\ k_1 \sum_{j\in[k_2]}\slb 1-p_j(H_2)\srb+\sum_{j\in[k_2]}\slb 1-p_j(H_2)\srb\sum\limits_{i\in[k_1]:i\neq A_1}\max(f_{1i}(H_1),\my_j,-1)\\
=&\ k_1 \sum_{j\in[k_2]}\slb 1-p_j(H_2)\srb+\sum_{j\in[k_2]}\slb 1-p_j(H_2)\srb\sum\limits_{i\in[k_1-1]}\max(\mv_i(H_1;A_1),\my_j),
\end{align*}
where $\mv(H_1;A_1)=(\max(f_{1i}(H_1),-1))_{i\in[k_1]:i\neq A_1}$ is a vector of length $2$.
When  $k_1=3$ and $k_2=2$, Lemma \ref{lemma: hinge: special case} implies that
\begin{align*}
\MoveEqLeft\inf_{\my\in\RR^{k_2}:\sum_{j\in[k_2]}\my_j=0}\sum_{j\in[k_2]}\slb 1-p_j(H_2)\srb\sum\limits_{i\in[k_1-1]}\max(\mv_i(H_1;A_1),\my_j)\\
=&\ \begin{cases}
     \mv_1(H_1;A_1)+\mv_2(H_1;A_1) &\ \text{ if }\min(\mv(H_1;A_1))\geq 0\\
      \upkappa(\mv(H_1;A_1),\mp(H_2)) & \text{ otherwise} 
      \end{cases}\\
       =&\  \begin{cases}\sum\limits_{i\in[k_1]:i\neq A_1}f_{1i}(H_1) &\ \text{ if }f_{1i}(H_1)\geq 0\text{ for }i\neq A_1\\
      \upkappa(\mv(H_1;A_1),\mp(H_2)) & \text{ otherwise,}
  \end{cases}
\end{align*}
 where the last step follows because $\min(\mv(H_1;A_1))\geq 0$ if and only if $f_{1i}(H_1)>0$ for all $i\neq A_1$, in which case, $\max(f_{1i}(H_1),-1)=f_{1i}(H_1)$. 
Since 
$\sum_{j\in[k_2]}( 1-p_j(H_2))=k_2-1$, taking the infimum of
\[\sum_{j\in[k_2]}\sum\limits_{i\in[k_1]:i\neq A_1}\max(1+f_{1i}(H_1),1+\my_j,0)\sum\limits_{a_2\in[k_2]: a_2\neq j}\E[Y_1+Y_2\mid H_2, A_2=a_2] \]
over $\{\my\in\RR^{k_2}:\sum_{j=1}^{k_2}\my_j=0\}$  yields
\begin{align*}
\MoveEqLeft k_1(k_2-1)\sum\limits_{j\in[k_2]}\E[Y_1+Y_2\mid h_2,A_2=j]\\
&\ + \lb\sum\limits_{j\in[k_2]}\E[Y_1+Y_2\mid h_2,A_2=j]\rb \sum\limits_{i\in[k_1]:i\neq A_1}f_{1i}(H_1) 1\slbt \min\limits_{i\in[k_1]:i\neq A_1}f_{1i}(H_1)\geq 0 \srbt\\
&\ + \lb\sum\limits_{j\in[k_2]}\E[Y_1+Y_2\mid h_2,A_2=j]\rb\upkappa(\mv(H_1;A_1),\mp(H_2)) 1\slbt \min\limits_{i\in[k_1]:i\neq A_1}f_{1i}(H_1)< 0 \srbt.
\end{align*}
Therefore, maximizing $\inf_{f_2}\R(f_1,f_2)$ with respect to $f_1:\sum_{i\in[k_1]}f_{1i}=0$ is equivalent to minimizing
\begin{align*}
 \MoveEqLeft \E\left[\frac{\sum\limits_{j\in[k_2]}\E[Y_1+Y_2\mid H_2,A_2=j]\sum\limits_{i\in[k_1]:i\neq A_1}\max(f_{1i}(H_1),-1) 1\slbt \min\limits_{i\in[k_1]:i\neq A_1}f_{1i}(H_1)\geq 0 \srbt}{\pi_1(A_1\mid H_1)}\rbtt\\
 &\ +\E\lbtt\frac{\sum\limits_{j\in[k_2]}\E[Y_1+Y_2\mid H_2,A_2=j]\upkappa(\mv(H_1;A_1),\mp(H_2)) 1\slbt \min\limits_{i\in[k_1]:i\neq A_1}f_{1i}(H_1)< 0 \srbt\rb} {\pi_1(A_1\mid H_1)}  \right]
\end{align*}
with respect to $f_1:\sum_{i\in[k_1]}f_{1i}=0$ . However, the last display equals 
\begin{align*}
 &\ \E\left[\pi_1(A_1\mid H_1)^{-1} \begin{Bmatrix}\sum\limits_{j\in[k_2]}\E\slbt\E[Y_1+Y_2\mid H_2,A_2=j]\mid H_1, A_1\srbt&\\
  \times \lb \sum\limits_{i\in[k_1]:i\neq A_1}f_{1i}(H_1) 1\slbt \min\limits_{i\in[k_1]:i\neq A_1}f_{1i}(H_1)\geq 0 \srbt &\\
  +\upkappa(\mv(H_1;A_1),\mp(H_2)) 1\slbt \min\limits_{i\in[k_1]:i\neq A_1}f_{1i}(H_1)< 0 \srbt\rb \end{Bmatrix}  \right] \\
  =&\ \E\left[\sum_{a_1\in[k_1]}\sum\limits_{j\in[k_2]}\E\slbt\E[Y_1+Y_2\mid H_2,A_2=j]\mid H_1, a_1\srbt\sum\limits_{i\neq A_1}f_{1i}(H_1) 1\slbt \min\limits_{i\neq A_1}f_{1i}(H_1)\geq 0 \srbt\right] \\
  &\ +\E\lbt\sum_{a_1\in[k_1]}\slb\sum\limits_{j\in[k_2]}\E\slbt\E[Y_1+Y_2\mid H_2,A_2=j]\mid H_1, a_1\srbt\srb\\
  &\ \times \upkappa(\mv(H_1;A_1),\mp(H_2)) 1\slbt \min\limits_{i\in[k_1]:i\neq A_1}f_{1i}(H_1)< 0 \srbt \rbt. 
\end{align*}
Let us denote
\[\upalpha(H_1,a_1)=\sum\limits_{j\in[k_2]}\E\slbt\E[Y_1+Y_2\mid H_2,A_2=j]\mid H_1, a_1\srbt.\]
Then $\tilde f_1(H_1)$ is the maximizer of
\begin{equation}
    \label{def: hinge: uppsi}
   \begin{split}
 \MoveEqLeft   \sum_{a_1\in[k_1]}\upalpha(H_1,a_1)\lb \sum\limits_{i\in[k_1]:i\neq A_1}\mx_i 1\slbt \min\limits_{i\in[k_1]:i\neq A_1}\mx_i\geq 0 \srbt\\
    &\ +\upkappa(\mv(a_1),\mp(H_2)) 1\slbt \min\limits_{i\in[k_1]:i\neq A_1}\mx_i< 0 \srbt\rb
    \end{split}
\end{equation}
subject to  $\sum_{i\in[k_1]}\mx_i=0$, 
where $\mv(a_1)\in\RR^{k_1-1}$ satisfies 
$\mv(a_1)=(\max(\mx_i,-1))_{i\in[k_1]:i\neq a_1}$. 

\paragraph{Toy setup of the sum-zero loss example}
\label{sec: toy setup}
Under the toy setup of Example \ref{ex: sum zero}, $H_1=\emptyset$. Therefore, $f_1(H_1)\equiv f_1\in\RR^3$ and 
\[\upalpha(H_1,a_1)\equiv \upalpha(a_1)=\sum\limits_{j\in[k_2]}\E[Y_2\mid A_1=a_1,A_2=j].\]
Moreover, since $H_2\equiv A_1$, $\mp(H_2)=\mp(A_1)$, and
\[\mp_j(A_1)=\frac{\E[Y_2\mid A_1,A_2=j]}{\sum_{j\in[k_2]}\E[Y_2\mid A_1,A_2=j]}\text{ for }j\in[k_2].\]
In our example, $H_1=\emptyset$, which implies $f_1(H_1)=f_1$. Also, $k_1=3$.
In this case, \eqref{def: hinge: uppsi} reduces to the following optimization problem: 
 \begin{mini}|s|
{\mx\in\RR^3:\sum_{i\in[k_1]}\mx_i=0}{\sum_{a_1\in[k_1]}\upalpha(a_1)\left(\splitfrac{\sum_{j\in[k_1]:j\neq a_1}\mx_j 1\slbt \min\limits_{i\in[k_1]:i\neq A_1}\mx_i\geq 0 \srbt}{+\upkappa(\mv(a_1),\mp(a_1)) 1\slbt \min\limits_{i\in[k_1]:i\neq a_1}\mx_i< 0 \srbt}\right)}
{\label{opti: hinge: with x}}{}
\addConstraint{\mx_1+\mx_2+\mx_3}{=0}{}
\addConstraint{\mv(a_1)}{=(\max(\mx_i,-1))_{i\in[3]:i\neq a_1}}{}
\end{mini}
Any solution to solution to \eqref{opti: hinge: with x} is a candidate of $\tilde f_1$. 
Let us define
\begin{equation}
    \label{def: uppsi}
\uppsi(\mx_1,\mx_2)=\sum_{a_1\in[k_1]}\upalpha(a_1)\left(\splitfrac{\sum_{j\in[k_1]:j\neq a_1}\mx_j 1\slbt \min\limits_{i\in[k_1]:i\neq A_1}\mx_i\geq 0 \srbt}{+\upkappa(\mv(a_1),\mp(a_1)) 1\slbt \min\limits_{i\in[k_1]:i\neq a_1}\mx_i< 0 \srbt}\right)
\end{equation}
where the $\mx$ used in \eqref{def: uppsi} is $\mx=(\mx_1,\mx_2,-(\mx_1+\mx_2))$.
Note that $\tilde f$ is a  solution to \eqref{opti: hinge: with x} if and only if $(\tilde f_1,\tilde f_2)$ is a solution to
\begin{mini}|s|
{(\mx_1,\mx_2)\in\RR^2}{\uppsi(\mx_1,\mx_2)}
{\label{opti: hinge: with x reduced}}{}
\end{mini}
and $\tilde f_3=-(\tilde f_1+\tilde f_2)$. The optimization program \eqref{opti: hinge: with x reduced} is an unconstrained optimization problem with 2 variables.
 We obtained the surface plot of $\uppsi$ using the software R in the three settings mentioned in Table \ref{tab:hinge} to find out the optimal solution. Figure \ref{fig: hinge: surface plot} gives the surface plots for settings 2 and 3. The plot for setting 1 is skipped because it is similar to that of Setting 2. Finally, in this case, $Q_2^*(H_2,A_2)=\E[Y_1+Y_2\mid H_2, A_2]=\E[Y_2\mid A_1,A_2]$ and
    \[Q_1^*(H_1,A_1)\equiv Q_1^*(A_1)=\E\slbt\max\limits_{j\in[k_2]}Q_2^*(H_2,j)\mid A_1\srbt=\E\slbt\max\limits_{j\in[k_2]}\E[Y_2\mid A_1,A_2=j]\mid A_1\srbt,\]
    which equals $\max_{j\in[k_2]}\E[Y_2\mid A_1,A_2=j]$. 
    Hence, any member of
    \[ \argmax_{i\in[3]}\slb \max\limits_{j\in[2]}\E[Y_2\mid A_1=i,A_2=j]\srb\]
    is a candidate for $d_1^*$, the first stage optimal treatment assignment. This explains the $d_1^*$ values in Table \ref{tab:hinge}.

\begin{figure}[htbp]
    \centering
    \begin{subfigure}{0.45\textwidth}
        \centering
        \includegraphics[width=\textwidth]{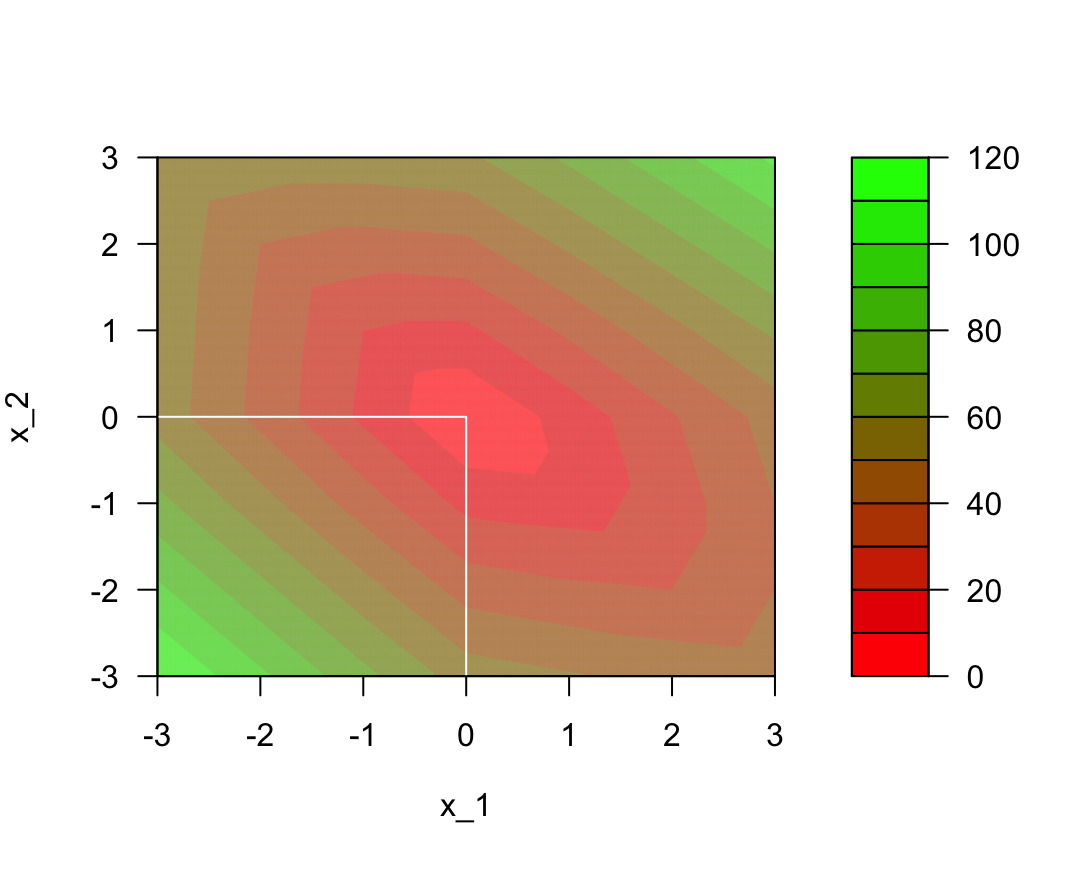}  
        \caption{Setting 2 in Table \ref{tab:hinge}}
        \label{fig:subfigure1}
    \end{subfigure}%
    \hfill
    \begin{subfigure}{0.45\textwidth}
        \centering
        \includegraphics[width=\textwidth]{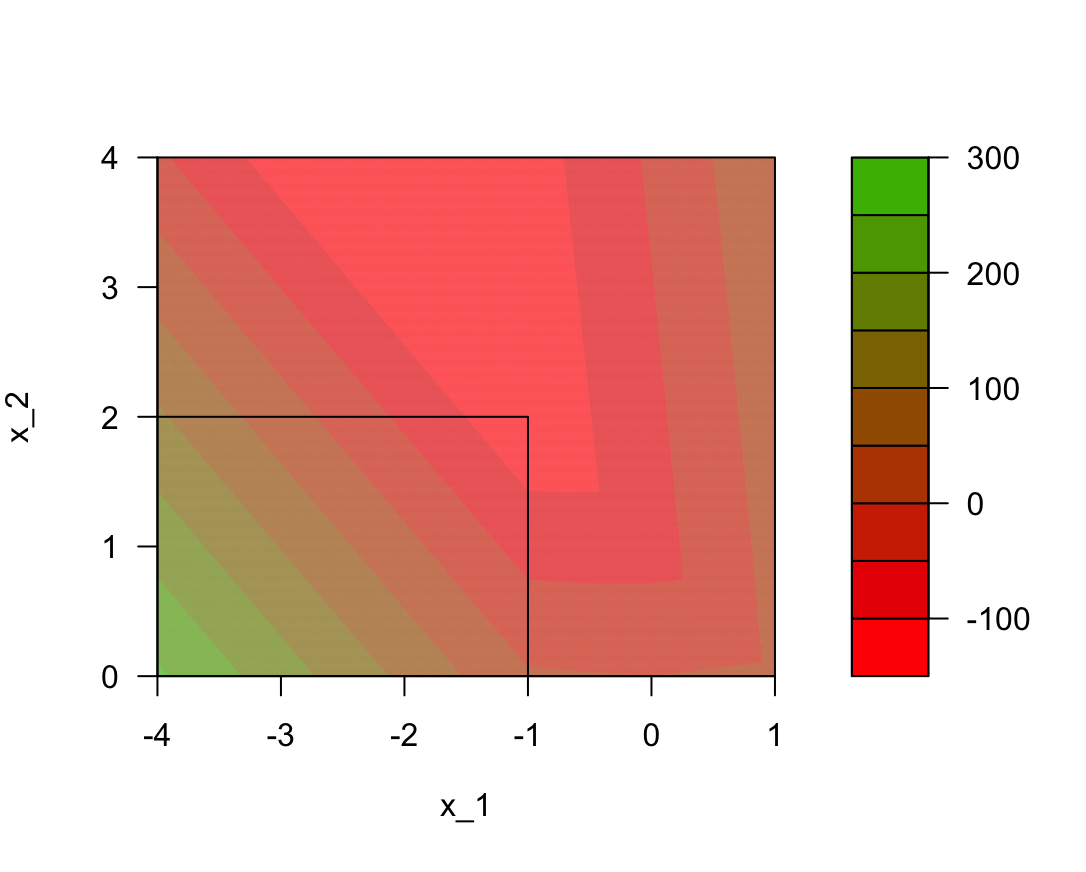}  
        \caption{Setting 3 in Table \ref{tab:hinge}}
        \label{fig:subfigure2}
    \end{subfigure}
    \caption{{\bf Surface  plot of $\uppsi$ defined in  \eqref{def: uppsi} for the last two settings in Table \ref{tab:hinge}}. The point where the vertical and horizontal lines meet is the point where the minima occurs. The value of the function decreases as the color turns red from green. The plot for Setting 1 is skipped because it is similar to that of Setting 2.}
    \label{fig: hinge: surface plot}
\end{figure}

\paragraph{Additional lemmas for the proofs of Section \ref{secpf: hinge loss}}

\begin{lemma}
\label{lemma: hinge: special case}
    Let $\mv\in\RR^{k_1-1}$ and $\mp\in\S^{k_2-1}$.  If $\mv_{[1]}< 0$, then the optimum value of the program 
         \begin{mini}|s|
{\my\in\RR^{k_2}}{\sum_{j\in[k_2]}\slb 1-\mp_j\srb\sum\limits_{i\in[k_1-1]}\max(\mv_i,\my_j)}
{\label{opt: hinge with v}}{}
\addConstraint{\sum_{j\in[k_2]}\my_j}{=0,}{}
\end{mini}
is $\upkappa(\mv,\mp)$ if $k_1=3$ and $k_2=2$, where 
\begin{align*}
    \upkappa(\mv,\mp)= \begin{cases}
   \leftfrac{\mv_{[1]}|\mp_1-\mp_2|+\max(\mv_{[2]},-\mv_{[1]})(\mp_1\wedge \mp_2)}{+\mv_{[2]}(\mp_1\vee\mp_2)}& \text{ if }\mp_1\wedge \mp_2\leq(\mp_1\vee\mp_2)/2,\\[10pt] \splitfrac{-|\min(\mv_{[2]},-\mv_{[1]})(\mp_2-\mp_1)|}{+\mv_{[2]}(\mp_1\vee \mp_2)+|\mv_{[2]}|(\mp_1\wedge \mp_2)} & \text{ otherwise.}
    \end{cases}
\end{align*}
If $\mv_{[1]}\geq 0$, then the optimal value of the program  in \eqref{opt: hinge with v} is $\mv_1+\mv_2$.
\end{lemma}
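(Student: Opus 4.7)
\textbf{Proof proposal for Lemma \ref{lemma: hinge: special case}.}

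The plan is to reduce the constrained two-variable problem to an unconstrained one-dimensional convex optimization and then analyze the resulting piecewise linear function by locating its kink points and slopes. Writing $\my = (y, -y)$ for $y \in \RR$ (which encodes the constraint $\my_1 + \my_2 = 0$), and using $1 - \mp_1 = \mp_2$ and $1 - \mp_2 = \mp_1$, the objective in \eqref{opt: hinge with v} becomes
\begin{equation*}
    f(y) \;=\; \mp_2\bigl[\max(\mv_{[1]}, y) + \max(\mv_{[2]}, y)\bigr] + \mp_1\bigl[\max(\mv_{[1]}, -y) + \max(\mv_{[2]}, -y)\bigr].
\end{equation*}
Since $f$ is a non-negative linear combination of maxima of affine functions of $y$, it is convex and piecewise linear, with kinks at $y \in \{\mv_{[1]}, \mv_{[2]}, -\mv_{[1]}, -\mv_{[2]}\}$. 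This reduces the problem to identifying the kink at which the subdifferential of $f$ contains zero.

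For the easy case $\mv_{[1]} \ge 0$, the plan is to observe that whenever $|y| \le \mv_{[1]}$ one has $\max(\mv_{[i]}, \pm y) = \mv_{[i]}$ for both $i \in \{1,2\}$, so $f(y) = (\mp_1 + \mp_2)(\mv_{[1]} + \mv_{[2]}) = \mv_1 + \mv_2$; outside this interval at least one max becomes strictly larger, giving $f(y) \ge \mv_1 + \mv_2$ by direct inspection of the slopes. This yields the claimed optimum $\mv_1 + \mv_2$.

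For the main case $\mv_{[1]} < 0$, the plan is to compute left- and right-derivatives of $f$ across the sorted kink points and find where the slope transitions from non-positive to non-negative. The ordering of the four kinks splits into sub-cases depending on (a) the sign of $\mv_{[2]}$ and (b) whether $|\mv_{[1]}| \gtrless |\mv_{[2]}|$. In each sub-case, starting from the far-left slope $-2(\mp_1 + \mp_2) \cdot \tfrac{1}{2} \cdot (\text{contribution from } \mp_1 \text{ terms}) = -2\mp_1$ and accumulating increments $\mp_1$ at each kink of the form $-\mv_{[i]}$ and $\mp_2$ at each kink of the form $\mv_{[i]}$, one determines the minimizing kink. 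The split encoded by the condition $\mp_1 \wedge \mp_2 \le (\mp_1 \vee \mp_2)/2$ is precisely the threshold at which the optimal kink changes: if the smaller probability is at most half the larger, the optimum is attained at a kink involving $\mv_{[1]}$ (so the formula mixes $\mv_{[1]}, \mv_{[2]}$ and $\max(\mv_{[2]}, -\mv_{[1]})$); otherwise the optimum is attained at a kink involving only $\mv_{[2]}$, giving the second branch. In each sub-case, the optimum value is evaluated by plugging the selected kink back into $f(y)$.

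The main obstacle is the case enumeration: there are four kinks whose order depends on the signs of $\mv_{[1]}, \mv_{[2]}$, and for each ordering one must compute the partial sums of slope increments and compare them to zero, which introduces the threshold $\mp_1 \wedge \mp_2 \lessgtr (\mp_1 \vee \mp_2)/2$. To keep the proof tidy, I would exploit the symmetry $f(y; \mp_1, \mp_2) = f(-y; \mp_2, \mp_1)$ so that one can assume $\mp_1 \le \mp_2$ without loss of generality (which simplifies the threshold to $\mp_1 \le 1/3$), and separately exploit the symmetry between $\mv_{[1]}$ and $-\mv_{[2]}$ when $\mv_{[2]} \le 0$ versus $\mv_{[2]} \ge 0$. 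With these symmetries, only a handful of essentially distinct sub-cases remain, and in each the formula $\upkappa(\mv, \mp)$ arises by a direct evaluation of $f$ at the minimizing kink. The final step is a unification: repackage the expressions from the distinct sub-cases into the two compact formulas listed in the lemma by using the identities $\max(a, b) = (a+b+|a-b|)/2$ and $\min(a,b) = (a + b - |a - b|)/2$ to absorb the absolute-value structure appearing in the claimed formulas.
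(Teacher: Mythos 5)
Your proposal is correct and follows essentially the same route as the paper: parameterize the sum-zero constraint by $\my=(y,-y)$, use $1-\mp_1=\mp_2$ and $1-\mp_2=\mp_1$, and then minimize the resulting piecewise-linear convex function of $y$ by inspecting its kinks at $\{\mv_{[1]},\mv_{[2]},-\mv_{[2]},-\mv_{[1]}\}$ and comparing slope increments, which naturally produces the threshold $\mp_1\wedge\mp_2\lessgtr(\mp_1\vee\mp_2)/2$. The only organizational difference is that the paper first invokes an auxiliary Lemma~\ref{hinge lemma: aux: 1} to restrict the search to $|y|\le -\mv_{[1]}$ before doing the one-dimensional analysis (that lemma is stated for general $k_1,k_2$), whereas you dispense with that step by noting that the far-left slope $-2\mp_1$ and far-right slope $2\mp_2$ already confine the minimizer to that interval; and you propose using the $\mp_1\leftrightarrow\mp_2$ symmetry (via $y\mapsto -y$) to cut the case enumeration, while the paper works through four explicit cases on the position of $\mv_{[2]}$. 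Both are fine; your slope-accumulation bookkeeping gives the same answer.
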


\begin{proof}
Let us define
\begin{align}
    \label{inlemma: hinge: def: tph with y}
    \tph(\my)=\sum_{j\in[k_2]}\slb 1-\mp_j\srb\sum\limits_{i\in[k_1-1]}\max(\mv_i,\my_j).
\end{align}
Note that 
\begin{equation*}
    \tph(\my)\geq \sum\limits_{i\in[k_1-1]}\mv_i\sum_{j\in[k_2]}\slb 1-\mp_j\srb=(k_2-1)\sum\limits_{i\in[k_1-1]}\mv_i
\end{equation*}
for all $\my\in\RR^{k_2}$.
If $\mv_{[1]}\geq 0$, then any $\my\in\RR^{k_1}$ with $\max(\my)\leq \mv_{[1]}$ satisfies $\tph(\my)=\sum\limits_{i\in[k_1-1]}\mv_i\sum_{j\in[k_2]}\slb 1-\mp_j\srb$. In particular, we can take $\my=\mz_{k_2}$.  
The result for $\mv_{[1]}>0$ follows noting $\mp\in\S^{k_2-1}$, which implies $\sum_{j\in[k_2]}( 1-\mp_j)=1$ when $k_2=2$. Hence, we consider $\mv_{[1]}<0$. 
Note also that
the optimum value of the program \eqref{opt: hinge with v}
        equals 
 \begin{mini*}|s|
{\my\in\RR^{k_2}}{\sum_{j\in[k_2]}\slb 1-\mp_j\srb\sum\limits_{i\in[k_1-1]}\max(\mv_{[i]},\my_j)}
{}{}
\addConstraint{\sum_{j\in[k_2]}\my_j}{=0.}{}
\end{mini*}
Therefore, it suffices to show the proof considering $\mv$ is sorted, i.e., $\mv_1\leq\ldots\leq \mv_{k_1-1}$ and $\mv_1<0$. 
In this case, from Lemma \ref{hinge lemma: aux: 1} it follows that the optimum value of the optimization program in \eqref{opt: hinge with v} equals the optimal value of the program
   \begin{mini}|s|
{\my\in\RR^{k_2}}{\sum_{j\in[k_2]}\slb 1-\mp_j\srb\sum\limits_{i\in[k_1-1]}\max(\mv_i,\my_j)}
{\label{opti: hinge: small case 0}}{}
\addConstraint{\sum_{j\in[k_2]}\my_j}{=0}{}
\addConstraint{\my_j}{\geq \mv_1}{\text{ for all }j\in[k_2].}
\end{mini}
When $k_2=2$, we can represent $\my$ by $\my=(x,-x)$ for some $x\in\RR$. Then the optimization problem in \eqref{opti: hinge: small case 0} becomes
   \begin{mini}|s|
{x\in\RR}{\mp_2\slb \max(\mv_1,x)+\max(\mv_2,x)\srb+\mp_1\slb \max(\mv_1,-x)+\max(\mv_2,-x)\srb}
{\label{inlemma: hinge: small case}}{}
\addConstraint{x,-x}{\geq \mv_1.}{}
\end{mini}
When $x,-x\geq \mv_1$, it follows that
\begin{align*}
   \MoveEqLeft \mp_2\slb \max(\mv_1,x)+\max(\mv_2,x)\srb+\mp_1\slb \max(\mv_1,-x)+\max(\mv_2,-x)\srb\\
   =&\ x(\mp_2-\mp_1)+\mp_2\max(\mv_2,x)+\mp_1\max(\mv_2,-x).
\end{align*}
Thus the optimization problem in \eqref{inlemma: hinge: small case} rewrites as 
    \begin{mini}|s|
{x\in\RR}{ x(\mp_2-\mp_1)+\mp_2\max(\mv_2,x)+\mp_1\max(\mv_2,-x)}
{\label{inlemma: hinge: small case 2}}{}
\addConstraint{|x|}{\leq -\mv_1,}{}
\end{mini}
where we used the fact that $x,-x\geq \mv_1$ is equivalent to $|x|\leq -\mv_1$ when $\mv_1<0$.
We overload notation and define
\[\tph(x):= x(\mp_2-\mp_1)+\mp_2\max(\mv_2,x)+\mp_1\max(\mv_2,-x).\]
Let us define $-v_1=c>0$. Therefore,
the optimal value of \eqref{inlemma: hinge: small case 2} equals $\inf_{x\in[-c,c]}\tph(x)$.
Since $v_2\geq v_1$, there can be three cases.
\subparagraph{Case 1: $\mv_2=\mv_1$} Since $x\in[-c,c]$, it follows that $x,-x\geq c_2$ implying   $\tph(x)=2x(\mp_2-\mp_1)$. Then 
\[\inf_{x\in[-c,c]}\tph(x)=-2|\mp_2-\mp_1|c=-2|\mv_2(\mp_1-\mp_2)|\]
\begin{align*}
  \argmin\limits_{x\in[-c,c]}\tph(x)=\begin{cases}
      c &\ \mp_1>\mp_2\\
      -c & \mp_2< \mp_1\\
      [-c,c] &\ \text{ if }\mp_1=\mp_2.
  \end{cases}  
\end{align*}
Let us call the above number $x^*$.
Suppose $\my^*=(x^*,-x^*) $
Then
\begin{align*}
\argmax(\my^*)=   \begin{cases}
      1 &\ \text{ if }\mp_1>\mp_2\\
      2 & \text{ if }\mp_2< \mp_1\\
     \{1,2\} &\ \text{ if }\mp_1=\mp_2.
  \end{cases}  
\end{align*}

\subparagraph{Case 2: $\mv_1<\mv_2<0$}
In this case, $-c<\mv_2<0<-\mv_2<c$, indicating

  \[  \tph(x)= \begin{cases}x(\mp_2-\mp_1)+\mp_2\mv_2-\mp_1 x & \text{ if } x\in[-c,\mv_2)\\
  2x(\mp_2-\mp_1)   & \text{ if } x\in(\mv_2,-\mv_2]\\
  2x\mp_2-\mp_1x+\mp_1\mv_2 & x\in(-\mv_2,c].
\end{cases}\]
Since $\tph(x)$ is a piecewise linear function, the knots will be in $\argmin_{x\in[-c,c]}\tph(x)$ where the knots belong to the set $\{-c, \mv_2,0,-\mv_2,c\}$.
Note that 
\begin{align*}
    \tph(x)=\begin{cases} 2\mp_1 c-\mp_2c+\mp_2\mv_2 & \text{if } x=-c\\
    2\mv_2(\mp_2-\mp_1) & \text{if }x=\mv_2\\
    -2\mv_2(\mp_2-\mp_1) & \text{if }x=-\mv_2\\
    2c\mp_2-\mp_1 c+\mp_1\mv_2 & \text{if }x=c.
     \end{cases}
\end{align*}
 Therefore,
\begin{align*}
    \inf_{x\in[-c,c]}\tph(x)=\min\slb 2\mp_1 c-\mp_2c+\mp_2\mv_2,\  -2|\mv_2(\mp_2-\mp_1)|,\  2c\mp_2-\mp_1 c+\mp_1\mv_2\srb.
\end{align*}
Let us denote
\[h_c(y;c,v_2)=2y c-(1-y)c+(1-y)\mv_2=3yc-c+(1-y)\mv_2.\]
Then \begin{align*}
    \inf_{x\in[-c,c]}\tph(x)=\min\slb h_c(\mp_1),\  h_c(\mp_2),\ -2|\mv_2(\mp_2-\mp_1)|\srb.
\end{align*} 
It can be easily seen that $h_c(\mp_1)\gtreqless h_c(\mp_2)$ iff $\mp_1\gtreqless \mp_2$. Therefore,
\begin{align}
\label{inlemma: hinge: hc p1 p2}
\min(h_c(\mp_1),h_c(\mp_2))=h_c(\min(\mp_1,\mp_2)).
\end{align}
Hence,
\begin{align*}
    \inf_{x\in[-c,c]}\tph(x)=\min\slb h_c(\mp_1\wedge\mp_2),\ -2|\mv_2(\mp_2-\mp_1)|\srb.
\end{align*} 
Now note that
\begin{equation}
    \label{inlemma: hinge: hc: alt form 1}
    h_c(\mp_1\wedge \mp_2)=2c(\mp_1\wedge \mp_2)+(\mp_1\vee\mp_2 )(\mv_2-c)=-c|\mp_1-\mp_2|+c(\mp_1\wedge \mp_2)+\mv_2(\mp_1\vee\mp_2).
\end{equation}
Therefore,
\[h_c(\mp_1\wedge\mp_2)\gtreqless -2|\mv_2(\mp_2-\mp_1)|\]
if and only if 
\[-c|\mp_1-\mp_2|+c(\mp_1\wedge \mp_2)+\mv_2(\mp_1\vee\mp_2)\gtreqless 2\mv_2|\mp_2-\mp_1|\]
where we used the fact that $\mv_2<0$. The last display is equivalent to 
\[(\mv_2+c)(\mp_1\wedge\mp_2)\gtreqless (\mv_2+c)|\mp_2-\mp_1|.\]
Since $c+\mv_2>0$, the above is equivalent to 
\[\mp_1\wedge\mp_2\gtreqless |\mp_2-\mp_1|\text{ or }\mp_1\wedge\mp_2\gtreqless \frac{\mp_1\vee\mp_2}{2}.\]
Therefore, we obtained that
\begin{align*}
 \inf_{x\in[-c,c]}\tph(x)=\begin{cases}
   -c|\mp_1-\mp_2|+c(\mp_1\wedge \mp_2)+\mv_2(\mp_1\vee\mp_2) &   \text{ if }\mp_1\wedge\mp_2\leq \frac{\mp_1\vee\mp_2}{2}\\
   -2|\mv_2(\mp_2-\mp_1)| & \text{ otherwise.}
 \end{cases}    
\end{align*}
\subparagraph{Case 3: $0\leq \mv_2<c$}
In this case, $-c<0\leq \mv_2<c$. Therefore,
\begin{align*}
    \tph(x)=\begin{cases}
   x(\mp_2-\mp_1)+\mp_2\mv_2-\mp_1 x & \text{ if } x\in[-c,-\mv_2)\\
  x(\mp_2-\mp_1) +\mv_2  & \text{ if } x\in(-\mv_2,\mv_2]\\
  2x\mp_2-\mp_1x+\mp_1\mv_2 & x\in(\mv_2,c]     
    \end{cases}
\end{align*}
Note that 
\begin{align*}
    \tph(x)=\begin{cases} 2\mp_1 c-\mp_2c+\mp_2\mv_2 & \text{if } x=-c\\
    \mv_2(\mp_1-\mp_2+1) & \text{if }x=-\mv_2\\
    \mv_2(\mp_2-\mp_1+1) & \text{if }x=\mv_2\\
    2c\mp_2-\mp_1 c+\mp_1\mv_2 & \text{if }x=c.
     \end{cases}
\end{align*}
 Therefore, $  \inf_{x\in[-c,c]}\tph(x)$ equals
\begin{align*}
  \MoveEqLeft \min\slb 2\mp_1 c-\mp_2c+\mp_2\mv_2,\  -\mv_2|\mp_2-\mp_1|+\mv_2,\  2c\mp_2-\mp_1 c+\mp_1\mv_2\srb\\
=&\ \min\slb h_c(\mp_1),\  h(\mp_2),\ -\mv_2|\mp_2-\mp_1|+\mv_2\srb\\
\stackrel{(a)}{=}&\ \min\slb h_c(\mp_1\wedge\mp_2),\ -\mv_2|\mp_2-\mp_1|+\mv_2\srb\\
\stackrel{(b)}{=}&\  \min\slb -c|\mp_1-\mp_2|+c(\mp_1\wedge \mp_2)+\mv_2(\mp_1\vee\mp_2),\ -\mv_2|\mp_2-\mp_1|+\mv_2\srb.
\end{align*}
 where (a) follows from \eqref{inlemma: hinge: hc p1 p2} and (b) follows from \eqref{inlemma: hinge: hc: alt form 1}.
 It can be easily seen that
 \[-c|\mp_1-\mp_2|+c(\mp_1\wedge \mp_2)+\mv_2(\mp_1\vee\mp_2)\lesseqgtr -\mv_2|\mp_2-\mp_1|+\mv_2\]
 if and only if
 \[(c-\mv_2)(\mp_1\wedge \mp_2)\lesseqgtr (c-\mv_2)|\mp_1-\mp_2|.\]
 Since $c>\mv_2$ in this case, the above is equivalent to $\mp_1\wedge \mp_2\lesseqgtr|\mp_1-\mp_2|$ or $\mp_1\wedge \mp_2\lesseqgtr(\mp_1\vee\mp_2)/2$. Therefore, we have obtained that
 \begin{align*}
   \inf_{x\in[-c,c]}\tph(x)=\begin{cases}
    -c|\mp_1-\mp_2|+c(\mp_1\wedge \mp_2)+\mv_2(\mp_1\vee\mp_2)   & \text{ if }\mp_1\wedge \mp_2\leq(\mp_1\vee\mp_2)/2\\
  -|\mv_2(\mp_2-\mp_1)|+\mv_2  & \text{ otherwise. }
   \end{cases}    
 \end{align*}
\subparagraph{Case 4: $c\leq  \mv_2$}
In this case, $-\mv_2\leq -c<c\leq\mv_2$. Thus $-x,x\leq \mv_2$ if $x\in[-c,c]$ Therefore,
\begin{align*}
    \tph(x)=
   x(\mp_2-\mp_1)+\mv_2   
    \end{align*}
     Therefore,
\begin{align*}
    \inf_{x\in[-c,c]}\tph(x)=-|\mp_2-\mp_1|c+\mv_2
\end{align*}
\subparagraph{Combining cases 1-4  for $\mp_1\wedge \mp_2\leq(\mp_1\vee\mp_2)/2$}
The above steps imply that if $\mp_1\wedge \mp_2\leq(\mp_1\vee\mp_2)/2$, then
\begin{align*}
    \inf_{x\in[-c,c]}\tph(x)=\begin{cases}
       2 \mv_2|\mp_1-\mp_2| &  \text{ if } \mv_1=\mv_2\\
    \mv_1|\mp_1-\mp_2|-\mv_1(\mp_1\wedge \mp_2)+\mv_2(\mp_1\vee\mp_2)    & \text{ if }\mv_1< \mv_2<-\mv_1\\
        \mv_1|\mp_1-\mp_2|+\mv_2 & \text{ if }-\mv_1\leq \mv_2.
    \end{cases}
\end{align*}
When $\mv_1=\mv_2$,
\[  -\mv_1(\mp_1\wedge \mp_2)+\mv_2(\mp_1\vee\mp_2) =\mv_1\slb \mp_1\vee\mp_2-\mp_1\wedge\mp_2\srb=\mv_1\abs{\mp_1-\mp_2}.\]
Therefore, if $\mp_1\wedge \mp_2\leq(\mp_1\vee\mp_2)/2$, then
\begin{align*}
    \inf_{x\in[-c,c]}\tph(x)=\begin{cases}
    \mv_1|\mp_1-\mp_2|-\mv_1(\mp_1\wedge \mp_2)+\mv_2(\mp_1\vee\mp_2)    & \text{ if }\mv_1\leq \mv_2<-\mv_1\\
        \mv_1|\mp_1-\mp_2|+\mv_2 & \text{ if }-\mv_1\leq \mv_2.
    \end{cases}
\end{align*}
However, when $-\mv_1\leq \mv_2$,
\begin{align*}
   \mv_1|\mp_1-\mp_2|+\mv_2= &\ \mv_1|\mp_1-\mp_2|+\mv_2(\mp_1\vee\mp_2)+\mv_1\mp_1\wedge\mp_2\\
   =&\   \mv_1|\mp_1-\mp_2|+\mv_2(\mp_1\vee\mp_2)+\max(\mv_2,-\mv_1)\mp_1\wedge\mp_2,
\end{align*}
and when $-\mv_1> \mv_2$,
\[ \mv_1|\mp_1-\mp_2|-\mv_1(\mp_1\wedge \mp_2)+\mv_2(\mp_1\vee\mp_2)= \mv_1|\mp_1-\mp_2|+\max(\mv_2,-\mv_1)\mp_1\wedge \mp_2+\mv_2(\mp_1\vee\mp_2).\]
Therefore,
if $\mp_1\wedge \mp_2\leq(\mp_1\vee\mp_2)/2$, then
\begin{align}
\label{inlemma: hinge: sol for one case}
    \inf_{x\in[-c,c]}\tph(x)=\mv_1|\mp_1-\mp_2|+\max(\mv_2,-\mv_1)(\mp_1\wedge \mp_2)+\mv_2(\mp_1\vee\mp_2).
\end{align}
\subparagraph{Combining cases 1-4 for $\mp_1\wedge \mp_2>(\mp_1\vee\mp_2)/2$}

The above cases imply that when  $\mp_1\wedge \mp_2>(\mp_1\vee\mp_2)/2$, then
\begin{align*}
  \inf_{x\in[\mv_1,-\mv_1]}  \tph(x)=\begin{cases}
   -2|\mv_2(\mp_2-\mp_1)| & \text{ if } \mv_1<\mv_2<0\\
   -|\mv_2(\mp_2-\mp_1)|+\mv_2 & \text{ if }0\leq \mv_2 <-\mv_1\\
        \mv_1|\mp_1-\mp_2|+\mv_2 & \text{ if }-\mv_1\leq \mv_2,
    \end{cases}
\end{align*}
which is equivalent to
\begin{align*}
 \inf_{x\in[\mv_1,-\mv_1]}   \tph(x)=\begin{cases}
   -2|\mv_2(\mp_2-\mp_1)| & \text{ if } \mv_1<\mv_2<0\\
   -|\min(\mv_2,-\mv_1)(\mp_2-\mp_1)|+\mv_2 & \text{ if }0\leq \mv_2
    \end{cases}  
\end{align*}
When $\mv_1<\mv_2<0$,
\begin{align*}
 \MoveEqLeft -2|\mv_2(\mp_2-\mp_1)|\\
 =&\ -|\mv_2(\mp_2-\mp_1)| +\mv_2(\mp_1\vee \mp_2)-\mv_2(\mp_1\wedge \mp_2)\\
 =&\  -|\min(\mv_2,-\mv_1)(\mp_2-\mp_1)|+\mv_2(\mp_1\vee \mp_2)-\mv_2(\mp_1\wedge \mp_2)\\
 =&\ -|\min(\mv_2,-\mv_1)(\mp_2-\mp_1)|+\mv_2(\mp_1\vee \mp_2)+|\mv_2|(\mp_1\wedge \mp_2).
\end{align*}
When $0\leq \mv_2$,
\[\mv_2=\mv_2(\mp_1\vee \mp_2)+\mv_2(\mp_1\wedge \mp_2)=\mv_2(\mp_1\vee \mp_2)+|\mv_2|(\mp_1\wedge \mp_2).\]
Therefore, we have obtained that when  $\mp_1\wedge \mp_2>(\mp_1\vee\mp_2)/2$,
\begin{equation}
    \label{inlemma: hinge: final: case 2}
   \inf_{x\in[\mv_1,-\mv_1]}\tph(x)=  -|\min(\mv_2,-\mv_1)(\mp_2-\mp_1)|+\mv_2(\mp_1\vee \mp_2)+|\mv_2|(\mp_1\wedge \mp_2)
\end{equation}
Hence, the proof follows combining \eqref{inlemma: hinge: sol for one case} and \eqref{inlemma: hinge: final: case 2}.


\end{proof}

\begin{lemma}
    \label{hinge lemma: aux: 1}
    Consider the optimization problem in \eqref{opt: hinge with v}.
     If $\mv_1\leq\ldots\leq \mv_{k_1-1}$ and  $\mv_1< 0$, then for any $k_1,k_2\geq 2$,  the optimal value of \eqref{opt: hinge with v} equals 
\[\inf_
{\my\in\C_1}{\sum_{j\in[k_2]}\slb 1-\mp_j\srb\sum\limits_{i\in[k_1]:i\neq A_1}\max(\mv_i,\my_j)},
\]
where
\[\C_1=\lbs\my\in\RR^{k_2}: \sum_{j\in[k_2]}\my_j=0,\ \my_i\geq \mv_1\rbs.\]
\end{lemma}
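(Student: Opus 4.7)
The inclusion $\C_1 \subset \{\my \in \RR^{k_2} : \sum_j \my_j = 0\}$ gives the inequality ``$\ge$'' for free, so the substantive content of the lemma is that any $\my$ with $\sum_j \my_j = 0$ can be replaced by some $\my' \in \C_1$ without increasing the objective
\[
f(\my) := \sum_{j \in [k_2]} (1-\mp_j) \sum_{i \in [k_1-1]} \max(\mv_i, \my_j).
\]
My plan is to build such a $\my'$ in two explicit steps: a \emph{truncation} followed by a \emph{redistribution} of the excess created by the truncation.

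For the truncation, I would set $\bar{\my}_j = \max(\my_j, \mv_1)$. The key observation is that the inner sum $\sum_i \max(\mv_i, \my_j)$ is completely insensitive to $\my_j$ on the region $\my_j \le \mv_1$: since $\mv_1 \le \mv_i$ for every $i$, pushing $\my_j$ up to $\mv_1$ leaves each $\max(\mv_i, \my_j)$ unchanged and equal to $\mv_i$. Consequently $f(\bar{\my}) = f(\my)$ identically, with no use of any hypothesis on $\mp$ or $\mv_1$. However $\bar{\my}$ need not satisfy the sum-zero constraint, and in general $s := \sum_j \bar{\my}_j \ge \sum_j \my_j = 0$.

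If $s = 0$ then $\bar{\my} \in \C_1$ and I am done, so suppose $s > 0$. Let $K := \{k : \bar{\my}_k > \mv_1\}$ and note that $\bar{\my}_j = \mv_1$ for $j \notin K$. The hypothesis $\mv_1 < 0$ enters precisely here, through the capacity bound
\[
\sum_{k \in K}(\bar{\my}_k - \mv_1) \;=\; \sum_{j \in [k_2]} \bar{\my}_j - k_2 \mv_1 \;=\; s - k_2 \mv_1 \;>\; s,
\]
which guarantees that I can choose $\Delta_k \in [0, \bar{\my}_k - \mv_1]$ for $k \in K$ summing to exactly $s$ (e.g.\ proportionally to $\bar{\my}_k - \mv_1$). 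Setting $\my'_k := \bar{\my}_k - \Delta_k$ for $k \in K$ and $\my'_j := \mv_1$ for $j \notin K$ yields $\my' \in \C_1$ with $\my'_j \le \bar{\my}_j$ coordinatewise.

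To conclude, observe that $\my_j \mapsto \max(\mv_i, \my_j)$ is non-decreasing and the weights $1 - \mp_j \ge 0$ (recall $\mp \in \S^{k_2-1}$), so $f$ is coordinate-wise non-decreasing. Combined with $\my' \le \bar{\my}$ componentwise this gives $f(\my') \le f(\bar{\my}) = f(\my)$, and taking infimum over $\my$ delivers the reverse inequality. The only nontrivial step is the capacity calculation in the redistribution phase, where the assumption $\mv_1 < 0$ is genuinely used; without it, $-k_2 \mv_1$ need not be strictly positive and there might be no room to absorb the excess $s$. The rest of the argument is essentially bookkeeping, and I do not anticipate further obstacles.
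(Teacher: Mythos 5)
Your proof is correct. Both you and the paper exploit the same two facts—that $\max(\mv_i,\my_j)$ is constant in $\my_j$ on the region $\my_j\le\mv_1$, and that $f$ is coordinatewise non-decreasing because $1-\mp_j\ge 0$—but the execution differs. The paper performs an iterative two-coordinate exchange: whenever some $\my_i<\mv_1$, it raises $\my_i$ to $\mv_1$ and compensates by lowering a single other coordinate $\my_l$ by the same amount, then repeats ``until all elements are $\ge\mv_1$.'' It never verifies that this loop terminates, which is not entirely obvious since lowering $\my_l$ can create a new bad coordinate. Your version replaces this with a single truncation $\bar{\my}_j=\max(\my_j,\mv_1)$ (leaving $f$ unchanged) followed by a one-shot redistribution of the excess $s=\sum_j\bar{\my}_j\ge 0$ over the coordinates in $K$, and your capacity bound $\sum_{k\in K}(\bar{\my}_k-\mv_1)=s-k_2\mv_1>s$ is exactly the point where $\mv_1<0$ is used. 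This makes the feasibility of the repair explicit in one calculation rather than implicit in a termination argument, so while the underlying idea coincides with the paper's, your one-shot formulation is the cleaner of the two.
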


\begin{proof}[Proof of Lemma \ref{hinge lemma: aux: 1}]
    Let us denote the set $\C=\{\my\in\RR^{k_2}: \sum_{j\in[k_2]}\my_j=0\}$.  Hence, the optimal value of \eqref{opt: hinge with v} equals  $\inf_{\my\in\C}\tph(\my)$ where $\tph$ is as defined in \eqref{inlemma: hinge: def: tph with y}.
We will show that $\inf_{\my\in\C}\tph(\my)=\inf_{\my\in\C_1}\tph(\my)$.
If possible, suppose $i\in[k_2]$ is such that $\my_i<\mv_1$. Let $l\neq i$ and $l\in[k_2]$.  Our next step is to show that  
$\tph(\my)\geq \tph(\my')$ where $\my'$ is the vector obtained by replacing $\my_{i}$ with $\mv_{1}$  and replacing $\my_l$ with $\my'_l=\my_l+\my_{i}-\mv_1$. 
 Clearly, $\my_l'<\my_l$ and $\my_i'>\my_i$. Note that $\my'\in\C$ and $ \tph(\my)-\tph(\my')$ equals
\begin{align*}
   \MoveEqLeft \slb 1-\mp_i\srb \sum\limits_{r\in[k_1-1]}\max(\mv_r,\my_i)+ \slb 1-\mp_l\srb \sum\limits_{r\in[k_1-1]}\max(\mv_r,\my_l)\\
    &\ -\slb 1-\mp_i\srb \sum\limits_{r\in[k_1-1]}\max(\mv_r,\mv_1)- \slb 1-\mp_l\srb \sum\limits_{r\in[k_1-1]}\max(\mv_r,\my_l+\my_{i}-\mv_1)\\
    =&\ (1-\mp_i)\lb \sum\limits_{r\in[k_1-1]}\slb \max(\mv_r,\my_i)-\max(\mv_r,\mv_1)\srb\rb\\
    &\ + (1-\mp_l)\lb \sum\limits_{r\in[k_1-1]}\slb \max(\mv_r,\my_l)- \max(\mv_r,\my_l+\my_{i}-\mv_1)\srb\rb\\
     =&\     (1-\mp_l)\lb \sum\limits_{r\in[k_1-1]}\slb \max(\mv_r,\my_l)- \max(\mv_r,\my_l+\my_{i}-\mv_1)\srb\rb
\end{align*}
where the last equality follows because $\max(\mv_r,\my_i)=\max(\mv_r,\mv_1)=\mv_r$ or all $r\in[k_1-1]$ since $\my_i<\mv_1\leq \mv_r$ for all $r\in[k_1-1]$. However, since $\my_l+\my_{i}-\mv_1<\my_l$, we obtain $\tph(\my)\geq \tph(\my')$. We can keep replacing $\my_i$ with $\mv_1$ if $\my_i<\mv_1$ until all elements of $\my'$ are greater than equal to $\mv_1$. Thus we have shown that $\inf_{\my\in\C_1}\tph(\my)\leq \inf_{\my\in\C}\tph(\my)$, which, combined with $\C_1\subset\C$, implies $\inf_{\my\in\C_1}\tph(\my)=\inf_{\my\in\C}\tph(\my)$.

\end{proof}
\section{Proof of Theorem \ref{theorem: CC}}
\label{sec: pf of theorem cc}

\subsection{Proof preparation}
\label{sec: intheorem: cc: terminologies}

Before proceeding with the proof of Theorem \ref{theorem: CC}, we first gather some key facts that will be utilized throughout the proof and briefly outline the main steps involved.

We say $f:\RR^k\mapsto\RR$ is lower semicontinuous at $\mx_0\in\RR^k$ if $\liminf_{\mx\to \mx_0}f(\mx)\geq f(\mx_0)$. Following Definition 1.2.3., pp. 78 of \cite{hiriart},  $f$ is  lower semicontinuous everywhere if and only if it is closed. 
Following  Section 3.2 of \cite{hiriart}, we define a 0-coercive convex function  as follows. 
  \begin{definition}[0-coercivity]
  \label{def: coercivity}
  A closed convex function $f:\RR^k\mapsto\RR$ is called 0-coercive if $f(x)\to\infty$ as $\|x\|_2\to\infty$.
  \end{definition}
 A 0-coercive convex function is necessarily bounded below.
     An associated notion is that of the recession function.
     \begin{definition}
  \label{def: recession functions}
  Suppose the function $f:\RR^k\to\RR$ is closed and convex. Then for any $x\in\RR^k$, the function $f_\infty'$ defined by
  \[f_\infty'(x)=\lim_{t\to\infty}\frac{f(tx)-f(0)}{t}\]
  is known as the recession function of $f$.
  \end{definition}
   It is evident that $f_\infty'(0) = 0$. The behavior of $f_\infty'$ for $x \neq 0$ is closely related to the coercivity of $f$ and can inform us on the existence of a minimum. The following result, synthesizing Remark 3.2.7 and Proposition 3.2.4 from \cite{hiriart}, establishes key properties of recession functions in relation to coercivity.

 \begin{fact}
  \label{fact: concave: coercivity and recession for concave}
   Suppose $f$ is  convex and bounded below. Then
    $f_\infty'(x)\geq 0$ for all $x\neq 0$. In fact, there are only two possibilities: (1)
      $f$ is 0-coercive, and the set of minimum points of $f$ is a non-empty compact set (singletone if $f$ is strictly convex) -- this happens if $f_\infty'(x)>0$ for all $x\neq 0$, or (2) the minimum of $f$ is unattained -- this happens  if $f_\infty'(x)=0$ for some $x\neq 0$.
  \end{fact}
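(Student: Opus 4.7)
The key identity to set up first is that, for any convex $f$, the difference quotient
\[
\varphi_x(t) \;=\; \frac{f(tx)-f(0)}{t}, \qquad t>0,
\]
is nondecreasing in $t$. This follows from the convexity inequality applied to $t_1x = (t_1/t_2)(t_2x)+(1-t_1/t_2)\cdot 0$ for $0<t_1<t_2$. Hence $f_\infty'(x)=\sup_{t>0}\varphi_x(t)$ exists in $(-\infty,\infty]$ and in fact $f(tx)\ge f(0)+t\,\varphi_x(s)$ whenever $t\ge s$. Once this monotonicity is in hand, the first assertion $f_\infty'(x)\ge 0$ for $x\ne 0$ is immediate: since $f\ge M$ for some $M\in\RR$, $\varphi_x(t)\ge (M-f(0))/t\to 0$ as $t\to\infty$.

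For the implication $f_\infty'(x)>0$ for all $x\ne 0 \Rightarrow f$ is 0-coercive, I would argue by contradiction via sublevel sets. Suppose $\{f\le c\}$ is unbounded for some $c$; pick $x_n$ with $f(x_n)\le c$ and $r_n:=\|x_n\|\to\infty$, and extract a subsequential limit $u$ of $x_n/r_n$ on the (compact) unit sphere. Convexity gives, for any fixed $t>0$ and $n$ large so that $r_n\ge t$,
\[
f\!\left(\tfrac{t}{r_n}x_n\right)\;\le\;\tfrac{t}{r_n}f(x_n)+\Bigl(1-\tfrac{t}{r_n}\Bigr)f(0)\;\le\;\tfrac{t}{r_n}c+\Bigl(1-\tfrac{t}{r_n}\Bigr)f(0).
\]
Using closedness (equivalently, lower semicontinuity) of $f$ and passing to the limit along the subsequence, $f(tu)\le f(0)$ for every $t>0$, whence $\varphi_u(t)\le 0$ and so $f_\infty'(u)\le 0$, contradicting the hypothesis. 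Thus every sublevel set is bounded and (being closed) compact, which both yields 0-coercivity and forces the minimum set $\operatorname{argmin}f$ to be nonempty and compact (nonempty by Weierstrass on a nonempty compact sublevel set). Strict convexity then makes $\operatorname{argmin}f$ a singleton by the usual chord argument.

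For the converse direction, suppose $f_\infty'(x_0)=0$ for some $x_0\ne 0$. By monotonicity of $\varphi_{x_0}$ together with the limit being $0$, we get $\varphi_{x_0}(t)\le 0$ for every $t>0$, i.e.\ $f(tx_0)\le f(0)$ for all $t\ge 0$; the same argument applied at an arbitrary base point $y$ (replacing $f$ by $f(\cdot+y)$, whose recession function coincides with $f_\infty'$) shows $t\mapsto f(y+tx_0)$ is nonincreasing. Consequently, if $y^\star\in\operatorname{argmin}f$ existed, then $f(y^\star+tx_0)=\min f$ for all $t\ge 0$, producing an unbounded set of minimizers and precluding the compact‐minimizer‐set conclusion of case (1); in particular the two cases are mutually exclusive, and within the strictly convex setting this yields outright non‐attainment.

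\textbf{Main obstacle.} The delicate step is the compactness/limit argument that converts ``$f_\infty'>0$ on every nonzero direction'' into actual coercivity. The difficulty is that $f_\infty'$ is only guaranteed to be lower semicontinuous on the sphere (as a supremum of continuous difference quotients), and closed convex functions need not be continuous at boundary points of their domain, so the limiting step $f(t\,x_n/r_n)\to f(tu)$ must be handled via lower semicontinuity of $f$ rather than continuity. Once that subtlety is dispatched, the remaining pieces are standard facts from convex analysis (Weierstrass on compact sublevel sets, and the chord inequality for strict convexity).
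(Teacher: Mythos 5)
Your argument is correct, but it is worth noting that the paper does not actually prove this Fact: it is stated as a synthesis of Proposition 3.2.4 and Remark 3.2.7 of \cite{hiriart}, so your self-contained derivation is a genuinely different route — essentially a from-scratch reproof of the cited material. The ingredients you use are the standard ones behind those results: monotonicity of the difference quotient $t\mapsto (f(tx)-f(0))/t$ (giving existence of $f_\infty'$ and $f_\infty'\geq 0$ under boundedness below), the sublevel-set argument with lower semicontinuity doing the work in the limit to convert $f_\infty'(x)>0$ for all $x\neq 0$ into 0-coercivity, then Weierstrass on a nonempty compact sublevel set and the chord inequality for the argmin assertions. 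Two small housekeeping points: the translation step in your converse ("replacing $f$ by $f(\cdot+y)$, whose recession function coincides with $f_\infty'$") silently invokes base-point independence of the recession function, which is standard for closed proper convex functions \citep[Thm.~8.5]{rockafellar} but deserves a sentence; and closedness — needed both for the paper's Definition \ref{def: recession functions} and for your lsc step — is not listed in the hypothesis of the Fact, though it is harmless here since the paper's $f:\RR^k\mapsto\RR$ is finite-valued and hence continuous.

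Where you deliberately deviate from the literal statement is case (2), and you are right to do so: "the minimum of $f$ is unattained if $f_\infty'(x)=0$ for some $x\neq 0$" is false as stated — $f(x,y)=x^2$ on $\RR^2$ (or $f\equiv 0$) is convex, closed, bounded below, has a vanishing recession direction, and attains its minimum, just on a noncompact set. What is true, and what you prove, is that a vanishing recession direction forces the argmin to be empty or to contain a ray, so the two alternatives in the dichotomy are mutually exclusive, and under strict convexity one gets outright non-attainment. Be aware, however, that the paper invokes the literal direction (2) downstream: the proof of Lemma \ref{lemma: CC: coercivity main lemma} concludes non-attainment for every weight vector from the existence of a single direction with zero recession, and your corrected version only yields "no nonempty compact argmin" there. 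That discrepancy is a defect of the quoted statement rather than of your proof, but it means your version does not reproduce that later step verbatim without an additional argument (e.g., strict convexity of the relevant objective).
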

Fact~\ref{fact: concave: coercivity and recession for concave} suggests that we can deduce whether the minimum  of $f$ is achieved  by evaluating the recession functions of $f$. The subsequent fact provides a useful tool for this purpose.
  \begin{fact}[Proposition 3.2.8 of \cite{hiriart}]
  \label{fact: concave: prop 3.2.8 of hiriat}
      \vphantom{here\\}
  \begin{enumerate}
      \item[a] Consider $m\in\NN$ and let $a_1,\ldots,a_m$ be positive numbers. Then for closed convex functions $f_1,\ldots,f_m:\RR^k\mapsto\RR$, we have
  \[\slb \sum_{i=1}^ma_if_i\srb_\infty'=\sum _{i=1}^ma_i(f_i)_\infty'.\]
  \item[b] Suppose $h(x_1,\ldots,x_k)=f(a_1x_1,\ldots,a_kx_k)$ where $a_i\in\{\pm 1\}$ for $k=1,\ldots,m$ and $f$ is convex. Then \[h_\infty'(x_1,\ldots,x_k)=f_\infty'(a_1x_1,\ldots,a_kx_k).\]
  \item[c] Let $\text{Range}(A)$ denote the range of $A$ for any matrix $A$ with $k$ columns, where $k\in\NN$. If $h(\mx)=f(A\mx)$ for all $\mx\in\RR^k$ for a convex function $f$ and $\dom(f)\cap \text{Range}(A)\neq \emptyset$, then  $(f\circ A)'_\infty=f_\infty'\circ A$.  
  \end{enumerate}
  \end{fact}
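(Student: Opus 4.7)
The plan is to prove each of the three assertions directly from Definition~\ref{def: recession functions}, supplemented by the standard fact (which I will invoke without reproving) that for a closed proper convex $g$ and any $x_0 \in \dom(g)$, the difference quotient $(g(x_0 + td) - g(x_0))/t$ is non-decreasing in $t > 0$, so the limit $g_\infty'(d) = \lim_{t \to \infty}(g(x_0+td)-g(x_0))/t$ exists in $(-\infty, +\infty]$ and is independent of the choice of base point $x_0 \in \dom(g)$. All three parts will then reduce to straightforward substitutions into this limit.

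For part (a), since $f_1, \ldots, f_m$ are real-valued convex functions on $\RR^k$, we have $0 \in \dom(f_i)$ for every $i$ and each $(f_i)_\infty'(x) \in (-\infty, +\infty]$ is well-defined by the base-0 formula. Linearity of finite sums — valid because the coefficients $a_i$ are positive and each summand converges in $(-\infty, +\infty]$, where finite sums are unambiguously defined — then gives
\[
\left(\sum_{i=1}^m a_i f_i\right)_\infty'(x) = \lim_{t\to\infty}\sum_{i=1}^m a_i \frac{f_i(tx)-f_i(0)}{t} = \sum_{i=1}^m a_i (f_i)_\infty'(x).
\]
For part (b), writing $D = \operatorname{diag}(a_1, \ldots, a_k)$ with $a_i \in \{\pm 1\}$, we have $h(x) = f(Dx)$ and $D\cdot 0 = 0$, so a direct substitution yields
\[
h_\infty'(x) = \lim_{t\to\infty}\frac{f(t Dx) - f(0)}{t} = f_\infty'(Dx) = f_\infty'(a_1 x_1, \ldots, a_k x_k),
\]
which in fact exhibits (b) as the special case $A = D$ of part (c).

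For part (c), the main obstacle is that $0$ need not lie in $\dom(f)$, so the base-0 formula of Definition~\ref{def: recession functions} does not literally apply to $f$; I will circumvent this using the base-point-free recession formula recorded at the outset. The hypothesis $\dom(f)\cap\operatorname{Range}(A)\neq \emptyset$ furnishes some $y_0 \in \RR^k$ with $A y_0 \in \dom(f)$, whence $y_0 \in \dom(f \circ A)$; moreover $f\circ A$ is closed (continuous-linear preimage of a closed convex function) and proper (nonempty domain, inheriting $>-\infty$ from $f$), so the base-point-free formula applies to $f\circ A$ as well. Evaluating both sides at the base points $y_0$ for $f\circ A$ and $Ay_0$ for $f$ gives
\[
(f\circ A)_\infty'(x) = \lim_{t\to\infty}\frac{f(A y_0 + t Ax) - f(A y_0)}{t} = f_\infty'(Ax),
\]
which is the required identity $(f\circ A)_\infty' = f_\infty' \circ A$. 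The only nontrivial bookkeeping is verifying the two ancillary properties (closedness and properness) of $f \circ A$ needed to legitimize the base-point-free formula; both follow from elementary convex-analytic arguments given the hypotheses.
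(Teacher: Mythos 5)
Your proposal is correct. Note, however, that the paper does not prove this Fact at all: it is imported verbatim as Proposition 3.2.8 of \cite{hiriart}, so there is no in-paper argument to compare against. What you supply is a self-contained derivation, and it is sound: part (a) follows from the paper's base-$0$ definition plus the monotone-difference-quotient argument, and your justification for exchanging limit and finite sum (each limit lies in $(-\infty,+\infty]$, coefficients positive, so no $\infty-\infty$ cancellation) is the right one; part (b) is a direct substitution since the sign-flip matrix fixes the origin; and part (c) correctly circumvents the possibility that $0\notin\dom(f)$ (equivalently $0\notin\dom(f\circ A)$, since $(f\circ A)(0)=f(0)$) by passing to the base-point-free formula at $y_0$ and $Ay_0$, after checking that $f\circ A$ is closed (lsc composed with a continuous linear map) and proper. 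The one caveat is that the base-point-free characterization you invoke — for closed proper convex $g$, the limit $\lim_{t\to\infty}(g(x_0+td)-g(x_0))/t$ exists and is independent of $x_0\in\dom(g)$, and closedness (or $x_0\in\operatorname{ri}(\dom g)$) genuinely matters here — is itself a nontrivial textbook result (Proposition 3.2.1 of \cite{hiriart}, or Theorem 8.5 of \cite{rockafellar}) rather than an immediate consequence of the paper's Definition \ref{def: recession functions}, and part (c) tacitly assumes $f$ is closed and proper, as in the source statement. So your proof trades one citation for another, slightly more elementary one; given that the Fact is a quoted result, this is a perfectly acceptable level of rigor, and where the paper actually applies part (c) (with $\mz_k\in\dom(h)$, as in Lemma \ref{lemma: CC: coercivity main lemma}) even the base-$0$ formula would have sufficed.
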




 Now we will collect some initial results that will be used throughout the proof.
 From \cite{wang2023unified}, it follows that  
 if $\ppi:[k_1]\mapsto[k_1]$, $\tpi:[k_2]\mapsto[k_2]$ are two permutations and $\mx\in\RR^{k_1}$ and $\mw\in\RR^{k_2}$, then 
 \begin{align}
     \label{def: cc: permutation equivariance 2 stage}
     \psi(\mx, \mw; i,j)=\psi(\ppi(\mx),\tpi(\mw);\ppi^{-1}(i),\tpi^{-1}(j))=\psi(\ppi^{-1}(\mx),\tpi^{-1}(\mw); \ppi(i),\tpi(j)).
 \end{align}
 We refer to \eqref{def: cc: permutation equivariance 2 stage} as the clockwise permutation symmetry of $\psi$. 
 The above implies $\psi(\mx,\mw;i,j)$ is permutation equivariant w.r.t. $\mx$ and $i$ when $\mw$ and $j$ are fixed and vice versa. 
 Using this result, we can prove that $\mz_{k_1+k_2-2}\in\iint(\dom(-\eta))$ under the setup of Theorem \ref{theorem: CC}. Lemma \ref{lemma: CC: 0 in int} establishes this fact.  The proof of Lemma \ref{lemma: CC: 0 in int} can be found in Section \ref{secpf: of lemma  CC: 0 in int}. 
\begin{lemma}
\label{lemma: CC: 0 in int}
    Suppose $\psi$ is a PERM loss with template $\eta$, which is concave and bounded above. If  $\cap_{i=1}^{k_1}\cap_{j=1}^{k_2}\iint(\dom(\psi(\cdot;i,j))\neq \emptyset$, then $\mz_{k_1+k_2-2}\in\iint(\dom(-\eta))$.
\end{lemma}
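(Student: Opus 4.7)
The plan is to exploit two ingredients: (i) the pairwise-difference map associated with each $(a_1,a_2)=(i,j)$ is a linear surjection from $\RR^{k_1+k_2}$ onto $\RR^{k_1+k_2-2}$, hence an open map; and (ii) the permutation symmetry of $\eta$ combined with convexity of $\iint(\dom(-\eta))$. Throughout I read the hypothesis as $\bigcap_{i,j}\iint(\dom(-\psi(\cdot;i,j)))\neq\emptyset$ (matching Theorem \ref{theorem: CC}), so there exists a common interior point $(\mx^*,\my^*)$.

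First I would transfer this interior point into $\iint(\dom(-\eta))$. For each $(i,j)$, set $L^{(i,j)}(\mx,\my)=\bigl((\mx_i-\mx_l)_{l\neq i},(\my_j-\my_l)_{l\neq j}\bigr)\in\RR^{k_1+k_2-2}$. The PERM representation \eqref{def: PERM} gives $-\psi(\mx,\my;i,j)=-\eta(L^{(i,j)}(\mx,\my))$, so $\dom(-\psi(\cdot;i,j))=(L^{(i,j)})^{-1}(\dom(-\eta))$. Because $L^{(i,j)}$ is a surjective linear map, it is open; therefore an open neighborhood of $(\mx^*,\my^*)$ contained in $\dom(-\psi(\cdot;i,j))$ is sent to an open neighborhood of $L^{(i,j)}(\mx^*,\my^*)$ inside $\dom(-\eta)$. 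Hence $L^{(i,j)}(\mx^*,\my^*)\in\iint(\dom(-\eta))$ for every $i\in[k_1]$, $j\in[k_2]$.

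Next I would symmetrize. Since $-\eta$ is convex, $\dom(-\eta)$ is convex and so is $\iint(\dom(-\eta))$. By \eqref{intheorem: cc: permutation symmetry of phi}, $\eta$ is invariant under permutations of the first $k_1-1$ coordinates and of the last $k_2-1$ coordinates; each such permutation is a linear bijection of $\RR^{k_1+k_2-2}$, so it preserves $\iint(\dom(-\eta))$. Averaging all $(k_1-1)!(k_2-1)!$ permuted copies of $L^{(i,j)}(\mx^*,\my^*)$ (each in $\iint(\dom(-\eta))$) yields, by convexity, the point
\[
\Bigl(c_1^{(i)}\mo_{k_1-1},\; c_2^{(j)}\mo_{k_2-1}\Bigr)\in\iint(\dom(-\eta)),
\]
where $c_1^{(i)}=\tfrac{1}{k_1-1}\sum_{l\neq i}(\mx^*_i-\mx^*_l)=\tfrac{k_1(\mx^*_i-\bar{\mx}^*)}{k_1-1}$ with $\bar{\mx}^*=\tfrac{1}{k_1}\sum_l\mx^*_l$, and analogously $c_2^{(j)}=\tfrac{k_2(\my^*_j-\bar{\my}^*)}{k_2-1}$.

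Finally, I would average these $k_1 k_2$ points over $(i,j)\in[k_1]\times[k_2]$. Because $\sum_{i}(\mx^*_i-\bar{\mx}^*)=0$ and $\sum_{j}(\my^*_j-\bar{\my}^*)=0$, this double average collapses to $\mz_{k_1+k_2-2}$; by convexity of $\iint(\dom(-\eta))$ the average remains in $\iint(\dom(-\eta))$, which is the desired conclusion. The only delicate step is the openness of $L^{(i,j)}$, but this is immediate from the rank of its Jacobian being $k_1+k_2-2$; the rest is bookkeeping with permutation symmetry and convex averaging.
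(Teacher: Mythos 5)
Your proof is correct and takes a cleaner route than the paper's. The paper proves $\mz_{k_1+k_2-2}\in\dom(-\eta)$ first, by exploiting the block-permutation symmetry \eqref{intheorem: cc: permutation symmetry of phi} to rewrite $\eta$ under cyclic shifts and then applying Jensen's inequality to the convex function $-\eta$; this only places the origin in the \emph{domain}, not the interior. It then mounts a separate contradiction argument (Fact \ref{claim: CC: orthant}) to promote membership in the domain to membership in the interior: if $\mz_{k_1+k_2-2}$ were a boundary point, a supporting hyperplane would force an entire open orthant to miss $\dom(-\eta)$, which is then refuted by choosing $(\mx,\mw)$ with distinct coordinates and permuting. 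Your argument bypasses this two-stage structure entirely: because $L^{(i,j)}$ is a surjective linear map and hence open, an interior point of $\dom(-\psi(\cdot;i,j))=(L^{(i,j)})^{-1}(\dom(-\eta))$ is pushed directly to an interior point of $\dom(-\eta)$, after which everything reduces to convex averaging (over block permutations, then over $(i,j)$) inside the convex set $\iint(\dom(-\eta))$. What your route buys is the elimination of the supporting-hyperplane/orthant machinery and the ancillary requirement of distinct coordinates; what the paper's route buys is a self-contained construction that does not invoke openness of quotient maps, at the cost of extra bookkeeping. Both rest on the same two symmetries (convexity of $-\eta$ and permutation equivariance of $\eta$), but you deploy them once and in the right place.
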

Since $-\eta$ is bounded below, if $\dom(-\eta)$ is non-empty, then $-\eta$ is proper. Therefore Lemma \ref{lemma: CC: 0 in int} implies  $-\eta$ is a proper convex function.
\subsubsection{The main steps}
We will prove this theorem in two main steps. 
The main goal of the first step is to reduce the general problem to a  problem with $k_1=2$ and $k_2=2$. 
 In the second step, from a high-level, we first show that the concavity of $\psi$ enforces some geometric restrictions on $\tilde f_1$ and $\tilde f_2$ in the  corresponding binary-treatment  problem. Then  we will show that if  $\psi$ is Fisher consistent, then it also induces some restrictions on  $\tilde f_1$ and $\tilde f_2$. Using a  bad set of distributions, we will show that these restrictions  get violated in presence of the abovementioned concavity-induced restrictions. The structure of this section is as follows: Step 1 is proved in Section \ref{sec: intheorem: cc: step 1}, and Step 2 is proved in Section \ref{sec: CC: step 2}. The main lemmas are proved in Section \ref{sec: cc: proof of main lemmas}, while the proofs of additional results are deferred to Section \ref{secpf: cc: proof of additional lemmas}.
 


\subsection{Step 1}
\label{sec: intheorem: cc: step 1}
The first task of this step is to construct a sublass of distributions for which the $\tilde f$ corresponding to our $\psi$ exists and the forms of $\tilde f$, $\tilde d$, $d^*$, etc. are tractable. Then we will find a further subclass of distributions, for which, the DTR optimization problem for $k_1$ and $k_2$ treatment levels reduce to a DTR optimization problem with 2 treatment levels per stage. 

We consider a subset $\mP^{k_1,k_2}$  of  $\mP^0$, the space all distributions satisfying Assumptions I-V. The distributions in $\mP^{k_1,k_2}$ entail the following setting.
For these distributions, $Y_1=0$ and $O_1,O_2=\emptyset$. Therefore, there is no covariate. We could add covariates in this counterexample, but the main idea of the construction of the counterexample would be similar. However, the calculation would be more technical. Since $\mP^{k_1,k_2}\subset\mP^0$, it follows that $\E[Y_2\mid i,j]>0$ for all $i\in[k_1]$ and $j\in[k_2]$.

 Since  $H_1=\emptyset$, under this setting, the first stage class-score function $f_1$ can be represented by a vector in $\RR^k$. The corresponding $d_1=\argmax_{i\in[k_1]}f_1(i)$ will be a number.  Also, since $H_2=A_1$, $\H_2=[k_1]$. Thus $f_2$ is a map from $[k_1]$ to $\RR^{k_2}$ and the second stage DTR $d_2$ is a map from $[k_1]$ to $[k_2]$. For each $i\in [k_1]$, the class score  $f_2(i)$ is a $k_2$-dimesnional real vector. Therefore, the function  $f_2$ can be completely explained by the $k_1$ vectors $\{f_2(1),\ldots,f_2(k_1)\}$. 
 Since $H_2=A_1$ for distributions in $\mP^{k_1,k_2}$, the expectation  $\E[Y_1+Y_2\mid H_2,A_2]=\E[Y_2\mid A_1,A_2]$.
  For $i\in[k_1]$ and $j\in[k_2]$, let us denote $\E[Y_2\mid A_1=i, A_2=j]$ by $\mp_{ij}$. Since for $\PP\in\mP^{k_1,k_2}$,  $\E[Y_2\mid A_1, A_2]>0$,  it follows that $\mp_{ij}>0$ for all $i\in[k_1]$ and $j\in[k_2]$. Note that every $\PP\in\mP^{k_1,k_2}$ elicits one set of $\mp_{ij}$'s. Moreover, given any set  $\{\mp_{ij}\}_{i\in[k_1],j\in[k_2]}$ with positive $\mp_{ij}$'s, we can construct a $\PP\in\mP^{k_1,k_2}$ so that $\E[Y_2\mid A_1=i, A_2=j]=\mp_{ij}$. It can also be shown that for any DTR $d=(d_1,d_2)$, the value function $V(d_1,d_2)=\mp_{d_1,d_2(d_1)}$. Thus,  the contribution of $\PP\in\mP^{k_1,k_2}$ on the value function reflects only via  $(\mp_{ij})$. 
  
  For each $A_1=i$, any number in the set  $\argmax_{j\in[k_2]} \mp_{ij}$ is a candidate of   the optimal second stage treatment assignment $d^*_2(i)$. Also, since $H_1=\emptyset$, it follows that   $d_1^*$ can be any number in the set $\argmax_{i\in[k_1]} Q^*_1(i)$.
Under our setting, it follows that 
\[Q^*_1(i)=\E[\max_{j\in[k_2]}\E[Y_2\mid A_1, A_2=j]\mid A_1=i]=\E[\max_{j\in[k_2]}\mp_{A_1 j}\mid A_1=i]=\max\limits_{j\in[k_2]}\mp_{i j}\]
Thus any member of $ \argmax_{i\in[k_1]}(\max_{j\in[k_2]}\mp_{i j})$ is a candidate for $d_1^*$. Also, since $V_*=\max_{i\in[k_1]} Q^*_1(i)$, 
\begin{align}
\label{intheorem: eq: V star: initial}
    V_*=\max\limits_{i\in[k_1],j\in[k_2]}\mp_{ij}.
\end{align}
The surrogate value function  for $\PP\in\mP^{k_1,k_2}$ takes the form
 \begin{align}
 \label{intheorem: cc: expression: surrogate loss Pp}
  V^\psi(f_1,f_2) :=&\ \E\lbt \frac{Y_2}{\pi_1(A_1)\pi_2(A_2\mid A_1)}\psi(f_1,f_2(A_1); A_1,A_2)\rbt\nn\\
  =&\ \E\lbt \frac{\E[Y_2\mid A_1, A_2]}{\pi_1(A_1)\pi_2(A_2\mid A_1)}\psi(f_1,f_2(A_1);A_1,A_2)\rbt\nn\\
 =&\ \sum_{j\in[k_2]}\E\lbt \frac{\E[Y_2\mid A_1, A_2=j]}{\pi_1(A_1)}\psi(f_1,f_2(A_1);A_1,j)\rbt\nn\\
 =&\ \sum_{i\in[k_1]}\sum_{j\in[k_2]}\E[Y_2\mid i,j]\psi(f_1,f_2(i);i,j)\nn\\
 =&\ \sum_{i\in[k_1]}\sum_{j\in[k_2]}\mp_{ij}\psi(f_1,f_2(i);i,j).
    \end{align}
   Therefore  $V^{\psi}(f_1,f_2)$ depends on $\PP$ only via $\mp_{ij}$'s as long as $\PP\in\mP^{k_1,k_2}$.  Let us denote $\mp=(\mp_{ij})$. From now on, we will denote $V^\psi(f_1,f_2)$ by $V^\psi(f_1,f_2;\mp)$ to make the dependence on the underlying probability distribution $\PP$ explicit.
  
We will introduce some more new notation.  Since $f_1\in\RR^k$ and $f_2(i)\in\RR^{k_2}$,  $f_1$ can be represented by a vector $\mx\in\RR^{k_1}$ and each $f_2(i)$ can be represented by a vector $\my_i\in\RR^{k_2}$, $i\in[k_1]$. 
As $f_2=(f_2(1),\ldots,f_2(k_1))$, we can describe $f_2$  by the $k_1k_2$-dimensional vector $\Y=(\my_1,\ldots, \my_{k_1})$. For the rest of this proof, unless otherwise specified, we will use the notation $\mx$ instead of $f_1$ and $(\my_1,\ldots,\my_{k_1})$ or $\Y$ instead of $f_2$ unless otherwise specified. Therefore, 
   $V^{\psi}(f_1,f_2;\mp)$ will also be denoted by $V^{\psi}(\mx,\my_1,\ldots,\my_{k_1};\mp)$ for the rest of this proof.  Using the above notation,  \eqref{intheorem: cc: expression: surrogate loss Pp} can be rewritten as
  \begin{equation}
      \label{def: V psi: many ps}
      V^{\psi}(\mx,\my_1,\ldots,\my_{k_1};\mp)=\sum_{i\in[k_1]}\sum_{j\in[k_2]}\mp_{ij}\psi(\mx,\my_i;i,j).
  \end{equation}
Suppose $\argmax_{\mx\in\RR^{k_1},\my_1,\ldots,\my_{k_1}\in\RR^{k_2}} V^{\psi}(\mx,\my_1,\ldots,\my_{k_1};\mp)$ is non-empty for some $\mp$ and
  \begin{align}
      \label{intheorem: cc: def x star and y star}
(\mx^*(\mp),\my_{k_1}^*(\mp),\ldots,\my_{k_2}^*(\mp))&\ \equiv (\mx^*,\my_{k_1}^*,\ldots,\my_{k_2}^*)\nn\\
&\ \in\argmax_{\mx\in\RR^{k_1},\my_1,\ldots,\my_{k_1}\in\RR^{k_2}} V^{\psi}(\mx,\my_1,\ldots,\my_{k_1};\mp).
  \end{align}
Then for any $\PP$ corresponding to $\mp$, $\mx^*(\mp)$ is a candidate for $\tilde f_1$ and $\my_i^*(\mp)$ is a candidate for $\tilde f_2(i)$ for all $i\in[k_1]$. If $\tilde d_1$ and $\tilde d_2$ are the policies generated by $\tilde f_1$ and $\tilde f_2$, then $\tilde d_1=\pred(\mx^*(\mp))$ and $\tilde d_2(i)=\pred(\my_i^*(\mp))$ for each $i\in[k_1]$. Since $V(d_1,d_2)=\mp_{d_1,d_2(d_1)}$ for any DTR $d$, $V(\tilde d_1,\tilde d_2)=\mp_{\tilde d_1, \tilde d_2(\tilde d_1)}$.

    
For an arbitrary $\mp$, there is no guarantee that the maximizer of $V^\psi(\cdot;\mp)$ exists.
However, Lemma \ref{lemma: cc: 0 maximum for smooth}, which is proved in Section \ref{secpf: proof of 0 solution lemma}, shows that \\$\argmax_{\mx\in\RR^{k_1},\my_1,\ldots,\my_{k_1}\in\RR^{k_2}} V^{\psi}(\mx,\my_1,\ldots,\my_{k_1};\mp)$ is none-empty   when  $\mp_{ij}=1$ for all $i\in[k_1]$ and $j\in[k_2]$. We will later see that the existence of the maximizer for one $\mp$ is sufficient for showing the existence of maximizer for all $\mp$'s. 

\begin{lemma}
\label{lemma: cc: 0 maximum for smooth}
Suppose 
\[\C_e=\lbs (x\mo_{k_1}, y\mo_{k_2},\ldots, y\mo_{k_2})\in\RR^{k_1(1+k_2)}: x,y\in\RR\rbs\]
    and  $\mp_{ij}=1$ for all $i\in[k_1]$ and $j\in[k_2]$. Under the setup of Theorem \ref{theorem: CC},  the $V_{\psi}$ defined in \eqref{def: V psi: many ps} satisfies
    \[\argmax\limits_{\mx\in\RR^{k_1},\my_1,\ldots,\my_{k_1}\in\RR^{k_2}}V^{\psi}(\mx,\my_1,\ldots,\my_{k_1};\mp)\supset \C_e.\]
\end{lemma}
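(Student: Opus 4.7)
When every $\mp_{ij}=1$, the objective simplifies to $V^\psi(\mx,\my_1,\ldots,\my_{k_1};\mp)=\sum_{i,j}\psi(\mx,\my_i;i,j)$. The plan is to prove the uniform upper bound $V^\psi\le k_1 k_2\,\eta(\mz_{k_1+k_2-2})$ and then verify that every point of $\C_e$ attains this bound. Finiteness of the bound will follow from Lemma~\ref{lemma: CC: 0 in int}, which places $\mz_{k_1+k_2-2}\in\iint(\dom(-\eta))$, together with the above-boundedness of $\eta$ imposed in Theorem~\ref{theorem: CC}.

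The argument will proceed in two symmetric reduction steps, one per block of scores. First, fix $\mx$ and $i$ and consider $f_i(\my_i):=\sum_{j\in[k_2]}\psi(\mx,\my_i;i,j)$. The clockwise permutation symmetry~\eqref{def: cc: permutation equivariance 2 stage}, applied with the identity on the first coordinate and an arbitrary $\tpi\colon[k_2]\to[k_2]$ on the second, yields $\psi(\mx,\tpi(\my_i);i,j)=\psi(\mx,\my_i;i,\tpi^{-1}(j))$, and summing over $j$ shows $f_i$ is invariant under coordinate permutations of $\my_i$. Since $\eta$ is concave and the second-block arguments of $\eta$ are affine in $\my_i$, each summand $\psi(\mx,\cdot;i,j)$ is concave. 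Applying Jensen's inequality at the symmetrization $\bar y_i\mo_{k_2}:=\frac{1}{k_2!}\sum_{\tpi} \tpi(\my_i)$ and invoking invariance of $f_i$ will produce $f_i(\my_i)\le f_i(\bar y_i\mo_{k_2})=k_2\,\eta(\mx_i-\mx_1,\ldots,\mx_i-\mx_{k_1},\mz_{k_2-1})$, since the second-block differences vanish at a constant vector.

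Summing over $i$ reduces the problem to bounding $\sum_i h(\mx,i)$, where $h(\mx,i):=\eta(\mx_i-\mx_1,\ldots,\mx_i-\mx_{k_1},\mz_{k_2-1})$. I would then repeat the permute-then-Jensen strategy on $\mx$: the first-block permutation symmetry of $\eta$ gives $h(\sigma\mx,i)=h(\mx,\sigma(i))$ for any permutation $\sigma$ of $[k_1]$, so the map $\mx\mapsto\sum_i h(\mx,i)$ is both concave and invariant under coordinate permutations of $\mx$, and Jensen at the symmetrized $x\mo_{k_1}$ gives $\sum_i h(\mx,i)\le k_1\,\eta(\mz_{k_1+k_2-2})$. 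Chaining the two bounds produces $V^\psi\le k_1 k_2\,\eta(\mz_{k_1+k_2-2})$.

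To conclude, a direct substitution shows that every $(x\mo_{k_1},y\mo_{k_2},\ldots,y\mo_{k_2})\in\C_e$ makes all within-block pairwise differences vanish, so each $\psi(\mx,\my_i;i,j)$ equals $\eta(\mz_{k_1+k_2-2})$ and $V^\psi$ meets the upper bound, which delivers the desired inclusion $\C_e\subset\argmax V^\psi(\cdot;\mp)$. The step I expect to require the most care is the bookkeeping that translates the block-wise permutation symmetry of $\eta$ into the invariance of $f_i$ and of $\sum_i h(\cdot,i)$ under coordinate permutations of $\my_i$ and $\mx$; once that invariance is in hand, the rest reduces to two applications of Jensen combined with Lemma~\ref{lemma: CC: 0 in int}.
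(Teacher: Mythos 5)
Your proposal is correct and takes a genuinely different route from the paper. The paper's proof is a first-order argument: it writes $\myf=-V^\psi$, shows $\myf$ is differentiable at the origin using the differentiability hypothesis of Theorem~\ref{theorem: CC}, computes $\grad\myf(\mzb)$ via the chain rule through the matrices $C_{ij}=\mathrm{diag}(\AA_i,\BB_j)$, and uses the block-wise permutation identity to show that the partial derivatives $\eta_1(\mzs),\ldots,\eta_{k_1-1}(\mzs)$ are all equal (similarly in the second block), which together with the identity $\sum_i\AA_i^T\mo_{k_1-1}=\mz_{k_1}$ forces $\grad\myf(\mzb)=\mzb$; convexity then upgrades the critical point to a global minimizer. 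Your proof is a double-symmetrization argument: you show that $\my_i\mapsto\sum_j\psi(\mx,\my_i;i,j)$ and then $\mx\mapsto\sum_i\eta(\mx_i-\mx_1,\ldots,\mx_i-\mx_{k_1},\mz_{k_2-1})$ are each concave and permutation-invariant, and averaging over the symmetry group (Jensen) gives the universal bound $V^\psi\le k_1k_2\,\eta(\mz_{k_1+k_2-2})$, attained on $\C_e$. Your route is more elementary in a substantive way: it needs only the concavity and clockwise permutation symmetry of $\eta$ plus Lemma~\ref{lemma: CC: 0 in int} for finiteness, and bypasses the smoothness assumption on $\eta$ entirely; the paper's proof relies on that smoothness through the gradient computation. (One technical point worth flagging when writing this up: since $-\eta$ can be $+\infty$ off its domain, the Jensen step should note that if some summand is $-\infty$, the permutation-invariance forces the entire sum to $-\infty$, so the inequality holds trivially; otherwise all permuted points lie in $\dom(\eta)$, which is convex, so the averaged point is in $\dom(\eta)$ and Jensen applies cleanly.)
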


We now consider a further subset $\mP_b^{k_1,k_2}\subset\mP^{k_1,k_2}$ so that for any $\PP\in\mP_b^{k_1,k_2}$, $\mp_{1,j}=\mp_{12}$ for all $j\geq 2$, $\mp_{i,1}=\mp_{21}$ for all $i>1$, and  $\mp_{ij}=\mp_{22}$ for all $j>1$ and $i>1$. Since it is possible to construct a $\PP \in \mP^{k_1,k_2}$ such that $\E[Y_2 \mid A_1 = i, A_2 = j] = \mp_{ij}$ for any $\mp \equiv {\mp_{ij}}{i \in [k_1], j \in [k_2]}$ with positive $\mp_{ij}$ values, it follows that for any $(\mp_{11}, \mp_{12}, \mp_{21}, \mp_{22}) \in \RR^4_{>0}$, there exists a $\PP \in \mP_b^{k_1,k_2}$ corresponding to these values. Since we will only consider $\PP\in\mP_b^{k_1,k_2}$ from now on, by an abuse of notation, we will denote $(\mp_{11}, \mp_{12}, \mp_{21}, \mp_{22})$ by $\mp$.
The surrogate loss reduces to
\begin{align}
\label{intheorem: CC: expansion: of Psi}
V^{\psi}(\mx,\my_1,\ldots,\my_{k_1};\mp)
=&\ \mp_{11}\psi(\mx,\my_1;1,1)+ \mp_{12} \sum_{j=2}^{k_2}\psi(\mx,\my_1;1,j)\nn\\
&\ +\mp_{21}\sum_{i=2}^{k_1}\psi(\mx,\my_i;i,1)
+\mp_{22}\sum_{i=2}^{k_1}\sum_{j=2}^{k_2}\psi(\mx, \my_i;i,j).   
\end{align}
Since $\psi$ is PERM with template $\eta$, \eqref{def: PERM} implies that $V^{\psi}(\mx,\my_1,\ldots,\my_{k_1};\mp)$ equals
\begin{align}
\label{intheorem: CC: psi to phi}
\MoveEqLeft \mp_{11}\eta(\mx_1-\mx_2,\ldots,\mx_1-\mx_{k_1},\my_{11}-\my_{12},\ldots,\my_{11}-\my_{1k_2})\nn\\
&\ +\mp_{12} \sum_{j=2}^{k_2}\eta(\mx_1-\mx_2,\ldots,\mx_1-\mx_{k_1},\my_{1j}-\my_{11},\ldots,\my_{1j}-\my_{1,k_2-1})\nn\\
&\ +\mp_{21}\sum_{i=2}^{k_1}\eta(\mx_i-\mx_1,\ldots,\mx_i-\mx_{k_1},\my_{i1}-\my_{i2},\ldots,\my_{i1}-\my_{ik_2})\nn\\
&\ +\mp_{22}\sum_{i=2}^{k_1}\sum_{j=2}^{k_2}\eta(\mx_i-\mx_1,\ldots,\mx_i-\mx_{k_1},\my_{ij}-\my_{i1},\ldots,\my_{ij}-\my_{ik_2})   
\end{align}
where  terms such as  $\mx_i-\mx_i$ and $\my_{ij}-\my_{ij}$ are omitted from the argument of $\eta$.
Let us denote $\Delta \mx_i=\mx_1-\mx_i$ for $i\in [2:k_1]$ and $\Delta \my_{ij}=\my_{i1}-\my_{ij}$ for $i\in[k_1]$ and $j\in[2:k_2]$. Observe that $\mx_i-\mx_j=\mx_i-\mx_1+(\mx_1-\mx_j)=\Delta x_j-\Delta \mx_i$ and $\my_{ij}-\my_{ij'}=\my_{ij}-\my_{i1}+(\my_{i1}-\my_{ij'})=\Delta \my_{ij'}-\Delta \my_{ij}$. Let $\Delta \mx=(\Delta \mx_{2},\ldots,\Delta \mx_{k_1})$, and for $i\in[k_1]$, let $\Delta \my_{i}=(\Delta \my_{i2},\ldots,\Delta \my_{ik_2})$. 
With this notation, for any $\mx\in\RR^{k_1}$ and $\my_1,\ldots,\my_{k_2}\in\RR^{k_2}$, the quantity $V^\psi$ reduces to
\begin{align*}
 \MoveEqLeft  V^{\psi}(\mx,\my_1,\ldots,\my_{k_1};\mp)
= \mp_{11}\eta(\Delta \mx,\Delta \my_1)\nn\\
&\ + \mp_{12} \sum_{j=2}^{k_2}\eta(\Delta \mx,-\Delta \my_{1j},\Delta \my_{12}-\Delta \my_{1j}\ldots,\Delta \my_{1k_2}-\Delta \my_{1j})\nn\\
&\ +\mp_{21}\sum_{i=2}^{k_1}\eta(-\Delta \mx_i,\Delta \mx_{2}-\Delta \mx_i\ldots,\Delta \mx_{k_1}-\Delta \mx_i,\Delta \my_{i})\nn\\
&\ +\mp_{22}\sum_{i=2}^{k_1}\sum_{j=2}^{k_2}\eta\begin{pmatrix}-\Delta \mx_i,\Delta \mx_{2}-\Delta \mx_i\ldots,\Delta \mx_{k_1}-\Delta \mx_i,&\\-\Delta \my_{ij},\Delta \my_{i2}-\Delta \my_{ij}\ldots,\Delta \my_{ik_2}-\Delta \my_{ij}\end{pmatrix}\nn\\
=&\ \mp_{11}\eta(\Delta \mx,\Delta \my_1)\nn\\
&\ + \mp_{12} \sum_{j=1}^{k_2-1}\eta(\Delta \mx,-\Delta \my_{1,1+j},\Delta \my_{12}-\Delta \my_{1,1+j}\ldots,\Delta \my_{1k_2}-\Delta \my_{1,1+j})\nn\\
&\ +\mp_{21}\sum_{i=1}^{k_1-1}\eta(-\Delta \mx_{1+i},\Delta \mx_{2}-\Delta \mx_{1+i}\ldots,\Delta \mx_{k_1}-\Delta \mx_{1+i},\Delta \my_{1+i})\nn\\
&\ +\mp_{22}\sum_{i=1}^{k_1-1}\sum_{j=1}^{k_2-1}\eta\begin{pmatrix}-\Delta \mx_{1+i},\Delta \mx_{2}-\Delta \mx_{1+i}\ldots,\Delta \mx_{k_1}-\Delta \mx_{1+i},-\Delta \my_{1+i,1+j},\\
\Delta \my_{1+i.2}-\Delta \my_{1+i,1+j}\ldots,\Delta \my_{1+i,k_2}-\Delta \my_{1+i.1+j}\end{pmatrix}.
\end{align*}
For $\mbu\in\RR^{k_1-1}$ and $\mv_1,\ldots,\mv_{k_1}\in\RR^{k_1-1}$, by an slight abuse of notation, let us define the function
\begin{align}
\label{intheorem: CC: def: of V phi for PERM}  \MoveEqLeft  V^{\eta}(\mbu,\mv_1,\ldots,\mv_{k_1};\mp)
=  \mp_{11}\eta(\mbu,\mv_1)\nn\\
&\ + \mp_{12} \sum_{j=1}^{k_2-1}\eta(\mbu,-\mv_{1j},\mv_{11}-\mv_{1j}\ldots,\mv_{1,k_2-1}-\mv_{1j})\nn\\
&\ +\mp_{21}\sum_{i=1}^{k_1-1}\eta(-\mbu_{i},\mbu_{1}-\mbu_{i}\ldots,\mbu_{k_1-1}-\mbu_{i},\mv_{1+i})\nn\\
&\ +\mp_{22}\sum_{i=1}^{k_1-1}\sum_{j=1}^{k_2-1}\eta
\begin{pmatrix}-\mbu_{i},\mbu_{1}-\mbu_{i}\ldots,\mbu_{k_1-1}-\mbu_{i},&\\
-\mv_{1+i,j},\mv_{1+i,1}-\mv_{1+i,j}\ldots, \mv_{1+i,k_2-1}-\mv_{1+i.j}
\end{pmatrix},
\end{align}
where terms such as $\mbu_i-\mbu_i$ and $\mv_{ij}-\mv_{ij}$ are omitted from the argument of $\eta$ in the above definition. Note that $V^\eta$ is different from $V^\psi$ as the domain of $\psi$ and $\eta$ are different. 
Since $\Delta \mx=(\Delta \mx_{2},\ldots,\Delta \mx_{k_1})$ and  $\Delta \my_{i}=(\Delta \my_{i2},\ldots,\Delta \my_{ik_2})$ for $i\in[k_1]$, letting
 $\Delta x=\mbu$ and $\Delta \my_i=\mv_i$, we can show that
 \begin{align}
     \label{intheorem: CC: connection between V psi and V eta}
     V^{\psi}(\mx,\my_1,\ldots,\my_{k_1};\mp)=V^\eta(\Delta \mx,\Delta \my_1,\ldots,\Delta\my_{k_1};\mp)
 \end{align}
 for all  $\mx\in\RR^{k_1}$ and $\my_1,\ldots,\my_{k_2}\in\RR^{k_2}$.
Therefore, $V^{\psi}$  depends on $\mx$ and the $\my_i$'s only via the $\Delta \mx$ and $\Delta \my_i$'s.  To maximize $V^{\psi}$ with respect to $\mx$ and the $\my_i$'s, one actually need to maximize it over the $\Delta \mx$ and $\Delta \my_i$'s which are in $\RR^{k_1-1}$ and $\RR^{k_2-1}$, respectively. 
If $\mbu^*\in\RR^{k_1-1} $, $\mv_1^*,\ldots,\mv_{k_1}^*\in\RR^{k_2-1}$ maximizes $V^\eta(\cdot;\mp)$, then any $\mx\in\RR^{k_1},\my_1,\ldots,\my_{k_1}\in\RR^{k_2}$ with $\Delta \mx=\mbu^* $ and $\Delta \my_i=\mv_i^*$ ($i\in[k_1]$) is a maximizer of $V^{\psi}(\cdot;\mp)$. In particular 
\begin{align}
\label{intheorem: CC: connection from u star to x star}
    \mx^*=(0,-\mbu^* ),\quad \my_1^*=(0,-\mv_1^*),\ldots, \my_{k_1}^*=(0,-\mv_{k_1}^*) 
\end{align}
is a version of $(\mx^*(\mp),\my^*_1(\mp),\ldots,\my_{k_1}^*(\mp))$. 
Alternatively, 
\begin{equation}
    \label{intheorem: cc: connection x star and mbu star}
(\Delta \mx^*(\mp),\Delta \my_1^*(\mp),\ldots,\Delta \my_{k_1}^*(\mp))\in\argmax\limits_{\mbu\in\RR^{k_1-1},\mv_1,\ldots,\mv_{k_1}\in\RR^{k_2-1}}V^\eta(\mbu,\mv_1,\ldots,\mv_{k_1})
\end{equation}
if $(\mx^*(\mp),\my^*_1(\mp),\ldots,\my_{k_1}^*(\mp))$ exists.  However, it is not obvious when\\ $(\mx^*(\mp),\my^*_1(\mp),\ldots,\my_{k_1}^*(\mp))$  exists. 
To this end, first we derive a simpler expression for $V^\eta(\cdot;\mp)$.

Since $\eta$ is blockwise permutation symmetric  by \eqref{intheorem: cc: permutation symmetry of phi}, for any $\mbu\in\RR^{k_1-1}$ and $\mw\in\RR^{k_2-1}$, 
\[\eta(\mbu,-\mw_{j},\mw_{1}-\mw_{j}\ldots,\mw_{k_2-1}-\mw_{j})=\eta(\mbu,\mw_{1}-\mw_{j}\ldots,-\mw_{j},\ldots,\mw_{k_2-1}-\mw_{j})\]
and
\[\eta(-\mbu_i,\mbu_{1}-\mbu_i\ldots,\mbu_{k_1-1}-\mbu_i,\mw)=\eta(\mbu_{1}-\mbu_i\ldots,-\mbu_i,\ldots,\mbu_{k_1-1}-\mbu_i,\mw)\]
for any $i\in[k_1-1]$, $j\in[k_2-1]$,  $\mbu\in\RR^{k_1-1}$ and $\mw\in\RR^{k_2-1}$. In the above, terms such as $\mbu_i-\mbu_i$ and $\mw_j-\mw_j$ are omitted from the argument of $\eta$. Thus 
\begin{align}
\label{intheorem: CC: def: of V phi for PERM 1} 
\MoveEqLeft  V^{\eta}(\mbu,\mv_1,\ldots,\mv_{k_1};\mp)
= \mp_{11}\eta(\mbu,\mv_1)\nn\\
&\ + \mp_{12} \sum_{j=1}^{k_2-1}\eta(\mbu,\mv_{11}-\mv_{1j}\ldots,-\mv_{1j},\ldots,\mv_{1,k_2-1}-\mv_{1j})\nn\\
&\ +\mp_{21}\sum_{i=1}^{k_1-1}\eta(\mbu_{1}-\mbu_i\ldots,-\mbu_i,\ldots,\mbu_{k_1-1}-\mbu_i,\mv_{i+1})\\
&\ +\mp_{22}\sum_{i=1}^{k_1-1}\sum_{j=1}^{k_2-1}\eta\begin{pmatrix}
    \mbu_{1}-\mbu_i\ldots,-\mbu_i,\ldots,\mbu_{k_1-1}-\mbu_i,&\\\mv_{i+1,1}-\mv_{i+1,j}\ldots,-\mv_{i+1,j},\ldots,\mv_{i+1,k_2-1}-\mv_{i+1,j}.
\end{pmatrix}\nn
\end{align}
 Let us define $\AA_0=I_{k_1-1}$, 
 \begin{align}
     \label{inlemma: CC: def of Ai}
     \AA_1=\begin{bmatrix}
 -1 & \mz^T_{k_1-2}\\
     -\mo_{k_1-1} & I_{k_1-2}
 \end{bmatrix} \text{ and }\AA_i=\begin{bmatrix}
     I_{i-1} & -\mo_{i-1} & \mz_{(i-1)\times (k_1-1-i)}\\
     \mz_{i-1}^T & -1 & -\mz_{k_1-1-i}^T\\
     \mz_{(k_1-1-i)\times(i-1)} & -\mo_{k_1-1-i} & I_{k_1-1-i}
 \end{bmatrix}
 \end{align}
 for $i\in[2:k_1-1]$ 
 in case the set $[2:k_1-1]$ is non-empty. 
Therefore,
\begin{align*}
    \AA_i\mbu=(\mbu_1-\mbu_i,\ldots,-\mbu_i,\ldots,\mbu_{k_1-1}-\mbu_i)\text{ for all }i\in[k_1].
\end{align*}
One can verify that we can obtain $\AA_i$ from $\AA_j$ by swapping the $i$th column with the $jth$ column and then swapping the ith row with the jth row. Therefore, $\AA_i=P_{ij}\AA_jP_{ij}$ where, by a slight abuse of notation, we define  $P_{ij}$ to be the permutation matrix of appropriate dimesnion for swapping ith row with jth row. Also, we let $\BB_0$ be $I_{k_2-1}$ and 
\begin{align}
\label{inlemma: CC: def of Bj}
  \BB_1=\begin{bmatrix}
 -1 & \mz^T_{k_2-2}\\
     -\mo_{k_2-1} & I_{k_2-2}
 \end{bmatrix} \text{ and }\BB_j=\begin{bmatrix}
     I_{j-1} & -\mo_{j-1} & \mz_{(j-1)\times (k_2-1-j)}\\
     \mz_{j-1}^T & -1 & -\mz_{k_2-1-j}^T\\
     \mz_{(k_2-1-j)\times(j-1)} & -\mo_{k_2-1-j} & I_{k_2-1-j}
 \end{bmatrix}
\end{align}
for $j\in[2:k_2-1]$ in case the set $[2:k_2-1]$ is non-empty. 
It can similarly be shown that $\BB_j=P_{1j}\BB_1P_{1j}$. Note that
\begin{align*}
   \BB_j\mw=(\mw_1-\mw_j,\ldots,-\mw_j,\ldots,\mw_{k_2-1}-\mw_j) \text{ for any }\mw\in\RR^{k_2-1}.
\end{align*}
With this new notation,  \eqref{intheorem: CC: def: of V phi for PERM 1} reduces to 
\begin{align}
\label{intheorem: CC: def: of V phi for PERM 2}  \MoveEqLeft  V^{\eta}(\mbu,\mv_1,\ldots,\mv_{k_1};\mp)
= \mp_{11}\eta(\mbu,\mv_1)\nn + \mp_{12} \sum_{j=1}^{k_2-1}\eta(\mbu,\BB_j \mv_1)\nn\\
&\ +\mp_{21}\sum_{i=1}^{k_1-1}\eta(\AA_i \mbu,\mv_{i+1}) +\mp_{22}\sum_{i=1}^{k_1-1}\sum_{j=1}^{k_2-1}\eta(\AA_i \mbu, \BB_j\mv_{i+1})
\end{align}
for all $\mbu\in\RR^{k_1-1}$, $\mv_1,\ldots,\mv_{k_1}\in\RR^{k_2-1}$. 

We will now show that the maximizer of $V^\eta(\cdot;\mp)$ is  attained for all  $\mp\in\RR^4_{>0}$.
If possible, suppose there exists $\mp\in\RR^4_{>0}$ so that the maximizer of $V^\eta(\cdot;\mp)$ is not attained. 
Then Lemma \ref{lemma: CC: coercivity main lemma} implies that  the maximizer of $V^\eta(\cdot;\mp)$ is not attained for  any $\mp\in\RR^4_{>0}$. The proof of Lemma  \ref{lemma: CC: coercivity main lemma} is provided in Section \ref{secpf: cc: coercive main lemma}. Note that the function $f$ in Lemma~\ref{lemma: CC: coercivity main lemma} does not represent scores, and $h$ denotes a function rather than a history.
 \begin{lemma}
\label{lemma: CC: coercivity main lemma}
  Let $r,k\in\NN$.  Suppose $C_1,\ldots, C_m$ are matrices in $\RR^{r\times k}$ and $h:\RR^r\mapsto\RR$ is a  convex and bounded below. Further suppose $\mz_{k}\in\dom(h)$. Define 
     \[f(\mx;\mw)=\sum_{i=1}^m\mw_ih(C_i\mx),\quad \mx\in\RR^k,\mw\in\RR^m_{>0}.\]
     Suppose there exists  $\mw_0\in\RR^m_{>0}$ so that the minimum of the convex function $f(\cdot;\mw_0)$
   is not attained  in $\RR^k$. Then the minima  of $f(\cdot;\mw)$ is not attained  in $\RR^k$ for any $\mw\in\RR^m_{>0}$.
\end{lemma}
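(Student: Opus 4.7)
The plan is to argue via recession functions, using Fact \ref{fact: concave: coercivity and recession for concave} and Fact \ref{fact: concave: prop 3.2.8 of hiriat} as the main tools. The key observation is that the ``bad direction'' certifying non-attainment of the minimum of $f(\cdot;\mw_0)$ is, thanks to the positivity of the weights, also a bad direction for $f(\cdot;\mw)$ for every $\mw\in\RR^m_{>0}$.

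First I would check that for each $\mw\in\RR^m_{>0}$, the function $f(\cdot;\mw)$ is convex, closed, and bounded below, so that Fact \ref{fact: concave: coercivity and recession for concave} applies. Convexity is immediate since each $h\circ C_i$ is convex and the $\mw_i$'s are positive; the lower bound on $h$ transfers to a lower bound on $f(\cdot;\mw)$ via $f(\mx;\mw)\geq \slb\sum_i \mw_i\srb\inf h$; closedness follows because each $h\circ C_i$ is a composition of a closed convex function with a linear map, and a positive linear combination of closed convex functions is closed.

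Next I would compute the recession function of $f(\cdot;\mw)$. Applying Fact \ref{fact: concave: prop 3.2.8 of hiriat}(a) for the positive linear combination and Fact \ref{fact: concave: prop 3.2.8 of hiriat}(c) for each composition $h\circ C_i$ (the hypothesis $\dom(h)\cap\mathrm{Range}(C_i)\neq\emptyset$ of (c) is satisfied because $C_i\mz_k=\mz_r\in\dom(h)$ by assumption), I obtain
\begin{equation*}
    f(\cdot;\mw)'_\infty(\mx)=\sum_{i=1}^m \mw_i\, h'_\infty(C_i\mx)\qquad\text{for all }\mx\in\RR^k.
\end{equation*}
Since $h$ is convex and bounded below, Fact \ref{fact: concave: coercivity and recession for concave} gives $h'_\infty(\mz)\geq 0$ for every $\mz\in\RR^r$, hence every summand above is non-negative.

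Now assume the minimum of $f(\cdot;\mw_0)$ is not attained in $\RR^k$. By Fact \ref{fact: concave: coercivity and recession for concave}, there exists $\mx_0\neq \mz_k$ with $f(\cdot;\mw_0)'_\infty(\mx_0)=0$, i.e., $\sum_{i=1}^m \mw_{0,i}\,h'_\infty(C_i\mx_0)=0$. Because each $\mw_{0,i}>0$ and each $h'_\infty(C_i\mx_0)\geq 0$, this forces $h'_\infty(C_i\mx_0)=0$ for every $i\in[m]$. Now for an arbitrary $\mw\in\RR^m_{>0}$, the same recession-function formula yields $f(\cdot;\mw)'_\infty(\mx_0)=\sum_{i=1}^m \mw_i\, h'_\infty(C_i\mx_0)=0$ with $\mx_0\neq \mz_k$, and another invocation of Fact \ref{fact: concave: coercivity and recession for concave} shows that the minimum of $f(\cdot;\mw)$ cannot be attained in $\RR^k$. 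I do not anticipate a serious obstacle here; the only point that needs a brief verification is the closedness of $f(\cdot;\mw)$ (to invoke the recession-function machinery), and the use of $\mz_k\in\dom(h\circ C_i)$ to legitimately apply Fact \ref{fact: concave: prop 3.2.8 of hiriat}(c). The crucial structural fact doing the work is that with strictly positive weights, a single zero of the recession function for one $\mw_0$ propagates to a zero for every $\mw$.
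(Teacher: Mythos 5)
Your proposal is correct and follows essentially the same route as the paper's proof: compute the recession function of $f(\cdot;\mw)$ via parts (a) and (c) of Fact~\ref{fact: concave: prop 3.2.8 of hiriat}, invoke Fact~\ref{fact: concave: coercivity and recession for concave} to extract a direction $\mx_0\neq 0$ with vanishing recession function under $\mw_0$, and observe that positivity of the weights together with non-negativity of each $h'_\infty(C_i\mx_0)$ forces $h'_\infty(C_i\mx_0)=0$ for every $i$, hence $f(\cdot;\mw)'_\infty(\mx_0)=0$ for all $\mw\in\RR^m_{>0}$. Your version is slightly more careful than the paper's in flagging the closedness of $f(\cdot;\mw)$ (the paper's Fact~\ref{fact: closedness} covers this); the paper also omits the explicit check that $h\circ C_i$ is closed, which you implicitly assume — but this is harmless here since the lemma is only applied to $-\eta$, which Theorem~\ref{theorem: CC} assumes closed.
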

Here Lemma \ref{lemma: CC: coercivity main lemma} applies because  $-\eta$ is convex and bounded below and $\dom(-\eta)$ $\ni \mz_{k_1+k_2-2}$ by Lemma \ref{lemma: CC: 0 in int}. In particular, the maximizer of $V^\eta(\cdot;\mp)$ is not attained when $\mp_{11}=\mp_{12}=\mp_{21}=\mp_{22}=1$. Then the maximizer of $V^\psi(\cdot;\mp)$ is also not attained because otherwise \eqref{intheorem: CC: connection from u star to x star} would give a maximizer of $V^\eta(\cdot;\mp)$. However, Lemma \ref{lemma: cc: 0 maximum for smooth} implies that $(\mz_{k_1},\mz_{k_2},\ldots,\mz_{k_2})$ is a  maximizer of $V^\psi$ when the $\mp_{ij}$'s are all one, which presents a contradiction. Therefore, the maximizer of $V^\eta(\cdot;\mp)$ is attained for all  $\mp\in\RR^4_{>0}$.
 Since $\eta$ is strictly concave, $V^\eta(\cdot;\mp)$ is strictly concave, implying that the maximizer is unique. Let us denote this maximizer by $(\mbu^*(\mp) $, $\mv_1^*(\mp),\ldots,\mv_{k_1}^*(\mp))$. The following lemma, which is proved in Section \ref{secpf: claim PERM sup}, shows that the maximizer lies in the lower dimensional space 
 \begin{equation}
 \label{def: CC: mathcal C}
 \begin{split}
      \C=\lbs (\mbu,\mv_1,\ldots,\mv_{k_1})\in\RR^{k_1-1}\times \RR^{k_1(k_2-1)}: \mbu=x\mo_{k_1-1},\\
      \ \mv_1=y\mo_{k_2-1},\ \mv_2=\ldots=\mv_{k_1}=z\mo_{k_2-1},\ \text{where }x,y,z\in\RR\rbs.
 \end{split}
    \end{equation}

\begin{lemma}
\label{lemma: cc: supremum}
For  $\mp\in\RR^4_{>0}$, let
\[(\mbu^*(\mp) , \mv_1^*(\mp),\ldots,\mv_{k_1}^*(\mp))=\argmax\limits_{\mbu\in\RR^{k_1-1},\mv_1,\ldots,\mv_{k_1}\in\RR^{k_2-1}}V^\eta(\mbu,\mv_1,\ldots,\mv_{k_1};\mp).\] 
Then  $
(\mbu^*(\mp) ,\mv_1^*(\mp),\ldots,\mv_{k_1}^*(\mp))\in\mathcal C,
$
 where $\mathcal C$ is as defined in \eqref{def: CC: mathcal C}.
\end{lemma}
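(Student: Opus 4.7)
The plan is to exploit the clockwise permutation symmetry of $\psi$, namely the identity \eqref{def: cc: permutation equivariance 2 stage}, together with the block structure of $\mp$ enforced by $\mathcal{P}^{k_1,k_2}_b$ (i.e.\ $\mp_{ij}$ is independent of $i$ for $i\geq 2$ and independent of $j$ for $j\geq 2$). The idea is that this pair of invariances forces the unique maximizer of $V^\eta(\cdot;\mp)$ to be fixed by two large symmetry groups, and elements fixed by all those symmetries are precisely the members of $\mathcal{C}$.

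I would begin by noting that $V^\eta(\cdot;\mp)$ is strictly concave on $\RR^{k_1-1}\times\RR^{k_1(k_2-1)}$: the very first summand $\mp_{11}\eta(\mbu,\mv_1)$ is strictly concave jointly in $(\mbu,\mv_1)$ because $-\eta$ is strictly convex, and, for each $i\in[2:k_1]$, the summand $\mp_{21}\eta(\AA_i\mbu,\mv_{i+1})$ is strictly concave in $\mv_{i+1}$ (with the $\mbu$-coordinates already pinned by the first summand). Combined with the existence of the maximizer proved just before the statement, this yields uniqueness of $(\mbu^*(\mp),\mv_1^*(\mp),\ldots,\mv_{k_1}^*(\mp))$; equivalently, via \eqref{intheorem: CC: connection from u star to x star}, the representative of $\argmax V^\psi(\cdot;\mp)$ with $\mx^*_1=0$ and $\my^*_{i,1}=0$ is unique.

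Next I would introduce two families of index permutations. \emph{Family A:} for each permutation $\sigma$ of $[k_1]$ with $\sigma(1)=1$, define $T_\sigma(\mx,\my_1,\ldots,\my_{k_1})=(\sigma(\mx),\my_{\sigma^{-1}(1)},\ldots,\my_{\sigma^{-1}(k_1)})$. Using \eqref{def: cc: permutation equivariance 2 stage} with $\ppi=\sigma$, $\tpi=\mathrm{id}$ and the substitution $i'=\sigma^{-1}(i)$ in \eqref{intheorem: cc: expression: surrogate loss Pp}, one shows
\[V^\psi(T_\sigma(\mx,\my_1,\ldots,\my_{k_1});\mp)=\sum_{i',j}\mp_{\sigma(i'),j}\,\psi(\mx,\my_{i'};i',j),\]
and the $\mathcal{P}^{k_1,k_2}_b$ structure forces $\mp_{\sigma(i'),j}=\mp_{i',j}$. \emph{Family B:} for each permutation $\tau$ of $[k_2]$ with $\tau(1)=1$, the map $(\mx,\my_1,\ldots,\my_{k_1})\mapsto(\mx,\tau(\my_1),\ldots,\tau(\my_{k_1}))$ is similarly shown to preserve $V^\psi(\cdot;\mp)$, this time with $\ppi=\mathrm{id}$, $\tpi=\tau$, and the $j$-block invariance of $\mp$. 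Both transformations preserve the normalization $\mx^*_1=\my^*_{i,1}=0$, so by uniqueness the representative $(\mx^*,\my_1^*,\ldots,\my_{k_1}^*)$ is fixed under Family A and Family B.

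Fixing by Family A yields $\mx^*_i=\mx^*_{\sigma(i)}$ and $\my^*_i=\my^*_{\sigma(i)}$ for every $\sigma$ fixing $1$; ranging $\sigma$ over transpositions of $\{2,\ldots,k_1\}$ gives $\mx^*_2=\cdots=\mx^*_{k_1}$ and $\my^*_2=\cdots=\my^*_{k_1}$. Fixing by Family B yields $\tau(\my^*_i)=\my^*_i$ for every $\tau$ fixing $1$ and every $i$; ranging over transpositions of $\{2,\ldots,k_2\}$ gives $\my^*_{i,2}=\cdots=\my^*_{i,k_2}$ for each $i$. Translating back through $\mbu^*=-(\mx^*_2,\ldots,\mx^*_{k_1})$ and $\mv_i^*=-(\my^*_{i,2},\ldots,\my^*_{i,k_2})$ delivers $\mbu^*=x\mo_{k_1-1}$, $\mv_1^*=y\mo_{k_2-1}$, and $\mv_2^*=\cdots=\mv_{k_1}^*=z\mo_{k_2-1}$, which is exactly membership in $\mathcal{C}$.

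The main obstacle is the careful bookkeeping in verifying that Family A is indeed a symmetry: one must apply \eqref{def: cc: permutation equivariance 2 stage} with $\my_{\sigma^{-1}(i)}$ (not $\my_i$) as the second argument, track the change of summation variable, and invoke the specific block structure of $\mp$ under $\mathcal{P}^{k_1,k_2}_b$ to match weights. Family B is cleaner because only the second-stage coordinates are permuted. Once both invariances are in place, the uniqueness-plus-fixed-point argument is purely mechanical.
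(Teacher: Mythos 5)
Your proof is correct and it reaches the conclusion by a genuinely different route from the paper's. The paper works entirely on the reduced $(\Delta\mx,\Delta\my)$ coordinates with $V^\eta$, establishes the invariance identity \eqref{intheorem: cc: pi pi inv for u} by explicit bookkeeping with the matrices $\AA_i,\BB_j$, and then runs a Jensen's-inequality-plus-strict-concavity averaging argument at the maximizer, split into three sub-steps (equal coordinates of $\mbu^*$, equal $\mv_i^*$ for $i\geq 2$, equal coordinates within each $\mv_i^*$). You instead work on the unreduced coordinates under the normalization $\mx^*_1=\my^*_{i,1}=0$ inherited from \eqref{intheorem: CC: connection from u star to x star}, exploit the cleaner PERM structure of $\psi$ directly via \eqref{def: cc: permutation equivariance 2 stage}, and read off the constraints from the uniqueness-implies-fixed-point argument, with Family A absorbing two of the paper's three sub-steps simultaneously. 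This is tighter and avoids the $\AA_i,\BB_j$ algebra entirely, at the cost of having to be careful that the transformations both preserve $V^\psi$ and preserve the normalization (which you do check).

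One small bookkeeping slip, harmless as used but worth fixing for the record: with your definition $T_\sigma(\mx,\my_1,\ldots,\my_{k_1})=(\sigma(\mx),\my_{\sigma^{-1}(1)},\ldots,\my_{\sigma^{-1}(k_1)})$, the PERM identity gives $\psi(\sigma(\mx),\mw;i,j)=\psi(\mx,\mw;\sigma(i),j)$, and after the substitution $i'=\sigma^{-1}(i)$ the label becomes $\sigma(\sigma(i'))$, not $i'$, so the displayed identity
\[
V^\psi(T_\sigma(\cdot);\mp)=\sum_{i',j}\mp_{\sigma(i'),j}\,\psi(\mx,\my_{i'};i',j)
\]
holds only when $\sigma^2=\mathrm{id}$. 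Since you only ever range $\sigma$ over transpositions, the conclusion is unaffected, and alternatively one could note that transpositions generate the symmetric group so the fixed-point property for all $\sigma$ follows anyway. But the direction-consistent definition $T_\sigma=(\sigma(\mx),\my_{\sigma(1)},\ldots,\my_{\sigma(k_1)})$ produces the identity with $\mp_{\sigma^{-1}(i'),j}$ for arbitrary $\sigma$, and is the one to use if you want the invariance at full generality.
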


For all $x$, $y$, $z\in\RR$, let us define $\vartheta:\RR^2\times\{1,2\}^2\mapsto\RR$ i as follows:
  \begin{align}
    \label{intheorem: CC: def: vartheta}
  \vartheta(x,y;1,1)=&\  \eta(x\mo_{k_1-1},y\mo_{k_2-1}) \nn\\
   \vartheta(x,y;1,2)=&\  \sum_{j=1}^{k_2-1}\eta(x\mo_{k_1-1},yB_j\mo_{k_2-1})\nn\\
    \vartheta(x,y;2,1)=&\  \sum_{i=1}^{k_1-1}\eta(xA_i\mo_{k_1-1},y\mo_{k_2-1})\nn\\
     \vartheta(x,y;2,2)=&\  \sum_{i=1}^{k_1-1}\sum_{j=1}^{k_2-1}\eta(xA_i\mo_{k_1-1},yB_j\mo_{k_2-1}),
    \end{align}
    where for $i\in[k_1]$ and $j\in[k_2]$, and the $\AA_i$ and $\BB_j$'s are as defined in \eqref{inlemma: CC: def of Ai} and \eqref{inlemma: CC: def of Bj}, respectively. 
For $ x,y,z\in\RR$, define
\begin{align}
    \label{def: CC: Phi: binary}
\Phi(x,y,z;\mp)=-\mp_{11}\vartheta(x,y;1,1)-\mp_{12}\vartheta(x,y;1,2)-\mp_{21}\vartheta(x,z;2,1)-\mp_{22}\vartheta(x,z;2,2).
\end{align}
Note that $\Phi$ is well defined because $\eta$ is bounded above. If $\mbu=x\mo_{k_1-1}$,\\ $\mv_1=y\mo_{k_2-1}$, and $\mv_2=\ldots=\mv_{k_1}=z\mo_{k_2-1}$, then for all $\mp\in\RR^4_{>0}$ and $x,y,z\in\RR$, 
\begin{equation}
\label{intheorem: connection between V eta and Phi}
  V^\eta(\mbu,\mv_1,\ldots,\mv_{k_1};\mp)=  \Phi(x,y,z;\mp).
\end{equation}
   Therefore
Lemma \ref{lemma: cc: supremum} and \eqref{intheorem: CC: def: of V phi for PERM 2} imply that 
\begin{align*}
\sup_{(\mbu,\mv_1,\ldots,\mv_{k_1})\in\RR^{k_1-1}\times \RR^{k_1(k_2-1)}}V^\psi(\mbu,\mv_1,\ldots,\mv_{k_1};\mp)=\sup_{x,y,z\in\RR}\Phi(x,y,z;\mp),
\end{align*}

It is  possible to show that if $\psi$ is Fisher consistent for the class $\mP_b^{k_1,k_2}$, then $\vartheta$ induces a relative-margin-based Fisher consistent loss for  2-stage DTR problem with  binary treatments. When we mention  ``relative-margin-based loss induced by $\vartheta$", we refer to the loss defined by $\vartheta^{\text{loss}}(x_1,x_2,y_1,y_2;i,j)=\vartheta(x_1-x_2,y_1-y_2;i,j)$ for all $x_1,x_2,y_1,y_2\in\RR$.  For the purpose of showing the contradiction in Step 2, it will be sufficient to show that the Fisher consistency of $\psi$ induces a weaker version of Fisher consistency on  $\vartheta^{\text{loss}}$ or $\vartheta$, which is later elicited in Property \ref{property: CC}. Therefore, we will not get into the Fisher consistency of $\vartheta^{\text{loss}}(\cdot;i,j)$. However, before discussing the weaker version of Fisher consistency, we need to establish more properties of $\Phi$ and the $\vartheta(\cdot;i,j)$'s.



 Lemma \ref{lemma: CC: closed}  establishes that $\Phi(\cdot;\mp)$ has a unique minimizer for any $\mp\in\RR^4_{>0}$. This lemma also establishes that $-\vartheta$ and $\Phi$ are proper, closed, and strictly convex, where a  proper convex function was defined in Section \ref{sec: intheorem: cc: terminologies}. Lemma \ref{lemma: CC: closed} is proved in Section \ref{secpf: cc: closed}.

\begin{lemma}
 \label{lemma: CC: closed}
 Under the setup of Theorem \ref{theorem: CC}, the $\Phi(\cdot;\mp)$ defined in \eqref{def: CC: Phi: binary} has a unique minimizer over $\RR^3$ for all $\mp\in\RR^4_{>0}$. Moreover, 
   \begin{equation}
       \label{def: intheorem: cc: x star y star}
       (x^*(\mp),y^*(\mp),z^*(\mp))\equiv (x^*,y^*,z^*)=\argmin\limits_{x,y,z\in\RR} \Phi(x,y,z;\mp).
   \end{equation} 
   satisfies $x^*(\mp)=\mbu_1^*(\mp)$, $y^*(\mp)=\mv_{11}^*(\mp)$, and $z^*(\mp)=\mv^*_{21}(\mp)$. 
   Moreover,  the  $-\vartheta(\cdot;i,j)$'s, as well as $\Phi(\cdot;\mp)$, are closed and proper strictly convex  functions for all $i,j\in[2]$ and $\mp\in\RR^4_{>0}$. Moreover, $\Phi(\cdot;\mp)$ is 0-coercive for all $i,j\in[2]$ and $\mp\in\RR^4_{>0}$.
\end{lemma}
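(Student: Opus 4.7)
The plan is to establish the claimed structural properties (closed, proper, strictly convex) of each $-\vartheta(\cdot;i,j)$ first, transfer them to $\Phi(\cdot;\mp)$, then extract the unique minimizer of $\Phi(\cdot;\mp)$ from Lemma \ref{lemma: cc: supremum} together with the identity \eqref{intheorem: connection between V eta and Phi}, and finally invoke Fact \ref{fact: concave: coercivity and recession for concave} for 0-coercivity. The main obstacle I anticipate is establishing strict convexity of $\Phi(\cdot;\mp)$ on all of $\RR^3$ despite each of its summands depending on only two of the three coordinates; this requires a brief case analysis.

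I first verify that each $-\vartheta(\cdot;i,j): \RR^2 \to \RR \cup \{+\infty\}$ is closed, proper, and strictly convex. By the definition in \eqref{intheorem: CC: def: vartheta}, $\vartheta(x,y;i,j)$ is a finite sum of terms $\eta(xc, yd)$, where $c$ ranges over $\mo_{k_1-1}$ and $\{A_i\mo_{k_1-1}\}_{i=1}^{k_1-1}$ and $d$ over $\mo_{k_2-1}$ and $\{B_j\mo_{k_2-1}\}_{j=1}^{k_2-1}$. A direct computation using \eqref{inlemma: CC: def of Ai}--\eqref{inlemma: CC: def of Bj} gives $A_i\mo_{k_1-1} = -\mathbf{e}^{(i)}_{k_1-1}$ and $B_j\mo_{k_2-1} = -\mathbf{e}^{(j)}_{k_2-1}$, so every linear map $L: (x, y) \mapsto (xc, yd)$ from $\RR^2$ into $\RR^{k_1+k_2-2}$ is injective. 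Since $-\eta$ is closed, proper, and strictly convex by hypothesis, with $\mz_{k_1+k_2-2} \in \iint(\dom(-\eta))$ by Lemma \ref{lemma: CC: 0 in int}, precomposition with each such $L$ preserves all three properties, and finite summation does too (as $(0,0)$ is an interior point of every summand's effective domain). Transferring these to $\Phi(\cdot;\mp): \RR^3 \to \RR \cup \{+\infty\}$ is then routine: $\Phi(\cdot;\mp)$ is a positive linear combination of closed, proper convex functions of $(x, y, z)$, all finite at $(0, 0, 0)$, hence closed, proper, and bounded below (the latter because $\eta$ is bounded above).

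For strict convexity of $\Phi(\cdot;\mp)$, I argue as follows: for distinct $(x_1, y_1, z_1), (x_2, y_2, z_2) \in \RR^3$, either $(x_1, y_1) \neq (x_2, y_2)$ (so the strictly convex summand $-\mp_{11}\vartheta(\cdot;1,1)$ produces the strict inequality in the convex combination) or $(x_1, z_1) \neq (x_2, z_2)$ (in which case $-\mp_{21}\vartheta(\cdot;2,1)$ does). Lemma \ref{lemma: cc: supremum} then places the unique maximizer of $V^\eta(\cdot;\mp)$ inside $\C$, so it has the form $(x^*\mo_{k_1-1}, y^*\mo_{k_2-1}, z^*\mo_{k_2-1}, \ldots, z^*\mo_{k_2-1})$ for some reals $x^*, y^*, z^*$. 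Since the identity \eqref{intheorem: connection between V eta and Phi} makes maximization of $V^\eta(\cdot;\mp)$ on $\C$ equivalent (up to sign) to minimization of $\Phi(\cdot;\mp)$ on $\RR^3$, we obtain $(x^*, y^*, z^*) = \argmin_{x,y,z \in \RR} \Phi(x, y, z;\mp)$ with $x^* = \mbu_1^*(\mp)$, $y^* = \mv_{11}^*(\mp)$, and $z^* = \mv_{21}^*(\mp)$; strict convexity forces the minimizer to be unique. Finally, Fact \ref{fact: concave: coercivity and recession for concave} applied to the closed, proper, bounded-below convex $\Phi(\cdot;\mp)$ offers a dichotomy between 0-coercivity and non-attainment of the minimum, and the existence of the minimizer just exhibited puts $\Phi(\cdot;\mp)$ in the 0-coercive branch.
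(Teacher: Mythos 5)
Your proof is correct. It reaches the same conclusions by a route that differs from the paper's in one substantive respect, and the comparison is instructive.

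The paper establishes uniqueness of the $\Phi$-minimizer by appealing directly to the uniqueness of the $V^\eta$-maximizer (already established via Lemma~\ref{lemma: cc: 0 maximum for smooth}, Lemma~\ref{lemma: CC: coercivity main lemma}, and strict concavity of $\eta$): if $(x,y,z)\neq(\mbu_1^*,\mv_{11}^*,\mv_{21}^*)$, then $(x\mo_{k_1-1},y\mo_{k_2-1},z\mo_{k_2-1},\ldots)$ differs from the unique maximizer of $V^\eta$ and therefore gives a strictly larger $\Phi$-value. You instead argue that $\Phi(\cdot;\mp)$ is strictly convex on $\RR^3$ and let strict convexity do the work. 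This step is not automatic, since each of the four summands of $\Phi$ depends on only two of the three coordinates (and hence is not strictly convex as a function of $(x,y,z)$), and your case analysis — either $(x_1,y_1)\neq(x_2,y_2)$, making $-\mp_{11}\vartheta(\cdot;1,1)$ the strict summand, or $(x_1,z_1)\neq(x_2,z_2)$, making $-\mp_{21}\vartheta(\cdot;2,1)$ the strict summand — is exactly what is needed. The paper asserts strict convexity of $\Phi$ without this argument, so in this respect your proof is more careful; indeed, strict convexity of $\Phi(\cdot;\mp)$ on $\RR^3$ is needed elsewhere (e.g., in the derivation of \eqref{intheorem: cc: inequality}), so filling in the case analysis is a genuine contribution. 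For closedness and properness, the paper invokes its bespoke Facts~\ref{fact: closedness} and~\ref{fact: bivariate lsc}, while you reformulate the same content as precomposition of $-\eta$ with injective linear maps $(x,y)\mapsto(xc,yd)$ followed by summation; both are valid, and you correctly verify that $A_i\mo_{k_1-1}=-\mathbf{e}^{(i)}$ and $B_j\mo_{k_2-1}=-\mathbf{e}^{(j)}$ are nonzero so that each linear map is injective. The extraction of the minimizer from Lemma~\ref{lemma: cc: supremum} via \eqref{intheorem: connection between V eta and Phi} and the final appeal to Fact~\ref{fact: concave: coercivity and recession for concave} for 0-coercivity match the paper exactly.
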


 An important quantity for us will be 
\begin{align*}
    \Lambda^*(\mp)=(x^*(\mp),y^*(\mp),z^*(\mp)),
\end{align*}
which uniquely exists for each $\mp\in\RR^4_{>0}$ by Lemma \ref{lemma: CC: closed}. 
 When $\mp$ is clear from the context, we will drop $\mp$ from the notation of $x^*(\mp),y^*(\mp)$, and $z^*(\mp)$.
\subsection{Step 2}
\label{sec: CC: step 2}
In this step, we will show that the $\Lambda^*(\mp)$ defined in \eqref{intheorem: CC: def: vartheta} satisfies some  restrictions due to the concavity of $\vartheta$. Then we will show that,
if $\psi$ is Fisher consistent, then $\vartheta$  satisfies some properties, which violates the abovementioned  restrictions. This contradiction would imply that $\psi$ can not be Fisher consistent, thus completing the proof. 

  It is hard to find a closed form expression of $\Lambda^*(\mp)$ or analyze its behavior for any arbitrary $\mp$. Especially, for an arbitrary $\mp\in\RR^4_{>0}$, it is not  guaranteed that $\Lambda^*(\mp)$ lies in the interior or the relative interior of $\dom(\Phi(\cdot;\mp))$. However, if $\Lambda^*(\mp)\notin\iint(\dom(\Phi(\cdot;\mp))$, we will need to deal with several pathological issues for applying the tools of convex analysis.  Therefore, for the rest of the proof, we will focus on a  neighborhood of a special $\mp$, namely $\mp=\mp^0=(1,1,1,1)$. We will later show that if $\mp$ is in a small neighborhood of $\mp^0$, then $\Lambda^*(\mp)\in \iint(\dom(\Phi(\cdot;\mp))$, which makes application of convex analysis tools easier.  Lemma \ref{lemma: CC: value of Lambda p knot } implies that $\Lambda^*(\mp^0)=(0,0,0)$. 
  \begin{lemma}
  \label{lemma: CC: value of Lambda p knot }
      Suppose $\mp^0=(1,1,1,1)$. Then $\Lambda^*(\mp^0)=(0,0,0)$.
  \end{lemma}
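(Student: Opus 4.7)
The plan is to deduce $\Lambda^*(\mp^0)=(0,0,0)$ by combining the explicit maximizer computed in Lemma~\ref{lemma: cc: 0 maximum for smooth} with the uniqueness statements already established earlier in the step. First I would invoke Lemma~\ref{lemma: cc: 0 maximum for smooth}, which asserts that when $\mp=\mp^0=(1,1,1,1)$, the point $(\mz_{k_1},\mz_{k_2},\ldots,\mz_{k_2})$ lies in $\argmax V^\psi(\cdot;\mp^0)$. The translation identity \eqref{intheorem: CC: connection between V psi and V eta} then immediately gives that the corresponding $(\Delta \mx,\Delta\my_1,\ldots,\Delta\my_{k_1})=(\mz_{k_1-1},\mz_{k_2-1},\ldots,\mz_{k_2-1})$ attains $\sup V^\eta(\cdot;\mp^0)$; and since the map $(\mx,\my_1,\ldots,\my_{k_1})\mapsto(\Delta\mx,\Delta\my_1,\ldots,\Delta\my_{k_1})$ is surjective, this zero point is in fact a maximizer of $V^\eta(\cdot;\mp^0)$ over all of $\RR^{k_1-1}\times\RR^{k_1(k_2-1)}$.

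Next I would invoke the uniqueness of the maximizer of $V^\eta(\cdot;\mp)$ that was established in Step~1 (which used the strict concavity of $\eta$ together with Lemma~\ref{lemma: CC: coercivity main lemma} applied via Lemma~\ref{lemma: cc: 0 maximum for smooth} and Lemma~\ref{lemma: CC: 0 in int}) to conclude that
\[
(\mbu^*(\mp^0),\mv_1^*(\mp^0),\ldots,\mv_{k_1}^*(\mp^0))=(\mz_{k_1-1},\mz_{k_2-1},\ldots,\mz_{k_2-1}).
\]
Finally, Lemma~\ref{lemma: CC: closed} identifies $x^*(\mp)=\mbu_1^*(\mp)$, $y^*(\mp)=\mv_{11}^*(\mp)$, and $z^*(\mp)=\mv_{21,1}^*(\mp)$; reading off the first components of the zero vectors above yields $(x^*(\mp^0),y^*(\mp^0),z^*(\mp^0))=(0,0,0)$, which is exactly $\Lambda^*(\mp^0)=(0,0,0)$.

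I do not anticipate a serious obstacle here: the result is essentially a bookkeeping exercise that stitches together the existence statement (Lemma~\ref{lemma: cc: 0 maximum for smooth}), the equivalence of the $V^\psi$ and $V^\eta$ optimizations (equation~\eqref{intheorem: CC: connection between V psi and V eta}), the uniqueness of the $V^\eta$ maximizer, and the coordinate identification (Lemma~\ref{lemma: CC: closed}). The only point that requires a moment of care is checking that passing from $V^\psi$ to $V^\eta$ really preserves the maximizer (i.e., that the affine map $\mx\mapsto \Delta \mx$ and $\my_i\mapsto \Delta\my_i$ is onto), which is immediate from the definitions of $\Delta\mx$ and $\Delta\my_i$.
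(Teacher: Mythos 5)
Your proof is correct and follows essentially the same route as the paper, which dispatches this lemma in one sentence by citing the definition of $\vartheta$ in \eqref{intheorem: CC: def: vartheta} together with Lemma~\ref{lemma: cc: 0 maximum for smooth}. You have simply unwound the bookkeeping — passing from $V^\psi$ through $V^\eta$ to the coordinates of $\Lambda^*$ via uniqueness and Lemma~\ref{lemma: CC: closed} — that the paper compresses into the word ``trivially.''
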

   \begin{proof}[Proof of Lemma \ref{lemma: CC: value of Lambda p knot }]
This lemma follows trivially from \eqref{intheorem: CC: def: vartheta} and Lemma \ref{lemma: cc: 0 maximum for smooth}.

  \end{proof}
 
We  break down step 2 into further substeps.
\begin{itemize}
\item[2a.] In this step, we establish smoothness properties of $\vartheta$ and related functions in neighborhood of $\mp^0$. 
\item[2b.] In this step, we  establish the continuity properties of the map $\mp\mapsto \Lambda^*(\mp)$.
We will also show that $\mp\mapsto y^*(\mp)$ and $\mp\mapsto z^*(\mp)$ are locally Lipschitz at $\mp=\mp^0$. These results enable control over the behavior of $\Lambda^*(\mp)$ for $\mp$'s in a neighborhood of $\mp^0$.
    \item[2c.] We will show that the concavity of  $\vartheta$ induces some restrictions on $\Lambda^*(\mp)$  for all $\mp$ in a neighborhood of $\mp^0$. 
    \item[2d.] We will show that if $\psi$ is Fisher consistent, then $\vartheta$ satisfies some properties, which will collectively be referred to as Property \ref{property: CC}. 
    \item[2e.] We will use the restrictions on $\Lambda^*(\mp)$  derived in Step 2c to show that $\vartheta$ can not satisfy Property \ref{property: CC}, which would imply $\psi$ can not be Fisher consistent. 
\end{itemize}

\subsubsection{ Step 2a: properties of $\vartheta$ and related functions}
In this step, we will mainly establish different  properties of $\vartheta$, $\Phi$, and $\Lambda^*(\mp)$. The function $\Phi(\cdot;\mp)$ is an important function for us because $\Lambda^*(\mp)$ relies on it and, as mentioned previously, the proof hinges on $\Lambda^*(\mp)$. Another important function for us is 
\begin{equation}
    \label{def: CC: Phi}
\Phi^*(\mp)= \inf_{\mw\in\RR^3} \Phi(\mw;\mp),\ \mp\in\RR^4_{>0}.
\end{equation}
Note that as $\Lambda^*(p)$ is the solution to above minimization problem, we also have
\[\Phi^*(\mp)=\Phi(\Lambda^*(\mp);\mp)\]
for any $\mp\in\RR^4_{>0}$. Fact \ref{fact: Phi star mp finite} below implies that  $-\Phi^*$ is well-behaved in that it is a  convex function  that is  continuous and bounded on $ \RR^4_{>0}$. This fact requires $-\eta$ to be bounded below, which is satisfied under the setup of Theorem \ref{theorem: CC}. Fact \ref{fact: Phi star mp finite} is proved in Section \ref{sec: proof of Fact Phi star mp finite}.
\begin{fact}
    \label{fact: Phi star mp finite}
    Suppose $-\eta$ is bounded below. Then 
    $-\Phi^*$ is a convex function  with $ \iint(\dom(-\Phi^*))\supset \RR^4_{>0}$.  Also $\Phi^*$ is continuous on $\RR^4_{>0}$. Moreover, 
 $|\Phi^*(\mp)|<\infty$ for all $\mp\in\RR^4_{>0}$.
\end{fact}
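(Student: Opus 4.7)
The key observation is that $-\Phi(x,y,z;\mp)$ is linear (in fact, homogeneous linear) in $\mp$ for each fixed $(x,y,z)\in\RR^3$. Indeed, from the definition \eqref{def: CC: Phi: binary},
\[
-\Phi(x,y,z;\mp) \;=\; \mp_{11}\vartheta(x,y;1,1)+\mp_{12}\vartheta(x,y;1,2)+\mp_{21}\vartheta(x,z;2,1)+\mp_{22}\vartheta(x,z;2,2).
\]
Consequently,
\[
-\Phi^*(\mp) \;=\; \sup_{(x,y,z)\in\RR^3} \bigl[-\Phi(x,y,z;\mp)\bigr]
\]
is a pointwise supremum of linear functions of $\mp$, and hence convex on $\RR^4$. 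This proves the convexity claim.

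The plan for finiteness on $\RR^4_{>0}$ has two sides. For the upper bound on $-\Phi^*(\mp)$: since $-\eta$ is bounded below, $\eta$ is bounded above by some constant $M$, and then each of the four $\vartheta(\cdot;i,j)$'s defined in \eqref{intheorem: CC: def: vartheta} is a finite sum of evaluations of $\eta$ and thus bounded above (by constants depending only on $k_1,k_2,M$). Therefore, for any $\mp\in\RR^4_{>0}$, $-\Phi(\mw;\mp)$ is uniformly bounded above in $\mw$ by a finite constant depending only on $\mp$ and $\eta$, so $-\Phi^*(\mp)<\infty$, equivalently $\Phi^*(\mp)>-\infty$. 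For the lower bound on $-\Phi^*$ (i.e., the upper bound on $\Phi^*$): by Lemma \ref{lemma: CC: 0 in int}, $\mz_{k_1+k_2-2}\in\iint(\dom(-\eta))$, so $\eta(\mz)$ is a finite real number, and hence $\vartheta(0,0;i,j)$ is finite for every $(i,j)\in\{1,2\}^2$. Evaluating at $\mw=(0,0,0)$ gives $\Phi^*(\mp)\le \Phi(0,0,0;\mp)<\infty$. Combining, $|\Phi^*(\mp)|<\infty$ for every $\mp\in\RR^4_{>0}$.

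Since $-\Phi^*$ is a finite convex function on the open set $\RR^4_{>0}$, we get $\RR^4_{>0}\subset \iint(\dom(-\Phi^*))$ immediately. Finally, continuity of $\Phi^*$ on $\RR^4_{>0}$ follows from the standard fact that a convex function which is finite on an open convex subset of $\RR^k$ is automatically continuous on that set (see e.g.\ Theorem 10.1 of \cite{rockafellar}). No step here presents a genuine obstacle; the only subtle point is ensuring that $\eta(\mz)$ is finite (so that the upper bound on $\Phi^*$ is not vacuous), which is exactly why Lemma \ref{lemma: CC: 0 in int} was established earlier as preparation. The argument is essentially a clean application of the linear-in-$\mp$ structure and standard convex analysis.
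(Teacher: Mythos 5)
Your proof is correct, and the overall architecture (supremum of linear functions in $\mp$ gives convexity; boundedness from $\eta$ bounded above; finiteness via evaluation at the origin; continuity from finite-convex-on-open-set) matches what the paper does. The one place you genuinely diverge is the step showing $\Phi^*(\mp)<\infty$: you evaluate directly at $\mw=(0,0,0)$ and invoke Lemma \ref{lemma: CC: 0 in int} to get $\eta(\mz_{k_1+k_2-2})\in\RR$, so $\Phi(0,0,0;\mp)<\infty$. The paper instead routes through Lemma \ref{lemma: CC: closed}, arguing that if $\Phi^*(\mp)=\infty$ then $\Phi(\cdot;\mp)\equiv\infty$, contradicting properness of $\Phi(\cdot;\mp)$. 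Your route is more self-contained (it depends only on Lemma \ref{lemma: CC: 0 in int}, which the properness argument in Lemma \ref{lemma: CC: closed} also relies on, so you've cut out a middle step). The paper also packages the convexity observation as $-\Phi^*$ being the support function of the image set of $\vartheta$, which buys closedness for free and lets them cite continuity on the relative interior of the domain; you use the equally standard fact that a convex function finite on an open set is continuous there. Both are clean; yours is marginally more elementary and avoids one lemma dependency.
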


Lemma \ref{lemma: CC: differentiability} below implies that the $\vartheta(\cdot;i,j)$'s are thrice continuously differentiable at $(0,0)$ for $i,j\in[2]$.  In particular,  there is an open neighborhood $U_0$ of $0$ so that the $\vartheta(\cdot;i.j)$'s are thrice continuously differentiable at $U_0^2$ for all $i,j\in[2]$ and  $\Phi(\cdot;\mp)$ is thrice continuously differentiable at $U_0^3$ for all $\mp\in\RR^4_{>0}$. 
Lemma \ref{lemma: CC: differentiability} is proved in Section \ref{secpf: Lemma differentiability}.

\begin{lemma}
\label{lemma: CC: differentiability}
 Under the setup of Theorem \ref{theorem: CC}, 
 there exists an open neighborhood $U_0$ of $0$ so that
 \begin{enumerate}
     \item $U_0^2\subset \iint(\dom(\vartheta(\cdot;i,j)))$ for all $i,j\in[2]$.
     \item $U_0^3\subset \iint(\dom(\Phi(\cdot;\mp)))$ for all $\mp\in\RR^4_{>0}$.
     \item The $\vartheta(\cdot;i.j)$'s are thrice continuously differentiable at $U_0^2$ for all $i,j\in[2]$.
     \item $\Phi(\cdot;\mp)$ is thrice continuously differentiable at $U_0^3$ for all $\mp\in\RR^4_{>0}$. 
 \end{enumerate}
\end{lemma}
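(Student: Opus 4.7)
The plan is to leverage Lemma~\ref{lemma: CC: 0 in int}, which guarantees that $\mz_{k_1+k_2-2} \in \iint(\dom(-\eta))$, together with the smoothness hypothesis on $\eta$ in Theorem~\ref{theorem: CC}, by pulling these back through the finite family of linear maps that appear in the definition of $\vartheta$.

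First I would choose an open ball $B \subset \iint(\dom(-\eta))$ centered at $\mz_{k_1+k_2-2}$; such a ball exists by Lemma~\ref{lemma: CC: 0 in int}. For each $(i,j) \in [k_1-1] \times [k_2-1]$, consider the linear map $L_{ij}\colon \RR^2 \to \RR^{k_1+k_2-2}$ defined by $L_{ij}(x,y) = (x\AA_i \mo_{k_1-1},\, y\BB_j \mo_{k_2-1})$ (with $\AA_0 = I_{k_1-1}$ and $\BB_0 = I_{k_2-1}$ handling the ``1,1'' case). Each $L_{ij}$ is continuous and satisfies $L_{ij}(0,0) = \mz_{k_1+k_2-2}$, so there is an open square $U_{ij}$ around $0 \in \RR$ such that $U_{ij}^2 \subset L_{ij}^{-1}(B)$. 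Taking $U_0$ to be the intersection of these finitely many open neighborhoods gives an open neighborhood of $0$ for which $U_0^2 \subset L_{ij}^{-1}(B) \subset L_{ij}^{-1}(\iint(\dom(-\eta)))$ for every admissible $(i,j)$.

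From the definition \eqref{intheorem: CC: def: vartheta}, each $\vartheta(\cdot;i,j)$ is a finite sum of terms of the form $\eta \circ L_{ij}$. Since each such $L_{ij}$ maps $U_0^2$ into $\iint(\dom(-\eta))$, the composition $\eta \circ L_{ij}$ is finite on $U_0^2$, which establishes claim~(1). Claim~(3) follows because $\eta$ is thrice continuously differentiable on $\iint(\dom(-\eta))$ (by hypothesis of Theorem~\ref{theorem: CC}), each $L_{ij}$ is linear hence $C^\infty$, and the chain rule preserves $C^3$ regularity under composition with linear maps; the sum of $C^3$ functions is $C^3$. For claims~(2) and~(4), observe from \eqref{def: CC: Phi: binary} that $\Phi(x,y,z;\mp)$ is built from evaluations of $\vartheta$ at $(x,y)$ and $(x,z)$. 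Hence for any $(x,y,z) \in U_0^3$, both $(x,y)$ and $(x,z)$ lie in $U_0^2$, placing the point inside the intersection of the domains of the four $\vartheta$ terms. Since $U_0^3$ is open and contained in $\dom(\Phi(\cdot;\mp))$, it is contained in $\iint(\dom(\Phi(\cdot;\mp)))$; thrice continuous differentiability on $U_0^3$ follows by summing the $C^3$ components with fixed coefficients.

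There is no real obstacle here: the argument is a routine pullback-by-linear-maps computation, and the only subtlety is that we must select $U_0$ small enough to work uniformly across the finitely many maps $L_{ij}$, which is handled by the intersection step. The role of Lemma~\ref{lemma: CC: 0 in int} is crucial, since without $\mz_{k_1+k_2-2} \in \iint(\dom(-\eta))$ one cannot guarantee that the compositions land in the interior of the domain where $\eta$ is smooth.
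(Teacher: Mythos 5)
Your proposal is correct and takes essentially the same route as the paper's proof: both pull $\mz_{k_1+k_2-2}\in\iint(\dom(-\eta))$ (Lemma~\ref{lemma: CC: 0 in int}) back through the finite family of linear maps $(x,y)\mapsto(x\AA_i\mo_{k_1-1},y\BB_j\mo_{k_2-1})$, shrink $U_0$ to accommodate all finitely many, and then invoke the $C^3$ hypothesis on $\eta$ together with linearity to transfer smoothness to $\vartheta$ and $\Phi$. The only cosmetic differences are that you make the pullback argument explicit with named maps $L_{ij}$ and derive $U_0^3\subset\dom(\Phi(\cdot;\mp))$ by observing that $(x,y),(x,z)\in U_0^2$, whereas the paper cites the Rockafellar identity $\dom(\Phi(\cdot;\mp))=\cap_{i,j}\dom(\vartheta(\cdot;i,j))$; one small notational slip in your write-up is that the index set should be $\{0,\ldots,k_1-1\}\times\{0,\ldots,k_2-1\}$ rather than $[k_1-1]\times[k_2-1]$, though your remark about $\AA_0,\BB_0$ shows you intended the former.
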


 
\subsubsection{Step 2b: continuity properties of $\Lambda^*$}
\label{sec: CC: 2b}
 In this section, we will prove the continuity properties of $\Lambda^*$. In particular, we will show that $\Lambda^*$ is continuous. Moreover, we will show that $y^*$ and $z^*$ are locally Lipschitz at $\mp^0$.   Lemma \ref{lemma: CC: continuity of Lambda} proves the continuity of the map $\mp\mapsto\Lambda^*(\mp)$. This lemma  is proved in Section \ref{secpf:  CC: continuity of Lambda}. 

  
\begin{lemma}
   \label{lemma: CC: continuity of Lambda}
      Under the setup of Theorem \ref{theorem: CC}, $\mp\mapsto\Lambda^*(\mp)$ is a continuous map at any $\mp\in\RR^4_{>0}$. 
   \end{lemma}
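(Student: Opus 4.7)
The plan is to apply a Berge-type argument, leveraging the strict convexity, closedness, and $0$-coercivity of $\Phi(\cdot;\mp)$ established in Lemma \ref{lemma: CC: closed}, together with the linearity of $\mp\mapsto \Phi(\mw;\mp)$ and the fact that the effective domain $\D:=\bigcap_{i,j}\dom(-\vartheta(\cdot;i,j))$ of $\Phi(\cdot;\mp)$ is the same for every $\mp\in\RR^4_{>0}$. Throughout, I use that each $-\vartheta(\cdot;i,j)$ is bounded below by some constant $M_{ij}$, which follows from $-\eta$ being bounded below (Lemma \ref{lemma: CC: 0 in int} and the setup of Theorem \ref{theorem: CC}).

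First I would fix $\mp^{(0)}\in\RR^4_{>0}$ and a sequence $\mp_n\to\mp^{(0)}$; shrinking if necessary, I may assume there is $\alpha>0$ with $\mp_{n,ij}\ge\alpha$ for all $n$ and all $i,j$. To bound $\{\Lambda^*(\mp_n)\}$, I would combine an upper bound on $\Phi^*(\mp_n)$ with a coercivity lower bound. For the upper bound: $\Lambda^*(\mp^{(0)})\in \D$, so by linearity of $\mp\mapsto \Phi(\Lambda^*(\mp^{(0)});\mp)$, $\Phi^*(\mp_n)\le \Phi(\Lambda^*(\mp^{(0)});\mp_n)\to \Phi^*(\mp^{(0)})$, yielding a uniform upper bound $M$. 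For the coercivity lower bound I would decompose
\[
\Phi(\mw;\mp_n)=\Phi(\mw;\alpha\mo_4)+\sum_{i,j}(\mp_{n,ij}-\alpha)\bigl(-\vartheta(\mw_{\cdot};i,j)\bigr)\ge \Phi(\mw;\alpha\mo_4)+C,
\]
where $C$ is a finite constant coming from $-\vartheta\ge M_{ij}$ and the boundedness of $\mp_{n,ij}-\alpha$. Since $\Phi(\cdot;\alpha\mo_4)$ is $0$-coercive by Lemma \ref{lemma: CC: closed}, its sublevel sets are bounded, which together with $\Phi(\Lambda^*(\mp_n);\mp_n)\le M$ forces $\{\Lambda^*(\mp_n)\}$ to be bounded.

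Next I would pass to an arbitrary convergent subsequence $\Lambda^*(\mp_{n_k})\to\Lambda^\infty$ and identify the limit. The bound $\Phi^*(\mp_{n_k})\le M$ together with $-\vartheta\ge M_{ij}$ and $\mp_{n_k,ij}\ge\alpha>0$ yields a uniform upper bound on each $-\vartheta(\Lambda^*(\mp_{n_k})_{\cdot};i,j)$, hence
\[
\bigl|\Phi(\Lambda^*(\mp_{n_k});\mp_{n_k})-\Phi(\Lambda^*(\mp_{n_k});\mp^{(0)})\bigr|\le \|\mp_{n_k}-\mp^{(0)}\|_1\,\max_{i,j}|\vartheta(\Lambda^*(\mp_{n_k})_\cdot;i,j)|\to 0.
\]
By closedness (lower semicontinuity) of $\Phi(\cdot;\mp^{(0)})$ and continuity of $\Phi^*$ on $\RR^4_{>0}$ (Fact \ref{fact: Phi star mp finite}),
\[
\Phi(\Lambda^\infty;\mp^{(0)})\le \liminf_k \Phi(\Lambda^*(\mp_{n_k});\mp^{(0)})=\liminf_k \Phi^*(\mp_{n_k})=\Phi^*(\mp^{(0)}).
\]
By the uniqueness part of Lemma \ref{lemma: CC: closed}, $\Lambda^\infty=\Lambda^*(\mp^{(0)})$. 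Since every convergent subsequence of the bounded sequence $\{\Lambda^*(\mp_n)\}$ has the same limit, $\Lambda^*(\mp_n)\to\Lambda^*(\mp^{(0)})$, giving continuity at $\mp^{(0)}$.

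I expect the main obstacle to be the boundedness step, since the $0$-coercivity afforded by Lemma \ref{lemma: CC: closed} is $\mp$-dependent and must be converted into a bound uniform over the sequence $\{\mp_n\}$. The decomposition trick with the uniform lower mass $\alpha\mo_4$ is what makes this work, provided one is careful that the residual terms $(\mp_{n,ij}-\alpha)(-\vartheta)$ are uniformly bounded below by using the lower bound $M_{ij}$ on $-\vartheta$; the rest of the argument is then a routine application of lower semicontinuity and uniqueness of the minimizer.
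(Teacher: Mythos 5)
Your proof is correct and shares the paper's overall strategy: establish boundedness of $\{\Lambda^*(\mp_n)\}$, extract a convergent subsequence, identify its limit as the minimizer of $\Phi(\cdot;\mp^{(0)})$ by combining lower semicontinuity with the estimate $\Phi(\Lambda^*(\mp_{n_k});\mp_{n_k})-\Phi(\Lambda^*(\mp_{n_k});\mp^{(0)})\to 0$, and conclude via the uniqueness in Lemma~\ref{lemma: CC: closed}. Where you diverge is in the boundedness step, and your version is cleaner. The paper first bounds $\Phi^*(\mq)$ above using the local Lipschitz property of the convex function $-\Phi^*$, then proves a nested sublevel-set inclusion (level-$r$ sublevel sets of $\Phi(\cdot;\mq)$ sit inside level-$C(\mp,r)$ sublevel sets of $\Phi(\cdot;\mp)$), which requires a preliminary uniform bound on $|\vartheta|$ over those sublevel sets; compactness of the coercive $\Phi(\cdot;\mp)$ then gives boundedness. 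That same quantitative inclusion is what the paper reuses to derive the rate bound in Lemma~\ref{lemma: CC: Lipschitz continuity of Lambda}. You instead obtain the upper bound on $\Phi^*(\mp_n)$ from the one-line evaluation $\Phi^*(\mp_n)\le\Phi(\Lambda^*(\mp^{(0)});\mp_n)\to\Phi^*(\mp^{(0)})$, and replace the sublevel-set inclusion with the affine decomposition $\Phi(\cdot;\mp_n)\ge\Phi(\cdot;\alpha\mo_4)+C$, shifting all coercivity to the single fixed point $\alpha\mo_4\in\RR^4_{>0}$ while needing only the crude lower bound on $-\vartheta$ together with boundedness of $\{\mp_n\}$. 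Your route is shorter and more elementary for the purely qualitative continuity statement; the paper's heavier construction is the price it pays for the explicit inclusion constants it needs downstream.
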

  

One immediate corollary of the continuity of $\Lambda^*$ is that  $\Lambda^*(\mp)\in U_0^3$ for all $\mp$ in a small neighborhood of $\mp^0$. 
  \begin{lemma}
\label{lemma: CC: hessian cont.}
  Under the setup of Theorem \ref{theorem: CC},  there exists $\delta>0$ so that if $\mp\in B(\mp^0,\delta)$, then $\Lambda^*(\mp)\in U_0^3$, where $U_0$ is as in Lemma \ref{lemma: CC: differentiability}. 
\end{lemma}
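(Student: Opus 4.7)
The plan is to deduce this lemma directly from the continuity of $\Lambda^*$ at $\mp^0$ established in Lemma~\ref{lemma: CC: continuity of Lambda}, together with the explicit value $\Lambda^*(\mp^0) = (0,0,0)$ provided by Lemma~\ref{lemma: CC: value of Lambda p knot }. The strategy is a standard $\varepsilon$--$\delta$ argument: since $(0,0,0)$ lies in the open set $U_0^3$, any nearby point also lies in $U_0^3$, and continuity transfers this closeness from the range to the domain.

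More concretely, I would first invoke Lemma~\ref{lemma: CC: differentiability} to recall that $U_0$ is an open neighborhood of $0 \in \RR$, so there exists $\varepsilon > 0$ with $(-\varepsilon, \varepsilon) \subset U_0$, and consequently the open cube $(-\varepsilon, \varepsilon)^3 \subset U_0^3$ contains the origin. Next, by Lemma~\ref{lemma: CC: value of Lambda p knot }, we have $\Lambda^*(\mp^0) = (0,0,0) \in (-\varepsilon, \varepsilon)^3$. Then Lemma~\ref{lemma: CC: continuity of Lambda}, applied at $\mp^0 \in \RR^4_{>0}$, yields the existence of $\delta > 0$ such that whenever $\mp \in B(\mp^0, \delta) \cap \RR^4_{>0}$ we have $\|\Lambda^*(\mp) - \Lambda^*(\mp^0)\|_\infty < \varepsilon$, i.e., each coordinate of $\Lambda^*(\mp)$ lies in $(-\varepsilon, \varepsilon) \subset U_0$, so $\Lambda^*(\mp) \in U_0^3$.

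Finally, by shrinking $\delta$ if necessary (e.g., taking $\delta$ smaller than the distance from $\mp^0$ to the boundary of $\RR^4_{>0}$, which is positive since every coordinate of $\mp^0$ equals $1$), we may ensure that $B(\mp^0, \delta) \subset \RR^4_{>0}$, so that the conclusion holds for all $\mp \in B(\mp^0, \delta)$. There is no substantive obstacle here; this lemma is a purely topological corollary of the two prior lemmas and is stated separately only for convenience of reference in subsequent steps where differentiability of $\Phi(\cdot; \mp)$ at $\Lambda^*(\mp)$ will be invoked.
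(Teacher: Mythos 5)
Your proof is correct and follows essentially the same route as the paper: invoke $\Lambda^*(\mp^0) = (0,0,0)$ from Lemma~\ref{lemma: CC: value of Lambda p knot } and then apply the continuity of $\Lambda^*$ from Lemma~\ref{lemma: CC: continuity of Lambda} to shrink $\delta$ so that $\Lambda^*(\mp)$ stays in the open set $U_0^3$. The paper's proof is a two-sentence version of exactly this argument, so no further comparison is needed.
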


\begin{proof}[Proof of Lemma \ref{lemma: CC: hessian cont.}]
 
Lemma \ref{lemma: CC: value of Lambda p knot } implies that $\Lambda^*(\mp^0)=(0,0,0)$. By the continuity of $\Lambda^*$ by Lemma \ref{lemma: CC: continuity of Lambda}, we can choose $\delta$ small enough so that if $\mp\in B(\mp^0,\delta)$, then $\Lambda^*(\mp)\in U_0^3$.

\end{proof}

Lemma \ref{lemma: CC: Lipschitz}  below establishes the Lipschitz continuity of $y^*$ and $z^*$ at $\mp^0$.
   \begin{lemma}
     \label{lemma: CC: Lipschitz}  
  Consider the setup of Theorem \ref{theorem: CC}.  Then there exists a $\delta>0$ and $C\geq 0$ such that for all  $\mp\in B(\mp^0,\delta)$,
    \[|y^*(\mp)|\leq C\|\mp-\mp^0\|_2\]
    and
    \[|z^*(\mp)|\leq C\|\mp-\mp^0\|_2.\]
   \end{lemma}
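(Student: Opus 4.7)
The plan is to apply the implicit function theorem to the first-order optimality conditions $\nabla_{(x,y,z)}\Phi(\Lambda^*(\mp);\mp) = 0$, which are valid for $\mp$ in a ball around $\mp^0$ by Lemma \ref{lemma: CC: hessian cont.} (which places $\Lambda^*(\mp)$ inside the $C^3$ neighborhood $U_0^3$) combined with the smoothness from Lemma \ref{lemma: CC: differentiability}. The base point of the expansion is $(\Lambda^*(\mp^0),\mp^0) = ((0,0,0),(1,1,1,1))$ by Lemma \ref{lemma: CC: value of Lambda p knot }.

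The critical structural observation, visible from \eqref{def: CC: Phi: binary} together with \eqref{intheorem: CC: def: vartheta}, is that $\partial_y\Phi$ depends on $\mp$ only through $(\mp_{11},\mp_{12})$ and on the score variables only through $(x,y)$, with the analogous decoupling for $\partial_z\Phi$. First I would Taylor-expand the scalar equation $\partial_y\Phi(x^*(\mp),y^*(\mp);\mp_{11},\mp_{12}) = 0$ around the base point, using $\partial_y\Phi(0,0;1,1) = 0$, to obtain
\[
\partial_{yy}\Phi|_0\,y^*(\mp) = -\partial_{xy}\Phi|_0\,x^*(\mp) - \partial_{\mp_{11}}\partial_y\Phi|_0\,(\mp_{11}-1) - \partial_{\mp_{12}}\partial_y\Phi|_0\,(\mp_{12}-1) + R_y,
\]
with remainder $R_y = O(\|\Lambda^*(\mp)\|_2^2 + \|\mp-\mp^0\|_2^2)$; an analogous identity holds for $z^*(\mp)$. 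Treating these two equations together with the first-order expansion of the $x$-FOC as a linear system in $(x^*,y^*,z^*)$ driven by $\mp-\mp^0$, and absorbing the quadratic remainder via the local boundedness of $\Lambda^*(\mp)$ provided by Lemma \ref{lemma: CC: continuity of Lambda}, yields the desired bound $|y^*(\mp)| + |z^*(\mp)| \leq C\|\mp-\mp^0\|_2$.

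The main obstacle is verifying that the Hessian $\nabla^2\Phi|_{(0,0,0);\mp^0}$ is strictly positive definite, since strict convexity of $\Phi(\cdot;\mp^0)$ only delivers positive semidefiniteness in general. To handle this, I would exploit the permutation symmetry of the template $\eta$ to express the Hessian entries of $\Phi$ in terms of the two scalars $a,b$ encoding the second-block Hessian $H_{22}^{-\eta}(0) = (a-b)I + b\mo\mo^T$, compute $\partial_{yy}\Phi|_0 = (k_2-1)[2a + (k_2-2)b]$ (with analogous expressions for $\partial_{zz}\Phi|_0$ and $\partial_{xx}\Phi|_0$), and then rule out the degenerate case $2a+(k_2-2)b = 0$ by combining positive semidefiniteness of $H^{-\eta}(0)$ with the strict convexity of $-\eta$ along the explicit coordinate curves entering \eqref{intheorem: CC: def: vartheta}. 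If outright positive definiteness cannot be squeezed out of strict convexity alone, a fallback route is to bound $y^*$ and $z^*$ directly: strict convexity of $\Phi(\cdot;\mp)$ delivers a quadratic lower bound for $\Phi(\Lambda^*(\mp);\mp) - \Phi((x^*(\mp),0,0);\mp)$, and combining this with the linear dependence of the optimal value on $\mp$ produced by Fact \ref{fact: Phi star mp finite} yields the same Lipschitz estimate without requiring Lipschitz control of $x^*$.
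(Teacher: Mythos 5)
Your high-level plan — exploit the first-order optimality conditions near $\mp^0$ and Taylor-expand — is correct, and you correctly anticipate that Hessian nondegeneracy is where the argument lives or dies, but your route differs substantively from the paper's, and the difference matters for what must be verified.

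The paper never sets up a coupled $3\times 3$ system. It works with the scalar equation $\partial_y\Phi(\Lambda^*(\mp);\mp)=0$ alone, which by the block structure of $\Phi$ involves only $(x,y)$ and $(\mp_{11},\mp_{12})$, and performs two nested one-variable expansions. The first, in $y$ at $y^*(\mp)$ around $0$, isolates $y^*(\mp)$ times $\mp_{11}\partial_{yy}\vartheta(\cdot;1,1)+\mp_{12}\partial_{yy}\vartheta(\cdot;1,2)$ at an intermediate point; the paper shows this stays bounded away from $0$ for $\mp$ near $\mp^0$, and this is the only Hessian entry it uses directly to bound $y^*$. The second expansion, in $x$ at $x^*(\mp)$ around $0$ and taken to second order, uses two structural cancellations that do not appear in your proposal: (a) $\partial_y\vartheta(0,0;1,1)+\partial_y\vartheta(0,0;1,2)=0$ from $\nabla\Phi(0;\mp^0)=0$, and (b) the cross-derivative identity $M_{12}(0,0)=\partial_{xy}\vartheta(0,0;1,1)+\partial_{xy}\vartheta(0,0;1,2)=0$ from Lemma~\ref{lemma: CC: M12=0}. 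Together these make the constant and $x^*$-linear parts factor through a single $(\mp_{11}-\mp_{12})$ prefactor, leaving a quadratic $x^*(\mp)^2$ remainder that is then absorbed by the H\"older-$\tfrac{1}{2}$ estimate $|x^*(\mp)|\lesssim\sqrt{\|\mp-\mp^0\|_2}$ from Lemma~\ref{lemma: CC: Lipschitz continuity of Lambda}. In this way the paper obtains Lipschitz bounds for $y^*,z^*$ without ever claiming or needing a Lipschitz bound on $x^*$. Your $3\times 3$ implicit-function-theorem route bypasses (a), (b), and the H\"older lemma and, if it goes through, yields Lipschitz (indeed $C^1$) regularity for all of $(x^*,y^*,z^*)$ in one stroke via the standard self-improving estimate, using only continuity of $\Lambda^*$; this is shorter and strictly stronger. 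But it requires the full Hessian of $\Phi(\cdot;\mp^0)$ at $(0,0,0)$ to be invertible, i.e.\ (given Lemma~\ref{lemma: CC: M12=0}, which makes that Hessian diagonal) $\Phi_{xx},\Phi_{yy},\Phi_{zz}\neq 0$, one entry more than the paper needs directly in this lemma.

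The genuine gap is your verification of that nondegeneracy. You correctly flag that strict convexity of $\Phi(\cdot;\mp^0)$ delivers only a positive semidefinite Hessian, and you propose to upgrade to strict positive definiteness using the exchangeable structure of the Hessian block of $-\eta$ together with strict convexity along coordinate curves. That cannot work: strict convexity of a smooth function does not force a strictly positive second derivative at a given point ($t\mapsto t^4$ is strictly convex with vanishing second derivative at $0$), so no combination of strict convexity and permutation symmetry will certify $2a+(k_2-2)b>0$. Your fallback is also circular: the quadratic lower bound you invoke near the minimizer is local strong convexity, which is precisely the Hessian positivity you set out to establish. To be fair, the paper's own proof asserts $C<0$ ``because $\vartheta$ is strictly concave'' and, in Lemma~\ref{lemma: CC: Lipschitz continuity of Lambda}, asserts positive definiteness of the Hessian ``since $\Phi(\cdot;\mp)$ is a strictly convex function'' — both carrying the same gap — so this is a shared weak point rather than yours alone; but your proposal would still need an independent argument for it before the rest can close.
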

   Lemma \ref{lemma: CC: Lipschitz} is proved in Section \ref{secpf: proof of lemma: CC: Lipschitz }.
   Since $\Lambda(\mp^0)=(0,0,0)$ by Lemma \ref{lemma: CC: value of Lambda p knot }, the above can also be re-written as 
     \[|y^*(\mp)-y^*(\mp^0)|\leq C\|\mp-\mp^0\|_2\text{ and }|z^*(\mp)-z^*(\mp^0)|\leq C\|\mp-\mp^0\|_2,\]
     which yields the Lipschitz continuity of $z^*$ and $y^*$ at $\mp^0$.

   \subsubsection{Step 2c: geometric restrictions on $\vartheta$}
   In this step, we show that $\vartheta$ and $\Lambda^*(\mp)$ need to satisfy a system of inequalities when  $\mp\in B(\mp^0,\delta)$ where $\delta$ is smaller than the $\delta$ in Lemma \ref{lemma: CC: hessian cont.}. 
For such $\mp$'s,  $\Lambda^*(\mp) \in U_0^3$, which is necessary to guarantee the differentiability of $\Phi(\cdot; \mp)$ at $\Lambda^*(\mp)$, as well as to ensure that $\Lambda^*(\mp) \in \iint(\dom(-\Phi(\cdot; \mp)))$ and $\Lambda^*(\mp) \in \iint(\dom(-\vartheta(\cdot; i, j)))$ for all $i, j \in [2]$.
Suppose $x\in U_0$, which indicates $(x,y^*(\mp),z^*(\mp))\in U_0^3$. Therefore, $\Phi(\cdot;\mp)$ is differentiable at $(x,y^*(\mp),z^*(\mp))$ by Lemma \ref{lemma: CC: differentiability}. Since $\Lambda^*(p)-(x,y^*(p),z^*(p))=(x^*(p)-x,0,0)$ and $\Phi$ is convex, from the definition of subdifferential \citep[pp. 167, (1.2.1),][]{hiriart}, it follows that 
   \[\Phi(\Lambda^*(\mp);\mp)-\Phi(x,y^*(\mp),z^*(0);\mp)\geq (x^*(\mp)-x)\pdv{\Phi(x,y^*(\mp),z^*(\mp);\mp)}{x}.\]
   In particular, if $x\neq x^*(\mp)$, then by strict convexity, the above inequality is strict.
   Thus, we have for any $x\neq x^*(\mp)$,
   \[(x^*(\mp)-x)\pdv{\Phi(x,y^*(\mp),z^*(\mp);\mp)}{x}< 0.\]
  Since $0\in U_0$, we can choose $x=0$, which implies that for all $\mp\in B(\mp^0,\delta)$,
   \begin{equation}
       \label{intheorem: cc: inequality}
       x^*(\mp)\pdv{\Phi(0,y^*(\mp),z^*(\mp);\mp)}{x}< 0.
   \end{equation}
   Similarly, we can show that
   \begin{align}
       \label{intheorem: cc: inequality 2}
       z^*(\mp)\pdv{\Phi(x^*(\mp),y^*(\mp),0;\mp)}{z}<&\  0,\nn\\
       y^*(\mp)\pdv{\Phi(x^*(\mp),0,z^*(\mp);\mp)}{y}<&\  0
   \end{align}
for all $\mp\in B(\mp^0,\delta)$.  Note that the concavity of $\vartheta$, which led to the convexity of $\Phi(\cdot;\mp)$, is the main reason behind the above inequalities.

   Now we will calculate the partial derivatives. To this end, note that 
  \begin{align}
  \label{intheorem: derivatives}
      \pdv{\Phi(x^*(\mp),0,z^*(\mp);\mp)}{y}=&\ -\mp_{11}\pdv{\vartheta(x^*(\mp),0;1,1)}{y}-\mp_{12}\pdv{\vartheta(x^*(\mp),0; 1,2)}{y}\nn\\
       \pdv{\Phi(x^*(\mp),z^*(\mp),0;\mp)}{z}=&\ -\mp_{21}\pdv{\vartheta(x^*(\mp),0;2,1)}{z}-\mp_{22}\pdv{\vartheta(x^*(\mp),0; 2,2)}{z}\nn\\
        \pdv{\Phi(0,y^*(\mp),z^*(\mp);\mp)}{x}=&\ -\mp_{11}\pdv{\vartheta(0,y^*(\mp);1,1)}{x}-\mp_{12}\pdv{\vartheta(0,y^*(\mp); 1,2)}{x}\nn\\
        &\ -\mp_{21}\pdv{\vartheta(0,z^*(\mp);2,1)}{x}-\mp_{22}\pdv{\vartheta(0,z^*(\mp); 2,2)}{x}
  \end{align}
for all $\mp\in B(\mp^0,\delta)$,  provided $
  \delta$ is smaller than the $\delta$ in Lemma \ref{lemma: CC: hessian cont.}. 
The inequalities \eqref{intheorem: cc: inequality} and \eqref{intheorem: cc: inequality 2} entail a restriction on $\vartheta$ and $\Lambda^*(\mp)$.

\subsubsection{Step 2d: implication of  Fisher consistency}
In this section, we will show that the Fisher consistency of $\psi$ enforces some restriction on $\vartheta$. To this end, we define a set of conditions, which we will refer to us Property \ref{property: CC}. One may think of these conditions as a weak version Fisher consistency. 
As mentioned earlier, one can show that when $\psi$ is Fisher consistent, the margin-based loss defined by $\vartheta^{\text{loss}}(x_1,x_2,y_1,y_2;i,j)=\vartheta(x_1-x_2,y_1-y_2;i,j)$ for all $x_1,x_2,y_1,y_2\in\RR$ is Fisher consistent in the two-stage binary treatment case. However, to prove this theorem, we do not need to show the above result, which would be technically more cumbersome. For the purpose of this proof, it is enough to show that, when $\psi$ is Fisher consistent, $\vartheta$ satisfies some conditions,  which are collected in Property \ref{property: CC}.
\begin{property}
    \label{property: CC}
  The function $\vartheta:\RR^2\times\{1,2\}\mapsto\RR$ satisfies Property \ref{property: CC} if there exists $\delta>0$ so that $x^*(\mp)$, $y^*(\mp)$, and $z^*(\mp)$ defined in \eqref{def: intheorem: cc: x star y star} satisfy the following for all  $\mp\in B(\mp^0,\delta)$:
 \begin{itemize}
    \item[P1.] If  $\max (\mp_{11},\mp_{12})> \max(\mp_{21},\mp_{22})$, then $x^*(\mp)>0$. Also, if $\max (\mp_{11},\mp_{12})< \max(\mp_{21},\mp_{22})$, then $x^*(\mp)<0$.
    \item[P2.] Suppose $\max (\mp_{11},\mp_{12})> \max(\mp_{21},\mp_{22})$. Then   then $y^*(\mp)> 0$ if $\mp_{11}>\mp_{12}$ and $y^*(\mp)<0$ if $\mp_{11}<\mp_{12}$. Suppose $\max (\mp_{21},\mp_{22})> \max(\mp_{11},\mp_{12})$. Then  $z^*(\mp)> 0$ if $\mp_{21}>\mp_{22}$ and $z^*(\mp)<0$ if $\mp_{21}<\mp_{22}$.
\end{itemize}
\end{property}
Lemma \ref{lemma: cc: connecting general to margin-based}, which is proved in Section \ref{secpf: proof of  lemma: cc: connecting general to margin-based}, entails that  $\vartheta$ satisfies Property \ref{property: CC} if $\psi$ is Fisher consistent.
\begin{lemma}
 \label{lemma: cc: connecting general to margin-based}
  If $\psi$ is fisher consistent, then the $\vartheta$ defined in \eqref{intheorem: CC: def: vartheta}  satisfies Property \ref{property: CC} under the setup of Theorem \ref{theorem: CC}
\end{lemma}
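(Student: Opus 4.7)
The plan is to translate Fisher consistency of $\psi$ into sign conditions on the scalar parameters $x^*(\mp)$, $y^*(\mp)$, $z^*(\mp)$. By Lemmas \ref{lemma: cc: supremum} and \ref{lemma: CC: closed} together with the identification \eqref{intheorem: CC: connection from u star to x star}, for every $\mp\in B(\mp^0,\delta)$ the unique maximizer $\tilde f$ of $V^\psi(\cdot;\mp)$ has the explicit form $\tilde f_1=(0,-x^*(\mp),\ldots,-x^*(\mp))$, $\tilde f_2(1)=(0,-y^*(\mp),\ldots,-y^*(\mp))$, and $\tilde f_2(i)=(0,-z^*(\mp),\ldots,-z^*(\mp))$ for $i\geq 2$. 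Under the convention $\pred(\mx)=\max(\argmax(\mx))$, we have $\pred(\tilde f_1)=1$ iff $x^*(\mp)>0$ and equals $k_1$ otherwise, with analogous rules linking the sign of $y^*(\mp)$ to $\pred(\tilde f_2(1))$ and the sign of $z^*(\mp)$ to $\pred(\tilde f_2(i))$ for $i\geq 2$. Since the $\mP_b^{k_1,k_2}$-distribution associated with $\mp$ satisfies $\mp_{1j}=\mp_{12}$ for $j\geq 2$ and $\mp_{ij}=\mp_{2j}$ for $i\geq 2$, the realized value $V(\tilde d)=\mp_{\tilde d_1,\tilde d_2(\tilde d_1)}$ is a transparent function of the signs of $(x^*,y^*,z^*)$.

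Applying Fisher consistency to the constant sequence $f_m\equiv \tilde f$ yields $V(\tilde d)=V_*$, and a short case analysis already establishes the \emph{weak} form of Property \ref{property: CC}. For instance, when $\max(\mp_{11},\mp_{12})>\max(\mp_{21},\mp_{22})$ the unique optimal first-stage action is $d_1^*=1$, so $\tilde d_1$ must equal $1$ and hence $x^*(\mp)>0$; if additionally $\mp_{11}>\mp_{12}$ then the unique optimum is $(1,1)$, forcing $\tilde d_2(1)=1$ and hence $y^*(\mp)>0$. A symmetric argument under $\max(\mp_{21},\mp_{22})>\max(\mp_{11},\mp_{12})$ with $\mp_{21}>\mp_{22}$ yields $z^*(\mp)>0$. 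In the complementary sub-cases ($\mp_{11}<\mp_{12}$ in the first regime, $\mp_{21}<\mp_{22}$ in the second, and the half of P1 requiring $x^*(\mp)<0$), the constant-sequence argument only gives the weak inequalities $y^*(\mp)\leq 0$, $z^*(\mp)\leq 0$, and $x^*(\mp)\leq 0$, respectively, because $\pred(\mz_k)=k$ is already compatible with selecting an action in $\{2,\ldots,k\}$ under our $\max(\argmax)$ convention.

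To promote these weak inequalities to strict ones, I invoke the sequential form of Fisher consistency. Consider the sub-case $\max(\mp_{11},\mp_{12})>\max(\mp_{21},\mp_{22})$ with $\mp_{11}<\mp_{12}$ and suppose for contradiction that $y^*(\mp)=0$, so $\tilde f_2(1)=\mz_{k_2}$. Define a sequence $f_m$ agreeing with $\tilde f$ except $f_{m,2}(1)=(1/m)\mathbf{e}^{(1)}_{k_2}$; then $\pred(f_{m,2}(1))=1$ while every other coordinate of the policy is unchanged, so the induced value is $V(f_m)=\mp_{11}<\mp_{12}=V_*$. On the other hand, the hypothesis $\bigcap_{i,j}\iint(\dom(-\psi(\cdot;i,j)))\neq\emptyset$ together with Lemma \ref{lemma: CC: 0 in int} yields $\mz\in\iint(\dom(-\psi(\cdot;i,j)))$ for all $(i,j)$, and the concavity of $\psi$ then gives continuity in a neighborhood of $\tilde f$; because the distribution is supported on finitely many points, $V^\psi(f_m)\to V^\psi(\tilde f)=V^\psi_*$, which contradicts Fisher consistency. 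Hence $y^*(\mp)<0$. Entirely analogous single-coordinate perturbations of $\tilde f_1$ and $\tilde f_2(2)$ dispose of the remaining strict cases $x^*(\mp)<0$ and $z^*(\mp)<0$, completing the verification of P1 and P2.

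The main technical obstacle is ensuring that both $\tilde f$ and each perturbed $f_m$ remain in the open region where $V^\psi$ is continuous. This is dealt with by shrinking $\delta$ so that $\Lambda^*(\mp)$ stays in a small neighborhood of the origin (using the continuity established in Lemma \ref{lemma: CC: continuity of Lambda}), which keeps $\tilde f$ inside the open interior set $\bigcap_{i,j}\iint(\dom(-\psi(\cdot;i,j)))$ supplied by Theorem \ref{theorem: CC}; since each perturbation alters a single coordinate of $\tilde f_2(\cdot)$ by $O(1/m)$, it also lies in this open set for $m$ large, justifying the continuity step.
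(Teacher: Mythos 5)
Your proposal follows the same two-stage strategy as the paper: use the explicit form of a maximizer $\tilde f$ together with the $\pred=\max(\argmax)$ convention to get the strict signs in the "easy" sub-cases from the constant sequence $f_m\equiv\tilde f$, and then rule out the boundary value $0$ in the remaining sub-cases by constructing a sequence $f_m\to\tilde f$ with a different $\pred$ and using continuity of $V^\psi$ near $\tilde f$ (guaranteed by $\Lambda^*(\mp)\in U_0^3\subset\iint(\dom\Phi(\cdot;\mp))$). The perturbation direction you use for $\tilde f_2(1)$, namely $(1/m)\mathbf e^{(1)}_{k_2}$, differs cosmetically from the paper's $(0,-\mo_{k_2-1}/m)$, but the two are equivalent because $\psi$ is relative-margin-based. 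So the structure is right.

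There is, however, a genuine slip in the final sentence. For the case $\max(\mp_{21},\mp_{22})>\max(\mp_{11},\mp_{12})$, $\mp_{21}<\mp_{22}$, you suppose $z^*(\mp)=0$ and say an ``analogous single-coordinate perturbation of $\tilde f_2(2)$'' produces the contradiction. It does not, when $k_1\geq 3$. Since $x^*(\mp)\leq 0$ in this regime, the convention $\pred=\max(\argmax)$ forces $\tilde d_1=k_1$, so $V(f_m)=\mp_{k_1,\,\tilde d_2(k_1)}$; perturbing only $\tilde f_2(2)$ changes $\tilde d_2(2)$, which never enters the value, and $V(f_m)$ stays equal to $\mp_{k_1,k_2}=\mp_{22}=V_*$. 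No contradiction arises, and the argument stalls. The paper avoids this by replacing $\my_i^*$ with the perturbed vector for \emph{every} $i\geq 2$ (equivalently, one could perturb $\tilde f_2(k_1)$ alone). The same caution applies to keeping the perturbed arguments within the diagonal slice of $\C$ defined in \eqref{def: CC: mathcal C}, so that the reduction to $\Phi(x,y,z;\mp)$ remains exact; perturbing a single $\my_i$ with $i\in\{2,\ldots,k_1-1\}$ leaves that slice. Fixing the index (use $k_1$, or ``all $i\geq 2$'') repairs the proof, and the remainder of your argument then matches the paper's.
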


Lemma \ref{lemma: cc: connecting general to margin-based} implies that to prove that $\psi$ is not Fisher consistent, it suffices to show that $\vartheta$ fails to satisfy Property \ref{property: CC}. 
We have $\mp\in B(\mp^0,\delta)$ in Property \ref{property: CC} because in this case Lemma \ref{lemma: CC: hessian cont.}  ensures that $\Lambda^*(\mp)\in U^2_0$ for sufficiently small $\delta>0$, ,which, in combination with Lemma \ref{lemma: CC: differentiability}, yields that $\Lambda^*(\mp)\in\iint(\dom(\Phi(\cdot;\mp))$. This ensures that we can use tools of convex analysis on $\Phi(\cdot;\mp)$ to prove Lemma \ref{lemma: cc: connecting general to margin-based}. Unless $\Lambda^*(\mp)\notin\iint(\dom(\Phi(\cdot;\mp))$, showing P1 and P2 of Property \ref{property: CC}  is difficult for arbitrary $\mp$'s. Proving Lemma \ref{lemma: cc: connecting general to margin-based},  which relies on the continuity of $\Phi(\cdot;\mp)$ at $\Lambda^*(\mp)$, is challenging  in this case.



  \subsubsection{Step 2e: proving the contradiction}

 This is the final step of the proof. In this step, we will show that $\vartheta$ can not satisfy Property \ref{property: CC}. Then  Lemma \ref{lemma: cc: connecting general to margin-based} would imply that $\psi$ can not be Fisher consistent, completing the proof.
Since $\vartheta(\cdot;i,j)$ is differentiable at $(0,0)$ by Lemma \ref{lemma: CC: differentiability} for all $i,j\in[2]$, at least one of the following three cases must hold: (i) $\partial \vartheta(0,0;1,1)/\partial x$, $\partial \vartheta(0,0;1,2)/\partial x$, $\partial \vartheta(0,0;2,1)/\partial x$, and $\partial \vartheta(0,0;2,2)/\partial x$ are all zero,
  (ii) at least one number between  $\partial \vartheta(0,0;1,1)/\partial x$ and  $\partial \vartheta(0,0;1,2)/\partial x$ is non-zero, and (iii) at least one number between $\partial \vartheta(0,0;2,1)/\partial x$ and   $\partial \vartheta(0,0;2,2)/\partial x$ is non-zero.   
 We will show that  in none of theses cases, $\vartheta$ satisfies Property \ref{property: CC}.  Since the proofs are similar for case (ii) and (iii), we will show the proof only for case (i) and (ii). 
\paragraph{Case (i): $\partial \vartheta(0,0;i,j)/\partial x$'s are  zero for all  $i,j\in[2]$}
 The following lemma implies that if the  $\partial \vartheta(0,0;i,j)/\partial x$'s are all zero, then $\vartheta(0,0;i,j)$'s can not satisfy Property \ref{property: CC}. 
\begin{lemma}
\label{lemma: CC: varthetas non-zero}
  Under the setup of Theorem \ref{theorem: CC},  if \[\partial\vartheta(0,0;1,1)/\partial x=\partial \vartheta(0,0;1,2)/\partial x=\partial\vartheta(0,0;2,1)/\partial x=\partial\vartheta(0,0;2,2)/\partial x=0,\]
    then the $\vartheta(\cdot;i,j)$'s can not satisfy Property \ref{property: CC}.
\end{lemma}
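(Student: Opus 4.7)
The plan is to exhibit explicit distributions $\mp \in B(\mp^0, \delta)$ for which Property P1 is directly violated under the hypothesis of Lemma~\ref{lemma: CC: varthetas non-zero}. For small $\epsilon > 0$, I would consider $\mp = (1, 1, 1-\epsilon, 1-\epsilon)$, which lies in $B(\mp^0, \delta)$ once $\epsilon$ is small enough. Since $\max(\mp_{11}, \mp_{12}) = 1 > 1-\epsilon = \max(\mp_{21}, \mp_{22})$, Property P1 demands $x^*(\mp) > 0$. The goal is to show, on the contrary, that $\Lambda^*(\mp) = (0, 0, 0)$ for this $\mp$, so $x^*(\mp) = 0$, yielding the contradiction.

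By Lemmas~\ref{lemma: CC: closed} and \ref{lemma: CC: differentiability}, $\Phi(\cdot; \mp)$ is strictly convex, 0-coercive, and differentiable at $(0, 0, 0)$; hence $\Lambda^*(\mp) = (0, 0, 0)$ is equivalent to $\nabla \Phi(0, 0, 0; \mp) = 0$. The $x$-component is immediate from the hypothesis: $\partial_x \Phi(0, 0, 0; \mp) = -\sum_{i, j \in [2]} \mp_{ij}\, \partial_1 \vartheta(0, 0; i, j) = 0$. For the $y$- and $z$-components, the key structural identities are $B_j \mo_{k_2-1} = -\mathbf{e}^{(j)}_{k_2-1}$ and $A_i \mo_{k_1-1} = -\mathbf{e}^{(i)}_{k_1-1}$, which follow from direct block computation with \eqref{inlemma: CC: def of Ai} and \eqref{inlemma: CC: def of Bj}. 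Substituting these into \eqref{intheorem: CC: def: vartheta} and invoking the permutation symmetry of $\eta$, which makes $\partial_{v_j}\eta(0, 0)$ independent of $j$ and $\partial_{u_i}\eta(0, 0)$ independent of $i$, one obtains the cancellations $\partial_2 \vartheta(0, 0; 1, 1) + \partial_2 \vartheta(0, 0; 1, 2) = 0$ and $\partial_2 \vartheta(0, 0; 2, 1) + \partial_2 \vartheta(0, 0; 2, 2) = 0$. Since $\mp_{11} = \mp_{12}$ and $\mp_{21} = \mp_{22}$ by the choice of $\mp$, this forces $\partial_y \Phi(0, 0, 0; \mp) = 0$ and $\partial_z \Phi(0, 0, 0; \mp) = 0$.

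Having verified $\nabla \Phi(0, 0, 0; \mp) = 0$, strict convexity yields $\Lambda^*(\mp) = (0, 0, 0)$, hence $x^*(\mp) = 0$, contradicting Property P1. The main obstacle will be the algebraic bookkeeping: expanding $\vartheta(\cdot; i, j)$ according to \eqref{intheorem: CC: def: vartheta} and verifying the identities for $A_i \mo_{k_1-1}$ and $B_j \mo_{k_2-1}$ from their block-matrix definitions requires some care, but once these simplifications are in hand the derivative cancellations follow cleanly from the permutation symmetry of $\eta$. The argument exploits a deliberate match between the symmetry of the chosen family $\mp = (a, a, b, b)$ and the block structure of $\vartheta$, which collapses two of the three gradient conditions automatically and reduces the remaining one to precisely the assumption that every $\partial_1 \vartheta(0, 0; i, j)$ vanishes.
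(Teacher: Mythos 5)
Your argument is correct and is essentially the same as the paper's. The paper also uses $\mp=(c_1,c_1,c_2,c_2)$ with $c_1>c_2>0$ close to $\mp^0$, invokes Property P1 to demand $x^*(\mp)>0$, and observes that all three components of $\nabla\Phi(0,0,0;\mp)$ vanish — the $x$-component by the hypothesis and the $y$- and $z$-components because $\mp_{11}=\mp_{12}$, $\mp_{21}=\mp_{22}$ combined with the identities $\partial_y\vartheta(0,0;1,1)+\partial_y\vartheta(0,0;1,2)=0$ and $\partial_z\vartheta(0,0;2,1)+\partial_z\vartheta(0,0;2,2)=0$ — and then uses strict convexity to conclude $\Lambda^*(\mp)=(0,0,0)$, a contradiction. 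The only difference is cosmetic: the paper cites those two cancellation identities as equation \eqref{intheorem: cc: partial derivatives sum zero} (derived from $\Lambda^*(\mp^0)=(0,0,0)$ and $\nabla\Phi(0,0,0;\mp^0)=0$), whereas you re-derive them from scratch via $A_i\mo_{k_1-1}=-\mathbf{e}^{(i)}$, $B_j\mo_{k_2-1}=-\mathbf{e}^{(j)}$ and the permutation symmetry of $\eta$; your direct computation is valid and arguably more self-contained, but it proves the same facts.
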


\begin{proof}[Proof of Lemma \ref{lemma: CC: varthetas non-zero}]
   Note that $\partial \Phi(0,0,0;\mp)/\partial x=0$ for all $\mp\in\RR^4_{>0}$ if
    \[\partial\vartheta(0,0;1,1)/\partial x=\partial \vartheta(0,0;1,2)/\partial x=\partial\vartheta(0,0;2,1)/\partial x=\partial\vartheta(0,0;2,2)/\partial x=0.\]
    Consider $\mp\in\RR^4_{>0}$ of the form $(c_1,c_1,c_2,c_2)$ where $c_1>c_2>0$ and $\mp$. If  $\vartheta$ satisfies Property \ref{property: CC}, then $x^*(\mp)>0$ for all such $\mp$ provided $c_1$ and $c_2$ are sufficiently close to one.  However, for  $\mp$'s of such form,  \eqref{intheorem: cc: partial derivatives sum zero} implies that $\partial \Phi(0,0,0;\mp)/\partial y=0$ and  $\partial \Phi(0,0,0;\mp)/\partial z=0$.
    Therefore, $\grad\Phi(0,0,0;\mp)=0$. Since $\Phi(\cdot;\mp)$ is strictly convex for all $\mp\in\RR^4_{>0}$, $\grad\Phi(0,0,0;\mp)=0$ implies $(0,0,0)$ is the unique minimizer of $\Phi(\cdot;\mp)$ \citep[cf. Theorem 2.2.1, 
 pp. 177,][]{hiriart}. Therefore, for our chosen $\mp$, $\Lambda^*(\mp)=(0,0,0)$ or $x^*(\mp)=0$ although $\max(\mp_{11},\mp_{12})>\max(\mp_{21},\mp_{22})$. Given any $\delta>0$, we can find $c_1>c_2>0$ such that $(c_1,c_1,c_2,c_2)\in B(\mp^0,\delta)$.  Therefore,  Property \ref{property: CC} can not hold.
\end{proof}
\paragraph{Case (ii):    $\partial \vartheta(0,0;1,1)/\partial x$ and/or  $\partial \vartheta(0,0;1,2)/\partial x$ is non-zero}
Our first task is to choose a $\delta_0>0$. The choice of $\delta$ is explained below. 
Since the $\vartheta(\cdot;i,j)$'s are thrice continuously differentiable at (0,0) by Lemma \ref{lemma: CC: differentiability}, 
there exists $\epsilon_0>0$  so  that
\begin{align}
    \label{intheorem: CC: third derivative bound}
    \sup_{|y|<\epsilon_0}\bl \diffp[2,1]{\vartheta(0,y;i,j)}{y,x}\bl<\bl\diffp[2,1]{\vartheta(0,0;i,j)}{y,x}\bl+1
\end{align}
 for all $i,j\in[2]$. By Lemma \ref{lemma: CC: continuity of Lambda}, there exists  $\delta>0$ such that  for all $\mp\in B(\mp^0,\delta)$, it holds that $\|\Lambda^*(\mp)-\Lambda^*(\mp^0)\|_2\leq \epsilon_0$. We choose our $\delta_0>0$ to be smaller than   this $\delta$, one, and also the $\delta$'s in  Lemmas \ref{lemma: CC: hessian cont.} and \ref{lemma: CC: Lipschitz}. If Property \ref{property: CC} holds, we will take $\delta_0$ to be smaller than the $\delta$ mentioned in Property \ref{property: CC}.  In this proof, we will consider $\mp$'s of the form $\mp=(1+\Delta_1,1+\Delta_2,1,1)$ where $\Delta_1$ and $\Delta_2$ are reals satisfying $\Delta_1^2+\Delta_2^2<\delta_0^2$, which ensures  $\mp\in B(\mp^0,\delta_0)$. We will choose $\Delta_1$ and $\Delta_2$ so that $\max(1+\Delta_1,1+\Delta_2)>1$. If Property \ref{property: CC} holds, and  $\delta_0$ is chosen as mentioned above, our  $\mp$'s will satisfy  $x^*(\mp)>0$ by P1 of Property \ref{property: CC}.



 \begin{lemma}
 \label{lemma: CC: final key inequality}
 Suppose $\vartheta$ satisfies Property \ref{property: CC} and $\delta_0$ is chosen as described above. If $x^*(\mp)>0$ for some $\mp=(1+\Delta_1,1+\Delta_2,1,1)\in B(\mp^0,\delta_0)$, then 
\begin{align}
\label{intheorem: cc: final: key inequality}
  \MoveEqLeft  \Delta_1\slb \pdv{\vartheta(0,0;1,1)}{x}+y^*(\mp)\pdv[2]{\vartheta(0,0;1,1)}{y}{x}+C\Delta_1\srb\nn\\
  &\ +\Delta_2\slb \pdv{\vartheta(0,0;1,2)}{x}+ y^*(\mp)\pdv[2]{\vartheta(0,0;1,2)}{y}{x}+C\Delta_2\srb  > 0
\end{align}
for some constant $C> 0$ depending only on  $\vartheta$.
 \end{lemma}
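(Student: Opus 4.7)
Assume $\vartheta$ satisfies Property~\ref{property: CC}, fix $\mp=(1+\Delta_1,1+\Delta_2,1,1)\in B(\mp^0,\delta_0)$, and assume $x^*(\mp)>0$. My plan is to combine the first-order optimality-type inequality derived in Step~2c with a second-order Taylor expansion of $y\mapsto\partial_x\vartheta(0,y;i,j)$ at $0$, using two structural symmetry identities specific to the PERM template $\eta$ to cancel otherwise problematic stray terms. I will begin from \eqref{intheorem: cc: inequality}, which under the hypothesis $x^*(\mp)>0$ is equivalent to $\partial_x\Phi(0,y^*(\mp),z^*(\mp);\mp)<0$. Substituting the explicit formula \eqref{intheorem: derivatives} and the specific form of $\mp$ rewrites this as
\[
(1+\Delta_1)\,\partial_x\vartheta(0,y^*;1,1)+(1+\Delta_2)\,\partial_x\vartheta(0,y^*;1,2)+\partial_x\vartheta(0,z^*;2,1)+\partial_x\vartheta(0,z^*;2,2)>0.
\]

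\textbf{Structural symmetry identities.} Direct inspection of the matrices defined in \eqref{inlemma: CC: def of Ai}--\eqref{inlemma: CC: def of Bj} yields $A_i\mo_{k_1-1}=-\mathbf e^{(i)}_{k_1-1}$ for each $i\in[k_1-1]$ and $B_j\mo_{k_2-1}=-\mathbf e^{(j)}_{k_2-1}$ for each $j\in[k_2-1]$, where $\mathbf e^{(i)}_k$ is the $i$-th standard basis vector in $\RR^k$. Applying the chain rule to the definition \eqref{intheorem: CC: def: vartheta} and using $\sum_{i=1}^{k_1-1}\mathbf e^{(i)}_{k_1-1}=\mo_{k_1-1}$, $\sum_{j=1}^{k_2-1}\mathbf e^{(j)}_{k_2-1}=\mo_{k_2-1}$, I will verify the first-order identity
\[
\partial_x\vartheta(0,y;2,k)=-\partial_x\vartheta(0,y;1,k),\qquad k\in\{1,2\},
\]
for every $y$ at which the derivatives exist, and the second-order identity at the origin
\[
\partial^2_{y,x}\vartheta(0,0;1,1)+\partial^2_{y,x}\vartheta(0,0;1,2)=0.
\]
Both reduce, after expressing derivatives via the gradient and cross-Hessian of $\eta$ at $(0,0)$, to the column-sum formulas displayed above.

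\textbf{Taylor expansion and conclusion.} Writing $F_k(y):=\partial_x\vartheta(0,y;1,k)$ and invoking the first identity, the inequality rewrites as
\[
\Delta_1 F_1(y^*)+\Delta_2 F_2(y^*)+[F_1(y^*)-F_1(z^*)]+[F_2(y^*)-F_2(z^*)]>0.
\]
By Lemma~\ref{lemma: CC: hessian cont.} and Lemma~\ref{lemma: CC: Lipschitz}, shrinking $\delta_0$ if needed guarantees $y^*(\mp),z^*(\mp)\in U_0$ and $|y^*(\mp)|,|z^*(\mp)|\le C\|\mp-\mp^0\|_2$, so the third-derivative bound \eqref{intheorem: CC: third derivative bound} applies. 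A second-order Taylor expansion of $F_k$ at $0$ then yields
\[
F_k(y^*)=F_k(0)+y^*F_k'(0)+R_k(y^*),\qquad F_k(y^*)-F_k(z^*)=(y^*-z^*)F_k'(0)+\widetilde R_k,
\]
with $|R_k(y^*)|\le C_1(y^*)^2$ and $|\widetilde R_k|\le C_1[(y^*)^2+(z^*)^2]$ for a constant $C_1$ depending only on $\vartheta$. The second symmetry identity forces $(y^*-z^*)[F_1'(0)+F_2'(0)]=0$, so the remaining expression is exactly the target in \eqref{intheorem: cc: final: key inequality}, minus the $C\Delta_k^2$ terms, plus an error of absolute value at most $C(\Delta_1^2+\Delta_2^2)$ after invoking the Lipschitz bounds on $y^*,z^*$ and $|\Delta_k|\le\delta_0\le 1$. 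Taking $C>0$ to dominate this error gives \eqref{intheorem: cc: final: key inequality}.

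\textbf{Main obstacle.} The crucial step is the second-order identity $\partial^2_{y,x}\vartheta(0,0;1,1)+\partial^2_{y,x}\vartheta(0,0;1,2)=0$: without it, the expansion would leave a residual term $(y^*-z^*)[F_1'(0)+F_2'(0)]$ which, by Lemma~\ref{lemma: CC: Lipschitz}, is only of order $|\Delta_1|+|\Delta_2|$. Such a linear-in-$\Delta$ term is too large to be absorbed into the quadratic remainder $C(\Delta_1^2+\Delta_2^2)$, and the stated inequality would fail. Establishing this identity requires careful book-keeping of the cross-Hessian of $\eta$ together with the column-sum relations derived from $A_i\mo=-\mathbf e^{(i)}$ and $B_j\mo=-\mathbf e^{(j)}$.
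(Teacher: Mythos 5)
Your proof is correct, and it takes a genuinely different (and arguably cleaner) route than the paper's. The paper keeps the full four-term expression from \eqref{intheorem: derivatives}, Taylor expands each term separately, and shows that the resulting cross terms $y^*M_{12}(0,0)$ and $z^*N_{12}(0,0)$ vanish by invoking Lemma~\ref{lemma: CC: M12=0}, which in turn is proved by an indirect contradiction argument using Lemma~\ref{lemma: CC: maximizer of loss half}. You instead observe that $A_i\mo_{k_1-1}=-\mathbf e^{(i)}_{k_1-1}$ and $B_j\mo_{k_2-1}=-\mathbf e^{(j)}_{k_2-1}$ give the first-order identity $\partial_x\vartheta(0,y;2,k)=-\partial_x\vartheta(0,y;1,k)$ for all $y$ in the differentiability domain (because at $x=0$ the first-block arguments of all the $\eta$-terms collapse to $\mz_{k_1-1}$, so only the chain-rule directions $A_i\mo_{k_1-1}$ differ, and they sum to $-\mo_{k_1-1}$). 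This lets you pair the $z^*$-terms with the $y^*$-terms and reduce the optimality inequality to the clean form $\Delta_1 F_1(y^*)+\Delta_2 F_2(y^*)+\sum_k[F_k(y^*)-F_k(z^*)]>0$, which needs only the single cancellation $F_1'(0)+F_2'(0)=M_{12}(0,0)=0$, and you establish that directly: the cross-Hessian contribution to $\partial^2_{yx}\vartheta(0,0;1,1)$ is the full sum $\sum_{a,b}\eta_{a,(k_1-1)+b}(\mz)$, while the chain rule through $B_j\mo=-\mathbf e^{(j)}$ makes $\partial^2_{yx}\vartheta(0,0;1,2)$ exactly the negative of that sum. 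So what you prove by algebra the paper proves by contradiction through an auxiliary lemma; your route sidesteps Lemma~\ref{lemma: CC: M12=0} and Lemma~\ref{lemma: CC: maximizer of loss half} entirely for the purposes of this lemma. One small imprecision in the write-up: you say both identities "reduce to column-sum formulas at $(0,0)$," but the first-order identity is claimed and used for all $y\in U_0$, not just $y=0$; the mechanism there is not evaluation at the origin but the collapse of the first-block arguments at $x=0$, so be explicit about that distinction when writing out the verification. The remainder bookkeeping via Lemma~\ref{lemma: CC: Lipschitz} and $|\Delta_k|\le\delta_0\le 1$ is correct and matches the paper.
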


\begin{proof}[Proof of Lemma \ref{lemma: CC: final key inequality}]
Because $\delta_0$ is smaller that the $\delta$ in Lemma \ref{lemma: CC: hessian cont.}, $\Lambda^*(\mp)\in U_0^3$ for all $\mp\in B(\mp^0,\delta_0)$. 
 Since $\Lambda^*(\mp)\in U_0^3$, $(0,y^*(\mp),z^*(\mp))\in U_0^3$. Therefore $\Phi(\cdot;\mp)$ is thrice continuously differentiable at $(0,y^*(\mp),z^*(\mp))$. 
Therefore using \eqref{intheorem: derivatives} we obtain that
 \begin{align}
 \label{intheorem: cc: final: 1st expansion}
-\pdv{\Phi(0,y^*(\mp),z^*(\mp);\mp)}{x}= &\   \mp_{11}\pdv{\vartheta(0,y^*(\mp);1,1)}{x}+\mp_{12}\pdv{\vartheta(0,y^*(\mp); 1,2)}{x}\nn\\
&\ +\mp_{21}\pdv{\vartheta(0,z^*(\mp);2,1)}{x}+\mp_{22}\pdv{\vartheta(0,z^*(\mp); 2,2)}{x}.
 \end{align}
 Since $\Lambda^*(\mp)\in U_0^3$, $(0,y^*(\mp))\in U_0^2$. Therefore, the $\vartheta(\cdot;i,j)$'s are thrice continuously differentiable at $(0,y^*(\mp))$. Since the line joining $y^*(\mp)$ and $0$ is in $U_0$, we can take a second order Taylor series expansion of  $y\mapsto \pdv{\vartheta(0,y;1,1)}{x}$  at $y^*(\mp)$ around zero, which yields
 \[\pdv{\vartheta(0,y^*(\mp);1,1)}{x}=\pdv{\vartheta(0,0;1,1)}{x}+y^*(\mp)\pdv[2]{\vartheta(0,0;1,1)}{y}{x}+\frac{y^*(\mp)^2}{2}\diffp[2,1]{\vartheta(0,\xi_1;1,1)}{y,x},\]
 where $\xi_1$ is between $0$ and $y^*(\mp)$. 
 We can have a similar Taylor series expansion for all the other terms, leading to
 \begin{align*}
   -\pdv{\Phi(0,y^*(\mp),z^*(\mp);\mp)}{x}=&\ \pdv{\Phi(0;\mp)}{x}+ y^*(\mp) \slb \Delta_1 \pdv[2]{\vartheta(0,0;1,1)}{y}{x}+ \Delta_2\pdv[2]{\vartheta(0,0;1,2)}{y}{x}\srb\\
    &\  +y^*(\mp)M_{12}(0,0)+z^*(\mp)N_{12}(0,0)\\
    &\   +(1+\Delta_1)\frac{y^*(\mp)^2}{2}\diffp[2,1]{\vartheta(0,\xi_1;1,1)}{y,x}\\
    &\ +(1+\Delta_2)\frac{y^*(\mp)^2}{2}\diffp[2,1]{\vartheta(0,\xi_2;1,2)}{y,x}\\
    &\ +\frac{z^*(\mp)^2}{2}\diffp[2,1]{\vartheta(0,\xi_3;2,1)}{y,x} +\frac{z^*(\mp)^2}{2}\diffp[2,1]{\vartheta(0,\xi_4;1,1)}{y,x}.
 \end{align*}
 where $\xi_2$ is between $0$ and $y^*(\mp)$ and $\xi_3$, $\xi_4$ is between $0$ and $z^*(\mp)$, and $M_{12}$ and $N_{12}$ are as defined in \eqref{def: cc: M12 and N12}.
 Note that $M_{12}(0,0)$ and $N_{12}(0,0)$ vanish by Lemma \ref{lemma: CC: M12=0}. Also, 
 \[\pdv{\Phi(0;\mp)}{x}=\pdv{\Phi(0;\mp^0)}{x}+\Delta_1\pdv{\vartheta(0,0;1,1)}{x}+\Delta_2\pdv{\vartheta(0,0;1,2)}{x},\]
whose first term is  zero by \eqref{intheorem: cc: partial derivatives sum zero}. Note that \eqref{intheorem: cc: inequality} implies when $x^*(\mp)>0$, we must have $-\pdv{\Phi(0,y^*,z^*;\mp)}{x}>0$. Therefore, \eqref{intheorem: cc: final: 1st expansion} yields that 
 \begin{align}
 \label{intheorem: cc: main proof big expansion inequality}
   \MoveEqLeft   \Delta_1\pdv{\vartheta(0,0;1,1)}{x}+\Delta_2\pdv{\vartheta(0,0;1,2)}{x} +y^*(\mp) \slb \Delta_1 \pdv[2]{\vartheta(0,0;1,1)}{y}{x}+ \Delta_2\pdv[2]{\vartheta(0,0;1,2)}{y}{x}\srb\nn\\
    &\ +(1+\Delta_1)\frac{y^*(\mp)^2}{2}\diffp[2,1]{\vartheta(0,\xi_1;1,1)}{y,x}+(1+\Delta_2)\frac{y^*(\mp)^2}{2}\diffp[2,1]{\vartheta(0,\xi_2;1,2)}{y,x}\nn\\
    &\ +\frac{z^*(\mp)^2}{2}\diffp[2,1]{\vartheta(0,\xi_3;2,1)}{y,x} +\frac{z^*(\mp)^2}{2}\diffp[2,1]{\vartheta(0,\xi_4;2,2)}{y,x}>0.
 \end{align}
 We have chosen $\delta_0$ so that $\|\Lambda^*(\mp)\|_2\leq \epsilon_0$ for all $\mp\in B(\mp^0,\delta)$ where  $\epsilon_0$ is as in \eqref{intheorem: CC: third derivative bound}. Since $\|\Lambda^*(\mp)\|_2<\epsilon_0$, $|y^*(\mp)|$ and $|z^*(\mp)|$ are smaller than $\epsilon_0$. Therefore, $\xi_1$, $\xi_2$, $\xi_3$ and $\xi_4$ are smaller than $\e_0$ in absolute value. Thus \eqref{intheorem: CC: third derivative bound} yields 
 \begin{gather*}
    \bl \diffp[2,1]{\vartheta(0,\xi_1;1,1)}{y,x}\bl<\bl\diffp[2,1]{\vartheta(0,0;1,1)}{y,x}\bl+1.\\
     \bl\diffp[2,1]{\vartheta(0,\xi_2;1,2)}{y,x}\bl<\bl\diffp[2,1]{\vartheta(0,0;1,2)}{y,x}\bl+1.\\
     \bl\diffp[2,1]{\vartheta(0,\xi_3;2,1)}{y,x}\bl<\bl\diffp[2,1]{\vartheta(0,0;2,1)}{y,x}\bl+1.\\
     \bl\diffp[2,1]{\vartheta(0,\xi_4;2,2)}{y,x}\bl<\bl\diffp[2,1]{\vartheta(0,0;2,2)}{y,x}\bl+1.
 \end{gather*}
 In addition, since $\delta_0$is chosen to be smaller than one, $|\Delta_1|,|\Delta_2|<1$. 
 Thus
 \begin{align*}
     \MoveEqLeft (1+\Delta_1)\frac{y^*(\mp)^2}{2}\diffp[2,1]{\vartheta(0,\xi_1;1,1)}{y,x}+(1+\Delta_2)\frac{y^*(\mp)^2}{2}\diffp[2,1]{\vartheta(0,\xi_2;1,2)}{y,x}\\
    &\ +\frac{z^*(\mp)^2}{2}\diffp[2,1]{\vartheta(0,\xi_3;2,1)}{y,x} +\frac{z^*(\mp)^2}{2}\diffp[2,1]{\vartheta(0,\xi_4;2,2)}{y,x}\leq C(y^*(\mp)^2+z^*(\mp)^2)
 \end{align*}
 where the constant $C>0$ depends only on $\vartheta(\cdot;i,j)$'s.
However, by Lemma \ref{lemma: CC: Lipschitz}, $y^*(\mp)^2$ and $z^*(\mp)^2$ are bounded by $C\|\mp-\mp^0\|_2^2= C(\Delta_1^2+\Delta_2^2)$ where $C>0$ is a constant depending on the $\vartheta(\cdot;i,j)$'s. Therefore,
\begin{align*}
     \MoveEqLeft (1+\Delta_1)\frac{y^*(\mp)^2}{2}\diffp[2,1]{\vartheta(0,\xi_1;1,1)}{y,x}+(1+\Delta_2)\frac{y^*(\mp)^2}{2}\diffp[2,1]{\vartheta(0,\xi_2;1,2)}{y,x}\\
    &\ +\frac{z^*(\mp)^2}{2}\diffp[2,1]{\vartheta(0,\xi_3;2,1)}{y,x} +\frac{z^*(\mp)^2}{2}\diffp[2,1]{\vartheta(0,\xi_4;2,2)}{y,x}\leq C(\Delta_1^2+\Delta_2^2).
 \end{align*}
The above, combined with \eqref{intheorem: cc: main proof big expansion inequality} implies \eqref{intheorem: cc: final: key inequality}.
\end{proof}



Now we are ready to show that under case (ii), $\vartheta$ does not satisfy Property \ref{property: CC}. We will divide case (ii) in two subcases:  (a) either $\partial{\vartheta(0,0;1,1)}/\partial{x}< 0$  or $\partial{\vartheta(0,0;1,2)}/\partial{x}< 0$ and (b) either $\partial{\vartheta(0,0;1,1)}/\partial{x}> 0$ or  $\partial{\vartheta(0,0;1,2)}/\partial{x}> 0$. If (ii) holds then either (ii) a or (ii) b needs to hold. There can be overlap between cases (a) and (b), but it will not affect the proof.  We will show that, if  $\vartheta$ satisfies Property \ref{property: CC}, then we will encounter contradiction  in either cases. 

\paragraph{Case (ii) a: $\partial{\vartheta(0,0;1,1)}/\partial{x}< 0$  or $\partial{\vartheta(0,0;1,2)}/\partial{x}< 0$ } 
First consider the case $\pdv{\vartheta(0,0;1,1)}{x}< 0$. If possible, suppose Property \ref{property: CC} holds. 
Consider $\Delta_1=1/k$ and $\Delta_2=1/k^2$, where $k\in\NN$ is a large number so that the resulting $\mp_k=(1+1/k,1+1/k^2,1,1)$ is in  $B(\mp^0,\delta_0)$. Observe that $\mp=\mp_k$ satisfies $\max(\mp_{11},\mp_{12})>\max(\mp_{21},\mp_{22})$. Therefore Property \ref{property: CC}  implies $x^*(\mp_k)>0$ for all $k\in\NN$. Hence, \eqref{intheorem: cc: final: key inequality} holds by Lemma \ref{lemma: CC: final key inequality}. For this $\Delta_1$ and $\Delta_2$,  \eqref{intheorem: cc: final: key inequality} reduces to
\begin{align*}
  \MoveEqLeft  \frac{1}{k}\slb \pdv{\vartheta(0,0;1,1)}{x}+y^*(\mp_k)\pdv[2]{\vartheta(0,0;1,1)}{y}{x}+C\frac{1}{k}\srb\\
  &\ +\frac{1}{k^2}\slb \pdv{\vartheta(0,0;1,2)}{x}+ y^*(\mp_k)\pdv[2]{\vartheta(0,0;1,2)}{y}{x}+C\frac{1}{k^2}\srb  > 0
\end{align*}
 for all  $\mp_k$ of the form $(1+1/k,1+1/k^2,1,1)$ for all sufficiently large  $k\in\NN$.
Multiplying both sides with $k$, we get 
\begin{align*}
  \MoveEqLeft   \pdv{\vartheta(0,0;1,1)}{x}+y^*(\mp_k)\pdv[2]{\vartheta(0,0;1,1)}{y}{x}+C\frac{1}{k}\\
  &\ +\frac{1}{k}\slb \pdv{\vartheta(0,0;1,2)}{x}+ y^*(\mp_k)\pdv[2]{\vartheta(0,0;1,2)}{y}{x}+C\frac{1}{k^2}\srb  > 0.
\end{align*}
Letting $k\to\infty$, and observing Lemma \ref{lemma: CC: continuity of Lambda} implies  $y^*(\mp_k)\to 0$ as $k\to\infty$ we obtain
\[ \pdv{\vartheta(0,0;1,1)}{x}\geq 0,\]
which leads to a contradiction. 
Therefore, $\vartheta$ can not satisfy Property \ref{property: CC} if \\$\partial{\vartheta(0,0;1,1)}/\partial{x}< 0$.
Similarly, we can show that $\vartheta$ can not satisfy Property \ref{property: CC} if $\partial{\vartheta(0,0;1,2)}/\partial{x}< 0$. by taking $\Delta_1=1/k^2$ and $\Delta_2=1/k$. Therefore, we have showed that if either $ \pdv{\vartheta(0,0;1,1)}{x}< 0$ or  $\pdv{\vartheta(0,0;1,2)}{x}< 0$, Property \ref{property: CC} can not hold.


\paragraph{Case ii(b): $\pdv{\vartheta(0,0;1,1)}{x}> 0$ or $\pdv{\vartheta(0,0;1,2)}{x}> 0$}  First consider the case $\pdv{\vartheta(0,0;1,1)}{x}> 0$. If possible, suppose $\vartheta$ satisfies Property \ref{property: CC}. Take $\Delta_1=-1/k$ and $\Delta_2=1/k^2$. Let us refer to the resulting sequence $(1-1/k,1+1/k^2,1,1)$ by $\mp_k$.  Note that $\mp_k\in B(\mp^0,\delta_0)$ if $k$ is sufficiently large and $x^*(\mp_k)>0$ for all $k\in\NN$ by Property \ref{property: CC}.  Then \eqref{intheorem: cc: final: key inequality}  holds by Lemma \ref{lemma: CC: final key inequality}.  
In this case, \eqref{intheorem: cc: final: key inequality} reduces to 
\begin{align*}
  \MoveEqLeft  -\frac{1}{k}\slb \pdv{\vartheta(0,0;1,1)}{x}+y^*(\mp_k)\pdv[2]{\vartheta(0,0;1,1)}{y}{x}-C\frac{1}{k}\srb\\
  &\ +\frac{1}{k^2}\slb \pdv{\vartheta(0,0;1,2)}{x}+ y^*(\mp_k)\pdv[2]{\vartheta(0,0;1,2)}{y}{x}+C\frac{1}{k^2}\srb  > 0
\end{align*}
for all sufficiently large $k$.
Multiplying both sides by $k$, we get
\begin{align*}
  \MoveEqLeft  -\slb \pdv{\vartheta(0,0;1,1)}{x}+y^*(\mp_k)\pdv[2]{\vartheta(0,0;1,1)}{y}{x}-C\frac{1}{k}\srb\\
  &\ +\frac{1}{k}\slb \pdv{\vartheta(0,0;1,2)}{x}+ y^*(\mp_k)\pdv[2]{\vartheta(0,0;1,2)}{y}{x}+C\frac{1}{k^2}\srb  > 0.
\end{align*}
Letting $k\to\infty$ and observing $y^*(\mp_k)\to 0$ by Lemma \ref{lemma: CC: continuity of Lambda}, we obtain that
\begin{align*}
  - \pdv{\vartheta(0,0;1,1)}{x}\geq 0,
\end{align*}
which is a contradiction. Therefore, $\vartheta$ can not satisfy Property \ref{property: CC} in this case. The proof for the  $\partial\vartheta(0,0;1,2)/\partial x> 0$ follows similarly by taking $\Delta_1=1/k^2$ and $\Delta_2=-1/k$ and letting $k\to\infty$.

\subsection{Proofs of main   lemmas}
\label{sec: cc: proof of main lemmas}
\subsubsection{Proof of Lemma \ref{lemma: CC: 0 in int}}
\label{secpf: of lemma  CC: 0 in int}
\begin{proof}[Proof of Lemma \ref{lemma: CC: 0 in int}]
In this proof, we will work with $-\eta$ because it is convex.
Because $\cap_{i=1}^{k_1}\cap_{j=1}^{k_2}\iint(\dom(\psi(\cdot;i,j))\neq \emptyset$, there exist $\mx\in\RR^{k_1}$ and $\mw\in\RR^{k_2}$ so that
 $(\mx,\mw)\in\iint(\dom(\psi(\cdot;i,j))$ for all $i\in[k_1]$ and $j\in[k_2]$. 
Let us fix $i\in[k_1]$. Our first task is to show that $\mz_{k_1+k_2-2}\in\iint(\dom(-\eta))$. 
Let us denote
  \[\dmxi=(\mx_i-\mx_1,\ldots,\mx_i-\mx_{i-1},\mx_i-\mx_{i+1},\ldots,\mx_i-\mx_{k_1}).\]
    Then for all $j\in[k_2]$, 
    \begin{gather*}
        -\eta(\dmxi, \mw_{j}-\mw_{1},\ldots,\mw_{j}-\mw_{k_2})<\infty
    \end{gather*}
    where the term $\mw_{i}-\mw_{i}$ is omitted.
 Since $\eta$ is blockwise permutation symmetric by \eqref{def: cc: permutation equivariance 2 stage}, for all $j\in[2:k_2]$, 
     \begin{align*}
      \MoveEqLeft  -\eta(\dmxi, \mw_{j}-\mw_{j+1},\mw_{j}-\mw_{j+2}\ldots,\mw_{j}-\mw_{k_2},\mw_{j}-\mw_{1},\ldots,\mw_{j}-\mw_{j-1})\\
        =&\  -\eta(\dmxi, \mw_{j}-\mw_{1},\ldots,\mw_{j}-\mw_{k_2})<\infty.
    \end{align*}
   For the sake of algebra, we will introduce $\mw_j$ for $j>k_2$. We  extend the definition of $\mw_j$ to $\NN$ letting $\mw_{j}=\mw_{j \mod k_2}$ for all $j\in\NN$. Therefore, we can write $\mw_1=\mw_{k_2+1}$, $\ldots$, $\mw_{j-1}=\mw_{j+k_2-1}$, and so on. According to our notation
  \begin{align*}
      \MoveEqLeft  \eta(\dmxi, \mw_{j}-\mw_{j+1},\ldots,\mw_{j}-\mw_{k_2},\mw_{j}-\mw_{1},\ldots,\mw_{j}-\mw_{j-1})\\
        =&\  \eta(\dmxi, \mw_{j}-\mw_{j+1},\ldots,\mw_{j}-\mw_{j+k_2-1})
    \end{align*}
for all $j\in[k_2]$. Therefore, using this notation, 
\begin{align}
\label{inlemma: CC: dom lemma: finity}
 -\eta(\dmxi, \mw_{j}-\mw_{j+1},\ldots,\mw_{j}-\mw_{j+k_2-1})<\infty
\end{align}
for all $j\in[k_2]$.
On the other hand, since $-\eta$ is convex, Jensen inequality implies that
\begin{align}
\label{inlemma: CC: 0 in dom: jensen}
\MoveEqLeft -\frac{1}{k_2}\sum_{j\in[k_2]} \eta(\dmxi, \mw_{j}-\mw_{j+1},\ldots,\mw_{j}-\mw_{j+k_2-1})\nn\\
\geq &\ -\eta\lb \dmxi, \frac{\sum_{j\in[k_2]}(\mw_i-\mw_{j+1})}{k_2},  \ldots, \frac{\sum_{j\in[k_2]}(\mw_i-\mw_{j+k_2-1})}{k_2}\rb
\end{align}
    where $\mw_j=\mw_{j\mod 2}$ as per our notation. Moreover, 
    \begin{align*}
   \sum_{j\in[k_2]}(\mw_j-\mw_{j+1})=\sum_{j\in[k_2]}\mw_j-\sum_{j\in[k_2]}\mw_{j+1}=\sum_{j\in[k_2]}\mw_j-\sum_{j=2}^{k_2}\mw_{j}-\mw_{k_2+1}=0
    \end{align*}
because $\mw_{k_2+1}=\mw_1$.
    Similarly,
 \begin{align*}
     \sum_{j\in[k_2]}(\mw_j-\mw_{j+k_2-1})= \sum_{j\in[k_2]}\mw_j-\sum_{j\in[k_2]}\mw_{j+k_2-1}=\sum_{j\in[k_2]}\mw_j-\sum_{j=2}^{k_2}\mw_{j+k_2-1}-\mw_{k_2},
 \end{align*}  
which vanishes  because $\sum_{j=2}^{k_2}\mw_{j+k_2-1}=\sum_{j=2}^{k_2}\mw_{j-1}=\sum_{j=1}^{k_2-1}\mw_{j}$. In general, for any  $1\leq j\leq k_2-1$,  we obtain that
\begin{align*}
&\  \sum_{j\in[k_2]}\mw_{j+j}=\sum_{j=1}^{k_2-j}\mw_{j+j}+\sum_{j=k_2-j+1}^{k_2}\mw_{j+j}=\sum_{j=j+1}^{k_2}\mw_j+\sum_{j=k_2+1}^{k_2+j}\mw_j\\
=&\ \sum_{j=j+1}^{k_2}\mw_j+\sum_{j=1}^{j}\mw_j=\sum_{j\in[k_2]}\mw_j.  \end{align*}
Therefore, $  \sum_{j\in[k_2]}(\mw_j-\mw_{j+j})=0$ for any  $1\leq j\leq k_2-1$, which, combined with \eqref{inlemma: CC: dom lemma: finity} and \eqref{inlemma: CC: 0 in dom: jensen}, implies that
$-\eta(\dmxi,\mz_{k_2-1})<\infty$.
Since $i$ was arbitrary, we obtain $-\eta(\dmxi,\mz_{k_2-1})<\infty$ for each $i\in[k_1]$.

For each $i\in[k_1]$, the block-wise permutation-symmetry of $\eta$ as in \eqref{def: cc: permutation equivariance 2 stage} implies that 
\begin{align*}
    \eta(\dmxi,\mz_{k_2-1})=\eta(\mx_i-\mx_{i+1},\ldots,\mx_i-\mx_{i+k_1-1},\mz_{k_2-1}),
\end{align*}
where for $i>k_1$, $\mx_i=\mx_{i \mod k_1}$. Since 
\[-\eta(\mx_i-\mx_{i+1},\ldots,\mx_i-\mx_{i+k_1-1},\mz_{k_2-1})<\infty\]
for all $i\in[k_1]$, by Jensen's inequality, it follows that
\begin{align*}
\infty> &\ -\frac{1}{k_2}\sum\limits_{i\in[k_2]} \eta( \mx_i-\mx_{i+1},\ldots,\mx_i-\mx_{i+k_1-1},\mzk)\\
\geq &\ -\eta\llb \frac{\sum\limits_{i\in[k_1]}(\mx_i-\mx_{i+1})}{k_1}, \frac{\sum\limits_{i\in[k_1]}(\mx_i-\mx_{i+2})}{k_1}, \ldots, \frac{\sum\limits_{i\in[k_1]}(\mx_i-\mx_{i+k_1-1})}{k_1},\mzk\rrb . 
\end{align*}
Proceeding as in the case for $\mw$, we can prove that the right hand side of the above inequality equals $-\eta(\mz_{k_1-1},\mz_{k_2-2})$. Thus we have shown that $\mz_{k_1+k_2-2}\in\dom(-\eta)$.

It remains to prove $\mz_{k_1+k_2-2}\in\iint(\dom(-\eta))$, which will be proved by contradiction. 
If possible, suppose $\mz_{k_1+k_2-2}\notin\iint(\dom(-\eta))$. Then we must have $\mz_{k_1+k_2-2}\in\partial(\dom(-\eta))$.  We claim that, in that case, there exists an open orthant in $\RR^{k_1+k_2-2}$, which we will soon define, whose intersection with $\dom(-\eta)$ is the nullset.  For any $k\in\NN$ and $\bse\in\{\pm 1\}^{k}$, we define the open orthant $\mathcal{Q}(\bse)$ in $\RR^k$  as 
\[\mathcal{Q}(\bse)=\{\mx\in\RR^k: \mx_i\epsilon_i>0\text{ for all }i\in[k]\}.\]
Note that an open orthant in $\RR^{k_1+k_2-2}$ is of the form $\Q(\bse)\times \Q(\bse')$ where $\Q(\bse)$ is an open orthant in $\RR^{k_1-1}$ corresponding to some $\bse\in\{\pm 1\}^{k_1-1}$ and $\Q(\bse')$ is an open orthant in $\RR^{k_2-1}$ corresponding to some $\bse'\in\{\pm 1\}^{k_2-1}$.
\begin{fact}
\label{claim: CC: orthant}
If $\eta$ is concave with $\mz_{k_1+k_2-2}\in\partial(\dom(-\eta))$, then there exist $\bse,\bse'\in\{\pm 1\}^{k}$  so that the open orthant  $\Q(\bse)\times \Q(\bse')$ satisfies $\Q(\bse)\times \Q(\bse')\cap\dom(-\eta)=\emptyset$. 
\end{fact}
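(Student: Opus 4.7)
The plan is to produce the desired product orthant directly from a supporting hyperplane of $D:=\dom(-\eta)$ at $\mz:=\mz_{k_1+k_2-2}$, using only the convexity of $D$ and the hypothesis $\mz\in\partial D$; no invocation of the permutation symmetry of $\eta$ is needed at this step. Since $-\eta$ is convex, $D$ is a convex subset of $\RR^{k_1+k_2-2}$ and $\mz\notin\iint(D)$. The supporting hyperplane theorem then yields a nonzero vector $\mathbf{a}=(\mathbf{a}^{(1)},\mathbf{a}^{(2)})\in\RR^{k_1-1}\times\RR^{k_2-1}$ with $\mathbf{a}^T\mx\leq 0$ for every $\mx\in D$. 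In the degenerate case $\iint(D)=\emptyset$, I would instead take $\mathbf{a}$ to be any unit normal to the affine hull of $D$; since $\mz\in\overline{D}$, this hull passes through $\mz=0$ and the same inequality (in fact, equality) holds on $D$.

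Given such $\mathbf{a}$, I would define $\bse\in\{\pm 1\}^{k_1-1}$ and $\bse'\in\{\pm 1\}^{k_2-1}$ componentwise by
\[
\bse_i=\begin{cases}\operatorname{sign}(\mathbf{a}^{(1)}_i), & \mathbf{a}^{(1)}_i\neq 0,\\ +1, & \mathbf{a}^{(1)}_i=0,\end{cases}\qquad
\bse'_j=\begin{cases}\operatorname{sign}(\mathbf{a}^{(2)}_j), & \mathbf{a}^{(2)}_j\neq 0,\\ +1, & \mathbf{a}^{(2)}_j=0,\end{cases}
\]
and claim that $\Q(\bse)\times\Q(\bse')$ has empty intersection with $D$. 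Indeed, for any $\mx=(\mx^{(1)},\mx^{(2)})$ in this orthant, $\bse_i\mx^{(1)}_i>0$ and $\bse'_j\mx^{(2)}_j>0$ by definition, so $\mathbf{a}^{(1)}_i\mx^{(1)}_i=|\mathbf{a}^{(1)}_i|\,|\mx^{(1)}_i|\geq 0$ (with equality iff $\mathbf{a}^{(1)}_i=0$), and analogously for block two. Summing over the two blocks gives
\[
\mathbf{a}^T\mx=\sum_{i:\mathbf{a}^{(1)}_i\neq 0}|\mathbf{a}^{(1)}_i|\,|\mx^{(1)}_i|+\sum_{j:\mathbf{a}^{(2)}_j\neq 0}|\mathbf{a}^{(2)}_j|\,|\mx^{(2)}_j|>0,
\]
where strict positivity follows because $\mathbf{a}\neq 0$ forces at least one summand to be nonzero. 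This contradicts $\mathbf{a}^T\mx\leq 0$ on $D$, so $\mx\notin D$, establishing the claim.

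The main obstacle is verifying the supporting hyperplane step cleanly when $D$ may fail to have full-dimensional interior; the affine-hull argument above handles this uniformly. I find it worth noting that the conclusion invokes no permutation symmetry of $\eta$: any orthant of $\RR^{k_1+k_2-2}$ can be written as the product of one orthant from each block, so the product form in the statement imposes no real restriction, and the elementary supporting hyperplane argument suffices.
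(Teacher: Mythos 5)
Your proof is correct and follows essentially the same route as the paper's: invoke the supporting hyperplane theorem at $\mz=\mz_{k_1+k_2-2}$ to get a nonzero normal $\mathbf a$ with $\mathbf a^T\mx\leq 0$ on $\dom(-\eta)$, read off $\bse,\bse'$ from the componentwise signs of $\mathbf a$ (choosing arbitrarily, e.g.\ $+1$, where $\mathbf a$ vanishes), and observe that on the resulting open orthant $\mathbf a^T\mx>0$, contradicting the hyperplane inequality. One modest point of added value: the paper cites Rockafellar's Theorem 11.6, which asserts \emph{proper} separation and requires $\mz\notin\text{ri}(\dom(-\eta))$; if $\dom(-\eta)$ were lower-dimensional, a topological-boundary point could still lie in the relative interior, and proper separation would not apply. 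Your fallback to a normal of the affine hull closes this (in the lower-dimensional case the affine hull is a proper linear subspace through the origin, and any normal to it gives $\mathbf a^T\mx=0$ on $\dom(-\eta)$, which still yields the contradiction). Your closing observation — that requiring the orthant to be a product $\Q(\bse)\times\Q(\bse')$ imposes no restriction, since any orthant of $\RR^{k_1+k_2-2}$ factors blockwise — is correct and worth keeping in mind; no permutation symmetry of $\eta$ is needed at this step.
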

Fact \ref{claim: CC: orthant} is proved in Section \ref{secpf: claim: cc: orthant}.  We will prove that under the setup of the current lemma,  $\{\Q(\bse)\times \Q(\bse')\}\cap\dom(-\eta)\neq \emptyset$  for all  $\bse$ and $\bse'$ in $\{\pm 1\}^{k}$, which will lead to the desired contradiction. Fix $\bse$ and $\bse'$ in $\{\pm 1\}^{k}$. We denote the number of positive elements in $\bse$ and $\bse'$ by $a_1$ and $a_2$, respectively. Since $\cap_{i=1}^{k_1}\cap_{j=1}^{k_2}\iint(\dom(\psi(\cdot;i,j))\neq \emptyset$, there is an open set $U\subset \RR^{k_1+k_2}$ so that for all $(\mx,\mw)\in U$, $-\psi(\mx,\mw;i,j)<\infty$ for all $i\in[k_1]$ and $j\in[k_2]$. In particular, we can choose $\mx\in\RR^{k_1}$ and $\mw\in\RR^{k_2}$ such that the elements in $\mx$ and $\mw$ are all distinct.
Suppose $r\in[k_2]$ is such that $\mw_r=\mw_{(a_2+1)}$. 
Since $\mw$ has distinct elements, $r$ is unique  and there are $a_2$ many $j$'s such that $\mw_j<\mw_r$ and $k_2-1-a_2$ many $\mw_j$'s are strictly greater than $\mw_r$. Note that
\begin{align*}
 \infty>   -\psi(\mx,\mw;i,j)=&\ -\eta(\dmxi, \mw_j-\mw_1,\ldots,\mw_j-\mw_{k_2})
\end{align*}
where the term $\mw_j-\mw_j$ is omitted. 
The vector $\mv=(\mw_i-\mw_1,\ldots,\mw_i-\mw_{k_2})$ has exactly $a_2$ many positive and $k_2-1-a_2$ many negative elements. Therefore, we can permute the vector $\mv$ in a way so that $\ppi(\mv)\in \Q(\bse')$. However, by \eqref{def: cc: permutation equivariance 2 stage}, 
\[\eta(\dmxi, \mv)=\eta(\dmxi, \ppi(\mv))\]
for any permutation $\ppi:[k_2-1]\mapsto[k_2-1]$. Therefore, there is a permutation $\ppi$ so that $\ppi(\mv)\in \Q(\bse')$ and $(\dmxi,\ppi(\mv))\in\dom(-\eta)$ both holds. Thus we have shown that
for each $i\in[k_1]$, there exists $\mv_i\in \Q(\bse')$ such that 
\begin{align*}
    (\dmxi,\mv_i)\in\dom(-\eta).
\end{align*}
Since the elements of $\mx$ is distinct, there is an $i\in[k_1]$ so that $\mx_{(a_1+1)}=\mx_i$. Then $a_1$ many elements of $\dmxi$ is positive, and the rest are negative. The block permutation symmetry of   \eqref{def: cc: permutation equivariance 2 stage}implies that $\eta(\dmxi, \mv_i)=\eta(\ppi(\dmxi), \mv_i)$ for any permutation $\ppi:[k_1]\mapsto[k_1]$. In particular, we can choose $\ppi$ in a way so that $\ppi(\dmxi)\in \Q(\bse)$. Therefore, $\ppi(\dmxi)\in  \Q(\bse)$ and $-\eta(\ppi(\dmxi),\mv_i)<\infty$. This implies $(\ppi(\dmxi),\mv_i)\in \Q(\bse)\times \Q(\bse')$. However, since $\eta(\dmxi, \mv_i)=\eta(\ppi(\dmxi), \mv_i)$, it holds that $-\eta(\ppi(\dmxi), \mv_i)<\infty$. Hence $\{\Q(\bse)\times \Q(\bse')\}\cap \dom(-\eta)$ is non-empty.  Then Fact \ref{claim: CC: orthant} implies   $\mz_{k_1+k_2-2}\notin\iint(\dom(-\eta))$ can not  hold. Hence, we must have $\mz_{k_1+k_2-2}\in\iint(\dom(-\eta))$.

\end{proof}

\subsubsection{Proof of Lemma \ref{lemma: cc: 0 maximum for smooth}}
\label{secpf: proof of 0 solution lemma}
\begin{proof}[Proof of Lemma \ref{lemma: cc: 0 maximum for smooth}]

Note that \eqref{intheorem: CC: psi to phi} implies when $\mp_{ij}=1$ for all $i\in[k_1]$ and $j\in[k_2]$, then 
\begin{align*}
V^\psi(\mx, \my_1,\ldots, \my_{k_1};\mp)= \sum\limits_{i\in[k_1]}\sum\limits_{j\in[k_2]}\eta(\mx_i-\mx_1,\ldots,\mx_i-\mx_{k_1},\my_{ij}-\my_{i1},\ldots, \my_{ij}-\my_{ik_2}),
\end{align*}
for all $\mx\in\RR^{k_1}$ and $\my_1,\ldots,\my_{k_1}\in\RR^{k_2}$. 
Note that if $(\mx,\my_1,\ldots,\my_{k_1})\in \C_e$, then 
\begin{align*}
V^\psi(\mx, \my_1,\ldots, \my_{k_1};\mp)= \sum\limits_{i\in[k_1]}\sum\limits_{j\in[k_2]}\eta(\mathbf 0_{k_1-1}, \mathbf 0_{k_2-1})=V^\psi(\mathbf 0_{k_1},\mathbf 0_{k_2},\ldots, \mathbf 0_{k_2};\mp).
\end{align*}
Since  $(\mathbf 0_{k_1-1}, \mathbf 0_{k_2-1})\in\iint(\dom(-\eta))$ by Lemma \ref{lemma: CC: 0 in int}, the above implies  that $(\mathbf 0_{k_1},\mathbf 0_{k_2},\ldots, \mathbf 0_{k_2})\in \iint(\dom(-V^{\psi}(\cdot;\mp)))$.
The above display also implies that the proof will be complete if we can show that $(\mathbf 0_{k_1},\mathbf 0_{k_2},\ldots, \mathbf 0_{k_2})$ lies in \\
$ \argmax_{\mx,\my_1,\ldots,\my_{k_1}}V^{\psi}(\mx,\my_1,\ldots,\my_{k_1};\mp)$. Note that $(\mathbf 0_{k_1},\mathbf 0_{k_2},\ldots, \mathbf 0_{k_2})$ is just the zero-vector $\mz_{k_1+k_1k_2}$ and $(\mathbf 0_{k_1-1}, \mathbf 0_{k_2-1})$
is just the zero-vector $\mz_{k_1+k_2-2}$. To streamline notation, we will use the expressions $\mzb$ and $\mzs$ for those vectors from now on.

Let us denote $\myf=-V^\psi(\cdot;\mp)$  so that $\myf$ is a  convex function with $\mzb\in\dom(\myf)$. Thus we need to show that $\mzb$ is a minimizer of $\myf$. Hence, it is enough to prove $\partial \myf(\mzb)\ni \mzb$ \citep[Theorem 2.2.1, pp. 177,][]{hiriart}.  In particular, we will show that $\partial \myf(\mzb)$ is singletone, and equals $\mzb$.  For $i\in[k_1]$ and $j\in[k_2]$, let us define 
\begin{align}
    \label{intheorem: CC: def: gij and hij}
   \MoveEqLeft \tph_{ij}(\mx,\mw)=-\eta(\mx_i-\mx_1,\ldots,\mx_i-\mx_{k_1},\mw_{j}-\mw_{1},\ldots, \mw_{j}-\mw_{k_2})\nn\\
    &\ \ \text{and}\ \myh_{ij}(\mx,\my_1,\ldots,\my_{k_1})=\tph_{ij}(\mx,\my_i),
\end{align}
where $\mx\in\RR^{k_1}$, $\my\in\RR^{k_2}$, $\my_i\in\RR^{k_2}$ for $i\in[k_1]$, and the terms $\mx_i-\mx_i$ and $\my_{j}-\my_{j}$ are omitted in the above definitions. Then by Theorem 4.1.1 of \cite{hiriart}, 
\begin{align*}
\partial \myf(\mx, \my_1,\ldots, \my_{k_1})=\sum\limits_{i\in[k_1]}\sum\limits_{j\in[k_2]}\partial \myh_{ij}(\mx, \my_1,\ldots, \my_{k_1})
\end{align*}
where for sets $C_1,\ldots,C_n$, the sum $C_1+\ldots+C_n$ is the Minkowski sum defined in Section \ref{sec: notation app}. 
If $(\mx, \my_1,\ldots, \my_{k_1})$ lies in  $\iint(\dom(\myh_{ij}))$ for some $i\in[k_1]$ and $j\in[k_2]$, then the corresponding $\partial \myh_{ij}(\mx, \my_1,\ldots, \my_{k_1})$ is non-empty \citep[see theorem 23.4, pp.214, of][for a proof]{rockafellar}. Otherwise, it is just an empty set. Thus the above expression continues to hold even if $(\mx, \my_1,\ldots, \my_{k_1})\notin\iint(\dom(\myh_{ij}))$ for  some $i\in[k_1]$ and $j\in[k_2]$.

Consider $i\in[k_1]$ and $j\in[k_2]$. Equation \ref{intheorem: CC: def: gij and hij} implies that among the $\my_r$'s ($r\in[k_1]$),  $\myh_{ij}$ depends only on $\my_i$. Hence, if $(\mt,\mzz_1,\ldots, \mzz_{k_1})\in \partial \myh_{ij}(\mx, \my_1,\ldots, \my_{k_1})$ for some $\mt\in\RR^{k_1}$ and $\mzz_1,\ldots,\mzz_{k_1}\in\RR^{k_2}$, then $\mzz_r=0$ for $r\neq i$ \citep[cf. Remark 4.1.2, pp. 184,][]{hiriart}.

Moreover, $(\mt,\mz_{k_2}\ldots, \mzz_i,\ldots,\mz_{k_2})$ is in $\partial \myh_{ij}(\mx, \my_1,\ldots, \my_{k_1})$ if and only if $(\mt,\mzz_i)\in \partial \tph_{ij}(\mx,\my_i)$. Using the above, we can specify $\partial \myh_{ij}(\mx, \my_1,\ldots, \my_{k_1})$ for any $\mx\in\RR^{k_1}$ and $\my_1,\ldots,\my_{k_1}\in\RR^{k_2}$  as follows:
\begin{equation}
    \label{intheorem: cc: delgij to del hij}
   \partial \myh_{ij}(\mx, \my_1,\mydots, \my_{k_1})=  \{(\mt,\mz_{k_2}\mydots, \mzz,\mydots,\mz_{k_2}): (\mt,\mzz)\in\partial \tph_{ij}(\mx,\my_i), \mt\in\RR^{k_1},\mzz\in\RR^{k_2}\}.
\end{equation}
Therefore, we need to find $\partial \tph_{ij}$ if we want to infer on $\partial \myh_{ij}$. Equation
\ref{intheorem: CC: def: gij and hij} also implies that for each $i,j$, $\tph_{ij}(\mx,\my)=h(C_{ij}(\mx,\my))$ where $C_{ij}=\begin{bmatrix}
    \AA_{i} & \mz_{(k_1-1)\times k_2}\\
   \mz_{(k_2-1)\times k_1} & \BB_{j}
\end{bmatrix}$ 
is a  matrix with $\AA_{i}\in \RR^{(k_1-1)}\times \RR^{k_1}$ and $\BB_{j}\in \RR^{(k_2-1)}\times \RR^{k_2}$ satisfying 
\begin{align*}
    \AA_{i} \mx= \begin{bmatrix} \mx_i-\mx_1\\ \vdots \\ \mx_i-\mx_{i-1}\\\mx_i-\mx_{i+1}\\ \vdots \\ \mx_i-\mx_{k-1}\end{bmatrix} \quad\text{and}\quad   \BB_{j} \mw= \begin{bmatrix} \mw_j-\mw_1\\ \vdots \\ \mw_j-\mw_{j-1}\\ \mw_j-\mw_{j+1}\\ \vdots\\ \mw_j-\mw_{k-2}\end{bmatrix}
\end{align*}
for any $i\in[k_1]$, $j\in[k_2]$,  $\mx\in\RR^{k_1}$,  and $\mw\in\RR^{k_2}$. The above implies
\begin{align}
\label{inlemma: CC: 0 solutions: Ai and Bj def}
&\ \AA_1= \begin{bmatrix}
    \mo_{k_1-1} & -I_{k_1-1}
\end{bmatrix},\ \AA_{k_1}= \begin{bmatrix}
   -I_{k_1-1} & \mo_{k_1-1} 
\end{bmatrix},\nn\\
&\ \BB_1=\begin{bmatrix}
    \mo_{k_2-1} & -I_{k_2-1}
\end{bmatrix},\ \BB_{k_2}= \begin{bmatrix}
   -I_{k_2-1} & \mo_{k_2-1},
\end{bmatrix},\nn\\
 &\ \AA_i= \begin{bmatrix}
     -\mathbf{I}_{i-1} & \mo_{i-1} & \mathbf 0_{(i-1)\times (k_1-i)}\\
     \mathbf {0}_{(k_1-i)\times(i-1)} & \mo_{k_1-i} & -\mathbf{I}_{k_1-i}
    \end{bmatrix},\\
    \text{and}  &\  \BB_j=\begin{bmatrix}
     -\mathbf{I}_{j-1} & \mo_{j-1} & \mathbf 0_{(j-1)\times (k_2-i)}\nn\\
     \mathbf 0_{(k_2-j)\times(j-1)} & \mo_{k_2-j} & -\mathbf{I}_{k_2-j}
    \end{bmatrix}
\end{align}
for $i\in[2:k_1-1]$ and $j\in[2: k_2-1]$. Here we did a slight  overload of notation because $\AA_i$ and $\BB_j$'s are different from those defined in Section \ref{sec: intheorem: cc: step 1}.
Since $k_1,k_2\geq 2$ by our assumption on $k_1$ and $k_2$, the above matrices are well-defined. 
Therefore,  $\partial \tph_{ij}(\mx,\mw)=C_{ij}^T\partial (-\eta)(C_{ij}(\mx,\mw))$ for $\mx\in\RR^{k_1}$ and $\mw\in\RR^{k_2}$  \cite[cf. Theorem 4.2.1 of][]{hiriart}.  Since $\mzs\in\iint(\dom(-\eta))$ by Lemma \ref{lemma: CC: 0 in int} and $-\eta$ is differentiable by our assumption, it follows that $-\eta$ is differentiable at $\mzs$. Thus $\partial (-\eta)(\mzs)$ is singletone and equals $-\grad \eta(\mzs)$.   Therefore, $\tph_{ij}$ is differentiable at $\mz_{k_1+k_2}$ and 
\begin{align}
    \label{inlemma: CC: gij gradient}
    \grad \tph_{ij}(\mz_{k_1+k_2})=-C_{ij}^T\grad \eta(\mzs).
\end{align}
Equation \ref{intheorem: CC: def: gij and hij} implies, $\myh_{ij}$, and consequently, $\myf=-V^\psi$, also are  differentiable at $\mzb$.

 We will calculate $\grad \eta(\mzs)$ now. To this end, we will use \eqref{def: cc: permutation equivariance 2 stage}, which implies 
\[\eta(\mbu_1,\mbu_2,\ldots,\mbu_{k_1-1},\mv)=\eta(\mbu_2,u_1,\ldots,\mbu_{k_1-1},\mv)\text{ for any }\mbu\in\RR^{k_1-1}\text{ and }\mv\in\RR^{k_2-1},\]
leading to
\[\pdv{\eta(\mbu_1,\mbu_2,\ldots,\mbu_{k_1-1},\mv)}{\mbu_1}=\pdv{\eta(\mbu_2,\mbu_1,\ldots,\mbu_{k_1-1},\mv)}{\mbu_1}\]
whenever $\eta$ is differentiable at $(\mbu,\mv)$. Therefore,
\[\eta_1(\mbu_1,\mbu_2,\ldots,\mbu_{k_1-1},\mv)=\eta_2(\mbu_2,\mbu_1,\ldots,\mbu_{k_1-1},\mv).\]
In particular, when $\mbu=\mz_{k_1-1}$, then it follows that $\eta_1(\mz_{k_1-1}, \mv)=\eta_2(\mz_{k_1-1},\mv)$. In the same way, we can show that $\eta_i(\mz_{k_1-1},\mv)$ is constant across $i\in[k_1-1]$. Similarly, we can show that $\eta_{k_1-1+j}(\mbu,\mz_{k_2-1})$ is constant across $j\in[k_2-1]$. In particular,
\[\eta_{i}(\mz_{k_1+k_2-2})=\eta_1\mz_{k_1+k_2-2})\text{ for all }i\in[k_1]\text{ and }\eta_{j}(\mz_{k_1+k_2-2})=\eta_{k_1}(\mz_{k_1+k_2-2})\]
for all $j\in[k_1:k_1+k_2-2]$. 
Thus
\[\grad \eta(\mzs)=(\eta_1(\mzs)\mo_{k_1-1}, \eta_{k_1}(\mzs)\mo_{k_2-1}).\]
Then  \eqref{inlemma: CC: gij gradient} implies that
\[\grad \tph_{ij}(\mz_{k_1+k_2})=-C_{ij}^T\grad \eta(\mzs)=-\begin{bmatrix}\eta_1(\mzs)\AA_{i}^T\mo_{k_1-1} \\\eta_{k_1}(\mzs)\BB_{j}^T\mo_{k_2-1}\end{bmatrix}.\]
We have already shown that $\myf$ is differentiable at $\mzb$. 
It remains to calculate $\grad \myf(\mzb)$.  To this end,   \eqref{intheorem: CC: def: gij and hij} and \eqref{intheorem: cc: delgij to del hij} imply that 
\begin{align*}
    \grad \myf(\mzb)=&\ -\sum\limits_{i\in[k_1]}\sum\limits_{j\in[k_2]}\begin{bmatrix}
    \eta_1(\mzs)\AA_{i}^T\mo_{k_1-1}\\ 0 \\ \vdots\\ 0\\ \eta_{k_1+1}(\mzs)\BB_{j}^T\mo_{k_2-1}\ (i\text{th position})\\ 0\\ \vdots\\ 0 \end{bmatrix}\\
    =&\ -\sum\limits_{j\in[k_2]}\begin{bmatrix}
    \eta_1(\mzs)\sum\limits_{i\in[k_1]}\AA_{i}^T\mo_{k_1-1}\\  \eta_{k_1+1}(\mzs)\BB_{j}^T\mo_{k_2-1}\ \\  \vdots\\ \eta_{k_1+1}(\mzs)\BB_{j}^T\mo_{k_2-1} \end{bmatrix}\\
    =&\ -\begin{bmatrix}
   k_2 \eta_1(\mzs)\sum\limits_{i\in[k_1]}\AA_{i}^T\mo_{k_1-1} \\\eta_{k_1+1}(\mzs)\sum\limits_{j\in[k_2]}\BB_{j}^T\mo_{k_2-1} \ \\  \vdots\\ h_{k_1+1}(\mzs)\sum\limits_{j\in[k_2]} \BB_{j}^T\mo_{k_2-1} \end{bmatrix}.
\end{align*}
Note that
\eqref{inlemma: CC: 0 solutions: Ai and Bj def} implies that
\begin{align*}
    \AA_i^T=\begin{bmatrix}
     -\mathbf{I}_{i-1} & \mathbf 0_{(i-1)\times(k_1-i)}\\  \mo_{i-1}^T & \mo_{k_1-i}^T\\ \mathbf 0_{(k_1-i)\times (i-1)} & -\mathbf{I}_{k_1-i}
    \end{bmatrix}\ \text{and}\  \BB_j^T=\begin{bmatrix}
     -\mathbf{I}_{j-1} & \mathbf 0_{(j-1)\times(j-1)}\\  \mo_{j-1}^T & \mo_{k_2-j}^T\\ \mathbf 0_{(k_2-j)\times (j-1)} & -\mathbf{I}_{k_2-j}
    \end{bmatrix}.
\end{align*}
Hence,
\begin{align*}
    \AA_i^T\mo_{k_1-1}=\begin{bmatrix}
        -\mo_{i-1}\\ k_1-1\\ -\mo_{k_1-i}
    \end{bmatrix} \text{ and } \BB_j^T\mo_{k_2-1}=\begin{bmatrix}
        -\mo_{j-1}\\ k_2-1\\ -\mo_{k_2-j}
    \end{bmatrix}.
\end{align*}
Thus $\sum\limits_{i\in[k_1]}\AA_{i}^T\mo_{k_1-1}=\mz_{k_1}$ and $\sum\limits_{j\in[k_2]} \BB_{j}^T\mo_{k_2-1}=\mz_{k_2}$, which indicates\\
$\grad \myf(\mzb)=\mzb$, which completes the proof.
\end{proof}

\subsubsection{Proof of Lemma \ref{lemma: CC: coercivity main lemma}}
\label{secpf: cc: coercive main lemma}
  \begin{proof}[Proof of Lemma \ref{lemma: CC: coercivity main lemma}]
       Since $\mz_{k}\in\dom(h)$ and $\mz_{k}\in \dom(h)\cap \text{Range}(C_i)$ for each $i\in[m]$, $(h\circ C_i)'_\infty=h_\infty'\circ C_i$ for each $i\in[m]$ by part (c) of Fact  \ref{fact: concave: prop 3.2.8 of hiriat}. Using part (a) of the same Fact, we obtain that for each  $\mw\in\RR^m_{>0}$,
       \[f_\infty'(\mx;\mw)=\sum\limits_{i=1}^m\mw_ih_\infty'(C_i\mx),\quad \mx\in\RR^k,\]
       where $f_\infty'(\cdot;\mw)$ is the recession function of $f(\cdot;\mw)$. Since $h$ is convex and bounded below, $f_\infty'(\cdot;\mw)$ is convex and bounded below for each $\mw\in\RR^m_{>0}$. If there exists $\mw_0\in\RR^m_{>0}$ so that the minima of the convex function $f(\cdot;\mw_0)$
   is not attained  in $\RR^k$, then $f_\infty'(\mx;\mw_0)=0$ for some $\mx\neq \mz_k$ by Fact \ref{fact: concave: coercivity and recession for concave}. Since $\mw_0\in\RR^m_{>0}$, it must hold that $h_\infty'(C_i\mx)=0$ for each $i\in[m]$ for this $\mx$. 
   Therefore,
   \[\sum\limits_{i=1}^m\mw_ih_\infty'(C_i\mx)=0\quad \text{ for all }\mw\in\RR^m_{>0}.\]
   Another application of Fact \ref{fact: concave: coercivity and recession for concave} yields that  the minima of the  convex function $\mx\mapsto \sum\limits_{i=1}^m\mw_ih(C_i\mx)$ is not attained for any $\mw\in\RR^m$. Hence, proved.
   \end{proof}

\subsubsection{Proof of Lemma \ref{lemma: cc: supremum}}
\label{secpf: claim PERM sup}
\begin{proof}[Proof of Lemma \ref{lemma: cc: supremum}]
We will fix $\mp\in\RR^4_{>0}$ and denote $\mbu^*(\mp)$, $\mv_1^*(\mp),\ldots,\mv_{k_1}^*(\mp)$ by $\mbu^*$, $\mv_1^*,\ldots,\mv_{k_1}^*$, respectively.
 We will prove Lemma \ref{lemma: cc: supremum} in three steps.
\begin{itemize}
    \item[(a)] We will show that there exists $x\in\RR$ so that $\mbu^* =x\mo_{k_1-1}$.
\item[(b)] We will next show that
$\mv_2^*=\ldots=\mv_{k_1}^*$.
\item[(c)] Finally, we will show that
$\mv_i^*=c\mo_{k_2-1}$ for some $c\in\RR$ for each $i\in[k_1]$.
\end{itemize}



\paragraph{Proving (a) the $\mbu_i^*$'s are equal}
For fixed $i$ and $j$ in $[k_1]$, let us denote $C_{ij}=\{\mbu\in\RR^{k_1-1}: \mbu_i=\mbu_j\}$. We will first show that $\mbu^* \in C_{ij}$ for all $i,j\in [k_1]$. Since the proof is trivial for $i=j$, we assume $i\neq j$. 

Fix $i,j\in[k_1]$ so that $i\neq j$. 
Let us denote by $\ppi$ the permutation on $[k_1]$ that swaps $i$ with $j$, i.e., $\ppi(i)=j$ and $\ppi(j)=i$. To show $\mbu^* \in C_{ij}$, our first step is to prove
\begin{align}
    \label{intheorem: cc: pi pi inv for u}
 V^{\eta}(\mbu,\mv_1,\ldots,\mv_{k_1};\mp) =V^{\eta}(\ppi \mbu,\mv_1,\mv_{\ppi(1)+1},\ldots, \mv_{\ppi(k_1-1)+1};\mp)
\end{align}
for all $\mbu\in\RR^{k_1-1}$ and $\mv_1,\ldots,\mv_{k_1}\in\RR^{k_2-1}$.
To that end, note that $\ppi \mbu=P_{ij}\mbu$ where $P_{ij}$ is the permutation matrix for swapping $i$th row with $j$th row. 
For any $\mbu\in\RR^{k_1-1}$ and $\mw\in\RR^{k_2-1}$, it follows that
$\eta(\ppi \mbu,\mw)=\eta(\mbu,\mw)$
by the blockwise permutation symmetry of $\eta$ in \eqref{def: cc: permutation equivariance 2 stage}. Also,
\begin{align*}
    \eta(\AA_i\ppi \mbu,\mw)=\eta(\AA_i P_{ij} \mbu,\mw)\stackrel{(a)}{=}\eta(P_{ij}\AA_jP_{ij}^2\mbu,\mw)\stackrel{(b)}{=}\eta(P_{ij}\AA_j\mbu,\mw)\stackrel{(b)}{=}\eta(P_{ij}^2\AA_j\mbu,\mw),
\end{align*}
which equals $\eta(\AA_j \mbu,\mw)$, 
where (a) uses the fact that $\AA_i=P_{ij}A_jP_{ij}$ for any $i,j\in[k_1]$, (b) uses the fact that $P_{ij}^2=I$ and (c) uses the blockwise permutation symmetry of $\eta$  as in \eqref{def: cc: permutation equivariance 2 stage}. Similarly we can show that $\eta(\AA_j\ppi \mbu,\mw)=\eta(\AA_i \mbu,\mw)$ for all $\mbu\in\RR^{k_1-1}$ and $\mw\in\RR^{k_2-1}$.
Now suppose $r\neq i\neq j$. Consider the case $i<r<j$. Then for all $\mbu\in\RR^{k_1-1}$ and $\mw\in\RR^{k_2-1}$,
\begin{align*}
\MoveEqLeft  \eta(\AA_r \ppi  \mbu,\mw)=\eta(\AA_r P_{ij} \mbu,\mw)  \\
=&\ \eta(\ppi \mbu_1-\ppi \mbu_r,\ldots, -\ppi \mbu_r,\ldots,\ppi \mbu_i-\ppi \mbu_r,\ldots,\ppi \mbu_j-\ppi \mbu_r,\ldots,\ppi \mbu_{k_1-1}-\ppi \mbu_r,\mw)\\
=&\ \eta(\mbu_1- \mbu_r,\ldots,  \mbu_j- \mbu_r,\ldots,-\mbu_r,\ldots, \mbu_i- \mbu_r,\ldots, \mbu_{k_1-1}-\mbu_r,\mw)\\
\stackrel{(a)}{=}&\  \eta(\mbu_1- \mbu_r,\ldots,  \mbu_i- \mbu_r,\ldots,-\mbu_r,\ldots, \mbu_j- \mbu_r,\ldots, \mbu_{k_1-1}-\mbu_r,\mw)\\
=&\ \eta(\AA_r \mbu,\mw),
\end{align*}
where (a) follows by the blockwise permutation symmetry of $\eta$. For all other orderings of the triplet $(i,r,j)$, we can similarly prove that $\eta(\AA_r \ppi  \mbu,\mw)=\eta(\AA_r \mbu,\mw)$.
Therefore \eqref{intheorem: CC: def: of V phi for PERM 2} implies that 
\begin{align*}
 \MoveEqLeft   V^{\eta}(\ppi \mbu,\mv_1,\ldots,\mv_{k_1};\mp)
= \mp_{11}\eta(\ppi \mbu,\mv_1)\nn + \mp_{12} \sum\limits_{j'=1}^{k_2-1}\eta(\ppi \mbu,\BB_{j'} \mv_1)\nn\\
&\ +\mp_{21}\sum\limits_{i'=1}^{k_1-1}\eta(\AA_{i'} \ppi \mbu,\mv_{i'+1}) +\mp_{22}\sum\limits_{i'=1}^{k_1-1}\sum\limits_{j'=1}^{k_2-1}\eta(\AA_{i'} \ppi \mbu, \BB_{j'}\mv_{i'+1})\\
=&\  \mp_{11}\eta(\mbu,\mv_1)\nn + \mp_{12} \sum\limits_{j'=1}^{k_2-1}\eta( \mbu,\BB_{j'} \mv_1)\nn\\
&\ +\mp_{21}\sum\limits_{i'\in[k_1]:i'\neq i,j}\eta(\AA_{i'} \ppi \mbu,\mv_{i'+1}) +\mp_{22}\sum\limits_{i'\in[k_1]:i'\neq i,j}\sum\limits_{j'=1}^{k_2-1}\eta(\AA_{i'}\ppi \mbu, \BB_{j'}\mv_{i'+1})\\
&\ +\mp_{21}\sum\limits_{i'\in (i,j)}\eta(\AA_{i'} \ppi \mbu,\mv_{i'+1}) +\mp_{22}\sum\limits_{i'\in (i,j)}\sum\limits_{j'=1}^{k_2-1}\eta(\AA_{i'}\ppi \mbu, \BB_{j'}\mv_{i'+1}).
\end{align*}
By our previous calculations,
\begin{align*}
    \MoveEqLeft \mp_{21}\sum\limits_{i'\in[k_1]:i'\neq i,j}\eta(\AA_{i'} \ppi \mbu,\mv_{i'+1}) +\mp_{22}\sum\limits_{i'\in[k_1]:i'\neq i,j}\sum\limits_{j'=1}^{k_2-1}\eta(\AA_{i'}\ppi \mbu, \BB_{j'}\mv_{i'+1})\\
   =  &\ \mp_{21}\sum\limits_{i'\in[k_1]:i'\neq i,j}\eta(\AA_{i'} \mbu,\mv_{i'+1}) +\mp_{22}\sum\limits_{i'\in[k_1]:i'\neq i,j}\sum\limits_{j'=1}^{k_2-1}\eta(\AA_{i'} \mbu, \BB_{j'}\mv_{i'+1})\\
\end{align*}
and
\begin{align*}
\MoveEqLeft \mp_{21}\sum\limits_{i'\in (i,j)}\eta(\AA_{i'} \ppi \mbu,\mv_{i'+1}) +\mp_{22}\sum\limits_{i'\in (i,j)}\sum\limits_{j'=1}^{k_2-1}\eta(\AA_{i'}\ppi \mbu, \BB_{j'}\mv_{i'+1})\\
=&\ \mp_{21}\eta(\AA_{i} \ppi \mbu,\mv_{i+1}) +\mp_{22}\sum\limits_{j'=1}^{k_2-1}\eta(\AA_{i}\ppi \mbu, \BB_{j'}\mv_{i+1})\\
&\ +\mp_{21}\eta(\AA_{j} \ppi \mbu,\mv_{j+1}) +\mp_{22}\sum\limits_{j'=1}^{k_2-1}\eta(\AA_{j}\ppi \mbu, \BB_{j'}\mv_{j+1})\\
=&\  \mp_{21}\eta(\AA_{j}\mbu,\mv_{i+1}) +\mp_{22}\sum\limits_{j'=1}^{k_2-1}\eta(\AA_{j} \mbu, \BB_{j'}\mv_{i+1})\\
&\ +\mp_{21}\eta(\AA_{i} \mbu,\mv_{j+1}) +\mp_{22}\sum\limits_{j'=1}^{k_2-1}\eta(\AA_{i}\mbu, \BB_{j'}\mv_{j+1}),
\end{align*}
which, noting $j=\ppi(i)$ and $i=\ppi(j)$, equals
\[\mp_{21}\sum\limits_{i'\in (i,j)}\eta(\AA_{i'}  \mbu,\mv_{\ppi(i')+1}) +\mp_{22}\sum\limits_{i'\in (i,j)}\sum\limits_{j'=1}^{k_2-1}\eta(\AA_{i'} \mbu, \BB_{j'}\mv_{\ppi(i')+1}).\]
Combining all the above pieces, we have
\begin{align*}
\MoveEqLeft   V^{\eta}(\ppi \mbu,\mv_1,\ldots,\mv_{k_1};\mp)\\
=&\  \mp_{11}\eta(\mbu,\mv_1)\nn + \mp_{12} \sum\limits_{j'=1}^{k_2-1}\eta( \mbu,\BB_{j'} \mv_1)\nn\\
&\ +\mp_{21}\sum\limits_{i'\in[k_1]:i'\neq i,j}\eta(\AA_{i'} \mbu,\mv_{i'+1}) +\mp_{22}\sum\limits_{i'\in[k_1]:i'\neq i,j}\sum\limits_{j'=1}^{k_2-1}\eta(\AA_{i'} \mbu, \BB_{j'}\mv_{i'+1})\nn\\
&\ +\mp_{21}\sum\limits_{i'\in (i,j)}\eta(\AA_{i'}  \mbu,\mv_{\ppi(i')+1}) +\mp_{22}\sum\limits_{i'\in (i,j)}\sum\limits_{j'=1}^{k_2-1}\eta(\AA_{i'} \mbu, \BB_{j'}\mv_{\ppi(i')+1}),
\end{align*}
which can be rewritten as
\begin{align*}
    \MoveEqLeft  \mp_{11}\eta(\mbu,\mv_1)\nn + \mp_{12} \sum\limits_{j'=1}^{k_2-1}\eta( \mbu,\BB_{j'} \mv_1)\nn\\
&\ +\mp_{21}\sum\limits_{i'=1}^{k_1-1}\eta(\AA_{i'} \mbu,\mv_{\ppi(i')+1}) +\mp_{22}\sum\limits_{i'=1,}^{k_1-1}\sum\limits_{j'=1}^{k_2-1}\eta(\AA_{i'} \mbu, \BB_{j'}\mv_{\ppi(i')+1})
\end{align*}
because $\ppi(i')=i'$ for $i'\neq (i,j)$. However, the above implies
\begin{equation}
    \label{inlemma: CC: intermediate: sol: intermediate}
    V^{\eta}(\ppi \mbu,\mv_1,\ldots,\mv_{k_1};\mp)=V^{\eta}( \mbu,\mv_1,\mv_{\ppi(1)+1},\ldots, \mv_{\ppi(k_1-1)+1};\mp)
\end{equation}
for all $\mbu\in\RR^{k_1-1}$ and $\mv_1,\ldots,\mv_{k_1}\in\RR^{k_2-1}$. 
Now observe that since $\ppi=\ppi^{-1}$, we  have
\begin{align*}
 V^{\eta}( \ppi \mbu,\mv_1,\mv_{\ppi(1)+1},\ldots, \mv_{\ppi(k_1-1)+1};\mp)=&\ V^{\eta}( \ppi \mbu,\mv_1,\mv_{\ppi^{-1}(1)+1},\ldots, \mv_{\ppi^{-1}(k_1-1)+1};\mp).
\end{align*}
However, \eqref{inlemma: CC: intermediate: sol: intermediate} applied on $(\mbu, \mv_1,\mv_{\ppi^{-1}(1)+1},\ldots, \mv_{\ppi^{-1}(k_1-1)+1}) $ leads to  
\[ V^{\eta}( \ppi \mbu,\mv_1,\mv_{\ppi^{-1}(1)+1},\ldots, \mv_{\ppi^{-1}(k_1-1)+1};\mp)=V^{\eta}(\mbu,\mv_1,\ldots,\mv_{k_1};\mp),\]
which proves \eqref{intheorem: cc: pi pi inv for u}.
  Now we will prove that $\mbu^*\in C_{ij}$.

Suppose  $\mbu^*_i\neq \mbu^*_j$. Since $\eta$ is bounded, there exists $c\in\RR$ so that\\
$V^{\eta}( \mbu^* ,\mv^*_1,\ldots,\mv^*_{k_1};\mp)=c$.  Then \eqref{intheorem: cc: pi pi inv for u} implies that 
\[V^{\eta}( \ppi \mbu^* ,\mv^*_1,\mv^*_{\ppi(1)+1},\ldots, \mv^*_{\ppi(k_1-1)+1};\mp)=c\]
as well. Since $V^\eta$ is a sum of strictly concave functions, it is strictly concave. Therefore, Jensen's inequality yields that 
\begin{align*}
    c=\frac{V^{\eta}(\mbu^* ,\mv^*_1,\ldots,\mv^*_{k_1};\mp)+V^{\eta}( \ppi \mbu^* ,\mv^*_1,\mv^*_{\ppi(1)+1},\ldots, \mv^*_{\ppi(k_1-1)+1};\mp)}{2}\\
    \stackrel{(a)}{<} V^{\eta}\lb \frac{\mbu^* +\ppi \mbu^* }{2}, \mv^*_1, \frac{\mv^*_2+\mv^*_{\ppi(1)+1}}{2},\ldots, \frac{\mv^*_{k_1}+\mv^*_{\ppi(k_1-1)+1}}{2}\rb
\end{align*}
where the strict inequality holds because of strict concavity.
If $\mbu'=\frac{\mbu^* +\ppi \mbu^* }{2}$, then $\mbu'_i=\mbu'_j$.  Then the above implies that if $\mbu_i\neq \mbu_j$, then 
\begin{align}
    \label{intheorem: cc: Cij inequality}
    V^{\eta}(\mbu^* ,\mv^*_1,\ldots,\mv^*_{k_1};\mp)<\sup_{\mbu\in C_{ij},\mv_1,\ldots, \mv_{k_1}\in\RR^{k_2-1}}V^{\eta}(\mbu,\mv_1,\ldots,\mv_{k_1};\mp),
\end{align}
which contradicts with the fact that $(\mbu^* ,\mv_1^*,\ldots,\mv_{k_1}^*)$ is the unique maximizer of $V^\eta$. Hence, we must have $\mbu^*_i=\mbu^*_j$. Since $i$ and $j$ were arbitrary, we must have $\mbu^* =x\mo_{k_1-1}$ for some $x\in\RR$.


\paragraph{Showing $\mv_j^*$'s are equal for $j\geq 2$:}
Suppose $\ppi$ is a permutaion of $[2:k_1]$. Then $(\ppi(2)-1,\ldots,\ppi(k_1)-1)$ is a permutaion of $[k_1-1]$. Let us denote $\ppi(2)-1=\varsigma(1),\ldots, \ppi(k_1)-1=\varsigma(k_1-1)$. Then $\varsigma$ is a permutaion of $[k_1-1]$. Therefore, we have shown that for $i\in[k_1-1]$, we can write  $\ppi(i+1)=\varsigma(i)+1$ for a permutaion $\varsigma$ of $[k_1-1]$. Therefore, 
\begin{align*}
\MoveEqLeft V^{\eta}(  \mbu,\mv_1,\mv_{\ppi(2)},\ldots, \mv_{\ppi(k_1)};\mp) \\
=&\ V^{\eta}(  \mbu,\mv_1,\mv_{\ppi(1+1)},\ldots, \mv_{\ppi(1+k_1-1)};\mp)\\
=&\  V^{\eta}(  \mbu,\mv_1,\mv_{1+\varsigma(1)},\ldots, \mv_{1+\varsigma(k_1-1)};\mp).
\end{align*}
However, applying \eqref{intheorem: cc: pi pi inv for u} on $(\varsigma^{-1}\mbu,\mv_1,\ldots,\mv_{k_1})$ we obtain that 
\[ V^{\eta}(  \varsigma^{-1}(\mbu),\mv_1,\mv_{2},\ldots, \mv_{k_1};\mp)=V^{\eta}(  \mbu,\mv_1,\mv_{1+\varsigma(1)},\ldots, \mv_{1+\varsigma(k_1-1)};\mp).\]
Therefore, we have showed that
\[V^{\eta}(  \mbu,\mv_1,\mv_{\ppi(2)},\ldots, \mv_{\ppi(k_1)};\mp)=V^{\eta}(  \varsigma^{-1}(\mbu),\mv_1,\mv_{2},\ldots, \mv_{k_1};\mp).\]
In particular,
\begin{align*}
  V^{\eta}(  \mbu^* ,\mv^*_1,\mv^*_{\ppi(2)},\ldots, \mv^*_{\ppi(k_1)};\mp)=&\ V^{\eta}(  \varsigma^{-1}(\mbu^* ),\mv^*_1,\mv^*_{2},\ldots, \mv^*_{k_1};\mp)\\
  =&\ V^{\eta}(  \mbu^* ,\mv^*_1,\mv^*_{2},\ldots, \mv^*_{k_1};\mp)
\end{align*}
because we have proved in the last step that 
 $\mbu^* _1=\ldots=\mbu^* _{k_1-1}$. However, $(\mbu^* ,\mv^*_1,\mv^*_{2},\ldots, \mv^*_{k_1})$ is the unique maximizer of $V^\eta$, implying $(\mv^*_{\ppi(2)},\ldots, \mv^*_{\ppi(k_1)})=(\mv^*_{2},\ldots, \mv^*_{k_1})$.  Since $\ppi$ is any arbitrary permutation of $[2:k_1]$, this implies that $\mv_2^*=\ldots=\mv_{k_1}^*$.

\paragraph{Showing $v_j$'s have equal elements}

Suppose $\ppi$ is any arbitrary permutation of $[k_2-1]$. Then by \eqref{intheorem: CC: def: of V phi for PERM 2}, 
\begin{align}
\label{inlemma: CC: vj s equal Vphi}
 V^{\eta}(  \mbu,\ppi(\mv_1),\mv_2,\ldots, \mv_{k_1};\mp) 
= \mp_{11}\eta( \mbu,\ppi(\mv_1))\nn + \mp_{12} \sum\limits_{j'=1}^{k_2-1}\eta( \mbu,\BB_{j'} \ppi(\mv_1))\nn\\
 +\mp_{21}\sum\limits_{i'=1}^{k_1-1}\eta(\AA_{i'}  \mbu,\mv_{i'+1}) +\mp_{22}\sum\limits_{i'=1}^{k_1-1}\sum\limits_{j'=1}^{k_2-1}\eta(\AA_{i'}  \mbu, \BB_{j'}\mv_{i'+1}),
\end{align}
where the $\BB_{j}$'s and $\AA_{i}$'s are as in \eqref{inlemma: CC: def of Ai} and \eqref{inlemma: CC: def of Bj}. First note that \\
$\eta(\mbu,\ppi(\mv_1))=\eta(\mbu,\mv_1)$ by \eqref{def: cc: permutation equivariance 2 stage}. By definition of $\BB_j$, 
\begin{align*}
 \MoveEqLeft \eta( \mbu,\BB_{j'} \ppi(\mv_1)) =&\ \eta(\mbu, \mv_{1\ppi(1)}-\mv_{1\ppi(j)},\ldots, -\mv_{1\ppi(j)},\ldots, \mv_{1\ppi(k_2-1)}-\mv_{1\ppi(j)}).
\end{align*}
However, by \eqref{intheorem: cc: permutation symmetry of phi}, $ \eta( \mbu,\BB_{j'} \ppi(\mv_1))= \eta( \mbu,\ppi^{-1}(\BB_{j'} \ppi(\mv_1)))$. Let 
\begin{align*}
l=\BB_{j'} \ppi(\mv_1)=(\mv_{1\ppi(1)}-\mv_{1\ppi(j)},\ldots, -\mv_{1\ppi(j)},\ldots, \mv_{1\ppi(k_2-1)}-\mv_{1\ppi(j)}).
\end{align*}
We want to figure out what is $\ppi^{-1}(l)$. Suppose $i=j$. Then 
\[\ppi^{-1}(l_i)=l_{\ppi^{-1}(i)}=\begin{cases}
    \mv_{1i}-\mv_{1\ppi(j)} & \text{ if }i\neq \ppi(j)\\
   -\mv_{1\ppi(j)} & \text{ if }i= \ppi(j)
\end{cases}\]
Therefore,
\[\ppi^{-1}(\BB_{j'} \ppi(\mv_1))=\left(\begin{smallmatrix}\underbrace{\mv_{1i}-\mv_{1\ppi(j)},\ldots, \mv_{1,\ppi(j)-1}-\mv_{1\ppi(j)}}_{\ppi(j)-1\text{ elements}},-\mv_{1\ppi(j)},&\\
\underbrace{\mv_{1,\ppi(j)+1}-\mv_{1\ppi(j)},\ldots,\mv_{1k_1}-\mv_{1\ppi(j)}}_{k_2-1-\ppi(j)\text{ elements}}\end{smallmatrix}\right),\]
which is just $\BB_{\ppi(j)}\mv_1$. 
Thus we have shown that $\eta(\mbu, \BB_{j'}\ppi(\mv_1))=\eta(\mbu, \BB_{\ppi(j')}\mv_1)$, indicating that
\begin{align*}
 \sum\limits_{j'=1}^{k_2-1}\eta( \mbu,\BB_{j'} \ppi(\mv_1))=    \sum\limits_{j'=1}^{k_2-1}\eta(\mbu, \BB_{\ppi(j')}\mv_1)=\sum\limits_{j=1}^{k_2-1}\eta(\mbu, \BB_{j}\mv_1).
\end{align*}
Hence, \eqref{inlemma: CC: vj s equal Vphi} implies
\[ V^{\eta}(  \mbu,\ppi(\mv_1),\mv_2,\ldots, \mv_{k_1};\mp) = V^{\eta}(  \mbu,\mv_1,\mv_2,\ldots, \mv_{k_1};\mp) \]
for all $\mbu\in\RR^{k_1}$ and $\mv_1,\mv_2,\ldots,\mv_{k_1}\in\RR^{k_2}$. 
In particular,
\[ V^{\eta}(  \mbu^* ,\ppi(\mv^*_1),\mv^*_2,\ldots, \mv^*_{k_1};\mp) = V^{\eta}(  \mbu^* ,\mv^*_1,\mv^*_2,\ldots, \mv^*_{k_1};\mp) \]
The uniqueness of $\mv_1^*$ implies $\mv_1^*=\ppi(\mv_1^*)$. Since $\ppi$ is an arbitrary permutaion, it follows that $\mv_1^*=c\mo_{k_2-1}$ for some $c\in\RR$. The proof follows similarly for other $\mv_i^*$'s replacing $\mbu$ by $\AA_{i-1}\mbu$, and hence skipped.

\end{proof}

\subsubsection{Proof of Lemma \ref{lemma: CC: closed}}
\label{secpf: cc: closed}
\begin{proof}[Proof of Lemma \ref{lemma: CC: closed}]
We have previously shown that the minimizer of $-V^\eta(\cdot;\mp)$ is unique, and 
 by  Lemma \ref{lemma: cc: supremum}, it  is of the form $(u_1^*\mo_{k_1-1},\mv^*_{11}\mo_{k_2-1},\mv^*_{21}\mo_{k_2-1},\ldots,\mv^*_{21}\mo_{k_2-1})$ for some $\mbu^* ,\mv_{11}^*,\mv_{21}^*\in\RR$, possibly depending on $\mp$. Therefore, if $(x,y,z)\neq (\mbu^* _1,\mv_{11}^*,\mv_{21}^*)$, it holds that 
 \begin{align*}
   \MoveEqLeft  -V^\eta(x\mo_{k_1-1},y\mo_{k_2-1},z\mo_{k_2-1},\ldots,z\mo_{k_2-1})\\
     &\ >-V^\eta(\mbu_1^*\mo_{k_1-1},\mv_{11}^*\mo_{k_2-1},\mv_{21}^*\mo_{k_2-1},\ldots,\mv_{21}^*\mo_{k_2-1}),
 \end{align*}
 where the strict inequality follows from the uniqueness of the minimum. The above implies
 \[\Phi(x,y,z;\mp)>\Phi(\mbu^* _1,\mv_{11}^*,\mv_{21}^*;\mp)\text{ for all }x,y,z\in\RR\]
 because  \eqref{intheorem: CC: def: of V phi for PERM 2} implies that 
    \[\Phi(x,y,z;\mp)=-V^\eta(x\mo_{k_1-1},y\mo_{k_2-1},z\mo_{k_2-1},\ldots,z\mo_{k_2-1}).\]
   Therefore, 
 $\Phi(x,y,z;\mp)$ has a unique minimizer $(x^*(\mp),y^*(\mp),z^*(\mp))$ and $x^*(\mp)=\mbu_1^*(\mp)$, $y^*(\mp)=\mv_{11}^*(\mp)$, $z^*(\mp)=\mv_{21}^*(\mp)$, thus proving the first part of the lemma.
 
 
Now we will show that $-\vartheta(\cdot;i,j)'s$ and $\Phi(\cdot;\mp)$ are  proper, closed, and strictly convex  for any $i,j\in[2]$ and $\mp\in\RR^4_{>0}$. Note that $-\vartheta(\cdot;i,j)$'s and $\Phi$ are strictly convex  because $-\eta$ is strictly convex. To show a convex function is proper, we need to show that (1) it never attains the value $-\infty$ and (2) its domain is non-empty \citep[pp. 24 of][]{rockafellar}. 
Since $-\eta$ is bounded below, (1) follows trivially for $-\vartheta(\cdot;i,j)'s$ and $\Phi$. Since $\mz_{k_1+K_2-2}\in\dom(-\eta)$ by Lemma \ref{lemma: CC: 0 in int}, \eqref{intheorem: CC: def: vartheta} implies that $(0,0)\in\dom(-\vartheta(\cdot;i,j))$ for all $i,j\in[2]$. Similarly, \eqref{def: CC: Phi: binary} implies that $(0,0,0)\in\dom(\Phi(\cdot;\mp))$ for all $\mp\in\RR^4_{>0}$. Therefore, (2) follows  for $-\vartheta(\cdot;i,j)'s$ and $\Phi(\cdot;\mp)$. Therefore  $-\eta$, $-\vartheta$, and $\Phi(\cdot;\mp)$ are proper.  Using Facts \ref{fact: closedness} and \ref{fact: bivariate lsc} (see Section \ref{sec: additional facts}), one can easily verify that  $-\vartheta(\cdot;i,j)$'s and $\Phi(\cdot;\mp)$ are  closed.
We have shown that, for each $\mp\in\RR^4_{>0}$, the minimum of $\Phi(\cdot;\mp)$ is attained. Therefore, $\Phi(\cdot;\mp)$is 0-coercive by Fact \ref{fact: concave: coercivity and recession for concave}.



 \end{proof}
 \subsubsection{Proof of Fact \ref{fact: Phi star mp finite}}
\label{sec: proof of Fact Phi star mp finite}
\begin{proof}[Proof of Fact \ref{fact: Phi star mp finite}]
 Note that
 \begin{align}
   -\Phi^*(\mp)=&\ \sup_{\mw\in\RR^3} \{-\Phi(\mw;\mp)\}\nn\\
      &\ =\sup_{(x,y,z)\in\RR^3}\slb \mp_{11}\vartheta(x,y;1,1)+\mp_{12}\vartheta(x,y;1,2)+ \mp_{21}\vartheta(x,z;2,1)\nn\\
      &\ +\mp_{22}\vartheta(x,z;2,2)\srb\nn\\
      &\ =\varsigma_{Im(\vartheta)}(\mp)
   \end{align} 
    where $\varsigma$ is the support function defined in \eqref{def: support function} and 
    \begin{align*}
        Im(\vartheta)=&\ \{\mw\in\RR^4: \mw=\{\vartheta(x,y;1,1), \vartheta(x,y;1,2), \vartheta(x,z;2,1), \vartheta(x,z;2,2)\}\\
        &\ \text{ for some }(x,y,z)\in\RR^3\}
    \end{align*}
    is the image set of $\vartheta$. 
   Thus $-\Phi^*$ is the support function of $Im(\vartheta)$. Thus $-\Phi^*$ is a closed convex function. Hence, it is continuous on the relative interior of its domain \citep[cf. Remark 3.1.3, pp. 104,][]{hiriart}. However, because  $\vartheta$ is bounded above, $-\Phi^*(\mp)<\infty$ for $\mp\in\RR^4_{>0}$. Hence,
   \begin{align}
       \label{intheorem: CC: dom Psi star}
       \dom(-\Phi^*)\supset \RR^4_{>0}.
   \end{align}
 Since $\RR^4_{>0}$  is open, $\RR^4_{>0}\subset\iint(\dom(-\Phi^*))$.  Thus $\Phi^*$ is continuous on $\RR^4_{>0}$  everywhere. 
 
To show the finiteness of $\Phi^*$, note  that, if  $\Phi^*(\mp)=\inf_{\mw\in\RR^3}\Phi(\mw;\mp)=\infty$ for some $\mp\in\RR^4_{>0}$, then   $\Phi(\mw;\mp)=\infty$ for all $\mw\in\RR^3$, which would imply that $\Phi(\cdot;\mp)$ is improper. However, Lemma \ref{lemma: CC: closed} implies that $\Phi(\cdot;\mp)$ is proper. Thus $\Phi^*(\mp)<\infty$. Also, $-\Phi^*(\mp)<\infty$ for all $\mp\in\RR^4_{>0}$ because $-\Phi^*$ is a convex function with $\mp\in\dom(-\Phi^*)$ by \eqref{intheorem: CC: dom Psi star}. Therefore, $|\Phi^*(\mp)|<\infty$.
\end{proof}

 \subsubsection{Proof of Lemma \ref{lemma: CC: differentiability}}
\label{secpf: Lemma differentiability}
\begin{proof}[Proof of Lemma \ref{lemma: CC: differentiability}]

First we prove the results on $\vartheta$.
Lemma \ref{lemma: CC: 0 in int} implies that under the setup of Theorem \ref{theorem: CC}, $\mz_{k_1+k_2-2}\in\iint(\dom(-\eta))$.
If $x,y$ are in a small neighborhood $U_0$ of $0$, then $(xA\mo_{k_1-1},yB\mo_{k_2-1})$ is in a small neighborhood of $\mz_{k_1+k_2-2}$ for all matrices $A$ and $B$ of appropriate dimensions. Noting $\mz_{k_1+k_2-2}\in\iint(\dom(-\eta))$, we deduce that if $U_0$ is small enough, then $(xA_i\mo_{k_1},yB_j\mo_{k_2-1})\in \iint(\dom(\eta))$ for all $(x,y)\in U_0$ and $i\in\{0,\ldots,k_1-1\}$ and $j\in\{0,\ldots,k_2-1\}$ where the $\AA_i$ and $\BB_j$'s are as defined in \eqref{inlemma: CC: def of Ai} and \eqref{inlemma: CC: def of Bj}.  Thus, $U_0^2\subset \iint(\dom(\vartheta(\cdot,i,j)))$ for $i,j\in[2]$, which proves part 1 of the current lemma.  On the other hand, if $(xA_i\mo_{k_1-1},yB_j\mo_{k_2-1})\in\iint(\dom(-\eta))$, $\eta$ is thrice continuously differentiable at $(xA_i\mo_{k_1},yB_j\mo_{k_2-1})$. Therefore, $\vartheta(\cdot;i,j)$ is also thrice continuously differentiable on $U_0^2$ if $U_0$ is a sufficiently small neighborhood, which proves part 3 of the current lemma. 

Now we will prove the results on $\Phi$.
Since for any $\mp\in\RR^4_{>0}$, the function $\Phi(\cdot;\mp)$ is a non-negative linear combination of convex functions, namely the $-\vartheta(\cdot;i,j)$'s,  by \eqref{def: CC: Phi: binary}, it follows that $\dom(\Phi(\cdot;\mp))= \cap_{i,j\in[2]}\dom(\vartheta(\cdot;i,j))$ \citep[pp. 33][]{rockafellar}. Therefore, part 2 follows from part 1. Part 4 follows from part 3 due to \eqref{def: CC: Phi: binary}.
\end{proof}

\subsubsection{Proof of Lemma \ref{lemma: CC: continuity of Lambda}}
\label{secpf:  CC: continuity of Lambda}
\begin{proof}[Proof of Lemma \ref{lemma: CC: continuity of Lambda}]
      The bulk of the proof is devoted to showing that there exists $\delta^*>0$ so that if $\mq\in B(\mp,\delta)$ for some $\delta\leq \delta^*$, then $\Lambda^*(\mq)$ is uniformly bounded in the sense that it lies in a compact set depending only on $\mp$ and $\vartheta$. Denote by
       \[\S^{\mq}_r=\{v\in\RR^3: \Phi(v;\mq)\leq r\},\quad \text{ for all }\mq\in\RR^4_{>0},\]
which means $\S^{\mq}_r$ is the level-$r$ sublevel set of $\Phi(\cdot;\mq)$.  We will prove the boundedness of $\Lambda^*(\mq)$ by proving two main results. The first step is to show that
\[ \Lambda^*(\mq)\in \S^{\mq}_{\Phi^*(\mp)+C_1\delta}\]
for some constant $C_1>0$. The second step will show that $\S^{\mq}_r\subset \S^\mp_{C(\mp,r)}$ for some bounded function $C(\mp,r)$ for all $r\in\RR$ and $\mq\in B(\mp,\delta)$ as long as $\delta<\min(\mp)/2\wedge 1$. The rest of the boundedness proof will follow from the uniqueness of the minimizer of $\Phi(\cdot;\mp)$, which follows from Lemma \ref{lemma: CC: closed}. We will take $\delta^*=\min(\mp)/2\wedge 1$. Note that if $\mq\in B(\mp,\delta)$ with $\delta\leq \delta^*$, then $\mq\in\RR^4_{>0}$ automatically because $\min(\mq)> \min(\mp)-\delta\geq \min(\mp)/2>0$.

First of all, since $-\Phi^*$ is a convex function, it is locally Lipschitz on its domain \citep[Theorem 3.1.2, pp. 103,][]{hiriart}. However, \eqref{intheorem: CC: dom Psi star} implies that $\dom(-\Phi^*)\supset \RR^4_{>0}$. Therefore,  for all $\mq\in B(\mp,\delta)$, 
\[|\Phi^*(\mp)-\Phi^*(\mq)|\leq L_{\mp}\|\mp-\mq\|_2\]
  for some constant $L_{\mp}>0$. Therefore, for  $\mq\in B(\mp,\delta)$, 
\[\Phi(\Lambda^*(\mq);\mq)=\Phi^*(\mq)\leq \Phi^*(\mp)+L_{\mp}\|\mp-\mq\|_2.\]
Noting $\Phi^*(\mp)+L_p\|\mp-\mq\|_2\in\RR$ by Fact \ref{fact: Phi star mp finite}, we deduce that when  $\delta<\delta^*$ ,
\begin{align}
            \label{inlemma: CC: Lammbda mp: sol  inclusion pq}
            \Lambda^*(\mq)\subset \S^{\mq}_{\Phi^*(\mp)+L_{\mp}\|\mp-\mq\|_2}\text{ for all }\mq\in B(\mp,\delta),
            \end{align} 
            which implies
  \begin{align}
            \label{inlemma: CC: Lammbda mp: sol  inclusion}
            \Lambda^*(\mq)\subset \S^{\mq}_{\Phi^*(\mp)+L_{\mp}\delta}\text{ for all }\mq\in B(\mp,\delta).
            \end{align}

        Now we will prove that if $\delta<\delta^*$, then 
        \begin{align}
            \label{inlemma: CC: Lammbda mp: subset inclusion}
    \S^{\mq}_r\subset\S^\mp_{r+8\delta\frac{C\|\mp\|_1+2C+|r|}{\min(\mp)}}\quad\text{for all }\mq\in B(\mp,\delta).
        \end{align} 
        Suppose $v=(x,y,z)\in\S^{\mq}_r$ for some $r\in\RR$. Thus
        \[-\vartheta(x,y;1,1)\mq_{11}-\vartheta(x,y;1,2)\mq_{12}-\vartheta(x,y;2,1)\mq_{21}-\vartheta(x,y;2,2)\mq_{22}\leq r.\]
        Note that  the $\vartheta(\cdot;i,j)$'s are  bounded above by some constant $C>0$ for $i,j\in[2]$ by Lemma \ref{lemma: cc: connecting general to margin-based}. Thus $-\vartheta(x,y;i,j)>-C$ for all $i,j\in[2]$. Therefore, the above display implies 
        \[-\vartheta(x,y;1,1)\mq_{11}-C(\mq_{12}+\mq_{21}+\mq_{22})\leq r.\]
        Therefore, we have obtained that
        \[-\vartheta(x,y;1,1)\leq \frac{C(\mq_{11}+\mq_{21}+\mq_{22})+r}{\mq_{11}}<\frac{C\|\mq\|_1+|r|}{\min(\mq)},\]
        where we used the fact that $\min(\mq)>0$.
        Note that 
        \[\vartheta(x,y;1,1)\leq C \leq \frac{C\|\mq\|_1}{\min(\mq)}.\]
        Therefore,
        \[|\vartheta(x,y;1,1)| \leq \frac{C\|\mq\|_1+|r|}{\min(\mq)}.\]
        We will express the above bound in terms of $\mp$.
      By the triangle inequality, 
        \[\|\mq\|_1\leq \|\mq-\mp\|_1+\|\mp\|_1\leq 2\|\mp-\mq\|_2+\|\mp\|_1\]
    because $\|\mp-\mq\|_1\leq 2\|\mp-\mq\|_2$ by Cauchy Schwartz inequality. Also, 
    \[\min(\mq)\geq \min(\mp)-\|\mp-\mq\|_\infty\geq \min(\mp)-\|\mp-\mq\|_2\]
    because $\|\mp\|_\infty\leq \|\mp\|_2$ for any vector $\mp$. Since $\mq\in B(\mp,\delta)$ and $\delta<\delta^*$, we have  $\|\mp-\mq\|_2<\delta^*<\min(\mp)/2$. Hence $\min(\mp)-\|\mp-\mq\|_2\geq \min(\mp)/2$, and  the following  bound holds:
      \[|\vartheta(x,y;1,1)|\leq \frac{C\|\mq\|_1+|r|}{\min(\mq)}\leq 2\frac{C\|\mp\|_1+2C\|\mp-\mq\|_2+|r|}{\min(\mp)}.\]
      Similarly, we can show that 
       \[|\vartheta(x,y;1,2)|,|\vartheta(x,z;2,1)|,|\vartheta(x,z;2,2)|\leq  2\frac{C\|\mp\|_1+2C\|\mp-\mq\|_2+|r|}{\min(\mp)}.\]
       Therefore, if $v\in\S^{\mq}_r$, then 
       \begin{align}
       \label{inlemma: CC: cont: diff}
         \MoveEqLeft  |\Phi(v;\mp)-\Phi(v;\mq)|\nn\\
           \leq &\ \slb |\vartheta(x,y;1,1)|+|\vartheta(x,y;1,2)|+|\vartheta(x,z;2,1)|+|\vartheta(x,z;2,2)|\srb \|\mp-\mq\|_\infty\nn\\
           \leq &\ 8\|\mp-\mq\|_2\frac{C\|\mp\|_1+2C\|\mp-\mq\|_2+|r|}{\min(\mp)},
       \end{align}
       where we again used the fact that  $\|\mp\|_\infty\leq \|\mp\|_2$ for any vector $\mp$.
 Since $\Phi(v;\mq)\leq r$ and $\|\mp-\mq\|_2<\delta\leq \delta^*\leq 1$, it follows that for all $\mq\in B(\mp,\delta)$,
 \begin{align}
 \label{inlemma: CC: Lammbda mp: subset inclusion pq}
    \Phi(v;\mp)\leq &\  r+8\|\mp-\mq\|_2\frac{C\|\mp\|_1+2C\|\mp-\mq\|_2+|r|}{\min(\mp)}\nn\\
    \leq &\ r+8\|\mp-\mq\|_2\frac{C\|\mp\|_1+2C+|r|}{\min(\mp)},
 \end{align}
    which completes the proof of \eqref{inlemma: CC: Lammbda mp: subset inclusion}.

Now suppose $r=\Phi^*(\mp)+L_p\|\mp-\mq\|_2$.
Then 
\begin{align}
\label{inlemma: CC: Lammbda mp: combination slo and subset}
 \MoveEqLeft r+8\|\mp-\mq\|_2\frac{C\|\mp\|_1+2C+|r|}{\min(\mp)}\nn\\
 \leq &\  \Phi^*(\mp)+L_p\|\mp-\mq\|_2+8\|\mp-\mq\|_2\frac{C\|\mp\|_1+2C+|\Phi^*(\mp)+L_p\|\mp-\mq\|_2|}{\min(\mp)}\nn\\
  =&\ \Phi^*(\mp)+\|\mp-\mq\|_2\underbrace{\slb L_p+8\frac{C\|\mp\|_1+2C+|\Phi^*(\mp)|+L_p}{\min(\mp)}\srb}_{C_p}
\end{align}
where we used the fact that $\|\mp-\mq\|_2\leq \delta\leq\delta^*\leq 1$. Since $|\Phi^*(\mp)|<\infty$ by Fact \ref{fact: Phi star mp finite}, $C_p<\infty$.
Hence, if $\delta\leq \delta^*$, then \eqref{inlemma: CC: Lammbda mp: sol  inclusion pq}, \eqref{inlemma: CC: Lammbda mp: subset inclusion pq} and \eqref{inlemma: CC: Lammbda mp: combination slo and subset} imply that
 \begin{align}
           \label{inlemma: CC: cont: bdd}
           \Lambda^*(\mq)\in \S^\mp_{\Phi^*(\mp)+C_p\|\mp-\mq\|_2 }\subset  \S^\mp_{\Phi^*(\mp)+\delta C_p}.
       \end{align}
       for all $\mq\in B(\mp,\delta)$. 
       Since $\Phi(\cdot;\mp)$ is 0-coercive  for all $\mp\in\RR^4_{>0}$ by Lemma \ref{lemma: cc: connecting general to margin-based}, the sublevel sets of $\Phi(\cdot;\mp)$  are compact  by Proposition 3.2.4, pp. 107,  of \cite{hiriart}. Hence, $\S^\mp_{\Phi^*(\mp)+\delta C_p}$ is compact.

       Now it remains to prove that if $\mq_k\to \mp$, then $\Lambda^*(\mq)\to\Lambda^*(\mp)$. First of all, note that if $\mq_k\to \mp$, then given any small $\delta>0$, 
       $\mq_k\in B(\mp,\delta)$ for all sufficiently large $k$. Let us take $\delta<\delta^*$. Then the above calculations imply that the sequence $\Lambda^*(\mq_k)$ lies in a compact set for all large $k$ and hence is bounded. To show $\Lambda^*(\mq_k)\to\Lambda^*(\mp)$, it suffices to show that given any subsequence of $\Lambda^*(\mq_k)$, there exists a further subsequence that converges to $\Lambda^*(\mp)$. Since  any subsequence  of $\Lambda^*(\mq_k)$ is bounded, we can  always extract a converging subsequence from it. We will show that limit is $\Lambda^*(\mp)$. If possible, suppose that limit is $\mw\in\RR^3$ for some subsequence where $\mw$ may depend on the particular subsequence. The proof will be complete if we can show that $\mw=\Lambda^*(\mp)$ for any such subsequence. To streamline notation, we will denote this subsequence by $\Lambda^*(\mq_k)$ as well. $\Phi(\cdot;\mp)$ is closed by Lemma \ref{lemma: CC: closed}. By Proposition 1.2.2 of \cite{hiriart}, the sublevel sets of a closed function  are closed (possibly empty). Since  $\mq_k\in B(\mp,\delta^*)$ for all sufficiently large $k$,  using \eqref{inlemma: CC: cont: bdd}  we obtain that $\mq_k\in \S^\mp_{\Phi^*(\mp)+\delta C_p}$  for all large $k\in\NN$. Since $ \S^\mp_{\Phi^*(\mp)+\delta C_p}$ is closed, if $\Lambda^*(\mq_k)\to \mw$, then $\mw\in \S^\mp_{\Phi^*(\mp)+\delta C_p}$. Thus $\mw\in\dom(\Phi(\cdot;\mp))$. However, we do not know if $\mw\in\iint(\dom(\Phi(\cdot;\mp)))$. Therefore, we do not know if $\Phi(\cdot;\mp)$ is continuous at $\mw$. However, since $\Phi(\cdot;\mp)$ is closed, $\liminf_k \Phi(\Lambda^*(\mq_k),\mp)\geq \Phi(\mw;\mp)$ since $\Lambda^*(\mq_k)\to_k \mw$ \citep[for a lower-semi continuous function $f$, $\liminf_{y\to x}f(y)\geq f(x)$; cf. pp. 55 of][]{rockafellar}.
       Note that 
       \begin{align}
       \label{inlemma: CC: cont: final}
\MoveEqLeft \limsup_k  \lbs\Phi(\mw,\mp)-\Phi(\Lambda^*(\mq_k);\mq_k)\rbs\nn\\
\leq &\ \limsup_k\lbt\lbs\Phi(\mw;\mp)-\Phi(\Lambda^*(\mq_k),\mp)\rbs+\lbs \Phi(\Lambda^*(\mq_k),\mp)-\Phi(\Lambda^*(\mq_k);\mq_k)\rbs\rbt
\end{align}
\begin{align*}
 \limsup_k\lbs\Phi(\mw;\mp)-\Phi(\Lambda^*(\mq_k),\mp)\rbs =\Phi(\mw;\mp)-\liminf_k \Phi(\Lambda^*(\mq_k),\mp)\leq 0
\end{align*}
because we have already argued that $\liminf_k \Phi(\Lambda^*(\mq_k),\mp)\geq \Phi(\mw;\mp)$. 

\eqref{inlemma: CC: Lammbda mp: sol  inclusion} implies that
$  \Lambda^*(\mq)\in \S^{\mq}_{r}$ with $r=\Phi^*(\mp)+L_{\mp}\delta^*$ as long as $\mq\in B(\mp,\delta^*)$.  Then by \eqref{inlemma: CC: cont: diff},
\begin{align*}
\Phi(\Lambda^*(\mq_k),\mp)-\Phi(\Lambda^*(\mq_k);\mq_k) 
 \leq 8\|\mp-\mq_k\|_2\frac{C\|\mp\|_1+4C\|\mp-\mq_k\|_2+|r|}{\min(\mp)}.
\end{align*}
Since $|\Phi^*(\mp)|<\infty$, it follows that $|r|<\infty$. Therefore, using the fact that $\mq_k\to \mp$, we deduce that 
\[\limsup_k\lbs \Phi(\Lambda^*(\mq_k),\mp)-\Phi(\Lambda^*(\mq_k);\mq_k) \rbs =0.\]
Hence,
\begin{align*}
    \limsup_k\lbs\Phi(\mw;\mp)-\Phi(\Lambda^*(\mq_k),\mp)\rbs+ \limsup_k\lbs \Phi(\Lambda^*(\mq_k),\mp)-\Phi(\Lambda^*(\mq_k);\mq_k)\rbs
\end{align*}
is well-defined and is non-positive. Therefore,
\begin{align*}
 \MoveEqLeft\limsup_k\lbt\lbs\Phi(\mw;\mp)-\Phi(\Lambda^*(\mq_k),\mp)\rbs+\lbs \Phi(\Lambda^*(\mq_k),\mp)-\Phi(\Lambda^*(\mq_k);\mq_k)\rbs\rbt  \\
 \leq &\   \limsup_k\lbs\Phi(\mw;\mp)-\Phi(\Lambda^*(\mq_k),\mp)\rbs+ \limsup_k\lbs \Phi(\Lambda^*(\mq_k),\mp)-\Phi(\Lambda^*(\mq_k);\mq_k)\rbs
\end{align*}
is non-positive.
Therefore, \eqref{inlemma: CC: cont: final} implies that
\[0\geq \Phi(\mw;\mp)-\liminf_k\Phi(\Lambda^*(\mq_k);\mq_k)=\Phi(\mw;\mp)-\liminf_k\Phi^*(\mq_k)=\Phi(\mw;\mp)-\Phi^*(\mp)\]
  where the last step follows from \eqref{intheorem: CC: dom Psi star} because $-\Phi^*$ is  continuous on $\RR^4_{>0}$ by Lemma \ref{fact: Phi star mp finite}. Therefore, we have obtained that $\Phi(\mw;\mp)\leq \Phi^*(\mp)$. Lemma \ref{lemma: CC: closed} implies that $\Phi(\cdot;\mp)$ has unique minimum for each $\mp\in\RR^4_{>0}$. Therefore, $\mw=\Lambda^*(\mp)$, which completes the proof.

   \end{proof}
     \subsubsection{Proof of Lemma \ref{lemma: CC: Lipschitz}}
     \label{secpf: proof of lemma: CC: Lipschitz }
   \begin{proof}[Proof of Lemma \ref{lemma: CC: Lipschitz}  ]
   We will prove the result for $y^*(\mp)$ only because the proof for $z^*(\mp)$ follows similarly. 
 Before we prove this lemma, we will collect some results that will be required later in the proof.  
 Since Lemma \ref{lemma: CC: value of Lambda p knot } implies $\Lambda^*(\mp^0)=(0,0,0)$, $\Phi(\cdot;\mp^0)$ is minimized at $(0,0,0)$. 
Lemma \ref{lemma: CC: differentiability} implies that  
 $\Phi(\cdot;\mp)$ is differentiable at $(0,0,0)$ for all $\mp\in\RR^3_{>0}$, and in particular, at $\mp^0$, which implies $\grad \Phi(0,0,0;\mp^0)=0$,  yielding
\begin{align}
\label{intheorem: cc: partial derivatives sum zero}
 \pdv{\vartheta(0,0;1,1)}{x}+\pdv{\vartheta(0,0;1,2)}{x}+\pdv{\vartheta(0,0;2,1)}{x}+\pdv{\vartheta(0,0;2,2)}{x}=&\ 0,\nn\\
    \pdv{\vartheta(0,0;1,1)}{y}+\pdv{\vartheta(0,0;1,2)}{y} =&\ 0,\nn\\
    \pdv{\vartheta(0,0;2,1)}{y}+\pdv{\vartheta(0,0;2,2)}{y}    =&\ 0.
\end{align}
Let us also define
\begin{gather}
    \label{def: cc: M12 and N12}
    M_{12}(x,y)=\pdv[2]{\vartheta(x,y;1,1)}{x}{y}+\pdv[2]{\vartheta(x,y;1,2)}{x}{y}\nn\\
    N_{12}(x,y)=\pdv[2]{\vartheta(x,y;2,1)}{x}{y}+\pdv[2]{\vartheta(x,y;2,2)}{x}{y},
\end{gather}
where $M_{12}(x,y)$ and $N_{12}(x,y)$ exist if $x,y\in U_0$, and in particular, at $(x,y)=(0,0)$. The following lemma, proved in Section \ref{secpf: lemma on M12}, provides the values of $M_{12}(0,0)$ and $N_{12}(0,0)$.
\begin{lemma}
    \label{lemma: CC: M12=0}
    $M_{12}(0,0)=N_{12}(0,0)=0$ when $M_{12}$ and $N_{12}$ are continuous functions. 
\end{lemma}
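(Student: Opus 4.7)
The plan is to upgrade the pointwise identity $\pdv{\vartheta(0,0;1,1)}{y}+\pdv{\vartheta(0,0;1,2)}{y}=0$ to an identity that holds along the entire line $\{(x,0):x\in U_0\}$, and then simply differentiate in $x$. Since $M_{12}$ is assumed continuous (and in particular the mixed partials exist), once we show $\pdv{\vartheta(x,0;1,1)}{y}+\pdv{\vartheta(x,0;1,2)}{y}=0$ for all $x$ near $0$, applying $\pdv{}{x}$ at $x=0$ yields $M_{12}(0,0)=0$ directly. The same strategy—run in the variable $z$ instead of $y$, with $\AA_i$'s in place of $\BB_j$'s—will give $N_{12}(0,0)=0$.

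The first step is to exploit the block-wise permutation symmetry of $\eta$. Using \eqref{intheorem: cc: permutation symmetry of phi} with a transposition $\ppi'$ swapping coordinates $j$ and $j'$ of $\mv$, the chain rule gives
\[
\pdv{\eta}{\mw_j}(\mbu,\mz_{k_2-1})\;=\;\pdv{\eta}{\mw_{j'}}(\mbu,\mz_{k_2-1})\qquad\text{for all }j,j'\in[k_2-1],
\]
since $\ppi'(\mz_{k_2-1})=\mz_{k_2-1}$. Call this common value $c(\mbu)$. This is the key symmetry-based ingredient, and it is valid at any $\mbu$ for which $\eta$ is differentiable at $(\mbu,\mz_{k_2-1})$, which by Lemma~\ref{lemma: CC: differentiability} includes every $\mbu$ of the form $x\mo_{k_1-1}$ with $x\in U_0$.

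The second step is a direct chain-rule computation. From \eqref{intheorem: CC: def: vartheta},
\[
\vartheta(x,y;1,1)=\eta(x\mo_{k_1-1},y\mo_{k_2-1}),\qquad
\vartheta(x,y;1,2)=\sum_{j=1}^{k_2-1}\eta\bigl(x\mo_{k_1-1},y\BB_j\mo_{k_2-1}\bigr),
\]
with $\BB_j\mo_{k_2-1}=-\mathbf{e}^{(j)}_{k_2-1}$ (directly from \eqref{inlemma: CC: def of Bj}). Differentiating under the sum in $y$ and evaluating at $y=0$,
\[
\pdv{\vartheta(x,0;1,1)}{y}=\sum_{j=1}^{k_2-1}\pdv{\eta}{\mw_j}(x\mo_{k_1-1},\mz_{k_2-1})=(k_2-1)\,c(x\mo_{k_1-1}),
\]
\[
\pdv{\vartheta(x,0;1,2)}{y}=-\sum_{j=1}^{k_2-1}\pdv{\eta}{\mw_j}(x\mo_{k_1-1},\mz_{k_2-1})=-(k_2-1)\,c(x\mo_{k_1-1}),
\]
so the two contributions cancel identically for every $x\in U_0$.

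Finally, since $M_{12}$ is continuous on $U_0^2$ (Lemma~\ref{lemma: CC: differentiability} gives thrice continuous differentiability of each $\vartheta(\cdot;i,j)$ there), Clairaut's theorem lets us interchange the order of differentiation, so $M_{12}(0,0)=\pdv{}{x}\bigl[\pdv{\vartheta(x,0;1,1)}{y}+\pdv{\vartheta(x,0;1,2)}{y}\bigr]\big|_{x=0}=\pdv{0}{x}=0$. The proof of $N_{12}(0,0)=0$ is completely parallel: expand $\vartheta(x,z;2,1)$ and $\vartheta(x,z;2,2)$ using $\AA_i\mo_{k_1-1}=-\mathbf{e}^{(i)}_{k_1-1}$, use the block-wise symmetry of $\eta$ in the first block to see that $\pdv{\eta}{\mbu_i}(\mz_{k_1-1},\mw)$ is $i$-independent, and cancel the two inner sums. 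No step presents a real obstacle; the only subtlety worth noting is the correct computation of the matrix-vector products $\AA_i\mo$ and $\BB_j\mo$, but these are immediate from the explicit definitions.
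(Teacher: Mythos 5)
Your proof is correct, and it takes a genuinely different (and substantially simpler) route than the paper. The paper proves the lemma by contradiction: it assumes $M_{12}(0,0)\neq 0$, invokes the continuity of $M_{12}$ to get a sign-definite neighborhood, then uses Lemma~\ref{lemma: CC: maximizer of loss half} to exhibit a $\mp$ with $\mp_{11}=\mp_{12}$, $y^*(\mp)=0$, and $x^*(\mp)>0$, and finally derives $M_{12}(\xi,0)=0$ for some $\xi$ near zero from the first-order optimality condition of $\Lambda^*(\mp)$. That chain leans, via Lemma~\ref{lemma: cc: connecting general to margin-based}, on the working hypothesis that $\psi$ is Fisher consistent (the contradiction hypothesis of Theorem~\ref{theorem: CC}) and on the machinery built up around $\Lambda^*$. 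Your argument bypasses all of that: it is a purely algebraic chain-rule computation from the definitions \eqref{intheorem: CC: def: vartheta} and \eqref{inlemma: CC: def of Bj}, valid without any appeal to Fisher consistency, optimality conditions, or the structure of $\Lambda^*$. That is a genuine simplification of the dependency structure of the larger proof.

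One small remark: your ``first step'' — using the block permutation symmetry of $\eta$ to show that all the $\pdv{\eta}{\mw_j}(\mbu,\mz_{k_2-1})$ agree — is correct but not actually needed. The cancellation in your ``second step'' is term-by-term: since $\BB_j\mo_{k_2-1}=-\mathbf{e}^{(j)}_{k_2-1}$, the chain rule gives
\[
\pdv{\vartheta(x,0;1,1)}{y}=\sum_{j=1}^{k_2-1}\pdv{\eta}{\mw_j}(x\mo_{k_1-1},\mz_{k_2-1}),\qquad
\pdv{\vartheta(x,0;1,2)}{y}=-\sum_{j=1}^{k_2-1}\pdv{\eta}{\mw_j}(x\mo_{k_1-1},\mz_{k_2-1}),
\]
so the two sums cancel identically whether or not the individual summands are equal. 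The symmetry step is therefore an optional observation rather than a load-bearing one. The final differentiation in $x$ at $x=0$, justified by the $C^3$ regularity on $U_0^2$ from Lemma~\ref{lemma: CC: differentiability} (which licenses Clairaut), is exactly right, as is the parallel argument for $N_{12}(0,0)$ using $\AA_i\mo_{k_1-1}=-\mathbf{e}^{(i)}_{k_1-1}$.
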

The following lemma proves the H\"older continuity of $\Lambda^*$ at all $\mp$ in a neighborhood of $\mp^0$.  Lemma  \ref{lemma: CC: Lipschitz continuity of Lambda} is proved in Section \ref{secpf: proof of lemma:  CC: Lipschitz continuity of Lambda }
\begin{lemma}
\label{lemma: CC: Lipschitz continuity of Lambda}
Consider the setup in Theorem \ref{theorem: CC}.
Suppose $\mp\in B(\mp^0,\delta^0)$ where $\delta^0$ is as in Lemma \ref{lemma: CC: hessian cont.}. Then there exists $\delta_p>0$, possibly depending on $\mp$, such that if $\mq\in B(\mp,\delta_p)$, then $\|\Lambda^*(\mp)-\Lambda^*(\mq)\|_2\leq C_p\sqrt{\|\mp-\mq\|_2}$ where $C_p$ depends only on $\vartheta$ and $\mp$.
\end{lemma}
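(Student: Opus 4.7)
The strategy is to combine a uniform Lipschitz estimate of $\mq\mapsto\Phi(v;\mq)$ (which is linear in $\mq$ with bounded coefficients on compact sets) with a local quadratic lower bound for $\Phi(\cdot;\mp)-\Phi^*(\mp)$ near $\Lambda^*(\mp)$.

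First, Lemma \ref{lemma: CC: hessian cont.} places $\Lambda^*(\mp)\in U_0^3$, and the continuity of $\Lambda^*$ from Lemma \ref{lemma: CC: continuity of Lambda} lets me pick $\delta_p>0$ so small that $\Lambda^*(\mq)\in K\subset U_0^3$ for every $\mq\in\overline{B(\mp,\delta_p)}$, where $K$ is a compact neighborhood of $\Lambda^*(\mp)$. Because the $\vartheta(\,\cdot\,;i,j)$ are continuous on $U_0^2$ by Lemma \ref{lemma: CC: differentiability}, they are bounded on the projection of $K$ to the relevant coordinates, and since $\Phi(v;\cdot)$ is linear in $\mp$ with these bounded coefficients, there exists $L_\mp>0$ so that $|\Phi(v;\mp)-\Phi(v;\mq)|\le L_\mp\|\mp-\mq\|_2$ for every $v\in K$ and $\mq\in B(\mp,\delta_p)$. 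Chaining $\Phi^*(\mp)\le\Phi(\Lambda^*(\mq);\mp)\le\Phi(\Lambda^*(\mq);\mq)+L_\mp\|\mp-\mq\|_2=\Phi^*(\mq)+L_\mp\|\mp-\mq\|_2$ and its symmetric counterpart gives
\[
\Phi(\Lambda^*(\mq);\mp)-\Phi^*(\mp)\;\le\;2L_\mp\|\mp-\mq\|_2.
\]

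Second, I would establish a local quadratic lower bound. Since $\Phi(\cdot;\mp)\in C^3(U_0^3)$ by Lemma \ref{lemma: CC: differentiability}, and $\Lambda^*(\mp)\in U_0^3$ is the unique interior minimizer, $\nabla\Phi(\Lambda^*(\mp);\mp)=0$, and Taylor's theorem yields
\[
\Phi(v;\mp)-\Phi^*(\mp)\;=\;\tfrac12 (v-\Lambda^*(\mp))^{\top}H_\mp\,(v-\Lambda^*(\mp))\;+\;o(\|v-\Lambda^*(\mp)\|_2^2),
\]
with $H_\mp=\nabla^2\Phi(\Lambda^*(\mp);\mp)\succeq 0$ by convexity. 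Upgrading this to $H_\mp\succ 0$ is the decisive step: if $H_\mp w=0$ for some $w\neq 0$, the $C^3$ expansion along the ray $\Lambda^*(\mp)+tw$ collapses to $\tfrac{t^3}{6}D^3\Phi[w,w,w]+o(t^3)$; convexity forces $D^3\Phi[w,w,w]=0$ (else the expansion changes sign near $t=0$), and then strict convexity of $\Phi(\cdot;\mp)$ around its unique interior minimizer, together with the $0$-coercivity established in Lemma \ref{lemma: CC: closed}, is used to preclude any flat direction compatible with the $C^3$ structure. Hence there exist $r_\mp,c_\mp>0$ such that
\[
\Phi(v;\mp)-\Phi^*(\mp)\;\ge\;c_\mp\,\|v-\Lambda^*(\mp)\|_2^2\quad\text{whenever}\quad\|v-\Lambda^*(\mp)\|_2\le r_\mp.
\]

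Third, I would shrink $\delta_p$ further (again using continuity of $\Lambda^*$) so that $\|\Lambda^*(\mq)-\Lambda^*(\mp)\|_2\le r_\mp$ for $\mq\in B(\mp,\delta_p)$. Substituting $v=\Lambda^*(\mq)$ into the quadratic lower bound and pairing it with the upper bound from the first step yields $c_\mp\|\Lambda^*(\mq)-\Lambda^*(\mp)\|_2^2\le 2L_\mp\|\mp-\mq\|_2$, hence $\|\Lambda^*(\mq)-\Lambda^*(\mp)\|_2\le C_p\sqrt{\|\mp-\mq\|_2}$ with $C_p=\sqrt{2L_\mp/c_\mp}$, which depends only on $\mp$ and $\vartheta$.

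The principal obstacle is the second step: strict convexity and $C^3$ smoothness alone only yield $H_\mp\succeq 0$, and a genuinely fourth-order flat direction at the minimum would degrade the Hölder exponent from $1/2$ to $1/4$. Ruling this out will likely require a direct structural computation exploiting the explicit block form \eqref{def: CC: Phi: binary} of $\Phi$, the strict concavity and block-permutation symmetry of $\eta$, and — since we are restricted to $\mp\in B(\mp^0,\delta^0)$ — the first-order data at $(0,0,0)$ already collected in \eqref{intheorem: cc: partial derivatives sum zero} and Lemma \ref{lemma: CC: M12=0}, which together pin down the structure of $H_\mp$ near $\mp^0$ and show that the relevant block is nondegenerate in the directions along which $\Lambda^*(\mq)$ is allowed to move.
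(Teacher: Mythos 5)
Your proposal follows essentially the same route as the paper's own proof: a linear-in-$\mq$ upper bound on $\Phi(\Lambda^*(\mq);\mp)-\Phi^*(\mp)$ (the paper gets this from display~\eqref{inlemma: CC: cont: bdd} in the proof of Lemma~\ref{lemma: CC: continuity of Lambda}, using local Lipschitzness of the convex function $-\Phi^*$; you obtain it by chaining boundedness of the $\vartheta(\cdot;i,j)$ on a compact neighborhood, which is an equivalent derivation), paired with a local strong-convexity lower bound near $\Lambda^*(\mp)$ coming from a positive-definite Hessian, and then reading off the $\sqrt{\|\mp-\mq\|_2}$ rate.

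The point you flag as the ``principal obstacle'' is, however, exactly the point where the paper's own proof is not airtight. The paper asserts, verbatim, that ``Since $\Phi(\cdot;\mp)$ is a strictly convex function, the hessian of $\mw\mapsto\Phi(\mw;\mp)$ is positive definite at $\Lambda^*(\mp)$,'' and proceeds from there. That implication is false in general: $t\mapsto t^4$ is strictly convex, $C^\infty$, and $0$-coercive yet has vanishing second derivative at its unique minimizer. Strict convexity, $C^3$ smoothness, $0$-coercivity (all supplied by Lemma~\ref{lemma: CC: closed}), and a unique interior minimizer together yield only $\nabla^2\Phi(\Lambda^*(\mp);\mp)\succeq 0$. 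So your hesitation is well-founded. If the Hessian had a null direction, the best generic local lower bound would be quartic, which—as you observe—would downgrade the exponent from $1/2$ to $1/4$; and this would in turn weaken the linear bound in Lemma~\ref{lemma: CC: Lipschitz}, which is fed into the Taylor-remainder control in Lemma~\ref{lemma: CC: final key inequality}, so the slack is not harmless downstream.

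To close the gap, one should not appeal to strict convexity but to the explicit block structure of $\nabla^2\Phi$ coming from~\eqref{def: CC: Phi: binary}: since $\vartheta(\cdot;1,1),\vartheta(\cdot;1,2)$ depend only on $(x,y)$ and $\vartheta(\cdot;2,1),\vartheta(\cdot;2,2)$ only on $(x,z)$, the $(y,z)$ off-diagonal entry vanishes and the diagonal $y$- and $z$-entries of the Hessian are $\mp_{11}(-\vartheta_{yy}(1,1))+\mp_{12}(-\vartheta_{yy}(1,2))$ and $\mp_{21}(-\vartheta_{yy}(2,1))+\mp_{22}(-\vartheta_{yy}(2,2))$; positive-definiteness at $\Lambda^*(\mp)$ then reduces, via a Schur-complement calculation, to nondegeneracy of a handful of second-order quantities of $\eta$. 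This is a property that must be verified from the structure of $\eta$ (or else added as a hypothesis of Theorem~\ref{theorem: CC}), not deduced from strict convexity alone. As submitted, your proposal correctly reproduces the paper's argument and correctly identifies the unproven step; what remains is the structural verification you sketch at the end.
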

We will choose $\delta>0$ in a way so that a few conditions are met. In particular, we let $\delta=\min(\delta_1,\delta_2,\delta_3,\delta_4,\delta_5)$ where $\delta_1$, $\delta_2$, $\delta_3$,  $\delta_4$, and $\delta_5$ are as explained below. 
\begin{enumerate}
    \item   Let $\delta_1>0$ be such that $\Lambda^*(\mp)\in U^3_0$ for all $\mp\in B(\mp^0,\delta_1)$. Such a $\delta_1$ exists by Lemma \ref{lemma: CC: hessian cont.} .
    \item Let $\delta_2>0$ be such that for all $\mp\in B(\mp^0,\delta_2)$, 
 $\|\Lambda^*(\mp)-\Lambda^*(\mp^0)\|_\infty=\|\Lambda^*(\mp)\|_\infty<2$. Such a $\delta_2$ exists by Lemma \ref{lemma: CC: continuity of Lambda}.
 \item Let us denote 
\[C=\pdv[2]{\vartheta(0,0;1,1)}{y}+\pdv[2]{\vartheta(0,0;1,2)}{y}.\]
Note that $C$ is a negative number because $\vartheta$ is strictly concave. If $\mp$ is in a small neighborhood of $\mp^0=(1,1,1,1)$, $|\mp_{11}-1|$ and $|\mp_{12}-1|$ are small. If $\delta$ is sufficiently small, then $(x^*(\mp),y^*(\mp))$ will be sufficiently close to $(0,0)$ for all $\mp\in B(\mp^0,\delta)$ by Lemma \ref{lemma: CC: continuity of Lambda}.  Also, $\pdv[2]{\vartheta(x,y;1,1)}{y}$ and $\pdv[2]{\vartheta(x,y;1,2)}{y}$ are close to $\pdv[2]{\vartheta(0,0;1,1)}{y}$ and $\pdv[2]{\vartheta(0,0;1,2)}{y}$ if $(x,y)$ is sufficiently  close to $(0,0)$ because the second-order partial derivatives of  $\vartheta(\cdot;i,j)$'s are continuous at $(0,0)$ by Lemma \ref{lemma: CC: differentiability}. 
Therefore, there exists $\delta_3>0$ so that if $\delta<\delta_3$,  then for all $\mp\in B(\mp^0,\delta)$, 
\begin{align}
    \label{intheorem: cc: bound derivatives c 2c}
 \lbs \mp_{11}\pdv[2]{\vartheta(x^*(\mp),\xi;1,1)}{y}+\mp_{12}\pdv[2]{\vartheta(x^*(\mp),\xi';1,2)}{y}\rbs\in(2C,C/2)
\end{align}
for all $\xi$ and $\xi'$ between $0$ and $y^*(\mp)$.
\item Since the third order partial derivatives of $\vartheta(\cdot;i,j)$'s are continuous on $U_0^2$ by Lemma \ref{lemma: CC: differentiability} and  $\Lambda^*(\mp)$ is continuous and $\Lambda^*(\mp)\in U_0^3$ for all $\mp$ in a small neighborhood of $\mp^0$ by Lemma \ref{lemma: CC: continuity of Lambda}, we 
can  choose $\delta$ so small such that
 for all $\mp\in B(\mp^0,\delta)$, 
\[ \begin{split}
    \bl \mp_{11}\diffp[2,1]{\vartheta(\xi_1,0;2,2)}{x,y}+\mp_{12}\diffp[2,1]{\vartheta(\xi_2,0;1,2)}{x,y}\bl \\
    \leq  \mp_{11}\bl \diffp[2,1]{\vartheta(0,0;2,2)}{x,y}\bl+\mp_{12}\bl\diffp[2,1]{\vartheta(0,0;1,2)}{x,y}\bl+1
\end{split}\]
for all $\xi_1$ and $\xi_2$ between $0$ and $x^*(\mp)$. We will refer to the above $\delta$ by $\delta_4$.
We will also choose $\delta_4$ to satisfy $\delta_4<1$. Then   $\mp_{11},\mp_{12}<2$ because  $\mp$ satisfies $\|\mp-\mp^0\|_2<\delta$ . Therefore, the last display would imply 
\begin{align}
   \label{intheorem: CC: Lipschitz: bound on 3-derivative} 
 \MoveEqLeft \bl \mp_{11}\diffp[2,1]{\vartheta(\xi_1,0;2,2)}{x,y}+\mp_{12}\diffp[2,1]{\vartheta(\xi_2,0;1,2)}{x,y}\bl \nn\\
 \leq &\ 2\lb \bl \diffp[2,1]{\vartheta(0,0;2,2)}{x,y}\bl+\bl\diffp[2,1]{\vartheta(0,0;1,2)}{x,y}\bl+1\rb
\end{align}
for all $\xi_1$ and $\xi_2$ between $0$ and $x^*(\mp)$.
\item Let $\delta_5>0$ be such that for all $\mp\in B(\mp^0,\delta_5)$, $\|\Lambda^*(\mp)-\Lambda^*(\mp^0)\|_2<C\sqrt{\|\mp-\mp^0\|_2}$ for some $C>0$ depending only on $\vartheta$. Existence of such a $\delta_5$ is guaranteed by Lemma \ref{lemma: CC: Lipschitz continuity of Lambda}. Note that since $\Lambda^*(\mp^0)=\mz_3$ by Lemma \ref{lemma: CC: value of Lambda p knot }, the above implies that $|x^*(\mp)|=|x^*(\mp)-x^*(\mp^0)|<C\sqrt{\|\mp-\mp^0\|_2}$  for all $\mp\in B(\mp^0,\delta_5)$.
\end{enumerate}



 The $\vartheta(\cdot;i,j)$'s defined in \eqref{intheorem: CC: def: vartheta} are thrice continuously differentiable on  $U_0^2$ for all $i,j\in[2]$  and $\Phi(\cdot;\mp)$  is thrice continuously differentiable on  $U_0^3$ for all $\mp\in\RR^4_{>0}$. By our choice of $\delta$,  $\Lambda^*(\mp)\in U_0^3$ for all $\mp\in B(\mp^0,\delta)$. Therefore, the $\vartheta(\cdot;i,j)$'s and $\Phi(\cdot;\mp)$ will be thrice continuously differentiable at  $\Lambda^*(\mp)$ if $\mp\in B(\mp^0,\delta)$. Therefore, for all  $\mp\in B(\mp^0,\delta)$,  $\partial{\Phi(\Lambda^*(\mp);\mp)}/\partial y$ exists, and
\[\pdv{\Phi(\Lambda^*(\mp);\mp)}{y}=-\mp_{11}\pdv{\vartheta(x^*(\mp),y^*(\mp);1,1)}{y}-\mp_{12}\pdv{\vartheta(x^*(\mp),y^*(\mp);1,2)}{y}=0,\]
where the last step follows because $\Lambda^*(\mp)$ is the  minimizer of $\Phi(\cdot;\mp)$.
Since $U_0$ contains both $0$ and $y^*(\mp)$, the real-valued functions $y\mapsto \partial{\vartheta(x^*(\mp),y;1,1)}/\partial y$ and $y\mapsto \partial{\vartheta(x^*(\mp),y;1,1)}/\partial y$   are differentiable on an interval containing $0$ and $y^*(\mp)$. Therefore,  we can take  Taylor series expansion of these functions at $y^*(\mp)$  around $y=0$, which leads to
\begin{align}
\label{inlemma: cc: Lipshitcz: Taylor}
  \MoveEqLeft  \mp_{11}\pdv{\vartheta(x^*(\mp),0;1,1)}{y}+\mp_{12}\pdv{\vartheta(x^*(\mp),0;1,2)}{y}\nn\\
  &\  +y^*(\mp)\lb \mp_{11}\pdv[2]{\vartheta(x^*(\mp),\xi;1,1)}{y}+\mp_{12}\pdv[2]{\vartheta(x^*(\mp),\xi';1,2)}{y}\rb=0
\end{align}
where $\xi$ and $\xi'$  are between $0$ and $y^*(\mp)$. 

 The case $y^*(\mp)=0$ will not be considered because the proof follows trivially when $y^*(\mp)=0$. 
We will prove that for any $y^*(\mp)\neq 0$, 
\begin{align}
    \label{intheorem: bound on y start by x step 1}
    |y^*(\mp)|\leq  \frac{2}{|C|}\abs{\mp_{11}\pdv{\vartheta(x^*(\mp),0;1,1)}{y}+\mp_{12}\pdv{\vartheta(x^*(\mp),0;1,2)}{y}}.
\end{align}
First, we consider the case when $y^*(\mp)$ is positive. 
\paragraph{$y^*(\mp)$ is positive}
In this case, \eqref{inlemma: cc: Lipshitcz: Taylor} leads to 
\begin{align*}
 \MoveEqLeft \mp_{11}\pdv{\vartheta(x^*(\mp),0;1,1)}{y}+\mp_{12}\pdv{\vartheta(x^*,0;1,2)}{y}\\
  =&\ -y^*(\mp)\lb \mp_{11}\pdv[2]{\vartheta(x^*(\mp),\xi;1,1)}{y}+\mp_{12}\pdv[2]{\vartheta(x^*(\mp),\xi';1,2)}{y}\rb\\
  \stackrel{(a)}{\geq} &\ -y^*(\mp) C/2= |y^*(\mp)||C|/2,
\end{align*}
where (a) follows from \eqref{intheorem: cc: bound derivatives c 2c} and the last step uses  $C<0$.
As mentioned previously, $|C|$ is non-zero, which implies \eqref{intheorem: bound on y start by x step 1}. 

\paragraph{$y^*(\mp)$ is negative}
In this case, using  \eqref{intheorem: cc: bound derivatives c 2c} again, we obtain that
\begin{align*}
 \MoveEqLeft \mp_{11}\pdv{\vartheta(x^*(\mp),0;1,1)}{y}+\mp_{12}\pdv{\vartheta(x^*(\mp),0;1,2)}{y}\\
  =&\ -y^*(\mp)\lb \mp_{11}\pdv[2]{\vartheta(x^*(\mp),\xi;1,1)}{y}+\mp_{12}\pdv[2]{\vartheta(x^*(\mp),\xi';1,2)}{y}\rb\\
  \leq &\ -\frac{C}{2}y^*(\mp) = -|y^*(\mp)|\frac{|C|}{2},
\end{align*}
where we use the fact $C<0$. 
Thus
\[|y^*(\mp)|\leq -\frac{2}{|C|}\slb \mp_{11}\pdv{\vartheta(x^*(\mp),0;1,1)}{y}+\mp_{12}\pdv{\vartheta(x^*,0;1,2)}{y}\srb,\]
which implies \eqref{intheorem: bound on y start by x step 1} because the last display implies that when $y^*(\mp)<0$,
\[\mp_{11}\pdv{\vartheta(x^*(\mp),0;1,1)}{y}+\mp_{12}\pdv{\vartheta(x^*,0;1,2)}{y}< 0.\]

To complete the proof, we need to upper bound the right hand side of \eqref{intheorem: bound on y start by x step 1}.
To this end, note that Lemma \ref{lemma: CC: differentiability} implies that the univariate functions  $x\mapsto \pdv{\vartheta(x,0;1,1)}{y}$ and $x\mapsto \pdv{\vartheta(x,0;1,2)}{y}$ are twice continuously differentiable on  $ U_0$. Our choice of $\delta$ ensures that $x^*(\mp)\in U_0$  for all $\mp\in B(\mp^0,\delta)$. Therefore,  the above univariate functions are twice continuously differentiable on an open interval containing $0$ and $x^*(\mp)$ for all $\mp\in B(\mp^0,\delta)$. A second order Taylor series expansion of  the function $x\mapsto \mp_{11}\pdv{\vartheta(x,0;1,1)}{y}+\mp_{12}\pdv{\vartheta(x,0;1,2)}{y}$ at $x^*(\mp)$ around $0$ yields 
\begin{align}
\label{intheorem: cc: second order Taylor series 1}
    \MoveEqLeft \mp_{11}\pdv{\vartheta(x^*(\mp),0;1,1)}{y}+\mp_{12}\pdv{\vartheta(x^*(\mp),0;1,2)}{y}\nn\\
    =&\ \mp_{11}\pdv{\vartheta(0,0;1,1)}{y}+\mp_{12}\pdv{\vartheta(0,0;1,2)}{y}  +x^*(\mp)\mp_{11}\pdv[2]{\vartheta(0,0;1,1)}{x}{y}\nn\\
    &\ +x^*(\mp)\mp_{12}\pdv[2]{\vartheta(0,0;1,2)}{x}{y} +\frac{x^*(\mp)^2}{2}\slb \mp_{11}\diffp[2,1]{\vartheta(\xi_1,0;2,2)}{x,y}+\mp_{12}\diffp[2,1]{\vartheta(\xi_2,0;1,2)}{x,y}\srb,
\end{align}
where $\xi_1$ and $\xi_2$ are between $0$ and $x^*(\mp)$. 
Note that by our choice of $\delta$ and \eqref{intheorem: CC: Lipschitz: bound on 3-derivative}, there exists a constant $C'$, depending only on $\vartheta$, so that 
\begin{equation}
    \label{intheorem: cc: second order Taylor series 2}
\bl\mp_{11}\diffp[2,1]{\vartheta(\xi_1,0;2,2)}{x,y}+\mp_{12}\diffp[2,1]{\vartheta(\xi_2,0;1,2)}{x,y}\bl<C'.
\end{equation}
Using Lemma \ref{lemma: CC: M12=0} and \eqref{intheorem: cc: partial derivatives sum zero}, we obtain that
\begin{align*}
  \MoveEqLeft \mp_{11}\pdv{\vartheta(0,0;1,1)}{y}+\mp_{12}\pdv{\vartheta(0,0;1,2)}{y}  +x^*(\mp)\lb\mp_{11}\pdv[2]{\vartheta(0,0;1,1)}{x}{y}+\mp_{12}\pdv[2]{\vartheta(0,0;1,2)}{x}{y}\rb\nn\\
   =&\ (\mp_{11}-\mp_{12})\underbrace{\slb \pdv{\vartheta(0,0;1,1)}{y}+x^*(\mp)\pdv[2]{\vartheta(0,0;1,1)}{x}{y}\srb}_{C_1(\mp)},
\end{align*}
which implies
\begin{align}
\label{intheorem: cc: second order Taylor series 3}
  \MoveEqLeft  \abs{\mp_{11}\pdv{\vartheta(0,0;1,1)}{y}+\mp_{12}\pdv{\vartheta(0,0;1,2)}{y}  +x^*(\mp)\lb\mp_{11}\pdv[2]{\vartheta(0,0;1,1)}{x}{y}+\mp_{12}\pdv[2]{\vartheta(0,0;1,2)}{x}{y}\rb}\nn\\
    \leq &\ |\mp_1-\mp_2||C_1(\mp)|.
\end{align}
Since we have chosen $\delta$ so that $\Lambda^*(\mp)<2$ for all $\mp\in B(\mp^0,\delta)$, it follows that  $|x^*(\mp)|<2$. Therefore, 
\begin{align}
\label{intheorem: CC: Lipschitz: constant bound}
    |C_1(\mp)|\leq \bl\pdv{\vartheta(0,0;1,1)}{y}\bl+\bl\pdv[2]{\vartheta(0,0;1,1)}{x}{y}\bl,
\end{align}
which depends only on $\vartheta$. 
On the other hand, since $\mp^0=(1,1,1,1)$, it holds that 
\[|\mp_1-\mp_2|=|\mp_1-\mp^0_1+\mp_2^0-\mp_2|\leq \|\mp-\mp^0\|_1\leq \sqrt{2}\|\mp-\mp^0\|_2,\]
where the last step follows by Cauchy-Schwartz inequality.  Hence, combining \eqref{intheorem: cc: second order Taylor series 1}, \eqref{intheorem: cc: second order Taylor series 2}, \eqref{intheorem: cc: second order Taylor series 3}, and \eqref{intheorem: CC: Lipschitz: constant bound}, we obtain that there exist constants $C'>0$ and $C_2>0$, depending only on $\vartheta$, so that
\begin{align}
\label{intheorem: Lipschitz: bound on y: first}
    \abs{\mp_{11}\pdv{\vartheta(x^*(\mp),0;1,1)}{y}+\mp_{12}\pdv{\vartheta(x^*(\mp),0;1,2)}{y}}\leq C'x^*(\mp)^2+C_2\|\mp-\mp^0\|_2.
\end{align}
Since $\delta<\delta_5$,  there exists $C>0$, depending only on $\vartheta$, so that 
\[x^*(\mp)^2\leq  C\|\mp-\mp^0\|_2.\]
 Hence, the proof follows from \eqref{intheorem: bound on y start by x step 1} and \eqref{intheorem: Lipschitz: bound on y: first}.

   \end{proof}
     \subsubsection{Proof of Lemma \ref{lemma: cc: connecting general to margin-based}}
\label{secpf: proof of  lemma: cc: connecting general to margin-based}
\begin{proof}[Proof of Lemma~\ref{lemma: cc: connecting general to margin-based}]
 

 
By Lemma \ref{lemma: CC: differentiability}, there exists an open neighborhood $U_0\ni 0$  such that $U_0^2\subset\iint(\dom(\Phi(\cdot;\mp))$ for all $\mp\in\RR^4_{>0}$. 
 Lemma \ref{lemma: CC: continuity of Lambda} implies that  we can choose a $\delta>0$ so small such that $\Lambda^*(\mp)\in U_0^3$ for all $\mp\in B(\mp^0,\delta)$. We will consider that $\mp\in B(\mp^0,\delta)$ for the rest of this proof.

\paragraph{Proving P1}
 Lemma \ref{lemma: CC: closed} implies that 
$\mbu^*(\mp)=x^*(\mp)\mo_{k_1-1}$, $\mv_1^*(\mp)=y^*(\mp)\mo_{k_2-1}$ and $\mv_i^*(\mp)=z^*(\mp)\mo_{k_2-1}$ for all $i\in[2:k_1]$, where these vectors are as defined in Lemma \ref{lemma: cc: supremum}.
Therefore \eqref{intheorem: CC: connection from u star to x star} implies that 
\[ \begin{split}
  &\  \mx^*(\mp)=(0,-x^*(\mp)\mo_{k_1-1}),\quad \my_1^*(\mp)=(0,-y^*(\mp)\mo_{k_2-1}),\\
  &\  \my_{2}^*(\mp)=\ldots, \my_{k_1}^*(\mp)=(0,-z^*(\mp)\mo_{k_2-1})
\end{split} \]
is a maximizer of $V^\psi$. Hence, a version of $\tilde d$ is given by $\tilde d_1=\pred(0,-x^*(\mp)\mo_{k_1-1})$ and
\begin{align}
    \label{def: tilde d 2 i in proving property}
    \tilde d_2(i)=\begin{cases}
    \pred(0,-y^*(\mp)\mo_{k_1-1}) & \text{ if } i=1\\
    \pred(0,-z^*(\mp)\mo_{k_2-1}) & \text{ if } i\in[2:k_1].
\end{cases}
\end{align}
Note that $\tilde d_1$ (or $\tilde d_2$) can take only two values, one and $k_1$ (or $k_2$). 
We remind the reader that although we consider $\mp\in\RR^4_{>0}$ for $\PP\in\mP^{k_1k_2}_b$,  this $\mp$ is the abbreviation of $(\mp_{ij})_{i\in[k_1],j\in[k_2]}$.  However, for  $\PP\in\mP^{k_1k_2}_b$, 
$\mp_{1j}=\mp_{12}$ if $j\geq 2$, $\mp_{i1}=\mp_{21}$ for $i\geq 2$ and $\mp_{i2}=\mp_{22}$ for $i\geq 2$. Therefore, to avoid redundancy, we only consider the four elements of $\mp$ although $\mp_{ij}$ exists for all $i\in[k_1]$ and $j\in[k_2]$. In light of the above, we can keep using the expression $V(\tilde d)=\mp_{\tilde d_1,\tilde d_2(\tilde d_1)}$ for $\PP\in\mP^{k_1,k_2}_b$ as well. Since $V(d)=\mp_{d_1,d_2(d_1)}$ for any DTR $d$, $V(\tilde d)=\mp_{\tilde d_1,\tilde d_2(\tilde d_1)}$.

{\bf Subcase $\max(\mp_{11},\mp_{12})>\max(\mp_{21},\mp_{22})$:} If $\psi$ is Fisher consistent, then Definition \ref{def: Fisher consistency multiclass} implies that $V(\tilde d)=V_*=\max(\mp_{11},\mp_{12})$. Therefore, $\mp_{\tilde d_1,\tilde d_2(\tilde d_1)}=\max(\mp_{11},\mp_{12})$. Hence, $\tilde d_1\neq 2,3,\ldots,k_1$ because in these cases, $\mp_{\tilde d_1,\tilde d_2(\tilde d_1)}\leq \max(\mp_{21},\mp_{22})<V_*$. Thus $\tilde d_1=1$, which implies $\pred(0,-x^*(\mp)\mo_{k_1-1})=1$, which implies $\max(\argmax(0,-x^*(\mp)))=1$. Therefore, $0>-x^*(\mp)$ or $x^*(\mp)>0$. 

 {\bf Subcase $\max(\mp_{21},\mp_{22})>\max(\mp_{11},\mp_{12})$:} In this case,  proceeding in the same way as the previous case, we can similarly prove that 
$\pred(0,-x^*(\mp)\mo_{k_1-1})=k_1$, which implies $\max(\argmax(0,-x^*(\mp)\mo_{k_1-1}))=k_1$. However, this only implies that $x^*(\mp)\leq 0$.  Therefore, we still need to show that $x^*(\mp)\neq 0$. If possible, suppose $x^*(\mp)=0$ for some $\mp\in B(\mp^0,\delta)$. Then for this $\mp$, $\mx^*(\mp)=\mz_{k_1}$. Consider the sequence $\mzz_n=(1/n,\mz_{k_1-1})$. 

Our next step is to show that  $V^\psi(\mzz_n,\my_1^*(\mp),\ldots,\my_{k_1}^*(\mp);\mp)\to_n V^\psi_*$. To this end, it suffices to show that $V^\eta(\Delta\mzz_n,\mv_1^*(\mp),\ldots,\mv_{k_1}^*(\mp);\mp)\to_n V^\eta_*$. Note that $\Delta\mzz_n=1_{k_1-1}/n$. Also, Lemma \ref{lemma: CC: closed} implies that $\mv_1^*(\mp)=y^*(\mp)\mo_{k_2-1}$ and $\mv_2^*(\mp)=\ldots=\mv_{k_1}^*(\mp)=z^*(\mp)\mo_{k_2-1}$. 
Therefore, by \eqref{intheorem: connection between V eta and Phi}, 
$V^\eta(\Delta\mzz_n,\mv_1^*(\mp),\ldots,\mv_{k_1}^*(\mp);\mp)=-\Phi(1/n, y^*(\mp),z^*(\mp);\mp) $ and $V^\eta_*=-\Phi^*(\mp)$. Therefore, it suffices to show that $\Phi(1/n, y^*(\mp),z^*(\mp);\mp)\to_n \Phi^*(\mp)$. Since  $\Lambda^*(\mp)=(0,y^*(\mp),z^*(\mp))\in U_0^3$, it follows that $(0,y^*(\mp),z^*(\mp))\in \iint(\dom(\Phi(\cdot;\mp)))$. Since $\Phi(\cdot;\mp)$ is convex by Lemma \ref{lemma: CC: closed}, the above implies that $\Phi(\cdot;\mp)$ is continuous at $(0,y^*(\mp),z^*(\mp))$ because convex functions are continuous on the interior of its domain  \citep[cf. pp. 104 of][]{hiriart}.
Therefore, 
\[\lim_n \Phi(1/n,y^*(\mp),z^*(\mp);\mp)= \Phi(0,y^*(\mp),z^*(\mp);\mp)=\Phi^*(\mp),\]
where we used the fact that $\Lambda^*(\mp)=(0,y^*(\mp),z^*(\mp))$. 
Therefore, it follows that $V^\psi(\mzz_n,\my_1^*(\mp),\ldots,\my_{k_1}^*(\mp);\mp)\to_n V^\psi_*$. If $\psi$ is Fisher consistent, then the above implies that $V(\tilde d_n)\to V_*=\max(\mp)=\max(\mp_{21},\mp_{22})$ where 
the DTR $\tilde d_n=(\tilde d_{1n}, \tilde d_{2n})$ corresponds to $(\mzz_n,\my_1^*(\mp),\ldots,\my_{k_1}^*(\mp))$. Note that $\tilde d_n$ satisfies
\begin{align}
    \label{inlemma: Prop proof: tilde d}
    \tilde d_{1n}=\pred(\Delta\mzz_n),\quad \tilde d_{2n}(i)=\begin{cases}
        \pred(0,-y^*(\mp)\mo_{k_2-1}) & \text{ if }i=1\\
        \pred(0,-z^*(\mp)\mo_{k_2-1}) & \text{ if }i\in[2:k_1].
    \end{cases}
\end{align}
Thus in this case, 
$\tilde d_{1n}=\pred(0,-\mo_{k_1-1}/n)=1$. Since $V(\tilde d)=V(d_1,d_2(d_1))$ for any DTR $d\equiv (d_1,d_2)$ in $\mP^{k_1,k_2}$,  $V(\tilde d_n)=\mp_{1, d_{2n}(1)}$. Therefore,
\[V(\tilde d_n)\leq \max(\mp_{11},\mp_{12})< \max(\mp_{21},\mp_{22})=V_*.\]
The above implies 
$\limsup_n V(\tilde d_n)<V_*$, which violates $V(\tilde d_n)\to V_*$. Therefore, we arrive at a contradiction if we assume $x^*(\mp)=0$. Therefore, if $\mp\in B(\mp^0,\delta)$, then $x^*(\mp)<0$ if $\max(\mp_{21},\mp_{22})>\max(\mp_{11},\mp_{12})$.




\paragraph{Proving P2}
\paragraph{Case 1: $\max(\mp_{11},\mp_{12})>\max(\mp_{21},\mp_{22})$}
 Suppose   $\mp_{11}>\mp_{12}$. Therefore, $V_*=\max(\mp)=\mp_{11}$ and $\mp_{ij}<V_*$ unless $(i,j)=(1,1)$.  If $\psi$ is Fisher consistent, then $V(\tilde d)=\mp_{\tilde d_1,\tilde d_2(\tilde d_1)}=\mp_{11}$.  Therefore, $d_2(\tilde d_1)=1$. We have already proved that $x^*(\mp)>0$ in this case, implying $\tilde d_1=\pred(0,-x^*(\mp)\mo_{k_1-1})=1$. Therefore, $\tilde d_2(1)=1$. Hence, \eqref{def: tilde d 2 i in proving property} implies that $\pred(0,-z^*(\mp)\mo_{k_2-1})=1$, which indicates that $\max(\argmax(0,-y^*(\mp)\mo_{k_2-1}))=1$. Therefore, $0>-y^*(\mp)$, which implies $y^*(\mp)>0$. Now if  $\mp_{11}<\mp_{12}$, then proceeding as before, we arrive at 
$\max(\argmax(0,-y^*(\mp)\mo_{k_2-1}))=k_2$, which only ensures that $y^*(\mp)\leq 0$. As before, let us assume $y^*(\mp)=0$ for some $\mp\in B(\mp^0,\delta)$. We will show that we arrive at a contradiction if $y^*(\mp)$. 

Consider the sequence $(x^*(\mp),1/n,z^*(\mp))$, which 
obviously converges to $\Lambda^*(\mp)$ as $n\to\infty$. Let us also denote $\mzz_n=(1/n,\mz_{k_2-1})$. 
Therefore, proceeding like the case of $x^*(\mp)=0$, we can show that
\[\Phi(x^*(\mp),1/n,z^*(\mp);\mp)\to_n \Phi^*(\mp).\]
As before, it follows that  $\Phi^*(\mp)=V^\psi_*$ and $V^\psi(\mx^*(\mp),\mzz_n, \my_2^*(\mp),\ldots,\my_{k_1}^*(\mp))=-\Phi(x^*(\mp),1/n,z^*(\mp);\mp)$. 
Therefore,  $V^\psi(\mx^*(\mp),\mzz_n, \my_2^*(\mp),\ldots,\my_{k_1}^*(\mp))\to_n V_*^\psi$. Suppose $\tilde d_n$ is the DTR associated with $(\mx^*,\my^*_1,\ldots,\my^*_{k_1})$. We can show that
\begin{align*}
    \tilde d_{1n}= \pred(0,-x^*(\mp)\mo_{k_1-1}),\quad \tilde d_{2n}(i)=\begin{cases}
        \pred(\Delta\mzz_n) & \text{ if }i=1\\
        \pred(0,-z^*(\mp)\mo_{k_2-1}) & \text{ if }i\in[2:k_1].
    \end{cases}
\end{align*}

Fisher consistency implies that $V(\tilde d_n)=\mp_{\tilde d_{1n},\tilde d_{2n}(\tilde d_{1n})}\to_n \mp_{12}$. However, we have shown that $\tilde d_{1n}=1$ in this case. Also,
\[\tilde d_{2n}(\tilde d_{1n})=\tilde d_{2n}(1)=\pred(0,-\mo_{k_2-1}/n)=1.\]
Therefore, we have $V(\tilde d_n)=\mp_{11}<V^\psi_*=\mp_{12}$ where the last equality follows because because  $\max(\mp)=\mp_{12}$.
Hence, we arrive at a contradiction, implying $y^*(\mp)$ can not be zero. 

\paragraph{Case 2: $\max(\mp_{11},\mp_{12})<\max(\mp_{21},\mp_{22})$}
We have shown that, in this case, $x^*(\mp)<0$. Therefore, $\pred(\mx^*(\mp))=\pred(0,-x^*(\mp)\mo_{k_1-1})=k_1$. Suppose   $\mp_{21}>\mp_{22}$. In this case, $V_*=\max(\mp)=\mp_{21}$. Using Fisher consistency as in Case 1, we can show that $\tilde d_2(\tilde d_1)=1$, implying $\tilde d_2(k_1)=1$. Therefore, $\pred(0,-z^*(\mp)\mo_{k_2-1})=1$, which implies $z^*(\mp)>0$. If   $\mp_{21}<\mp_{22}$, then we can show that $\pred(0,-z^*(\mp))=k_2$, which implies $z^*(\mp)\leq 0$. If possible, suppose $z^*(\mp)=0$ for some $\mp\in B(\mp^0,\delta)$. In this case, $\max(\mp)=\mp_{22}$.

Consider the sequence  $(x^*(\mp),y^*(\mp),1/n)$. Let us denote $\mzz_n=(1/n,\mz_{k_2-1})$. Proceeding as before, we can show that 
\[\Phi(x^*(\mp),y^*(\mp),1/n;\mp)\to_n \Phi^*(\mp).\]
From \eqref{intheorem: connection between V eta and Phi}, it follows that  $\Phi^*(\mp)=V^\psi_*$ and $V^\psi(\mx^*(\mp),\my_2^*(\mp),\mzz_n, ,\ldots,\mzz_n)=-\Phi(x^*(\mp),y^*(\mp),1/n,)$. 
Therefore,  $V^\psi(\mx^*(\mp),\my_2^*(\mp),\mzz_n, ,\ldots,\mzz_n)\to_n V_*^\psi$. If $\psi$ is Fisher consistent, we must also have $V(\tilde d_n)\to_n V_*$ where $\tilde d_{n}$ is the DTR corresponding to $(\mx^*(\mp),\my_2^*(\mp),\mzz_n, ,\ldots,\mzz_n)$. We can show that
\begin{align*}
    \tilde d_{1n}= \pred(0,-x^*(\mp)\mo_{k_1-1}),\quad \tilde d_{2n}(i)=\begin{cases}
     \pred(0,-y^*(\mp)\mo_{k_2-1})   & \text{ if }i=1\\
      \pred(\Delta\mzz_n)    & \text{ if }i\in[2:k_1].
    \end{cases}
\end{align*}
Since $x^*(\mp)<0$, the corresponding $\tilde d_{n}$ satisfies $\tilde d_{1n}=k_1$. Therefore, $\tilde d_{2n}(\tilde d_{1n})=\tilde d_{2n}(k_1)=\pred(0,-\mo_{k_2-1}/n)=1$. Hence, $\mp_{\tilde d_{1n},\tilde d_{2n}(\tilde d_{1n})}=\mp_{k_11}$. However, since $\PP\in\mP^{k_1k_2}_b$, $\mp_{k_11}=\mp_{21}$ by definition. Therefore, $V(\tilde d_n)=\mp_{21}$. Since  $\mp_{21}<\mp_{22}$, it follows that $\limsup_n V(\tilde d_n)<V_*$, which implies $V(\tilde d_n)$ does not converge to $V_*$. Thus we again encounter a contradiction, implying $z^*(\mp)$ can not be zero.

\end{proof}

\subsection{Proof of auxiliary  results}
\label{secpf: cc: proof of additional lemmas}


 \subsubsection{Proof of Lemma \ref{lemma: CC: M12=0}}
\label{secpf: lemma on M12}
\begin{proof}[Proof of Lemma \ref{lemma: CC: M12=0}]
    We prove the case for $M_{12}$ because the proof for $N_{12}$ follows in the similar way.  Lemma \ref{lemma: CC: differentiability} implies that the second-order partial derivatives of $\vartheta(\cdot;i,j)$'s are continuous on $U_0^2$ for all $i,j\in[2]$. Therefore, $M_{12}$ and $N_{12}$ are continuous on the open set $U_0^2$. 
Suppose $M_{12}(0,0)\neq 0$. If possible, suppose $M_{12}(0,0)>C$ for some $C>0$. Then there exists $\epsilon>0$ such that $|x|,|y|<\epsilon$ implies that
$M_{12}(x,y)>C/2$. By Lemma \ref{lemma: CC: continuity of Lambda}, can choose $\delta_1>0$ so small   such that $|x^*(\mp)-x^*(\mp^0)|<\epsilon$ for all $\mp\in B(\mp^0,\delta_1)$. Since $x^*(\mp)=0$ by Lemma \ref{lemma: CC: value of Lambda p knot }, it follows that $|x^*(\mp)|<\epsilon$ for all $\mp\in B(\mp^0,\delta_1)$.
By Lemma \ref{lemma: CC: maximizer of loss half} below (proved in Section \ref{secpf: auxiliary lemmas for step 2c}), we can choose $\mp\in B(\mp^0,\delta_1)$ so that $\mp_{11}=\mp_{12}>\mp_{21},\mp_{22}$, $\Lambda^*(\mp)\in U_0^3$, $y^*(\mp)=0$,  and $x^*(\mp)>0$.


    \begin{lemma}
  \label{lemma: CC: maximizer of loss half}
     There exists $\mp\in B(\mp^0,\delta_1)$ so that $\mp_{11}=\mp_{12}>\mp_{21},\mp_{22}$, $\Lambda^*(\mp)\in U_0^3$, $x^*(\mp)>0$, and $y^*(\mp)=0$. 
  \end{lemma}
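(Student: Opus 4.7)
The strategy is to exhibit an explicit perturbation of $\mp^0=(1,1,1,1)$ and verify each of the four conditions in turn. A natural candidate is
\[
\mp(\epsilon):=(1+\epsilon,\,1+\epsilon,\,1-\epsilon,\,1-\epsilon)
\]
for $\epsilon>0$ sufficiently small. Taking $\epsilon<\delta_1/\sqrt{2}$ places $\mp(\epsilon)\in B(\mp^0,\delta_1)$; the strict inequality $\mp_{11}=\mp_{12}=1+\epsilon>1-\epsilon=\mp_{21}=\mp_{22}$ is immediate; and $\Lambda^*(\mp(\epsilon))\in U_0^3$ follows from the defining property of $\delta_1$ in Lemma~\ref{lemma: CC: hessian cont.}. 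Three of the four conditions are therefore automatic, and the plan reduces to establishing $y^*(\mp(\epsilon))=0$ and $x^*(\mp(\epsilon))>0$.

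For $y^*(\mp(\epsilon))=0$ I plan to exploit the label-permutation equivariance of $\psi$ in the second block. Since $\PP\in\mP_b^{k_1,k_2}$ collapses all $\mp_{1,a_2}$, $a_2\in[k_2]$, to the common value $\mp_{11}=\mp_{12}$, the $\my_1$-dependent part of $V^\psi$ is proportional to $\sum_{a_2=1}^{k_2}\psi(\mx,\my_1;1,a_2)$, which \eqref{def: cc: permutation equivariance 2 stage} shows to be invariant under the full symmetric group acting on the $k_2$ coordinates of $\my_1$. Passing to the reduced formulation over $\mv_1$, transpositions within coordinates $[2:k_2]$ translate into permutations of $\mv_1$, while transpositions pairing coordinate $1$ with coordinate $j\geq 2$ translate into the matrices $\BB_{j-1}$ of \eqref{inlemma: CC: def of Bj}. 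Combining the uniqueness of the maximizer in Lemma~\ref{lemma: cc: supremum} with permutation-invariance (which writes $\mv_1^*(\mp(\epsilon))=y^*(\mp(\epsilon))\mo_{k_2-1}$) and $\BB_1$-invariance (the only constant vector fixed by $\BB_1$ being $\mz_{k_2-1}$) forces $y^*(\mp(\epsilon))=0$. An identical argument applied to $\my_2,\dots,\my_{k_1}$ with $\mp_{21}=\mp_{22}$ also yields $z^*(\mp(\epsilon))=0$.

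For $x^*(\mp(\epsilon))>0$ I plan to appeal to the Fisher consistency hypothesis in force throughout the proof of Theorem~\ref{theorem: CC} (which proceeds by contradiction). Because $\PP\in\mP_b^{k_1,k_2}$ with $\max(\mp_{11},\mp_{12})=1+\epsilon>1-\epsilon=\max(\mp_{21},\mp_{22})$, the analysis in Section~\ref{sec: intheorem: cc: step 1} gives $V_*=1+\epsilon=\mp_{11}$. Since $\tilde f$ exists for every $\mp\in\RR^4_{>0}$, Fisher consistency yields $V(\tilde d)=V_*$, which forces the first-stage choice $\tilde d_1=1$: any $\tilde d_1\geq 2$ would give $V(\tilde d)\leq\max(\mp_{21},\mp_{22})<V_*$. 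Via the representation $\mx^*(\mp(\epsilon))=(0,-x^*(\mp(\epsilon))\mo_{k_1-1})$ in \eqref{intheorem: CC: connection from u star to x star}, the equality $\pred(\mx^*(\mp(\epsilon)))=\tilde d_1=1$ forces $x^*(\mp(\epsilon))>0$.

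The principal obstacle is that Fisher consistency is genuinely required at this step: a purely assumption-based sign analysis using only \eqref{intheorem: cc: partial derivatives sum zero} and strict convexity reduces the sign of $x^*(\mp(\epsilon))$ to the sign of $\eta_1(\mz_{k_1+k_2-2})$, which can vanish for degenerate (and non-Fisher-consistent) PERM templates such as $\eta(\mbu,\mv)=-\|\mbu\|_2^2-\|\mv\|_2^2$. Invoking Fisher consistency is therefore what rules out such pathologies and makes the construction succeed, which is consistent with the lemma being stated within, and applied inside, the contradiction argument of Theorem~\ref{theorem: CC}.
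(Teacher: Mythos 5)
Your construction and overall strategy are sound, and the key step — forcing $y^*=0$ — is handled by a genuinely different mechanism than the paper's. Where the paper argues by contradiction through continuity of $\Lambda^*$ and Property~P2 (perturb $\mp_{12}$ to $\mp_{12}+\delta'/2$ so that $\argmax(\mp_{11},\mp_{12})$ flips from $1$ to $2$, use Property~P2 to force the sign of $y^*$ at the perturbed point, and let continuity of $\Lambda^*$ produce the contradiction), you exploit the extra symmetry that materializes on the diagonal $\mp_{11}=\mp_{12}$: the $\my_1$-part of $V^\psi$ collapses to $\mp_{11}\sum_{a_2\in[k_2]}\psi(\mx,\my_1;1,a_2)$, which the clockwise permutation symmetry \eqref{def: cc: permutation equivariance 2 stage} shows to be invariant under the full symmetric group on the coordinates of $\my_1$. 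Combined with strict concavity of $V^\eta(\cdot;\mp)$ (hence uniqueness of $\mv_1^*$) and the observation that the only vector of the form $y\mo_{k_2-1}$ fixed by $\BB_1$ is the zero vector, this pins $y^*=0$ directly. Your route is shorter, requires no appeal to the continuity of $\Lambda^*$ for this step, avoids Property~P2 entirely, and yields $z^*=0$ as a free by-product; the cost is that it is tied to the exact diagonal $\mp_{11}=\mp_{12}$, whereas the paper's argument tolerates the generic form $(c,c,c_1,c_2)$. Both proofs agree in their treatment of $x^*(\mp)>0$, which must come from Fisher consistency via Property~P1, and you are correct that this is where the reductio hypothesis of Theorem~\ref{theorem: CC} enters essentially.

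Two small slips worth fixing. First, $\|\mp(\epsilon)-\mp^0\|_2=2\epsilon$, not $\sqrt{2}\epsilon$, so membership in $B(\mp^0,\delta_1)$ needs $\epsilon<\delta_1/2$. Second, $\delta_1$ in the enclosing proof of Lemma~\ref{lemma: CC: M12=0} is chosen to control $|x^*(\mp)|$, not to guarantee $\Lambda^*(\mp)\in U_0^3$; the latter requires a separate (possibly smaller) radius coming from Lemma~\ref{lemma: CC: hessian cont.}. The fix is harmless — simply take $\epsilon$ small enough that $\mp(\epsilon)$ lies in the intersection of both balls — but your attribution of the $U_0^3$ guarantee to ``the defining property of $\delta_1$'' should be corrected.
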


Noting (a) $\Lambda^*(\mp)=(x^*(\mp),y^*(\mp),z^*(\mp))$ is the minimizer of $\Phi(\cdot;\mp)$ and (b) $\Phi(\cdot;\mp)$ is thrice differentiable on $U_0^3$, and hence at $\Lambda^*(\mp)$, we obtain that
\[0=\pdv{\Phi(x^*(\mp),y^*(\mp),z^*(\mp);\mp)}{y}.\]
 Then
by a first order Taylor series expansion (differentiating with respect to $x$), we have
\begin{align*}
    \pdv{\Phi(0,y^*(\mp),z^*(\mp);\mp)}{y}+x^*(\mp)\pdv[2]{\Phi(\xi,y^*(\mp),z^*(\mp);\mp)}{x}{y}=0
\end{align*}
where $\xi$ is a number between $0$ and $x^*$. Therefore,
\begin{align}
    \label{inlemma: CC: Taylor of M12 N12}
  &\  \mp_{11}\pdv{\vartheta(0,y^*(\mp);1,1)}{y}+\mp_{12}\pdv{\vartheta(0,y^*(\mp);1,2)}{y}\nn\\
    &\ +x^*(\mp)\lb\mp_{11}\pdv[2]{\vartheta(\xi,y^*(\mp);1,1)}{x}{y}+\mp_{12}\pdv[2]{\vartheta(\xi,y^*(\mp);1,2)}{x}{y}\rb=0.
\end{align}
Since $\mp_{11}=\mp_{12}$ and $y^*(\mp)=0$, we have 
\[\pdv{\vartheta(0,0;1,1)}{y}+\pdv{\vartheta(0,0;1,2)}{y}+x^*(\mp)\pdv[2]{\vartheta(\xi,0;1,1)}{x}{y}+x^*(\mp)\pdv[2]{\vartheta(\xi,0;1,2)}{x}{y}=0.\]
Using \eqref{intheorem: cc: partial derivatives sum zero} we obtain that
\[x^*(\mp)\pdv[2]{\vartheta(\xi,0;1,1)}{x}{y}+x^*(\mp)\pdv[2]{\vartheta(\xi,0;1,2)}{x}{y}=0.\]
Since $x^*(\mp)>0$,
\[M_{12}(\xi,0)=\pdv[2]{\vartheta(\xi,0;1,1)}{x}{y}+\pdv[2]{\vartheta(\xi,0;1,2)}{x}{y}=0.\]
Therefore, we have obtained that
there exists $\xi\in[0,x^*(\mp)]$ such that
$M_{12}(\xi,0)=0$.  However, since $|x^*(\mp)|<\epsilon$,  $\xi$ satisfies $|\xi|<\epsilon$. However, we have previously shown that $M_{12}(x,y)>C/2$ for all $x,y\in(-\epsilon,\epsilon)$. Therefore, we arrive at a contradiction.  Hence, our assumption $M_{12}(0,0)>0$ is incorrect Similarly, we can show that assuming $M_{12}(0,0)<0$ leads to a contradiction as well. Thus, $M_{12}(0,0)=0$.

\end{proof}

\subsubsection{Proof of Lemma \ref{lemma: CC: maximizer of loss half}}
\label{secpf: auxiliary lemmas for step 2c}

  \begin{proof}[Proof of Lemma \ref{lemma: CC: maximizer of loss half}]
  By Lemmas \ref{lemma: CC: differentiability} and \ref{lemma: cc: connecting general to margin-based},  we can choose $\delta\in (0,\delta_1)$ so small such that for all $\mp\in B(\mp^0,\delta)$, $\Lambda^*(\mp)\in U_0^3$ and $\Lambda^*(\mp)$ satisfies Properties P1 and P2. We consider a $\mp\in B(\mp^0,\delta/3)$ of the form $(c,c,c_1,c_2)$ where $c>c_1,c_2>0$. Such a $\mp$ exists in the ball $B(\mp^0,\delta/3)$  because $\mp^0=(1,1,1,1)$. 
      Suppose, if possible,  $y^*(\mp)>0$. We will show that this leads to a contradiction. If $y^*(\mp)>0$,  we can find an $\epsilon>0$ such that $y^*(\mp)>\epsilon$. Since $\Lambda^*$ is a continuous map by Lemma \ref{lemma: CC: continuity of Lambda}, we can find a $\delta'>0$ such that for all  $\mq\in B(\mp,\delta')$, $\|\Lambda^*(\mp)-\Lambda^*(\mq)\|_2\leq \epsilon/2$. We  choose $\delta'$ to be smaller than $\delta/3$ so that  $B(\mp,\delta')\subset B(\mp^0,2\delta/3)\subset B(\mp^0,\delta)$.  If $(x^*(\mq),y^*(\mq),z^*(\mq))=\Lambda^*(\mq)$, $|y^*(\mp)-y^*(\mq)|\leq \epsilon/2$. Hence, $y^*(\mq)>0$.  In particular, we consider $\mq=(c,c+\delta'/2,c_1,c_2)$. It is not hard to see that $\mq\in B(\mp,\delta')$. However, since $B(\mp,\delta')\subset  B(\mp^0,\delta)$, the corresponding   $\Lambda^*(\mq)$ satisfies Properties P1 and P2.  Since $\argmax(\mq)=(1,2)$, Property P2 implies $y^*(\mq)<0$, which leads to a contradiction. Thus $y^*(\mp)>0$ can not hold. In a similar way, we can show that $y^*(\mp)<0$ can not hold. Hence, we must have $y^*(\mp)=0$.

  \end{proof}
  \subsubsection{Proof of Lemma \ref{lemma: CC: Lipschitz continuity of Lambda}}
\label{secpf: proof of lemma:  CC: Lipschitz continuity of Lambda }
\begin{proof}[Proof of Lemma \ref{lemma: CC: Lipschitz continuity of Lambda}]
In this proof, we will use the fact that $(0,0)$ lies inside $\iint(\dom(\vartheta(\cdot;i,j)))$ for all $i,j\in[2]$, which follows from Lemma \ref{lemma: CC: differentiability}.
If $\mp\in B(\mp^0,\delta^0)$ where $\delta^0$ is as in Lemma \ref{lemma: CC: hessian cont.}, then $\Lambda^*(\mp)\in U_0^3$, where the latter is an open set containing the origin. Therefore, $\Phi(\cdot;\mp)$ is thrice continuously differentiable at $\Lambda^*(\mp)$. Therefore,  the hessian of $\Phi(\cdot;\mp)$ exists and all of its elements are continuous at $U_0^3\ni\Lambda^*(\mp)$.  Since $\Phi(\cdot;\mp)$ is a strictly convex function, the hessian of $\mw\mapsto\Phi(\mw;\mp)$ is  positive definite at $\Lambda^*(\mp)$.  The minimum eigenvalue of the hessian at $\Lambda^*(\mp)$  is thus bounded below by $2c_p$ for some $c_p>0$. Since the hessian of $\Phi(\cdot;\mp)$ is  continuous on $U_0^3$ and the minimum eigenvalue of a matrix is a continuous function of the matrix \citep[cf. Corollary 6.3.8, p.407 of][]{horn2012matrix}, the minimum eigenvalue of the hessian of $\mw\mapsto \Phi(\mw;\mp)$ is a continuous function of $\mw$ on $U_0^3$.  We can therefore choose a $\epsilon>0$ so small such that  the minimum eigenvalue of the hessian of $\Phi(\mw;\mp)$ is bounded away from $c_p$ for all $\mw\in B(\Lambda^*(\mp),\epsilon)\subset U_0^3$.

  Thus $\Phi(\cdot;\mp)$ is strongly convex on $B(\Lambda^*(\mp),\epsilon) $ with  constant $c_p>0$ that possibly depends only on $\vartheta$ and $\mp$.  Since $\Lambda^*(\mp)$ is continuous at $\mp$, there is $\delta'>0$ so that  $\Lambda^*(\mq)\in B(\Lambda^*(\mp),\epsilon)$ for all $\mq\in B(\mp,\delta')$. 
Let us take $\delta_p=\min(\delta',\delta^*)$ where $\delta^*$ is as in the proof of Lemma \ref{lemma: CC: continuity of Lambda}. Note that $\delta^*$ depends on $\mp$. Suppose $\delta<\delta_p$. Then for all $\mq\in B(\mp,\delta)$, 
\begin{align*}
    \Phi(\Lambda^*(\mq),\mp)\geq \Phi(\Lambda^*(\mp),\mp)+\langle s, \Lambda^*(\mq)-\Lambda^*(\mp)\rangle +\frac{c_p}{2} \|\Lambda^*(\mp)-\Lambda^*(\mq)\|_2^2,
\end{align*}
where $s\in\partial \Phi(\Lambda^*(\mp), \mp)$ by Theorem 6.1.2, pp. 200 of \cite{hiriart}.
 Since $\Lambda^*(\mp)$ is the minimizer of $\Phi(\cdot,\mp)$, $0\in\partial \Phi(\Lambda^*(\mp), \mp)$. Taking $s=0$, we obtain the following for all $\mq\in B(\mp,\delta)$:
\[ \Phi(\Lambda^*(\mq),\mp)\geq \Phi(\Lambda^*(\mp),\mp)+\frac{c_p}{2} \|\Lambda^*(\mp)-\Lambda^*(\mq)\|_2^2.\]
On the other hand, since $\delta<\delta_p\leq \delta^*$, \eqref{inlemma: CC: cont: bdd} implies that 
\[\Phi(\Lambda^*(\mq),\mp)\leq \Phi^*(\mp)+ C_p\|\mp-\mq\|_2\]
for all $\mq\in B(\mp,\delta)$. 
  Because $\Phi(\Lambda^*(\mp),\mp)=\Phi^*(\mp)$, we thus have
  \[\Phi^*(\mp)+\frac{c_p}{2} \|\Lambda^*(\mp)-\Lambda^*(\mq)\|_2^2\leq \Phi(\Lambda^*(\mq),\mp)\leq \Phi^*(\mp)+\frac{\|\mp-\mq\|_2}{2} C_p\]
  for all $\mq\in B(\mp,\delta)$, implying 
  \[ \|\Lambda^*(\mp)-\Lambda^*(\mq)\|_2^2\leq \frac{2C_p}{c_p}\|\mp-\mq\|_2.\]
  \end{proof}
\subsubsection{Proof of Fact \ref{claim: CC: orthant}}
\label{secpf: claim: cc: orthant}
\begin{proof}[Proof of Fact \ref{claim: CC: orthant}]
 If $\mz_{k_1+k_2-2}\in\partial(\dom(-\eta))$, then by the supporting hyperplane theorem \citep[pp. 100, Theorem 11.6][]{rockafellar}, there exists a hyperplane $\mathfrak{T}^T\mbu+\mathfrak{C}^T\mv+c=0$ with  $\mathfrak{T}\in\RR^{k_1-1}$, $\mathfrak{C}\in\RR^{k_2-1}$, and $c\in\RR$ so that 
$(\mathfrak{T},\mathfrak{C})\neq \mz_{k_1+k_2-2}$ and  $\mathfrak{T}^T\mbu+\mathfrak{C}^T\mv+c\leq 0$ for all $(\mbu,\mv)\in\dom(-\eta)$ and $\mz_{k_1+k_2-2}$ lies on the hyperplane.  The supporting hyperplane theorem applies because $\dom(-\eta)$ is a convex set due to $\eta$ being concave. Since $\mz_{k_1+k_2-2}$ lies on the hyperplane, $c=0$ and, hence, the hyperplane passes through the origin.
 Suppose $\bse=(\bse_i)_{i=1}^{k_1}\in\{\pm 1\}^{k_1-1}$ is such that $\bse_i>0$ if $\mathfrak{T}_i>0$ and $\bse_i<0$ if $\mathfrak{T}_i<0$. If $\mathfrak{T}_i=0$, then $\bse_i$ can be either $1$ or $-1$. Thus $\bse$ need not be unique. Simililarly, we define  $\bse'=(\bse_i')_{i=1}^{k_1}\in\{\pm 1\}^{k_2-1}$  such that $\bse'_i>0$ if $\beta_i>0$ and $\bse'_i<0$ if $\beta_i<0$. $\bse'$ need not be unique either since $\mathfrak{C}$ can contain zero elements. Consider the open orthant $\Q(\bse)\times \Q(\bse')$. Suppose $\mbu\in \Q(\bse) $ and $\mv\in  \Q(\bse')$. The definition of $\bse$ and $\bse'$ ensures that $\mathfrak{T}_i\mbu_i>0$ if $\mathfrak{T}_i\neq 0$ and $\beta_j\mv_j>0$ if $\beta_j\neq 0$.
Then
\[\mathfrak{T}^T\mbu+\mathfrak{C}^T\mv=\sum\limits_{i:\mathfrak{T}_i\neq 0}|\mathfrak{T}_i\mbu_i|+\sum\limits_{j:\beta_j\neq 0}|\beta_j\mv_j|.\]
Also $\mathfrak{T}$ and $\mathfrak{C}$ both can not be zero vectors because the theorem states that $(\mathfrak{T},\mathfrak{C})\neq \mz_{k_1+k_2-2}$. Therefore, there exists at least one $\mathfrak{T}_i$ or $\beta_j$ such that $\mathfrak{T}_i\neq 0$ or $\beta_j\neq 0$, leading to $\mathfrak{T}_i^T\mbu_i>0$ or $\beta_j\mv_j>0$. Therefore, $\mathfrak{T}^T\mbu+\mathfrak{C}^T\mv>0$. Therefore, $(\mbu,\mv)\notin \dom(-\eta)$ because  $\mathfrak{T}^T\mbu+\mathfrak{C}^T\mv\leq 0$  for all $(\mbu,\mv)\in\dom(-\eta)$. Since $(\mbu,\mv)$ was an arbitrary element of $\Q(\bse)\times \Q(\bse')$, it follows that
$\{\Q(\bse)\times \Q(\bse')\}\cap \dom(-\eta)=\emptyset$.  

\end{proof}

\section{Proofs of Theorem~\ref{theorem: sufficient conditions} and Proposition \ref{prop: multi-cat FC}}
\label{secpf: proof of suff}

This section is organized as follows. In Section \ref{secpf: suff: new notation}, we introduce some terminologies that will be used in the proofs of Theorem \ref{theorem: sufficient conditions} and Proposition~\ref{prop: multi-cat FC}. Section \ref{sec: necessity: preparatory results} 
lists some preliminary results  on $\psi$'s satisfying Conditions \ref{assump: N1}-\ref{assump: N2}. These results will be used not only for the proofs of this section, but also for  proving    Theorem \ref{theorem: necessity} and other results of Section \ref{sec: necessity}. Then, in 
Section \ref{sec: suff: preliminary lemmas}, we state and prove some  auxiliary lemmas on the value function under Assumptions I-V, which will be used in the proofs of this section. Sections \ref{sec: proof of theorem suff} and \ref{sec: proof of dorollary  multi-cat FC} provide the proofs of Theorem \ref{theorem: sufficient conditions} and Proposition \ref{prop: multi-cat FC}, respectively.

\subsubsection{New terminologies}
\label{secpf: suff: new notation}
\textbf{Notation and terminologies:}
In what follows, we will denote $\myq(\mx;\mp)=\mp_{\pred(\mx)}$. 
For the purpose of induction, we introduce some new notation. 
Let us define the function $\mp_T:\H_T\mapsto\RR^{k_T}$ so that 
\[\mp_T(H_T)_i=\E[Y_1+\ldots+Y_T\mid H_T, A_T=i]\text{ for } i\in [k_T],\]
and for $t=T-1,\ldots,1$, the functions $\mp_t:\H_t\mapsto\RR^{k_t}$ are recursively defined as
\begin{align}
    \label{def: suff: pt f}
    \mp_t(H_t;f)_i=\E[\Psi_{1+t}(f_{1+t}(H_{1+t});\mp_{1+t}(H_{1+t};f))\mid H_{t}, A_t=i] \text{ for all }i\in[k_t].
\end{align}
When $f$ is clear from the context, we will denote $\mp_t(H_t;f)$ simply by  $\mp_t(H_t)$.
We also define the functions $\mp_t^*:\H_t\mapsto\RR^{k_t}$ 
 so that $\mp_T^*=\mp_T$ and for $t=T-1,\ldots,1$, the $\mp_t^*$'s are recursively defined as  
\begin{align}
\label{def: sufficiency: pt star}
    \mp_t^*(H_t)_i= \E[\Psi_{1+t}^*(\mp_{1+t}^*(H_{1+t}))\mid H_t, A_t=i],\quad i\in[k_t].
    \end{align}
 For a function $h:\H_t\mapsto[k_t]$, expressions such as  $\mp_t(H_t)_{h(H_t)}$ and $\mp_t^*(H_t)_{h(H_t)}$ may also be denoted by $\mp_t(H_t,h(H_t))$ and $\mp_t^*(H_t,h(H_t))$, respectively, especially when the expression of $h(H_t)$ is long or convoluted.
   For the sake of notational simplicity, for any $\mx,\mp\in\RR^{k}$, 
   \begin{equation}
       \label{def: myq}
       \myq(\mx;\mp)=\mp_{\pred(\mx)}.
   \end{equation}


\subsection{Lemmas on the properties of $\phi$ under Conditions \ref{assump: N1}-\ref{assump: N2}}
\label{sec: necessity: preparatory results}
This section is organized as follows. 
First, we list some facts that we will repeatedly use in the proofs of Theorem \ref{theorem: sufficient conditions} and \ref{theorem: necessity}.   In Section \ref{sec: consequence of N1}, we establish properties that $\phi$ satisfies when Condition \ref{assump: N1} holds. In Section \ref{secpf: consequences of N1 and N2}, we derive properties of $\phi$ under both Conditions \ref{assump: N1} and \ref{assump: N2}.

Under Assumption I,
  \begin{align}
      \label{intheorem: necessity: IPW sum one}
  \E\lbt \frac{\phi_t(f_t(H_t);A_t)}{\pi_t(A_t\mid H_t)}\ \bl\ H_t\rbt=\Psi_t(f_t(H_t);\mo_{k_t})
  \text{ for all }t\in[T],
  \end{align}
and for any random variable $\VV$ and $i\in[k_t]$, 
\begin{align}
   \label{intheorem: necessity: IPW: 0-1}
        \E\lbt \VV\frac{1[A_t=i]}{\pi_t(A_t\mid H_t)}\ \bl\ H_t\rbt=\E[\VV\mid  H_t, A_t=i]
  \end{align}
  and
\begin{align}
    \label{intheorem: necessity: IPW: general p}
    \E\lbt \VV\frac{\phi_t(f_t(H_t);A_t)}{\pi_t(A_t\mid H_t)}\ \bl\ H_{t}\rbt=\Psi_t(f_t(H_t);\mp(H_t))
\end{align}
where $\mp(H_t)_j=\E[\VV\mid H_t, A_{t}=j]$ for all $j\in[k_t]$.
Therefore, for any $H_t$,
\begin{align}
    \label{intheorem: necessity: IPW: general p supremum}
   \sup_{f_t\in\F_t} \E\lbt \VV\frac{\phi_t(f_t(H_t);A_t)}{\pi_t(A_t\mid H_t)}\rbt=\E[\Psi_t^*(\mp(H_t))].
\end{align}
Moreover, if $h(H_{t})$ is any $\PP$-measurable function of $H_{t}$, then
\begin{align}
    \label{intheorem: necessity: IPW: supremum with other rv}
   \sup_{f_t\in\F_t} \E\lbt h(H_{t})\VV\frac{\phi_t(f_t(H_t);A_t)}{\pi_t(A_t\mid H_t)}\rbt=\E[h(H_{t})\Psi_t^*(\mp(H_t))]\text{ for all }t\in[T].
\end{align}
Also, if Assumption I holds, with $\mp(H_t)_j=\E[\VV\mid H_t, A_{t}=j]$, we obtain that 
\begin{align}
    \label{intheorem: necessity: IPW: diff}
\MoveEqLeft\E\lbt \frac{1[A_t=\pred(\mp(H_t))]-1[A_t=i]}{\pi_t(A_t\mid H_t)}h(H_t)\VV\bl H_t\rbt\nn\\
=&\ E\lbt \frac{1[A_t=\pred(\mp(H_t))]-1[A_t=i]}{\pi_t(A_t\mid H_t)}h(H_t)\E[\VV\mid H_t, A_t]\bl H_t\rbt\nn\\
=&\ E\lbt \frac{1[A_t=\pred(\mp(H_t))]-1[A_t=i]}{\pi_t(A_t\mid H_t)}h(H_t)\mp(H_t)_{A_t}\bl H_t\rbt\nn\\
=&\ \E\lbt \slb \max(\mp(H_t))-\mp(H_t)_i\srb h(H_t)\bl H_t\rbt.
\end{align}
We will also use the fact that
\begin{equation}
       \label{intheorem: fact: product of indicators} 
       \E\lbt \prod_{j=t}^T\frac{1[A_j=d_j^*(H_j)]}{\pi_j(A_j\mid H_j)}\ \bl\ H_{t}\rbt=1 \text{ for all }t\in[T].
    \end{equation}
\subsubsection{Properties of $\phi$ satisfying  Condition \ref{assump: N1}}
\label{sec: consequence of N1}
\begin{fact}
    \label{fact: Cond N1 implies pred x in argmax p}
Suppose   $\phi:\RR^{k}\times[k]\mapsto\RR$ satisfies Condition \ref{assump: N1}, where $k\in\NN$.  If $\phi(\xpm;\mp)\to_m\Psi^*(\mp)$  for some $\mp\in\RR_{\geq 0}^{k}$, 
   then  $\pred(\xpm)\in\argmax(\mp)$ for all sufficiently large $m$.
\end{fact}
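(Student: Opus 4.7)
The plan is to exploit the strict positive gap provided by Condition \ref{assump: N1}. I interpret the statement as asserting that $\Psi(\xpm;\mp)\to_m \Psi^*(\mp)$ (the $\phi$ in the hypothesis appears to be a typo for $\Psi$, which is the natural quantity governed by Condition \ref{assump: N1}). I would first dispatch the degenerate case in which all coordinates of $\mp$ are equal: there $\argmax(\mp)=[k]$, so $\pred(\xpm)\in\argmax(\mp)$ holds trivially for every $m$, and indeed the set $\{\mx\in\RR^k : \mp_{\pred(\mx)}<\max(\mp)\}$ is empty, with supremum $-\infty$ by our convention, so Condition \ref{assump: N1} is satisfied vacuously but gives no information.

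Assume therefore that $\mp$ has at least two distinct entries, so that the set
\[
S_\mp := \{\mx\in\RR^k : \mp_{\pred(\mx)} < \max(\mp)\}
\]
is nonempty. Condition \ref{assump: N1} then yields a constant
\[
\delta := \Psi^*(\mp) - \sup_{\mx\in S_\mp} \Psi(\mx;\mp) > 0.
\]
By the assumed convergence $\Psi(\xpm;\mp)\to\Psi^*(\mp)$, choose $M\in\NN$ large enough that $\Psi(\xpm;\mp) > \Psi^*(\mp) - \delta/2$ for all $m\geq M$. If $\xpm\in S_\mp$ for some such $m$, then
\[
\Psi(\xpm;\mp) \leq \sup_{\mx\in S_\mp}\Psi(\mx;\mp) = \Psi^*(\mp) - \delta < \Psi^*(\mp) - \delta/2,
\]
which is a contradiction. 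Therefore $\xpm\notin S_\mp$ for every $m\geq M$, meaning $\mp_{\pred(\xpm)} = \max(\mp)$, equivalently $\pred(\xpm)\in\argmax(\mp)$, as required.

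This is a direct consequence of the quantitative separation built into Condition \ref{assump: N1}, and no serious obstacle is anticipated. The only care needed is to handle the constant-$\mp$ case explicitly, since there Condition \ref{assump: N1} is vacuous and the argument via $\delta>0$ does not apply; in that case the conclusion is automatic.
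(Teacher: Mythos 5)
Your proof is correct and follows essentially the same approach as the paper's: both are proofs by contradiction using the strict positive gap in Condition~\ref{assump: N1}. You also correctly identified that the $\phi(\xpm;\mp)$ in the hypothesis should read $\Psi(\xpm;\mp)$, and your explicit handling of the constant-$\mp$ case (where the conclusion is automatic and Condition~\ref{assump: N1} is vacuous) is a small but welcome clarification that the paper leaves implicit.
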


\begin{proof}[Proof of Fact \ref{fact: Cond N1 implies pred x in argmax p}]
If the assertion in Fact \ref{fact: Cond N1 implies pred x in argmax p} does not hold, we can extract a subsequence of $\xpm$, if necessary, so that $\pred(\xpm)\not\in\argmax(\mp)$ but
    $\Psi(\xpm;\mp)\to\Psi^*_t(\mp)$. We denote this subsequence by $\{\xpm\}_{m\geq 1}$ for the sake of notational simplicity. Then,
    \[\sup_{\mx:\mp_{\pred(\mx)}<\max(\mp)}\Psi(\mx;\mp)\geq \sup_{m}\Psi(\xpm;\mp)= \Psi^*(\mp),\]
  which would contradict \eqref{inlemma: necessity: single-stage FC}.
    
\end{proof}
\begin{lemma}
  \label{lemma: necessity: psi bounded}  
  Suppose  $\phi:\RR^{k}\times[k]\mapsto\RR$ satisfies Condition \ref{assump: N1}, where $k\in\NN$. Then the $\phi(\cdot;i)$'s are bounded above for each  $i\in[k]$. Moreover, for each   $\mp\in\RR_{\geq 0}^{k}$ such that $\mp\neq\mz_{k}$, $\Psi^*(\mp)>0$.
\end{lemma}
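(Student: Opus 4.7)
The lemma has two claims: (i) each $\phi(\cdot; i)$ is bounded above, and (ii) $\Psi^*(\mp) > 0$ for every nonzero $\mp \in \RR_{\geq 0}^k$. Both are deduced from Condition~\ref{assump: N1} by specializing $\mp$ in a way that forces a contradiction when the conclusion fails. The common idea is that the strict inequality \eqref{inlemma: necessity: single-stage FC} must hold for \emph{every} $\mp \in \RR_{\geq 0}^k$, so we are free to engineer $\mp$ to expose pathologies of $\phi$.

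For claim~(i) I argue by contradiction. Fix $i \in [k]$ and suppose $\phi(\mx_m; i) \to \infty$ along some sequence $\{\mx_m\}_{m \geq 1}$. Since $\pred$ is $[k]$-valued and $[k]$ is finite, passing to a subsequence we may assume $\pred(\mx_m) = j$ for a constant $j$, and split on $j$. If $j \neq i$, take $\mp := \mathbf{e}^{(i)}$; then $\max(\mp) = 1$ is attained only at $i$, so every $\mx_m$ lies in the ``bad'' set $\{\mx : \mp_{\pred(\mx)} < \max(\mp)\}$ and $\Psi(\mx_m; \mp) = \phi(\mx_m; i) \to \infty$, forcing both $\Psi^*(\mp)$ and the restricted supremum in \eqref{inlemma: necessity: single-stage FC} to $+\infty$, which violates the strict inequality in Condition~\ref{assump: N1}. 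If $j = i$, pick any $l \in [k] \setminus \{i\}$ (available since $k \geq 2$ under the paper's standing assumption) and take $\mp := (1-\varepsilon)\mathbf{e}^{(i)} + \mathbf{e}^{(l)}$ for any $\varepsilon \in (0,1)$; now $\max(\mp) = 1$ is attained only at $l$, while $\pred(\mx_m) = i \neq l$ again places $\mx_m$ in the bad set, and $\Psi(\mx_m; \mp) \geq (1-\varepsilon)\phi(\mx_m; i) \to \infty$ sends both sides of \eqref{inlemma: necessity: single-stage FC} to $+\infty$, contradicting Condition~\ref{assump: N1}.

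For claim~(ii), pick any $i$ with $\mp_i > 0$; it suffices to show $\phi(\cdot; i) \not\equiv 0$, because then some $\mx_0$ yields $\Psi(\mx_0; \mp) \geq \mp_i \phi(\mx_0; i) > 0$, so $\Psi^*(\mp) > 0$. To rule out $\phi(\cdot; i) \equiv 0$, apply Condition~\ref{assump: N1} to $\mp' := \mathbf{e}^{(i)}$: if $\phi(\cdot; i)$ vanished identically, both $\Psi^*(\mp')$ and the restricted supremum over $\{\mx : \pred(\mx) \neq i\}$ would equal $0$ (the latter supremum is over a non-empty set since $k \geq 2$), contradicting the strict inequality required by Condition~\ref{assump: N1}.

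The only real subtlety is the case split for $j$ in claim~(i): I must choose $\mp$ so that the blow-up of $\phi(\mx_m; i)$ genuinely enters $\Psi(\mx_m; \mp)$ with a positive coefficient while simultaneously placing $\mx_m$ in the set $\{\mx : \mp_{\pred(\mx)} < \max(\mp)\}$, which is why the two different constructions of $\mp$ are needed depending on whether $\pred(\mx_m)$ coincides with $i$ or not.
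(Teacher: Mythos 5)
Your proof is correct, and both of your arguments are carefully verified. The approach differs from the paper's in organization, though the core mechanism — exploiting Condition~\ref{assump: N1} by probing with specially chosen $\mp$'s — is the same. For boundedness, the paper routes through Fact~\ref{fact: Cond N1 implies pred x in argmax p}: it notes that $\phi(\mx_m;i)\to\infty$ forces $\Psi(\mx_m;\mp)\to\Psi^*(\mp)=\infty$ for every $\mp$ with $\mp_i>0$, so (by the Fact) $\pred(\mx_m)$ must eventually lie in $\argmax(\mp)$; taking two such $\mp$'s with disjoint argmaxes then gives a pigeonhole contradiction, with no need to control $\pred(\mx_m)$ explicitly. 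You instead pass to a subsequence on which $\pred(\mx_m)\equiv j$ is constant, and then build a \emph{single} probe $\mp$ with $\mp_i>0$ but $j\notin\argmax(\mp)$, so that the $\mx_m$ land in the restricted set of \eqref{inlemma: necessity: single-stage FC} and both sides blow up, violating the strict inequality directly. The two-probe pigeonhole and the one-probe-after-subsequencing are genuinely different bookkeeping for the same underlying obstruction; yours is a bit more self-contained since it does not lean on the Fact. For $\Psi^*(\mp)>0$, the paper assumes $\Psi^*(\mp')=0$, deduces $\phi(\cdot;i)\equiv 0$ on the support $\C$ of $\mp'$, and then constructs a new $\mp$ supported on $\C$ with $\argmax(\mp)\subsetneq[k]$ to contradict N1; you invert the order and show directly that $\phi(\cdot;i)\not\equiv 0$ for each $i$ by applying N1 to $\mp'=\mathbf{e}^{(i)}$, which shortcuts the construction. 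Your observation that both arguments tacitly need $k\geq 2$ is correct and applies equally to the paper's proof (the paper picks $\mp_1,\mp_2$ with $\argmax(\mp_1)=1$, $\argmax(\mp_2)=2$, which is only possible for $k\geq 2$), so this is not a gap but a shared standing assumption inherited from the DTR setting.
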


\begin{proof}[Proof of Lemma \ref{lemma: necessity: psi bounded}]
First we prove the boundedness of $\phi$. 
    Suppose there exists $i\in[k]$ such that $\sup_{\mx\in\RR^{k}}\phi(\mx;i)=\infty$. In this case, $\Psi^*(\mp)=\infty$ for all   $\mp\in\RR_{\geq 0}^{k}$. Also there exists $\{\xpm\}\subset \RR^{k}$ so that $\phi(\xpm;i)\to_m\infty$. Therefore, 
    $\Psi(\xpm;\mp)\to_m\Psi^*(\mp)$ for all $\mp\in\RR_{\geq 0}^{k}$. Fact  \ref{fact: Cond N1 implies pred x in argmax p}  implies that $\pred(\xpm)\in\argmax(\mp)$ for all sufficiently large $m\in\NN$. 
  Consider $\mp_1,\ \mp_2$ in $\RR_{\geq 0}^{k}$ such that $\argmax(\mp_1)=1$ and  $\argmax(\mp_2)=2$.
  For both $i\in[1,2]$, it holds that  $\pred(\xpm)\in\argmax(\mp_i)$ for all sufficiently large $m$. 
Then there exists $M\in\NN$ such that for all $m\geq M$, $\pred(\xpm)\in\argmax(\mp_1)\cap\argmax(\mp_2)=\emptyset$, which can not hold. Therefore, $\sup\phi(\cdot;i)$ must be finite.

    Now suppose $\Psi^*(\mp')=0$ for some $\mp'\in\RR_{\geq 0}^{k}$, but $\mp'\neq\mz_{k}$. Let $\C=\{i\in[k]:\mp'_i>0\}$, which is non-empty because $\mp'\neq\mz_{k}$.  Since $\Psi^*(\mp')=0$, $\Psi(\mx;\mp')=0$ for all $\mx\in\RR^{k}$  if $i\in\C$. Therefore, if $i\in\C$, $\phi(\mx;i)=0$ for all $\mx\in\RR^{k}$. Then for any $\mp$ such that $\{i\in[k]:\mp_i>0\}=\C$, 
    $\Psi(\mx;\mp)=0$ for all $\mx\in\RR^{k}$ and  $\Psi^*(\mp)=0$. We can choose $\mp\in\RR^k_{\geq 0}$ so that
    $\{i\in[k]:\mp_i>0\}=\C$ and the non-zero  $\mp_i$'s are all different so that $[k_r]\setminus\argmax(\mp)$ is non-empty. Therefore, there exists $\mx\in\RR^k$ satisfying $\mp_{\pred(\mx)}<\max(\mp)$. Therefore, the set $\{\mx\in\RR^{k}:\mp_{\pred(\mx)}<\max(\mp)\}$ is non-empty, which implies
    \[\sup_{\mx\in\RR^{k}:\mp_{\pred(\mx)}<\max(\mp)}\Psi(\mx;\mp)>-\infty.\]
    Since $0\leq \Psi(\mx;\mp)\leq \Psi^*(\mp)$ for all $\mx\in\RR^k$ and $\Psi^*(\mp)=0$, it follows that the above supremum is $0$. Therefore, \eqref{inlemma: necessity: single-stage FC} is violated. Therefore, $\Psi^*(\mp)>0$ for all non-zero  $\mp\in\RR_{\geq 0}^{k}$.
\end{proof}
\begin{lemma}
    \label{lemma: necessity: Psi-t cont.}
    If $\phi:\RR^k\times[k]$ satisfies Condition \ref{assump: N1} for some $k\in\NN$, then $\Psi^*:\RR^k\mapsto\RR$ is positively homogenous, convex, and a continuous function.
\end{lemma}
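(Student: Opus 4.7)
The plan is to recognize $\Psi^{*}$ as the support function of the image set $\mV_\phi$ from \eqref{def: image set} and then invoke standard properties of support functions, with Condition \ref{assump: N1} entering only through the boundedness of $\mV_\phi$ that has already been established in Lemma \ref{lemma: necessity: psi bounded}.

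First I would observe directly from the definition of $\Psi^{*}$ in \eqref{def: Psi and psi star main text}, the definition of $\mV_\phi$ in \eqref{def: image set}, and the definition of the support function in \eqref{def: support function} that
\[
\Psi^{*}(\mp)\;=\;\sup_{\mx\in\RR^{k}}\sum_{i=1}^{k}\mp_i\phi(\mx;i)\;=\;\sup_{\mv\in\mV_\phi}\mp^{T}\mv\;=\;\varsigma_{\mV_\phi}(\mp),\qquad \mp\in\RR^{k}.
\]
Positive homogeneity is then immediate: for any $\lambda\geq 0$,
\[
\Psi^{*}(\lambda\mp)\;=\;\sup_{\mv\in\mV_\phi}\lambda\,\mp^{T}\mv\;=\;\lambda\,\Psi^{*}(\mp).
\]
Convexity follows from splitting the supremum over a sum of nonnegatively weighted terms: for $\alpha\in[0,1]$ and $\mp,\mq\in\RR^{k}$,
\[
\Psi^{*}(\alpha\mp+(1-\alpha)\mq)\;\leq\;\alpha\sup_{\mv\in\mV_\phi}\mp^{T}\mv\;+\;(1-\alpha)\sup_{\mv\in\mV_\phi}\mq^{T}\mv\;=\;\alpha\Psi^{*}(\mp)+(1-\alpha)\Psi^{*}(\mq).
\]

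For continuity, the one nontrivial input is to show $\mV_\phi$ is bounded. Lemma \ref{lemma: necessity: psi bounded} (which applies here because Condition \ref{assump: N1} is in force) gives an $M<\infty$ such that $\phi(\mx;i)\leq M$ for every $i\in[k]$ and every $\mx\in\RR^{k}$; combined with the nonnegativity built into Condition \ref{assump: N1}, this forces $\mV_\phi\subset[0,M]^{k}$. I would then use this boundedness to establish the Lipschitz estimate
\[
|\Psi^{*}(\mp)-\Psi^{*}(\mq)|\;\leq\;\sup_{\mv\in\mV_\phi}|(\mp-\mq)^{T}\mv|\;\leq\;M\sqrt{k}\,\|\mp-\mq\|_{2},
\]
which yields continuity (indeed Lipschitz continuity) of $\Psi^{*}$ on all of $\RR^{k}$.

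I do not anticipate any serious obstacle: once $\Psi^{*}$ is identified with $\varsigma_{\mV_\phi}$ and $\mV_\phi$ is seen to be bounded via Lemma \ref{lemma: necessity: psi bounded}, all three conclusions follow from elementary properties of support functions of bounded sets. The only subtlety worth double-checking is that the statement of the lemma asks for $\Psi^{*}$ to be $\RR$-valued on all of $\RR^{k}$ rather than only on $\RR_{\geq 0}^{k}$; the boundedness of $\mV_\phi$ supplies precisely this, since the support function of a bounded set is finite everywhere.
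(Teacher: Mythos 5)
Your proof is correct and takes essentially the same route as the paper's: identify $\Psi^*$ as the support function of $\mV_\phi$, and use the boundedness of $\phi$ from Lemma \ref{lemma: necessity: psi bounded} to get finiteness/continuity. The only cosmetic difference is that the paper cites Hiriart-Urruty for sublinearity of support functions and for continuity of convex functions on the interior of their domain, whereas you prove positive homogeneity, convexity, and a Lipschitz bound directly — both are fine.
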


\begin{proof}[Proof of Lemma \ref{lemma: necessity: Psi-t cont.}]
  In Section \ref{sec: necessity}, it was mentioned that $\Psi^*$ is the support function of the set $\mathcal V_\phi$. A support function of a nonempty set is sublinear \citep[cf. Proposition 2.1.2, pp. 134 of ][]{hiriart}. Sublinear functions are functions that are both convex and positively homogenous 
  \citep[cf. Definition 1.1.1, pp. 123 of ][]{hiriart}.
By Lemma \ref{lemma: necessity: psi bounded}, $\phi(\cdot;i)$ is bounded above for each $i$. Therefore, $|\Psi^*(\mp)|<\infty$ for all $\mp\in\RR^{k}$. Hence, $\dom(\Psi^*)=\RR^{k}$.  Since a convex function is continuous on the interior of its domain \citep[cf. pp. 104 of][]{hiriart}, $\Psi^*$ is continuous.

\end{proof}

\begin{lemma}
\label{lemma: necessity: Bartlett stuff}
Let $\mathfrak{B}$ be any bounded set in $\RR_{\geq 0}^k$.  
 Suppose   $\phi:\RR^{k}\times[k]\mapsto\RR_{\geq 0}$ satisfies Condition \ref{assump: N1}. Then there exists a non-negative convex function $\varrho:\RR_{\geq 0}\mapsto\RR_{\geq 0}$,  depending only on $\psi$ and $\mathfrak{B}$, so that 
\[\Psi^*(\mp)-\Psi(\mx;\mp)\geq \varrho(\max(p)-\myq(\mx;\mp))\]
for all $\mp\in \mathfrak{B}$, 
where $\myq$ is as in \eqref{def: myq}. Moreover, $\varrho(0)=0$, $\varrho$ is positive on $(0,\infty)$, and for any sequence $\{\e_m\}_{m\geq 1}\subset\RR_{\geq 0}$, $\lim_{m\to \infty}\varrho(\e_m)=0$ if and only if $\lim_{m\to\infty}\e_m=0$.
\end{lemma}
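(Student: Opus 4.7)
The plan is to build $\varrho$ as the greatest convex minorant of a natural \emph{calibration function}, after first showing the latter is strictly positive on $(0, M]$ for the bound $M := \sup_{\mp \in \mathfrak{B}} \max(\mp) < \infty$ (finite by boundedness of $\mathfrak{B}$). Since $\max(\mp) - \myq(\mx; \mp) \in [0, M]$ always, I would define
\[
H(\e) := \inf\bigl\{\Psi^*(\mp) - \Psi(\mx;\mp) : \mp \in \mathfrak{B},\ \mx \in \RR^k,\ \max(\mp) - \myq(\mx;\mp) \geq \e\bigr\}
\]
for $\e \in [0, M]$ and extend by $H(\e) = H(M)$ for $\e > M$. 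The lemma's inequality then holds with $H$ in place of $\varrho$ by construction; moreover $H$ is nondecreasing, non-negative, and $H(0) = 0$ since $\Psi^*(\mp) - \Psi(\mx_m;\mp) \to 0$ for any near-maximizer $\mx_m$ of $\Psi(\cdot;\mp)$.

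The heart of the matter is showing $H(\e) > 0$ for $\e > 0$, which I would establish by contradiction using Condition~\ref{assump: N1} at a limit point. Take sequences $\mp_m \in \mathfrak{B}$ and $\mx_m$ with $\max(\mp_m) - \mp_{m,\pred(\mx_m)} \geq \e$ and $\Psi^*(\mp_m) - \Psi(\mx_m;\mp_m) \to 0$. Boundedness of $\mathfrak{B}$ and the uniform boundedness of each $\phi(\cdot;i)$ from Lemma~\ref{lemma: necessity: psi bounded} let me pass to convergent subsequences $\mp_m \to \mp^* \in \overline{\mathfrak{B}} \subset \RR_{\geq 0}^k$ and $\mv_m := (\phi(\mx_m;i))_{i\in[k]} \to \mv^* \in \RR_{\geq 0}^k$; since $\pred(\mx_m)$ takes finitely many values, a further subsequence has $\pred(\mx_m) \equiv j^*$. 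Continuity of $\Psi^*$ (Lemma~\ref{lemma: necessity: Psi-t cont.}) together with $\Psi(\mx_m;\mp_m) = \mp_m^T\mv_m \to (\mp^*)^T\mv^*$ gives $(\mp^*)^T \mv^* = \Psi^*(\mp^*)$, while passing the constraint to the limit yields $\mp^*_{j^*} < \max(\mp^*)$. But then $\Psi(\mx_m;\mp^*) = \sum_i \mp^*_i \phi(\mx_m;i) \to (\mp^*)^T \mv^* = \Psi^*(\mp^*)$ along a sequence with $\mp^*_{\pred(\mx_m)} = \mp^*_{j^*} < \max(\mp^*)$, directly contradicting Condition~\ref{assump: N1} at $\mp^*$.

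With $H$ in hand, take $\varrho$ to be the greatest convex minorant of $H$ on $[0, M]$, extended by $\varrho(\e) = \varrho(M)$ on $(M, \infty)$. Convexity, $\varrho \leq H$ (which yields the desired inequality on substituting $\e = \max(\mp) - \myq(\mx;\mp) \in [0,M]$), $\varrho \geq 0$ (the constant $0$ is a convex minorant), and $\varrho(0) = 0$ all come for free. The main remaining obstacle --- the step I expect to be most delicate --- is showing $\varrho(\e) > 0$ for $\e \in (0, M]$, since a convex envelope can in principle drop to zero even when its majorant does not. Using the two-point representation valid in $\RR^2$,
\[
\varrho(\e_0) = \inf\bigl\{\lambda H(x_1) + (1-\lambda) H(x_2) : x_1,x_2 \in [0,M],\ \lambda \in [0,1],\ \lambda x_1 + (1-\lambda) x_2 = \e_0\bigr\},
\]
a compactness argument on $[0,M]^2 \times [0,1]$ forces any value-minimizing sequence to degenerate to $x_1^* = x_2^* = 0$ (since $H$ is nondecreasing and positive away from $0$), contradicting $\lambda x_1 + (1-\lambda)x_2 = \e_0 > 0$. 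The two-sided continuity at $0$ then falls out: replacing $\varrho$ by its running maximum $\e \mapsto \sup_{s \leq \e}\varrho(s)$ (still a convex minorant of the nondecreasing $H$, so equal to $\varrho$ by maximality) gives monotonicity, which combined with $\varrho(0)=0$ and convexity yields right-continuity at $0$; the converse direction uses continuity of $\varrho$ on $(0, M]$ combined with positivity to rule out $\e_m$ bounded away from zero.
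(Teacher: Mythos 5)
You take the same two-stage route as the paper: first define the pointwise calibration function (your $H$, the paper's $\myv$), establish that it is strictly positive on $(0,\infty)$ by a subsequential limit argument that invokes Condition~\ref{assump: N1} at a limit point $\mp^* \in \overline{\mathfrak{B}}$, and then convexify. Your positivity argument matches the paper's in substance: both rely on continuity of $\Psi^*$ (Lemma~\ref{lemma: necessity: Psi-t cont.}) and uniform boundedness of $\phi$ (Lemma~\ref{lemma: necessity: psi bounded}) to pass to the limit, and both verify that the argmax-avoidance constraint survives. You carry along $\pred(\mx_m)$ explicitly via a pigeonhole on the finite label set, which is a cleaner formulation than the paper's, but the mechanism is the same.

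Where you genuinely diverge is the convexification step. The paper treats this as a black box, citing Proposition~25 of \cite{zhang2004} to produce the convex $\varrho$ with $\varrho \leq \myv$, $\varrho(0)=0$, $\varrho>0$ on $(0,\infty)$, and monotonicity. You instead build the greatest convex minorant on $[0,M]$ by hand via the two-point Carath\'eodory formula and prove its positivity directly by compactness of $[0,M]^2 \times [0,1]$: if $\lambda^{(m)} H(x_1^{(m)}) + (1-\lambda^{(m)}) H(x_2^{(m)}) \to 0$, then along a convergent subsequence whichever of $\lambda^*, 1-\lambda^*$ is positive forces the corresponding $x_i^{(m)} \to 0$ (by monotonicity and positivity of $H$), and all cases then contradict $\lambda^* x_1^* + (1-\lambda^*) x_2^* = \e_0 > 0$. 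This is correct and makes the proof self-contained at the cost of roughly a page of elementary convex analysis. Your derivations of $\varrho(0)=0$, $\varrho \geq 0$, monotonicity, and the two directions of the limit equivalence are all sound (indeed monotonicity comes for free: a convex $\varrho \geq 0$ with $\varrho(0)=0$ has its global minimum at $0$ and is automatically non-decreasing, so the running-maximum detour is unnecessary).

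One technical slip to flag: you extend $\varrho$ past $M$ by the constant $\varrho(M)$. This destroys convexity on $\RR_{\geq 0}$, because $\varrho$ increases from $0$ to $\varrho(M)>0$ on $[0,M]$ and therefore has a positive left derivative at $M$, while the constant extension has derivative $0$ — a concave kink. (And if you instead extend $H$ by a constant first and then take the envelope on all of $\RR_{\geq 0}$, it collapses: a convex, non-negative function on $\RR_{\geq 0}$ that vanishes at $0$ and is bounded above must be identically zero.) The fix is to extend $\varrho$ affinely along its supporting line at $M$, i.e.\ $\varrho(\e) = \varrho(M) + \varrho'(M^-)(\e - M)$ for $\e > M$; this keeps convexity and non-negativity, and since $\max(\mp) - \myq(\mx;\mp) \leq M$ for every $\mp \in \mathfrak{B}$, the lemma's inequality never evaluates $\varrho$ beyond $M$, so the extension need not be dominated by $H$ there.
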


\begin{proof}
  The proof follows from \cite{zhang2004} but is given here for the sake of completeness.  Let us denote
  \[\myv(\epsilon)=\inf_{\mp\in \mathfrak{B},\mx\in \RR^k}\{\Psi^*(\mp)-\Psi(\mx;\mp):\max(\mp)-\myq(\mx;\mp)\geq \epsilon\}\]
  where the infimum is $\infty$ the set is empty. 

   Then by Proposition 23 of \cite{zhang2004}, $\myv$ is non-ngeative, $\myv(0)=0$, and non-decreasing on $\RR_{\geq 0}$. Proposition 23 of \cite{zhang2004} further implies that for any $\mp\in \mathfrak{B}$ and $\mx\in\RR^k$, 
  \[\Psi^*(\mp)-\Psi(\mx;\mp)\geq \myv(\max(\mp)-\myq(\mx;\mp)).\]
  We will now show that if $\epsilon>0$, then $\myv(\epsilon)>0$. For fixed $\mp\in \mathfrak{B}$, if $\myq(\mx;\mp)\leq \max(\mp)-\epsilon$, then $\myq(\mx;p)<\max(\mp)$, indicating $\pred(\mx)\notin\argmax(\mp)$. Moreover, $\mp\neq \mz_{k}$ since $\argmax(\mz_k)=[k]$. Therefore,
  \[\sup_{\mx: \max(\mp)-\myq(\mx;\mp)\geq\epsilon }\Psi(\mx;\mp)\leq \sup_{\mx: \pred(\mx)\notin\argmax(\mp) }\Psi(\mx;\mp)\stackrel{(a)}{<}\Psi^*(\mp),\]
  where (a) follows because $\phi$ satisfies Condition \ref{assump: N1}.
  Hence, for any $\mx\in\RR^k$ and $\mp\in \mathfrak{B}$, $\Psi^*(\mp)-\Psi(\mx;\mp)>0$ if $\max(\mp)-\myq(\mx;\mp)\geq\epsilon $.

 Now suppose $\myv(\epsilon)=0$. Then there exists a sequence $(\mxm,\mpm)_{m\geq 1}\subset\RR^k\times \mathfrak{B}$ such that $\max(\mpm)-\myq(\mxm;\mpm)\geq\epsilon$ for all $m\geq 1$ but $\Psi^*(\mpm)-\Psi(\mxm;\mpm)\to_m 0$. Since $\mathfrak{B}$ is bounded, $\mpm$ has a convergent subsequence that converges to $\mp^0\in\overline{\mathfrak{B}}$. 
  Then
  \begin{align*}
  \MoveEqLeft    \abs{\Psi^*(\mpm)-\Psi(\mxm;\mpm)-\slb   \Psi^*(\mp^0)-\Psi(\mxm;\mp^0)\srb}\\
  \leq &\ \abs{\Psi^*(\mpm)-\Psi^*(\mp^0)}+\abs{\Psi(\mxm;\mp^0)-\Psi(\mxm;\mpm)},
  \end{align*}
  whose first term goes to zero as $m\to\infty$ because $\Psi^*$ is continuous on $\RR^k$ by Lemma \ref{lemma: necessity: psi bounded}. 
 For the second term, note that since $\Psi$ is linear in $\mp$, 
  \begin{align*}
  \MoveEqLeft \abs{\Psi(\mxm;\mp^0)-\Psi(\mxm;\mpm)}= \abs{\Psi(\mxm;\mp^0-\mpm)}
  \leq \sup_{\mx\in\RR^k,i\in[k]}\phi(\mx;i)\|\mp^0-\mpm\|_1
  \end{align*}
  which goes to zero as $m\to \infty$ because $\phi$ is bounded by Lemma \ref{lemma: necessity: psi bounded} and $\mpm\to_m\mp^0$. Hence, we have showed that
  \[\abs{\Psi^*(\mpm)-\Psi(\mxm;\mpm)-\slb   \Psi^*(\mp^0)-\Psi(\mxm;\mp^0)\srb}\to_m 0,\]
  which, combined with the fact that $\Psi^*(\mpm)-\Psi(\mxm,\mpm)\to_m 0$,  implies that
  \[\Psi(\mxm;\mp^0)\to_m \Psi^*(\mp^0).\]
 On the other hand, $\max(\mpm)-\myq(\mxm;\mpm)\geq\epsilon$ for all $m\geq 1$. 
 Therefore, 
 \begin{align*}
    \MoveEqLeft \max(\mp^0)-\myq(\mxm;\mp^0)\\
     \geq &\ \max(\mpm)-\myq(\mxm;\mpm)-|\myq(\mxm;\mpm)-\myq(\mxm;\mp^0)|\\
     &\ -|\max(\mpm)-\max(\mp^0)|\\
     \geq&\  \epsilon -|\myq(\mxm;\mpm)-\myq(\mxm;\mp^0)|-|\max(\mpm)-\max(\mp^0)|.
 \end{align*}
Since $\mpm\to_m\mp^0$, we have  $\max(\mpm)\to_m\max(\mp^0)$. Also
 \[|\myq(\mxm;\mpm)-\myq(\mxm;\mp^0)|=|(\mpm)_{\pred(\mxm)}-(\mp^0)_{\pred(\mxm)}|\leq \|\mpm-\mp^0\|_2,\]
 which converges to $0$ as $m\to\infty$. 
 Hence, 
  $ \max(\mp^0)-\myq(\mxm;\mp^0)\geq \epsilon/2$ for all sufficiently large $m$. Hence, $\mp_{\pred(\mx)}<\max(\mp^0)$ for all sufficiently large $m$. However, $\Psi(\mxm;\mp^0)\to_m\Psi^*(\mp^0)$. 
  Therefore, 
  \[\sup_{\mp^0_{\pred(\mx)}<\max(\mp^0)}\Psi(\mxm;\mp^0)=\Psi^*(\mp^0),\]
  which is a contradiction since $\mp^0\in\overline {\mathfrak{B}}\subset \RR_{\geq 0}^k$. 
  Thus, the assumption that $\myv(\epsilon)=0$ was wrong. Hence,  $\myv(\epsilon)>0$. 


Let us extend $\myv$ to $\RR$ by setting $\myv$ to be $0$ on $(-\infty,0)$. Since $\myv\geq 0$, $\myv$ is bounded below by the affine function $f(0)=0$. Then 
Proposition 25 of \cite{zhang2004} applied to our case (we need to take $Q=\mathfrak{B}$, $l(\mp, d)=\max(\mp)-\myq(d,\mp)$, and $v\equiv 1$ in that proposition) implies there is a convex function $\varrho$ such that $\varrho(x)\leq \myv(x)$, $\varrho(\e)>0$ whenever $\epsilon>0$, $\varrho(0)=0$, and $\varrho$ is non-decreasing. Since $\varrho\leq\myv$ and $\myv<\infty$ on $\RR$, $\dom(\varrho)=\RR$. 

It remains to show that $\varrho(\e_m)\to_m 0$ if and only if $\e_m\to 0$. For the if part, note that $\varrho$ is continuous at $0$ because $\varrho$ is convex and $0\in\iint(\dom(\varrho))\equiv\RR$. Thus $\lim_{\e\to 0}\varrho(\e)=0$.
Now we prove the only if part. Suppose $\{\e_m\}_{m\geq 1}$ is a subsequence so that  $\e_m\geq 0$ for all $m\geq 1$ and $\varrho(\e_m)\to_m 0$. Then we claim that $\e_m\to 0$. We can replace $\{\e_m\}_{m\geq 1}$, if necessary, with a subsequence so that $\e_m\geq \e>0$ for all $m\geq 1$. Then $\varrho(\e_m)\geq \varrho(\e)>0$ for all $m$ along that sequence since $\varrho$ is non-decreasing. However, all subsequences of $\varrho(\e_m)$ must go to zero, leading to a contradiction. Hence, we have shown that $\varrho(\e_m)\to_m 0$  only if $\e_m\to 0$.

\end{proof}
\begin{lemma}
\label{lemma: necessity psi(xk, pred xk) is lower bounded}
   Suppose  $m\in\NN$ and the single-stage surrogate $\phi:\RR^k\times[k]$ satisfies Condition \ref{assump: N1} and $\Psi^*(\mo_k)=1$. 
    Let $\mp\in\RR_{\geq 0}^k$ satisfy $\mp\neq\mz_k$. Then there exists $\mytheta(\mp)>0$, depending only on $\mp$ and $\phi$, so that for any sequence $\mxm$ satisfying $\Psi(\mxm;\mp)\to_m \Psi^*(\mp)$,
    \[\liminf_m\phi(\mxm;\pred(\mxm))>\mytheta(\mp).\]
\end{lemma}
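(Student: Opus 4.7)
The plan is to prove a slightly stronger, uniform statement by contradiction: there exist $\delta(\mp)>0$ and $\mytheta(\mp)>0$, depending only on $\mp$ and $\phi$, such that every $\mx \in \RR^k$ with $\Psi^*(\mp) - \Psi(\mx;\mp) < \delta(\mp)$ automatically satisfies $\phi(\mx;\pred(\mx)) > \mytheta(\mp)$. The claimed $\liminf$ bound then follows immediately, since the hypothesis $\Psi(\mxm;\mp)\to\Psi^*(\mp)$ eventually pushes $\mxm$ into the region where this implication applies.

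If the strong statement fails, one can extract a sequence $\{\mxm\}_{m\geq 1}\subset\RR^k$ with $\Psi(\mxm;\mp)\to\Psi^*(\mp)$ and $\phi(\mxm;\pred(\mxm))\to 0$. Since $\pred(\mxm)\in[k]$ is finite-valued, pass to a subsequence along which $\pred(\mxm)=j^*$ is constant. By Fact~\ref{fact: Cond N1 implies pred x in argmax p}, $j^*\in\argmax(\mp)$ for all large $m$, and since $\mp\neq\mz_k$ we have $\mp_{j^*}=\max(\mp)>0$. I would then split according to whether the vector
\[
\mp^{(j^*)} := (\mp_1,\ldots,\mp_{j^*-1},0,\mp_{j^*+1},\ldots,\mp_k)
\]
is zero. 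In the degenerate case $\mp^{(j^*)}=\mz_k$, the only nonzero coordinate of $\mp$ is at $j^*$, so $\Psi(\mxm;\mp)=\mp_{j^*}\phi(\mxm;j^*)\to 0$; this contradicts $\Psi(\mxm;\mp)\to\Psi^*(\mp)>0$, the positivity coming from Lemma~\ref{lemma: necessity: psi bounded}.

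In the remaining case $\mp^{(j^*)}\neq\mz_k$, decompose
\[
\Psi(\mxm;\mp)=\mp_{j^*}\phi(\mxm;j^*)+\Psi(\mxm;\mp^{(j^*)}).
\]
Since the first term vanishes in the limit, $\Psi(\mxm;\mp^{(j^*)})\to\Psi^*(\mp)$. Because $\phi\geq 0$ and $\mp^{(j^*)}\leq\mp$ coordinatewise, we have $\Psi^*(\mp^{(j^*)})\leq\Psi^*(\mp)$, and the converse inequality $\Psi^*(\mp)\leq\Psi^*(\mp^{(j^*)})$ follows from $\Psi(\mxm;\mp^{(j^*)})\leq\Psi^*(\mp^{(j^*)})$. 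Hence $\Psi^*(\mp^{(j^*)})=\Psi^*(\mp)$ and $\Psi(\mxm;\mp^{(j^*)})\to\Psi^*(\mp^{(j^*)})$. Applying Fact~\ref{fact: Cond N1 implies pred x in argmax p} to the vector $\mp^{(j^*)}$ forces $\pred(\mxm)=j^*\in\argmax(\mp^{(j^*)})$ for large $m$, i.e.\ $\mp^{(j^*)}_{j^*}=\max(\mp^{(j^*)})$. But $\mp^{(j^*)}_{j^*}=0$ while $\max(\mp^{(j^*)})>0$ (since $\mp^{(j^*)}\neq\mz_k$ and $\mp^{(j^*)}\geq 0$), the desired contradiction.

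The only mildly subtle step is the reduction to the ``support-function monotonicity'' identity $\Psi^*(\mp)=\Psi^*(\mp^{(j^*)})$; everything else is essentially direct bookkeeping around Fact~\ref{fact: Cond N1 implies pred x in argmax p} and Lemma~\ref{lemma: necessity: psi bounded}. The normalization $\Psi^*(\mo_k)=1$ in the hypothesis is not needed for the proof itself but presumably fixes the scale of the constant $\mytheta(\mp)$ so that it is directly comparable across later results. Note also that no use is made of Condition~\ref{assump: N2}, consistent with the lemma's placement in the ``consequences of Condition~\ref{assump: N1} alone'' section.
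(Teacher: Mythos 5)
Your proof is correct, and it follows a genuinely different route than the paper's. The paper's argument is constructive: it introduces $\overline\Omega(\mq)=\sup_{\pred(\mx)\notin\argmax(\mq)}\Psi(\mx;\mq)$, the finite family $\myG(\mp)=\{\mp-\mp_i\mathbf e^{(i)}_k : i\in\argmax(\mp)\}$, and the explicit constant $\tilde\mytheta(\mp)=\Psi^*(\mp)-\max_{\mq\in\myG(\mp),\,\mq\neq\mz_k}\overline\Omega(\mq)$, then bounds $\phi(\mxm;\pred(\mxm))$ from below by $\tilde\mytheta(\mp)/(2\max(\mp))$ for all large $m$; the constant $\mytheta(\mp)$ is thus exhibited. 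You instead argue by contradiction: extract a bad sequence, pass to a subsequence on which $\pred(\mxm)=j^*$ is constant, and apply Fact~\ref{fact: Cond N1 implies pred x in argmax p} a \emph{second} time to the zeroed-out vector $\mp^{(j^*)}$ (after the support-function identity $\Psi^*(\mp^{(j^*)})=\Psi^*(\mp)$), obtaining the absurdity $j^*\in\argmax(\mp^{(j^*)})$ while $\mp^{(j^*)}_{j^*}=0<\max(\mp^{(j^*)})$. The two proofs share the same underlying decomposition of $\Psi(\mxm;\mp)$ and both invoke Condition~\ref{assump: N1} on the coordinate-deleted vector, but the paper packages this as a max over the finite set $\myG(\mp)$ to extract an explicit constant, whereas you use a subsequence-and-contradiction argument that is shorter and more elementary at the cost of not producing a usable formula for $\mytheta(\mp)$. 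This matters downstream: the paper reuses its explicit $\tilde\mytheta$ in the proof of Lemma~\ref{lemma: symmetric surrogate lemma}, so the constructive form is not wasted effort there. Your observation that the normalization $\Psi^*(\mo_k)=1$ is logically dispensable (Lemma~\ref{lemma: necessity: psi bounded} already gives $\Psi^*(\mp)>0$) is accurate and matches how the paper actually uses the hypothesis, namely only as a scale-fixing convention.
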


\begin{proof}
Had  the $\phi(\cdot;i)$'s been zero for all $i\in[k]$, then $\Psi(\mx;\mp)=\Psi^*(\mp)=0$ for all $\mp\in\RR_{\geq 0}^k$. This contradicts that $\Psi^*(\mo_k)= 1$. Thus, there exists a $i\in[k]$ so that  $\phi(\cdot;i)$ is not identically zero. 
Since $\mp\neq \mz_k$,  the above implies $\Psi^*(\mp)\geq \max(\mp)\sup_{\mx\in\RR}\phi(\mx;i)>0$.

Let us define the function 
\[\overline\Omega(\mp)=\sup_{\pred(\mx)\notin\argmax(\mp)}\Psi(\mx;\mp)\]
for all $\mp\in\RR_{\geq 0}^k$.
Since $\phi$ satisfies Condition \ref{assump: N1}, for all $\mp\in\RR_{\geq 0}^k$, it holds that $\overline\Omega(\mp)<\Psi^*(\mp)$. Recall that we denote by $\mathbf e^{(i)}_k$ the unit vector of length one with one in the $i$th position. 
For any $\mp$, we denote the set
\[\myG(\mp)=\lbs \mp- \mathbf e^{(i)}_k\mp_i: i\in\argmax(\mp)\rbs.\]
 Note that $|\myG(\mp)|=|\argmax(\mp)|$ is a finite number. 
 However, for any  $\mq\in\myG(\mp)$, since the $\phi$'s are non-negative, 
 \[\Psi(\mx;\mq)=\sum_{i\in[k]}\phi(\mx;i)\mq_i\leq\sum_{i\in[k]}\phi(\mx;i)\mp_i=\Psi(\mx;\mp)\]
 for all $\mx\in\RR^k$. Therefore, for any  $\mq\in\myG(\mp)$,
 \[\Psi^*(\mq)=\sup_{\mx\in\RR^k}\Psi(\mx;\mq)\leq \Psi^*(\mp).\]
By definition of $\overline\Omega$,  $\overline\Omega(\mq)< \Psi^*(\mq)\leq \Psi^*(\mp)$ for each $\mq\in\myG(\mp)$ satisfying $\mq\neq\mz_k$. Since $\myG(\mp)$ is a finite set, it therefore follows that   $\max_{\mq\in\myG(\mp),\mq\neq\mz_k}\overline\Omega(\mq)<\Psi^*(\mp)$.  Therefore
\begin{equation}
    \label{def: intheorem: suff: tilde mytheta}
    \tilde\mytheta(\mp):=\Psi^*(\mp)-\max_{\mq\in\myG(\mp),\mq\neq\mz_k}\overline\Omega(\mq)
\end{equation}
is positive. 
 Since $\mp\neq\mz_k$, there exists a $\mq\in\myG(\mp)$ so that $\mq\neq\mz_k$. Hence, the maximum $\max_{\mq\in\myG(\mp),\mq\neq\mz_k}\overline\Omega(\mq)>-\infty$.
 Therefore,   $\tilde\mytheta(\mp)\in\RR_{>0}$.

Since $\Psi(\mxm;\mp)\to_m \Psi^*(\mp)$, and $\phi$ satisfies Condition \ref{assump: N1},  $\pred(\mxm)\in\argmax(\mp)$ for all sufficiently large $m\in\NN$ by Fact \ref{fact: Cond N1 implies pred x in argmax p}. Therefore, given any $\e>0$,
 $\Psi(\mxm,\mp)>\Psi^*(\mp)-\e$ for all sufficiently large $m$. Let us choose $\epsilon<\tilde\mytheta(\mp)/2$. Note that
 \begin{align*}
     \Psi(\mxm;\mp) =&\ \sum_{i\in[k_1]:i\neq \pred(\mxm)}\phi(\mxm;i)\mp_i +\max(\mp)\phi(\mxm;\pred(\mxm)).
 \end{align*}
 Therefore,
 \begin{align*}
   \max(\mp)\phi(\mxm;\pred(\mxm))=&\ \Psi(\mxm;\mp)-\sum_{i\in[k_1]:i\neq \pred(\mxm)}\phi(\mxm;i)\mp_i\\
   \geq &\ \Psi^*(\mp)-\epsilon -\sum_{i\in[k_1]:i\neq \pred(\mxm)}\phi(\mxm;i)\mp_i
 \end{align*}
 for all sufficiently large $m$. 
Let us define $\mq^{(m)}$ so that  $\mq^{(m)}_i=\mp_i$ if $i\neq \pred(\mxm)$ and 0 otherwise. Since
 $\pred(\mxm)\in\argmax(\mp)$ for all sufficiently large $m\in\NN$, $\mq^{(m)}\in\myG(\mp)$ for all sufficiently large $m$. Then
\[\sum_{i\in[k_1]:i\neq \pred(\mxm)}\phi(\mxm;i)\mp_i=\Psi(\mxm;\mq^{(m)}).\]
Since $\mp\neq\mz_k$, $\max(\mp)>0$. There can be two situations. Either $\mq^{(m)}=\mz_{k}$ or $\max(\mq^{(m)})>0$. In the first case,
$\sum_{i\in[k_1]:i\neq \pred(\mxm)}\phi(\mxm;i)\mp_i=0$. Therefore,
\[\phi(\mxm;\pred(\mxm)\geq\frac{\Psi^*(\mp)-\epsilon}{\max(\mp)}>\frac{\Psi^*(\mp)}{2\max(\mp)}\]
because $\epsilon<\tilde \mytheta(\mp)/2\leq \Psi^*(\mp)/2$ by \eqref{def: intheorem: suff: tilde mytheta}. 
In the second case, $\pred(\mxm)\notin\argmax(\mq^{(m)})$
because $\mq^{(m)}_{\pred(\mxm)}=0$. 
Suppose  $\mx\in\RR^k$ satisfies $\pred(\mx)\notin\argmax(\mq)$ for some $\mq\in\myG(\mp)$ such that $\mq\neq \mz_{k}$. Then  it holds that  
\[\Psi(\mx;\mq)\leq \overline\Omega(\mq)\leq \max_{\mq\in\myG(\mp),\mq\neq \mz_{k}}\overline\Omega(\mq)=\Psi^*(\mp)-\tilde\mytheta(\mp).\]
Since $\mq^{(m)}\in\myG(\mp)$ for all sufficiently large $m\in\NN$,  it follows that
\[\Psi(\mxm;\mq^{(m)})<\Psi^*(\mp)-\tilde\mytheta(\mp)\]
given $m$ is sufficiently large. Therefore, in the second case, 
\[\phi(\mxm;\pred(\mxm)\geq\frac{\Psi^*(\mp)-\epsilon-\Psi(\mxm;\mq^{(m)})}{\max(\mp)}\geq \frac{\tilde\mytheta(\mp)-\epsilon}{\max(\mp)}\geq\frac{\tilde\mytheta(\mp)}{2\max(\mp)}\]
since $\epsilon\leq \tilde\mytheta(\mp)/2$.
Thus, we have shown that for 
 all sufficiently large $m$, 
 \[\phi(\mxm;\pred(\mxm)\geq \frac{\tilde\mytheta(\mp)}{2\max(\mp)}.\]
The definition of $\tilde\mytheta(\mp)$ in \eqref{def: intheorem: suff: tilde mytheta} implies that   $\tilde\mytheta(\mp)\leq \Psi^*(\mp)$. Hence, \[\phi(\mxm;\pred(\mxm))\geq \frac{\tilde\mytheta(\mp)}{2\max(\mp)}.\] Since $\tilde\mytheta(\mp)>0$, the proof follows with $\mytheta(\mp)=\tilde\mytheta(\mp)/2$.
\end{proof}

\begin{lemma}
\label{lemma: necessity: phi bdd away from zero step 1}
   Let  $k\in\NN$. Suppose $\phi:\RR^k\times[k]$, which is not identiacally zero,  satisfies Condition \ref{assump: N1}. Let $\mxm$ be such that there exists $\mp\in\RR^{k}_{\geq 0}$ and $\mp\neq 0$ so that $\Psi(\mxm;\mp)\to\Psi^*(\mp)$. Then there exists a constant $c>0$, depending only on $\psi$, so that $\liminf_{m\to\infty}\phi(\mxm;\pred(\mxm))\geq c$. 
\end{lemma}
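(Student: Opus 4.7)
The plan is to argue by contradiction using compactness. The improvement over Lemma \ref{lemma: necessity psi(xk, pred xk) is lower bounded} is that the lower bound must be independent of $\mp$; this will be achieved by passing to a limit in $\mp$ and invoking Condition \ref{assump: N1} only at the limiting direction.

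First I would reduce to the case $\max(\mp)=1$. Since $\Psi(\mx;\mp)=\sum_i \mp_i\phi(\mx;i)$ is linear in $\mp$ and $\Psi^*$ is positively homogeneous (Lemma \ref{lemma: necessity: Psi-t cont.}), multiplying $\mp$ by any positive scalar preserves the hypothesis $\Psi(\mxm;\mp)\to\Psi^*(\mp)$ and leaves $\phi(\mxm;\pred(\mxm))$ untouched. So $\mp$ may be taken in the compact set $K:=\{\mq\in\RR_{\geq 0}^{k}:\max(\mq)=1\}$.

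Suppose now no universal $c>0$ exists. Then for each $n\in\NN$ one can find a pair $(\mxm^{(n)},\mp^{(n)})$ with $\mp^{(n)}\in K$ and $\Psi(\mxm^{(n)};\mp^{(n)})\to_{m}\Psi^*(\mp^{(n)})$ such that $\liminf_{m}\phi(\mxm^{(n)};\pred(\mxm^{(n)}))<1/n$. Using Fact \ref{fact: Cond N1 implies pred x in argmax p}, I would pick $m_n$ large enough that (i) $\pred(\mxm_n^{(n)})\in\argmax(\mp^{(n)})$, (ii) $|\Psi(\mxm_n^{(n)};\mp^{(n)})-\Psi^*(\mp^{(n)})|<1/n$, and (iii) $\phi(\mxm_n^{(n)};\pred(\mxm_n^{(n)}))<1/n$. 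Compactness of $K$ and finiteness of $[k]$ then allow me to extract a subsequence (still indexed by $n$) along which $\mp^{(n)}\to\mp^\infty\in K$ and $\pred(\mxm_n^{(n)})\equiv j^*$. Passing $\mp^{(n)}_{j^*}=1$ to the limit gives $\mp^\infty_{j^*}=1=\max(\mp^\infty)$, so $\mp^\infty\neq\mz_k$ and $j^*\in\argmax(\mp^\infty)$. Continuity of $\Psi^*$ and boundedness of $\phi$ (both from Lemma \ref{lemma: necessity: psi bounded}) yield $\Psi(\mxm_n^{(n)};\mp^{(n)})\to\Psi^*(\mp^\infty)$, and isolating the $j^*$-th term produces $\Psi(\mxm_n^{(n)};\mq)\to\Psi^*(\mp^\infty)$, where $\mq:=\mp^\infty-\mathbf{e}^{(j^*)}\in[0,1]^k$.

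The contradiction comes in two cases. If $\mq=\mz_k$, then $\Psi(\mxm_n^{(n)};\mq)\equiv 0$ forces $\Psi^*(\mp^\infty)=0$, which contradicts Lemma \ref{lemma: necessity: psi bounded} since $\mp^\infty\neq\mz_k$. Otherwise $\mq\neq\mz_k$ with $\mq_{j^*}=0<\max(\mq)$, so $\pred(\mxm_n^{(n)})=j^*\notin\argmax(\mq)$ for every $n$; Condition \ref{assump: N1} then delivers
\[
\Psi(\mxm_n^{(n)};\mq)\ \leq\ \sup_{\mx:\,\mq_{\pred(\mx)}<\max(\mq)}\Psi(\mx;\mq)\ <\ \Psi^*(\mq)\ \leq\ \Psi^*(\mp^\infty),
\]
the last inequality being monotonicity of $\Psi^*$ in $\mp$ coming from $\phi\geq 0$. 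This supplies a strictly positive, $n$-independent asymptotic gap, contradicting $\Psi(\mxm_n^{(n)};\mq)\to\Psi^*(\mp^\infty)$. The main subtlety I anticipate is keeping track of the two layers of subsequence extraction (the outer one in $n$ and the choice of $m_n$ within each inner sequence), together with verifying that the Condition \ref{assump: N1} gap at the single limiting $\mq$ is enough to overwhelm the $o(1)$ errors; no quantitative uniform gap over $\mp\in K$ is needed, which is precisely the virtue of the limiting argument.
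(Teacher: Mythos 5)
Your proposal is correct, and it takes a genuinely different route than the paper's. The paper first passes to the normalized $\phi$ with $\Psi^*(\mo_k)=1$, reduces the claim to showing $\inf_{\mp\neq 0}\mytheta(\mp)>0$ where $\mytheta(\mp)$ is the $\mp$-dependent constant from Lemma~\ref{lemma: necessity psi(xk, pred xk) is lower bounded}, works with a doubly-indexed family of sequences $\{\mxrm\}_{m,r}$, normalizes $\mp$ into the simplex $\S^{k-1}$, extracts a convergent subsequence, and builds a diagonal sequence that ultimately violates Lemma~\ref{lemma: necessity psi(xk, pred xk) is lower bounded} at the limit point $\mp^*$. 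You instead normalize into $K=\{\max(\mq)=1\}$, negate the conclusion directly, pick a single diagonal index $m_n$ up front (using Fact~\ref{fact: Cond N1 implies pred x in argmax p} to secure $\pred(\mxm_n^{(n)})\in\argmax(\mp^{(n)})$), and exploit the finiteness of $[k]$ to freeze $\pred(\mxm_n^{(n)})\equiv j^*$ along a subsequence. That last step is what lets you bypass Lemma~\ref{lemma: necessity psi(xk, pred xk) is lower bounded} entirely: with $j^*$ fixed and $\mp^\infty_{j^*}=1$, stripping the $j^*$-th coordinate gives a single fixed vector $\mq=\mp^\infty-\mathbf{e}^{(j^*)}$ with $\mq_{j^*}=0<\max(\mq)$ (in the non-degenerate case), so the Condition~\ref{assump: N1} gap at $\mq$ alone supplies the contradiction, with the degenerate case $\mq=\mz_k$ handled by $\Psi^*(\mp^\infty)>0$ from Lemma~\ref{lemma: necessity: psi bounded}. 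Your approach is more self-contained and avoids the paper's heavier diagonalization bookkeeping; the paper's approach has the advantage of reusing the already-proved Lemma~\ref{lemma: necessity psi(xk, pred xk) is lower bounded} as a black box rather than re-running the N1 gap argument. One small exposition point if you write this up: the step "isolating the $j^*$-th term produces $\Psi(\mxm_n^{(n)};\mq)\to\Psi^*(\mp^\infty)$" implicitly passes from $\mq^{(n)}:=\mp^{(n)}-\mathbf{e}^{(j^*)}$ to its limit $\mq$ using boundedness of $\phi$; worth making that intermediate replacement explicit, since $\Psi(\mxm_n^{(n)};\cdot)$ is evaluated at a moving argument before it is at the fixed one.
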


\begin{proof}[Proof of Lemma \ref{lemma: necessity: phi bdd away from zero step 1}]
Since $\phi$ is not identically zero, $\Psi^*(\mo_k)>0$. Without loss of generality, we assume that   $\Psi^*(\mo_k)=1$. If not,  we can replace $\phi$ by $\phi/\Psi^*(\mo_k)$. This ensures that we can apply Lemma \ref{lemma: necessity psi(xk, pred xk) is lower bounded}, which implies
\[\liminf_{m\to\infty}\phi(\mxm;\pred(\mxm))\geq \mytheta(\mp).\]
Therefore, it suffices to show that  $c=\inf_{\mp\in\RR^{k}_{\geq 0},\mp\neq 0}\mytheta(\mp)>0$ where $\mytheta$ is as in Lemma \ref{lemma: necessity psi(xk, pred xk) is lower bounded}.
    Suppose, if possible, $c=0$.  Then 
    there exist a sequence $\{\mpm\}_{m\geq 1}\subset \RR^{k}_{\geq 0}$ and sequences $\{\mxrm\}_{r\geq 1}\subset \RR^k$ so that $\mpm\neq 0$, $\Psi(\mxrm;\mpm)\to_r \Psi^*(\mpm)$  for each $m\in\NN$ but $\liminf_r\phi(\mxrm;\pred(\mxrm))\to_m 0$. 
     Each $\{\mxrm\}_{r\geq 1}$ has a subsequence $\{\mx^{m,r'}\}_{r'\geq 1}$ so that  
     \[\lim_{r'\to\infty}\phi(\mx^{m,r'};\pred(\mx^{m,r'}))=\liminf_{r\to\infty}\phi(\mxrm;\pred(\mxrm)).\] Replacing the original sequences by these  subsequences, we obtain  sequences $\{\mxrm\}_{r\geq 1}\subset \RR^k$  so that $\Psi(\mxrm;\mpm)\to_r \Psi^*(\mpm)$ for each $m\in\NN$,  but \[\lim_{m\to\infty}\lim_{r\to\infty}\phi(\mxrm;\pred(\mxrm))=0.\] 
    
    Note that since $\mpm\neq \mz_{k}$, $\|\mpm\|_1=\sum_{l\in[k]}\mpm_l>0$. Therefore, we obtain that 
    \[\Psi(\mxrm;\mpm/\|\mpm\|_1)\stackrel{(a)}{=}\frac{\Psi(\mxrm;\mpm)}{\|\mpm\|_1}\stackrel{(b)}{\to_r} \frac{\Psi^*(\mpm)}{\|\mpm\|_1}\stackrel{(c)}{=}\Psi^*(\mpm/\|\mpm\|_1),\]
    where (a) follows because the map $\mp\mapsto\Psi(\mx,\mp)$ is linear in $\mp$, (b) follows because  $\Psi(\mxrm;\mpm)\to_r \Psi^*(\mpm)$, and (c) follows because $\Psi^*$ is positively homogeneous by Lemma \ref{lemma: necessity: Psi-t cont.}.
    Therefore, without the loss of generality, we can write that there exists $\{\mpm\}_{m\geq 1}\subset \S^{k-1}$ so that $\Psi(\mxrm;\mpm)\to_r \Psi^*(\mpm)$  for each $m\in\NN$ but \[\lim_{m\to\infty}\lim_{r\to\infty}\phi(\mxrm;\pred(\mxrm))=0.\] Since $\S^{k-1}$ is closed and bounded, there is a subsequence $\{m_l\}_{l\geq 1}\subset\{m\}_{m\geq 1}$ so that $\mp^{(m_l)}\to_l\mp^*\in\S^{k-1}$. For the sake of notational simplicity, we will denote the sequence $\{m_l\}_{l\geq 1}$ by $\{l\}_{l\geq 1}$. Hence $l\to\infty$ and  $\mpl\to_l\mp^*$ as $l\to\infty$. We will next show that if $\mpl\to_l\mp^*$, then there exists  a  sequence $\mxl$ so that $\Psi(\mxl;\mp^*)\to_l \Psi^*(\mp^*)$ but $\lim_{l\to\infty}\phi(\mxl;\pred(\mxl))=0$, which will contradict Lemma \ref{lemma: necessity psi(xk, pred xk) is lower bounded}, thus completing the proof.

  Fix $l\geq 1$. Since $\lim_{m\to\infty}\lim_{r\to\infty}\phi(\mxrm,\pred(\mxrm))=0$, there exists $M_l\in\NN$ so that for all $m\geq M_l$, 
   \[\lim_{r\to\infty}\phi(\mxrm,\pred(\mxrm))<1/l.\]
   This implies that for all $m\geq M_l$,  there exists $R_m>0$ so that for all $r\geq R_m$,
   \[\phi(\mxrm,\pred(\mxrm))<1/l.\]
   In particular,
   \[\phi(\mx^{M_l,R_{M_l}},\pred(\mx^{M_l,R_{M_l}}))<1/l.\]
 Since $\Psi(\mx^{m,r};\mpm)\to_r \Psi^*(\mpm)$ for each $m\in\NN$, given any $m$ and $l$, there exists $\mathfrak{K}_{m,l}>0$ so that  if $r\geq \mathfrak{K}_{m,l}$, it holds that 
    $|\Psi^*(\mpm)-\Psi(\mx^{m,r};\mpm)|<1/l$. 

We choose $\mx^{(1)}=\mx^{1,1}$. For any $l\in\NN$ so that $l\geq 2$, we choose $\mx^{(l)}=\mx^{m_l,n_l}$ so that $m_l= \max(m_{l-1}+1,M_l)$, and $n_l=\max(n_{l-1},R_{m_l},\mathfrak{K}_{m_l,l})$. We will now show that if  $\mpi\to_l\mp^*$, then  $\Psi(\mxl;\mp^*)\to_l \Psi^*(\mp^*)$.
For each $l\in\NN$, 
    \begin{align*}
   \Psi^*(\mp^*)-\Psi(\mxl;\mp^*) 
     \leq & | \Psi^*(\mp^*)-\Psi^*(\mp^{(m_l)})  |+|\Psi^*(\mp^{(m_l)})-\Psi(\mx^{m_l,n_l};\mp^{(m_l)})|\\
     &\ +|\Psi(\mx^{m_l,n_l};\mp^{(m_l)})-\Psi(\mx^{m_l,n_l};\mp^*)|.
    \end{align*}
   Since $m_l>m_{l-1}$, we have $m_l\to\infty$. Thus $\mp^{(m_l)}\to_l \mp^*$ as $l\to\infty$.  Since $\Psi^*$ is continuous on $\RR^k$ by Lemma \ref{lemma: necessity: Psi-t cont.}, the above implies  $\Psi^*(\mp^{(m_l)})\to_l \Psi^*(\mp^*)$, indicating that the first term in the above bound approaches  zero. Since  $n_l\geq \mathfrak{K}_{m_l,l}$,
   it holds that
   \[|\Psi^*(\mp^{(m_l)})-\Psi(\mx^{m_l,n_l};\mp^{(m_l)})|<1/l.\]
   Therefore,
   \[\lim_{l\to\infty}|\Psi^*(\mp^{(m_l)})-\Psi(\mx^{m_l,n_l};\mp^{(m_l)})|=0.\]
On the other hand,
\[|\Psi(\mx^{m_l,n_l};\mp^{(m_l)})-\Psi(\mx^{m_l,n_l};\mp^*)|\leq \sup_{\mx\in\RR^k,l\in[k]}|\phi(\mx,l)|\|\mp^{(m_l)}-\mp^*\|_1,\]
   which goes to zero as $l\to\infty$ because $\mp^{(m_l)}\to_l \mp^*$ and $\phi$ is bounded by Lemma \ref{lemma: necessity: psi bounded}. Therefore, we have shown that if $\mpi\to_l\mp^*$, then  $\Psi(\mxl;\mp^*)\to_l \Psi^*(\mp^*)$. Therefore, it just remains to show that $\lim_{m\to\infty}\phi(\mxl;\pred(\mxl))=0$. Since $m_l\geq M_l$ and $n_l\geq R_{m_l}$, 
   \[\phi(\mxl;\pred(\mxl))=\phi(\mx^{m_l,n_l};\pred(\mx^{m_l,n_l}))<1/l.\]
  Since $\phi$ is non-negative, the above implies that 
   \[\lim_{l\to\infty}\phi(\mx^{m_l,n_l};\pred(\mx^{m_l,n_l}))=0,\]
   which proves the desired contradiction.


\end{proof}

\begin{lemma}
\label{lemma: necessity: phi bdd away from zero}
   Let  $k\in\NN$. Suppose $\phi:\RR^k\times[k]\mapsto\RR$, which is not identiacally zero,  satisfies Condition \ref{assump: N1}. The sequences $\{\mxm\}\subset\RR^{k}$ and $\{\mpm\}\subset\RR^{k}_{\geq 0}$ are such that  $\Psi^*(\mpm)-\Psi(\mxm;\mpm)\to_m 0$. Moreover, the sequences $\mpm$ satisfy $\liminf_{m\to\infty}\|\mpm\|_1>0$. Then there exists a constant $\J>0$, depending only on $\psi$, so that $\liminf_{m\to\infty}\phi(\mxm;\pred(\mxm))\geq \J$. 
\end{lemma}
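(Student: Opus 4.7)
My plan is to reduce this lemma to the fixed-$\mp$ version already proved as Lemma \ref{lemma: necessity: phi bdd away from zero step 1} via a normalization-plus-compactness argument, and then finish by contradiction. The key enabling ingredients --- positive homogeneity and continuity of $\Psi^*$ (Lemma \ref{lemma: necessity: Psi-t cont.}) and the boundedness of $\phi$ (Lemma \ref{lemma: necessity: psi bounded}) --- are already in hand, so the work is essentially bookkeeping.

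The first step is to normalize. Since $\liminf_m \|\mpm\|_1 > 0$, I may discard finitely many terms and assume $\|\mpm\|_1 \geq c_0 > 0$ for all $m$. Set $\tilde{\mp}^{(m)} = \mpm / \|\mpm\|_1 \in \S^{k-1}$. Because $\Psi^*$ is positively homogeneous and $\mp \mapsto \Psi(\mx;\mp)$ is linear,
\[
\Psi^*(\tilde{\mp}^{(m)}) - \Psi(\mxm;\tilde{\mp}^{(m)}) = \frac{1}{\|\mpm\|_1}\bigl(\Psi^*(\mpm) - \Psi(\mxm;\mpm)\bigr) \longrightarrow 0,
\]
since $\|\mpm\|_1 \geq c_0$ and the original difference tends to $0$ by hypothesis.

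Next I argue by contradiction. Suppose $\liminf_m \phi(\mxm;\pred(\mxm)) = 0$; then along some subsequence (which I relabel as $\{l\}$) one has $\phi(\mx^{(l)};\pred(\mx^{(l)})) \to 0$. Since $\S^{k-1}$ is compact, passing to a further subsequence I may assume $\tilde{\mp}^{(l)} \to \mp^*$ for some $\mp^* \in \S^{k-1}$, so in particular $\mp^* \in \RR^k_{\geq 0}$ with $\mp^* \neq \mz_k$. The claim is that $\Psi(\mx^{(l)};\mp^*) \to \Psi^*(\mp^*)$ along this subsequence; this follows from the triangle-inequality decomposition
\[
\bigl|\Psi^*(\mp^*) - \Psi(\mx^{(l)};\mp^*)\bigr| \leq |\Psi^*(\mp^*) - \Psi^*(\tilde{\mp}^{(l)})| + |\Psi^*(\tilde{\mp}^{(l)}) - \Psi(\mx^{(l)};\tilde{\mp}^{(l)})| + |\Psi(\mx^{(l)};\tilde{\mp}^{(l)}) - \Psi(\mx^{(l)};\mp^*)|,
\]
where the first term vanishes by continuity of $\Psi^*$, the second by the previous paragraph, and the third is bounded by $(\sup_{\mx,i}\phi(\mx;i)) \|\tilde{\mp}^{(l)} - \mp^*\|_1 \to 0$ since $\phi$ is bounded above.

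Finally, I invoke Lemma \ref{lemma: necessity: phi bdd away from zero step 1} with the fixed $\mp^* \in \RR^k_{\geq 0} \setminus\{\mz_k\}$: this yields $\liminf_l \phi(\mx^{(l)};\pred(\mx^{(l)})) \geq c$ for some constant $c > 0$ depending only on $\psi$, contradicting $\phi(\mx^{(l)};\pred(\mx^{(l)})) \to 0$. Hence $\liminf_m \phi(\mxm;\pred(\mxm)) \geq c$, and taking $\J = c$ proves the lemma. I do not anticipate any serious obstacle; the only subtle point is ensuring that the constant extracted from Lemma \ref{lemma: necessity: phi bdd away from zero step 1} depends only on $\psi$ (and not on the subsequential limit $\mp^*$), which is exactly what that lemma guarantees.
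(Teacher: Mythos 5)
Your approach --- normalize onto the simplex via positive homogeneity of $\Psi^*$, extract a convergent subsequence by compactness of $\S^{k-1}$, pass the convergence through the triangle inequality using continuity of $\Psi^*$ and boundedness of $\phi$, and finally invoke Lemma~\ref{lemma: necessity: phi bdd away from zero step 1} with the subsequential limit $\mp^*$ --- is exactly the paper's argument, and you correctly identify the key point that the constant from Lemma~\ref{lemma: necessity: phi bdd away from zero step 1} is uniform in $\mp^*$.

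However, your contradiction hypothesis is too weak, and as written the proof does not establish what it claims. You suppose $\liminf_m \phi(\mxm;\pred(\mxm)) = 0$ and derive a contradiction. That only shows $\liminf_m \phi(\mxm;\pred(\mxm)) > 0$; it does not show $\liminf_m \geq c$ for the specific constant $c$ from Lemma~\ref{lemma: necessity: phi bdd away from zero step 1}. The final sentence ``Hence $\liminf_m \phi(\mxm;\pred(\mxm)) \geq c$'' is a non sequitur: you have not ruled out a sequence with $\liminf_m$ strictly between $0$ and $c$. The fix is routine and all the machinery you already built applies: either negate correctly --- suppose $\liminf_m < c$, extract a subsequence along which $\phi(\mx^{(l)};\pred(\mx^{(l)})) \to a$ with $a < c$, pass to a further subsequence where $\tilde{\mp}^{(l)} \to \mp^*$, run the same triangle-inequality and Lemma~\ref{lemma: necessity: phi bdd away from zero step 1} steps to get $\liminf_l \geq c$, and observe this contradicts $\lim_l = a < c$ --- or skip the contradiction entirely: take a subsequence realizing $a := \liminf_m \phi(\mxm;\pred(\mxm))$ as a limit, extract a further convergent subsequence in $\mp$, and Lemma~\ref{lemma: necessity: phi bdd away from zero step 1} gives $a \geq c$ directly. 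This is precisely how the paper negates (writing the hypothesized sequence as one with $\lim_m = \J - \epsilon$ for some $\epsilon > 0$), so once you align the contradiction hypothesis your argument matches the paper's.
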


\begin{proof}[Proof of Lemma \ref{lemma: necessity: phi bdd away from zero}]

    Since $\liminf_{m\to\infty}\|\mpm\|_1>0$,  $\|\mpm\|_1$ is bounded away from $0$. Therefore,
   $\mqm=\mpm/\|\mpm\|_1$ is well defined for all sufficiently large $m\in\NN$ and 
   \[\Psi^*(\mqm)-\Psi(\mxm;\mqm)\stackrel{(a)}{=}\frac{\Psi^*(\mpm)-\Psi(\mxm;\mpm)}{\|\mpm\|_1}\to_m 0,\]
   where (a) follows because $\Psi^*$ is positively homogeneous by Lemma \ref{lemma: necessity: Psi-t cont.} and $\Psi(\mx;\cdot)$ is linear for each fixed $\mx\in\RR^k$.
   Therefore, we can replace $\mpm$ with $\mqm$.
   Note that $\mqm\in\S^{k-1}$. Hence, without loss of generality, we assume that $\mpm\in\S^{k-1}$.
   
   If possible, suppose the assertion in the statement of Lemma \ref{lemma: necessity: phi bdd away from zero} is false. This implies there exists $\epsilon>0$ and    sequences $\{\mxm\}\subset\RR^{k}$ and $\{\mpm\}\subset\S^{k-1}$ such that  $\Psi^*(\mpm)-\Psi(\mxm;\mpm)\to_m 0$ but  $\lim_{m\to\infty}\phi(\mxm;\pred(\mxm))=\J-\epsilon$. Therefore, all subsequences $\mxmr$ of $\mxm$ must satisfy 
   \[\lim_{r\to\infty}\phi(\mxmr;\pred(\mxmr))=\J-\epsilon.\] We will show that there exists a subsequence $\{m_r\}_{r\geq 1}\subset \{m\}$ so that 
   \[\liminf_{r\to\infty}\phi(\mx^{(m_r)};\pred(\mx^{(m_r)}))\geq \J,\]
   which will finish the proof by contradiction. We will consider the following subsequence. Since $\{\mpm\}_{m\geq 1}\subset\S^{k-1}$ and $\S^{k-1}$ is closed, there exists a subsequence $\{m_r\}_{r\geq 1}\subset \{m\}$ so that $\mpmr\to_r \mp^*\in\S^{k-1}$. It is clear that $\mp^*\neq 0$. If we can show $\Psi(\mxmr,\mp^*)\to_r\Psi^*(\mp^*)$,  then  Lemma \ref{lemma: necessity: phi bdd away from zero step 1} would imply  $\liminf_{r\to\infty}\phi(\mx^{(m_r)};\pred(\mx^{(m_r)}))\geq \J$, leading to the desired contradiction. Therefore, it is enough to prove that $\Psi(\mxmr,\mp^*)\to_r\Psi^*(\mp^*)$. 

   To this end, note that
   \begin{align*}
    \MoveEqLeft   \Psi^*(\mp)-\Psi(\mxmr,\mp^*)= \Psi^*(\mp)-\Psi^*(\mpmr)\\
    &\ +\Psi^*(\mpmr)-\Psi(\mxmr;\mpmr)+\Psi(\mxmr;\mpmr)-\Psi(\mxmr,\mp^*),
   \end{align*}
   which converges to zero as $r\to\infty $ because (a) $\Psi^*(\mpmr)\to_r\Psi^*(\mp)$ since $\Psi^*$ is continuous by Lemma \ref{lemma: necessity: Psi-t cont.} and $\mpmr\to_r \mp^*$, (b) $\Psi^*(\mpmr)-\Psi(\mxmr;\mpmr)\to_r 0$ because $\{m_r\}_{r\geq 1}\subset \{m\}$ and $\Psi^*(\mpm)-\Psi(\mxm;\mpm)\to_m 0$, and (c) $\Psi(\mxmr;\mpmr)-\Psi(\mxmr,\mp^*)\to_r 0$ since
    \[\Psi(\mxmr;\mpmr)-\Psi(\mxmr,\mp^*)\leq \sup_{\mx\in\RR^k,i\in[k]}\phi(\mx;i)\|\mpmr-\mp^*\|_1,\]
    which converges to $0$ as $\mpmr\to_r \mp^*$. Hence, the proof follows.

\end{proof}
\subsubsection{Properties of $\phi$ under both Conditions \ref{assump: N1} and \ref{assump: N2}}
\label{secpf: consequences of N1 and N2}
\begin{lemma}
\label{lemma: necessity: bound on psi(x;i) if i is not pred(x)}
    Suppose   $\phi:\RR^k\times[k]\mapsto\RR$ satisfies Conditions \ref{assump: N1} and \ref{assump: N2} with $C_\phi=1$.
    Then there exists
    $\Omega<1$ so that for all $\mx\in\RR^k$, 
    \[\sum_{i\in[k_1]:i\neq\pred(\mx)}\phi(\mx;i)<\Omega.\]
\end{lemma}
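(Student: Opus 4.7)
The plan is to apply Conditions~\ref{assump: N1} and~\ref{assump: N2} to a carefully chosen finite collection of weight vectors. For each $j\in[k]$, let $\mp^{(j)}=\mo_k-\mathbf{e}^{(j)}_k$, i.e.\ the vector with a $0$ in the $j$-th coordinate and $1$ everywhere else. The two key observations are: (i) for any $\mx\in\RR^k$ with $\pred(\mx)=j$, the sum $\sum_{i\neq\pred(\mx)}\phi(\mx;i)$ equals exactly $\Psi(\mx;\mp^{(j)})$, and (ii) any such $\mx$ satisfies $\mp^{(j)}_{\pred(\mx)}=0<1=\max(\mp^{(j)})$, so $\mx$ lies in the set over which the supremum in Condition~\ref{assump: N1} is taken.

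From here the proof is short. By Condition~\ref{assump: N2} with $C_\phi=1$, we have $\Psi^*(\mp^{(j)})=\max(\mp^{(j)})=1$ for every $j\in[k]$. Define
\[
\Omega_j \;:=\; \sup_{\mx'\in\RR^k:\,\mp^{(j)}_{\pred(\mx')}<\max(\mp^{(j)})}\Psi(\mx';\mp^{(j)}),
\]
which is a single real number. Condition~\ref{assump: N1} applied at $\mp=\mp^{(j)}$ gives $\Psi^*(\mp^{(j)})-\Omega_j>0$, i.e.\ $\Omega_j<1$. For any $\mx$ with $\pred(\mx)=j$, observation (ii) shows $\mx$ is in the sup's feasible set, so by (i) and the definition of $\Omega_j$,
\[
\sum_{i\in[k]:\,i\neq\pred(\mx)}\phi(\mx;i)\;=\;\Psi(\mx;\mp^{(j)})\;\leq\;\Omega_j.
\]

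The final step exploits the finiteness of $[k]$: set $\Omega_0:=\max_{j\in[k]}\Omega_j$. Then $\Omega_0<1$ (maximum of finitely many numbers each strictly less than $1$), and the displayed bound above holds uniformly in $\mx\in\RR^k$ with $\Omega_j$ replaced by $\Omega_0$. To upgrade the non-strict inequality $\leq\Omega_0$ to the strict inequality $<\Omega$ required by the lemma, simply take $\Omega:=(1+\Omega_0)/2\in(\Omega_0,1)$.

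There is no real obstacle here; the whole content is recognizing that the structure ``$\sum_{i\neq\pred(\mx)}\phi(\mx;i)$'' coincides with $\Psi(\mx;\mp^{(j)})$ for an appropriately degenerate weight vector, and that the finiteness of $[k]$ converts pointwise strictness (from N1) into uniform strictness. Nothing in the argument relies on differentiability, convexity, or any finer structure of $\phi$ beyond Conditions~\ref{assump: N1} and~\ref{assump: N2}.
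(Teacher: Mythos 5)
Your proof is correct and takes essentially the same route as the paper's: both use the weight vectors $\mp^{(j)}=\mo_k-\mathbf{e}^{(j)}_k$, observe that $\sum_{i\neq\pred(\mx)}\phi(\mx;i)=\Psi(\mx;\mp^{(\pred(\mx))})$, invoke Condition~\ref{assump: N2} to get $\Psi^*(\mp^{(j)})=1$ and Condition~\ref{assump: N1} to get $\sup_{\pred(\mx)=j}\Psi(\mx;\mp^{(j)})<1$, and finish by taking the maximum over $j\in[k]$. One small point in your favor: the paper concludes with $\sum_{i\neq\pred(\mx)}\phi(\mx;i)\leq\Omega$ for $\Omega=\max_j\mathfrak{G}_j<1$, leaving the strict inequality in the lemma statement technically unaddressed, whereas your final step of replacing $\Omega_0$ by $(1+\Omega_0)/2$ cleanly delivers the stated strict bound.
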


\begin{proof}[Proof of Lemma \ref{lemma: necessity: bound on psi(x;i) if i is not pred(x)}]
   Fix $i\in[k]$. Let $\C_i=\{\mx\in\RR^k:\pred(\mx)= i\}$. Recall that we denote by $\mathbf e^{(i)}_k$ the unit vector with one in the $i$th position. Consider $\mp^{(i)}=\mo_{k}-\mathbf e^{(i)}_k$, whose $i$th element is zero  but all other elements are one. Clearly, if $\mx\in\C_i$, then $\pred(\mx)=i\notin\argmax(\mp^{(i)})$. Let $\mathfrak{G}_i=\sup_{\mx\in \C_i}\Psi(\mx;\mp^{(i)})$. Note that $\Psi^*(\mp^{(i)})=\max(\mp^{(i)})$ because $\phi$ satisfies Condition \ref{assump: N2} with $C_\phi=1$. Thus, $\Psi^*(\mp^{(i)})=1$. Consequently, $\mathfrak{G}_i<1$ since 
   \[\mathfrak{G}_i=\sup_{\mx\in \C_i}\Psi(\mx;\mp^{(i)})\stackrel{(a)}{<}\Psi^*(\mp^{(i)})=1,\]
  where step (a) follows from Condition \ref{assump: N1}.
  
   Since $\Psi(\mx;\mp^{(i)})=\sum_{j\in[k_2]:j\neq i}\phi(\mx;j)$, we have shown that if $\mx\in\C_i$, then
   \[\sum_{j\in[k_2]:j\neq i}\phi(\mx;j)\leq \mathfrak{G}_i\]
   for some $\mathfrak{G}_i<1$. We can show that above holds for all for all $i\in[k]$. Let $\Omega=\max_{i\in[k]}\mathfrak{G}_i$. Clearly, $\Omega<1$. Since any $\mx\in\RR^k$ belongs to $\C_{\pred(\mx)}$, we have
   \[\sum_{j\in[k_2]:j\neq \pred(\mx)}\phi(\mx;j)\leq \alpha_{\pred(\mx)}\leq\Omega,\]
  which completes the proof.
\end{proof}

\begin{lemma}
    \label{lemma: necessity: linear bound}
    Suppose   $\phi:\RR^k\times[k]\mapsto\RR$ satisfies Conditions \ref{assump: N1} and \ref{assump: N2}. Then there exists $\CC_{\phi}\in(0,1]$, depending only on $\phi$, so that
    \[\Psi^*(\mp)-\Psi(\mx;\mp)\geq C_\phi\CC_{\phi}(\max(\mp)-\myq(\mx;\mp))\]
    for all $\mx\in\RR^k$ and $\mp\in \RR_{\geq 0}^k$. Here $C_\phi>0$ is as in Condition \ref{assump: N2}.
\end{lemma}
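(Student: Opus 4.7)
The plan is to reduce to the normalized situation $C_\phi=1$ and $\max(\mp)=1$, after which the bound follows from a clean two-term estimate controlled by Supplementary Lemma~\ref{lemma: necessity: bound on psi(x;i) if i is not pred(x)}. Both sides of the target inequality are positively homogeneous of degree one in $\mp$: the left-hand side because $\Psi^*$ is positively homogeneous (Lemma~\ref{lemma: necessity: Psi-t cont.}) and $\Psi(\mx;\cdot)$ is linear, and the right-hand side because $\max$ and $\myq(\mx;\cdot)$ are both linear in $\mp$. Thus it suffices to treat $\mp\neq\mz_k$ after dividing by $\max(\mp)>0$. Similarly, replacing $\phi$ by $\phi/C_\phi$ preserves Conditions~\ref{assump: N1} and~\ref{assump: N2} (with the new constant equal to one) and scales both sides of the target inequality by $1/C_\phi$, so I may also assume $C_\phi=1$.

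Under these reductions, I need to produce $\CC_\phi\in(0,1]$ (depending only on $\phi$) such that
\begin{equation*}
1-\Psi(\mx;\mp)\;\geq\;\CC_\phi\,(1-\mp_{\pred(\mx)})
\qquad\text{for all }\mx\in\RR^k\text{ and all }\mp\in[0,1]^k\text{ with }\max(\mp)=1.
\end{equation*}
Set $i=\pred(\mx)$, $a=\mp_i\in[0,1]$, and $s=\sum_{j\neq i}\phi(\mx;j)$. Two observations drive the estimate. First, Condition~\ref{assump: N2} with $C_\phi=1$ applied to $\mp=\mo_k$ gives $\sum_j\phi(\mx;j)\leq\Psi^*(\mo_k)=1$, hence $\phi(\mx;i)\leq 1-s$. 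Second, Supplementary Lemma~\ref{lemma: necessity: bound on psi(x;i) if i is not pred(x)} supplies a constant $\Omega<1$, depending only on $\phi$, with $s\leq\Omega$ for every $\mx$. Combining these with $\mp_j\in[0,1]$ for $j\neq i$, I estimate
\begin{equation*}
\Psi(\mx;\mp)\;=\;a\,\phi(\mx;i)+\sum_{j\neq i}\mp_j\phi(\mx;j)\;\leq\;a(1-s)+s,
\end{equation*}
so that $1-\Psi(\mx;\mp)\geq(1-a)(1-s)\geq(1-a)(1-\Omega)$. Reversing the normalization yields the claim with $\CC_\phi=1-\Omega\in(0,1]$.

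The main step to execute carefully is the second observation—the use of Supplementary Lemma~\ref{lemma: necessity: bound on psi(x;i) if i is not pred(x)}, which already packages the key qualitative consequence of Condition~\ref{assump: N1} under the normalization $C_\phi=1$ (namely, that if $\pred(\mx)=i$ then the loss mass on the remaining coordinates is bounded away from one). Since that lemma is proved independently in the supplement, the remainder of the argument is routine algebra and the positive‑homogeneity rescaling. The only mild subtlety is bookkeeping: one must verify that $\CC_\phi$ is preserved under the scaling $\phi\mapsto\phi/C_\phi$ (it is, because $\myq(\mx;\mp)=\mp_{\pred(\mx)}$ is independent of the multiplicative normalization of $\phi$, while both $\Psi^*(\mp)-\Psi(\mx;\mp)$ and $\CC_\phi\,\max(\mp)-\CC_\phi\,\myq(\mx;\mp)$ scale by the same factor $1/C_\phi$), which yields the stated inequality with the multiplicative constant $C_\phi\CC_\phi$ in the unnormalized form.
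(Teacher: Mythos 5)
Your proof is correct and follows essentially the same path as the paper's: reduce to $C_\phi=1$, use that $\sum_i \phi(\mx;i)\leq \Psi^*(\mo_k)=1$ together with Lemma~\ref{lemma: necessity: bound on psi(x;i) if i is not pred(x)}'s bound $\Omega<1$ on the off-$\pred(\mx)$ mass, and collect terms into the factorization $(1-a)(1-s)\geq (1-\Omega)(\max(\mp)-\myq(\mx;\mp))$. The only cosmetic difference is that you additionally normalize $\max(\mp)=1$ via positive homogeneity, whereas the paper carries $\max(\mp)$ through the same algebraic rearrangement; both yield $\CC_\phi=1-\Omega$.
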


\begin{proof}[Proof of Lemma \ref{lemma: necessity: linear bound}]
First, we will consider the case when $C_\phi=1$. 
Recall from \eqref{def: myq} that $\myq(\mx;\mp)=\mp_{\pred(\mx)}$. Let us define $z=\myq(\mx;\mp)$. Then 
\begin{align*}
  \Psi^*(\mp)-\Psi(\mx;\mp)
    =&\ \Psi^*(\mp)-z+z\slb 1-\sum_{i\in[k]}\phi(\mx;i)\srb +\sum_{i\in[k_1]:i\neq\pred(\mx)}(z-\mp_i)\phi(\mx;i),
\end{align*}
  which is bounded below by 
  \begin{align*}
   \Psi^*(\mp)-z+\sum_{i\in[k_1]:i\neq\pred(\mx)}(z-\mp_i)\phi(\mx;i)   
  \end{align*}
  since $z\geq 0$ and 
  \[\sum_{i\in[k]}\phi(\mx;i)\leq \Psi^*(\mo_{k})\stackrel{(a)}{=}C_\phi=1,\]
  where (a) follows from Condition \ref{assump: N2}. 
However, since $\Psi^*(\mp)=\max(\mp)$, $\phi(\mx;i)\geq 0$, and $z-\mp_i\geq z-\max(\mp)$, the expression in the above display is bounded below by
\begin{align*}
 \MoveEqLeft  \max(\mp)-z+\sum_{i\in[k_1]:i\neq \pred(\mx)}(z-\max(\mp))\phi(\mx;i)\\
   =&\  ( \max(\mp)-z)\slb 1-\sum_{i\in[k_1]:i\neq \pred(\mx)}\phi(\mx;i)\srb.
     \end{align*}
 Lemma \ref{lemma: necessity: bound on psi(x;i) if i is not pred(x)} implies $\sum_{i\in[k_1]:i\neq \pred(\mx)}\phi(\mx;i)\leq \Omega$ where $\Omega<1$, and it depends only on $\phi$. Since $z=\myq(\mx;\mp)$, it follows that
 \[ \Psi^*(\mp)-\Psi(\mx;\mp)\geq (1-\Omega)(\max(\mp)-\myq(\mx;\mp)).\]
Since $\Omega\in[0,1)$, $1-\Omega\in(0,1]$. Hence, the proof follows for $C_\phi=1$. When $C_\phi\neq 1$, the proof follows taking $\tilde\phi$ to be $\phi/C_\phi$ and applying the above result to $\tilde\phi$.

\end{proof}

\begin{lemma}
    \label{lemma: symmetric surrogate lemma}
     Suppose the single-stage surrogate $\phi:\RR^k\times[k]\times\RR$ satisfies Conditions \ref{assump: N1} and \ref{assump: N2} for some $k\in\NN$. Further suppose, there exists $C>0$ so that  $\Psi(\mx,\mo_k)=C$ for all $\mx\in\RR^k$. Then there exists $\J>0$ so that $\phi(\mx;\pred(\mx))>\J$.
\end{lemma}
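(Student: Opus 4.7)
The plan is to combine the symmetry identity $\sum_{i\in[k]}\phi(\mx;i)=C$ with the already-established upper bound on $\sum_{i\neq \pred(\mx)}\phi(\mx;i)$ provided by Lemma~\ref{lemma: necessity: bound on psi(x;i) if i is not pred(x)}. The starting observation is that, by the symmetry assumption, $\Psi^*(\mo_k)=\sup_{\mx\in\RR^k}\sum_{i\in[k]}\phi(\mx;i)=C$, while on the other hand Condition~\ref{assump: N2} applied at $\mp=\mo_k$ gives $\Psi^*(\mo_k)=C_\phi\max(\mo_k)=C_\phi$. Hence the two constants agree: $C_\phi=C$.

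Next I would normalize: set $\tilde\phi=\phi/C_\phi$. Straightforward algebra shows that $\tilde\phi$ again satisfies Conditions~\ref{assump: N1} and~\ref{assump: N2} with constant $C_{\tilde\phi}=1$ (Condition~\ref{assump: N1} is scale-invariant, and Condition~\ref{assump: N2} scales linearly). Therefore Lemma~\ref{lemma: necessity: bound on psi(x;i) if i is not pred(x)} applies to $\tilde\phi$ and yields a constant $\Omega\in[0,1)$, depending only on $\phi$, such that
\begin{equation*}
\sum_{i\in[k]\setminus\{\pred(\mx)\}}\tilde\phi(\mx;i)\leq \Omega\quad\text{for all }\mx\in\RR^k.
\end{equation*}
Unscaling and using that $C_\phi=C$, this becomes $\sum_{i\neq\pred(\mx)}\phi(\mx;i)\leq \Omega C$ for all $\mx\in\RR^k$.

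Finally I would invoke the symmetry identity once more: since $\sum_{i\in[k]}\phi(\mx;i)=C$, we have
\begin{equation*}
\phi(\mx;\pred(\mx))=C-\sum_{i\neq\pred(\mx)}\phi(\mx;i)\geq C-\Omega C=C(1-\Omega)>0,
\end{equation*}
the strict positivity being the consequence of $\Omega<1$ from Lemma~\ref{lemma: necessity: bound on psi(x;i) if i is not pred(x)}. Taking $\J=C(1-\Omega)/2$ (for instance) produces a positive constant, depending only on $\phi$, with $\phi(\mx;\pred(\mx))>\J$ for all $\mx\in\RR^k$, as required.

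I do not foresee a serious technical obstacle; the symmetry assumption rigidly ties $\phi(\mx;\pred(\mx))$ to $\sum_{i\neq\pred(\mx)}\phi(\mx;i)$, and the only nontrivial ingredient, namely the fact that the off-argmax mass is strictly bounded away from the total mass uniformly in $\mx$, is exactly the content of Lemma~\ref{lemma: necessity: bound on psi(x;i) if i is not pred(x)}. The only small subtlety to verify carefully is that the constant $\Omega$ supplied by that lemma depends only on $\phi$ (equivalently, only on $\tilde\phi$) and not on $\mx$, so that $\J$ is a genuine uniform lower bound. This is already built into the statement of the lemma, so nothing further is needed.
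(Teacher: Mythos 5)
Your proof is correct and takes essentially the same approach as the paper's: both use the symmetry identity to write $\phi(\mx;\pred(\mx))=C-\sum_{i\neq\pred(\mx)}\phi(\mx;i)$ and then bound the right-hand sum uniformly and strictly below $C$. The only difference is in citation: you invoke Lemma~\ref{lemma: necessity: bound on psi(x;i) if i is not pred(x)} directly, while the paper reaches into the internal quantities $\overline\Omega$, $\myG$, $\tilde\mytheta$ from the proof of Lemma~\ref{lemma: necessity psi(xk, pred xk) is lower bounded}; these yield the identical constant, and your reliance on a stated lemma is arguably the cleaner choice.
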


\begin{proof}[Proof of Lemma \ref{lemma: symmetric surrogate lemma}]
Without loss of generality, we assume that $C=1$. If not, we can replace $\phi$ by $\phi/C$ so that $\Psi(\mx,\mo_k)=1$. 
Hence, the conditions of  Lemma \ref{lemma: necessity psi(xk, pred xk) is lower bounded} are satisfied. We let $\overline\Omega$, $\myG$, and $\tilde\mytheta$ be as in the proof of Lemma \ref{lemma: necessity psi(xk, pred xk) is lower bounded}. We have shown in the proof of Lemma \ref{lemma: necessity psi(xk, pred xk) is lower bounded} that $\tilde\mytheta(\mp)>0$ for all $\mp\in\RR^k_{\geq 0}$ such that $\mp\neq \mz_k$. 
   Since $\phi$ satisfies $\Psi(\mx;\mo_{k})=1$, 
\begin{align*}
    1= \Psi(\mx;\mo_k) =&\ \sum_{i\in[k_1]:i\neq \pred(\mx)}\phi(\mx;i) +\phi(\mx;\pred(\mx)).
 \end{align*}
 Hence,
\begin{align*}
    \phi(\mx;\pred(\mx))=1-\sum_{i\in[k_1]:i\neq \pred(\mx)}\phi(\mx;i)=1-\Psi(\mx; \mq),
\end{align*}
where $\mq=\mo_k-\mathbf e^{(\pred(\mx))}_k$. Note that $\mq\in \myG(\mo_{k})$. Moreover, $\argmax(\mq)=[k]\setminus \{\pred(\mx)\}$,  indicating $\pred(\mx)\notin\argmax(\mq)$.
Therefore, $\Psi(\mx;\mq)\leq \overline{\Omega}(\mq)$. Hence, 
\begin{align*}
 \MoveEqLeft \phi(\mx;\pred(\mx))\geq 1- \overline{\Omega}(\mq) \geq  1- \sup_{\mq\in\myG(\mo_k)}\overline{\Omega}(\mq) =\max(\mo_k)-\sup_{\mq\in\myG(\mo_k)}\overline{\Omega}(\mq)\\
  =&\ \Psi^*(\mo_k)-\sup_{\mq\in\myG(\mo_k)}\overline{\Omega}(\mq)=\tilde\mytheta(\mo_k).
\end{align*}
The proof  follows since $\tilde\mytheta(\mp)>0$ for all $\mp\neq \mz_k$, as mentioned in the beginning of the proof.
\end{proof}

\begin{lemma}
\label{lemma: sufficiency: tilde f form}
 Let $k\in\NN$. 
 Suppose $\phi:\RR^k\times [k]\mapsto\RR_{\geq 0}$ satisfies Conditions  \ref{assump: N1} and \ref{assump: N2}. Given any $\mp\in\RR^k_{\geq 0}$, if $\Psi^*(\mp)=\langle \mv^*,\mp\rangle $ for some $\mv^*\in\RR^k$, then
    the vector $\mv^*$ satisfies 
   $\mv^*\in\conv(\C_{\text{set}})$, 
    where
     \[\C_{\text{set}}=\{\mathbf{e}^{(i)}_k: i\in\argmax(\mp)\}.\]
     Moreover, if $\Psi(\mx^{(m)};\mp)\to_m  \Psi^*(\mp)$, then
     $\phi(\mxm;i)\to_m 0$ if $i\notin\argmax(\mp)$.
   \end{lemma}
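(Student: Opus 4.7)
My plan is to first reduce to the normalized case $C_\phi = 1$ by rescaling $\phi$; the hypotheses and the conclusion both scale correctly under $\phi \mapsto \phi/C_\phi$, so assume $\Psi^*(\mp) = \max(\mp)$ for $\mp \in \RR^k_{\geq 0}$. For the statement to hold one also needs $\mv^* \in \conv(\mV_\phi)$ (otherwise infinitely many $\mv^*$ solve $\langle \mv^*,\mp\rangle=\Psi^*(\mp)$); this is the natural setting, since in the usage below the vector $\mv^*$ will be either the image $(\phi(\mx^*;1),\dots,\phi(\mx^*;k))$ for some $\mx^*$, or a limit of such images. I would open the proof by noting that for any $\mv \in \conv(\mV_\phi)$ one has $\mv \geq \mz_k$ componentwise (since $\phi\geq 0$) and $\langle \mv,\mo_k\rangle \leq \Psi^*(\mo_k) = 1$ (by Condition \ref{assump: N2} with $C_\phi=1$).

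For the first claim, I would then chain the inequalities
\[
\max(\mp) \;=\; \Psi^*(\mp) \;=\; \langle \mv^*,\mp\rangle \;=\; \sum_{i=1}^{k} \mv^*_i\,\mp_i \;\leq\; \max(\mp)\sum_{i=1}^k \mv^*_i \;\leq\; \max(\mp),
\]
using $\mp_i \leq \max(\mp)$ and $\mv^*_i \geq 0$ in the first inequality and $\sum \mv^*_i \leq 1$ in the second. Equality throughout forces (a) $\sum_i \mv^*_i = 1$ and (b) $\mv^*_i > 0 \Rightarrow \mp_i = \max(\mp)$, i.e.\ $\mv^*_i = 0$ for every $i \notin \argmax(\mp)$. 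Combined with $\mv^* \geq \mz_k$, this is precisely the statement $\mv^* \in \conv(\C_{\text{set}})$.

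For the moreover part, fix $i_0 \notin \argmax(\mp)$ and choose $\lambda > 0$ small enough so that $\mp_{i_0} + \lambda < \max(\mp)$; then $\mp' := \mp + \lambda\, \mathbf{e}^{(i_0)}_k \in \RR^k_{\geq 0}$ with $\max(\mp')=\max(\mp)$, so by Condition \ref{assump: N2}, $\Psi^*(\mp') = \max(\mp) = \Psi^*(\mp)$. Linearity of $\Psi(\mx;\cdot)$ in $\mp$ gives
\[
\Psi(\mxm;\mp) + \lambda\,\phi(\mxm;i_0) \;=\; \Psi(\mxm;\mp') \;\leq\; \Psi^*(\mp') \;=\; \Psi^*(\mp),
\]
hence $0 \leq \lambda\,\phi(\mxm;i_0) \leq \Psi^*(\mp) - \Psi(\mxm;\mp) \to 0$, proving $\phi(\mxm;i_0) \to 0$.

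\textbf{Main obstacle.} The only non-routine step is pinning down the implicit membership $\mv^*\in\conv(\mV_\phi)$ that makes the first claim true; once that is identified, the rest is short. A secondary subtlety is verifying $\langle \mv,\mo_k\rangle \leq 1$ for $\mv\in\conv(\mV_\phi)$, which uses Condition \ref{assump: N2} at $\mp=\mo_k$ together with the linear-in-convex-combinations extension from $\mV_\phi$ to its convex hull; this is immediate but worth stating explicitly to justify the key inequality in the chain above.
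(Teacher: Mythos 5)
Your proof is correct, and the division of labor is instructive. For the first claim, your chain of inequalities $\max(\mp)=\langle\mv^*,\mp\rangle\leq\max(\mp)\sum_i\mv^*_i\leq\max(\mp)$ is exactly what the paper uses after establishing $\mv^*\in\C:=\{\mx\geq 0:\sum_i\mx_i\leq 1\}$; the paper gets there via a "face of the closed convex hull" citation while you go straight to $\conv(\mV_\phi)\subset\C$, which is cleaner. You are also right to flag that the lemma as literally stated is ambiguous without the implicit premise $\mv^*\in\conv(\mV_\phi)$ — the paper's own proof invokes this tacitly via the support-function/face argument, so you have identified and repaired a real gap in how the statement is written. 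For the moreover part you take a genuinely different route: the paper extracts a convergent subsequence of $(\phi(\mxm;1),\ldots,\phi(\mxm;k))$ (using the boundedness of $\phi$ from Lemma~\ref{lemma: necessity: psi bounded}) and applies the first part to the limit, whereas your perturbation argument — adding $\lambda\mathbf{e}^{(i_0)}_k$ to $\mp$ while keeping the maximum fixed — gives the conclusion in two lines, is self-contained, and does not even need $\phi$ bounded or the first part of the lemma. This is shorter and conceptually lighter than the paper's argument. (Both proofs wave past the degenerate case $\mp=\mz_k$, where the first claim actually fails as stated; the paper's "trivial" is not accurate there, but since applications always have $\mp\neq\mz_k$ this is harmless for both of you.)
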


\begin{proof}[Proof of Lemma \ref{lemma: sufficiency: tilde f form}]
The proof is trivial if $\mp=\mz_{k}$. Thus we assume $\mp\neq \mz_k$. 
Without loss of generality, let us assume that $C_\phi=1$ because otherwise we can replace $\phi$ by $\phi/C_\phi$. Recall the image set $\mV$ defined in \eqref{def: image set}. Since $C_\phi=1$, $\Psi(\mx;\mo_k)\leq 1$, which implies $\phi(\mx;i)\leq 1$ for each $i\in[k]$ and 
\[\mV\subset \lbs \mx\in\RR^k_{\geq 0}:\mx_i\in[0,1],\quad \sum_{i\in[k]}\mx_i\leq 1\rbs:=\C.\]
Since $\Phi^*$ is the support function of $\mV$ (see the proof of Lemma \ref{lemma: necessity: Psi-t cont.}), and $\Phi^*(\mp)=\langle \mv^*,\mp\rangle$, it follows that $\mv^*$ lies on a face of the closed convex hull of $\mV$  \citep[cc. pp. 145 of][]{hiriart}. Since $\mV\subset\C$, and $\C$ is both closed and convex, $\overline{\conv(\mV)}\subset\C$.
Therefore, $\mv^*\in \C$, which implies $\mv_i^*\in[0,1]$ for each $i\in[k]$ and $\sum_{i\in[k]}\mv^*_i\leq 1$. Therefore,
\[\langle \mv^*,\mp\rangle\leq \max(\mp)\sum_{i\in[k]}\mv^*_i\leq \max(\mp).\]
Since $\phi$ satisfies Condition \ref{assump: N2}, it follows that $\Psi^*(\mp)=\max(\mp)$, implying
\[\langle \mv^*,\mp\rangle= \max(\mp)\sum_{i\in[k]}\mv^*_i= \max(\mp),\]
implying $\sum_{i\in[k]}\mv^*_i= 1$ and
\[\langle \mv^*,\mp\rangle= \max(\mp)\sum_{i\in[k]}\mv^*_i.\]
Therefore, 
\[\sum_{i\in[k]}(\max(\mp)-\mp_i)\mv_i^*=0,\]
implying $\mv_i^*>0$ only if $\max(\mp)=\mp_i$. Since $\sum_{i\in[k]}\mv^*_i= 1$, it follows that $\mv^*=\sum\limits_{i\in[k]}\mv_i^*\mathbf{e}^{(i)}_k=\sum\limits_{i\in\argmax(\mp)}\mv_i^*\mathbf{e}^{(i)}_k\in\conv(\cset)$.

Let $\mym_i=\phi(\mxm;i)$. We will show that, if $i\notin\argmax(\mp)$, given any subsequence of $\mym_i$, we can find a further subsequence that converges to zero. Therefore, it will follow that $\mym_i\to_m 0$. Since $\phi$ satisfies Condition \ref{assump: N1}, $\phi$ is bounded by Lemma \ref{lemma: necessity: psi bounded}. Therefore, given any subsequence of $\{\mxm\}_{m\geq 1}$, we can find a further subsequence so that the $\mym_i=\phi(\mxm;i)$'s converge (say to some $\mv_i^*\in\RR$) for all $i\in[k]$ along this subsequence. To simplify notation, we will denote the corresponding subsequences by $\mxm$ and $\mym$. Hence, $\mym\to_m \mv^*$. Therefore, it suffices to show that $\mv_i^*=0$ unless $i\in\argmax(\mp)$.

 Note that $\langle \mym,\mp\rangle\to_m  \langle\mv^*,\mp\rangle$ because the $l_2$ inner product  is a continuous function. Noting $\Psi(\mxm;\mp)=\langle \mym,\mp\rangle$ and $\Psi(\mxm;\mp)\to_m\Psi^*(\mp)$, we obtain that $\langle\mv^*,\mp\rangle=\Psi^*(\mp)$.  Therefore, from the first part, it follows that $\mv^*\in\conv(\cset)$, which implies $\mv_i^*=0$ if $i\notin\argmax(\mp)$.
 

\end{proof}

\subsection{Preliminary auxlliary lemmas}
\label{sec: suff: preliminary lemmas}

\begin{lemma}
\label{lemma:  sufficiency: induction: Psi t star p t star}
Suppose $T\geq 2$, $\PP$ satisfies Assumptions I-IV, and $(\phi_2,\ldots,\phi_T)$ satisfies Conditions \ref{assump: N1} and \ref{assump: N2} with $C_{\phi_t}=1$ for $t\in[2:T]$. Then 
    \begin{equation}
        \label{intheorem: sufficiency: induction: Psi t star p t star}
        \Psi_t^*(\mp_t^*(H_t))=\E\lbt\slb\sum_{i=1}^T Y_i\srb\prod_{j=t}^T\frac{1[A_j=d_j^*(H_j)]}{\pi_j(A_j\mid H_j)}\ \bl\  H_t\rbt
    \end{equation}
    and   $\argmax(\mp_t^*(H_t))=\argmax(Q_t^*(H_t))$ for $t\in[2:T]$. 
\end{lemma}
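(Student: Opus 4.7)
\textbf{Proof proposal for Lemma~\ref{lemma: sufficiency: induction: Psi t star p t star}.} I would prove both assertions simultaneously by backward induction on $t$, starting at $t = T$ and descending to $t = 2$. The central observation driving the argument is that, under the hypotheses, Condition~\ref{assump: N2} with $C_{\phi_t} = 1$ reduces $\Psi_t^*(\mp_t^*(H_t))$ to $\max(\mp_t^*(H_t))$, so both claims reduce to identifying $\mp_t^*(H_t)_i$ with the $Q$-function $Q_t^*(H_t, i)$ up to an $H_t$-measurable additive constant (namely $\sum_{j=1}^{t-1} Y_j$).

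For the base case $t = T$, directly from the definition $\mp_T^*(H_T)_i = \E[\sum_{j=1}^T Y_j \mid H_T, A_T = i] = \sum_{j=1}^{T-1} Y_j + Q_T^*(H_T, i)$, so $\argmax(\mp_T^*(H_T)) = \argmax(Q_T^*(H_T))$, and $d_T^*(H_T) \in \argmax(\mp_T^*(H_T))$. Applying Condition~\ref{assump: N2} gives $\Psi_T^*(\mp_T^*(H_T)) = \max(\mp_T^*(H_T)) = \mp_T^*(H_T)_{d_T^*(H_T)}$, which via the IPW identity \eqref{intheorem: necessity: IPW: 0-1} equals the right-hand side of \eqref{intheorem: sufficiency: induction: Psi t star p t star}.

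For the inductive step, assume the claim holds at $t+1$. Substituting the inductive formula for $\Psi_{t+1}^*(\mp_{t+1}^*(H_{t+1}))$ into the defining relation \eqref{def: sufficiency: pt star} and using the tower property yields
\[
\mp_t^*(H_t)_i = \E\Bigl[\Bigl(\sum_{j=1}^T Y_j\Bigr) \prod_{r=t+1}^T \frac{1[A_r = d_r^*(H_r)]}{\pi_r(A_r \mid H_r)} \,\Big|\, H_t, A_t = i\Bigr].
\]
Splitting $\sum_{j=1}^T Y_j = \sum_{j=1}^{t-1} Y_j + \sum_{j=t}^T Y_j$, pulling the $H_t$-measurable part out, and using \eqref{intheorem: fact: product of indicators} (which gives $\E[\prod_{r=t+1}^T 1[A_r=d_r^*(H_r)]/\pi_r(A_r\mid H_r) \mid H_{t+1}] = 1$) to dispose of the first piece, I reduce $\mp_t^*(H_t)_i$ to $\sum_{j=1}^{t-1} Y_j$ plus an IPW expression. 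A short induction (or a direct expansion of the Bellman recursion for $Q_t^*$ combined with the same IPW identity) identifies this IPW expression as $Q_t^*(H_t, i)$. Hence $\mp_t^*(H_t)_i = \sum_{j=1}^{t-1} Y_j + Q_t^*(H_t, i)$, which yields $\argmax(\mp_t^*(H_t)) = \argmax(Q_t^*(H_t))$ and $\mp_t^*(H_t)_{d_t^*(H_t)} = \max(\mp_t^*(H_t))$. Applying Condition~\ref{assump: N2} once more gives $\Psi_t^*(\mp_t^*(H_t)) = \max(\mp_t^*(H_t))$, and a final application of the IPW identity \eqref{intheorem: necessity: IPW: 0-1} converts $\max(\mp_t^*(H_t))$ into the desired right-hand side.

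The main technical point to handle carefully is the bookkeeping for the IPW weights across stages and verifying that the $H_t$-measurable component of $\sum_{j=1}^T Y_j$ indeed passes through the conditional expectation cleanly thanks to \eqref{intheorem: fact: product of indicators}. Identifying $\mp_t^*(H_t)_i - \sum_{j=1}^{t-1} Y_j$ with $Q_t^*(H_t, i)$ is the key step; I would establish it either by an inner backward induction or by directly invoking the well-known IPW representation of the optimal $Q$-function under Assumptions~I--III. No other step appears to pose difficulty, since Condition~\ref{assump: N2} does all the work of linking $\Psi_t^*$ to $\max$.
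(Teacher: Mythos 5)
Your proposal is correct and follows essentially the same route as the paper's proof: backward induction from $t=T$, reducing $\Psi_t^*$ to $\max$ via Condition~\ref{assump: N2} with $C_{\phi_t}=1$, splitting $\sum_{j=1}^T Y_j$ into its $H_t$-measurable part plus $\sum_{j=t}^T Y_j$ and disposing of the former with \eqref{intheorem: fact: product of indicators}, and identifying the remaining IPW expression with $Q_t^*(H_t,i)$. The paper packages that last identification as its Fact~\ref{fact: Q function expression} (itself proved by backward induction), which is exactly the "well-known IPW representation of the optimal $Q$-function" you mention as one of your two options; your explicit additive identity $\mp_t^*(H_t)_i = \sum_{j=1}^{t-1}Y_j + Q_t^*(H_t,i)$ is the same computation the paper carries out implicitly in its display \eqref{intheorem: suff: Psi t star form argmax step}.
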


\begin{proof}[Proof of Lemma \ref{lemma:  sufficiency: induction: Psi t star p t star}]
We will show that 
\[\Psi_t^*(\mp_t^*(H_t))=\E\lbt\slb\sum_{i=1}^T Y_i\srb\prod_{j=t}^T\frac{1[A_j=d_j^*(H_j)]}{\pi_j(A_j\mid H_j)}\ \bl\  H_t\rbt\]
for $t\in[2:T]$ using induction. Our induction hypothesis is that the above and $\argmax(\mp_t^*(H_t))=\argmax(Q_t^*(H_t))$ holds for $t\in[2:T]$. 
  Let $t=T$. Note that \eqref{def: sufficiency: pt star} implies   
  $\mp^*_T(H_T)_i=\E\slbt \sum_{i=1}^TY_i\mid H_T, A_T=i\srbt$. Since $Q_T^*(H_T,i)=\E[Y_T\mid H_T, A_T=i]$ for all $i\in[k_T]$ and $\pred(\mx)=\max(\argmax(\mx))$ for any vector $\mx$, it follows that 
     \[\begin{split}
         \pred(\mp_T^*(H_T))=&\ \max(\argmax_{i\in[k_T]}\E[Y_T\mid H_T, A_T=i])=\max(\argmax(Q_T^*(H_T))\\
         =&\ \pred(Q_T^*(H_T))=d_T^*(H_T),
     \end{split}\] 
     where the last step follows because  \eqref{def: d star from Q} defines  $d_t^*(H_t)$ to be $\pred(Q_t^*(H_t))$ for all $t\in[T]$.
   Since $\phi_t$'s satisfy  Condition \ref{assump: N2} for $t\in[2:T]$ with $C_{\phi_t}=1$,  
    \begin{align*}
  \MoveEqLeft \Psi_t^*(\mp_T^*(H_t))= \max(\mp_T^*(H_T))\stackrel{(a)}{=}\mp_T^*(H_T)_{\pred(\mp_T^*(H_T))}\\
   =&\  \E\lbt\sum_{i=1}^TY_i\mid A_T=\pred(\mp_T^*(H_T)), H_T\rbt\stackrel{(b)}{=} \E\lbt\sum_{i=1}^TY_i\mid A_T=d_T^*(H_T), H_T\rbt,
    \end{align*}
    where (a) follows because $\pred(\mx)\in\argmax(\mx)$ for any vector $\mx$ and (b) follows because we just showed that $\pred(\mp_T^*(H_T))=d_T^*(H_T)$.
  Since $\PP$ satisfies Assumption I,  \eqref{intheorem: necessity: IPW: 0-1} yields   
    \[\E\lbt \sum_{i=1}^TY_i\ \bl\ H_T, A_T=d_T^*(H_T)\rbt=\E\lbt\slb\sum_{i=1}^T Y_i\srb\frac{1[A_T=d_T^*(H_T)]}{\pi_T(A_T\mid H_T)}\ \bl\  H_T\rbt,\]
     which implies that the induction hypothesis holds when $t=T$.

    Suppose the induction hypothesis holds for  some $1+t\in[3:T]$. We will show that the induction hypothesis holds for $t$. Since $t\in[2:T]$, $\phi_t$ satisfies  Condition \ref{assump: N2} with $C_{\phi_t}=1$. Therefore,   $\Psi_t^*(\mp_t^*(H_t))= \max(\mp_t^*(H_t))$ where by \eqref{def: sufficiency: pt star}, for any $i\in[k_t]$, 
     \begin{align}
     \label{def: p t star}
 \mp_t^*(H_t)_i &\  =\E[\Psi_{1+t}^*(\mp_{1+t}^*(H_{1+t}))\mid H_t, A_t=i]\nn\\
   =&\ \E\lbt\E\lbt \slb\sum_{i=1}^T Y_i\srb\prod_{j=1+t}^T\frac{1[A_j=d_j^*(H_j)]}{\pi_j(A_j\mid H_j)}\ \bl\ H_{1+t}\rbt\ \bl\ H_t, A_t=i\rbt\nn\\
   =&\ \E\lbt \slb\sum_{i=1}^T Y_i\srb\prod_{j=1+t}^T\frac{1[A_j=d_j^*(H_j)]}{\pi_j(A_j\mid H_j)}\ \bl\ H_t, A_t=i\rbt
    \end{align}
    since the Induction hypothesis holds for $1+t$.
    Thus 
    \begin{align*}
  \Psi_t^*(\mp_t^*(H_t))=&\  \E\lbt \slb\sum_{i=1}^T Y_i\srb\prod_{j=1+t}^T\frac{1[A_j=d_j^*(H_j)]}{\pi_j(A_j\mid H_j)}\ \bl\ H_t, A_t=\pred(\mp_t^*(H_t))\rbt  \\  
  =&\  \E\lbt \slb\sum_{i=1}^T Y_i\srb\frac{1[A_t=\pred(\mp_t^*(H_t))]\prod_{j=1+t}^T 1[A_j=d_j^*(H_j)]}{\prod_{j=t}^T\pi_j(A_j\mid H_j)}\ \bl\ H_t\rbt,
    \end{align*}
    where the last step follows from  \eqref{intheorem: necessity: IPW: 0-1}. Thus to show that the induction hypothesis follows for $t$, it only remains to show that $\pred(\mp_t^*(H_t))=d_t^*(H_t)$.
To show the latter, we will first prove a fact. Since $(Y_1,\ldots,Y_{t-1})\subset H_t$, 
  \begin{align}
  \label{intheorem: suff: Psi t star form argmax step}
      \MoveEqLeft \E\lbt \slb\sum_{i=1}^T Y_i\srb\prod_{j=1+t}^T\frac{1[A_j=d_j^*(H_j)]}{\pi_j(A_j\mid H_j)}\ \bl\ H_t, A_t=j\rbt\nn\\
      =&\ \slb\sum_{i=1}^{t-1} Y_i\srb\E\lbt \prod_{j=1+t}^T\frac{1[A_j=d_j^*(H_j)]}{\pi_j(A_j\mid H_j)}\ \bl\ H_t, A_t=i\rbt\nn\\
      &\ +\E\lbt \slb\sum_{i=t}^T Y_i\srb\prod_{j=1+t}^T\frac{1[A_j=d_j^*(H_j)]}{\pi_j(A_j\mid H_j)}\ \bl\ H_t, A_t=j\rbt\nn\\
      =&\ \sum_{i=1}^{t-1} Y_i+ \E\lbt \slb\sum_{i=t}^T Y_i\srb\prod_{j=1+t}^T\frac{1[A_j=d_j^*(H_j)]}{\pi_j(A_j\mid H_j)}\ \bl\ H_t, A_t=j\rbt,
  \end{align}  
  where the last step follows from \eqref{intheorem: fact: product of indicators}. 
    Since $\pred(\mp)=\max(\argmax(\mp))$ for any vector $\mp$, \eqref{def: p t star} implies $\pred(\mp_t^*(H_t))$ equals
\begin{align*}
    \MoveEqLeft \max\lbs \argmax_{j\in[k_t]}\E\lbt \slb\sum_{i=1}^T Y_i\srb\prod_{j=1+t}^T\frac{1[A_j=d_j^*(H_j)]}{\pi_j(A_j\mid H_j)}\ \bl\ H_t, A_t=j\rbt\rbs\\
    \stackrel{(a)}{=}&\ \max\lbs \argmax_{j\in[k_t]}\E\lbt \slb\sum_{i=t}^T Y_i\srb\prod_{j=1+t}^T\frac{1[A_j=d_j^*(H_j)]}{\pi_j(A_j\mid H_j)}\ \bl\ H_t, A_t=j\rbt\rbs,
\end{align*}
    where (a) follows due to \eqref{intheorem: suff: Psi t star form argmax step}.
      However, Fact \ref{fact: Q function expression} implies
    \[\E\lbt \slb\sum_{i=t}^T Y_i\srb\prod_{j=1+t}^T\frac{1[A_j=d_j^*(H_j)]}{\pi_j(A_j\mid H_j)}\ \bl\ H_t, A_t=i\rbt=Q_t^*(H_t,i),\]
    implying $\pred(\mp_t^*(H_t))=\pred(Q_t^*(H_t))=d_t^*(H_t)$.
    Therefore, the proof follows.
\end{proof}

\begin{lemma}
\label{lemma: suff: p t star is bdd}
  Under the setup of Lemma \ref{lemma:  sufficiency: induction: Psi t star p t star}, there exist $c_1,c_2>0$, depending only on $\PP$, so that the $\mp_t^*:\H_t\mapsto\RR$ defined in \eqref{def: sufficiency: pt star} satisfies $c_1<\mp_t^*(h_t)_i<c_2$ for all $h_t\in\H_t$, $i\in[k_t]$, and  $t\in[T]$. 
\end{lemma}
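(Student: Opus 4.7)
The plan is to prove the result by backward induction on $t$, starting from $t=T$ and descending to $t=1$. The key observation is that Condition~\ref{assump: N2} with $C_{\phi_t}=1$ collapses $\Psi_t^*$ into the simple $\max$ operator for $t \in [2:T]$, which lets the bounds propagate cleanly through the recursive definition in \eqref{def: sufficiency: pt star}. Although the lemma formally cites ``the setup of Lemma~\ref{lemma:  sufficiency: induction: Psi t star p t star},'' the lower bound $c_1 > 0$ requires Assumption~V (positivity of the $Y_t$'s), which is available because the lemma is only invoked in the proofs of Theorem~\ref{theorem: sufficient conditions} and Proposition~\ref{prop: multi-cat FC} where $\PP \in \mP_0$.

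For the base case $t=T$, apply the definition directly: $\mp_T^*(H_T)_i = \E[\sum_{j=1}^T Y_j \mid H_T, A_T = i]$. By Assumption~IV, there is a $\PP$-dependent constant $C_{\max}$ with $Y_j \leq C_{\max}$ for every $j$, so $\mp_T^*(H_T)_i \leq T C_{\max}$. By Assumption~V, $Y_j > C_{\min} > 0$, so $\mp_T^*(H_T)_i > T C_{\min}$. Take $c_1 := T C_{\min}$ and $c_2 := T C_{\max}$; these constants depend only on $\PP$.

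For the inductive step, suppose $c_1 < \mp_{t+1}^*(H_{t+1})_j < c_2$ holds for every $j \in [k_{t+1}]$ and every $H_{t+1}$, for some $t \in [T-1]$. Since $t+1 \in [2:T]$, the surrogate $\phi_{t+1}$ satisfies Condition~\ref{assump: N2} with $C_{\phi_{t+1}}=1$, hence $\Psi_{t+1}^*(\mp) = \max(\mp)$ on $\RR^{k_{t+1}}_{\geq 0}$. The inductive hypothesis gives $\mp_{t+1}^*(H_{t+1}) \in (c_1,c_2)^{k_{t+1}}$, and in particular is a vector in $\RR^{k_{t+1}}_{\geq 0}$, so
\[
c_1 \;<\; \Psi_{t+1}^*(\mp_{t+1}^*(H_{t+1})) \;=\; \max(\mp_{t+1}^*(H_{t+1})) \;<\; c_2.
\]
Applying $\E[\,\cdot \mid H_t, A_t = i]$ to these bounds (and using \eqref{def: sufficiency: pt star}) yields $c_1 < \mp_t^*(H_t)_i < c_2$ uniformly in $H_t$ and $i \in [k_t]$. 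This closes the induction and establishes the claim for all $t \in [T]$ with the same constants $c_1, c_2$.

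There is no real technical obstacle; the proof is a clean backward induction where Condition~\ref{assump: N2} (via $C_{\phi_t}=1$) plays the essential role of preserving the bounds as we pass from $\mp_{t+1}^*$ to $\Psi_{t+1}^*(\mp_{t+1}^*)$ without inflation. The only mildly delicate point is noting that Assumption~V supplies the lower bound $c_1 > 0$, which is implicit in the context where this lemma will be invoked.
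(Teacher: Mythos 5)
Your proof is correct, and it takes a genuinely different route from the paper's. The paper invokes the closed-form representation
\[
\mp_t^*(H_t)_i = \E\Bigl[\textstyle\bigl(\sum_{j=1}^T Y_j\bigr)\prod_{j=1+t}^T\frac{1[A_j=d_j^*(H_j)]}{\pi_j(A_j\mid H_j)}\;\Big|\; H_t, A_t=i\Bigr],
\]
which is established inside Lemma~\ref{lemma:  sufficiency: induction: Psi t star p t star}, and then bounds the bracket directly using Assumptions~IV--V and the identity \eqref{intheorem: fact: product of indicators} (which makes the conditional expectation of the propensity-weighted indicator product equal to one). Your argument instead runs a fresh backward induction directly on the recursive definition \eqref{def: sufficiency: pt star}, using Condition~\ref{assump: N2} with $C_{\phi_{t+1}}=1$ to collapse $\Psi_{t+1}^*$ into $\max$ and then observing that $\max$ and conditional expectation preserve uniform bounds. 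The two approaches lean on the same ingredients (Assumptions~IV--V and $C_{\phi_t}=1$) but package them differently: the paper's route is a one-shot calculation once the closed form is granted, while yours is self-contained and does not depend on having that closed form in hand, at the modest cost of an explicit induction. Your observation that Assumption~V is needed for $c_1 > 0$ (even though the ``setup'' of Lemma~\ref{lemma:  sufficiency: induction: Psi t star p t star} formally lists only Assumptions~I--IV) is accurate and matches the implicit reliance in the paper's own proof.
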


\begin{proof}[Proof of Lemma \ref{lemma: suff: p t star is bdd}]
We will use \eqref{def: p t star}. By our assumption, there is a constant depending on $\PP$, i.e., $C_{min}(\PP)$, so that $Y_t>C_{min}(\PP)$ for each $t\in[T]$. Hence,   \eqref{def: p t star} implies for each $i\in[k_t]$, 
\[\mp_t^*(H_t)_i\geq TC_{min}(\PP)\E\lbt \prod_{j=1+t}^T\frac{1[A_j=d_j^*(H_j)]}{\pi_j(A_j\mid H_j)}\rbt,\]
which equals $TC_{min}(\PP)$ by 
\eqref{intheorem: fact: product of indicators}. Moreover, Assumption IV implies that there exists a constant depending on $\mp$, say $C_{max}(\PP)$, so that $Y_j\leq C_{max}(\PP)$ for all $j\in[T]$. Hence, 
\[\mp_t^*(H_t)_i\leq TC_{max}(\PP)\E\lbt \prod_{j=1+t}^T\frac{1[A_j=d_j^*(H_j)]}{\pi_j(A_j\mid H_j)}\rbt=TC_{max}(\PP)\]
by the same argument. Hence, the proof follows.
\end{proof}

\begin{lemma}
\label{lemma: sufficiency: p t star and Q t star}
Suppose $\PP$ satisfies Assumptions I-V, and if $T\geq 2$, then for $t\in[2:T]$, $\phi_t$ satisfies Conditions \ref{assump: N1} and \ref{assump: N2}  with $C_{\phi_t}=1$. Then $\mp_1^*(H_1)= Q_1^*(H_1)$ and $V_*=\E[\max(\mp_1^*(H_1))]$.
\end{lemma}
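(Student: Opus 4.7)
The plan is to combine Lemma~\ref{lemma:  sufficiency: induction: Psi t star p t star} (which already handles stages $t\geq 2$) with the tower property and the standard IPW representation of the optimal Q-functions. No new convex-analytic machinery is needed; this is essentially a bookkeeping lemma that connects the recursion defining $\mp_t^*$ to the Bellman recursion defining $Q_t^*$.

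\textbf{Step 1: Handle the trivial case $T=1$.} When $T=1$, the definition of $\mp_T^*$ gives directly $\mp_1^*(H_1)_i=\E[Y_1\mid H_1,A_1=i]=Q_1^*(H_1,i)$, so $\mp_1^*(H_1)=Q_1^*(H_1)$. Then $V_*=\E[\max_{i\in[k_1]}Q_1^*(H_1,i)]$ is the usual formula (readily verified by noting $V(d)=\E[Y_1\,1[A_1=d_1(H_1)]/\pi_1(A_1\mid H_1)]$ and maximizing pointwise). This gives $V_*=\E[\max(\mp_1^*(H_1))]$. For the rest I assume $T\geq 2$, so that Lemma~\ref{lemma:  sufficiency: induction: Psi t star p t star} applies.

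\textbf{Step 2: Rewrite $\mp_1^*(H_1)_i$ using Lemma~\ref{lemma:  sufficiency: induction: Psi t star p t star}.} By definition,
\[
\mp_1^*(H_1)_i=\E\bigl[\Psi_2^*(\mp_2^*(H_2))\,\bigl|\,H_1,A_1=i\bigr].
\]
Applying Lemma~\ref{lemma:  sufficiency: induction: Psi t star p t star} at $t=2$ and then the tower property (using that $H_1\subset H_2$ and $A_1$ is measurable w.r.t.\ $H_2$), this becomes
\[
\mp_1^*(H_1)_i=\E\Biggl[\Bigl(\sum_{j=1}^T Y_j\Bigr)\prod_{j=2}^T\frac{1[A_j=d_j^*(H_j)]}{\pi_j(A_j\mid H_j)}\,\Bigl|\,H_1,A_1=i\Biggr].
\]
This is exactly the IPW representation of $Q_1^*(H_1,i)$ furnished by Fact~\ref{fact: Q function expression} (which was invoked at the end of the proof of Lemma~\ref{lemma:  sufficiency: induction: Psi t star p t star}). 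Hence $\mp_1^*(H_1)=Q_1^*(H_1)$.

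\textbf{Step 3: Deduce the value-function identity.} Since $\mp_1^*(H_1)=Q_1^*(H_1)$, we have $\max(\mp_1^*(H_1))=\max_{i\in[k_1]}Q_1^*(H_1,i)=Q_1^*(H_1,d_1^*(H_1))$ by the definition of $d_1^*$ in \eqref{def: d star from Q}. The standard backward-induction identity $V_*=\E\bigl[Q_1^*(H_1,d_1^*(H_1))\bigr]$ (again obtained by combining Fact~\ref{fact: Q function expression} at $t=1$ with $\E_{d^*}[\sum_t Y_t]$ written via IPW) then yields $V_*=\E[\max(\mp_1^*(H_1))]$.

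There is no real obstacle here: the only subtlety is being careful that Lemma~\ref{lemma:  sufficiency: induction: Psi t star p t star} is stated for $t\geq 2$, so stage $1$ has to be treated by hand via its definition, and the $T=1$ case cannot invoke that lemma at all. Everything else is an application of the tower property together with a result already established earlier in the paper.
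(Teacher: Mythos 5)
Your proof is correct and follows essentially the same route as the paper's: handle $T=1$ directly from the definition, then for $T\geq 2$ apply Lemma~\ref{lemma:  sufficiency: induction: Psi t star p t star} at $t=2$, use the tower property, and identify the resulting IPW expression with $Q_1^*(H_1,i)$ via Fact~\ref{fact: Q function expression} at $t=1$. The final step invoking $V_*=\E[\max_{i}Q_1^*(H_1,i)]$ is also the paper's argument, stated there without the extra IPW justification you sketch.
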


\begin{proof}[Proof of Lemma \ref{lemma: sufficiency: p t star and Q t star}]
If $T=1$, then the proof follows trivially because in this case, $p_1^*(H_1)_i=p_1(H_1)_i=E[Y_1\mid H_1, A_1=i]=Q_1^*(H_1,i)$ for all $i\in[k_1]$. Hence, we assume that $T\geq 2$. 
For $t\in[2:T]$, $C_{\phi_t}=1$ implies $\Psi^*_t(\mo_{k_t})=1$, which indicates Lemmas \ref{lemma: necessity: Bartlett stuff}--\ref{lemma: necessity: linear bound} hold.
    By \eqref{def: sufficiency: pt star} and \eqref{intheorem: sufficiency: induction: Psi t star p t star}, for any $i\in[k_1]$, 
    \begin{align*}
        \mp_1^*(H_1)_i=&\ \E[\Psi_2^*(p_2^*(H_2))\mid H_1,A_1=i]\\
        =&\ \E\lbt \E\lbt\slb\sum_{i=1}^T Y_i\srb\prod_{j=2}^T\frac{1[A_j=d_j^*(H_j)]}{\pi_j(A_j\mid H_j)}\ \bl\ H_2\rbt\ \bl\ H_1,A_1=i\rbt
    \end{align*}
    where the last step follows by Lemma \ref{lemma:  sufficiency: induction: Psi t star p t star}.
   Using Fact \ref{fact: Q function expression} in step (a), we obtain that
    \begin{align*}
    \mp_1^*(H_1)_i=\E\lbt\slb\sum_{i=1}^T Y_i\srb\prod_{j=2}^T\frac{1[A_j=d_j^*(H_j)]}{\pi_j(A_j\mid H_j)}\ \bl\ H_1,A_1=i\rbt \stackrel{(a)}{=} Q_1^*(H_1,i). 
    \end{align*}
    That $V_*=\E[\max(\mp_1^*(H_1))]$ follows noting $V_*=\E[\max_{i\in [k_1]} Q_1^*(H_1,i)]$.
\end{proof}

 \begin{lemma}
        \label{lemma: sufficiency}
         Suppose $\PP$ satisfies Assumptions I-V. Then  $V^\psi_*=\E[\Psi_1^*(\mp_1^*(H_1))]$, and for any $f\in\F$, $V^\psi(f)=\E[\Psi(f_1(H_1);\mp_1(H_1))]$ where $\mp_1(H_1)\equiv \mp_1(H_1;f)$.  In the special case where $\phi_1$ also satisfies Condition  \ref{assump: N2} with $C_{\phi_1}=1$, then $V_*^\psi=V_*$.
          
  \end{lemma}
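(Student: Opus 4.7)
The plan is to unroll both $V^\psi(f)$ and $V^\psi_*$ by backward recursion on the stages, at each step applying the IPW identity \eqref{intheorem: necessity: IPW: general p} (and, for the supremum, its variant \eqref{intheorem: necessity: IPW: supremum with other rv}) to convert a conditional expectation of $\phi_t/\pi_t$ against a non-negative reward into a $\Psi_t$ or $\Psi_t^*$. The separability of $\psi$ together with non-negativity of every $\phi_t$ and positivity of every $Y_t$ (Assumption V) will make all intermediate quantities non-negative, which is exactly what is needed to push suprema inside expectations without measurable-selection subtleties.

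For the formula for $V^\psi(f)$, I would first condition on $H_T$ inside the definition of $V^\psi(f)$ in \eqref{def:  surrogate V(f)}: pulling the factors $\prod_{t=1}^{T-1}\phi_t(f_t(H_t);A_t)/\pi_t(A_t\mid H_t)$ outside, the inner conditional expectation becomes
\[
\E\!\left[\frac{\phi_T(f_T(H_T);A_T)}{\pi_T(A_T\mid H_T)}\sum_{j=1}^T Y_j\;\Bigl|\;H_T\right]=\Psi_T(f_T(H_T);\mp_T(H_T))
\]
by \eqref{intheorem: necessity: IPW: general p}, using that $\mp_T(H_T)_i=\E[\sum_j Y_j\mid H_T,A_T=i]$ by definition. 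Now condition on $H_{T-1}$ and apply \eqref{intheorem: necessity: IPW: general p} again with $\VV=\Psi_T(f_T(H_T);\mp_T(H_T))$ to collapse stage $T-1$ into $\Psi_{T-1}(f_{T-1}(H_{T-1});\mp_{T-1}(H_{T-1};f))$, using \eqref{def: suff: pt f} to identify the resulting conditional expectation as $\mp_{T-1}(H_{T-1};f)$. Iterating this backward recursion $T-1$ more times yields $V^\psi(f)=\E[\Psi_1(f_1(H_1);\mp_1(H_1;f))]$.

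For $V^\psi_*$, I would run the same recursion, but at each stage take the supremum over $f_t$ before moving to stage $t-1$. The key observation is that, at the beginning of stage $t$ of the recursion, we have arrived at an expression of the form
\[
\E\!\left[\prod_{s=1}^{t-1}\frac{\phi_s(f_s(H_s);A_s)}{\pi_s(A_s\mid H_s)}\,\Psi_t(f_t(H_t);\mp_t^*(H_t))\right],
\]
with the product of weights pointwise non-negative (by non-negativity of each $\phi_s$ and positivity of $\pi_s$). Hence the supremum over $f_t\in\F_t$ can be moved inside the expectation, yielding $\Psi_t^*(\mp_t^*(H_t))$ in place of $\Psi_t(f_t(H_t);\mp_t^*(H_t))$ (this is exactly \eqref{intheorem: necessity: IPW: supremum with other rv} applied with $h$ being the non-negative product of weights and $\VV=\Psi_{t+1}^*(\mp_{t+1}^*(H_{t+1}))$ at the previous step). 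Using \eqref{def: sufficiency: pt star} to recognize the next-stage weight identity $\mp_{t-1}^*(H_{t-1})_j=\E[\Psi_t^*(\mp_t^*(H_t))\mid H_{t-1},A_{t-1}=j]$, we conclude $V^\psi_*=\E[\Psi_1^*(\mp_1^*(H_1))]$.

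Finally, the special case $V^\psi_*=V_*$ is immediate: if $\phi_1$ satisfies Condition~\ref{assump: N2} with $C_{\phi_1}=1$, then $\Psi_1^*(\mp_1^*(H_1))=\max(\mp_1^*(H_1))$, and Lemma~\ref{lemma: sufficiency: p t star and Q t star} gives $\mp_1^*(H_1)=Q_1^*(H_1)$, so $\E[\Psi_1^*(\mp_1^*(H_1))]=\E[\max_i Q_1^*(H_1,i)]=V_*$. The main (minor) obstacle is justifying the interchange of supremum and expectation cleanly at each stage; but since the weights are always pointwise non-negative and \eqref{intheorem: necessity: IPW: supremum with other rv} has already been recorded for exactly this purpose, it reduces to a direct citation rather than a measurable-selection argument.
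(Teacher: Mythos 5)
Your proposal matches the paper's proof in substance: the paper also establishes $V^\psi_*=\E[\Psi_1^*(\mp_1^*(H_1))]$ by a backward induction on $t$, applying \eqref{intheorem: necessity: IPW: supremum with other rv} at each stage to pull the supremum over $f_t$ inside (using non-negativity of the accumulated weight), and identifies the resulting nested conditional expectations with $\mp_t^*$ via \eqref{def: sufficiency: pt star}; the $V^\psi(f)$ formula follows by the analogous recursion with \eqref{intheorem: necessity: IPW: general p} and \eqref{def: suff: pt f}. The final observation that $V^\psi_*=V_*$ when $\phi_1$ satisfies Condition~\ref{assump: N2} with $C_{\phi_1}=1$ is handled identically, by invoking $\Psi_1^*(\mp)=\max(\mp)$ together with Lemma~\ref{lemma: sufficiency: p t star and Q t star}.
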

    
\begin{proof}[Proof of Lemma \ref{lemma: sufficiency}]
We will prove $V^\psi_*=\E[\Psi_1^*(\mp_1^*(H_1))]$  by induction. Since the proof of $V^\psi(f)=\E[\Psi(f_1(H_1);\mp_1(H_1)]$ is similar, it is skipped. \\
\textbf{Induction hypothesis:} For $t\in[T]$,
\begin{align*}
     \MoveEqLeft  \sup_{f_{i}\in\F_i:i\in [t:T]}\E\lbt \slb\sum_{i=1}^T Y_i\srb\prod_{j=1}^{T}\frac{\phi_j(f_j(H_j);A_j)}{\pi_j(A_j\mid H_j)}\rbt= \E\lbt \prod_{j=1}^{t-1}\frac{\phi_{j}(f_j(H_j);A_t)}{\pi_j(A_j\mid H_j)}\Psi_t^*(\mp_t^*(H_t))\rbt,
    \end{align*}
    where the product term is one when the range of the product is empty, which occurs only when $t=1$.
    When $t=T$, taking $\VV=\sum_{i=t}^TY_t$ and $h(H_{T})=\prod_{j=1}^{T-1}\{\phi_j(f_j(H_j);A_j)/ \pi_j(A_j\mid H_j)\}$ in \eqref{intheorem: necessity: IPW: supremum with other rv}, we obtain that 
    \begin{align*}
 \sup_{f_T\in\F_T}\E\lbt \slb\sum_{i=1}^T Y_i\srb\prod_{j=1}^{T}\frac{\phi_j(f_j(H_j);A_j)}{\pi_j(A_j\mid H_j)}\rbt
        =&\ \E\lbt \prod_{j=1}^{T-1}\frac{\phi_j(f_j(H_j);A_j)}{\pi_j(A_j\mid H_j)}\Psi_T^*(\mp_T^*(H_T))\rbt.
    \end{align*}
    Thus the induction hypothesis holds for $t=T$. Suppose it holds for  some $t\in[2:T]$. We will show that then the induction hypothesis holds for $t-1$.  Note that $t-1\in[T-1]$ and  
    \begin{align*}
     \MoveEqLeft  \sup_{f_i:i\in[t-1:T]}\E\lbt \slb\sum_{i=1}^T Y_i\srb\prod_{j=1}^{T}\frac{\phi_j(f_j(H_j);A_j)}{\pi_j(A_j\mid H_j)}\rbt\\
     \stackrel{(a)}{=} &\ \sup_{f_{t-1}}\E\lbt \prod_{j=1}^{t-1}\frac{\phi_j(f_j(H_j);A_j)}{\pi_j(A_j\mid H_j)}\Psi_t^*(\mp_t^*(H_t))\rbt\\
    = &\ \sup_{f_{t-1}\in\F_{t-1}}\E\lbt \prod_{j=1}^{t-2}\frac{\phi_j(f_T(H_t);A_t)}{\pi_j(A_j\mid H_j)}\lb \Psi_t^*(\mp_t^*(H_t))\frac{\phi_{t-1}(f_{t-1}(H_{t-1});A_{t-1})}{\pi_{t-1}(A_{t-1}\mid H_{t-1})}\rb\rbt.
    \end{align*}
Now we apply \eqref{intheorem: necessity: IPW: supremum with other rv} on the right hand side of the above display at time stage  $ t-1$ with  $\VV=\Psi_t^*(\mp_t^*(H_t))$ and  $h(H_{t})=\prod_{j=1}^{t-2}\{\phi_j(f_j(H_j);A_j)/ \pi_j(A_j\mid H_j)\}$. Noting that 
 \[\E[\VV\mid H_{t-1}, A_{t-1}=i]=\E[\Psi_t^*(\mp_t^*(H_t))\mid  H_{t-1}, A_{t-1}=i]=\mp_{t-1}^*(H_{t-1})_i\]
  by \eqref{def: sufficiency: pt star},  and applying \eqref{intheorem: necessity: IPW: supremum with other rv}, we obtain that
    \begin{align*}
   \MoveEqLeft \sup_{f_{t-1}\in\F_{t-1}}\E\lbt \prod_{j=1}^{t-2}\frac{\phi_j(f_T(H_t);A_t)}{\pi_j(A_j\mid H_j)}\lb \Psi_t^*(\mp_t^*(H_t))\frac{\phi_{t-1}(f_{t-1}(H_{t-1});A_{t-1})}{\pi_{t-1}(A_{t-1}\mid H_{t-1})}\rb\rbt\\
    &\ = 
   \sup_{f_{t-1}\in\F_{t-1}}\E\lbt \prod_{j=1}^{t-2}\frac{\phi_j(f_T(H_t);A_t)}{\pi_j(A_j\mid H_j)}\Psi_{t-1}^*(p_{t-1}^*(H_{t-1}))\rbt,
    \end{align*}
    which implies that the induction hypothesis holds for $t-1$. Therefore, the induction hypothesis holds for all $t\in[1:T]$. Thus, setting $t=1$, we obtain that
    \[V_*^\psi=\sup_{f_t\in\F_t:t\in[T]}\E\lbt \slb\sum_{i=1}^T Y_i\srb\prod_{j=1}^{T}\frac{\phi_j(f_j(H_j);A_j)}{\pi_j(A_j\mid H_j)}\rbt=\E[\Psi_1^*(\mp_1^*(H_1))].\]
   Finally, in the special case, $\Psi_1^*(\mp_1^*(H_1))=\max(\mp_1^*(H_1))$ by Assumption \ref{assump: N2}. Hence, $V_*^\psi=\E[\max(\mp_1^*(H_1))]$, which is $V_*$ by Lemma \ref{lemma: sufficiency: p t star and Q t star}.
\end{proof}



\paragraph{Auxiliary lemmas for induction steps}
Lemmas \ref{lemma: sufficiency: Induction: non-negative} and \ref{lemma: sufficiency: Induction: main step} are used during the induction steps in the proofs of Theorem \ref{theorem: sufficient conditions} and \ref{theorem: necessity}, respectively. 
\begin{lemma}
\label{lemma: sufficiency: Induction: non-negative}
Suppose $\PP$ satisfies Assumptions I-V, $\phi_1$ satisfies Condition \ref{assump: N1} and  for $t\in[2:T]$, $\phi_t$ satisfies Conditions \ref{assump: N1} and \ref{assump: N2}  with $C_{\phi_t}=1$. Let
\begin{align*}
    Z_{\text{diff,t}}=&\ \frac{\slb\sum_{i=1}^TY_i\srb 1[A_{t}=\pred(\mx)]}{\prod_{r=t}^T\pi_r(A_r\mid H_r)}
\prod_{r=t+1}^{T-1}\phi_r\slb f_r(H_r);\pred(f_r(H_r))\srb\\
&\ \times \left\{ \prod_{r=1+t}^T1[A_r=d_r^*(H_r)] -\prod_{r=1+t}^T1[A_r=\pred(f_r(H_r))]\right\},
\end{align*}
where the product terms in the expectation are one if the range is empty.
Then 
  for $t=1,\ldots,T$, for all $f\in\F$, and for any $\mx\in\RR^{k_t}$, $\E\lbtt Z_{\text{diff,t}}\mid\ H_t\rbtt$ is  non-negative and
\begin{align}
\label{instatement: main induction step}
\MoveEqLeft \Psi_{t}(\mx;\mp_{t}^*(H_{t}))-\Psi_{t}(\mx;\mp_{t}(H_{t}))
\geq  \slb \min_{1+t\leq i\leq T}\CC_{\phi_i}\srb \phi_t(\mx;\pred(\mx))\E\lbtt Z_{\text{diff,t}}\mid\ H_t\rbtt, 
\end{align}
where the $\CC_{\phi_t}$'s are as in Lemma \ref{lemma: necessity: linear bound} and $p_t(H_t)\equiv p_t(H_t;f)$. 
\end{lemma}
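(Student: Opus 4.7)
Both assertions will be proved simultaneously by backward induction on $t$. The base case $t=T$ is immediate: $\mp_T^*=\mp_T$ by their definitions, so the left side of \eqref{instatement: main induction step} equals zero, and $Z_{\text{diff},T}$ contains the factor $\prod_{r=T+1}^{T}1[A_r=d_r^*]-\prod_{r=T+1}^{T}1[A_r=\pred(f_r(H_r))]=1-1=0$, making $Z_{\text{diff},T}\equiv 0$. As a preliminary that feeds every step below, I would first establish by a separate backward induction that $\mp_t^*(H_t)_i\geq \mp_t(H_t)_i$ for every $i\in[k_t]$ and every $t\in[T]$; this follows from $\Psi_{t+1}^*\geq \Psi_{t+1}(\cdot;\mp_{t+1}^*)\geq \Psi_{t+1}(\cdot;\mp_{t+1})$, the last step using that the $\phi_{t+1}(\cdot;i)$ are non-negative. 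Consequently $\Psi_t(\mx;\mp_t^*(H_t))-\Psi_t(\mx;\mp_t(H_t))=\sum_i\phi_t(\mx;i)(\mp_t^*(H_t)_i-\mp_t(H_t)_i)\geq \phi_t(\mx;\pred(\mx))\bigl(\mp_t^*(H_t)_{\pred(\mx)}-\mp_t(H_t)_{\pred(\mx)}\bigr)$, so it will suffice to prove the per-$j$ bound $\mp_t^*(H_t)_j-\mp_t(H_t)_j\geq(\min_{r>t}\CC_{\phi_r})\,\E[Z_{\text{diff},t,j}\mid H_t]$ for every $j\in[k_t]$, where $Z_{\text{diff},t,j}$ denotes $Z_{\text{diff},t}$ with $j$ in place of $\pred(\mx)$.

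In the inductive step I would write $\mp_t^*(H_t)_j-\mp_t(H_t)_j=\E[\Psi_{t+1}^*(\mp_{t+1}^*)-\Psi_{t+1}(f_{t+1};\mp_{t+1})\mid H_t,A_t=j]$ and split the integrand as $[I]+[II]$ with $[I]=\Psi_{t+1}^*(\mp_{t+1}^*)-\Psi_{t+1}(f_{t+1};\mp_{t+1}^*)$ and $[II]=\Psi_{t+1}(f_{t+1};\mp_{t+1}^*)-\Psi_{t+1}(f_{t+1};\mp_{t+1})$. Lemma~\ref{lemma: necessity: linear bound} (applied with $C_{\phi_{t+1}}=1$) gives $[I]\geq \CC_{\phi_{t+1}}(\max(\mp_{t+1}^*)-\mp_{t+1,\pred(f_{t+1})}^*)$, and expanding $[II]=\sum_i\phi_{t+1}(f_{t+1};i)(\mp_{t+1,i}^*-\mp_{t+1,i})$ and invoking the induction hypothesis termwise yields $[II]\geq c_{t+1}\sum_i\phi_{t+1}(f_{t+1};i)\,\E[Z_{\text{diff},t+1,i}\mid H_{t+1}]$, where $c_{t+1}=\min_{r>t+1}\CC_{\phi_r}$. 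Combining and factoring out $c_t=\min(\CC_{\phi_{t+1}},c_{t+1})$ produces a lower bound whose outer conditional expectation I would then recognize, via iterated IPW and the identity $\sum_i\phi_{t+1}(f_{t+1};i)1[A_{t+1}=i]=\phi_{t+1}(f_{t+1};A_{t+1})$, as matching (or dominating) $c_t\,\E[Z_{\text{diff},t,j}\mid H_t]$.

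The matching is most transparent through the one-stage-deviation telescoping $\prod_{r=t+1}^T1[A_r=d_r^*]-\prod_{r=t+1}^T1[A_r=\pred(f_r)]=(1[A_{t+1}=d_{t+1}^*]-1[A_{t+1}=\pred(f_{t+1})])\prod_{r=t+2}^T1[A_r=d_r^*]+1[A_{t+1}=\pred(f_{t+1})]\bigl(\prod_{r=t+2}^T1[A_r=d_r^*]-\prod_{r=t+2}^T1[A_r=\pred(f_r)]\bigr)$. The first summand, once plugged into $\E[Z_{\text{diff},t,j}\mid H_t]$, is shown to be dominated by $\E[\max(\mp_{t+1}^*)-\mp_{t+1,\pred(f_{t+1})}^*\mid H_t,A_t=j]$ using $\phi_r\leq C_{\phi_r}=1$ for $r\geq t+1$ together with iterated IPW reducing the inner $\prod_{r=t+2}^T1[A_r=d_r^*]/\pi_r$ factors to $\mp_{t+1,\cdot}^*$; the second summand matches $\E[\sum_i\phi_{t+1}(f_{t+1};i)\,\E[Z_{\text{diff},t+1,i}\mid H_{t+1}]\mid H_t,A_t=j]$ term by term through the identity above. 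Non-negativity of $\E[Z_{\text{diff},t}\mid H_t]$ then falls out of the same telescoping: the second summand is non-negative by the inductive non-negativity at stage $t+1$ and $\phi_{t+1}\geq 0$, while the first summand equals $\E[\phi_{t+1}(f_{t+1};\pred(f_{t+1}))(\max(\mp_{t+1}^*)-\mp_{t+1,\pred(f_{t+1})}^*)\mid H_t,A_t=j]$ after IPW, which is non-negative.

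The principal obstacle I anticipate is the combinatorial matching in the preceding paragraph: the first summand after iterated IPW carries a cascade of $\phi_r$ factors attached to nested conditional expectations, and the direction in which the bound $\phi_r\leq 1$ can be safely applied depends delicately on the sign of the indicator difference at stage $t+1$. The argument therefore requires splitting on whether $A_{t+1}=d_{t+1}^*$ or $A_{t+1}=\pred(f_{t+1}(H_{t+1}))$ before applying $\phi_r\leq 1$ componentwise inside each branch, and then reassembling the two branches using $\mp_t^*\geq \mp_t$ componentwise (the preliminary) together with Lemma~\ref{lemma:  sufficiency: induction: Psi t star p t star}, which identifies $\max(\mp_{t+1}^*)$ with $\mp_{t+1,d_{t+1}^*}^*$ and lets the IPW representation $\mp_{t+1,l}^*=\E[(\sum Y_k)\prod_{r=t+2}^T1[A_r=d_r^*]/\pi_r\mid H_{t+1},A_{t+1}=l]$ be exploited. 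The boundedness of the rewards (Assumption IV) and the uniform lower bound $\mp_t^*\geq c_1$ from Lemma~\ref{lemma: suff: p t star is bdd} are also used to guarantee that the telescoped estimates do not blow up.
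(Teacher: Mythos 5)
Your proposal takes essentially the same route as the paper's proof: the same backward induction, the same split of $\Psi_t(\mx;\mp_t^*)-\Psi_t(\mx;\mp_t)$ into two pieces (your $[I]$ and $[II]$ are the paper's $S_1$ and $S_2$), the same use of Lemma~\ref{lemma: necessity: linear bound} for the first piece and the inductive hypothesis for the second, and the same one-stage telescoping to reassemble. Your preliminary ``$\mp_t^*\geq\mp_t$ componentwise'' is a clean standalone fact that the paper establishes only implicitly inside the induction; separating it out is a harmless but nice bookkeeping improvement.

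The genuine gap is in your ``combinatorial matching'' paragraph, and it occurs at exactly the point you flag. You assert that the first telescoping summand of $\E[Z_{\text{diff},t,j}\mid H_t]$ is dominated by $\E\bigl[\max(\mp_{t+1}^*)-\mp^*_{t+1,\pred(f_{t+1}(H_{t+1}))}\mid H_t,A_t=j\bigr]$ by applying $\phi_r\leq 1$, and you propose to resolve the sign issue by splitting on whether $A_{t+1}=d_{t+1}^*(H_{t+1})$ or $A_{t+1}=\pred(f_{t+1}(H_{t+1}))$. That split does not deliver the bound you want: in the $A_{t+1}=\pred(f_{t+1})$ branch, the direction of $\phi_r\leq 1$ only gives an upper bound on that branch's (positive) contribution, and the only lower bound you can extract for it is zero. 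Together the two branches give at best
$(\text{first telescoping summand})\leq \max(\mp_{t+1}^*)$,
not $\leq \max(\mp_{t+1}^*)-\mp^*_{t+1,\pred(f_{t+1})}$, which is what you need. The cascading factors $\prod_{r\geq t+2}^{T-1}\phi_r(f_r(H_r);\pred(f_r(H_r)))$ are not $H_{t+1}$-measurable when $T>t+2$, so they cannot be pulled through the $A_{t+1}$-expectation and replaced by the undamped $\mp^*_{t+1}$; the damping factor has genuinely different conditional laws in the two $A_{t+1}$-branches, so there is no a priori ordering.

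The paper's device is different and does avoid the pointwise sign problem: it first takes the inner conditional expectation given $H_{1+t}$, which produces the nonnegative scalar $\max(\mp_{1+t}^*(H_{1+t}))-\mp^*_{1+t,\pred(f_{1+t}(H_{1+t}))}\geq 0$ (its \eqref{inlemma: hotchpotch}), and only then multiplies by the damping factor $\prod_{r=t+1}^{T-1}\phi_r\in[0,1]$ inside the outer expectation; because the inner expectation is a nonnegative $H_{1+t}$-measurable function, multiplying by a $[0,1]$-valued random variable can only shrink the outer expectation, yielding the needed direction of the inequality ($S_1 \geq (\min_{i>t}\CC_{\phi_i})\,\phi_t(\mx;\pred(\mx))\,S_{11}$, $S_{11}\geq 0$). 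If you replace your $A_{t+1}$-split with this ``condition on $H_{1+t}$ first, then damp'' step, the rest of your outline goes through. The second telescoping summand and the reassembly via $S_1+S_2$ are as you describe and match the paper.
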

\begin{proof}[Proof of Lemma \ref{lemma: sufficiency: Induction: non-negative}]
First of all, the assumptions imply  Lemma \ref{lemma: necessity: linear bound} is applicable to $\phi_2,\ldots,\phi_T$.
When $t=T$, we can show that the LHS of \eqref{instatement: main induction step} is zero for all $\mx\in\RR^{k_t}$ because $\mp_T^*(h_T)=\mp_T(h_T)$ for all $h_T\in\H_T$ by definition. Also, the RHS of \eqref{instatement: main induction step} is zero in this case because the range $[1+T,T]$ is empty.  Thus,\eqref{instatement: main induction step} trivially follows for $t=T$.  Therefore, it suffices to consider the case $t\in[T-1]$, which implies we also consider $T\geq 2$.

We will prove the lemma by Induction. Our induction hypothesis is that \eqref{instatement: main induction step} holds and the RHS is non-negative  for $t\in[T-1]$. First, we will  show that \eqref{instatement: main induction step} holds for $t=T-1$. Then we will show that if \eqref{instatement: main induction step} holds for $t+1\in[2:T-1]$, then it holds for $t$ as well. When $t=T-1$, the LHS of \eqref{instatement: main induction step} equals 
\begin{align*}
  \MoveEqLeft \Psi_{T-1}(\mx;\mp_{T-1}^*(H_{T-1}))-\Psi_{T-1}(\mx;\mp_{T-1}(H_{T-1})) \\
  =&\ \sum_{i=1}^{k_{T-1}}\phi_{T-1}(\mx; i)\slb \mp^*_{T-1}(H_{T-1})_i-\mp_{T-1}(H_{T-1})_i\srb\\
  =&\ \sum_{i=1}^{k_{T-1}}\phi_{T-1}(\mx; i)\slb \E[\Psi_T^*(\mp_T^*(H_T))-\Psi_T(f_T(H_T);\mp_T(H_T))\mid H_{T-1}, A_{T-1}=i]\srb
\end{align*}
by \eqref{def: sufficiency: pt star}.
Since $\mp_T^*=\mp_T$, it follows that the above is non-negative.  Lemma \ref{lemma: necessity: linear bound} a applies to $\phi_T$ because $T\geq 2$. Applying Lemma \ref{lemma: necessity: linear bound}, we obtain that there exists $\CC_{\phi_T}>0$, depending only on $\phi_t$, so that 
\begin{align*}
  &\   \sum_{i=1}^{k_{T-1}}\phi_{T-1}(\mx; i)\slb \E[\Psi_t^*(\mp_T^*(H_T))-\Psi_t(f_T(H_T);\mp_T^*(H_T))\mid H_{T-1}, A_{T-1}=i]\srb\\
    \geq &\ \CC_{\phi_T}\sum_{i=1}^{k_{T-1}}\phi_{T-1}(\mx; i) \E[\max(\mp_T^*(H_T))-\myq(f_T(H_T);\mp_T(H_T))\mid H_{T-1}, A_{T-1}=i]
    \end{align*}
    which is bounded below by
    \[\CC_{\phi_T}\phi_{T-1}(\mx; \pred(\mx))\E[\max(\mp_T^*(H_T))-\myq(f_T(H_T);\mp_T(H_T))\mid H_{T-1}, A_{T-1}=\pred(\mx)],\]
which is non-negative because $\mp_T=\mp_T^*$. However,
\begin{align}
\label{intheorem: sufficiency: induction main step: non-negative T-1}
&\  \E[\max(\mp_T^*(H_T))-\myq(f_T(H_T);\mp_T(H_T))\mid H_{T-1}, A_{T-1}=\pred(\mx)]\\
  \stackrel{(a)}{=}&\ \E\lbt\slb \sum_{i=1}^TY_i\srb\frac{1[A_T=\pred(\mp_T^*(H_T))]-1[A_T=\pred(f_T(H_T))]}{\pi_T(A_T\mid H_T)}\bl H_{T-1}, A_{T-1}=\pred(\mx) \rbt\nn\\
  \stackrel{(b)}{=}&\ \E\lbt\slb\sum_{i=1}^TY_i\srb 1[ A_{T-1}=\pred(\mx)]\frac{1[A_T=\pred(\mp_T^*(H_T))]-1[A_T=\pred(f_T(H_T))]}{\prod_{t=T-1}^{T}\pi_t(A_t\mid H_t)}\bl H_{T-1} \rbt\nn\\
  \stackrel{(c)}{=}&\ \E\lbt\slb\sum_{i=1}^TY_i\srb 1[ A_{T-1}=\pred(\mx)]\frac{1[A_T=d_T^*(H_T)]-1[A_T=\pred(f_T(H_T))]}{\prod_{t=T-1}^{T}\pi_t(A_t\mid H_t)} \bl H_{T-1} \rbt \nn
\end{align}
where (a) follows from \eqref{intheorem: necessity: IPW: diff} with $i=\pred(f_T(H_T))$, (b) follows from an application of \eqref{intheorem: necessity: IPW: 0-1}, and (c) follows because $\pred(\mp_T^*(H_T))=d_T^*(H_T)$ by Lemma \ref{lemma: sufficiency: p t star and Q t star}. 
Since 
\[\max(\mp_T^*(H_T))\geq \myq(f_T(H_T);\mp^*_T(H_T))=\myq(f_T(H_T);\mp_T(H_T)),\]
\eqref{intheorem: sufficiency: induction main step: non-negative T-1} also implies that
\[ \E\lbt\slb\sum_{i=1}^TY_i\srb 1[ A_{T-1}=\pred(\mx)]\times\frac{1[A_T=d_T^*(H_T)]-1[A_T=\pred(f_T(H_T))]}{\prod_{t=T-1}^{T}\pi_t(A_t\mid H_t)}\ \bl\ H_{T-1} \rbt,\]
which is non-negative. 
Since $\CC_{\phi_T},\phi_{T-1}\geq 0$, it also follows that 
\begin{align*}
 &\ \Psi_{T-1}(\mx;\mp_{T-1}^*(H_{T-1}))-\Psi_{T-1}(\mx;\mp_{T-1}(H_{T-1}))  
   \geq \CC_{\phi_T}\phi_{T-1}(\mx;\pred(\mx))\\
   \times &\ \E\lbt\slb\sum_{i=1}^TY_i\srb 1[ A_{T-1}=\pred(\mx)]\frac{1[A_T=d_T^*(H_T)]-1[A_T=\pred(f_T(H_T))]}{\prod_{t=T-1}^{T}\pi_t(A_t\mid H_t)} \bl H_{T-1} \rbt,
\end{align*}
which is non-negative, implying that the induction hypothesis holds  for $t=T-1$. If $T=2$, then there is nothing to prove. Suppose $T\geq 3$ and that the induction hypothesis is true for $t+1$ where $t\in[1, T-2]$. We will show that the induction hypothesis is true for $t$, which will complete the proof of the current lemma.
Note that
\begin{align}
\label{inlemma: sufficiency: main induction step: t 1st deduction}
 &\ \Psi_{t}(\mx;\mp_{t}^*(H_{t}))-\Psi_{t}(\mx;\mp_{t}(H_{t}))\nn \\
  =&\ \sum_{i=1}^{k_{t}}\phi_t(\mx; i)\slb \mp^*_{t}(H_{t})_i-\mp_{t}(H_{t})_i\srb\\
  =&\ \sum_{i=1}^{k_{t}}\phi_t(\mx; i)\slb \E[\Psi_{1+t}^*(\mp_{1+t}^*(H_{1+t}))-\Psi_{1+t}(f_{1+t}(H_{1+t});\mp_{1+t}(H_{1+t}))\mid H_{t}, A_{t}=i]\srb\nn
\end{align}
by \eqref{def: sufficiency: pt star} since $t\in[ T-2]$. However, the above equals
\begin{align}
\label{inlemma: sufficiency: main induction step: t 2nd deduction}
 & \underbrace{\sum_{i=1}^{k_{t}}\phi_t(\mx; i)\slb \E[\Psi_{1+t}^*(\mp_{1+t}^*(H_{1+t}))-\Psi_{1+t}(f_{1+t}(H_{1+t});\mp^*_{1+t}(H_{1+t}))\mid H_{t}, A_{t}=i]\srb}_{S_1}\nn\\
     + & \underbrace{\sum_{i=1}^{k_{t}}\phi_t(\mx; i)\slb \E[\Psi_{1+t}(f_{1+t}(H_{1+t});\mp^*_{1+t}(H_{1+t}))-\Psi_{1+t}(f_{1+t}(H_{1+t});\mp_{1+t}(H_{1+t}))\mid H_{t}, A_{t}=i]\srb}_{S_2}.
\end{align}
 Since $1+t\in[2:T-1]$,  Lemma \ref{lemma: necessity: linear bound} can be applied to $\phi_{1+t}$, yielding
\begin{align}
\label{intheorem: sufficiency: induction: T1 and T2}
    S_1\geq &\  \CC_{\phi_{1+t}} \sum_{i=1}^{k_{t}}\phi_t(\mx; i)\E[\max(\mp_{1+t}^*(H_{1+t}))-\myq(f_{1+t}(H_{1+t});\mp^*_{1+t}(H_{1+t}))\mid H_{t}, A_{t}=i]\nn\\
    \geq &\ \CC_{\phi_{1+t}}  \phi_t(\mx; \pred(\mx))\E[\max(\mp_{1+t}^*(H_{1+t}))-\myq(f_{1+t}(H_{1+t});\mp^*_{1+t}(H_{1+t}))\mid H_{t}, A_{t}=\pred(\mx)]\nn\\
    \geq &\ \slb\min_{1+t\leq i\leq T}\CC_{\phi_i} \srb\phi_t(\mx; \pred(\mx))\\
    &\ \times\E\slbt\max(\mp_{1+t}^*(H_{1+t}))-\myq(f_{1+t}(H_{1+t});\mp^*_{1+t}(H_{1+t}))\mid H_{t}, A_{t}=\pred(\mx)\srbt.\nn
\end{align}
Since 
\begin{align}
    \label{inlemma: sufficiency: mp 1+t}
    \mp_{1+t}^*(H_{1+t})_i=\E[\Psi_{2+t}^*(\mp_{2+t}^*(H_{2+t}))\mid H_{1+t},A_{1+t}=i]
\end{align}
 for all $i\in[k_{1+t}]$, it follows that 
 \begin{align}
 \label{intheorem: sufficiency: induction: T1 reduction}
    \MoveEqLeft \E[\max(\mp_{1+t}^*(H_{1+t}))-\myq(f_{1+t}(H_{1+t});\mp^*_{1+t}(H_{1+t}))\mid H_{t}, A_{t}=\pred(\mx)]\nn\\
    =&\ \E\lbt \E\lbt \max(\mp_{1+t}^*(H_{1+t}))-\myq(f_{1+t}(H_{1+t});\mp^*_{1+t}(H_{1+t}))\ \bl\ H_{1+t}\rbt\ \bl\  H_t, A_t=\pred(\mx)\rbt\nn\\
    \stackrel{(a)}{=}&\ \E\lbt \E\lbt\frac{ 1[A_{1+t}=\pred(\mp_{1+t}^*(H_{1+t}))]-1[A_{1+t}=\pred(f_{1+t}(H_{1+t}))]}{\pi_{1+t}(A_{1+t}\mid H_{1+t})}\nn\\
    &\ \times \Psi_{2+t}^*(\mp_{2+t}^*(H_{2+t}))\ \bl\ H_{1+t}\rbt\ \bl\ H_t, A_t=\pred(\mx)\rbt
\end{align}
where (a) uses \eqref{intheorem: necessity: IPW: diff} on $H_{1+t}$ with $i=\pred(f_{1+t}(H_{1+t}))$ and $\mp(H_{1+t})=\mp_{1+t}^*(H_{1+t})$, whose expression is given by \eqref{inlemma: sufficiency: mp 1+t}. Also, note that
\begin{align}
\label{inlemma: hotchpotch}
  \MoveEqLeft  \E\lbt\frac{ 1[A_{1+t}=\pred(\mp_{1+t}^*(H_{1+t}))]-1[A_{1+t}=\pred(f_{1+t}(H_{1+t}))]}{\pi_{1+t}(A_{1+t}\mid H_{1+t})}\Psi_{2+t}^*(\mp_{2+t}^*(H_{2+t}))\ \bl\ H_{1+t}\rbt\nn\\
  =&\ \E\lbt \max(\mp_{1+t}^*(H_{1+t}))-\myq(f_{1+t}(H_{1+t});\mp^*_{1+t}(H_{1+t}))\bl H_{1+t}\rbt
\end{align}
is non-negative because $\max(\mp_{1+t}^*(H_{1+t}))\geq \myq(f_{1+t}(H_{1+t});\mp^*_{1+t}(H_{1+t}))$. Since $C_{\phi_t}=\Psi_t^*(\mo_{k_t})$ and $C_{\phi_t}= 1$ for all $t\in[2:T]$, it follows that $\phi_t(\mx;\pred(\mx))\leq 1$ for each $\mx\in\RR^{k_t}$ for all $t\in[2:T]$. In particular
\[0\leq \prod_{r=t+1}^{T-1}\phi_r\slb f_r(H_r);\pred(f_r(H_r))\srb\leq 1\text{ for all }t\geq 1.\]
Therefore, denoting
\begin{align}
\label{def: S11}
    S_{11}=&\ \E\lbt \prod_{r=t+1}^{T-1}\phi_r\slb f_r(H_r);\pred(f_r(H_r))\srb\nn\\
    &\ \times\E\lbt\frac{ 1[A_{1+t}=\pred(\mp_{1+t}^*(H_{1+t}))]-1[A_{1+t}=\pred(f_{1+t}(H_{1+t}))]}{\pi_{1+t}(A_{1+t}\mid H_{1+t})}\nn\\
    &\  \times \Psi_{2+t}^*(\mp_{2+t}^*(H_{2+t}))\ \bl\ H_{1+t}\rbt\ \bl\ H_t, A_t=\pred(\mx)\rbt,
\end{align}
and using \eqref{intheorem: sufficiency: induction: T1 reduction}, we obtain that
\begin{align}
\label{intheorem: sufficiency: induction: T1 reduction 2}
     \MoveEqLeft \E[\max(\mp_{1+t}^*(H_{1+t}))-\myq(f_{1+t}(H_{1+t});\mp^*_{1+t}(H_{1+t}))\mid H_{t}, A_{t}=\pred(\mx)]
     \geq  S_{11}.
\end{align}
 Since $0\leq \prod_{r=t+1}^{T-1}\phi_r\slb f_r(H_r);\pred(f_r(H_r))\srb$, from \eqref{inlemma: hotchpotch} it follows that $S_{11}\geq 0$.
Also,  Lemma \ref{lemma:  sufficiency: induction: Psi t star p t star} implies
\[\Psi_{2+t}^*(\mp_{2+t}^*(H_{2+t}))=\E\lbt \sum_{i=1}^TY_i \prod_{j=t+2}^T\frac{1[A_j=d_j^*(H_j)]}{\pi_j(A_j\mid H_j)}\ \bl\ H_{2+t}\rbt,\]
where $H_{2+t}$ is non-empty because the case under consideration has   $t\in[T-2]$. Therefore, 
\begin{align*}
 S_{11}=&\   \E\lbt \prod_{r=t+1}^{T-1}\phi_r\slb f_r(H_r);\pred(f_r(H_r))\srb\\
 &\ \times\frac{ 1[A_{1+t}=\pred(\mp_{1+t}^*(H_{1+t}))]-1[A_{1+t}=\pred(f_{1+t}(H_{1+t}))]}{\pi_{1+t}(A_{1+t}\mid H_{1+t})}\nn\\
    &\  \times \ \slb\sum_{i=1}^TY_i \srb\prod_{j=t+2}^T\frac{1[A_j=d_j^*(H_j)]}{\pi_j(A_j\mid H_j)} \bl\ H_t, A_t=\pred(\mx)\rbt.
\end{align*}
An application of \eqref{intheorem: necessity: IPW: 0-1}  yields that 
\begin{align*}
  S_{11}=&\ \E\lbt \prod_{r=t+1}^{T-1}\phi_r\slb f_r(H_r);\pred(f_r(H_r))\srb\\
  &\ \times\frac{ 1[A_{1+t}=\pred(\mp_{1+t}^*(H_{1+t}))]-1[A_{1+t}=\pred(f_{1+t}(H_{1+t}))]}{\pi_{1+t}(A_{1+t}\mid H_{1+t})}\nn\\
    &\  \times \ \slb\sum_{i=1}^TY_i \srb\prod_{j=t+2}^T\frac{1[A_j=d_j^*(H_j)]}{\pi_j(A_j\mid H_j)} \frac{1[A_t=\pred(\mx)]}{\pi_t(A_t\mid H_t)}\bl\ H_t\rbt.
\end{align*}
Combining the above with \eqref{intheorem: sufficiency: induction: T1 and T2} and \eqref{intheorem: sufficiency: induction: T1 reduction 2}, and using the fact that $\phi_t$'s  and $\CC_{\phi_t}$'s are non-negative, we obtain the following lower bound for $S_1:$
\begin{align}
\label{intheorem: sufficiency: induction: T1 lower bound}
    S_1\geq &\  \slb \min_{1+t\leq i\leq T}\CC_{\phi_i}\srb\phi_t(\mx;\pred(\mx)) S_{11}\nn\\
   =  &\ \slb \min_{1+t\leq i\leq T}\CC_{\phi_i}\srb\phi_t(\mx;\pred(\mx))\E\lbt  \slb\sum_{i=1}^TY_i \srb \prod_{r=t+1}^{T-1}\phi_r\slb f_r(H_r);\pred(f_r(H_r))\srb\\
    \times &\ \frac{\prod_{j=t+2}^T 1[A_j=d_j^*(H_j)]}{\prod_{j=t}^T\pi_j(A_j\mid H_j)}\nn\\
    \times &\ \slb 1[A_{1+t}=d_{1+t}^*(H_{1+t})]-1[A_{1+t}=\pred(f_{1+t}(H_{1+t}))]\srb 1[A_t=\pred(\mx)]\ \bl\ H_t\rbt\nn
\end{align}
Since we already argued that $S_{11}\geq 0$, the RHS of \eqref{intheorem: sufficiency: induction: T1 lower bound} is non-negative. 
Now we will bound $S_2$. 
By the induction hypothesis, each element of the sum $S_2$ is non-negative. Therefore, $S_2$ is bounded below by $\phi_t(\mx; \pred(\mx))$ times
\begin{align*}
 & \slb \E[\Psi_{1+t}^*(\mp_{1+t}^*(H_{1+t}))-\Psi_{1+t}(f_{1+t}(H_{1+t});\mp_{1+t}(H_{1+t}))\mid H_{t}, A_{t}=\pred(\mx)]\srb\\
 & =\E\lbt \slb\Psi_{1+t}^*(\mp_{1+t}^*(H_{1+t}))-\Psi_{1+t}(f_{1+t}(H_{1+t});\mp_{1+t}(H_{1+t}))\srb\frac{1[A_t=\pred(\mx)]}{\pi_t(A_t\mid H_t)}\ \bl\ H_t\rbt
\end{align*}
where the last step follows from  \eqref{intheorem: necessity: IPW: 0-1}.
Since the Induction hypothesis holds for $1+t$, it holds that 
\begin{align}
\label{inlemma: suff: s2 bound nonnegative}
 \MoveEqLeft \Psi_{1+t}(f_{1+t}(H_{1+t});\mp_{1+t}^*(H_{1+t}))-\Psi_{1+t}(f_{1+t}(H_{1+t});\mp_{1+t}(H_{1+t}))\nn\\
 \geq &\ \slb \min_{2+t\leq i\leq T}\CC_{\phi_i}\srb\phi_{1+t}\slb f_{1+t}(H_{1+t});\pred(f_{1+t}(H_{1+t}))\srb\nn\\
 &\ \times \E\lbt \lb\prod_{r=t+2}^{T-1}\phi_{r}\slb f_{r}(H_{r})); \pred(f_{r}(H_{r}))\srb\rb \\
  &\ \times \slb\sum_{i=1}^TY_i\srb\times  1[A_{1+t}=\pred(f_{1+t}(H_{1+t})]\\
  \times &\ \frac{\prod_{r=t+2}^T1[A_r=d_r^*(H_r)]-\prod_{r=t+2}^T1[A_r=\pred(f_r(H_r)]}{\prod_{r=t+1}^T\pi_r(A_r\mid H_r)}\ \bl\ H_{1+t}\rbt,\nn
\end{align}
which is also non-negative by the induction hypothesis.  
Since the $\CC_{\phi_t}$'s and the $\phi_t$'s are non-negative, we obtain that  
\begin{align*}
  S_2 \geq &\ \slb \min_{2+t\leq i\leq T}\CC_{\phi_i}\srb\phi_t(\mx; \pred(\mx))\E\lbt \lb\prod_{r=t+1}^{T-1}\phi_{r}\slb f_{r}(H_{r})); \pred(f_{r}(H_{r}))\srb\rb \\
  &\ \times \frac{1[A_t=\pred(\mx)]}{\pi_t(A_t\mid H_t)}\slb\sum_{i=1}^TY_i\srb\times  1[A_{1+t}=\pred(f_{1+t}(H_{1+t})]\\
  \times &\ \frac{\prod_{r=t+2}^T1[A_r=d_r^*(H_r)]-\prod_{r=t+2}^T1[A_r=\pred(f_r(H_r))]}{\prod_{r=t+1}^T\pi_r(A_r\mid H_r)}\ \bl\ H_t\rbt,
\end{align*}
which is also non-negative. 
Adding the above inequality with the inequality in  \eqref{intheorem: sufficiency: induction: T1 lower bound}, we obtain that 
\begin{align*}
S_1+S_2\geq  \slb \min_{1+t\leq i\leq T}\CC_{\phi_i}\srb\phi_t(\mx;\pred(\mx))\E\lbtt Z_{\text{diff,t}}\mid\ H_t\rbtt
\end{align*}
Since we have shown that  $S_{11}$ (the RHS of \eqref{intheorem: sufficiency: induction: T1 lower bound}) and the RHS of \eqref{inlemma: suff: s2 bound nonnegative} are non-negative, it also follows that $\E\lbtt Z_{\text{diff,t}}\mid\ H_t\rbtt$ is non-negative. 
The induction hypothesis follows for $t$ noting \eqref{inlemma: sufficiency: main induction step: t 1st deduction} and \eqref{inlemma: sufficiency: main induction step: t 2nd deduction} imply
\[\Psi_{t}(\mx;\mp_{t}^*(H_{t}))-\Psi_{t}(\mx;\mp_{t}(H_{t}))=S_1+S_2,\]
 thus completing the proof of Lemma \ref{lemma: sufficiency: Induction: non-negative}.
\end{proof}

\begin{lemma}
\label{lemma: sufficiency: Induction: main step}
Suppose $\PP$, $\phi_1,\ldots,\phi_T$, and $Z_{\text{diff,t}}$ are as in Lemma \ref{lemma: sufficiency: Induction: non-negative}. Additionally,  $\phi_t(\mx;\pred(\mx))\geq \J_t$ for some $\J_t\geq 0$ for all $t\in[T]$ and $\mx\in\RR^{k_t}$. Then 
  for $t=1,\ldots,T$, for all $f\in\F$, and for any $\mx\in\RR^{k_t}$, 
\begin{align*}
\MoveEqLeft \Psi_{t}(\mx;\mp_{t}^*(H_{t}))-\Psi_{t}(\mx;\mp_{t}(H_{t}))
\geq \slb \prod_{i=t}^T\J_i\srb\slb \min_{1+t\leq i\leq T}\CC_{\phi_i}\srb \E[Z_{\text{diff,t}}\mid H_t],
\end{align*}
and $ \E[Z_{\text{diff,t}}\mid H_t]\geq 0$. Here   the $\CC_{\phi_t}$'s are as in Lemma \ref{lemma: necessity: linear bound} and $p_t(H_t)\equiv p_t(H_t;f)$. 
\end{lemma}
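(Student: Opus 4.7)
\textbf{Proof proposal for Lemma~\ref{lemma: sufficiency: Induction: main step}.} The plan is to derive this lemma directly from Lemma~\ref{lemma: sufficiency: Induction: non-negative}, using only the additional hypothesis $\phi_t(\mx;\pred(\mx)) \geq \J_t$ together with a basic consequence of Condition~\ref{assump: N2}. I will not redo the induction: the heavy lifting (the $T{-}1$-to-$1$ induction that produces the inequality involving $\phi_t(\mx;\pred(\mx))\E[Z_{\text{diff},t}\mid H_t]$ and the non-negativity of $\E[Z_{\text{diff},t}\mid H_t]$) is already done in Lemma~\ref{lemma: sufficiency: Induction: non-negative}.

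\textbf{Step 1: extract the factor $\J_t$ in front.} Lemma~\ref{lemma: sufficiency: Induction: non-negative} yields, for every $t \in [T]$ and every $\mx \in \RR^{k_t}$,
\[
\Psi_t(\mx;\mp_t^*(H_t)) - \Psi_t(\mx;\mp_t(H_t)) \;\geq\; \Big(\min_{1+t\leq i\leq T}\CC_{\phi_i}\Big)\,\phi_t(\mx;\pred(\mx))\,\E[Z_{\text{diff},t}\mid H_t],
\]
and in addition that $\E[Z_{\text{diff},t}\mid H_t] \geq 0$. Since $\phi_t(\mx;\pred(\mx))\geq \J_t$ and $\E[Z_{\text{diff},t}\mid H_t]\geq 0$, replacing $\phi_t(\mx;\pred(\mx))$ by $\J_t$ preserves the inequality.

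\textbf{Step 2: supply the remaining factors $\J_{t+1},\ldots,\J_T$.} The key observation is that $\J_i \leq 1$ for every $i \in [2:T]$. Indeed, by Condition~\ref{assump: N2} applied with $C_{\phi_i}=1$, $\Psi_i^*(\mo_{k_i})=\max(\mo_{k_i})=1$, so $\sum_{j\in[k_i]}\phi_i(\mx;j)=\Psi_i(\mx;\mo_{k_i})\leq 1$ for every $\mx\in\RR^{k_i}$; since $\phi_i\geq 0$ this forces $\phi_i(\mx;\pred(\mx))\leq 1$, and hence $\J_i\leq 1$ for all $i\in[2:T]$. Consequently $\prod_{i=t+1}^T \J_i \in [0,1]$, so using $\E[Z_{\text{diff},t}\mid H_t]\geq 0$ once more,
\[
\J_t\,\E[Z_{\text{diff},t}\mid H_t] \;\geq\; \J_t\Big(\prod_{i=t+1}^T \J_i\Big)\E[Z_{\text{diff},t}\mid H_t] \;=\; \Big(\prod_{i=t}^T \J_i\Big)\E[Z_{\text{diff},t}\mid H_t].
\]
Chaining this with the Step~1 inequality gives the claim. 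The non-negativity of $\E[Z_{\text{diff},t}\mid H_t]$ is exactly the second conclusion of Lemma~\ref{lemma: sufficiency: Induction: non-negative}, so nothing further is needed.

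\textbf{Obstacle check.} There is essentially no obstacle beyond recognising that $\J_i\leq 1$ for $i\geq 2$; without this bound, inserting the extra factors $\prod_{i=t+1}^T \J_i$ would in principle worsen the inequality rather than strengthen it. Note that the hypothesis does not constrain $\J_1$ to be $\leq 1$, but this is irrelevant because the product $\prod_{i=t+1}^T \J_i$ we insert never involves $\J_1$ (it runs from $t+1\geq 2$). Degenerate cases where some $\J_i = 0$ are automatic: then $\prod_{i=t}^T \J_i = 0$ and the conclusion reduces to the non-negativity statement, which already comes for free from Lemma~\ref{lemma: sufficiency: Induction: non-negative}.
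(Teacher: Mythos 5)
Your argument is clean and does prove the lemma taken completely literally: Lemma~\ref{lemma: sufficiency: Induction: non-negative} gives $\Psi_t(\mx;\mp_t^*)-\Psi_t(\mx;\mp_t)\geq (\min_{1+t\leq i\leq T}\CC_{\phi_i})\,\phi_t(\mx;\pred(\mx))\,\E[Z_{\text{diff},t}\mid H_t]$ with $\E[Z_{\text{diff},t}\mid H_t]\geq 0$, your bound $\J_i\leq 1$ for $i\in[2:T]$ is correct (because $C_{\phi_i}=1$ forces $\sum_j\phi_i(\cdot;j)\leq\Psi_i^*(\mo_{k_i})=1$ so $\phi_i\leq 1$), and multiplying the right-hand side by $\prod_{i=t+1}^T\J_i\in[0,1]$ only lowers it. However, this is not the statement the paper intends to prove, and it is not the statement used in the proof of Proposition~\ref{prop: multi-cat FC}. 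The paper's proof instructs you to \emph{rerun the induction} of Lemma~\ref{lemma: sufficiency: Induction: non-negative}, replacing $\phi_r(f_r(H_r);\pred(f_r(H_r)))$ by $\J_r$ and $\phi_t(\mx;\pred(\mx))$ by $\J_t$ in every lower bound. The result of that replacement is a version of $Z_{\text{diff},t}$ in which the random factor $\prod_{r=t+1}^{T-1}\phi_r(f_r(H_r);\pred(f_r(H_r)))$ has been stripped out and replaced by the constant $\prod_{r=t+1}^{T-1}\J_r$ (which is then absorbed into the leading product of $\J$'s). This $\phi_r$-free version is precisely what appears in the bound used in the proof of Proposition~\ref{prop: multi-cat FC}, where the subsequent Q-function manipulations in \eqref{intheorem: will need for cor} require the conditional expectation to contain only indicators, outcomes, and propensity scores, not the surrogate values $\phi_r$.

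The reason your shortcut cannot deliver this stronger, $\phi_r$-free conclusion is a sign issue: after the induction has been collapsed into the compact form $\E[Z_{\text{diff},t}\mid H_t]$, the factor $\prod_{r=t+1}^{T-1}\phi_r(\cdot)$ multiplies a random variable that is \emph{not} pointwise non-negative (only its conditional expectation is), so neither $\phi_r\leq 1$ nor $\phi_r\geq\J_r$ can be applied to remove it. Inside the induction of Lemma~\ref{lemma: sufficiency: Induction: non-negative}, by contrast, each $\phi_r$ factor is introduced precisely where it multiplies a nested conditional expectation that is non-negative as a function of $H_{1+t}$ (see the step around \eqref{inlemma: hotchpotch} and \eqref{def: S11}), so at that stage $\phi_r\geq\J_r$ can be substituted while preserving the inequality. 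In short, the $\phi_r\to\J_r$ substitution is only valid at the point in the argument where the sign of the multiplicand is under control, and this information is discarded once everything is packaged into $\E[Z_{\text{diff},t}\mid H_t]$. So you do need to go back into the induction; the one-line reduction does not suffice for the lemma as it is actually used.
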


\begin{proof}
Note that Lemma \ref{lemma: sufficiency: Induction: non-negative} implies $\phi_t(f_t(H_t),\pred(f_t(H_t)))\geq \J_t$ since $f_t(H_t)\in\RR^{k_t}$ for all $t\in[T]$. If we redo all steps of Lemma \ref{lemma: sufficiency: Induction: non-negative} replacing \\
$\phi_r(f_r(H_r),\pred(f_r(H_r)))$ and $\phi_t(\mx,\pred(\mx))$ with $\J_t$ in all lower bounds, Lemma \ref{lemma: sufficiency: Induction: main step} will follow.  Since the proof is similar, it has been skipped.
\end{proof}
\subsection{Proof of Theorem \ref{theorem: sufficient conditions}}
\label{sec: proof of theorem suff}
We need to show that if $V^{\psi}(\fm)\to_m V^\psi_*$ for some $\{\fm\}_{m\geq 1}\subset \F$, then $V(\fm)\to_m V_*$. 
Without loss of generality, let us assume $C_{\phi_t}=1$ for $t\in[2:T]$ because we can substitute $\phi_t$ by $\phi_t/C_{\phi_t}$ otherwise. 
We claim that the proof of Fisher consistency follows directly from Lemma \ref{lemma: suff: main lemma} below. 
\begin{lemma}
\label{lemma: suff: main lemma}
  Suppose $\phi_1$ satisfies Condition \ref{assump: N1} and  $\phi_2,\ldots,\phi_T$ satisfy both Conditions \ref{assump: N1} and \ref{assump: N2} with $C_{\phi_t}=1$ for $t\in[2;T]$. If   $V^\psi(\fm)\to_m V^*$, then  under Assumptions I-V, 
  \begin{align}
      \label{instatement: suff: main lemma}
  \MoveEqLeft  \E\lbt \slb \sum_{i=1}^T Y_i\srb  \prod_{j=1}^{t-1}\frac{1[A_1=\pred(\fm_1(H_1))] }{\pi_1(A_1\mid H_1)}\prod_{j=1+t}^{T}\frac{1[A_j=d_j^*(H_j)]}{\pi_j(A_j\mid H_j)}\nn\\
   &\ \times \frac{ \slb 1[A_t=\pred(\mp^*_{t}(H_{t}))]-1[A_t=\pred(\fm_{t}(H_{t}))]\srb}{\pi_t(A_t\mid H_t)}\rbt\to_m 0
  \end{align} 
  for all $t\in[T]$, where the products with empty range are taken to be one.
\end{lemma}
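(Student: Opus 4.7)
The proof of Lemma~\ref{lemma: suff: main lemma} combines a stagewise decomposition of the surrogate regret with a Bartlett-style calibration argument at stage $1$ and an induction on $t$ for $t \geq 2$. By Lemma~\ref{lemma: sufficiency},
\[
V^\psi_\ast - V^\psi(f^{(m)}) = \E\bigl[\Psi_1^*(\mp_1^*(H_1)) - \Psi_1(f_1^{(m)}(H_1); \mp_1(H_1; f^{(m)}))\bigr].
\]
Write the integrand as $U_1^{(m)}(H_1) + T_1^{(m)}(H_1)$, where $U_s^{(m)}(H_s) := \Psi_s^*(\mp_s^*(H_s)) - \Psi_s(f_s^{(m)}(H_s); \mp_s^*(H_s))$ and $T_s^{(m)}(H_s) := \Psi_s(f_s^{(m)}(H_s); \mp_s^*(H_s)) - \Psi_s(f_s^{(m)}(H_s); \mp_s(H_s; f^{(m)}))$. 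Both are pointwise non-negative: $U_s^{(m)} \geq 0$ by definition of the supremum, and $T_s^{(m)} \geq 0$ follows from $\mp_s^* \geq \mp_s(\cdot; f^{(m)})$ componentwise, proved by backward induction from $\mp_T^* = \mp_T$ using $\phi_r \geq 0$. Iterating the $S_1+S_2$ decomposition from the proof of Lemma~\ref{lemma: sufficiency: Induction: non-negative} and applying \eqref{intheorem: necessity: IPW: general p} to unroll $T_1^{(m)}$ yields
\[
V^\psi_\ast - V^\psi(f^{(m)}) = \E[U_1^{(m)}(H_1)] + \sum_{s=2}^T \E[W_s^{(m)}\, U_s^{(m)}(H_s)], \qquad W_s^{(m)} := \prod_{r=1}^{s-1}\frac{\phi_r(f_r^{(m)}(H_r);A_r)}{\pi_r(A_r\mid H_r)} \geq 0.
\]
All summands are non-negative and sum to a vanishing quantity, so each tends to $0$ as $m \to \infty$.

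\textbf{Base case $t=1$.} Applying Lemma~\ref{lemma: necessity: Bartlett stuff} to $\phi_1$ (valid because $\mp_1^* = Q_1^*$ is uniformly bounded via Lemma~\ref{lemma: suff: p t star is bdd}) produces a non-negative convex $\varrho_1$ with $\varrho_1(0) = 0$ and $\varrho_1(\epsilon_m) \to 0$ if and only if $\epsilon_m \to 0$, satisfying
\[
U_1^{(m)}(H_1) \geq \varrho_1\bigl(\max(\mp_1^*(H_1)) - \myq(f_1^{(m)}(H_1); \mp_1^*(H_1))\bigr).
\]
Taking expectation and applying Jensen's inequality forces $\E[\max(\mp_1^*) - \myq(f_1^{(m)}; \mp_1^*)] \to 0$. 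Since $\pred(\mp_1^*) = d_1^*$ and Lemma~\ref{lemma:  sufficiency: induction: Psi t star p t star} identifies $\mp_1^*(H_1, A_1)$ with $\E[\sum_i Y_i \prod_{j \geq 2} 1[A_j = d_j^*]/\pi_j \mid H_1, A_1]$, identity~\eqref{intheorem: necessity: IPW: diff} at stage $1$ shows this expectation equals $R_1^{(m)}$ exactly, so $R_1^{(m)} \to 0$.

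\textbf{Inductive step $t \geq 2$ and main obstacle.} I induct on $t$, assuming $R_1^{(m)}, \ldots, R_{t-1}^{(m)} \to 0$. Because $\phi_t$ satisfies both Conditions~\ref{assump: N1} and~\ref{assump: N2}, Lemma~\ref{lemma: necessity: linear bound} yields $U_t^{(m)}(H_t) \geq \CC_{\phi_t}(\max(\mp_t^*(H_t)) - \myq(f_t^{(m)}(H_t); \mp_t^*(H_t)))$, and the same IPW identity combined with Lemma~\ref{lemma:  sufficiency: induction: Psi t star p t star} rewrites $\E[W_t^{(m)} U_t^{(m)}] \to 0$ as
\[
\E\!\left[W_t^{(m)} \sum_{i=1}^T Y_i \cdot \frac{1[A_t = d_t^*(H_t)] - 1[A_t = \pred(f_t^{(m)}(H_t))]}{\pi_t(A_t \mid H_t)} \prod_{j > t} \frac{1[A_j = d_j^*(H_j)]}{\pi_j(A_j\mid H_j)}\right] \to 0.
\]
This is precisely $R_t^{(m)}$ except that the indicator weight $\tilde W_t^{(m)} := \prod_{r < t} 1[A_r = \pred(f_r^{(m)}(H_r))]/\pi_r(A_r\mid H_r)$ has been replaced by the $\phi$-weight $W_t^{(m)}$. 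The remaining task, and the main obstacle, is this weight conversion. I plan to telescope
\[
\tilde W_t^{(m)} - W_t^{(m)} = \sum_{u=1}^{t-1} \tilde W_u^{(m)} \cdot \frac{1[A_u = \pred(f_u^{(m)}(H_u))] - \phi_u(f_u^{(m)}(H_u); A_u)}{\pi_u(A_u \mid H_u)} \cdot \prod_{u < r < t}\frac{\phi_r(f_r^{(m)}(H_r); A_r)}{\pi_r(A_r\mid H_r)},
\]
pair each term with the common downstream factor, and show each resulting expectation vanishes, using Lemma~\ref{lemma: sufficiency: tilde f form} to force $\phi_u(f_u^{(m)}; i) \to 0$ for $i \notin \argmax(\mp_u^*(H_u))$ and Lemma~\ref{lemma: necessity: phi bdd away from zero} to keep $\phi_u(f_u^{(m)}; \pred(f_u^{(m)}))$ bounded below along almost-sure convergent subsequences on which $U_u^{(m)}(H_u) \to 0$, then closing via bounded convergence together with the inductive hypotheses $R_1^{(m)}, \ldots, R_{t-1}^{(m)} \to 0$. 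The difficulty is that, without the uniform lower bound $\phi_t(\mx;\pred(\mx)) \geq \J_t > 0$ of Proposition~\ref{prop: multi-cat FC}, no pointwise inequality directly converts $W_t^{(m)}$ to $\tilde W_t^{(m)}$, so the argument must route through subsequence and almost-sure arguments; threading these uniformly in $m$ through the induction on $t$ is the most delicate step of the proof.
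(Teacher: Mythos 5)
Your decomposition
\[
V^\psi_\ast - V^\psi(f^{(m)}) = \E[U_1^{(m)}(H_1)] + \sum_{s=2}^T \E[W_s^{(m)}\, U_s^{(m)}(H_s)]
\]
is correct, as is the conclusion that each non-negative summand vanishes, and the base case $t=1$ via Lemma~\ref{lemma: necessity: Bartlett stuff} matches the paper. But the weight conversion you flag as the ``main obstacle'' is not merely delicate---the telescope route you sketch has a genuine gap, and the paper avoids it by using a different induction structure.

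The paper carries \emph{indicator} weights $w_t = \prod_{r<t}1[A_r=\pred(\fm_r(H_r))]/\pi_r$ through a two-part induction: (P.ii) is the indicator-weighted vanishing of $U_t^{(m)}$, and (P.ii) at stage $t$ is used (via the g-formula identifying $w_t$ as a counterfactual Radon--Nikodym derivative, and via Lemma~\ref{lemma: suff: lemma with counterfactuals}) to remove \emph{one} $\phi_t$-factor and produce (P.ii) at stage $t+1$; only then does Lemma~\ref{lemma: necessity: linear bound} extract (P.i). The crucial a.s.-subsequence argument (Lemma~\ref{lemma: necessity: phi bdd away from zero}) is applied to the counterfactual distribution $H_t(d^{\upm})$, under which $A_r$ for $r<t$ is frozen at $\pred(\fm_r)$, so only the diagonal term $\phi_t(\cdot;\pred(\cdot))$ ever needs to be controlled.

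Your telescope instead introduces, at each $u<t$, the mismatch $1[A_u=\pred(f_u^{(m)})]-\phi_u(f_u^{(m)};A_u)$. After integrating out $A_u$ conditionally on $H_u$, this splits into $\bigl(1-\phi_u(\cdot;\pred)\bigr)\cdot[\text{downstream at }A_u=\pred]$ minus $\sum_{i\neq\pred}\phi_u(\cdot;i)\cdot[\text{downstream at }A_u=i]$. Two problems arise. First, $1-\phi_u(\cdot;\pred)$ does not tend to zero: Lemma~\ref{lemma: necessity: phi bdd away from zero} gives $\phi_u(\cdot;\pred)\geq\J_u>0$ along a.s.\ subsequences, not $\phi_u(\cdot;\pred)\to 1$; and for $u=1$, where only Condition~\ref{assump: N1} is assumed, $\phi_1(\cdot;\pred)$ need not even be bounded by $1$. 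So you must show the paired downstream factor vanishes. But the downstream factor at $A_u=\pred(f_u^{(m)})$ is essentially a weighted version of the quantity $R_t^{(m)}$ you are trying to bound, so the argument becomes circular. Second, for $i\in\argmax(\mp_u^*(H_u))\setminus\{\pred(f_u^{(m)})\}$, Lemma~\ref{lemma: sufficiency: tilde f form} does not force $\phi_u(f_u^{(m)};i)\to 0$, and again you are left needing the downstream factor to vanish. Resolving these requires interleaving a forward induction over $u$ with the outer induction over $t$, under a distribution that itself changes with $m$; that bookkeeping is precisely what the paper's per-step counterfactual conversion is designed to avoid. Your plan correctly identifies the hard step, but the telescope as stated does not get around it.
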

To see  how the proof follows the Lemma \ref{lemma: suff: main lemma}, first  note that 
\[V_*-V(f)=\E\lbt \slb \sum_{i=1}^T Y_i\srb\frac{\prod_{t=1}^T1[A_t=d_t^*(H_t)]-\prod_{t=1}^T1[A_t=\pred(\fm_{t}(H_{t}))]}{\prod_{t=1}^T\pi_t(A_t\mid H_t)}\rbt.\]
Letting  products over empty ranges  be one, we calculate 
\begin{align*}
  \MoveEqLeft \sum_{t=1}^T\lbs\prod_{j=1}^{t-1}\frac{1[A_1=\pred(\fm_1(H_1))] }{\pi_1(A_1\mid H_1)}\prod_{j=1+t}^{T}\frac{1[A_j=d_j^*(H_j)]}{\pi_j(A_j\mid H_j)}\\
   &\ \times\frac{ \slb 1[A_t=\pred(\mp^*_{t}(H_{t}))]-1[A_t=\pred(\fm_{t}(H_{t}))]\srb}{\pi_t(A_t\mid H_t)}\rbs\\
  \stackrel{(a)}{=}&\ \sum_{t=1}^T\lbs\prod_{j=1}^{t-1}\frac{1[A_1=\pred(\fm_1(H_1))] }{\pi_1(A_1\mid H_1)}\prod_{j=1+t}^{T}\frac{1[A_j=d_j^*(H_j)]}{\pi_j(A_j\mid H_j)}\\
  &\ \times\frac{ \slb 1[A_t=d_t^*(H_{t})]-1[A_t=\pred(\fm_{t}(H_{t}))]\srb}{\pi_t(A_t\mid H_t)}\rbs\\
  =&\ \sum_{t=1}^T\prod_{j=1}^{t-1}\frac{1[A_1=\pred(\fm_1(H_1))] }{\pi_1(A_1\mid H_1)}\prod_{j=1+t}^{T}\frac{1[A_j=d_j^*(H_j)]}{\pi_j(A_j\mid H_j)}\frac{ 1[A_t=d_t^*(H_{t})]}{\pi_t(A_t\mid H_t)}\\
  &\ -\sum_{t=1}^T\prod_{j=1}^{t-1}\frac{1[A_1=\pred(\fm_1(H_1))] }{\pi_1(A_1\mid H_1)}\prod_{j=1+t}^{T}\frac{1[A_j=d_j^*(H_j)]}{\pi_j(A_j\mid H_j)}\frac{1[A_t=\pred(\fm_{t}(H_{t}))]}{\pi_t(A_t\mid H_t)}\\
  =&\ \sum_{t=1}^T\prod_{j=1}^{t-1}\frac{1[A_1=\pred(\fm_1(H_1))] }{\pi_1(A_1\mid H_1)}\prod_{j=t}^{T}\frac{1[A_j=d_j^*(H_j)]}{\pi_j(A_j\mid H_j)}\\
  &\ -\sum_{t=1}^T\prod_{j=1}^{t}\frac{1[A_1=\pred(\fm_1(H_1))] }{\pi_1(A_1\mid H_1)}\prod_{j=1+t}^{T}\frac{1[A_j=d_j^*(H_j)]}{\pi_j(A_j\mid H_j)},
\end{align*}
where (a) follows from Lemmas \ref{lemma:  sufficiency: induction: Psi t star p t star} and \ref{lemma: sufficiency: p t star and Q t star}. The telescopic sum in the RHS of the above display equals
\[\frac{\prod_{t=1}^T1[A_t=d_t^*(H_t)]-\prod_{t=1}^T1[A_t=\pred(\fm_{t}(H_{t}))]}{\prod_{t=1}^T\pi_t(A_t\mid H_t)}.\]
Thus, we derived that $V_*-V(f)$ equals
 \[ \begin{split}
    \MoveEqLeft \E\lbt \slb \sum_{i=1}^T Y_i\srb  \sum_{t=1}^T\lbs\prod_{j=1}^{t-1}\frac{1[A_1=\pred(\fm_1(H_1))] }{\pi_1(A_1\mid H_1)}\prod_{j=1+t}^{T}\\
    &\  \times\frac{1[A_j=d_j^*(H_j)]}{\pi_j(A_j\mid H_j)}\frac{ \slb 1[A_t=\pred(\mp^*_{t}(H_{t}))]-1[A_t=\pred(\fm_{t}(H_{t}))]\srb}{\pi_t(A_t\mid H_t)}\rbs\rbt,
 \end{split}\]
 which goes to zero as $m\to\infty$ by Lemma \ref{lemma: suff: main lemma}. The rest of the proof is devoted towards proving  Lemma \ref{lemma: suff: main lemma}.
\subsubsection{Proof of Lemma \ref{lemma: suff: main lemma}}
Let $H_t(d^{\upm})$ be the counterfactual $H_t$ under the DTR $d^{\upm}_t(H_t)=\pred(\fm_t(H_t))$. In other word, the random variable $H_t(d^{\upm})$ is distributed as the $H_t$ in the world where the treatments are assigned according to $d^{\upm}$ instead of the propensity scores $\pi_t$. Thus 
\[H_1(d^{\upm})\equiv H_1,\quad H_2(d^{\upm})=(H_1, d^\upm_1(H_1), O_2(d^{\upm}_1(H_1)),Y_1(d^{\upm}_1(H_1))),\] and so on. Thus $H_t(d^{\upm})$'s distribution is not same as the distribution of $H_t$ under $\PP$ for $t\geq 2$. 
From Lemma 1 of \cite{orellana2010} \citep[see also][]{murphy2001marginal}, it follows that for $t\in[T-1]$, the distribution of $H_{1+t}(d^{\upm})$ is absolutely continuous with respect to the distribution of  $H_{1+t}$  under Assumptions I-III with  Radon-Nikodym derivative
\[\prod_{i=1}^{t}\frac{1[A_i=d^{\upm}(H_i)]}{\pi_i(A_i\mid H_i)},\]
which is well defined as $\pi_i(A_i\mid H_i)>0$ for all $i\in[T]$ by Assumption I. 
Therefore, under Assumptions I-III, for any measurable function $h:\H_{1+t}\mapsto\RR$, the expectation under $\PP_{d^{\upm}}$ is identified as 
\begin{align}
\label{g-formula}
    \dE[h(H_{1+t}(d^{\upm}))]=\E\lbt h(H_{1+t})\prod_{i=1}^t\frac{1[A_i=d^{\upm}(H_1)]}{\pi_i(A_i\mid H_i)}\rbt.
\end{align}
We will prove Lemma \ref{lemma: suff: main lemma} by mathematical induction. We consider the following induction hypothesis: under the setup of Lemma \ref{lemma: suff: main lemma}, for each $t\in[T]$, (P.i) \eqref{instatement: suff: main lemma} holds, and 
\begin{align}
    \label{intheorem: necessity: induction (ii)}
  (P.ii)\  \E\lbt\prod_{i=1}^{t-1} \frac{1[A_i=\pred(\fm_i(H_i))] }{\pi_i(A_i\mid H_i)}\slb\Psi_{t}^*(\mp^*_{t}(H_{t}))-\Psi_{t}(\fm_{t}(H_{t});\mp^*_{t}(H_{t}))\srb\rbt \to_m 0,
\end{align}
where the product over empty range is defined to be one.
We will denote $\mp_t(H_t;\fm)$ by $\mp_t^{(m)}(H_t)$ throughout the proof of Lemma \ref{lemma: suff: main lemma}. First we show that the induction hypotheses (P.i) and (P.ii) hold for $t=1$.
\subsubsection{Case $t=1$}
Note that 
\begin{align*}
 \MoveEqLeft   V^\psi_*-V^\psi(\fm)\nn\\
   \stackrel{(a)}{=}&\ \E[\Psi_1^*(\mp_1^*(H_1))]-\E[\Psi_1(\fm_1(H_1);\mpm(H_1))]\nn\\
    =&\ \E[\Psi_1^*(\mp_1^*(H_1))-\Psi_1(\fm_1(H_1);\mp_1^*(H_1))]\\
    &\ + \E\slbt \Psi_1(\fm_1(H_1);\mp_1^*(H_1))-\Psi_1(\fm_1(H_1);\mp_1^{(m)}(H_1))\srbt\\
    \stackrel{(b)}{\geq} &\ \E[\Psi_1^*(\mp_1^*(H_1))-\Psi_1(\fm_1(H_1);\mp_1^*(H_1))],
\end{align*}
where (a) follows from Lemma \ref{lemma: sufficiency} and (b) follows as  
Lemma \ref{lemma: sufficiency: Induction: non-negative} implies  
\[\Psi_1(\fm_1(H_1);\mp_1^*(H_1))\geq \Psi_1(\fm_1(H_1);\mpm_1(H_1)).\]
Also, the RHS of the above display is non-negative because $\Psi_1^*(\mp_1^*(H_1))\geq \Psi_1(\fm_1(H_1);\mp_1^*(H_1))$ by the definition of $\Psi^*$ in \eqref{def: Psi and psi star main text}. 
Since $V^\psi(\fm)\to_m V_*^\psi$, it then follows that 
\begin{align}
    \label{intheorem: suff: Psi1 f1 converges to Psi1 star }
    \E[\Psi_1^*(\mp_1^*(H_1))-\Psi_1(\fm_1(H_1);\mp_1^*(H_1))]\to_m & 0,
\end{align}
implying \eqref{intheorem: necessity: induction (ii)}. Hence Hypothesis P.ii holds for $t=1$.
Let $\B_1=[c_1,c_2]^{k_1}$ where $c_1,c_2>0$ is as in Lemma \ref{lemma: suff: p t star is bdd}. Then Lemma \ref{lemma: suff: p t star is bdd} implies that $\mp_1^*(h_1)\in\B_1$ for all $h_1\in\H_1$. Note that  $c_2$ depends on $\PP$. Since $\phi_1$  satisfies Condition \ref{assump: N1},  Lemma  \ref{lemma: necessity: Bartlett stuff} applies on $\phi_1$ with $\mp=\mp_1^*(h_1)$ and $\mathfrak{B}=\B_1$. 
Let $\varrho$ be as in Lemma \ref{lemma: necessity: Bartlett stuff}, which depends   on $\phi_1$ and $\PP$ (via $\B_1$). Since $\varrho$ is convex with $\dom(\varrho)\supset\RR_{\geq 0}$, $\max(\mp_1^*(H_1))-\myq(\fm_1(H_1);\mp_1^*(H_1))\in\dom(\varrho)$ for all $H_1$. Thus  Jensen's inequality implies that 
\begin{align*}
 \MoveEqLeft  \varrho\slb \E\slbt \max(\mp_1^*(H_1))-\myq(\fm_1(H_1);\mp_1^*(H_1))\srbt\srb\\
   \leq &\  \E\slbt \varrho\slb \max(\mp_1^*(H_1))-\myq(\fm_1(H_1);\mp_1^*(H_1))\srb\srbt\\
  \stackrel{(a)}{\leq} &\ \E\slbt \Psi_1^*(\mp_1^*(H_1))-\Psi_1(\fm_1(H_1);\mp_1^*(H_1))\srbt,
\end{align*}
where (a) follows from Lemma \ref{lemma: necessity: Bartlett stuff} because $\mp_1^*(H_1)\in\B_1$.   Lemma \ref{lemma: necessity: Bartlett stuff} states that $\varrho(x)\to 0$ implies $x\to 0$ for $x\geq 0$. Since $\max(\mp_1^*(H_1))\geq \myq(\fm_1(H_1);\mp_1^*(H_1))$ by the definition of $\myq$ in \eqref{def: myq},
\[\E[\max(\mp_1^*(H_1))- \myq(\fm_1(H_1);\mp_1^*(H_1))]\geq 0.\]
Therefore,  \eqref{intheorem: suff: Psi1 f1 converges to Psi1 star } leads to 
\begin{align}
  \label{intheorem: sufficiency: V convg: stage 1: intermediate}
    \E\slbt \max(\mp_1^*(H_1))-\myq(\fm_1(H_1);\mp_1^*(H_1))\srbt\to_m 0.
\end{align}
On one hand, Lemma \ref{lemma: sufficiency: p t star and Q t star} implies
\[\E[\max(\mp_1^*(H_1))]=V_*=\E\lbt \slb\sum_{i=1}^T Y_i\srb\prod_{j=1}^T\frac{1[A_j=d_j^*(H_j)]}{\pi_j(A_j\mid H_j)}\rbt.\]
 On the other hand, 
 \[\begin{split}
 \MoveEqLeft \myq(\fm_1(H_1);\mp_1^*(H_1))= \mp_1^*(H_1)_{\pred(\fm_1(H_1))}\\
  \stackrel{(a)}{=}&\ \E\lbt \slb\sum_{i=1}^T Y_i\srb\prod_{j=2}^T\frac{1[A_j=d_j^*(H_j)]}{\pi_j(A_j\mid H_j)}\rbt\ \bl\ H_1, A_1= \pred(\fm_1(H_1))\rbt,  
\end{split}\]
where (a) follows by \eqref{def: p t star}. 
Then \eqref{intheorem: necessity: IPW: 0-1} implies
\[\E[\myq(\fm_1(H_1);\mp_1^*(H_1))]=\E\lbt \slb\sum_{i=1}^T Y_i\srb\prod_{j=2}^T\frac{1[A_j=d_j^*(H_j)]}{\pi_j(A_j\mid H_j)}\frac{1[A_1=\pred(\fm_1(H_1))]}{\pi_1(A_1\mid H_1)}\rbt,\]
which, combined with \eqref{intheorem: sufficiency: V convg: stage 1: intermediate}, implies
\begin{align}
    \label{intheorem: sufficiency: V convg: stage 1}
    \E\lbt \slb\sum_{i=1}^T Y_i\srb\prod_{j=2}^T\frac{1[A_j=d_j^*(H_j)]}{\pi_j(A_j\mid H_j)}\frac{1[A_1=d_1^*(H_1)]-1[A_1=\pred(\fm_1(H_1))]}{\pi_1(A_1\mid H_1)}\rbt\to_m 0,
\end{align}
which implies induction hypothesis P.i 
holds when $t=1$. If $T=1$, then there is nothing else to prove. Therefore, we will consider the case $T\geq 2$ from now on.


\subsubsection{Case $t\in[2:T]$}
Let $T\geq 2$. Suppose the induction hypotheses P.i and P.ii hold for $t\in[1:T-1]$. We will show that they hold for $t+1$. Let us denote
\begin{equation}
\label{def: wt}
    w_t=\prod_{i=1}^{t-1} \frac{1[A_i=\pred(\fm_i(H_i))] }{\pi_i(A_i\mid H_i)},
\end{equation}
implying $w_1=1$.
Since $t<T$, $p_t$ and $p_t^*$  are as in  \eqref{def: suff: pt f} and \eqref{def: sufficiency: pt star}, respectively. 
Using the fact that $w_t\geq 0$ and equations \eqref{inlemma: sufficiency: main induction step: t 1st deduction} and \eqref{inlemma: sufficiency: main induction step: t 2nd deduction} from the proof of Lemma \ref{lemma: sufficiency: Induction: non-negative}, we obtain that
\begin{align*}
 \MoveEqLeft  \E\lbt w_t\slb \Psi_t(\fm_t(H_t);\mp_t^*(H_t))-\Psi_t(\fm_t(H_t);\mpm_t(H_t))\srb\rbt\\ 
 \geq &\ \E\lbt w_t\sum_{i=1}^{k_{t}}\phi_t(\fm_t(H_t);i) \E[\Psi_{1+t}^*(\mp^*_{1+t}(H_{1+t}))\\
 &\ -\Psi_{1+t}(f_{1+t}(H_{1+t});\mp^*_{1+t}(H_{1+t}))\mid H_t, A_t=i]\rbt\\
 &\ + \E\lbt w_t\sum_{i=1}^{k_{t}}\phi_t(\fm_t(H_t);i) \E[\Psi_{1+t}(f_{1+t}(H_{1+t});\mp^*_{1+t}(H_{1+t}))\\
 &\ -\Psi_{1+t}(f_{1+t}(H_{1+t});\mp_{1+t}(H_{1+t}))\mid H_{t}, A_{t}=i]\rbt,
\end{align*}
whose second term is non-negative by Lemma \ref{lemma: sufficiency: Induction: non-negative}. 
Since the $\phi_t$'s are non-negative and  \[\Psi_{1+t}^*(\mp^*_{1+t}(H_{1+t}))\geq \Psi_{1+t}(f_{1+t}(H_{1+t});\mp^*_{1+t}(H_{1+t}))\]
for all $t\in[T-1]$, and \eqref{intheorem: necessity: induction (ii)} holds for $t$, 
\begin{align}
    \label{intheorem: necessity: stage 2: first}
\MoveEqLeft\E\lbt w_t\phi_t(\fm_t(H_{t});\pred(\fm_t(H_{t}))) \E[\Psi_{1+t}^*(\mp^*_{1+t}(H_{1+t}))\\
&\ -\Psi_{1+t}(f_{1+t}(H_{1+t});\mp^*_{1+t}(H_{1+t}))\mid H_t, A_t=\pred(\fm_t(H_{t}))]\rbt\to_m 0.\nn
\end{align}
An application of \eqref{intheorem: necessity: IPW: 0-1} implies that
\begin{align*}
   & \E\lbt w_t\phi_{t}(\fm_{t}(H_{t});\pred(\fm_{t}(H_{t})))\E[\Psi_{1+t}^*(\mp^*_{1+t}(H_{1+t}))\\
    &\ -\Psi_{1+t}(f_{1+t}(H_{1+t});\mp_{1+t}^*(H_{1+t}))\mid H_{t}, A_{t}=\pred(\fm_{t}(H_{t}))]\rbt\\
    =&\ \E\lbt w_t\frac{1[A_{t}=\pred(\fm_{t}(H_{t}))]}{\pi_{t}(A_{t}\mid H_{t})}\phi_{t}(\fm_{t}(H_{t});\pred(\fm_{t}(H_{t})))\E[\Psi_{1+t}^*(\mp^*_{1+t}(H_{1+t}))\\
    &\ -\Psi_{1+t}(f_{1+t}(H_{1+t});\mp_{1+t}^*(H_{1+t}))\mid H_{t}]\rbt\\
   \stackrel{(a)}{=}&\ \E\lbt\lb\prod_{i=1}^t\frac{1[A_{i}=\pred(\fm_{i}(H_{i}))]}{\pi_{i}(A_{i}\mid H_{i})}\rb\phi_{t}(\fm_{t}(H_{t});\pred(\fm_{t}(H_{t})))\\
  &\  \times \slb\Psi_{1+t}^*(\mp^*_{1+t}(H_{1+t}))-\Psi_{1+t}(f_{1+t}(H_{1+t});\mp_{1+t}^*(H_{1+t}))\srb\rbt,
\end{align*}
where (a) follows from \eqref{def: wt}. However, the above, combined with \eqref{g-formula}, implies that 
\begin{align*}
 \MoveEqLeft  \dE\lbt\phi_{t}\slb \fm_{t}(H_{t}(d^{\upm}));\pred(\fm_{t}(H_{t}(d^{\upm})))\srb \slb\Psi_{1+t}^*(\mp^*_{1+t}(H_{1+t}(d^{\upm})))\\
   &\ -\Psi_{1+t}(f_{1+t}(H_{1+t}(d^{\upm}));\mp_{1+t}^*(H_{1+t}(d^{\upm})))\srb\rbt \to_m 0.
\end{align*}
While we use $H_{t}(d^{\upm})$, we want to remind the readers that the  functions  $\mp_t$ and $\mp_t^*$ do not depend on the distribution of  $H_{t}(d^{\upm})$, and their definition remains as in as in  \eqref{def: suff: pt f} and \eqref{def: sufficiency: pt star}, respectively, which depends only on $\PP$. 
 Lemma \ref{lemma: suff: lemma with counterfactuals} below states that the above convergence implies  
\begin{align*}
   \dE\lbt \Psi_{1+t}^*(\mp^*_{1+t}(H_{1+t}(d^{\upm})))-\Psi_{1+t}(f_{1+t}(H_{1+t}(d^{\upm}));\mp_{1+t}^*(H_{1+t}(d^{\upm})))\rbt \to_m 0,
\end{align*}
which combined with \eqref{g-formula}, implies that
\begin{align*}
   \E\lbt w_{1+t} \lb \Psi_{1+t}^*(\mp^*_{1+t}(H_{1+t}))-\Psi_{1+t}(f_{1+t}(H_{1+t});\mp_{1+t}^*(H_{1+t}))\rb\rbt \to_m 0.
\end{align*}
\begin{lemma}
\label{lemma: suff: lemma with counterfactuals}
Consider the setup of Lemma \ref{lemma: suff: main lemma}. Let $T\geq 2$ and 
\begin{align}
    \label{def: suff: Z upm}
    {\mathfrak Z}^{\upm}=\Psi_{1+t}^*(\mp^*_{1+t}(H_{1+t}(d^{\upm})))-\Psi_{1+t}(f_{1+t}(H_{1+t}(d^{\upm}));\mp_{1+t}^*(H_{1+t}(d^{\upm})))
\end{align}
for some $t\in[T-1]$. 
    Suppose \eqref{intheorem: necessity: induction (ii)} holds and 
    \begin{align}
        \label{instatement: lemma: counterfactual lemma}
        \dE\lbt\phi_{t}\slb \fm_{t}(H_{t}(d^{\upm}));\pred(\fm_{t}(H_{t}(d^{\upm})))\srb {\mathfrak Z}^{\upm}\rbt \to_m 0.
    \end{align}
    Then, under the setup of Lemma \ref{lemma: suff: main lemma},
    $\dE [{\mathfrak Z}^{\upm}] \to_m 0$.
    


\end{lemma}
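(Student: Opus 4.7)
The plan is to exploit non-negativity of ${\mathfrak Z}^{\upm}$ and the boundedness of all relevant quantities, then split the expectation based on whether the (counterfactual) stage-$t$ surrogate gap is small or large. The idea is that on the event where the stage-$t$ gap is small, Lemma \ref{lemma: necessity: phi bdd away from zero} forces the weight $\phi_t(\fm_t;\pred(\fm_t))$ to be bounded away from zero, which lets us transfer the hypothesis \eqref{instatement: lemma: counterfactual lemma} into a bound on $\dE[{\mathfrak Z}^{\upm}]$ itself; on the complementary event, Markov's inequality together with the hypothesis \eqref{intheorem: necessity: induction (ii)} shows the event has vanishing probability.

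More concretely, first set $U_m := \Psi_t^*(\mp_t^*(H_t(d^{\upm}))) - \Psi_t(\fm_t(H_t(d^{\upm}));\mp_t^*(H_t(d^{\upm}))) \geq 0$. By \eqref{intheorem: necessity: induction (ii)} combined with the g-formula \eqref{g-formula}, we have $\dE[U_m]\to 0$. Next I would establish the following uniform statement, which is the technical heart of the argument: there exist constants $\J>0$ and $\eta>0$ (depending only on $\phi_t$ and $\PP$) such that whenever $\mp\in\RR^{k_t}_{\geq 0}$ satisfies $\|\mp\|_1\geq k_t c_1$ (with $c_1$ from Lemma \ref{lemma: suff: p t star is bdd}) and $\mx\in\RR^{k_t}$ satisfies $\Psi_t^*(\mp)-\Psi_t(\mx;\mp)\leq \eta$, we have $\phi_t(\mx;\pred(\mx))\geq \J$. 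This follows by contradiction: if not, one extracts sequences $(\mx^{(l)},\mp^{(l)})$ with $\Psi_t^*(\mp^{(l)})-\Psi_t(\mx^{(l)};\mp^{(l)})\to 0$, $\liminf_l\|\mp^{(l)}\|_1\geq k_t c_1>0$, but $\phi_t(\mx^{(l)};\pred(\mx^{(l)}))<\J_0/2$, violating Lemma \ref{lemma: necessity: phi bdd away from zero} (with $\J_0$ the constant it supplies).

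Given this uniform bound, Lemma \ref{lemma: suff: p t star is bdd} ensures that $\|\mp_t^*(H_t(d^{\upm}))\|_1\geq k_t c_1$ $\dP$-almost surely, so on the event $\{U_m\leq \eta\}$ we have $\phi_t(\fm_t(H_t(d^{\upm}));\pred(\fm_t(H_t(d^{\upm}))))\geq \J$. Since ${\mathfrak Z}^{\upm}\geq 0$, this yields
\begin{align*}
\dE[{\mathfrak Z}^{\upm}\mathbf{1}_{\{U_m\leq\eta\}}]
\ \leq\ \frac{1}{\J}\,\dE[\phi_t(\fm_t(H_t(d^{\upm}));\pred(\fm_t(H_t(d^{\upm})))){\mathfrak Z}^{\upm}]\ \to\ 0
\end{align*}
by the hypothesis \eqref{instatement: lemma: counterfactual lemma}. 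For the complementary event, Lemmas \ref{lemma: necessity: psi bounded} and \ref{lemma: suff: p t star is bdd} give a constant $M>0$ with ${\mathfrak Z}^{\upm}\leq M$, hence by Markov's inequality
\begin{align*}
\dE[{\mathfrak Z}^{\upm}\mathbf{1}_{\{U_m>\eta\}}]\ \leq\ M\,\dP(U_m>\eta)\ \leq\ \frac{M}{\eta}\dE[U_m]\ \to\ 0.
\end{align*}
Adding the two contributions gives $\dE[{\mathfrak Z}^{\upm}]\to 0$.

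The main obstacle is proving the uniform claim converting Lemma \ref{lemma: necessity: phi bdd away from zero} (a $\liminf$ statement along a specific sequence) into a quantitative statement valid for any pair $(\mx,\mp)$ with $\|\mp\|_1$ bounded below. Everything else, in particular the bookkeeping with the counterfactual distribution $\dP$ and transferring the induction hypothesis (P.ii) through \eqref{g-formula}, is routine once that uniform gap-to-weight inequality is in hand.
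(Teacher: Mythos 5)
Your proof is correct, and it takes a genuinely different route from the paper's. The paper proceeds via a measure-theoretic chain: the hypothesis \eqref{instatement: lemma: counterfactual lemma}, together with non-negativity, gives $L_1$ hence in-probability convergence of the weighted term; the induction hypothesis (P.ii) plus \eqref{g-formula} gives in-probability convergence of your $U_m$; then a subsequence-of-subsequence argument upgrades both to almost sure convergence, Lemma \ref{lemma: necessity: phi bdd away from zero} is applied \emph{pointwise} along that subsequence to force $\phi_t(\fm_t;\pred(\fm_t))$ bounded below a.s., hence ${\mathfrak Z}^{\upm}\to 0$ a.s., and Fact \ref{fact: as to L1} closes. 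You instead ``uniformize'' Lemma \ref{lemma: necessity: phi bdd away from zero} once and for all into a deterministic gap-to-weight inequality (the $\eta$--$\J$ statement, correctly obtained by contradiction against the lemma's $\liminf$ guarantee), then decompose $\dE[{\mathfrak Z}^{\upm}]$ by the event $\{U_m\le\eta\}$ vs.\ $\{U_m>\eta\}$, handling the first with the uniform weight lower bound plus \eqref{instatement: lemma: counterfactual lemma}, and the second with boundedness plus Markov and $\dE[U_m]\to0$. The trade is clear: the paper avoids proving the uniform lemma by working along a.s.-convergent subsequences, while you pay that one-time cost to get a fully quantitative, subsequence-free argument that also makes explicit which constants ($c_1$ from Lemma \ref{lemma: suff: p t star is bdd}, $\J$ and $\eta$) control the rate. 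Both arguments use the same ingredients (Lemmas \ref{lemma: necessity: phi bdd away from zero}, \ref{lemma: suff: p t star is bdd}, \ref{lemma: necessity: psi bounded}, the g-formula, and non-negativity of ${\mathfrak Z}^{\upm}$), so neither is strictly more general, but your version is more elementary in its probability usage and arguably cleaner to audit.
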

Lemma \ref{lemma: suff: lemma with counterfactuals} is proved in Section \ref{sec: proof of main lemma: helper lemma}.
Since $t\geq 1$, we have $1+t\geq 2$. Therefore,  Lemma \ref{lemma: necessity: linear bound} implies
\begin{align}
\label{inlemma: suff: main lemma: t stage induction}
\MoveEqLeft \Psi_{1+t}^*(\mp^*_{1+t}(H_{1+t}))-\Psi_{1+t}(\fm_{1+t}(H_{1+t});\mp^*_{1+t}(H_{1+t})) \nn\\ 
\geq &\ \CC_{\phi_{1+t}}\slb \max(\mp^*_{1+t}(H_{1+t}))-\myq(\fm_{1+t}(H_{1+t});\mp^*_{1+t}(H_{1+t}))\srb.
\end{align}
Since $w_t$'s are non-negative by \eqref{def: wt}, we therefore obtain
\begin{align*}
 \MoveEqLeft  \E\lbt w_{1+t}\slb\Psi_{1+t}^*(\mp^*_{1+t}(H_{1+t}))-\Psi_{1+t}(\fm_{1+t}(H_{1+t});\mp^*_{1+t}(H_{1+t}))\srb\rbt\\
 \geq &\  \CC_{\phi_{1+t}}\E\lbt w_{1+t}\slb\max(\mp^*_{1+t}(H_{1+t}))-\myq(\fm_{1+t}(H_{1+t});\mp^*_{1+t}(H_{1+t}))\srb\rbt.
 \end{align*}
 Thus \eqref{inlemma: suff: main lemma: t stage induction} leads to
 \begin{align}
     \label{inlemma: suff: main lemma: t stage induction: final}
     \E\lbt w_{1+t}\slb\max(\mp^*_{1+t}(H_{1+t}))-\myq(\fm_{1+t}(H_{1+t});\mp^*_{1+t}(H_{1+t}))\srb\rbt\to_m 0.
 \end{align}
 On the other hand, 
\begin{align*}
 \MoveEqLeft  \E\lbt w_{1+t}\slb\max(\mp^*_{1+t}(H_{1+t}))-\myq(\fm_{1+t}(H_{1+t});\mp^*_{1+t}(H_{1+t}))\srb\rbt\\
 =&\ \E\lbt w_{1+t}\E\slbt \max(\mp^*_{1+t}(H_{1+t}))-\mp^*_{1+t}(H_{1+t})_{\pred(\fm_{1+t}(H_{1+t}))} \mid H_{1+t}\srbt \rbt.
\end{align*}
Now we want to use \eqref{intheorem: necessity: IPW: diff}  with $H_{1+t}$,  $h(H_{1+t})=1$, and 
\[\VV=\slb \sum_{i=1}^T Y_i\srb \prod_{j=2+t}^{T}\frac{1[A_j=d_j^*(H_j)]}{\pi_j(A_j\mid H_j)}.\]
Note that \eqref{def: p t star} implies
\[\E[\VV\mid H_{1+t},  A_{1+t}=i]=\mp_{1+t}^*(H_{1+t})_i\text{ for all }i\in[k_{1+t}]. \]
Therefore, \eqref{intheorem: necessity: IPW: diff} implies
\begin{align*}
 \MoveEqLeft  \E\lbt  \max(\mp_{1+t}^*(H_{1+t}))-\mp_{1+t}^*(H_{1+t})_{\pred(\fm_{1+t}(H_{1+t}))} \bl H_{1+t}\rbt\\
 =&\ \E\lbt  \frac{  1[A_{1+t}=\pred(\mp^*_{1+t}(H_{1+t}))]-1[A_{1+t}=\pred(\fm_{1+t}(H_{1+t}))]}{\pi_{1+t}(A_{1+t}\mid H_{1+t})}\VV\bl H_{1+t}\rbt.
\end{align*}
Thus we obtain that 
\begin{align*}
\MoveEqLeft \E\lbt w_{1+t}\slb\max(\mp^*_{1+t}(H_{1+t}))-\myq(\fm_{1+t}(H_{1+t});\mp^*_{1+t}(H_{1+t}))\srb\rbt\\
   =&\  \E\lbt w_{1+t}\lbs \slb \sum_{i=1}^T Y_i\srb  \prod_{j=2+t}^{T}\frac{1[A_j=d_j^*(H_j)]}{\pi_j(A_j\mid H_j)}\rbs\\
   &\ \times\frac{ \slb 1[A_{1+t}=\pred(\mp^*_{1+t}(H_{1+t}))]-1[A_{1+t}=\pred(\fm_{1+t}(H_{1+t}))]\srb}{\pi_{1+t}(A_{1+t}\mid H_{1+t})}\rbt,
\end{align*}
 If $2+t=T+1$, the product will be one since the range is empty. 
Hence, \eqref{inlemma: suff: main lemma: t stage induction: final} implies that 
\begin{equation*}
\begin{split}
       \MoveEqLeft \E\lbt \slb \sum_{i=1}^T Y_i\srb  w_{1+t}\prod_{j=2+t}^{T}\frac{1[A_j=d_j^*(H_j)]}{\pi_j(A_j\mid H_j)}\\
    &\ \times\frac{ \slb 1[A_{1+t}=\pred(\mp^*_{1+t}(H_{1+t}))]-1[A_{1+t}=\pred(\fm_{1+t}(H_{1+t}))]\srb}{\pi_{1+t}(A_{1+t}\mid H_{1+t})}\rbt\to_m 0,
\end{split}
\end{equation*}
which proves P.i for $1+t$.

\subsubsection{The proof of Lemma \ref{lemma: suff: lemma with counterfactuals}}
\label{sec: proof of main lemma: helper lemma}

\begin{proof}[Proof of Lemma \ref{lemma: suff: lemma with counterfactuals}]
First, we will show that the ${\mathfrak Z}^{\upm}$ in \eqref{def: suff: Z upm}  is non-negative and  bounded above.  Our ${\mathfrak Z}^{\upm}$ is also non-negative by the definition of $\Psi_t^*$ in \eqref{def: Psi and psi star main text}. The boundedness follows because the $\phi_t$'s are bounded and $\mp_{1+t}^*(h_{1+t})$ is bounded above by a constant $c_1$, potentially depending on $\PP$, for all $h_{1+t} \in \H_{1+t}$—as shown in Lemma~\ref{lemma: suff: p t star is bdd}. 

It suffices to show that given any subsequence of $\{m\}$, there exists a subsequence along which   $\dE [{\mathfrak Z}^{\upm}] \to_m 0$. First note that
    \[\phi_{t}\slb \fm_{t}(H_{t}(d^{\upm}));\pred(\fm_{t}(H_{t}(d^{\upm})))\srb {\mathfrak Z}^{\upm}\geq 0,\]
    \eqref{instatement: lemma: counterfactual lemma} implies
    \begin{align}
        \label{inlemma: suff: convg in Prob 1}
        \phi_{t}\slb \fm_{t}(H_{t}(d^{\upm}));\pred(\fm_{t}(H_{t}(d^{\upm})))\srb {\mathfrak Z}^{\upm}\to_{P} 0.
    \end{align}
  The convergence in \eqref{intheorem: necessity: induction (ii)} implies that
   \[\E\lbt\prod_{i=1}^{t-1} \frac{1[A_i=\pred(\fm_i(H_i))] }{\pi_i(A_i\mid H_i)}\slb\Psi_{t}^*(\mp^*_{t}(H_{t}))-\Psi_{t}(\fm_{t}(H_{t});\mp^*_{t}(H_{t}))\srb\rbt \to_m 0.\]
   Thus \eqref{g-formula} leads to
   \[\dE\lbt\Psi_{t}^*\slb\mp^*_{t}(H_{t}(d^{\upm}))\srb-\Psi_{t}\slb\fm_{t}(H_{t}(d^{\upm}));\mp^*_{t}(H_{t}(d^{\upm}))\srb\rbt \to_m 0.\]
 The random variable inside the above expectation is non-negative by \eqref{def: Psi and psi star main text}, which implies the above convergence is $L_1$-convergence. Since $L_1$ convergence implies convergence in probability, 
  \begin{align}
      \label{inlemma: suff: convg in Prob 2}
      \Psi_{t}^*\slb\mp^*_{t}(H_{t}(d^{\upm}))\srb-\Psi_{t}\slb\fm_{t}(H_{t}(d^{\upm}));\mp^*_{t}(H_{t}(d^{\upm}))\srb\to_P 0,
  \end{align}
  where $P$ corresponds to the underlying probability space. 
  Since convergence in probability a implies almost sure convergence along a subsequence, given any subsequence of $m$, we can find a further subsequence $\{m_r\}_{r\geq 1}\subset\{m\}$ so that
  the convergences in \eqref{inlemma: suff: convg in Prob 1} and \eqref{inlemma: suff: convg in Prob 2} are almost sure convergences w.r.t. the underlying probability $P$. The proof will be complete if we can show that $\E[Z^{(m_r)}]\to_r 0$. For the sake of notational simplicity, we will denote the subsequence $m_r$ by $m$. Hence, we need to show that if
  \begin{align}
        \label{inlemma: suff: convg as 1}
        \phi_{t}\slb \fm_{t}(H_{t}(d^{\upm}));\pred(\fm_{t}(H_{t}(d^{\upm})))\srb {\mathfrak Z}^{\upm}\as 0
    \end{align}
    and 
  \begin{align}
      \label{inlemma: suff: convg as 2}
      \Psi_{t}^*\slb\mp^*_{t}(H_{t}(d^{\upm}))\srb-\Psi_{t}\slb\fm_{t}(H_{t}(d^{\upm}));\mp^*_{t}(H_{t}(d^{\upm}))\srb\as 0
  \end{align}
  as $m\to\infty$, then 
  then it holds that $\E[Z^{(m)}]\to_m 0$. Now we will use Lemma \ref{lemma: necessity: phi bdd away from zero}  with $\mxm=\fm_{t}(H_{t}(d^{\upm}))$ and $\mpm=\mp^*_{t}(H_{t}(d^{\upm}))$. This lemma applies because Lemma \ref{lemma: suff: p t star is bdd} implies  $\mp_t^*(h_t)\in[c_1,c_2]^{k_t}$ for all $h_t\in\H_t$, which indicates that   $\|\mp_t^*(h_t)\|_1$ is bounded below by  a constant, perhaps depending on $\PP$, for all $h_t\in\H_t$.  Let $\J_t>0$ be the fixed constant appearing in Lemma \ref{lemma: necessity: phi bdd away from zero}, which  depends  only on $\phi_t$.
 Lemma \ref{lemma: necessity: phi bdd away from zero} implies 
 \[\begin{split}
  \MoveEqLeft P\lb\liminf_{m\to\infty}\phi_{t}\slb \fm_{t}(H_{t}(d^{\upm}));\pred(\fm_{t}(H_{t}(d^{\upm}))\srb>\J_t\rb\\ \geq &\ P\lb \Psi_{t}^*\slb\mp^*_{t}(H_{t}(d^{\upm}))\srb-\Psi_{t}\slb\fm_{t}(H_{t}(d^{\upm}));\mp^*_{t}(H_{t}(d^{\upm}))\srb\to_m 0\rb,  
 \end{split}
 \]
 which is one because of \eqref{inlemma: suff: convg as 2}.
  Then \eqref{inlemma: suff: convg as 1}, combined with the fact that ${\mathfrak Z}^{\upm}\geq 0$ implies ${\mathfrak Z}^{\upm}\as 0$ as $m\to \infty$. 
 Since ${\mathfrak Z}^{\upm}$ is a bounded random variable, Fact \ref{fact: as to L1} implies $\dE[{\mathfrak Z}^{\upm}]\to_m 0$, which completes the proof.

\end{proof}

 \subsubsection{New proof of main lemma commented below}

 \subsection{Proof of Proposition \ref{prop: multi-cat FC}}
 \label{sec: proof of dorollary  multi-cat FC}
 We show the proof first for the case  
 $C_{\phi_t}=1$ for each $t\in[T]$. 
For any $f\in\F$, 
\begin{align}
\label{intheorem: suff: V psi breakdown}
 \MoveEqLeft   V^\psi_*-V^\psi(f)\nn\\
    =&\ \E[\Psi_1^*(\mp_1^*(H_1))]-\E[\Psi_1(f_1(H_1);\mp_1(H_1))]\nn\\
    =&\  \E[\Psi_1^*(\mp_1^*(H_1))-\Psi_1(f_1(H_1);\mp_1^*(H_1))]\\
    &\ + \E\slbt \Psi_1(f_1(H_1);\mp_1^*(H_1))-\Psi_1(f_1(H_1);\mp_1(H_1))\srbt\nn,
\end{align}
where the first term is non-negative,  and by Lemma \ref{lemma: sufficiency: Induction: main step}, the second term is bounded below by
\[C\E\lbtt \frac{\begin{matrix}
      \slb\sum_{i=1}^TY_i\srb 1[A_{1}=\pred(f_1(H_1))]\\
      \times\slb \prod_{r=2}^T1[A_r=d_r^*(H_r)] -\prod_{r=2}^T1[A_r=\pred(f_r(H_r))]\srb\end{matrix}}{\prod_{r=1}^T\pi_r(A_r\mid H_r)
}
\ \Bigg|\ H_1\rbtt,\]
where $C=\slb \prod_{i=1}^T\J_{i}\srb \min_{2\leq i\leq T}\CC_{\phi_i}$. Lemma \ref{lemma: sufficiency: Induction: main step} also indicates that the above expression is  non-negative. 
 Lemma \ref{lemma: necessity: linear bound} implies that 
\begin{align*}
    \MoveEqLeft\E\slbt \Psi_1^*(\mp_1^*(H_1))-\Psi_1(f_1(H_1);\mp_1^*(H_1))\srbt 
  \geq  \CC_{\phi_1}\slb \E\slbt \max(\mp_1^*(H_1)-\myq(f_1(H_1),\mp_1^*(H_1))\srbt\srb
\end{align*}
where $\CC_{\phi_1}$ is as in Lemma \ref{lemma: necessity: linear bound}. Then
\eqref{intheorem: suff: V psi breakdown} reduces to
\begin{align*}
   \MoveEqLeft V^\psi_*-V^\psi(f)\geq \CC_{\phi_1}\slb \E\slbt \max(\mp_1^*(H_1)-\myq(f_1(H_1),\mp_1^*(H_1))\srbt\srb\\
   + &\ C\E\lbt \frac{\slb\sum_{i=1}^TY_i\srb  \times 1[A_{1}=\pred(f_1(H_1))]}{\prod_{r=1}^T\pi_r(A_r\mid H_r)}\\
   &\ \times
\slb \prod_{r=2}^T1[A_r=d_r^*(H_r)] -\prod_{r=2}^T1[A_r=\pred(f_r(H_r))]\srb\ \bl\ H_1\rbt.
\end{align*}
where $C=\slb \prod_{i=1}^T\J_{i}\srb \min_{2\leq i\leq T}\CC_{\phi_i}$. Lemma \ref{lemma: sufficiency: p t star and Q t star} implies that $\mp_1^*(H_1)=Q_1^*(H_1)$. Thus
\[\max(\mp_1^*(H_1))=\max(Q_1^*(H_1))=Q_1^*(H_1,d_1^*(H_1))\]
and
\[\myq(\fm_1(H_1),\mp_1^*(h_1))=Q_1^*(H_1,\pred(\fm_1(h_1))).\]
However, 
\[Q_1^*(H_1,i)=\E\lbt \slb\sum_{i=1}^TY_i\srb\prod_{j=2}^{T}\frac{1[A_j=d_j^*(H_j)]}{\pi_j(A_j\mid H_j)}\ \bl\ H_1, A_1=i\rbt\]
by Fact \ref{fact: Q function expression}. Therefore, 
\begin{align}
\label{intheorem: will need for cor}
  & E\slbt \max(\mp_1^*(h_1))-\myq(\fm_1(h_1),\mp_1^*(h_1))\srbt\nn\\
  =&\ \E\lbt \slb\sum_{i=1}^TY_i\srb\prod_{j=2}^{T}\frac{1[A_j=d_j^*(H_j)]}{\pi_j(A_j\mid H_j)}\ \bl\ H_1, A_1=d_1^*(H_1)\rbt\nn\\
  &\ - \E\lbt \slb\sum_{i=1}^TY_i\srb\prod_{j=2}^{T}\frac{1[A_j=d_j^*(H_j)]}{\pi_j(A_j\mid H_j)}\ \bl\ H_1, A_1=\pred(\fm_1(H_1))\rbt\nn\\
  =&\ \E\lbt \frac{\slb\sum_{i=1}^TY_i\srb\prod_{j=2}^{T}1[A_j=d_j^*(H_j)]}{\prod_{j=1}^{T}\pi_j(A_j\mid H_j)} \lb\leftfrac{ 1[A_1=d_1^*(H_1)]}{- 1[A_1=\pred(\fm_1(H_1))]}\rb \rbt\nn\\
  =&\  V^*- \E\lbt \sum_{i=1}^TY_i\frac{\prod_{j=2}^{T}1[A_j=d_j^*(H_j)]}{\prod_{j=1}^{T}\pi_j(A_j\mid H_j)} 1[A_1=\pred(\fm_1(H_1))] \rbt 
\end{align}
Now
\eqref{intheorem: will need for cor} implies that 
\begin{align*}
\MoveEqLeft \E\slbt \max(\mp_1^*(H_1)-\myq(f_1(H_1),\mp_1^*(H_1))\srbt\\
 =&\ V^*- \E\lbt \sum_{i=1}^TY_i\frac{\prod_{j=2}^{T}1[A_j=d_j^*(H_j)]}{\prod_{j=1}^{T}\pi_j(A_j\mid H_j)} 1[A_1=\pred(f_1(H_1))] \rbt,
\end{align*}
implying
\begin{align*}
   \MoveEqLeft V^\psi_*-V^\psi(f)\geq \CC_{\phi_1}\lb V^*- \E\lbt \sum_{i=1}^TY_i\frac{\prod_{j=2}^{T}1[A_j=d_j^*(H_j)]}{\prod_{j=1}^{T}\pi_j(A_j\mid H_j)} 1[A_1=\pred(f_1(H_1))] \rbt\rb\\
   + &\ C\E\left[ \frac{\slb\sum_{i=1}^TY_i\srb 1[A_{1}=\pred(f_1(H_1))]}{\prod_{r=1}^T\pi_r(A_r\mid H_r)}\right.
\\
&\ \left.\times\slb \prod_{r=2}^T1[A_r=d_r^*(H_r)] -\prod_{r=2}^T1[A_r=\pred(f_r(H_r))]\srb\ \bl\ H_1\right]\\
\geq &\ \min(\CC_{\phi_1},C)\slb V^*-V(f)\srbt.
\end{align*}
where the last step follows because both terms in the sum are non-negatives. Note that
\[\min(\CC_{\phi_1},C)\geq \slb \prod_{i=1}^T\min(\J_{i},1)\srb \min_{1\leq i\leq T}\CC_{\phi_i}.\]
However, since $\Psi^*_t(\mo_{k_t})=1$ for each $t\in[T]$, it follows that $\phi_t(\mx;\pred(\mx))\leq 1 $ for each $t\in[T]$. Therefore, $\J_t\leq 1$. Hence,
\[\min(\CC_{\phi_1},C)\geq \slb \prod_{t=1}^T\J_{t}\srb \min_{1\leq i\leq T}\CC_{\phi_i}.\]

Thus we have showed 
\begin{align}
     \label{instatement: sufficiency: lower bound linear under constraint}
     V^\psi_*-V^\psi(f)\geq\slb\prod_{t=1}^T\J_t\srb\min_{1\leq t\leq T}\CC_{\phi_t}\slb V_*-V(f)\srb
 \end{align}
 when $C_{\phi_t}=1$ for each $t\in[T]$. 

 Now suppose $C_{\phi_t}\neq 1$. 
 Then \eqref{instatement: sufficiency: lower bound linear under constraint} holds if we transform  $\phi_t\mapsto\phi_t/C_{\phi_t}$. However, $\J_t$ a also becomes $\J_t/C_{\phi_t}$ following this transformation. Therefore, 
 \eqref{instatement: sufficiency: lower bound linear under constraint} implies
 \[ \frac{V^\psi_*-V^\psi(f)}{\prod_{t=1}^TC_{\phi_t}}\geq\lb\prod_{t=1}^T\slb\frac{\J_t}{C_{\phi_t}}\srb\rb\min_{1\leq t\leq T}\CC_{\phi_t}\slb V_*-V(f)\srb,\]
which completes the proof.



\section{Proof of Theorem~\ref{theorem: necessity} }
\label{secpf: necessity}
 
  This section is organized as follows. Section \ref{secpf: lemmas for proving Cond 1} contains auxiliary lemmas necessary for proving the necessity of Condition \ref{assump: N1}. The proof of the necessity of Condition \ref{assump: N1} is then given in Section \ref{secpf: Cond 1}.  We  establish the necessity of Condition \ref{assump: N2} in Section  \ref{secpf: condition N2}.

\subsection{Lemmas required for proving Condition \ref{assump: N1}}
\label{secpf: lemmas for proving Cond 1}
\begin{lemma}
\label{lemma: necessity: sum for any t is like T}
    Suppose $\PP$ satisfies Assumption I-IV and $Y_t\geq 0$ for $t\in[T]$. Let $T\geq 2$. Additionally, $Y_t=0$ for $t>r$ where $r\in[T-1]$ is an integer, and
    \[Y_r\perp O_{r+1},A_{r+1},\ldots, O_T,A_T\mid H_r,A_r.\]
    Then $Q_t^*(H_t,A_t)=0$ for all $t>r$ and $Q_r^*(H_r,a_r)=\E[Y_r\mid H_r, A_r=a_r]$ for all $a_r\in[k]$. Additionally, 
    \begin{equation*}
\sup_{f\in\F}\E\lbt\prod_{t=1}^T\frac{\phi_t(f_t(H_t);A_t)}{\pi_t(A_t\mid H_t)} \sum_{t=1}^TY_t\rbt = \prod_{t=1+r}^T\Psi^*_t(\mo_{k_t})\sup_{f_1,\ldots,f_r} \E\lbt\prod_{t=1}^r\frac{\phi_t(f_t(H_t);A_t)}{\pi_t(A_t\mid H_t)} \sum_{t\in[r]}Y_t\rbt.
    \end{equation*}

\end{lemma}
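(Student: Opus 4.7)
The plan is to dispatch parts (1) and (2) in a few lines using the recursive definition of optimal $Q$-functions, and then reduce the supremum identity in part (3) to an iterated application of the elementary identity \eqref{intheorem: necessity: IPW: supremum with other rv}, exploiting that the payoff $\sum_{t=1}^r Y_t$ becomes $H_t$-measurable for every $t>r$.

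For parts (1) and (2), I would use downward induction on $t$. At $t=T$, $Q_T^*(H_T,A_T)=\E[Y_T\mid H_T,A_T]=0$ since $Y_T=0$. For $t\in(r,T)$, the recursive definition gives $Q_t^*(H_t,A_t)=\E[Y_t+\max_i Q_{t+1}^*(H_{t+1},i)\mid H_t,A_t]$, and both summands vanish by hypothesis and the induction step. At $t=r$, the same recursion reduces to $Q_r^*(H_r,a_r)=\E[Y_r\mid H_r,A_r=a_r]$ because the continuation term is zero. Notably, the conditional independence hypothesis on $Y_r$ is not required for these two statements; it is the vanishing of the later $Y_t$'s that drives everything.

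For part (3), since $Y_t=0$ for $t>r$, the objective becomes $\E\bigl[\prod_{t=1}^T\tfrac{\phi_t(f_t(H_t);A_t)}{\pi_t(A_t\mid H_t)}\sum_{j=1}^rY_j\bigr]$. I will peel off the supremum over $f_T$ first by applying \eqref{intheorem: necessity: IPW: supremum with other rv} with $\VV=\sum_{j=1}^rY_j$ and $h(H_T)=\prod_{t=1}^{T-1}\tfrac{\phi_t(f_t(H_t);A_t)}{\pi_t(A_t\mid H_t)}$. The key observation is that $\sum_{j=1}^r Y_j$ is $\sigma(H_{r+1})$-measurable, hence $\sigma(H_T)$-measurable because $T\geq r+1$; consequently the vector $\mp(H_T)$ with entries $\mp(H_T)_i=\E[\VV\mid H_T,A_T=i]=\VV$ is the constant vector $\VV\cdot\mo_{k_T}$. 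Now I use a direct positive-homogeneity identity that holds for \emph{every} non-negative surrogate without invoking Condition~\ref{assump: N1}: for any $c\geq 0$, $\Psi_t(\mx;c\mo_{k_t})=c\,\Psi_t(\mx;\mo_{k_t})$, whence $\Psi_t^*(c\mo_{k_t})=c\,\Psi_t^*(\mo_{k_t})$. Since $Y_j\geq 0$ gives $\VV\geq 0$, this peeling step produces a factor $\Psi_T^*(\mo_{k_T})$ while leaving the remaining integrand of exactly the same structural form but with $T$ replaced by $T-1$. Iterating downward for $t=T-1,T-2,\ldots,r+1$ (each step is legal because $\sum_{j=1}^rY_j$ remains $H_t$-measurable for $t\geq r+1$) accumulates the product $\prod_{t=r+1}^T\Psi_t^*(\mo_{k_t})$, and then taking the remaining supremum over $f_1,\ldots,f_r$ yields the claimed identity.

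The main subtlety I anticipate is simply bookkeeping: I must justify pushing the supremum over $f_t$ inside the outer expectation at each peeling step, which is the content of \eqref{intheorem: necessity: IPW: supremum with other rv} applied with the appropriate $\VV$ and $h$; and I must verify that the measurability of $\sum_{j=1}^rY_j$ with respect to $H_t$ persists through all $t>r$, which is immediate from the definition of the filtration $\{H_t\}_{t}$. The hypothesis $Y_r\perp(O_{r+1},A_{r+1},\ldots,O_T,A_T)\mid H_r,A_r$ is not needed in the argument sketched above, since the measurability of $\VV$ with respect to $H_t$ already forces $\mp(H_t)$ to be a constant multiple of $\mo_{k_t}$; I will either note this or use the conditional independence only to justify the intuition that the later stages become non-informative for the reward once $t>r$.
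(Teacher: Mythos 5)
Your proof is correct and follows essentially the same route as the paper's: parts (1)--(2) by downward induction on the $Q$-function recursion, then part (3) by stage-by-stage peeling from $t=T$ down to $t=r+1$. The only stylistic variation is which elementary identity carries each peeling step. The paper applies \eqref{intheorem: necessity: IPW sum one} to replace the conditional expectation of the $T$-th stage IPW ratio by $\Psi_T(f_T(H_T);\mo_{k_T})$ and then argues explicitly that non-negativity of the $Y_t$'s and $\phi_t$'s lets the supremum over $f_T$ pass inside the integral, yielding $\Psi_T^*(\mo_{k_T})$. You instead invoke the bundled identity \eqref{intheorem: necessity: IPW: supremum with other rv}, observe that because $\sum_{j\leq r}Y_j$ is $H_t$-measurable for $t>r$ the vector $\mp(H_t)$ collapses to the constant multiple $\VV\,\mo_{k_t}$, and then extract $\Psi_t^*(\mo_{k_t})$ via positive homogeneity. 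These unwrap the same mechanism; yours is marginally slicker but smuggles the non-negativity argument inside the cited lemma, whereas the paper's keeps that justification local. Your observation that the conditional independence hypothesis on $Y_r$ is unused is also accurate: the paper's proof never invokes it either, and that hypothesis is only exercised downstream in Lemma~\ref{lemma: necessity}.
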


\paragraph{Proof of Lemma \ref{lemma: necessity: sum for any t is like T}}
\label{sec: pf of lemma: necessity: sum for any t is like T}
\begin{proof}[Proof of Lemma \ref{lemma: necessity: sum for any t is like T}]

 First of all, note that for any $a\in[k_T]$,
    \[Q^*_T(H_T, a)=\E[Y_T\mid H_T, a]=0.\]
    Similarly, we can show that
    if $T-1>r$, for any $a\in[k_{T-1}]$,
    \begin{align*}
   Q^*_{T-1}(H_{T-1}, a)=&\ \E\slbt Y_{T-1}+\max_{a_T\in[k_T]}\E[Y_T\mid H_T, a_T]\ \bl\ H_{t-1},A_{t-1}=a\rbt=0.
    \end{align*}
   By induction, we can show that $Q^*_t(h_t,a_t)=0$ for all $h_t\in\H_t$ and $a_t\in[k_t]$ as long as  $t>r$.
   Therefore, for any $a_r\in[k_r]$, 
   \[Q^*_r(H_r,a_r)=\E[Y_r+Q^*_{r+1}(H_{r+1}, A_{r+1})\mid H_r, A_r=a_r]=\E[Y_r\mid H_r, A_r=a_r],\]
   which proves the first part of the lemma.

To prove the second part of the current lemma, we will show that for any $m\in[r,T-1]$, 
  \begin{equation}
      \label{inlemma: necessity: 1st lemma: induction statement}
      \begin{split}
\MoveEqLeft\sup_{f\in\F}\E\lbt\prod_{t=1}^T\frac{\phi_t(f_t(H_t);A_t)}{\pi_t(A_t\mid H_t)} \sum_{t=1}^TY_t\rbt\\
        =&\ \prod_{t=1+m}^T\Psi^*_t(\mo_{k_t}) \sup_{f_1,\ldots,f_{m}}\E\lbt\slb\sum_{t\in[r]}Y_t\srb\prod_{t=1}^{m}\frac{\phi_t( f_t(H_t);A_t)}{\pi_t(A_t\mid H_t)} \rbt.  
      \end{split}
  \end{equation}
  To prove \eqref{inlemma: necessity: 1st lemma: induction statement}, we will show that the statement holds for $m=T-1$. Then we will show that if \eqref{inlemma: necessity: 1st lemma: induction statement} holds for some $m$, then it holds for $m-1$ as well, provided  $m-1\geq r$. Then, by induction, it will follow that   $m=1+r$ satisfies \eqref{inlemma: necessity: 1st lemma: induction statement}, which will complete the proof of the current lemma.
  
   To this end, note that 
   \begin{align*}
       \MoveEqLeft \sup_{f\in\F}\E\lbt\prod_{t=1}^T\frac{\phi_t(f_t(H_t);A_t)}{\pi_t(A_t\mid H_t)} \sum_{t=1}^TY_t\rbt \\
       =&\ \sup_{f_1,\ldots,f_T}\E\lbt\slb\sum_{t\in[r]}Y_t\srb\prod_{t=1}^{T-1}\frac{\phi_t( f_t(H_t);A_t)}{\pi_t(A_t\mid H_t)} \E\lbt \frac{\phi_T( f_T(H_T);A_T)}{\pi_T(A_T\mid H_T)}\ \bl\ H_T,A_T\rbt\rbt\\
      \stackrel{(a)}{=}&\ \sup_{f_1,\ldots,f_T}\E\lbt\slb\sum_{t\in[r]}Y_t\srb\prod_{t=1}^{T-1}\frac{\phi_t( f_t(H_t);A_t)}{\pi_t(A_t\mid H_t)} \Psi_T(f_T(H_T);\mo_{k_T})\rbt\rbt\\
       \stackrel{(b)}{=}&\ \sup_{f_1,\ldots,f_{T-1}}\E\lbt\slb\sum_{t\in[r]}Y_t\srb\prod_{t=1}^{T-1}\frac{\phi_t( f_t(H_t);A_t)}{\pi_t(A_t\mid H_t)} \sup_{\mx\in\RR^{k_T}}\Psi_T(\mx;\mo_{k_T})\rbt,
   \end{align*}
   where (a) follows from \eqref{intheorem: necessity: IPW sum one} and step (b) uses the non-negativity of the $Y_t$'s  and the $\phi_t$'s (by our assumption). 
Note that $\sup_{\mx\in\RR^{k_T}}\sum_{i=1}^{k_T}\phi_T(\mx;i)=\Psi_t^*(\mo_{k_T})$.
   Thus we have shown that
   \[\sup_{f\in\F}\E\lbt\prod_{t=1}^T\frac{\phi_t(f_t(H_t);A_t)}{\pi_t(A_t\mid H_t)} \sum_{t=1}^TY_t\rbt=\Psi^*_T(\mo_{k_T})\sup_{f_1,\ldots,f_{T-1}}\E\lbt\slb\sum_{t\in[r]}Y_t\srb\prod_{t=1}^{T-1}\frac{\phi_t( f_t(H_t);A_t)}{\pi_t(A_t\mid H_t)} \rbt,\]
   which proves \eqref{inlemma: necessity: 1st lemma: induction statement} holds for $m=T-1$. 
  If $r=T-1$, then there is nothing to prove. Suppose $r\leq T-2$ (note that it requires $T>2$ for such $r\in\NN$ to exist).

   Now suppose \eqref{inlemma: necessity: 1st lemma: induction statement} holds for some $m\leq T-1$ such that $m\geq 1+r$. Then we will show that \eqref{inlemma: necessity: 1st lemma: induction statement} holds for  $m-1$ as well.
    Since $m\geq 1+r$, it holds that $m-1\geq r$ and $Y_1,\ldots,Y_r\subset H_m$. Hence, 
   \begin{align*}
  \MoveEqLeft \sup_{f_1,\ldots,f_{m}}\E\lbt\slb\sum_{t\in[r]}Y_t\srb\prod_{t=1}^{m}\frac{\phi_t( f_t(H_t);A_t)}{\pi_t(A_t\mid H_t)} \rbt  \\
   =&\ \sup_{f_1,\ldots,f_{m}}\E\lbt\slb\sum_{t\in[r]}Y_t\srb\prod_{t=1}^{m-1}\frac{\phi_t( f_t(H_t);A_t)}{\pi_t(A_t\mid H_t)} \E\lbt \frac{\phi_m( f_m(H_m);A_m)}{\pi_m(A_m\mid H_m)}\ \bl\ H_m, A_m\rbt \rbt 
   \end{align*}
Using \eqref{intheorem: necessity: IPW sum one}, we obtain that
\begin{align*}
     \E\lbt \frac{\phi_m( f_m(H_m);A_m)}{\pi_m(A_m\mid H_m)}\ \bl\ H_m, A_m\rbt =\Psi_m(f_m(H_m);\mo_{k_m}).
\end{align*}
Using the non-negativity of the $Y_t$'s and the non-negativity of he $\phi_t$'s, we obtain that  
\begin{align*}
\MoveEqLeft\sup_{f_1,\ldots,f_{m}}\E\lbt\slb\sum_{t\in[r]}Y_t\srb\prod_{t=1}^{m}\frac{\phi_t( f_t(H_t);A_t)}{\pi_t(A_t\mid H_t)} \rbt\\
=&\  \sup_{f_1,\ldots,f_{m-1}}\E\lbt\slb\sum_{t\in[r]}Y_t\srb\prod_{t=1}^{m-1}\frac{\phi_t( f_t(H_t);A_t)}{\pi_t(A_t\mid H_t)} \sup_{\mx\in\RR^{k_m}} \Psi_m(\mx;\mo_{k_m}) \rbt\\
 =&\sup_{\mx\in\RR^{k_m}}  \Psi_m(\mx;\mo_{k_m}) \sup_{f_1,\ldots,f_{m-1}}\E\lbt\slb\sum_{t\in[r]}Y_t\srb\prod_{t=1}^{m-1}\frac{\phi_t( f_t(H_t);A_t)}{\pi_t(A_t\mid H_t)} \rbt.
\end{align*}
Since $\sup_{\mx\in\RR^{k_m}}  \Psi_m(\mx;\mo_{k_m})=\Psi_m^*(\mo_{k_m})$ and \eqref{inlemma: necessity: 1st lemma: induction statement} holds for $m$, the above implies that \eqref{inlemma: necessity: 1st lemma: induction statement} holds for $m-1$ as well, thus completing the proof.
\end{proof}

\begin{lemma}
\label{lemma: necessity}
 Let $r\in[T]$. Suppose   $\PP$ is as in the setup of Theorem \ref{theorem: necessity}. Moreover, $Y_t\geq 0$ for $t\in[T]$ and $Y_t=0$ for all $t\neq r$. Further suppose 
$(H_{r-1},A_{r-1})\perp (A_r, O_r, Y_r)$, 
and $Y_r\perp (O_{r+1},A_{r+1},\ldots, O_T, A_T)\mid (O_r, A_r)$. Here $H_{t},A_t=\emptyset$ if $t<1$ or $t>T$. Then 
\[\sup_{f\in\F}\E\lbt \prod_{t=1}^T\frac{\phi_t(f_t(H_t);A_t)}{\pi_t(A_t\mid H_t)}\sum_{t=1}^TY_t\rbt=\E[\Psi_r^*(Q_r^*(O_r))]\prod_{t=1,t\neq r}^T \Psi_t^*(\mo_{k_t}),\]
where
\[Q_r^*(O_r)_{a_r}=Q_r^*(H_r,a_r)=\E[Y_r\mid O_r, A_r=a_r]\quad\text{ for }a_r\in[k_r].\]
\end{lemma}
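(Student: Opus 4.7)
}

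The plan is to peel the product $\prod_{t=1}^T \phi_t(f_t(H_t);A_t)/\pi_t(A_t\mid H_t)$ into three blocks corresponding to $t>r$, $t=r$, and $t<r$, and handle each block using the structural assumptions on $\PP$. Since only $Y_r$ is nonzero, $\sum_{t=1}^T Y_t = Y_r$, and the conditional independence $Y_r \perp (O_{r+1},A_{r+1},\ldots,O_T,A_T) \mid (O_r,A_r)$, together with $(H_{r-1},A_{r-1})\perp(A_r,O_r,Y_r)$, will upgrade to $Y_r \perp (O_{r+1},\ldots,A_T)\mid H_r,A_r$. This places us inside the hypotheses of Lemma~\ref{lemma: necessity: sum for any t is like T} (applicable when $r<T$; the case $r=T$ is trivial). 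Applying that lemma peels off the $t>r$ factors and reduces the problem to
\[
\prod_{t=r+1}^T \Psi_t^*(\mo_{k_t}) \cdot \sup_{f_1,\ldots,f_r} \E\lbt \prod_{t=1}^{r}\frac{\phi_t(f_t(H_t);A_t)}{\pi_t(A_t\mid H_t)}\, Y_r\rbt.
\]

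Next, I would condition on $H_r$ and use the two independence assumptions. The condition $(H_{r-1},A_{r-1})\perp (A_r,O_r,Y_r)$ implies $A_r \perp (H_{r-1},A_{r-1})\mid O_r$, so $\pi_r(A_r\mid H_r) = \pi_r(A_r\mid O_r)$, and similarly $\E[Y_r\mid H_r,A_r] = \E[Y_r\mid O_r,A_r] = Q_r^*(O_r)_{A_r}$. Hence the inner conditional expectation at stage $r$ collapses to
\[
\E\lbt \frac{\phi_r(f_r(H_r);A_r)}{\pi_r(A_r\mid H_r)} Y_r \,\Bigl|\, H_r\rbt = \sum_{a_r\in[k_r]} \phi_r(f_r(H_r);a_r)\,Q_r^*(O_r)_{a_r} = \Psi_r(f_r(H_r);Q_r^*(O_r)).
\]
Taking the supremum over $f_r$ pointwise in $H_r$ (justified by measurable selection plus dominated convergence, since $\phi_t$ is bounded above by Lemma~\ref{lemma: necessity: psi bounded} and $Q_r^*(O_r)$ is bounded by Assumption IV) yields $\Psi_r^*(Q_r^*(O_r))$ inside the expectation. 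The remaining independence $(H_{r-1},A_{r-1}) \perp O_r$ then factors the expectation into $\E[\Psi_r^*(Q_r^*(O_r))]$ times $\E\!\left[\prod_{t=1}^{r-1}\phi_t(f_t(H_t);A_t)/\pi_t(A_t\mid H_t)\right]$.

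For the remaining product over $t<r$, I would mimic the backward-induction argument inside Lemma~\ref{lemma: necessity: sum for any t is like T}: condition on $H_{r-1}$, apply \eqref{intheorem: necessity: IPW sum one} to replace the innermost factor by $\Psi_{r-1}(f_{r-1}(H_{r-1});\mo_{k_{r-1}})$, take its supremum to get $\Psi_{r-1}^*(\mo_{k_{r-1}})$, and iterate down to $t=1$. This produces $\prod_{t=1}^{r-1}\Psi_t^*(\mo_{k_t})$, and combining the three blocks gives the claimed formula. The case $r=1$ skips this block (empty product), and $r=T$ skips the first block; both are handled by the convention that empty products equal one.

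The main obstacle is the interchange of supremum with expectation in the step that produces $\Psi_r^*(Q_r^*(O_r))$: we need a sequence $\{f_r^{(m)}\}\subset\F_r$ for which $\Psi_r(f_r^{(m)}(H_r);Q_r^*(O_r)) \to \Psi_r^*(Q_r^*(O_r))$ almost surely while the integrand stays dominated. This can be arranged via a standard measurable selection (using that $\Psi_r^*$ is continuous in its argument by Lemma~\ref{lemma: necessity: Psi-t cont.} and that $\mx \mapsto \Psi_r(\mx;Q_r^*(O_r))$ is real-valued) combined with the boundedness of $\phi_r$ and $Q_r^*$, so dominated convergence applies. Once this step is carried out carefully, the rest of the argument is a mechanical unwinding of the three factorizations described above.
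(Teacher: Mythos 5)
Your proposal is correct and follows essentially the same route as the paper's proof: first apply Lemma~\ref{lemma: necessity: sum for any t is like T} to peel off stages $t>r$ and produce $\prod_{t>r}\Psi_t^*(\mo_{k_t})$, then use the tower property at stage $r$ together with the two independence hypotheses to collapse the inner conditional expectation to $\Psi_r(f_r(H_r);Q_r^*(O_r))$ and interchange supremum with expectation (the paper wraps this in the identity $\sup_{f_r}\E[V\,\phi_r(f_r(H_r);A_r)\pi_r(A_r\mid H_r)^{-1}Y_r]=\E[V]\E[\Psi_r^*(Q_r^*(O_r))]$ for a generic non-negative $(H_{r-1},A_{r-1})$-measurable $V$, but the content is the same), and finally peel off stages $t<r$ by iterated application of \eqref{intheorem: necessity: IPW sum one}. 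One small point: the observation that $\pi_r(A_r\mid H_r)=\pi_r(A_r\mid O_r)$ is not actually needed — the propensity score cancels in the IPW identity regardless — but including it does no harm.
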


\begin{proof}[Proof of Lemma \ref{lemma: necessity}]
Note that $Y_t=0$ is possible under the setup of Theorem \ref{theorem: necessity}. 
   By Lemma \ref{lemma: necessity: sum for any t is like T},  \[Q^*_r(H_r,a)=\E[Y_r\mid H_r, A_r=a].\]
Moreover, $Y_r\perp (H_{r-1}, A_{r-1}, Y_{r-1})$ where we used the fact that $Y_{r-1}=0$. Therefore, $\E[Y_r\mid H_r, A_r]=\E[Y_r\mid O_r, A_r]$. 
Therefore, 
\[Q^*_r(H_r,a)=\E[Y_r\mid O_r, A_r=a]\quad\text{ for all }a\in[k_r],\]
which implies $Q_r^*(O_r)_a=Q_r^*(H_r,a)$ for $a\in[k_r]$.

When $T=1$, the only possible value for $r$ is $1$. In that case, 
\[\sup_{f_1\in\F_1}\E\lbt \frac{\phi_1(f_1(H_1);A_1)}{\pi_t(A_1\mid H_1)}Y_1\rbt=\E[\Psi_1^*(Q_1^*(H_1))]\]
by 
\eqref{intheorem: necessity: IPW: general p supremum}. Therefore, the proof follows for $T=1$ case trivially. 
If $T>1$ and $r=1$, then from Lemma \ref{lemma: necessity: sum for any t is like T} it follows that
\[\sup_{f\in\F}\E\lbt \prod_{t=1}^T\frac{\phi_t(f_t(H_t);A_t)}{\pi_t(A_t\mid H_t)}\sum_{t=1}^TY_t\rbt=\prod_{t=2}^T \Psi_t^*(\mo_{k_t})\sup_{f_r} \E\lbt\frac{\phi_r(f_r(H_r);A_r)}{\pi_r(A_r\mid H_r)} Y_r\rbt,\]
which equals $\E[\Psi_r^*(Q_r(H_r))]\prod_{t=2}^T \Psi_t^*(\mo_{k_t})$ by 
\eqref{intheorem: necessity: IPW: general p supremum}. Therefore, the proof of the current lemma follows trivially also when $r=1$. 

Hence, we consider the case $r>1$. Note that if $r>1$, then $T\geq 2$. 
Lemma \ref{lemma: necessity: sum for any t is like T} implies when $r\in[T-1]$, 
\begin{align*}
\MoveEqLeft\sup_{f\in\F}\E\lbt\prod_{t=1}^T\frac{\phi_t(f_t(H_t);A_t)}{\pi_t(A_t\mid H_t)} \sum_{t=1}^TY_t\rbt
=  \prod_{t=1+r}^T \Psi_t^*(\mo_{k_t})\sup_{f_1,\ldots,f_r} \E\lbt\prod_{t=1}^r\frac{\phi_t(f_t(H_t);A_t)}{\pi_t(A_t\mid H_t)} Y_r\rbt.
\end{align*}
The product in the above expression  is one if $r=T$. It suffices to show that
\begin{align}
    \label{inlemma: necessity: product from 1 to r}
    \sup_{f_1,\ldots,f_r} \E\lbt\prod_{t=1}^r\frac{\phi_t(f_t(H_t);A_t)}{\pi_t(A_t\mid H_t)} Y_r\rbt= \E[\Psi^*_r(Q^*_r(O_r))]\prod_{t=1}^{r-1}\Psi_t^*(\mo_{k_t}).
\end{align}

Suppose $V$ is any non-negative random variable that is a function of $(H_{r-1},A_{r-1})$.
Using Assumption I and the non-negativity of the $\phi_t$'s, we obtain that
\begin{align*}
  \sup_{f_r}\E\lbt V\frac{\phi_r(f_r(H_r);A_r)}{\pi_r(A_r\mid H_r)} Y_r\rbt
   \stackrel{(a)}{=}&\ \sup_{f_r}\E\lbt V \E\lbt \frac{\phi_r(f_r(H_r);A_r)}{\pi_r(A_r\mid H_r)} Y_r\ \bl\ H_r\rbt\rbt\\
   \stackrel{(b)}{=} &\ \sup_{f_r}\E\lbt V\sum_{a_r\in[k_r]}\E[Y_r\mid H_r, A_r=a_r]\phi_r(f_r(H_r); a_r)\rbt\\
    \stackrel{(c)}{=} &\  \sup_{f_r}\E\lbt V\sum_{a_r\in[k_r]}\E[Y_r\mid O_r, A_r=a_r]\phi_r(f_r(H_r); a_r)\rbt\\
    \stackrel{(d)}{=}&\ \E\lbt V\sup_{\mx\in\RR^{k_r}}\Psi_r(\mx;Q_r^*(O_r))\rbt,
\end{align*}
where (a) follows because $V$ is a function of $H_{r-1}$ and $A_{r-1}$, (b) follows because the propensity scores are bounded away from zero by Assumption I, (c) follows because $Y_r\perp H_{r-1}, A_{r-1},Y_{r-1}$ (we again used the fact that $Y_{r-1}$, being zero, is independent of $Y_r$) and (d) follows using the non-negativity of $V$.
Therefore, we have proved that
\begin{align}
\label{inlemma: necessity: V main eq}
     \sup_{f_r}\E\lbt V\frac{\phi_r(f_r(H_r);A_r)}{\pi_r(A_r\mid H_r)} Y_r\rbt=\E[V\Psi^*_r(Q_r^*(O_r))]=\E[V]\E[\Psi^*_r(Q_r^*(O_r))]
\end{align}
because $O_r\perp (H_{r-1},A_{r-1})$ and $V$ is a function of $(H_{r-1},A_{r-1})$. 
Letting $V=1$ in \eqref{inlemma: necessity: V main eq}, we obtain that
\[ \sup_{f_r}\E\lbt \frac{\phi_r(f_r(H_r);A_r)}{\pi_r(A_r\mid H_r)} Y_r\rbt=\E[\Psi^*_r(Q_r^*(O_r))].\]
Letting
\begin{align*}
    V= \prod_{t=1}^{r-1}\frac{\phi_t(f_t(H_t);A_t)}{\pi_t(A_t\mid H_t)},
\end{align*}
we obtain that
\[ \sup_{f_r}\E\lbt \prod_{t=1}^{r}\frac{\phi_t(f_t(H_t);A_t)}{\pi_t(A_t\mid H_t)}Y_r\rbt=\E\lbt \prod_{t=1}^{r-1}\frac{\phi_t(f_t(H_t);A_t)}{\pi_t(A_t\mid H_t)}\rbt \E[\Psi^*_r(Q_r^*(O_r))].\]
Therefore,
\[ \sup_{f_1,\ldots,f_r}\E\lbt \prod_{t=1}^{r}\frac{\phi_t(f_t(H_t);A_t)}{\pi_t(A_t\mid H_t)}Y_r\rbt=\sup_{f_1,\ldots,f_{r-1}}\E\lbt \prod_{t=1}^{r-1}\frac{\phi_t(f_t(H_t);A_t)}{\pi_t(A_t\mid H_t)}\rbt \E[\Psi^*_r(Q_r^*(O_r))].\]
The proof will be complete if we can show that
\begin{align}
    \label{inlemma: necessity: product from 1 to r-1}
    \sup_{f_1,\ldots,f_{r-1}}\E\lbt \prod_{t=1}^{r-1}\frac{\phi_t(f_t(H_t);A_t)}{\pi_t(A_t\mid H_t)}\rbt=\prod_{t=1}^{r-1}\Psi^*_t(\mo_{k_t}).
\end{align}
For any $j\leq r-1$, 
\begin{align*}
\MoveEqLeft\sup_{f_1,\ldots,f_{j}}\E\lbt \prod_{t=1}^{j}\frac{\phi_t(f_t(H_t);A_t)}{\pi_t(A_t\mid H_t)}\rbt\\
 =&\ \sup_{f_1,\ldots,f_{j}}\E\lbt \prod_{t=1}^{j}\frac{\phi_t(f_t(H_t);A_t)}{\pi_t(A_t\mid H_t)}\E\lbt \frac{\phi_j(f_j(H_j);A_j)}{\pi_j(A_j\mid H_j)}\ \bl\ H_j, A_j\rbt\rbt\\
 \stackrel{(a)}{=}&\ \sup_{f_1,\ldots,f_{j}}\E\lbt \prod_{t=1}^{j}\frac{\phi_t(f_t(H_t);A_t)}{\pi_t(A_t\mid H_t)}\Psi_j(f_j(H_j);\mo_{k_j})\rbt
\end{align*}
where (a) follows from \eqref{intheorem: necessity: IPW sum one}.
Since the $\phi_t$'s are non-negative, the above equals
\[\sup_{f_1,\ldots,f_{j-1}}\E\lbt \prod_{t=1}^{j}\frac{\phi_t(f_t(H_t);A_t)}{\pi_t(A_t\mid H_t)}\Psi_j^*(\mo_{k_j})\rbt=\Psi_j^*(\mo_{k_j})\sup_{f_1,\ldots,f_{j-1}}\E\lbt \prod_{t=1}^{j}\frac{\phi_t(f_t(H_t);A_t)}{\pi_t(A_t\mid H_t)}\rbt.\]
Equation \ref{inlemma: necessity: product from 1 to r-1} follows by applying \eqref{intheorem: necessity: IPW sum one} repeatedly to $j=r-1,\ldots,1$.


\end{proof}

\subsection{Proving the necessity of  Condition \ref{assump: N1}}
\label{secpf: Cond 1}
We will prove by contradiction. 
Suppose there exists $r\in[T]$ so that $\phi_r$ does not satisfy Condition \ref{assump: N1}. Hence, there exists $\mp\in \RR^{k_r}_{\geq 0}$ that violates \eqref{inlemma: necessity: single-stage FC} with $\Psi_r$. 
Suppose $\mp$ is such that all elements of $\mp$ are equal. Then the left hand side of \eqref{inlemma: necessity: single-stage FC} is $\infty$ since we defined the supremum of emptyset to be $-\infty$. Therefore, \eqref{inlemma: necessity: single-stage FC} automatically holds. Therefore, in what follows, we assume that the elements of $\mp$ are not all equal. Since \eqref{inlemma: necessity: single-stage FC} fails, there exists $\mp\in\RR_{\geq 0}^{k_r}$ and a sequence $\{\xpmr\}_{m\geq 1}\subset\RR^{k_r}$ so that as $m\to \infty$, $\Psi_r(\xpmr;\mp)\mapsto \Psi_r^*(\mp)$ but $\mp_{\pred(\xpmr)}<\max(\mp)$ for all $m\in\NN$.   
We will show that if the above-mentioned $\xpmr$ exists, then we can construct a $\PP$ and a sequence of class score vectors (these are vector-valued functions) $\{\fm\}_{m\geq 1}\subset\F$ so that $V^\psi(\fm)\to_m V^\psi_*$ but $V(\fm)\not\to V_*$, which will complete the proof.  For $t\in[T]$ such that $t\neq r$, we define the sequences  $\{\xpmt\}_{m\geq 1}\subset\RR^{k_t}$ so  that $\Psi_t(\xpmt;\mo_{k_t})\to_m\Psi_t^*(\mo_{k_t})$.

We will define our $\fm$ and $\PP$ now. We take $\fpm$ to be so that $\fpm_t\equiv\xpmt$ for all $t\in[T]$, i.e., $\fpm_t(h_t)=\xpmt$ for all $h_t\in\H_t$. 
 We let  $\PP$ be as in Lemma \ref{lemma: necessity}, i.e., $\PP$ satisfies Assumptions I-IV, $Y_t = 0$ for all $t \neq r$, and $Y_r\geq 0$. Note that $Y_t=0$ is allowed because under the set up of Theorem \ref{theorem: necessity}, $\PP$ does not need to satisfy Assumption V.  Additionally, the independence conditions  
\[
(H_{r-1}, A_{r-1}) \perp (A_r, O_r, Y_r)
\]
and  
\[
Y_r \perp (O_{r+1}, A_{r+1}, \ldots, O_T, A_T) \mid (O_r, A_r)
\]  
hold for all $t\in[T]$, where $H_t$ and $A_t$ are empty sets if $t < 1$ or $t > T$. Furthermore, we assume that  
\[
\E[Y_r \mid H_r, A_r = i] = \mp_i \quad \text{for } i \in [k].
\]  
It is straightforward to verify the existence of such distributions.
From Lemma \ref{lemma: necessity}, it follows that
$Q_r^*(H_r,A_r)=\mp_{A_r}$ and
\begin{align}
    \label{inlemma: necessity: optimal when r}
    \sup_{f_1,\ldots,f_T}\E\lbt \prod_{t=1}^{T}\frac{\phi_t(f_t(H_t);A_t)}{\pi_t(A_t\mid H_t)}Y_r\rbt=\Psi_r^*(\mp)\prod_{t\neq r}\Psi^*_t(\mo_{k_t}).
\end{align}
First, we will show that $V^\psi(\fpm)\to_mV^\psi_*$. Next, we will show that $V(\fpm)\not\to_m V_*$, which will contradict the Fisher consistency of $\psi$, hence completing the proof by contradiction. 

\paragraph{Showing $\V^\psi(\fpm)\to_m V^\psi_*$} It suffices to show that for each $m\in[\NN]$,
\begin{align}
    \label{intheorem: necessity cond 1: V psi expression second deduction}
    V^\psi(\fpm)=\Psi_r(\xpmr;\mp)\prod_{t\neq r, t=1}^{T}\Psi_t(\xpmt;\mo_{k_t})
\end{align}
because the above converges to $\Psi_r^*(\mp)\prod_{t\neq r, t=1}^{T}\Psi_t^*(\mo_{k_t})$ due to $\Psi_r(\xpmr;\mp)\to_m\Psi_r^*(\mp)$ and $\Psi_t(\xpmt;\mo_{k_t})\to_m\Psi_t^*(\mo_{k_t})$ for $t\neq r$. However, \eqref{inlemma: necessity: optimal when r} implies 
\[V^\psi_*=\Psi_r^*(\mp)\prod_{t\neq r, t=1}^{T}\Psi_t^*(\mo_{k_t}),\]
which would complete the proof of $V^\psi(\fpm)\to_m V^\psi_*$.

To show \eqref{intheorem: necessity cond 1: V psi expression second deduction}, we will first show that for any $r\in[T]$,
\begin{equation}
    \label{intheorem: necessity Cond 1: V psi first deduction}
    \begin{split}
     V^\psi(\fm)= &\ \E\lbt Y_r\prod_{t=1}^{T}\frac{\phi_t(\fpm_t(H_t);A_t)}{\pi_t(A_t\mid H_t)}\rbt\\
  =&\ \prod_{t=1+r}^{T} \Psi_t(\xpmt;\mo_{k_t})\E\lbt Y_r\prod_{t=1}^{r}\frac{\phi_t(\fpm_t(H_t);A_t)}{\pi_t(A_t\mid H_t)}\rbt.    
    \end{split}
\end{equation}
If $r=T$, \eqref{intheorem: necessity Cond 1: V psi first deduction} holds trivially because the product in  \eqref{intheorem: necessity Cond 1: V psi first deduction} becomes one since the range of the product becomes empty. For $r\in[T-1]$, 
\begin{align*}
\MoveEqLeft \E\lbt Y_r\prod_{t=1}^{T}\frac{\phi_t(\fpm_t(H_t);A_t)}{\pi_t(A_t\mid H_t)}\rbt \\
 =&\ \E\lbt Y_r\prod_{t=1}^{T-1}\frac{\phi_t(\fpm_t(H_t);A_t)}{\pi_t(A_t\mid H_t)}\E\lbt \frac{\Psi_t(\fpm_T(H_T);A_T)}{\pi_T(A_T\mid H_T)}\ \bl\ H_T, A_T\rbt\rbt\\
 \stackrel{(a)}{=}&\ \E\lbt Y_r\prod_{t=1}^{T-1}\frac{\phi_t(\fpm_t(H_t);A_t)}{\pi_t(A_t\mid H_t)}\Psi_T(\xpmt;\mo_{k_T})\rbt\\
 =&\ \Psi_T(\xpmt;\mo_{k_T})\E\lbt Y_r\prod_{t=1}^{T-1}\frac{\phi_t(\fpm_t(H_t);A_t)}{\pi_t(A_t\mid H_t)}\rbt,
\end{align*}
where (a) follows from \eqref{intheorem: necessity: IPW sum one}. Proceeding similarly and applying \eqref{intheorem: necessity: IPW sum one} repeatedly, we can show \eqref{intheorem: necessity Cond 1: V psi first deduction} holds.
To finish the proof of \eqref{intheorem: necessity cond 1: V psi expression second deduction}, now observe that 
\begin{align*}
  \E\lbt Y_r\prod_{t=1}^{r}\frac{\phi_t(\fpm_t(H_t);A_t)}{\pi_t(A_t\mid H_t)}\rbt
    =&\ \E\lbt \prod_{t=1}^{r-1}\frac{\phi_t(\fpm_t(H_t);A_t)}{\pi_t(A_t\mid H_t)}\E\lbt Y_r\frac{\phi_r(\fpm_r(H_r);A_r)}{\pi_r(A_r\mid H_r)}\rbt\rbt\\
   \stackrel{(a)}{=} &\ \E\lbt \prod_{t=1}^{r-1}\frac{\phi_t(\fpm_t(H_t);A_t)}{\pi_t(A_t\mid H_t)}\Psi_r(\fpm_r(H_r);Q^*_r(O_r))\rbt\rbt,
\end{align*}
where (a) follows from \eqref{intheorem: necessity: IPW: general p} with 
\[Q^*_r(O_r)_j=\E[Y_r\mid H_r,A_r=j]=\mp_j\]
by our definition of $\PP$. 
Since $\fpm_r=\xpmr$, 
\[ \E\lbt Y_r\prod_{t=1}^{r}\frac{\phi_t(\fpm_t(H_t);A_t)}{\pi_t(A_t\mid H_t)}\rbt=\Psi_r(\xpmr;\mp)\E\lbt \prod_{t=1}^{r-1}\frac{\phi_t(\fpm_t(H_t);A_t)}{\pi_t(A_t\mid H_t)}\rbt.\]
Using \eqref{intheorem: necessity: IPW sum one} and the fact that $\fpm(H_t)=\xpmt$ is non-stochastic, we can show that
\[\E\lbt \prod_{t=1}^{r-1}\frac{\phi_t(\fpm_t(H_t);A_t)}{\pi_t(A_t\mid H_t)}\rbt=\prod_{t=1}^{r-1}\Psi_t(\xpmt;\mo_{k_t}).\]
Therefore, \eqref{intheorem: necessity cond 1: V psi expression second deduction} follows, completing the proof of $V^\psi(\fm)\to_m V^\psi_*$.

\paragraph{Showing $\limsup_{m\to\infty}V(\fpm)< V_*$}
It remains to prove that $V(\fpm)\not\to_m V_*$. First, we will prove that $V(\fm)=\mp_{\pred(\xpmr)}$. To see this, note that
\begin{align*}
V(\fpm)=&\ \E\lbt Y_r\prod_{t=1}^T\frac{1[A_t=\pred(\fpm(H_t))]}{\pi_t(A_t\mid H_t)}\rbt\\
=&\ \E\lbt Y_r\prod_{t=1}^{T-1}\frac{1[A_t=\pred(\fpm(H_t))]}{\pi_t(A_t\mid H_t)}\E\lbt \frac{1[A_T=\pred(\fpm(H_T))]}{\pi_T(A_T\mid H_T)}\ \bl\ H_T\rbt\rbt.
\end{align*}
Letting the $\phi$ in \eqref{intheorem: necessity: IPW sum one} be $\phi_{\text{dis}}$, we obtain that
\begin{align*}
   \MoveEqLeft \E\lbt \frac{1[A_T=\pred(\fpm(H_T))]}{\pi_T(A_T\mid H_T)}\rbt= 1.
\end{align*}
Thus
\[V(\fpm)=\E\lbt Y_r\prod_{t=1}^{T-1}\frac{1[A_t=\pred(\fpm(H_t))]}{\pi_t(A_t\mid H_t)}\rbt,\]
which rewrites as 
\begin{align*}
 V(\fpm)= \E\lbt \prod_{t=1}^{r-1}\frac{1[A_t=\pred(\fpm(H_t))]}{\pi_t(A_t\mid H_t)}\E\lbt Y_r\frac{1[A_r=\pred(\fpm(H_r))]}{\pi_r(A_r\mid H_r)}\ \bl\ H_r\rbt\rbt   
\end{align*}
where the product term is one if $r=1$. Letting the $\phi$ in \eqref{intheorem: necessity: IPW: general p} be $\phi_{\text{dis}}$, we obtain that
\begin{equation}
    \begin{split}
        \E\lbt Y_r\frac{1[A_r=\pred(\fpm(H_r))]}{\pi_r(A_r\mid H_r)}\ \bl\ H_r\rbt=\sum_{a\in [k_r]}\E[Y_r\mid H_r,a]1[a=\pred(\xpmr)],
    \end{split}
\end{equation}
which equals $\mp_{\pred(\xpmr)}$. 
Thus
\[V(\fpm)=\mp_{\pred(\xpmr)}\E\lbt \prod_{t=1}^{r-1}\frac{1[A_t=\pred(\fpm(H_t))]}{\pi_t(A_t\mid H_t)}\rbt.\]
Applying \eqref{intheorem: necessity: IPW sum one} repeatedly with $\phi=\phi_{\text{dis}}$, we can show that
\[\E\lbt \prod_{t=1}^{r-1}\frac{1[A_t=\pred(\fpm(H_t))]}{\pi_t(A_t\mid H_t)}\rbt=1.\]
Hence, we showed that $V(\fpm)=\mp_{\pred(\xpmr)}$. 
It remains to show that \\
$\limsup_{m\to\infty}\mp_{\pred(\xpmr)}<  V_*$.  

By our assumption, for each $m\in\NN$, $\mp_{\pred(\xpm)}<\max(\mp)$. Therefore, $\pred(\xpm)\notin\argmax(\mp)$ for all $m\in\NN$. Since $\mp$ has a finite length, this implies $\limsup_{m\to\infty}\mp_{\pred(\xpm)}<\max(\mp)$. Therefore, $\limsup_{m\to\infty} V(\fpm)<\max(\mp)$. Thus, it suffices to show that $V_*\geq \max(\mp)$.
To this end, note that, for any $\mx\in\RR^{k_r}$,  setting $\fm_r\equiv \mx$, $\fm_t\equiv\xpmt$ for $t\neq r$, $\fm=(\fm_1,\ldots, \fm_r,\ldots,\fm_T)$, and working as before, we can show that
\[V(\fm)\to_m\mp_{\pred(\mx)}.\]
If we take $\mx=\mp$, then $V(\fm)\to_m\mp_{\pred(\mp)}=\max(\mp)$. Thus $V_*\geq \max(\mp)$, which completes the proof.

\subsection{Proving the necessity of  Condition \ref{assump: N2}} 
\label{secpf: condition N2}
We only need to consider the $T\geq 2$ case in this proof.
Since we have already proved that $\phi_t$'s satisfy Condition \ref{assump: N1} for $t\in[T]$, we will use this fact in the proof. In particular, $\phi_t$'s are bounded and  $\Psi_t^*(\mp)\in(0,\infty)$ for all $\mp\in\RR^{k+t}_{\geq 0}$ such that $\mp\neq \mz_{k_t}$ by Lemma \ref{lemma: necessity: psi bounded} and $\Psi_t^*$ is continuous by Lemma \ref{lemma: necessity: Psi-t cont.}. Moreover, Fact \ref{fact: Cond N1 implies pred x in argmax p} will also apply.

We will prove the necessity of 
 Condition \ref{assump: N2} in 3 steps.  
To this end, let $r\in[1:T-1]$. 
\begin{itemize}
    \item[Step 1.] We will show that for any $\mp,\mq\in\RR_{\geq 0}^{k_{1+r}}$, $\max(\mp)>\max(\mq)$ implies $\Psi_{1+r}^*(\mp)\geq \Psi_{1+r}^*(\mq)$.
    \item[Step 2.] Using the result from Step 1, we will show that for any $\mp,\mq\in\RR_{\geq 0}^{k_{1+r}}$, $\max(\mp)=\max(\mq)$ implies $\Psi_{1+r}^*(\mp)=\Psi_{1+r}^*(\mq)$. This will imply that there exists a function $h:\RR\mapsto\RR$ so that $\Psi_{1+r}^*(\mp)=h_{1+r}(\max(\mp))$ for all $\mp\in\RR^{k_{1+r}}_{\geq 0}$.
    \item[Step 3.] We will show that $h_{1+r}(x)=\Psi_{1+r}^*(\mo_{k_{1+r}})x$ for all $x>0$. $\Psi_{1+r}^*(\mo_{k_{1+r}})<\infty$ by Lemma \ref{lemma: necessity: psi bounded}.
\end{itemize}
Step 3 implies $\Psi_{1+r}^*(\mp)=\Psi_{1+r}^*(\mo_{k_{1+r}})\max(\mp)$ for all $\mp\in\RR_{\geq 0}^{k_{1+r}}$. Since $\Psi_{1+r}^*(\mo_{k_{1+r}})$ is positive for each $r\in[T-1]$ by Lemma \ref{lemma: necessity: psi bounded},  $\phi_{1+r}$ satisfies Condition \ref{assump: N2}. Since $r$ can be any number in $[1:T-1]$, it follows that $\phi_t$ satisfies Condition \ref{assump: N2} if $t\in[2:T]$, thus completing the proof.

\subsubsection{Step 1: Showing $\max(\mp)>\max(\mq)$ implies $\Psi_{1+r}^*(\mp)\geq \Psi_{1+r}^*(\mq)$  for any $\mp,\mq\in\RR^{k_{1+r}}_{\geq 0}$ and any $r\in[1:T-1]$}
Note that if $\mq=\mz_{k_{1+r}}$, then $\Psi_{1+r}^*(\mq)=0$.  Since the $\phi(\cdot;i)$'s are non-negative, $\Psi_{1+r}^*(\mp)\geq \Psi_{1+r}^*(\mq)$ automatically holds. Hence, it suffices to consider the case where $\mq\neq \mz_{k_{1+r}}$ so that $\max(\mq)>0$. In this case, note that we also have  $\mp\neq \mz_{k_{1+r}}$. 
Let $r\in[1:T-1]$. We will prove Step 1 in some substeps.
\begin{itemize}
    \item \textbf{Step 1a:} We will define a simpler class of distributions $\Pms$, depending on $\mp$ and $\mq$, that satisfy Assumptions I-IV and $Y_t\geq 0$ for all $t\in[T]$.
    \item \textbf{Step 1b:} We will show that when $\PP\in\Pms$, for any collection of non-negative surrogates $\phi_1,\ldots,\phi_T$,  we have simpler expressions of $V^\psi_*$ and $V^\psi(f)$ for some classes of $f$.
    \item \textbf{Step 1c:} Using the form of $V^\psi_*$, we will derive the form of $V_*$ by letting $\phi$ to be the 0-1 loss.
    \item\textbf{Step 1d:} We will define a sequence of class-score functions $\{\fpm\}_{m\geq 1}\subset\F$ so that $V^\psi(\fpm)\to_m  V^\psi_*$.
     \item\textbf{Step 1d:} We calculate $V(\fpm)$ for the aforementioned sequence of score functions. 
     \item \textbf{Step 1e:}  Fisher consistency implies $V(\fpm)\to_m V_*$, which leads to the desired inequality $\Psi_{1+r}^*(\mp)\geq \Psi_{1+r}^*(\mq)$.
\end{itemize}

\paragraph*{Step 1a. Defining the small class $\Pms$}
Suppose $\mp$ and $\mq$ are fixed vectors in $\RR^{k_{1+r}}_{\geq 0}$ such that $\max(\mp)>\max(\mq)$. 
Let $\Pms$ be the class of all $\PP$'s that satisfy Assumption I-IV and $Y_t\geq 0$ for all $t\in[T]$ in addition to the followings:
\begin{enumerate}
    \item  $Y_i=0$ unless $i=1+r$. 
    \item $H_{r-1},A_{r-1}\perp (A_{r},O_r,A_{1+r},O_{1+r},Y_{1+r})$ for all $t\in[2:T-1]$. If $r=1$, we take $H_0=\emptyset$ and $A_0=0$. Hence, the above statement is vacuously true for $r=1$.
    \item $O_r=O_{1+r}=\emptyset$. Hence,
    \begin{align*}
        \E[Y_{1+r}\mid H_{1+r}, A_{1+r}]=\E[Y_{1+r}\mid A_r,A_{1+r}].
    \end{align*}
    Also, for $i\in[k_r]$ and $j\in[k_{1+r}]$, we let  $ \E[Y_{1+r}\mid A_r=i,A_{1+r}=j] =\mpi_j$ where 
    \begin{align}
    \label{intheorem: necessity: cond 2: pm(i) def}
     \mpi =\begin{cases}
         \mp & \text{ if }i=1\\
         \mq& \text{ if  }i\in[2:k_r].
     \end{cases}
     \end{align}
  \item $Y_{1+r}\perp O_{r+2},A_{2+r},\ldots\mid H_r$. We take $O_{T+1}=\emptyset$ and $A_{T+1}=0$.
  \end{enumerate}
 Note that $\Pms$ has  resembelence with a 2-stage DTR. The outcome $Y_r$ does not depend on any variable other than $A_r$ and $A_{1+r}$. Thus the stages $r$ and $1+r$ are the only stages that has connection to the outcome. The  $T$ stage DTR is formed from this 2-stage DTR by padding  $r-1$  stages before  and $T-1-r$ after these two stages. The proof builds on the idea that, since the $\phi_t$'s are Fisher consistent,  $(\phi_r,\phi_{1+r})$ are Fisher consistent for the two-stage DTR embedded in the $T$ stage DTR.

  \paragraph*{Step 1b. Properties under $\Pms$}
   Distributions in $\Pms$ have some interesting properties. 
  First we will show that $V^\psi_*$ has some closed form expression under $\PP\in\Pms$ for non-negative surrogates. 
 Let us define $\nu\in\RR^{k_r}_{\geq 0}$ so that 
\begin{align}
\label{intheorem: def: necessity: nu}
\nu_i=\Psi_{1+r}^*(\mpi)\text{ for each }i\in[k_r],
\end{align}
where the $\mpi$'s are as defined in \eqref{intheorem: necessity: cond 2: pm(i) def}. Here the $\nu_i$'s are non-negative by   Lemma \ref{lemma: necessity: psi bounded} because $\Psi_{1+r}^*(\mpi)>0$ since $\mp,\mq\neq \mz_{k_{1+r}}$.
\begin{lemma}
\label{lemma: necessity: optimal surrogate value function in small P}
 
    Suppose $\psi$ is as in \eqref{def: product psi}, where the losses $\phi_1,\ldots,\phi_T$ are bounded and non-negative. Let $r\in[1:T-1]$. Then for any $\PP\in\Pms$, it follows that 
    \[V^\psi_*=\Psi_r^*(\nu)\prod_{t=1}^{r-1}\Psi^*_t(\mo_{k_t})\prod_{t=2+r}^{T}\Psi^*_t(\mo_{k_t}),\]
where the products are one if the range is empty and $\nu$ is as defined in \eqref{intheorem: def: necessity: nu}
  
\end{lemma}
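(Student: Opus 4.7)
The plan is to mimic the iterated-conditioning argument of Lemma~\ref{lemma: necessity: sum for any t is like T}, but with a distinguished interior stage that absorbs the $\nu$-term. Since $Y_t=0$ for $t\ne 1+r$ under any $\PP\in\Pms$, we begin with
\[V^\psi(f)=\E\lbt Y_{1+r}\prod_{t=1}^T\frac{\phi_t(f_t(H_t);A_t)}{\pi_t(A_t\mid H_t)}\rbt.\]
For each $t>1+r$, the factor $Y_{1+r}$ is $H_t$-measurable, so conditioning on $H_t$ and applying \eqref{intheorem: necessity: IPW sum one} replaces $\phi_t(f_t(H_t);A_t)/\pi_t(A_t\mid H_t)$ by $\Psi_t(f_t(H_t);\mo_{k_t})$. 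Non-negativity of $Y_{1+r}$ and of the remaining $\phi_s$'s lets us take the supremum in $f_t$ inside the expectation, yielding a factor $\Psi_t^*(\mo_{k_t})$. Iterating $t=T,T-1,\dots,r+2$ reduces the problem to
\[V^\psi_*=\Bigl(\prod_{t=r+2}^T\Psi_t^*(\mo_{k_t})\Bigr)\sup_{f_1,\dots,f_{1+r}}\E\lbt Y_{1+r}\prod_{t=1}^{1+r}\frac{\phi_t(f_t(H_t);A_t)}{\pi_t(A_t\mid H_t)}\rbt.\]

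The crucial stage is $1+r$. Conditioning on $H_{1+r}$ and applying \eqref{intheorem: necessity: IPW: general p} with $\VV=Y_{1+r}$ converts the last factor inside the expectation into $\Psi_{1+r}\!\left(f_{1+r}(H_{1+r});\mathbf q(H_{1+r})\right)$, where $\mathbf q(H_{1+r})_j=\E[Y_{1+r}\mid H_{1+r},A_{1+r}=j]$. Here the key point I must verify is that $\mathbf q(H_{1+r})=\mpoA$: since $O_{1+r}=\emptyset$, $O_r=\emptyset$, and $Y_r=0$, we have $H_{1+r}=(H_{r-1},A_{r-1},A_r)$ (with the empty/zero slots absorbed); property 2 of $\Pms$ gives $(H_{r-1},A_{r-1})\perp(A_r,A_{1+r},Y_{1+r})$, so $\E[Y_{1+r}\mid H_{1+r},A_{1+r}=j]=\E[Y_{1+r}\mid A_r,A_{1+r}=j]=\mpoA_j$. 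Because $\mpoA$ depends on $H_{1+r}$ only through $A_r$, and $f_{1+r}$ may be chosen as an arbitrary Borel function of $H_{1+r}$, the supremum over $f_{1+r}$ equals $\Psi_{1+r}^*(\mpoA)=\nu_{A_r}$ by the definition of $\nu$ in \eqref{intheorem: def: necessity: nu}. Again the non-negativity of the prefactor $\prod_{t\le r}\phi_t/\pi_t$ legitimizes pulling the supremum inside.

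We are left with
\[\Bigl(\prod_{t=r+2}^T\Psi_t^*(\mo_{k_t})\Bigr)\sup_{f_1,\dots,f_r}\E\lbt\nu_{A_r}\prod_{t=1}^{r}\frac{\phi_t(f_t(H_t);A_t)}{\pi_t(A_t\mid H_t)}\rbt.\]
At stage $r$, another use of \eqref{intheorem: necessity: IPW: general p}, this time with $\VV=\nu_{A_r}$ (whose conditional expectation given $H_r,A_r=i$ is the deterministic $\nu_i$), converts the last factor into $\Psi_r(f_r(H_r);\nu)$, and supping over $f_r$ produces $\Psi_r^*(\nu)$. For stages $t=r-1,\dots,1$ there is no $Y$ or $\nu$ dependence left, so the same telescoping used in Lemma~\ref{lemma: necessity: sum for any t is like T} — conditioning on $H_t$, applying \eqref{intheorem: necessity: IPW sum one}, then supping — yields a factor $\Psi_t^*(\mo_{k_t})$ at each remaining stage. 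Assembling all the factors gives the claimed identity.

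The only nontrivial step is the stage-$(1+r)$ computation: everything else is a direct analogue of the iteration already used in Lemma~\ref{lemma: necessity: sum for any t is like T}. The subtleties to be handled carefully are (i) the independence calculation that produces $\mpoA$ independently of the earlier-stage variables, and (ii) checking that $\mpoA\ne\mz_{k_{1+r}}$ so that $\Psi_{1+r}^*(\mpoA)$ is finite and positive (guaranteed since $\mp,\mq\in\RR_{\ge 0}^{k_{1+r}}$ with $\max(\mp)>\max(\mq)\ge 0$ forces $\mp\ne\mz_{k_{1+r}}$, and the case $\mq=\mz$ is handled separately as in Step 1 of the surrounding proof). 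Everything else is a clean induction.
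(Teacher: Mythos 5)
Your proof is correct and follows essentially the same route as the paper's: peel off stages $T,\dots,r+2$ via \eqref{intheorem: necessity: IPW sum one} to extract the $\Psi_t^*(\mo_{k_t})$ factors, compute $\mp_2(H_{1+r})=\mp^{(A_r)}$ at stage $1+r$ and sup out $f_{1+r}$ to produce $\nu_{A_r}$, then apply \eqref{intheorem: necessity: IPW: general p} at stage $r$ to produce $\Psi_r^*(\nu)$, and peel the remaining earlier stages. Your added remark that $\mp^{(A_r)}$ depends on $H_{1+r}$ only through the discrete variable $A_r$ is a small rigor bonus for the sup-expectation interchange, and the aside about $\mq=\mz_{k_{1+r}}$ is harmless but unnecessary for this lemma since $\Psi_{1+r}^*(\mz)=0$ makes the identity hold trivially in that case.
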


\begin{proof}[Proof of Lemma \ref{lemma: necessity: optimal surrogate value function in small P}]
First, we want to show
\begin{align}
\label{inlemma: necessity: optimal surrogate value small p: later stages}
   V^\psi_*=  \prod_{t=2+r}^{T}\Psi_t^*(\mo_{k_t})\sup_{f_1,\ldots,f_{1+r}}\E\lbt Y_{1+r}\prod_{t=1}^{1+r}\frac{\phi_t(f_t(H_t);A_t)}{\pi_t(A_t\mid H_t)}\rbt,
\end{align}
  where the products in the integrand are $1$ if the range is empty. The equation in \eqref{inlemma: necessity: optimal surrogate value small p: later stages} trivially holds if $r=T-1$. Let us consider the case $r<T-1$. Then
  \begin{align*}
V^\psi_*=&\ \sup_{f_1,\ldots,f_T}\E\lbt Y_{1+r}\prod_{t=1}^T\frac{\phi_t(f_t(H_t);A_t)}{\pi_t(A_t\mid H_t)}\rbt\\
=&\ \sup_{f_1,\ldots,f_T}\E\lbt Y_{1+r}\prod_{t=1}^{T-1}\frac{\phi_t(f_t(H_t);A_t)}{\pi_t(A_t\mid H_t)}\E\lbt \frac{\phi_T(f_T(H_T);A_T)}{\pi_T(A_T\mid H_T)}\ \bl\ H_T\rbt\rbt\\
\stackrel{(a)}{=}&\  \sup_{f_1,\ldots,f_T}\E\lbt Y_{1+r}\prod_{t=1}^{T-1}\frac{\phi_t(f_t(H_t);A_t)}{\pi_t(A_t\mid H_t)}\Psi_T(f_T(H_T);\mo_{k_T})\rbt\\
\stackrel{(b)}{=}&\ \sup_{f_1,\ldots,f_{T-1}}\E\lbt Y_{1+r}\prod_{t=1}^{T-1}\frac{\phi_t(f_t(H_t);A_t)}{\pi_t(A_t\mid H_t)}\sup_{\mx\in\RR^{k_T}}\Psi_T(\mx;\mo_{k_T})\rbt\\
=&\ \sup_{f_1,\ldots,f_{T-1}}\E\lbt Y_{1+r}\prod_{t=1}^{T-1}\frac{\phi_t(f_t(H_t);A_t)}{\pi_t(A_t\mid H_t)}\Psi_t^*(\mo_{k_T})\rbt\\
=&\ \Psi_t^*(\mo_{k_T})\sup_{f_1,\ldots,f_{T-1}}\E\lbt Y_{1+r}\prod_{t=1}^{T-1}\frac{\phi_t(f_t(H_t);A_t)}{\pi_t(A_t\mid H_t)}\rbt
  \end{align*}
  where  step (b) uses the non-negativity of $Y_{1+r}$ and the $\phi_t$'s
and step (a) follows from \eqref{intheorem: necessity: IPW sum one}. Proceeding as above, we can show \eqref{inlemma: necessity: optimal surrogate value small p: later stages} holds. Observe that 
\begin{align*}
\MoveEqLeft    \sup_{f_1,\ldots,f_{1+r}}\E\lbt Y_{1+r}\prod_{t=1}^{1+r}\frac{\phi_t(f_t(H_t);A_t)}{\pi_t(A_t\mid H_t)}\rbt\\
=&\ \sup_{f_1,\ldots,f_{1+r}}\E\lbt \prod_{t=1}^{r}\frac{\phi_t(f_t(H_t);A_t)}{\pi_t(A_t\mid H_t)}\E\lbt Y_{1+r}\frac{\phi_{1+r}(f_{1+r}(H_{1+r});A_{1+r})}{\pi_{1+r}(A_{1+r}\mid H_{1+r})}\ \bl\ H_{1+r}\rbt \rbt\\
=&\ \sup_{f_1,\ldots,f_{1+r}}\E\lbt \prod_{t=1}^{r}\frac{\phi_t(f_t(H_t);A_t)}{\pi_t(A_t\mid H_t)}\Psi_{1+r}(f_{1+r}(H_{1+r}); \mp_2(H_{1+r})) \rbt
\end{align*}  
by \eqref{intheorem: necessity: IPW: general p}, where
\[\mp_2(H_{1+r})_j=\E[Y_{1+r}\mid H_{1+r}, A_{1+r}=j].\]
However, for our $\PP$, 
\[\E[Y_{1+r}\mid H_{1+r},A_{1+r}=j]=\E[Y_{1+r}\mid A_{r},A_{1+r}=j]=\mp^{(A_r)}_j\text{ for all }j\in[k_{1+r}]\]
Therefore, $\mp_2(H_{1+r})=\mp^{(A_r)}$.
Hence, we have shown that
\begin{align}
    \label{inlemma: necessity: cond 2: small p V psi star: stage 1+r}
   \E\lbt Y_{1+r}\frac{\phi_{1+r}(f_{1+r}(H_{1+r});A_{1+r})}{\pi_{1+r}(A_{1+r}\mid H_{1+r})}\ \bl\ H_{1+r}\rbt  =\Psi_{1+r}(f_{1+r}(H_{1+r}); \mp^{(A_r)}),
\end{align}
and, in particular, 
\begin{align*}
\MoveEqLeft  \sup_{f_1,\ldots,f_{1+r}}\E\lbt Y_{1+r}\prod_{t=1}^{1+r}\frac{\phi_t(f_t(H_t);A_t)}{\pi_t(A_t\mid H_t)}\rbt\\
=&\  \sup_{f_1,\ldots,f_{1+r}}\E\lbt \prod_{t=1}^{r}\frac{\phi_t(f_t(H_t);A_t)}{\pi_t(A_t\mid H_t)}\Psi_{1+r}(f_{1+r}(H_{1+r}); \mp^{(A_r)}) \rbt\\
=&\ \sup_{f_1,\ldots,f_{r}}\E\lbt \prod_{t=1}^{r}\frac{\phi_t(f_t(H_t);A_t)}{\pi_t(A_t\mid H_t)}\sup_{\mx\in\RR^{k_{1+r}}}\Psi_{1+r}(\mx; \mp^{(A_r)}) \rbt\\
=&\ \sup_{f_1,\ldots,f_{r}}\E\lbt \prod_{t=1}^{r}\frac{\phi_t(f_t(H_t);A_t)}{\pi_t(A_t\mid H_t)} \Psi_{1+r}^*( \mp^{(A_r)})\rbt\\
=&\ \sup_{f_1,\ldots,f_{r}}\E\lbt \prod_{t=1}^{r-1}\frac{\phi_t(f_t(H_t);A_t)}{\pi_t(A_t\mid H_t)} \E\lbt \frac{\phi_r(f_r(H_r);A_r)}{\pi_r(A_r\mid H_r)}\Psi_{1+r}^*( \mp^{(A_r)})\ \bl\ H_r\rbt\rbt
\end{align*}
where the product term is one if $r=1$.
Another application of \eqref{intheorem: necessity: IPW: general p} yields that
the above equals
\begin{align*}
 \sup_{f_1,\ldots,f_{r}}\E\lbt \prod_{t=1}^{r-1}\frac{\phi_t(f_t(H_t);A_t)}{\pi_t(A_t\mid H_t)} \Psi_r(f_r(H_r);\nu)\rbt,   
\end{align*}
where
\[\nu_j=\E[\Psi_{1+r}^*(\mp^{(A_r)})\mid H_r, A_r=j]=\Psi_{1+r}^*(\mp^{(j)})\text{ for all }j\in[k_r],\]
i.e., $\nu$ is non-stochastic and as defined in \eqref{intheorem: def: necessity: nu}.
Using the fact that the $\phi_t$'s are non-negatve, it then follows that 
\begin{align*}
 \sup_{f_{r}}\E\lbt \prod_{t=1}^{r-1}\frac{\phi_t(f_t(H_t);A_t)}{\pi_t(A_t\mid H_t)} \Psi_r(f_r(H_r);\nu)\rbt=&\ \E\lbt \prod_{t=1}^{r-1}\frac{\phi_t(\mx;A_t)}{\pi_t(A_t\mid H_t)} \sup_{\mx\in\RR^{k_r}}\Psi_r(\mx;\nu)\rbt\\
 =&\ \E\lbt \prod_{t=1}^{r-1}\frac{\phi_t(\mx;A_t)}{\pi_t(A_t\mid H_t)} \Psi^*_r(\nu)\rbt.
\end{align*}
Thus we have shown that
\begin{align*}
     \sup_{f_1,\ldots,f_{1+r}}\E\lbt Y_{1+r}\prod_{t=1}^{1+r}\frac{\phi_t(f_t(H_t);A_t)}{\pi_t(A_t\mid H_t)}\rbt= \Psi_r^*(\nu) \sup_{f_1,\ldots,f_{r-1}}\E\lbt \prod_{t=1}^{r-1}\frac{\phi_t(f_t(H_t);A_t)}{\pi_t(A_t\mid H_t)}\rbt.
\end{align*}
The proof follows by combining the above with \eqref{inlemma: necessity: optimal surrogate value small p: later stages}, and noting that, for $r-1\geq 1$,  \eqref{inlemma: necessity: product from 1 to r-1} implies that
\[\sup_{f_1,\ldots,f_{r-1}}\E\lbt \prod_{t=1}^{r-1}\frac{\phi_t(f_t(H_t);A_t)}{\pi_t(A_t\mid H_t)}\rbt=\prod_{t=1}^{r-1}\Psi^*_t(\mo_{k_t}).\]
\end{proof}

 For simpler forms of $f$'s, we can find their surrogate value function $V^\psi(f)$ explicitly under $\Pms$. 
  \begin{lemma}
\label{lemma: necessity: calculation for V psi for positive surrogates}
Suppose $\PP\in\mP_{\text{small}}$ and $\psi$ is as in \eqref{def: product psi}, where $\phi_1,\ldots,\phi_T$ are bounded and non-negative. Consider $r\in[1,T-1]$.  For $t\notin\{1+r\}$,  we let  $f_t(h_t)=\mxt$ for all $h_t\in\H_t$ where $\mxt\in\RR^{k_t}$ are fixed vectors. For $H_{1+r}=(H_r, A_r,Y_r)$, we let $f_{1+r}(H_{1+r})=\mxmo^{(A_r)}$ where,  for each $i\in[k_r]$, $\mxmo^{(i)}\in\RR^{k_{1+r}}$ is a non-stochastic vector.
Let us define
  $\gamma\in\RR^{k_r}$ is defined by 
\begin{align}
    \label{def: necessity: theta}
    \gamma_i=\Psi_{1+r}(\mxmo^{(i)};\mp^{(i)})\text{ for all }i\in[k_r].
\end{align}
Then
\begin{align*}
V^\psi(f)=\Psi_r(\mxr;\gamma)\prod_{t=1}^{r-1}\Psi_t(\mxt;\mo_{k_t})\prod_{t=r+2}^{T}\Psi_t(\mxt;\mo_{k_t}),
\end{align*}
where $\gamma$ is as defined in \eqref{def: necessity: theta},  the $\mpi$'s are as in \eqref{intheorem: necessity: cond 2: pm(i) def}, and the products are one if the ranges are empty.
\end{lemma}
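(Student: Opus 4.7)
The plan is to compute $V^\psi(f)$ directly by iterated conditioning, working from the outside in, following essentially the same template as Lemma~\ref{lemma: necessity: optimal surrogate value function in small P} but tracking the actual (rather than supremal) contribution at each stage. Since $Y_i = 0$ for $i \neq 1+r$ under any $\PP \in \Pms$, we start from
\[
V^\psi(f) = \E\lbt Y_{1+r} \prod_{t=1}^T \frac{\phi_t(f_t(H_t); A_t)}{\pi_t(A_t\mid H_t)} \rbt.
\]

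First, I would peel off the stages $t > 1+r$ one at a time. For each such $t$, the relevant $f_t \equiv \mathbf{x}^{(t)}$ is a constant vector. Conditioning on $H_t$ and applying \eqref{intheorem: necessity: IPW sum one}, the innermost factor $\E[\phi_t(\mathbf{x}^{(t)}; A_t)/\pi_t(A_t\mid H_t)\mid H_t]$ equals the non-stochastic quantity $\Psi_t(\mathbf{x}^{(t)}; \mathbf{1}_{k_t})$, which factors out of the expectation. Iterating from $t = T$ down to $t = r+2$ yields
\[
V^\psi(f) = \lb \prod_{t=r+2}^T \Psi_t(\mathbf{x}^{(t)}; \mathbf{1}_{k_t}) \rb \E\lbt Y_{1+r} \prod_{t=1}^{1+r} \frac{\phi_t(f_t(H_t); A_t)}{\pi_t(A_t\mid H_t)} \rbt.
\]

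Next, I handle stage $1+r$. Conditioning on $H_{1+r}$ and using \eqref{intheorem: necessity: IPW: general p} with $\VV = Y_{1+r}$, the resulting conditional expectation is $\Psi_{1+r}(f_{1+r}(H_{1+r}); \mathbf{p}_2(H_{1+r}))$ with $\mathbf{p}_2(H_{1+r})_j = \E[Y_{1+r}\mid H_{1+r}, A_{1+r}=j]$. Under the $\Pms$-specific assumptions (no covariates at stages $r, 1+r$ and the stated conditional independences), this reduces to $\mathbf{p}^{(A_r)}_j$. Since by assumption $f_{1+r}(H_{1+r}) = \mathbf{x}^{(1+r, A_r)}$, we obtain the key identity $\Psi_{1+r}(\mathbf{x}^{(1+r, A_r)}; \mathbf{p}^{(A_r)}) = \gamma_{A_r}$, a function of $A_r$ alone.

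Substituting and then conditioning on $H_r$, the stage-$r$ factor becomes
\[
\E\lbt \frac{\phi_r(\mathbf{x}^{(r)}; A_r)}{\pi_r(A_r\mid H_r)} \gamma_{A_r} \,\Big|\, H_r \rbt = \sum_{a \in [k_r]} \gamma_a\, \phi_r(\mathbf{x}^{(r)}; a) = \Psi_r(\mathbf{x}^{(r)}; \gamma),
\]
again a non-stochastic quantity that factors out. The remaining expectation involves only stages $t \leq r-1$ (when $r \geq 2$), and by the same telescoping argument using \eqref{intheorem: necessity: IPW sum one} at each stage, it equals $\prod_{t=1}^{r-1} \Psi_t(\mathbf{x}^{(t)}; \mathbf{1}_{k_t})$ (with the empty product equal to one when $r=1$). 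Combining the three pieces delivers the claimed factorization. The proof is essentially bookkeeping; the only subtle point is verifying at stage $1+r$ that the conditional expectation of $Y_{1+r}$ given $H_{1+r}$ really depends only on $A_r$, which is precisely what the independence structure built into $\Pms$ (specifically $O_r = O_{1+r} = \emptyset$ together with the stated conditional independences) guarantees.
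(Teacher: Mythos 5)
Your proposal is correct and follows essentially the same route as the paper's proof: peel off stages $t > 1+r$ via \eqref{intheorem: necessity: IPW sum one}, collapse stage $1+r$ to $\gamma_{A_r}$ via \eqref{intheorem: necessity: IPW: general p} and the $\Pms$ structure, condition on $H_r$ to obtain $\Psi_r(\mxr;\gamma)$, then factor out the remaining stages $t \le r-1$. No substantive differences.
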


\begin{proof}[Proof of Lemma \ref{lemma: necessity: calculation for V psi for positive surrogates}]
Our first step is to show that
\begin{align}
\label{inlemma: necessity: V psi calculation: later stages}
    V^\psi(f)=\lb\prod_{t=2+r}^T  \Psi_t(\mxt;\mo_{k_t})\rb\E\lbt Y_{1+r}\prod_{t=1}^{1+r}\frac{\phi_t(f_t(H_t);A_t)}{\pi_t(A_t\mid H_t)}\rbt,
\end{align}
where the product is one if the range of the product is empty.
The above follows trivially if $r=T-1$. Suppose $T>1+r$. Then 
    \begin{align*}
        V^\psi(f)=&\ \E\lbt Y_{1+r}\prod_{t=1}^T\frac{\phi_t(f_t(H_t);A_t)}{\pi_t(A_t\mid H_t)}\rbt\\
=&\ \E\lbt Y_{1+r}\prod_{t=1}^{T-1}\frac{\phi_t(f_t(H_t);A_t)}{\pi_t(A_t\mid H_t)}\E\lbt \frac{\phi_T(f_T(H_T);A_T)}{\pi_T(A_T\mid H_T)}\ \bl\ H_T\rbt\rbt\\
\stackrel{(a)}{=}&\  \E\lbt Y_{1+r}\prod_{t=1}^{T-1}\frac{\phi_t(f_t(H_t);A_t)}{\pi_t(A_t\mid H_t)}\Psi_T(f_T(H_T);\mo_{k_T})\rbt\\
=&\ \Psi_T(\mx^{(T)};\mo_{k_T})\E\lbt Y_{1+r}\prod_{t=1}^{T-1}\frac{\phi_t(f_t(H_t);A_t)}{\pi_t(A_t\mid H_t)}\rbt
    \end{align*}
where (a) follows from \eqref{intheorem: necessity: IPW sum one}.
Proceeding as above, and applying  \eqref{intheorem: necessity: IPW sum one} repeatedly, \eqref{inlemma: necessity: V psi calculation: later stages} can be proved.
Using \eqref{inlemma: necessity: cond 2: small p V psi star: stage 1+r}, we can show that
\[\E\lbt Y_{1+r}\prod_{t=1}^{1+r}\frac{\phi_t(f_t(H_t);A_t)}{\pi_t(A_t\mid H_t)}\rbt=\E\lbt \Psi_{1+r}(f_{1+r}(H_{1+r});\mp^{(A_r)})\prod_{t=1}^{r}\frac{\phi_t(f_t(H_t);A_t)}{\pi_t(A_t\mid H_t)}\rbt.\]
However, according to our definition,
\[f_{1+r}(H_{1+r})=\mxmo^{(A_r)},\]
where $A_r\in H_{1+r}$.
Therefore,
\begin{align*}
\MoveEqLeft \E\lbt Y_{1+r}\prod_{t=1}^{1+r}\frac{\phi_t(f_t(H_t);A_t)}{\pi_t(A_t\mid H_t)}\rbt\\
 =&\  \E\lbt \Psi_{1+r}(\mxmo^{(A_r)};\mp^{(A_r)})\prod_{t=1}^{r}\frac{\phi_t(f_t(H_t);A_t)}{\pi_t(A_t\mid H_t)}\rbt\\ 
 =&\ \E\lbt \E\lbt \Psi_{1+r}(\mxmo^{(A_r)};\mp^{(A_r)})\frac{\phi_r(f_r(H_t);A_r)}{\pi_r(A_r\mid H_r)}\ \bl\ H_r\rbt\prod_{t=1}^{r-1}\frac{\phi_t(f_t(H_t);A_t)}{\pi_t(A_t\mid H_t)}\rbt,
\end{align*}
where the product over the range  $\prod_{t=1}^{r-1}$ is one if $r-1=0$. Another application of \eqref{intheorem: necessity: IPW: general p} yields that
\begin{align*}
 \E\lbt \Psi_{1+r}(\mxmo^{(A_r)};\mp^{(A_r)})\frac{\phi_r(f_r(H_r);A_r)}{\pi_r(A_r\mid H_r)}\ \bl\ H_r\rbt=\Psi_r(f_r(H_r);\gamma),
\end{align*}
where
\[\gamma_i=\E[\Psi_{1+r}(\mxmo^{(A_r)};\mp^{(A_r)})\mid H_r, A_r=i]=\Psi_{1+r}(\mxmo^{(i)};\mp^{(i)}).\]
Since $f_r(H_r)=\mxr$,
\begin{align*}
 \E\lbt \Psi_{1+r}(\mxmo^{(A_r)};\mp^{(A_r)})\frac{\phi_t(f_t(H_t);A_t)}{\pi_t(A_t\mid H_t)}\ \bl\ H_r\rbt=\Psi_r(\mxr;\gamma).
\end{align*}
Therefore,
\begin{align*}
  \MoveEqLeft E\lbt \E\lbt \Psi_{1+r}(\mxmo^{(A_r)};\mp^{(A_r)})\frac{\phi_t(f_r(H_r);A_r)}{\pi_r(A_r\mid H_r)}\ \bl\ H_r\rbt\prod_{t=1}^{r-1}\frac{\phi_t(f_t(H_t);A_t)}{\pi_t(A_t\mid H_t)}\rbt\\
  =&\ \Psi_r(\mxr;\gamma)\E\lbt \prod_{t=1}^{r-1}\frac{\phi_t(f_t(H_t);A_t)}{\pi_t(A_t\mid H_t)} \rbt,
\end{align*}
where, if $r-1\geq 1$, using \eqref{intheorem: necessity: IPW sum one} repeatedly, one can show that
\begin{align*}
 \E\lbt \prod_{t=1}^{r-1}\frac{\phi_t(f_t(H_t);A_t)}{\pi_t(A_t\mid H_t)} \rbt=\prod_{t=1}^{r-1}\Psi_t(\mxt;\mo_{k_t}).  
\end{align*}
Therefore, the proof follows.
\end{proof}
\paragraph*{Step 1c. Calculating $V_*$}
Lemma \ref{lemma: necessity: optimal surrogate value function in small P} implies that, for any non-negative $\phi_1,\ldots,\phi_T$,
\[V^\psi_*=\Psi_r^*(\nu)\prod_{t=1}^{r-1}\Psi^*_t(\mo_{k_t})\prod_{t=2+r}^{T}\Psi^*_t(\mo_{k_t})\]
where
$\nu$ is as in \eqref{intheorem: def: necessity: nu}.
Suppose for each $t\in[T]$,   $\phi_t(\mx;i)=1[\pred(\mx)=i]$ where $\mx\in\RR^{k_t}$.
Then $\Psi_t(\mx;\mo_{k_t})=1$. Therefore, $\Psi_t^*(\mo_{k_t})=1$. For any $\mp\in\RR_{\geq 0}^{k_t}$ in general, $\Psi_t(\mx;\mp)=\mp_{\pred(\mx)}$. Therefore, $\Psi_t^*(\mp)=\max(\mp)$. Hence,  \eqref{intheorem: def: necessity: nu} implies $\nu_i=\Psi^*_{1+r}(\mpi)=\max(\mpi)$. Therefore,  $V_*=\max(\nu)=\max_{i\in[k_r]}\max(\mpi)$. 

 \paragraph*{Step 1d. Defining the sequences}   Now we will define a sequence of class scores $\{f^\upm\}_{m\geq 1}\subset\F$. To this end, we define some sequences of vectors first. The following fact will be used while defining these sequences: since $\Psi_t^*(\mp)=\sup_{\mx\in\RR^{k_t}}\Psi_t(\mx;\mp)$ for each $\mp\in\RR^{k_t}$, there  exists a sequence $\{\mxm\}_{t\geq 1}\subset\RR^{k_t}$  so that $\Psi_t(\mxm;\mp)\to_m\Psi_t^*(\mp)$ for each $\mp\in\RR^{k_t}_{\geq 0}$ and  $t\in[T]$. 
 
For each $i\in[k_r]$, let  $\{\mxmo^{m,i}\}_{m\geq 1}\subset\RR^{k_{1+r}}$ be a sequence that satisfies  
    \begin{align}
    \label{intheorem: necessity: def of xpm 1+r}
       \Psi_{1+r}(\mxmo^{m,i};\mp^{(i)})\to_m \Psi^*_{1+r}(\mp^{(i)}),
    \end{align}
where $\mpi$ is as in \eqref{intheorem: necessity: cond 2: pm(i) def} for each $i\in[k_r]$. 
We let $\{\xpmr\}_{m\geq 1}\subset \RR^{k_r}$ be a sequence of vectors that satisfy 
    \begin{align}
\label{intheorem: necessity: def of xpm r}
\Psi_r(\xpmr;\nu)\to_m\Psi_r^*(\nu),
    \end{align}
    where $\nu$ as in \eqref{intheorem: def: necessity: nu}.
For each $t\notin\{ r,1+r\}$ and $m\in\NN$,  define the sequences $\{\mxmt\}_{m\geq 1}\subset\RR^{k_t}$ such that $\Psi_t(\mxmt;\mo_{k_t})\to_m \Psi_t^*(\mo_{k_t})$.

Now we are ready to define $\fm$.  For $t\notin\{ r,1+r\}$,  we let $f_t^{(m)}\equiv\mxmt$, i.e., $f_t^{(m)}(h_t)=\mxmt$ for all $h_t\in\H_t$.  When $t=r$, we let $f^\upm_r\equiv\xpmr$. Thus $\fm_t$ is constant vector for all $t\neq 1+r$. However, we will consider a non-constant $\fm_{1+r}$. Note that $H_{1+r}=(H_r,A_r)$ for $\PP\in\Pms$ because $Y_r=0$ and $O_{r}=O_{1+r}=\emptyset$. Therefore, $f_{1+r}$ is a function from $\H_r\times [k_r]$ to $\RR$. For $i\in[k_r]$,   we let $f_{1+r}^\upm(H_{1+r})=f_{1+r}^\upm(H_{r},i)=\mxmo^{m,i}$.

\paragraph*{Step 1e. Showing $V^\psi(\fpm)\to_m V^\psi_*$}
Our surrogates are non-negative by assumption and they are bounded by Lemma \ref{lemma: necessity: psi bounded}. Therefore,
Lemmas \ref{lemma: necessity: optimal surrogate value function in small P} and  \ref{lemma: necessity: calculation for V psi for positive surrogates} apply.  $V^\psi_*$ is given by Lemma \ref{lemma: necessity: optimal surrogate value function in small P}.  $V^\psi(\fpm)$ is given by Lemma \ref{lemma: necessity: calculation for V psi for positive surrogates}, which yields that 
\begin{align}
    \label{intheorem: necessity: V psi}
V^\psi(\fpm)=\Psi_r(\xpmr;\gamma^\upm)\prod_{t=1}^{r-1}\Psi_t(\xpmt;\mo_{k_t})\prod_{t=r+2}^{T}\Psi_t(\xpmt;\mo_{k_t}),
\end{align}
where 
\begin{equation}
    \label{def: intheorem: necessity: theta m}
    \gamma^\upm=\Psi_{1+r}(\mxmo^{m,i};\mpi)\text{ for all }i\in[k_r],
\end{equation}
and the products are one if the ranges are empty.
Since $\Psi_t(\xpmt;\mo_{k_t})\to_m\Psi_t^*(\mo_{k_t})$ for all $t\notin\{r,1+r\}$ by construction, to show $V^\psi(\fpm)\to_m V^\psi_*$, it suffices to show that $\Psi_r(\xpmr;\gamma^\upm)\to_m\Psi^*_r(\nu)$ where $\nu\in\RR^{k_r}$ is as in \eqref{intheorem: def: necessity: nu}. 
To this end, we note that
\begin{align*}
 \MoveEqLeft \abs{\Psi_r(\xpmr;\gamma^\upm)-\Psi_r^*(\nu)}\\
  \leq &\ \bl\sum_{i=1}^{k_r}\slb\phi_r(\xpmr;i)\gamma^\upm_i-\phi_r(\xpmr;i)\nu_i\srb\bl+\abs{\sum_{i=1}^{k_r}\phi_r(\xpmr;i)\nu_i-\Psi_r^*(\nu)}\\
  \leq &\ \sup_{\mx\in\RR^{k_r},i\in[k_r]}\phi_r(\mx;i)\sum_{i=1}^{k_r}\abs{\gamma^\upm_i-\nu_i}+\abs{\Psi_r(\xpmr;\nu)-\Psi_r^*(\nu)},
\end{align*}
where we used the non-negativity of $\phi_r$.
By Lemma \ref{lemma: necessity: psi bounded}, $\sup_{\mx\in\RR^{k_r},i\in[k_r]}\phi_r(\mx;i)$ is finite. For each $i\in[k_r]$,
\begin{align*}
   |\gamma^\upm_i-\nu_i|= |\Psi_{1+r}(\mxmo^{m,i};\mpi)-\Psi^*_{1+r}(\mpi)|
\end{align*}
which goes to $0$ as $m\to\infty$ by the definition of $\mxmo^{m,i}$ in \eqref{intheorem: necessity: def of xpm 1+r}. On the other hand, $\Psi_r(\xpmr;\nu)\to_m\Psi_r^*(\nu)$ by \eqref{intheorem: necessity: def of xpm r}. Therefore, we have shown that $\Psi_r(\xpmr;\gamma^\upm)\to_m\Psi_r^*(\nu)$, which completes the proof of this step.

\paragraph*{Step 1e. Calculating $V(\fpm)$}
We will apply Lemma \ref{lemma: necessity: calculation for V psi for positive surrogates} with $\phi_t(\mx;i)=\phi_{\text{dis}}(\mx;i)=1[\pred(\mx)=i]$ for  $\mx\in\RR^{k_t}$ and  $t\in[T]$.
 Equation \ref{intheorem: necessity: V psi} yields that 
\begin{align*}
V(\fpm)=\Psi_r(\xpmr;\gamma^\upm)\prod_{t=1}^{r-1}\Psi_t(\xpmt;\mo_{k_t})\prod_{t=r+2}^{T}\Psi_t(\xpmt;\mo_{k_t}),
\end{align*}
where $\phi_t\equiv\phi_{\text{dis}}$ and  $\gamma^\upm$ is as in \eqref{def: intheorem: necessity: theta m}. 
 For $\phi_{\text{dis}}$, we have already shown that $\Psi_t(\mx;\mp)=\mp_{\pred(\mx)}$ for any $\mx\in\RR^{k_t}$ and $\mp\in\RR_{\geq 0}^{k_t}$. Therefore, $\Psi_t(\xpmt;\mo_{k_t})=1$ for each $t\notin\{r,1+r\}$ and $\Psi_r(\xpmr;\gamma^\upm)=\gamma^\upm_{\pred(\xpmr)}$.
Therefore,
\[V(\fpm)=\gamma^\upm_{\pred(\xpmr)}.\]

\paragraph*{Step 1f. Implication of Fisher consistency}
Suppose $\phi_t$'s are Fisher consistent. Since  $V^\psi(\fpm)\to_m V^\psi_*$ by Step 1d, Fisher consistency implies $V(\fpm)\to_m V_*$.  Step 1e shows that that $V(\fpm)=\gamma^\upm_{\pred(\xpmr)}$  and  Step 1c shows that $V_*=\max_{i\in[k_r]}\max(\mpi)$.  Therefore, Fisher consistency yields that    $\gamma^\upm_{\pred(\xpmr)}\to_m\max_{i\in[k_r]}\max(\mpi)$. We will use use this convergence to  show that  $\pred(\xpmr)=1$ for all sufficiently large $m$. The definition of $\xpm$ in \eqref{intheorem: necessity: def of xpm r} implies that  $\Psi_r(\xpmr;\nu)\to_m \Psi_r^*(\nu)$. Since we proved that $\phi_r$ satisfies Condition \ref{assump: N1},  Fact \ref{fact: Cond N1 implies pred x in argmax p} applies. Hence, 
 $\pred(\xpmr)\in\argmax(\nu)$ for all sufficiently large $m$. Therefore, if $\pred(\xpmr)=1$ for all sufficiently large $m$, it will follow that  $1\in\argmax(\nu)$. Then the definition of $\nu$ in  \eqref{intheorem: def: necessity: nu} would imply that $\Psi_{1+r}^*(\mp)\geq \Psi_{1+r}^*(\mq)$, completing the proof of Step 1. Thus it suffices to show that $\pred(\xpmr)=1$ for all sufficiently large $m$.

To this end, note that  \eqref{def: necessity: theta} implies 
$\gamma^\upm_i=(\mpi)_{\pred(\mxmo^{m,i})}\leq \max(\mpi)$ for each $i\in[k_r]$. Let us denote the vector  $v^0=(\max(\mp^{(1)}),\ldots,\max(\mp^{(k_r)}))$. Therefore, $\gamma^\upm_i\leq \mv^0_i$ for each $i\in[k_r]$. Hence, it  holds that  
\[\gamma^\upm_{\pred(\xpmr)}\leq v^0_{\pred(\xpmr)}\leq \max(v^0).\]
 Since $\max(v^0)=\max_{i\in[k_r]}\max(\mpi)$ and  Fisher consistency implies $\gamma^\upm_{\pred(\xpmr)}\to_m\max_{i\in[k_r]}\max(\mpi)$, it follows that $\gamma^\upm_{\pred(\xpmr)}\to_m\max(v^0)$. Therefore, it follows that $v^0_{\pred(\xpmr)}\to_m\max(v^0)$. Since $\argmax(v^0)=\argmax_{i\in[k_r]}\max(\mpi)=1$ for $\PP\in\Pms$,  it follows that $\pred(\xpmr)=1$ for all sufficiently large $m$.

\subsubsection{Step 2: Proving $\max(\mp)=\max(\mq)$ 
implies $\Psi_t^*(\mp)=\Psi_t^*(\mq)$ for all $\mp,\mq\in\RR_{\geq 0}^{k_t}$ and $t>1$}
 
Suppose $t\in[2:T]$.
Consider $\mpm=\mp+(1/m)\mo_{k_2}$. Then $\max(\mpm)=\max(\mp)+1/m>\max(\mq)$. Taking $r=t-1$ in the previous step, we obtain that 
$\Psi_t^*(\mpm)\geq \Psi_t^*(\mq)$. Therefore,
$\lim_{m\to\infty}\Psi_t^*(\mpm)\geq \Psi_t^*(\mq)$. Since $\mpm\to_m\mp$ and $\Psi_t^*$ is continuous for each $t\in[T]$ by Lemma \ref{lemma: necessity: Psi-t cont.}, it follows that $\Psi^*_t(\mp)\geq \Psi^*_t(\mq)$. Switching the role between $\mp$ and $\mq$, we can similarly prove that $\Psi_t^*(\mp)\leq\Psi_t^*(\mq)$, which implies $\Psi_t^*(\mp)=\Psi_t^*(\mq)$. Therefore, the function $\Psi_t^*:\RR_{\geq 0}^{k_t}\mapsto\RR$ satisfies $\Psi_t^*(\mp)=G_t(\max(\mp))$ for all $\mp\in\RR_{\geq 0}^{k_t}$ for some $G_t:\RR\to\RR$.

\subsubsection{Step 3: Showing that $G_t$ is linear on $\RR_{\geq 0}$ for $t\in[2:T]$}
Now, we will show that $G_t$ is positively homogenous on $\RR_{\geq 0}$ in that $G_t(x)=C_t x$ for some $C_t>0$ for all $x\geq 0$. Note that if $x\geq 0$, then 
\[G_t(x)=G_t(\max(x\mo_{k_t}))=\Psi_t^*(x\mo_{k_t}).\]
However,
\[\Psi_t^*(x\mo_{k_2})=x\sup_{\mx\in\RR^{k_t}}\sum_{i=1}^{k_t}\phi_t(\mx;i)=x\Psi_t^*(\mo_{k_t}).\] 
Hence, we have shown that $\Psi_t^*(\mx)=\Psi_t^*(\mo_{k_t})\max(\mx)$ for all $\mx\in\RR_{\geq 0}^{k_t}$. The proof follows noting $\Psi_t^*(\mo_{k_t})>0$ by Lemma \ref{lemma: necessity: psi bounded}.

\section{Proofs of the lemmas from Section \ref{sec: necessity}}
\subsection{Single-stage case: proof of Lemma \ref{lemma: single-stage}}
\label{secpf: single-stage lemma}
\begin{proof}[Proof of Lemma \ref{lemma: single-stage}]
    The sufficiency part follows directly from  Theorem \ref{theorem: sufficient conditions}. We will only consider the proof of the necessity of Condition \ref{assump: N1}.
 In this case $T=1$ and $f=f_1$. 
 Note that given any $\mp\in\RR^k_{>0}$, we can find a $\PP$ satisfying Assumptions I-V so that  $\E[Y_1\mid H_1, A_1=i]=\mp_i$ for all $i\in[k]$. In this case, 
$Q_1^*(H_1,A_1)=\E[Y_1\mid H_1, A_1]=\mp_{A_1}$. 
From Lemma \ref{lemma: necessity}, it follows that
\[
 V^\psi(f)=   \sup_{f_1\in\F_1}\E\lbt \frac{\phi_1(f_1(H_1);A_1)}{\pi_1(A_1\mid H_1)}Y_1\rbt=\Psi_1^*(\mp).
\]
Equation \ref{intheorem: necessity Cond 1: V psi first deduction} implies that $V^\psi(f)=\E[\Psi_1(f_1(H_1);\mp)]$. 
The rest of the proof follows similarly to Section \ref{secpf: Cond 1}, and hence skipped. 
\end{proof}
\subsection{Proof of Lemma \ref{lemma: necessity: cond N2 and Cond N2'}}
\label{secpf: pf of cond N2 and N2'}
\begin{proof}[Proof of Lemma \ref{lemma: necessity: cond N2 and Cond N2'}]
In this proof, we will use that $\Psi^*(\mo_k)>0$, which follows from Lemma \ref{lemma: necessity: psi bounded} since $\phi$ satisfies Condition \ref{assump: N1}.

   \textbf{Only if part: }{Suppose Condition \ref{assump: N2} holds.  Without loss of generality, let us assume that $\Psi^*(\mo_k)=1$. Otherwise, we can replace $\phi$ by $\phi/\Psi^*(\mo_k)$. Since $\Psi^*(\mo_k)=1$,  $C_\phi$  is one. Thus, we need to show that $\S^{k-1}\subset\conv(\overline{\mV}_\phi)$.  Note that  it suffices to show
    \begin{align}
        \label{inlemma: necessity: Cond N2 equiv set}
        \{\mathbf{e}_k^{(1)},\ldots,\mathbf{e}_k^{(k)}\}\subset\overline{\mV}_\phi
    \end{align}
   because $\S^{k-1}\subset\conv(\overline{\mV}_\phi)$ would follow taking convex hull on both sides of \eqref{inlemma: necessity: Cond N2 equiv set}.  Convex hull of a set in $\RR^k$ is same as that of its closure, cf. page 31 of \cite{hiriart}. Therefore,  $\conv(\overline{\mV}_\phi)=\conv({\mV}_\phi)$, which would complete the proof of $\S^{k-1}\subset\conv({\mV}_\phi)$. 
  To prove \eqref{inlemma: necessity: Cond N2 equiv set}, will show that $\mathbf{e}_k^{(1)}\in \overline{\mV}_\phi$ because the proof will follow similarly for the other $\mathbf{e}_k^{(i)}$'s. Consider $\mp\in\RR^k_{>0}$ such that $\argmax(\mp)=1$. Then from Lemma \ref{lemma: sufficiency: tilde f form}, it holds that there exists a sequence $\{\mvm\}_{m\geq 1}\subset \mV_\phi$ such 
  that $\mvm_i\to_m 0$ if $i\neq 1$ and $\langle \mvm,\mp\rangle\to_m \mp_1$. Since $\mp_1>0$, $\mvm_1\to_m 1$. Here Lemma  \ref{lemma: sufficiency: tilde f form} applies because both Conditions \ref{assump: N1} and \ref{assump: N2} hold with $C^\phi=1$.  Therefore, $\mvm\to_m \mathbf{e}_k^{(1)}$. Hence, it follows that $\mathbf{e}_k^{(1)}\in \overline{\mV}_\phi$. Therefore, the proof of \eqref{inlemma: necessity: Cond N2 equiv set} follows.}

\vspace{1em}
  \textbf{If part:} Suppose $C_\phi\S^{k-1}\subset \conv({\mV}_\phi)$ where $C_\phi=\Psi^*(\mo_k)$. Without loss of generality, we assume that  $\Psi^*(\mo)=1$. Otherwise, we can replace $\phi$ by $\phi/\Psi^*(\mo_k)$. Thus, we have $\S^{k-1}\subset \conv({\mV}_\phi)$. Hence, 
 $ \varsigma_{\S^{k-1}}\leq \varsigma_{\conv({\mV}_\phi)}$ \citep[cf. Theorem 3.3.1, p. 151,][]{hiriart}. Also, the support function of $\S^{k-1}$ equals the support function of $  \{\mathbf{e}_k^{(1)},\ldots,\mathbf{e}_k^{(k)}\}$  \citep[cf. Prop 2.2.1, p. 137,][]{hiriart}. Hence, for any $\mp\in\RR^{k_1}$, 
 \[\varsigma_{\S^{k-1}}(\mp)=\max\{\mp_1,\ldots,\mp_k\}=\max(\mp).\]
 Therefore, we have obtained that for any $\mp\in\RR^k$,
 $\max(\mp)\leq \varsigma_{\conv({\mV}_\phi)}(\mp)=\Psi^*(\mp)$.
 If $\mp\in\RR^k_{\geq 0}$, then 
 \[\Psi^*(\mp)\leq \max(\mp)\Psi^*(\mo_k)=\max(\mp)\]
 because we assumed $\Psi^*(\mo_k)=1$. Therefore, for $\mp\in\RR^k_{>0}$, $\Psi^*(\mp)=\max(\mp)$. Hence, Condition \ref{assump: N2} holds.
\end{proof}
\subsection{Proof of Lemma  \ref{lemma: location-scale transformation}}
\label{secpf: lemma location-scale}
\begin{proof}[Proof of Lemma \ref{lemma: location-scale transformation}]
 Note that for any $\mp\in\RR_{\geq 0}^k$,
 \[\sup_{\mx\in\RR^k}\Psi_{a,b}(\mx;\mp)=b\sup_{\mx\in\RR^k}\Psi(a\mx;\mp)=b\Psi^*(\mp)=bC^{\phi}\max(\mp)\]
 because $\phi$ satisfies  Condition \ref{assump: N2}.
 Hence, $\phi_{a,b}$ satisfies Condition \ref{assump: N2}. 
 Also,
  \[\sup_{\mx:\pred(\mx)\notin\argmax(\mp)}\Psi_{a,b}(\mx;\mp)=b\sup_{\mx\in\RR^k:\pred(\mx)\notin\argmax(\mp)}\Psi(a\mx;\mp).\]
  Note that $\pred(a\mx)=\pred(\mx)$ since $a>0$. Therefore, \begin{align*}
 \sup_{\mx\in\RR^k:\pred(\mx)\notin\argmax(\mp)}\Psi(a\mx;\mp)
   =&\ \sup_{\mx:\pred(a\mx)\notin\argmax(\mp)}\Psi(a\mx;\mp)\\
   =&\ \sup_{\mx\in\RR^k:\pred(\mx)\notin\argmax(\mp)}\Psi(\mx;\mp)
 <  \Psi^*(\mp)
  \end{align*}
 since $\phi$ satisfies \eqref{inlemma: necessity: single-stage FC}. Thus it follows that 
 \[b\sup_{\mx\in\RR^k:\pred(\mx)\notin\argmax(\mp)}\Psi_{a,b}(\mx;\mp)<b\Psi^*(\mp)=\sup_{\mx\in\RR^k}\Psi_{a,b}(\mx;\mp),\]
 indicating that $\phi_{a,b}$ satisfies Condition \ref{assump: N1}. Hence, we showed that  $\phi_{a,b}$ satisfies Condition \ref{assump: N1} and Condition \ref{assump: N2}.

With an abuse of notation, we denote the surrogate $\phi+c$ by $\phi_c$ and the corresponding $\Psi$ and $\Psi^*$ transforms as $\Psi_c$ and $\Psi^*_c$, respectively. Observe that
\begin{align*}
 \sup_{\mx\in\RR^k:\pred(\mx)\notin\argmax(\mp)}\Psi_c(\mx;\mp)=&\ \sup_{\mx\in\RR^k:\pred(\mx)\notin\argmax(\mp)}\Psi(\mx;\mp)+c\sum_{i\in[k]}\mp_i   \\
\stackrel{(a)}{<} &\ \sup_{\mx\in\RR^p}\Psi(\mx;\mp)+c\sum_{i\in[k]}\mp_i= \sup_{\mx\in\RR^p}\Psi_c(\mx;\mp)=\Psi_c^*(\mp),
\end{align*}
where (a) follows because $\phi$ satisfies Condition \ref{assump: N1}. 
Therefore, $\phi_c$ satisfies Condition \ref{assump: N1}.
 However, 
 \[\Psi_c^*(\mp)=\sup_{\mx\in\RR^p}\Psi_c(\mx;\mp)=\Psi^*(\mp)+c\sum_{i\in[k]}\mp_i=C^\phi\max(\mp)+c\sum_{i\in[k]}\mp_i\]
 because $\phi$ satisfies Condition \ref{assump: N2}. Therefore, $\phi_c$ does not satisfy  Condition \ref{assump: N2} unless $c=0$.
\end{proof}

\subsection{Restricted class: Proof of Lemma \ref{lemma: theorem with linear policy}}
\label{sec: pf of linear}
As in Proposition~\ref{prop: multi-cat FC}, it suffices to prove the result for the case where $C_{\phi_t} = 1$ for all $t \in [T]$, since the general case then follows by replacing $\phi_t$ with $\phi_t / C_{\phi_t}$.
Note that since $af\in\mathcal L$ for all $a>0$ and $f\in\mathcal L$, if we can prove  that  $\lim_{a\to\infty}V^\psi(af^*)=\sup_{f\in\F}V^\psi(f)$, then 
$\sup_{f\in\F}V^\psi(f)=\sup_{f\in\mathcal L}V^\psi(f)$, which, combined with Proposition   \ref{prop: multi-cat FC}, completes the proof. Hence, it suffices to show that $\lim_{a\to\infty}V^\psi(af^*)=V^\psi_*$. Lemma \ref{lemma: sufficiency} implies that under Conditions \ref{assump: N1} and \ref{assump: N2}, $V^\psi_*=\E[\Psi_1^*(\mp_1^*(H_1))]$  where $\mp_1^*(H_1)$ is as in \eqref{def: sufficiency: pt star}. However, Condition \ref{assump: N2} implies $\Psi_1^*(\mp_1^*(H_1))=\max(\mp_1^*(H_1))$. Lemma \ref{lemma: sufficiency: p t star and Q t star} implies $\mp_1^*(H_1)=Q_1^*(H_1)$. Thus $V^\psi_*=\E[Q_1^*(H_1)]$. However,  Fact \ref{fact: Q function expression} implies $V_*=\E[Q_1^*(H_1)]$. Therefore, $V^\psi_*=V_*$. 
Hence, it suffices to show that $\lim_{a\to\infty}V^\psi(af^*)=V_*$.

 If $\argmax(f^*_t(H_t))$ is singleton, then  Condition  \ref{assump: N3} implies
 \begin{align*}
\lim_{a\to\infty}\phi(af^*_t(H_t);j)=1[j=\argmax(f^*_t(H_t))].
\end{align*}
Thus
\[\PP\lb\lim_{a\to\infty}\prod_{t=1}^T\frac{\phi(af^*_t(H_t);A_t)}{\pi_t(A_t\mid H_t)}=\prod_{t=1}^T\frac{1[j=\argmax(f_t^*(H_t))]}{\pi_t(A_t\mid H_t)}\rb=1.\]
Under the setup of the current lemma, $\argmax (f_t^*(H_t))=d_t^*(H_t)$ for some version of $d_t^*$ with $\PP$-probability one. Therefore, the above implies
\[\PP\lb\lim_{a\to\infty}\prod_{t=1}^T\frac{\phi(af^*_t(H_t);A_t)}{\pi_t(A_t\mid H_t)}=\prod_{t=1}^T\frac{1[j=d_t^*(H_t)]}{\pi_t(A_t\mid H_t)}\rb=1.\]

Since Condition \ref{assump: N1} is satisfied, 
Lemma \ref{lemma: necessity: psi bounded} implies that the $\phi$'s are all bounded.  Assumption I implies that the $\pi_t$'s are all bounded below and Assumption IV implies that the $Y_t$'s are bounded above. Therefore,
\[\slb\sum_{i=1}^TY_i \srb\prod_{t=1}^T\frac{\phi(af^*_t(H_t);A_t)}{\pi_t(A_t\mid H_t)}<C\]
for some constant $C>0$ potentially depending on $\PP$.
Hence, the dominated convergence theorem implies that  
\begin{align*}
   \lim_{a\to\infty} V^\psi(af^*)=&\ \lim_{a\to\infty}\E\lbt \slb\sum_{i=1}^TY_i \srb\prod_{t=1}^T\frac{\phi(af^*_t(H_t);A_t)}{\pi_t(A_t\mid H_t)}\rbt\\
   =&\ \E\lbt\slb\sum_{i=1}^TY_i \srb\prod_{t=1}^T\frac{1\slbt A_t=d_t^*(H_t)\srbt}{\pi_t(A_t\mid H_t)}\rbt,
\end{align*}
which equals $V_*$, thus completing the proof.

\subsection{Proof of the results for the kernel-based surrogates in Section \ref{sec: kernel based surrogates}}
\label{secpf: kernel based}

\begin{proof}[Proof of Lemma \ref{lemma: kernel based satisfies J condition}]
Without loss of generality, let us assume $C=1$ in   \eqref{kernel based: alt form}. 
Since $\pred(\mx)=\max(\argmax(\mx))$ for all $\mx\in\RR^k$,  \eqref{kernel based: alt form} implies
\begin{align}
 \label{def: psi smoothed}
 \phi(\mx,j)=&\  \dint_{\RR^k} 1[\pred(\mx-\mbu)=j]K(\mbu)d\mbu\nn\\
    =&\ \dint_{\RR^k}\prod_{i=j+1}^k 1[\mx_j-Z_j>\mx_i-Z_i]\prod_{i=1}^{j-1} 1[\mx_j-Z_j\geq \mx_i-Z_i]d P_Z\nn\\
   \stackrel{(a)}{=}&\ P_Z\lb Z_j\leq Z_i+\mx_j-\mx_i\text{ for all }i\neq j,\ i\in[k]\rb
 \end{align}
 where $(a)$ uses the fact that $P_Z(Z_j=Z_i+\mx_j-\mx_i)=0$  for all $i$ and $j$ in $[k]$, which follows because the $Z_i$'s have density. Now
 \begin{align*}
 \MoveEqLeft   P_Z\lb Z_j\leq Z_i+\mx_j-\mx_i\text{ for all }i\neq j,\ i\in[k]\rb\\
    =&\ E_{\KK}\lbt P_Z\lb Z_j\leq Z_i+\mx_j-\mx_i\text{ for all }i\neq j,\ i\in[k]\bl Z_j\rb\rbt\\
    =&\ E_{\KK}\lbt\prod_{i\neq j} P_Z\lb Z_j\leq Z_i+\mx_j-\mx_i\bl Z_j\rb\rbt
 \end{align*}
because $Z_1,\ldots,Z_k$ are i.i.d. given $Z_j$. 
Since they are independent of $Z_j$ as well, the above equals
\begin{align*}
    \MoveEqLeft E_{\KK}\lbt \prod_{i\neq j}\slb 1-F_{\KK}(Z_j+\mx_i-\mx_j)\srb\rbt= E_{\KK}\lbt \prod_{i\neq j}\slb  1-F_{\KK}(Z+\mx_i-\mx_j)\srb\rbt,
\end{align*}
where $Z$ is a random variable with distribution $F_{\KK}$. Thus the first part of the lemma follows. 
Suppose $j\in\argmax(\mx)$. Then $\mx_j-\mx_i\geq 0$ for all $i\in[k]$. Thus, 
\[1-F_{\KK}(Z+\mx_i-\mx_j)\geq 1-F_{\KK}(Z).\]
Therefore, $\phi(\mx;\pred(\mx))\geq E_{\KK}[(1-F_{\KK}(Z))^{k-1}]$.
Now note that since $Z\sim F_{\KK}$, $F_{\KK}(Z)\sim \mathbb U(0,1)$. Hence, $1-F_{\KK}(Z)\equiv U\sim \mathbb U(0,1)$ as well. Thus $  \phi(\mx,\pred(\mx))\geq \E[\mathbb U^{k-1}]=1/k$. Hence, the second part of the proof  follows.

\end{proof}

\subsubsection{Forms of the kernel-based $\phi$ in special cases}
\label{sec: short: calc for kernel logistic}

Lemma \ref{lemma: kernel based satisfies J condition} implies 
\[\phi(\mx;j)=CE_{\KK}\lbt \prod_{i\neq j} \overline F_{\KK}(Z+\mx_i-\mx_j)\rbt\quad\text{ for all } j\in[k],\]
where $\overline F_{\KK}=1-F_{\KK}$.
If $\KK$ is symmetric about 0, then 
\[\phi(\mx;j)=CE_{\KK}\lbt \prod_{i\neq j} F_{\KK}(\mx_j-\mx_i-Z)\rbt\quad\text{ for all } j\in[k].\]
Suppose $k=3$ and $C=1$. Then it follows that
\[\phi(\mx;i)=\begin{cases}
  \eta_s(\mx_1-\mx_2,\mx_1-\mx_3) & \text{ if }i=1\\
    \eta_s(\mx_2-\mx_3,\mx_2-\mx_1) & \text{ if }i=2\\
      \eta_s(\mx_3-\mx_1,\mx_3-\mx_2) & \text{ if }i=3
\end{cases}\]
where for $x,y\in\RR$, 
\begin{equation}
\label{kernel: common formula}
  \eta_s(x,y)=\begin{cases}
        E_{\KK}[F_{\KK}(x-Z)F_{\KK}(y-Z)] & \text{ if } \KK \text{ is symmetric about }0\\
         E_{\KK}[\overline{F}_{\KK}(Z-x)\overline F_{\KK}(Z-y)] & \text{ otherwise. }
    \end{cases}
\end{equation}
In Section~\ref{sec: convex loss}, we used $\eta$ to denote the template for two-stage PERM losses, so $\eta_s$ may appear to be an overloaded notation. However, the expressions above show that $\phi$ is a single-stage PERM loss with template $\eta_s$ \citep[see also][for the single-stage definition of PERM losses]{wang2023unified}. Here, the subscript $s$ indicates "single stage."

Since logistic is a symmetric (w.r.t. 0) distribution, when $\KK$ is  the standard logistic density, straightforward algebra yields
\[\eta_s(a,b)=\frac{e^{a+b}\left(a\left(e^b-1\right)^2+\left(e^a-1\right)\left(-e^ab+\left(e^b-1\right)\left(e^a-e^b\right)+b\right)\right)}{\left(e^a-1\right)^2\left(e^b-1\right)^2\left(e^a-e^b\right)}\]
for all $a,b\in\RR$.
When $\KK$ is the standard Gumbel density, 
straightforward algebra yields
\[\eta_s(a,b)=\frac{e^{a+b}-1}{(1+e^a)(1+e^b)}+\frac{1}{1+e^{a}+e^{b}}\text{ for all }a,b\in\RR.\]



\subsubsection{Proof of Lemma \ref{lemma: FC of smoothed FC}}
\label{secpf: kernel based surrogate N1 and N2}
\begin{proof}[Proof of Lemma \ref{lemma: FC of smoothed FC}]
Without loss of generality, we assume $C=1$. Therefore, to show $C_\phi=C$, it suffices to show that $C_\phi=1$. 

First we will show that
$\sup_{\mx\in\RR^k}\Psi(\mx;\mp)=\max(\mp)$, which would imply that $\phi$ satisfies Condition \ref{assump: N2} with $C_\phi=1$. Note that
\begin{align*}
    \Psi(\mx;\mp)= &\ \sum_{j\in[k]}\mp_j\dint_{\RR^k}1[\pred(\mx-\mbu)=j]K(\mbu)d\mbu\\
    &\ =\dint_{\RR^k}\slb \sum_{j\in[k]}\mp_j 1[\pred(\mx-\mbu)=j]\srb K(\mbu)d\mbu\\
    &\ =\dint_{\RR^k} \mp_{\pred(\mx-\mbu)} K(\mbu)d\mbu
    \leq  \max(\mp)\dint_{\RR^k} K(\mbu)d\mbu.
\end{align*}
Since $\int K(\mbu)d\mbu=1$, we have  $\Psi(\mx;\mp)\leq \max(\mp)$. Suppose $j\in\argmax(\mx)$. Let  $x_n=n\mathbf{e}^{(k)}_j$. Then 
\begin{align*}
\lim_{n\to\infty}\Psi(x_n;\mp)=\lim_{n\to\infty} \dint_{\RR^k} \mp_{\pred(\mx_n-\mbu)}K(\mbu)d\mbu=\dint_{\RR^k} \lim_{n\to\infty} \mp_{\pred(\mx_n-\mbu)}K(\mbu)d\mbu
\end{align*}
by the dominated convergence theorem since $\mp_{\pred(\mx_n-\mbu)}\leq \max(\mp)$ and $\int K(\mbu)d\mbu=1$. However, 
\[\lim_{n\to\infty} \mp_{\pred(\mx_n-\mbu)}=\mp_{\pred(\mx_n)}=\mp_{\pred(\mathbf{e}^{(k)}_j)}=\mp_j.\] Therefore, $  \lim_{n\to\infty}\Psi(x_n;\mp)=\mp_j=\max(\mp)$. Hence, we have proved that  $\sup_{\mx\in\RR^k}\Psi(\mx;\mp)=\max(\mp)$, which implies Condition \ref{assump: N2} holds with $C_\phi=1$. 

Now we will show that Condition \ref{assump: N1} holds. To this end, it suffices to show that \eqref{def of mathcal C psi} holds. 
Suppose $\pred(\mx)\notin\argmax(\mp)$. 
Then $\Psi(\mx;\mp)$ equals
\begin{align*}
     &\ \dint_{\RR^k} \mp_{\pred(\mx-\mbu)} K(\mbu)d\mbu\\
    =&\ \dint_{\mbu: \pred(\mx-\mbu)=\pred(\mx)}\mp_{\pred(\mx-\mbu)} K(\mbu)d\mbu+\dint_{\mbu: \pred(\mx-\mbu)\neq\pred(\mx)}\mp_{\pred(\mx-\mbu)} K(\mbu)d\mbu\\
    \leq &\ \mp_{\pred(\mx)}\dint_{\mbu: \pred(\mx-\mbu)=\pred(\mx)} K(\mbu)d\mbu+\max(\mp)\dint_{\mbu: \pred(\mx-\mbu)\neq\pred(\mx)} K(\mbu)d\mbu\\
    =&\ \mp_{\pred(\mx)}      P_{K}\slb \pred(\mx-\vec{Z})=\pred(\mx)\srb+\max(\mp)      P_{K}\slb \pred(\mx-\vec{Z})\neq\pred(\mx)\srb
\end{align*}
where $\vec{Z}=(Z_1,\ldots,Z_k)$ with the $Z_i$'s being i.i.d. with common density $\KK$, and $P_{K}$ is the  probability measure induced by $K$, the joint density  of the random vector  $\vec{Z}$.
Therefore, we have obtained that
\[\Psi(\mx;\mp)\leq \max(\mp)-   P_{K}\slb \pred(\mx-\vec{Z})=\pred(\mx)\srb\slb \max(\mp)-\mp_{\pred(\mx)}\srb,\]
which, noting $\Psi^*(\mp)=\max(\mp)$, implies
\begin{align}
\label{inlemma: kernel: oi FC stuff}
    \Psi^*(\mp)-\Psi(\mx;\mp)\geq      P_{K}\slb \pred(\mx-\vec{Z})=\pred(\mx)\srb\slb \max(\mp)-\mp_{\pred(\mx)}\srb.
\end{align}
Observe that if $\pred(\mx)=\pred(-\vec{Z})$, then $\argmax(\mx-\vec{Z})\ni\pred(\mx)$. 
Letting $P_Z$ denote the probability measure corresponding to the distribution of the $Z_i$'s,  we obtain that 
\begin{align*}
       P_{K}\slb \argmax(\mx-\vec{Z})\ni\pred(\mx)\srb
    \geq &\       P_{K}\slb \pred(-\vec{Z})=\pred(\mx)\srb\\
  \geq &\ P_{Z}\slb -Z_{\pred(\mx)}> -Z_i, i\neq \pred(\mx)\srb\\
  =&\ P_{Z}\slb -Z_1> -Z_i, i\in[2:k]\srb,
\end{align*}
where the last step follows because  $-Z_1,\ldots,-Z_k$ are i.i.d., and hence, exchangeable, and $\mx$ is non-stochastic. The RHS of the above display is $2^{-(k-1)}$ because the $Z_i$'s have a density. 
Now note that
\begin{align*}
          P_{K}\slb \argmax(\mx-\vec{Z})\ni\pred(\mx)\srb\leq &\      P_{K}\slb \pred(\mx-\vec{Z})=\pred(\mx)\srb\\
     &\ +      P_{K}\slb \argmax(\mx-\vec{Z})\text{ is not singleton}\srb,
\end{align*}
whose second term vanishes because $\vec{Z}$ has a density. Thus we have shown that
\begin{align*}
       P_{K}\slb \pred(\mx-\vec{Z})=\pred(\mx)\srb\geq 2^{-(k-1)}.  
\end{align*}
Therefore, \eqref{inlemma: kernel: oi FC stuff} implies  \eqref{def of mathcal C psi} holds with $\CC_\phi=1$.

It remains to show that Condition \ref{assump: N3} holds. To this end, it suffices to show that if $\argmax(\mx)=j$, then  $ \lim_{a_n\to\infty}   \phi(a_n\mx,j)=1$. By \eqref{kernel based: alt form}, 
    \begin{align*}
  \liminf_{a_n\to\infty}   \phi(a_n\mx,j)=&\ \liminf_{a_n\to\infty}  E_{\KK}\lbt \prod_{i\in[k]:i\neq j}\slb  1-F_{\KK}(Z+a_n\mx_i-a_n\mx_j)\srb\rbt\\
    \stackrel{(a)}{\geq }  &\ E_{\KK}\lbt  \liminf_{a_n\to\infty} \prod_{i\in[k]:i\neq j}\slb  1-F_{\KK}(Z+a_n\mx_i-a_n\mx_j)\srb\rbt\\
    =&\  E_{\KK}\lbt \prod_{i\in[k]:i\neq j}\slb  1- \liminf_{a_n\to\infty} F_{\KK}(Z+a_n\mx_i-a_n\mx_j)\srb\rbt\\
    \stackrel{(b)}{=} &\  E_{\KK}\lbt \prod_{i\in[k]:i\neq j}\slb  1-  F_{\KK}(Z+\liminf_{a_n\to\infty}(a_n\mx_i-a_n\mx_j))\srb\rbt,
         \end{align*}
         where (a) follows by  Fatou's lemma since $0\leq F_{\KK}\leq 1$ and (b) follows because $F_{\KK}$ is continuous on $\RR$ since it has a density.   Since $\mx_j-\mx_i>0$ for all $i\in[k]$ such that $i\neq j$, 
        $a_n\mx_i-a_n\mx_j\to_n -\infty$. Thus, the RHS on the above display is one.
Thus we have shown that $ \liminf_{a_n\to\infty}   \phi(a_n\mx,j)\geq 1$. However, $\phi(\mx;j)\leq \Psi(\mx,\mo_k)=1$ since $\Psi(\mx,\mo_k)=1$ for the kernel-based surrogate. Therefore,  $ \lim_{a_n\to\infty}   \phi(a_n\mx,j)= 1$ in this case.
         

Now suppose $\argmax(\mx)$ is singleton and $j\notin\argmax(\mx)$. Then 
\[ 1-F_{\KK}(Z+a_n\mx_{\pred(\mx)}-a_n\mx_j)\as 0.\]
Therefore,
\[\prod_{i\in[k]:i\neq j}\slb 1-F_{\KK}(Z+a_n\mx_{i}-a_n\mx_j)\srb \as 0.\]
Therefore, by the dominated convergence theorem, it follows that
\begin{align*}
  \MoveEqLeft \liminf_{a_n\to\infty}  E_{\KK}\lbt \prod_{i\in[k]:i\neq j}\slb  1-F_{\KK}(Z+a_n\mx_i-a_n\mx_j)\srb\rbt\\
   =&\   E_{\KK}\lbt \liminf_{a_n\to\infty}\prod_{i\in[k]:i\neq j}\slb  1-F_{\KK}(Z+a_n\mx_i-a_n\mx_j)\srb\rbt,
\end{align*}
which equals  zero. The proof follows since
\[\liminf_{a_n\to\infty}   \phi(a_n\mx,j)=\liminf_{a_n\to\infty}  E_{\KK}\lbt \prod_{i\in[k]:i\neq j}\slb  1-F_{\KK}(Z+a_n\mx_i-a_n\mx_j)\srb\rbt.\]


\end{proof}

\subsection{Proof of Lemma \ref{lemma: FC: product type surrogate loss} for the product-based losses}
 \label{secpf: product loss FC}

 \begin{proof}[Proof of Lemma \ref{lemma: FC: product type surrogate loss}]
 Without loss of generality, let us assume that $C=1$, which implies
 $\tau=1-F_\KK$. Because $\KK$ is symmetric about zero, $1-F_\KK=F_\KK$, implying $\tau=F_\KK$ in this case.  Since $\tau$ is a distribution function in this case, it is non-decreasing. Hence, for the surrogate in \eqref{def: multi-cat: Fisher consistent psi},  $\phi(\mx;\pred(\mx))\geq \tau(0)^{k-1}$.  If $\KK$ is symmetric about zero,  then $\tau(0)=\int_{-\infty}^\infty 1[y\leq 0]\KK(y)dy=1/2$.  Hence, $\J=2^{-(k-1)}$ for such $\phi$'s.

    \noindent\textbf{Proving Conditions \ref{assump: N1},  \ref{assump: N2}, and \eqref{def of mathcal C psi}:}  
   Since we assumed $C=1$, it suffices to show that Conditions \ref{assump: N1},  \ref{assump: N2}, and \eqref{def of mathcal C psi} hold with $C_\ph=1$. 
   If we can show that $\Psi^*(\mp)=\max(\mp)$, and
        \begin{align}
           \label{instatement: product loss: c psi=1} 
           \Psi^*(\mp)-\Psi(\mx;\mp)\geq  (\max \mp-\mp_{\pred(\mx)})/2,
        \end{align}
then it will follow that Condition \ref{assump: N2} holds with $C_\phi=1$ and \eqref{def of mathcal C psi} holds with $\CC_\phi=1/2$. The latter would imply that Condition \ref{assump: N1} holds. Thus it suffices to show \eqref{instatement: product loss: c psi=1}. 

        First, we will show that $\Psi^*(\mp)\geq \max(\mp)$. To see this, consider $\mxm=m\mathbf{e}^{(k)}_{\pred(\mp)}$. In this case, $\pred(\mxm)=\pred(\mp)$. Since $\tau$ is a distribution function, it follows that $\lim_{m\to\infty}\phi(\mxm;i)= 1$ if $i=\pred(\mp)$ and $\lim_{m\to\infty}\phi(\mxm;i)= 0$ for all other $i$'s in $[k]$. Therefore, $\Psi(\mxm;\mp)\to_m\max(\mp)$. Since $\Psi^*(\mp)=\sup_{\mx\in\RR^k}\Psi(\mx;\mp)$, it follows that
 \[\Psi^*(\mp)\geq \lim_{m\to\infty}\Psi(\mxm;\mp)=\max(\mp).\] 
 Therefore, to show that Condition \ref{assump: N2} holds, it suffices to show that $\Psi^*(\mp)\leq\max(\mp)$.
 
   We will show that given any $k\in\NN$, 
     $\Psi(\mx;\mp)\leq (\max(\mp)+\mp_{\pred(\mx)})/2$ for all $\mp\in\RR_{\geq 0}^k$ and $\mx\in\RR^k$. Note that the above implies 
     \begin{align}
        \label{inlemma: product loss: main ineq} 
        2\max(\mp)-2\Psi(\mx;\mp)\geq \max(\mp)-\mp_{\pred(\mx)},
     \end{align}
which, combined with  $\Psi^*(\mp)\geq \max(\mp)$, leads to \eqref{instatement: product loss: c psi=1} with $\CC_{\phi}=1/2$. 
Also,    $\Psi(\mx;\mp)\leq (\max(\mp)+\mp_{\pred(\mx)})/2$ implies  $\Psi^*(\mp)\leq \max(\mp)$, from which Condition \ref{assump: N2} follows. Therefore, it suffices to prove \eqref{inlemma: product loss: main ineq}. We will use mathematical induction to prove this. Our induction hypothesis is that \eqref{inlemma: product loss: main ineq}  holds  for some $k\in\NN$. We will first show that  \eqref{inlemma: product loss: main ineq}  holds for $k=1$. Next we will show that if  \eqref{inlemma: product loss: main ineq}  hold for some $k\in\NN$, it holds for $k+1$ as well, which will complete the proof.

   When $k=1$, $\mx=x\in\RR$ and $\mp=p\in\RR_{\geq 0}$. Note that $\Psi(\mx;\mp)=p\tau(x)$. Since $\tau(x)$ is a distribution function, $\sup_{x\in\infty}\tau(x)=1$ and $\Psi^*(\mp)=p=\max(\mp)$.   Also $\mp_{\pred(\mx)}=p$. Since $\tau$ is a distribution function, $\tau(x)\leq 1$. Thus 
   \[\Psi(\mx;\mp)=p\tau(x)\leq p=(\max(\mp)+\mp_{\pred(\mx)})/2=(p+\mp_{\pred(\mx)})/2,\]
  implying \eqref{inlemma: product loss: main ineq}  holds for $k=1$.
   
   Suppose the induction hypothesis holds for some $k\in\NN$.
 Consider  $\mx\in\RR^{k+1}$ and $\mp\in\RR_{\geq 0}^{k+1}$. Let $\pred(\mx)=i^*$. Suppose $r\in[k]$ is such that $r=\max\{\argmax_{i\in[k]:i\neq i^*}\mx_i\}$. Note that we allow the case $\mx_{r}=\max(\mx)$, which can happen if $\argmax(\mx)$ is not singleton. Also, $\mx_r\leq \mx_{i^*}$ and $\mx_r\geq \mx_i$ for all $i\in[k]\setminus\{r,i^*\}$.
Note that $\Psi(\mx;\mp)$ equals
   \begin{align*}
      & \mp_{i^*}\phi(\mx;i^*)+\sum_{j\in[k]:j\neq i^*}\mp_j\phi(\mx;j)\\
     =&\ \mp_{i^*}\tau(\mx_{i^*}-\mx_r)\prod_{i\notin\{r,i^*\}}\tau(\mx_{i^*}-\mx_i)+\sum_{j\neq i^*}\mp_j\tau(\mx_j-\mx_{i^*})\prod_{i\notin\{i^*,j\}}\tau(\mx_j-\mx_i).
   \end{align*}
   Here, the products will be one if the range is empty, which is the case when $k=2$ since $r\neq i^*$. 
   Since  $\tau$ is a distribution function,   $\tau\leq 1$. Also, since $\tau$ is a distribution function, it is non-decreasing, implying $\tau(\mx_j-\mx_{i^*})\leq \tau(\mx_r-\mx_{i^*})$ for all $j\neq i^*$. Therefore,
   \begin{align*}
      \Psi(\mx;\mp) \leq &   \mp_{i^*}\tau(\mx_{i^*}-\mx_r)+\tau(\mx_r-\mx_{i^*})\sum_{j\in[k_2]:j\neq i^*}\mp_j\prod_{i\notin\{i^*,j\}}\tau(\mx_j-\mx_i).
   \end{align*}
  Let us denote 
  \[\mp'=(\mp_1,\ldots, \mp_{i^*-1},\mp_{i^*+1},\ldots,\mp_{k+1})\text{ and   
 }\mx'=(\mx_1,\ldots, \mx_{i^*-1},\mx_{i^*+1},\ldots,\mx_{k+1}).\] Then $\mp'\in\RR_{\geq 0}^k$ and $\mx'\in\RR^k$. Let us also denote by $\Psi(\mx';\mp')$ the $k$-dimesnional version of $\phi$.    Since the induction hypothesis holds for $k$, $\Psi(\mx';\mp')\leq \max(\mp')$.

Note that the last display can be rewritten as 
\begin{align*}
  \Psi(\mx;\mp) \leq &   \mp_{i^*}\tau(\mx_{i^*}-\mx_r)+\tau(\mx_r-\mx_{i^*})\Psi(\mx';\mp')\\
  \leq &\ \mp_{i^*}\tau(\mx_{i^*}-\mx_r)+\tau(\mx_r-\mx_{i^*})\max(\mp')\\
   \leq &\ \mp_{i^*}\tau(\mx_{i^*}-\mx_r)+\tau(\mx_r-\mx_{i^*})\max(\mp)\\
  \leq &\ \sup_{x\geq 0}\slb \mp_{i^*}\tau(x)+\tau(-x)\max(\mp)\srb,
\end{align*}
where we used the fact that $\mx_{i^*}\geq \mx_r$. 
Since $\KK$ is symmetric about zero, $\tau(-x)=1-\tau(x)$. Therefore, 
The derivative of the function $x\mapsto \mp_{i^*}\tau(x)+\max(\mp)\tau(-x)$ is $\KK(x)(\mp_{i^*}-\max(\mp))$, which is  non-positive because $\mp_{i^*}\leq \max(\mp)$ and $\KK(x)\geq 0$ as $\KK$ is a density.  Thus on the positive half-line, $\mp_{i^*}\tau(x)+\max(\mp)\tau(-x)$ is maximized at $x=0$. However, since $\KK$ is symmetric about zero, $\tau(0)=1/2$, which implies $\Psi(x;\mp)\leq (\mp_{i^*}+\max(\mp))/2$. Thus, we have shown that the induction hypothesis holds for $k+1$. Hence, the induction hypothesis holds for all $k\in\NN$, which completes the proof.

\noindent\textbf{Proving Condition \ref{assump: N3}:}
Since $\tau$ is a distribution function for $C=1$, $\tau(\infty)=1$ and $\tau(-\infty)=0$. Note also that we have shown that $C_\phi=1$. If $\argmax(\mx)=j$, then
 \[\lim_{a\to\infty}\phi(a\mx;j)=\prod_{i\neq j}\lim_{a\to\infty}\tau(a(\mx_j-\mx_i))=1.\]
 If $j\notin\argmax(\mx)$, and $\argmax(\mx)$ is singleton, $a(\mx_j-\mx_{\pred(\mx)})\to -\infty$ as $a\to\infty.$ The proof follows noting that, in this case,
 \[\lim_{a\to\infty}\phi(a\mx;j)=\prod_{i\neq j}\lim_{a\to\infty}\tau(a(\mx_j-\mx_i))=0.\]
 \end{proof}

\section{Proofs for Section \ref{sec: approximation error}}
\label{sec: approx error proofs}


\subsubsection{Proof of Lemma \ref{lemma: error decomp}}
\label{secpf: error decomp}
\begin{proof}
 Proposition \ref{prop: multi-cat FC} implies
 \[ C_*(V_*-V(\hf))\leq  V^\psi_*-V^\psi(\hf).\]
 Since $\J_t>0$ for all $t\in[T]$, it follows that  $C_*>0$.
 Recall from  Algorithm \ref{alg: SDSS} that $\hf=\trans(\hg)$ where the $\trans$ operator is as in \eqref{eq: relative class cores}. Since we defined $\Vr(g)$ to be $V^\psi(\trans(g))$,  $V^\psi(\hf)=\Vr(\hg)$.  
 From Section \ref{sec: method}, it follows that for relative-margin-based surrogates, $V^\psi_*=\sup_{g\in\W}\Vr(g)$. Thus $ V^\psi_*-V^\psi(\hf) =\sup_{g\in\W}\Vr(g)-\Vr(\hg)$,
implying the following decomposition of the $\psi$-regret $ V^\psi_*-V^\psi(\hf)$:
 \begin{align*}
  \MoveEqLeft \sup_{g\in\W}\Vr(g)-\Vr(\tilde g)+ (\Vr-\hVr)(\tga)-(\Vr-\hVr)(\hg)\\
    &\ + \hVr(\tga)- \sup_{g\in\U_n}\hVr(g)+\underbrace{\sup_{g\in\U_n}\hVr(g)-\hVr(\hg)}_{\text{Optimization error: }\Opn}.
\end{align*}
The proof follows noting  $\widehat V^\psi(\tga)\leq  \sup_{g\in\U_n}\widehat V^\psi(g)\leq 0$ because  $\U_n\ni \tga$. 
\end{proof}
\subsection{Proof of Result \ref{result: smoothed psi satisfies approx error conditions}}
\label{secpf: kernel loss satisfies approx condition}
\begin{proof}[Proof of Result \ref{result: smoothed psi satisfies approx error conditions}]
  Proof follows trivially if $j\in\argmax(\mx)$. Suppose $j\notin\argmax(\mx)$.  Let $\pred(x)=i$. Then 
    \begin{align*}
       \psi(x;j)= &\ P_{\KK}\slb Z_j\leq Z_{i'} +x_j-x_{i'}\quad \text { for all }i'\neq j, i'\in[k] \srb\\
      \leq &\ P_{\KK} \slb Z_j\leq Z_i +x_j-x_i\srb \\
     =  &\  P_{\KK} \slb Z_j\leq Z_i -(x_i-x_j)\srb  
    \end{align*}
     However, since $x_i-x_j>0$,
    \[P_{\KK}(Z_i-Z_j\geq x_i-x_j)=P_{\KK}\slb (Z_i-Z_j)^2>(x_i-x_j)^2\srb\leq \frac{E_{\KK}[(Z_1-Z_2)^2]}{(x_i-x_j)^2}\]
    by Markov's inequality since $Z_1-Z_2$ has finite second moment. Moreover,
    $C_a=E_{\KK}[(Z_1-Z_2)^2]=E_{\KK}[Z_1^2]+E_{\KK}[Z_2^2]=2E_{\KK}[Z_1^2]>0$ because $E_{\KK}[Z_1^2]>0$.
\end{proof}

\subsection{Proof of Theorem \ref{theorem: approx error theorem}}
Without loss of generality, we assume that $C_{\phi_t}=1$ for all $t\in[T]$. If that is not the case, we can replace the $\phi_t$'s by $\phi_t/C_{\phi_t}$. 
Our first step is to express $V^\psi_*-V^\psi(f)$ in terms of the optimal Q-functions. To this end, we will need the following lemma.

\begin{lemma}
\label{lemma: approx:  difference decomposition in terms of Q-functions}
Under the setup of Theorem \ref{theorem: approx error theorem}, for any $f=(f_1,\ldots,f_T)\in\F$, 
\begin{align*}
   V^\psi_*-V^\psi(f)= \E\lbt \sum\limits_{j=1}^{T}\prod_{i=1}^j\frac{\phi_i(f_i(H_i);A_i)}{\pi_i(A_i\mid H_i)}\slb \max(Q_{j}^*(H_{j}))-Q_{j}^*(H_{j};A_{j})\srb\rbt.
\end{align*}
\end{lemma}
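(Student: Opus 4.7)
}
The plan is to exploit the symmetry of the $\phi_t$'s to turn the weighted cumulative product into a martingale-like object, after which the claimed identity reduces to a telescoping advantage decomposition. First, as in the proof of Theorem~\ref{theorem: approx error theorem} we may assume without loss of generality that $C_{\phi_t}=1$ for every $t\in[T]$ (otherwise replace each $\phi_t$ by $\phi_t/C_{\phi_t}$, which preserves symmetry and all stated conditions). Under this normalization, symmetry of $\phi_t$ gives $\sum_{i\in[k_t]}\phi_t(\mx;i)=\Psi_t^*(\mo_{k_t})=1$ for every $\mx\in\RR^{k_t}$, so by \eqref{intheorem: necessity: IPW sum one},
\[
\E\!\left[\tfrac{\phi_t(f_t(H_t);A_t)}{\pi_t(A_t\mid H_t)}\,\Big|\,H_t\right]=1.
\]
Writing $W_j=\prod_{i=1}^{j}\tfrac{\phi_i(f_i(H_i);A_i)}{\pi_i(A_i\mid H_i)}$ (with $W_0\equiv 1$), this yields $\E[W_j\mid H_j]=W_{j-1}$, i.e., $(W_j)$ satisfies a martingale-type identity with respect to the stage filtration.

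Next I would expand the claimed right-hand side into two pieces:
\[
\mathrm{RHS}=\sum_{j=1}^T\E\!\left[W_j\max(Q_j^*(H_j))\right]-\sum_{j=1}^T\E\!\left[W_jQ_j^*(H_j,A_j)\right].
\]
For the first sum, since $\max(Q_j^*(H_j))$ is $H_j$-measurable and $\E[W_j\mid H_j]=W_{j-1}$, each term equals $\E[W_{j-1}\max(Q_j^*(H_j))]$. For the second sum, the Bellman identity $Q_j^*(H_j,A_j)=\E[Y_j+\max(Q_{j+1}^*(H_{j+1}))\mid H_j,A_j]$ (with $Q_{T+1}^*\equiv 0$) together with $(H_j,A_j)$-measurability of $W_j$ gives
\[
\E[W_jQ_j^*(H_j,A_j)]=\E[W_jY_j]+\E[W_j\max(Q_{j+1}^*(H_{j+1}))]
\]
for $j<T$ and $\E[W_TQ_T^*(H_T,A_T)]=\E[W_TY_T]$.

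Substituting these and shifting the index $j\mapsto j-1$ in the resulting "continuation" terms produces a perfect telescope: the only surviving $\max$-term is $\E[W_0\max(Q_1^*(H_1))]=\E[\max(Q_1^*(H_1))]=V_*$, where the last equality uses Lemma~\ref{lemma: sufficiency: p t star and Q t star} (which is available because $C_{\phi_t}=1$ for all $t$ implies $\mp_1^*=Q_1^*$ and $V^\psi_*=V_*$). The residue is $-\sum_{j=1}^T\E[W_jY_j]$, and it remains to identify this with $-V^\psi(f)$. The latter identification is a second short martingale argument: for $j<T$, $Y_j$ is $H_{j+1}$-measurable and the telescoping relation $\E[W_{\ell+1}Y_j]=\E[W_{\ell}Y_j]$ for $\ell\geq j$ pushes each $W_j$ up to $W_T$, giving $\sum_{j=1}^T\E[W_jY_j]=\E[W_T\sum_{j=1}^TY_j]=V^\psi(f)$.

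The bulk of the work is bookkeeping; no single step is an obstacle once the symmetry-induced property $\E[W_j\mid H_j]=W_{j-1}$ is in hand. The subtlest point is ensuring the boundary index $j=T$ is handled correctly (no stage $T+1$, so the convention $Q_{T+1}^*\equiv 0$ must be made explicit), and verifying that $V^\psi_*=V_*$ under our normalization, which both follow from results proved earlier in the paper.
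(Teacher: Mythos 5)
Your proof is correct and takes a genuinely different route from the paper's. The paper's argument runs through the auxiliary functions $\mp_t$ and $\mp_t^*$ (defined in \eqref{def: suff: pt f} and \eqref{def: sufficiency: pt star}): it sets $\T_t=\E[\Psi_t(f_t(H_t);\mp_t^*(H_t))-\Psi_t(f_t(H_t);\mp_t(H_t))\mid H_t]$, proves by backward induction that $\T_t$ satisfies the telescoping recursion \eqref{inlemma: approx: Tt: induction}, then combines with Lemma~\ref{lemma: sufficiency}, Lemma~\ref{lemma: approx: p t star q t star}, and \eqref{intheorem: necessity: IPW: general p} to convert everything to $Q^*$-advantages. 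Your argument bypasses the $\mp_t$/$\mp_t^*$ machinery entirely: the key observation $\E[W_j\mid H_j]=W_{j-1}$, which is exactly what symmetry and the normalization $C_{\phi_t}=1$ deliver, lets you lift $W_j\mapsto W_{j-1}$ on the $H_j$-measurable $\max(Q_j^*)$ terms, while the Bellman recursion $Q_j^*(H_j,A_j)=\E[Y_j+\max(Q_{j+1}^*(H_{j+1}))\mid H_j,A_j]$ splits the action-value terms, and everything telescopes. What the paper's approach buys is direct reuse of the scaffolding already built for Theorem~\ref{theorem: sufficient conditions}; what yours buys is a shorter, more self-contained derivation that makes the role of symmetry transparent (it is precisely what makes $W$ a "martingale-like" weight) and makes the boundary treatment ($Q_{T+1}^*\equiv 0$) explicit rather than implicit in $\mp_T^*=\mp_T$.

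One small remark: you both invoke the WLOG normalization $C_{\phi_t}=1$. As used here this is fine because the lemma is applied inside the proof of Theorem~\ref{theorem: approx error theorem}, where that normalization has already been made — but note that, unlike for the inequality in Theorem~\ref{theorem: approx error theorem}, the identity in this lemma does \emph{not} scale uniformly on both sides under $\phi_t\mapsto\phi_t/C_{\phi_t}$ when the $C_{\phi_t}$'s differ across $t$ (the $j$th summand on the right picks up $\prod_{i\le j}C_{\phi_i}^{-1}$, not $\prod_{i\le T}C_{\phi_i}^{-1}$). It would be cleaner to state $C_{\phi_t}=1$ as an explicit hypothesis of the lemma rather than as a "without loss of generality" step; this applies equally to the paper's own proof.
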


 Lemma \ref{lemma: approx:  difference decomposition in terms of Q-functions} is proved in 
Section \ref{sec: aux lemmas for approx error}. 
Since $\phi_t\geq 0$, and $\sum_{j\in[k_t]}\phi_t(\mx;j)\leq \Psi_t^*(\mo_{k_t})=1$, if follows that $\phi_t\leq 1$ for all $t\in[T]$. Moreover, since $\pi_t> \pb$ for all $t\in[T]$, using Lemma \ref{lemma: approx:  difference decomposition in terms of Q-functions} and \eqref{intheorem: necessity: IPW: general p}, we obtain
\begin{align}
\label{intheorem: approx: main inequality}
  V^\psi_*-V^\psi(f) &\ \leq 
    \frac{\sum\limits_{t\in[T]} \E\slbt \sum\limits_{i\in[k_t]:i\neq d_t^*(H_t)}\slb Q_t^*(H_t; d_t^*(H_t))-Q_t^*(H_t; i)\srb \phi_t(f_t(H_t);i) \srbt}{\pb^{T}}.
\end{align}
For fixed $i\in[k_t]$,
\begin{align*}
   \MoveEqLeft  \slb Q_t^*(H_t; d_t^*(H_t))-Q_t^*(H_t; i)\srb \phi_t(f_t(H_t);i) \\
   =&\  \underbrace{ \slb Q_t^*(H_t; d_t^*(H_t))-Q_t^*(H_t; i)\srb \phi_t(\myb_n Q_t^*(H_t);i)}_{S_1}\\
   &\ -\underbrace{\slb Q_t^*(H_t; d_t^*(H_t))-Q_t^*(H_t; i)\srb\slb \phi_t(\myb_n Q_t^*(H_t);i)- \phi_t(f_t(H_t);i) \srb}_{S_2}.
\end{align*}
Defining $\hn=\trans(\tga)$, we will now show that when  $f=\myb_n \hn$, then 
\begin{equation}
    \label{intheorem: approx: S1 inequality}
    S_1\lesssim C_a \myb_n^{-2}z^{-1}+z1[H_t\in\mE_{ti}]
\end{equation}
and
\begin{equation}
     \label{intheorem: approx: S2 inequality}
     S_2\lesssim C_a \myb_n^{-2}\delta_n^{-1}+\delta_n 1_{\mE_{ti}'}.
 \end{equation}
\subsubsection{Bound on $S_1$}
For fixed $t\in[T]$, $i\in[k_t]$, and $z>0$, let us denote $\mE_{ti}=\{h_t\in\H_t:  Q_t^*(h_t; d_t^*(h_t))-Q_t^*(h_t; i)\leq z\}$. Ideally, $\mE_{ti}$ should depend on $z$, but we omit the dependence for notational simplicity. Let us denote the random variables  $1_{\mE_{ti}^c}$ and $1_{\mE_{ti}^c}$ by $1_{\mE_{ti}}$ and $1_{\mE_{ti}^c}$, respectively.
Then 
\begin{align*}
    S_1=&\ \underbrace{\slb Q_t^*(H_t; d_t^*(H_t))-Q_t^*(H_t; i)\srb \phi_t(\myb_n Q_t^*(H_t);i) 1_{\mE_{ti}^c}}_{S_{11}}\\
    &\ + \underbrace{\slb Q_t^*(H_t; d_t^*(H_t))-Q_t^*(H_t; i)\srb \phi_t(\myb_n Q_t^*(H_t);i) 1_{\mE_{ti}}}_{S_{12}}.
\end{align*}
We apply Condition \ref{assump: N3: strong} on the first term to obtain
\[\psi(a_nQ_t^*(H_t);i)\leq C_a\slb \myb_n \slb Q_t^*(H_t, d_t^*(H_t))-Q_t^*(H_t,i)\srb\srb^{-2}.\]
 Thus
 \[S_{11}\lesssim C_a \myb_n^{-2}\slb Q_t^*(H_t, d_t^*(H_t))-Q_t^*(H_t,i))\srb^{-1}1_{\mE_{ti}^c}\leq C_a \myb_n^{-2}z^{-1}\]
 because for $H_t\in\mE^c_t$, $Q_t^*(H_t, d_t^*(H_t))-Q_t^*(H_t,i)> z>0$.
Finally, \eqref{intheorem: approx: S1 inequality}  follows, noting, since   $\phi_t\leq 1$ for all $t\in[T]$, we have $S_{12}\leq z1_{\mE_{ti}}$.

 
 

\subsubsection{Bound on $S_2$}
 Now for fixed $t\in[T]$, $i\in[k_t]$, and for any  $\delta_n>0$, we denote  $\mE'_{ti}=\{h_t\in\H_t:Q_t^*(h_t; d_t^*(h_t))-Q_t^*(h_t; i)\leq 3\delta_n\}$. Here we suppress the dependence of $\mE$ on $n$ for the sake of simplicity. Now notice that $S_2$ is bounded above by
 \begin{align*}
\MoveEqLeft \underbrace{\slb Q_t^*(H_t; d_t^*(H_t))-Q_t^*(H_t; i)\srb\slb \phi_t(\myb_n Q_t^*(H_t);i)- \phi_t(\myb_n\hn_t(H_t);i)\srb 1_{(\mE_{ti}')^c}}_{S_{21}}\\
&\ +\underbrace{\slb Q_t^*(H_t; d_t^*(H_t))-Q_t^*(H_t; i)\srb\slb \phi_t(\myb_n Q_t^*(H_t);i)- \phi_t(\myb_n\hn_t(H_t);i)\srb 1_{\mE'_{ti}}}_{S_{22}}.
 \end{align*}
 We will analyze $S_{21}$ first, for which, we use Lemma \ref{lemma: approx: connecting f and Q on E} below. This lemma is proved in Supplement \ref{secpf: proof of Lemma approx: connecting f and Q on E}.
 \begin{lemma}
\label{lemma: approx: connecting f and Q on E}
    Under the setup of Theorem \ref{theorem: approx error theorem}, for all $t\in[T]$, $h_t\in \H_t$, and $i\in[k_t]$, it holds that, 
\begin{align}
    \label{intheorem: approx: S21: lower bound }
    \hn_{t,d_t^*(h_t)}(h_t)-\hn_{ti}(h_t)\geq Q_t^*(h_t;{d_t^*(h_t)})-Q_t^*(h_t;i)-2\delta_n\text{ where }\hn_t=\trans(\tga_t).
\end{align}
\end{lemma}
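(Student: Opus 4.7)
\textbf{Proof proposal for Lemma \ref{lemma: approx: connecting f and Q on E}.} The plan is a direct, case-based computation that exploits the explicit form of $\hn_t = \trans(\tga_t) = (0, -\tga_t)$ together with the approximation hypothesis $\|\tga_{t,j} - \texttt{blip}_{t,j}\|_\infty \leq \delta_n$ for all $j \in [k_t-1]$. The main observation is that for $i \in [k_t]$, the relative score $\hn_{ti}(h_t)$ is essentially an estimate of the quantity $Q_t^*(h_t; i) - Q_t^*(h_t; 1)$: indeed, when $i=1$ this holds exactly since $\hn_{t,1}(h_t) = 0$, and when $i \geq 2$ we have $\hn_{ti}(h_t) = -\tga_{t,i-1}(h_t)$, while by definition $\texttt{blip}_{t,i-1}(h_t) = Q_t^*(h_t;1) - Q_t^*(h_t;i)$, so $|\hn_{ti}(h_t) - (Q_t^*(h_t;i) - Q_t^*(h_t;1))| \leq \delta_n$.

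Given this observation, the strategy is to show the stronger pairwise inequality
\[
Q_t^*(h_t; i_1) - Q_t^*(h_t; i_2) - \bigl(\hn_{t,i_1}(h_t) - \hn_{t,i_2}(h_t)\bigr) \leq 2\delta_n
\]
for every $i_1, i_2 \in [k_t]$, from which the lemma follows by taking $i_1 = d_t^*(h_t)$ and $i_2 = i$. I would split into four subcases according to whether $i_1$ or $i_2$ equal $1$. In the cases $i_1=1,\, i_2 \neq 1$ and $i_1 \neq 1,\, i_2 = 1$, the difference collapses to a single term of the form $\pm(\tga_{t,\cdot}(h_t) - \texttt{blip}_{t,\cdot}(h_t))$, which is bounded by $\delta_n$ in absolute value. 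In the case $i_1, i_2 \neq 1$, one writes $Q_t^*(h_t; i_1) - Q_t^*(h_t; i_2) = -\texttt{blip}_{t,i_1-1}(h_t) + \texttt{blip}_{t,i_2-1}(h_t)$ and $\hn_{t,i_1}(h_t) - \hn_{t,i_2}(h_t) = -\tga_{t,i_1-1}(h_t) + \tga_{t,i_2-1}(h_t)$, and the difference telescopes into two terms each controlled by $\delta_n$, giving the $2\delta_n$ bound. The case $i_1=i_2$ is trivial.

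There is no real obstacle here: the proof is bookkeeping. The only subtle point is remembering that the $\trans$ transformation introduces a sign flip and that blip functions are defined relative to treatment label $1$, so one must track signs carefully to ensure the inequalities go in the correct direction. The factor of $2$ on the right-hand side is exactly the cost of passing through the reference label $1$ when neither $i_1$ nor $i_2$ equals $1$; it cannot be improved in general, and this is precisely the slack that the $1_{\mathcal{E}'_{ti}}$ indicator in the bound on $S_2$ is designed to absorb via the small-noise assumption.
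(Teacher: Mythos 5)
Your proof is correct and follows essentially the same route as the paper: both arguments reduce to a case split on whether the relevant treatment labels equal the reference label $1$, then use the uniform bound $\|\tga_{tj} - \texttt{blip}_{tj}\|_\infty \leq \delta_n$ once or twice depending on the case. Your framing via the abstract pairwise inequality $Q_t^*(h_t;i_1)-Q_t^*(h_t;i_2)-(\hn_{t,i_1}(h_t)-\hn_{t,i_2}(h_t))\leq 2\delta_n$ (equivalently, $|e_{i_2}-e_{i_1}|\leq 2\delta_n$ where $e_i$ is the approximation error at label $i$) is slightly cleaner than the paper's direct three-case verification, but the content is the same bookkeeping.
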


Since $H_t\in(\mE'_{ti})^c$
  implies $Q_t^*(H_t;{d_t^*(H_t)})-Q_t^*(H_t;i)> 3\delta_n$, Lemma \ref{lemma: approx: connecting f and Q on E} implies the following for $H_t\in(\mE'_{ti})^c$:
 \[ \hn_{t,d_t^*(H_t)}(H_t)-\hn_{ti}(H_t)>\delta_n>0\quad\text{for all }i\neq {d_t^*(H_t)}.\]
Therefore, $\argmax(\hn_t(H_t))=\{{d_t^*(H_t)}\}$ for $H_t\in(\mE'_{ti})^c$. On the other hand, Condition \ref{assump: N3: strong} implies
\begin{align*}
    \phi_t(\myb_n\hn_t(H_t);i)
    \leq&\ \myb_n ^{-2}C_a\slb \hn_{t,d_t^*(H_t)}(H_t)-\hn_{ti}(H_t)\srb^{-2}\\
    \leq&\  \myb_n^{-2} C_a\slb Q_t^*(H_t;{d_t^*(H_t)})-Q_t^*(H_t;i)-2\delta_n\srb^{-2},
\end{align*}
where the last step follows from Lemma \ref{lemma: approx: connecting f and Q on E} and the fact that $Q_t^*(H_t;{d_t^*(H_t)})-Q_t^*(H_t;i)-2\delta_n>\delta_n>0$ for $H_t\in(\mE'_{ti})^c$.
Moreover, if $Q_t^*(H_t;{d_t^*(H_t)})-Q_t^*(H_t;i)> 3\delta_n$, then
\[2\delta_n< \frac{2}{3}\slb Q_t^*(H_t;{d_t^*(H_t)})-Q_t^*(H_t;i)\srb,\]
which implies, for $H_t\in(\mE'_{ti})^c$,
\[ Q_t^*(H_t;{d_t^*(H_t)})-Q_t^*(H_t;i)-2\delta_n\geq \frac{1}{3}\slb  Q_t^*(H_t;{d_t^*(H_t)})-Q_t^*(H_t;i)\srb.\]
Therefore, for $H_t\in(\mE'_{ti})^c$,
\[\phi_t(\myb_n\hn_t(H_t);i)\leq 9a_n^{-2}C_a \slb Q_t^*(H_t;{d_t^*(H_t)})-Q_t^*(H_t;i)\srb^{-2}.\]
On the other hand,  Condition \ref{assump: N3: strong} applied on $\phi_t(a_nQ_t^*(H_t);i)$ implies that 
\[\phi_t(a_nQ_t^*(H_t);i)\leq C_a  \myb_n^{-2} \slb Q_t^*(H_t;{d_t^*(H_t)})-Q_t^*(H_t;i)\srb^{-2}.\]
Therefore, we have shown that for $H_t\in(\mE'_{ti})^c$,
\begin{align*}
 \MoveEqLeft   \slb Q_t^*(H_t; {d_t^*(H_t)})-Q_t^*(H_t; i)\srb\slb \phi_t(\myb_n Q_t^*(H_t);i)- \phi_t(\myb_n\hn_t(H_t);i)\srb\\
 \leq&\ 10C_a  \myb_n^{-2} \slb Q_t^*(H_t;{d_t^*(H_t)})-Q_t^*(H_t;i)\srb^{-1},
\end{align*}
 which implies $ S_{21}\leq 10 C_a \myb_n^{-2}\delta_n^{-1}1_{(\mE'_{ti})^c}$. On the other hand, from the definition of $\mE_{ti}'$, and using $\phi_t\leq 1$, 
 it is not hard to see that
$S_{22}\leq 6\delta_n 1_{\mE_{ti}'}$. 
 Thus, \eqref{intheorem: approx: S2 inequality} is proved.

 Combining \eqref{intheorem: approx: main inequality}, \eqref{intheorem: approx: S1 inequality},  and \eqref{intheorem: approx: S2 inequality}, we obtain that
 \begin{align*}
     V^\psi_*-V^\psi(\myb_n\hn)\lesssim &\ \frac{1}{\pb^T}\lb T \myb_n^{-2} C_a(z^{-1}+\delta_n^{-1})+\sum\limits_{t\in[T]}\E\slbt \sum\limits_{i\in[k_t]:i\neq d_t^*(H_t)} (z1_{\mE_{ti}}+\delta_n 1_{\mE_{ti'}})\srbt\rb.
 \end{align*}
  Let $\mE_t=\{h_t: \mu(Q_t^*(h_t))\leq z\}$ and $\mE'_t=\{h_t: \mu(Q_t^*(h_t))\leq 3\delta_n\}$.
 Then
 \[\sum\limits_{i\in[k_t]:i\neq d_t^*(H_t)}  1_{\mE_{ti}}\leq (k_t-1) 1_{\mE_t}\text{ and } \sum\limits_{i\in[k_t]:\neq d_t^*(H_t)}  1_{\mE'_{ti}}\leq (k_t-1) 1_{\mE'_t}.\]
 By the small noise assumption, $\PP(\mE_t)\leq z^{\alpha}$ and  $\PP(\mE'_t)\lesssim \delta_n^\alpha$.
 Thus 
 \[ V^\psi_*-V^\psi(\myb_n\hn)\lesssim \frac{1}{\pb^T} \lb \myb_n^{-2}z^{-1}C_a+\myb_n^{-2}\delta_n^{-1}C_a+z^{1+\alpha}+\delta_n^{1+\alpha}\rb.\]
We want the $z^{1+\alpha}$ term to match $\delta_n^{1+\alpha}$. Thus, letting $z=\delta_n$
\[V^\psi_*-V^\psi(\myb_n\hn)\lesssim  \frac{1}{\pb^T} \lb \myb_n^{-2}\delta_n^{-1}C_a+\delta_n^{1+\alpha}\rb.\]
Since $\myb_n> \delta_n^{-(1+\alpha/2)}$, it follows that $\delta_n^{1+\alpha}>\myb_n^{-2}\delta_n^{-1}$. Thus
\[V^\psi_*-V^\psi(\myb_n\hn)\lesssim  \frac{1}{\pb^T} (1+C_a)\delta_n^{1+\alpha}.\]
Since $\sup_{g\in\W}\Vr(g)=V^\psi_*$ for relative-margin-based surrogates, and  $V^{\psi,\text{rel}}(\myb_n\tga)=V^\psi(\myb_n\trans(\tga))=V^\psi(\myb_n\hn)$, the proof follows.

\subsection{Proof of the auxiliary lemmas for proving Theorem \ref{theorem: approx error theorem}}
\label{sec: aux lemmas for approx error}

\begin{lemma}
\label{lemma: approx: p t star q t star}
    Under Assumptions I-V, for all $t\in[T]$ and $i\in[k-t]$, 
    \begin{equation}
\label{inlemma: approx: p star to Q}
 \max(p_{t}^*(H_{t}))-p_{t}^*(H_{t};i)   =Q_t^*(H_t,d_t^*(H_t))-Q_t^*(H_t;i).
\end{equation}
\end{lemma}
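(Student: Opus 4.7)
The plan is a direct computation leveraging the closed-form expression for $p_t^*(H_t)$ established earlier in the paper. Since $\max(p_t^*(H_t)) - p_t^*(H_t;i)$ and $Q_t^*(H_t, d_t^*(H_t)) - Q_t^*(H_t;i)$ are both differences across the coordinate $i$, it suffices to show that $p_t^*(H_t;i)$ and $Q_t^*(H_t;i)$ agree up to an additive term that does not depend on $i$; the $\max$-argmax identification will then be automatic via Lemma \ref{lemma:  sufficiency: induction: Psi t star p t star}, which says $\argmax(p_t^*(H_t)) = \argmax(Q_t^*(H_t))$ for $t\geq 2$ (and Lemma \ref{lemma: sufficiency: p t star and Q t star} for $t=1$).

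For the base case $t=1$, Lemma \ref{lemma: sufficiency: p t star and Q t star} gives $p_1^*(H_1) = Q_1^*(H_1)$, so the identity is trivial. For $t \in [2:T]$, I would combine two representations. On one hand, equation \eqref{def: p t star} (derived inside the proof of Lemma \ref{lemma:  sufficiency: induction: Psi t star p t star}) expresses
\[
p_t^*(H_t)_i \;=\; \E\!\left[\Bigl(\sum_{j=1}^T Y_j\Bigr)\prod_{j=1+t}^{T}\frac{1[A_j=d_j^*(H_j)]}{\pi_j(A_j\mid H_j)}\ \Big|\ H_t,\,A_t=i\right].
\]
On the other hand, Fact \ref{fact: Q function expression} gives
\[
Q_t^*(H_t,i) \;=\; \E\!\left[\Bigl(\sum_{j=t}^T Y_j\Bigr)\prod_{j=1+t}^{T}\frac{1[A_j=d_j^*(H_j)]}{\pi_j(A_j\mid H_j)}\ \Big|\ H_t,\,A_t=i\right].
\]

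Subtracting, the difference involves only $\sum_{j=1}^{t-1} Y_j$, which is $H_t$-measurable and can be pulled outside the conditional expectation; what remains is $\E[\prod_{j=1+t}^{T}\{1[A_j=d_j^*(H_j)]/\pi_j(A_j\mid H_j)\}\mid H_t,A_t=i]$, which equals $1$ by \eqref{intheorem: fact: product of indicators}. Hence $p_t^*(H_t;i) - Q_t^*(H_t;i) = \sum_{j=1}^{t-1} Y_j$, a quantity independent of $i$. Taking the maximum over $i$ on both sides and subtracting yields the claimed identity; the fact that the maximizing coordinate on the $Q_t^*$ side is $d_t^*(H_t)$ follows from the definition of $d_t^*$ in \eqref{def: d star from Q}.

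There is no serious obstacle here; the only point requiring care is being explicit about which conditioning sets and which measurability arguments allow pulling $\sum_{j=1}^{t-1} Y_j$ out of the conditional expectation, and citing the correct preceding identities (\eqref{def: p t star}, Fact \ref{fact: Q function expression}, and \eqref{intheorem: fact: product of indicators}). The proof should take no more than a few lines.
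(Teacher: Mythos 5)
Your proposal is correct and follows essentially the same route as the paper's proof: establish $p_t^*(H_t;i) = Q_t^*(H_t,i) + \sum_{j=1}^{t-1} Y_j$ via \eqref{def: p t star}, Fact~\ref{fact: Q function expression}, and \eqref{intheorem: fact: product of indicators}, then observe the $i$-independent shift cancels in the difference. The only cosmetic difference is that the paper handles $t=T$ separately from $t\in[2:T-1]$ (since \eqref{def: p t star} is derived within the induction for $t<T$, while $p_T^*=p_T$ is direct from \eqref{def: sufficiency: pt star}), whereas you fold them together; the argument is the same in substance.
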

\begin{proof}[Proof of Lemma \ref{lemma: approx: p t star q t star}]
Note that for $j\in[k_T]$,  \eqref{def: sufficiency: pt star} implies
\begin{align*}
    p_T^*(H_T;j)= \E\slbt\sum\limits_{i=1}^T Y_i\mid H_T, A_T,A_T=j\srbt=&\ \E[Y_T\mid H_T,A_T=j]+\sum\limits_{i=1}^{T-1}Y_i,
\end{align*}
which equals $Q_T^*(H_T,j)+\sum_{i=1}^{T-1}Y_i$. 
Therefore,
\[ \max(p_{T}^*(H_{T}))-p_{T}^*(H_{T};i)=\max(Q_{T}^*(H_{T}))-Q_{T}^*(H_{T};i).\]
Hence, the proof follows for $t=T$. For $t=1$, the proof follows from Lemma \ref{lemma: sufficiency: p t star and Q t star}. Suppose $T>2$. 
For $t\in[2:T-1]$, using \eqref{def: p t star} we obtain that for $i\in[k_t]$, 
    \begin{align*}
     p_t^*(H_t;i)=&\   \E\lbt\E\lbt \slb\sum\limits_{i=1}^T Y_i\srb\prod_{j=1+t}^T\frac{1[A_j=d_j^*(H_j)]}{\pi_j(A_j\mid H_j)}\bl H_{1+t}\rbt\bl H_t, A_t=i\rbt\\
     =&\  \E\lbt\E\lbt \slb\sum\limits_{i=1}^{t-1} Y_i\srb\prod_{j=1+t}^T\frac{1[A_j=d_j^*(H_j)]}{\pi_j(A_j\mid H_j)}\bl H_{1+t}\rbt\bl H_t, A_t=i\rbt\\
     &\ +\E\lbt\E\lbt \slb\sum\limits_{i=t}^T Y_i\srb\prod_{j=1+t}^T\frac{1[A_j=d_j^*(H_j)]}{\pi_j(A_j\mid H_j)}\bl H_{1+t}\rbt\bl H_t, A_t=i\rbt,
    \end{align*}
    whose last term is $Q_t^*(H_t,i)$ by Fact \ref{fact: Q function expression} and the first term 
    \begin{align*}
  \MoveEqLeft  \slb\sum\limits_{i=1}^{t-1} Y_i\srb  \E\lbt  \E\lbt \prod_{j=1+t}^T\frac{1[A_j=d_j^*(H_j)]}{\pi_j(A_j\mid H_j)}\bl H_{1+t}\rbt \bl H_t, A_t=i\rbt =  \sum\limits_{i=1}^{t-1} Y_i
    \end{align*}
because 
\[ \E\lbt \prod_{j=1+t}^T\frac{1[A_j=d_j^*(H_j)]}{\pi_j(A_j\mid H_j)}\bl H_{1+t}\rbt =1\]
by \eqref{intheorem: fact: product of indicators}. 
Therefore, for $1<t<T$ and $i\in[k_t]$,
\[p_t^*(H_t;i)=Q_t^*(H_t,i)+\sum\limits_{i=1}^{t-1} Y_i.\]
Hence, the lemma follows for $t\in[2:T-1]$.
\end{proof}

\begin{proof}[Proof of Lemma \ref{lemma: approx:  difference decomposition in terms of Q-functions}]
Without loss of generality, we assume that $C_{\phi_t}=1$ for each $t\in[T]$. Otherwise, we can replace $\phi_t$ by $\phi_t/C_{\phi_t}$ for each $t\in[T]$. 
Since $\phi_t$ is symmetric for each $t\in[T]$, it  holds that $\Psi_t^*(\mo_{k_t})=\Psi_t(\mx;\mo_{k_t})$ for each $\mx\in\RR^{k_t}$. Since $\Psi_t^*(\mo_{k_t})=C_{\phi_t}=1$, it follows that $\Psi_t(\mx;\mo_{k_t})=\sum_{i\in[k_t]}\phi_t(\mx;i)=1$ for each $\mx\in\RR^{k_t}$. This fact will be used repeatedly in the proof. 
We will also use the notation introduced in Section~\ref{secpf: suff: new notation} for the proof of Theorem~\ref{theorem: sufficient conditions}.

For $t\in[T]$, let us define
\[\T_t=\E\lbt \Psi_t(f_t(H_t);p_t^*(H_t))-\Psi_t(f_t(H_t);p_t(H_t))\mid H_t\rbt.\]
Since $p_T^*(H_T)=p_T(H_T)$, we obtain that $\T_T=0$.
For $t\in[T-1]$, we obtain that $\T_t $ equals
\begin{align*}
 &\ \E\slbt \sum\limits_{i\in[k_t]}\phi_t(f_t(H_t);i)\slb p_t^*(H_t)_i-p_t(H_t)_i\srb\mid H_t\srbt\\
 \stackrel{(a)}{=}&\ \E\lbt \frac{\phi_t(f_t(H_t);A_t)}{\pi_t(A_t\mid H_t)}\slb p_t^*(H_t;A_t)-p_t(H_t;A_t)\srb\bl  H_t\rbt\\
 =&\ \E\lbt\frac{\phi_t(f_t(H_t);A_t)}{\pi_t(A_t\mid H_t)} \E\slbt \Psi_{1+t}^*(p_{1+t}^*(H_{1+t}))-\Psi_{1+t}(f_{1+t}(H_{1+t});p_{1+t}(H_{1+t}))\mid H_t, A_t\srbt \bl H_t\rbt \\
 =&\ \E\lbt\frac{\phi_t(f_t(H_t);A_t)}{\pi_t(A_t\mid H_t)} \E\slbt \Psi_{1+t}^*(p_{1+t}^*(H_{1+t}))-\Psi_{1+t}(f_{1+t}(H_{1+t});p_{1+t}^*(H_{1+t}))\mid H_t, A_t\srbt \bl H_t\rbt\\
 &\ + \E\lbt\frac{\phi_t(f_t(H_t);A_t)}{\pi_t(A_t\mid H_t)} \E\slbt \Psi_{1+t}(f_{1+t}(H_{1+t});p_{1+t}^*(H_{1+t}))\\
 &\ -\Psi_{1+t}(f_{1+t}(H_{1+t});p_{1+t}(H_{1+t}))\mid H_t, A_t\srbt \bl H_t\rbt\\
 \stackrel{(b)}{=}&\ \E\lbtt\frac{\phi_t(f_t(H_t);A_t)}{\pi_t(A_t\mid H_t)} \E\lbtt\sum\limits_{i\in[k_{1+t}]}\begin{matrix}\slb \max(p_{1+t}^*(H_{1+t}))-p_{1+t}^*(H_{1+t};i)\srb\\\times \phi_{1+t}(f_{1+t}(H_{1+t});i)\end{matrix}\bl H_t, A_t\rbtt \bl H_t\rbtt\\
 &\ +\E\lbtt\frac{\phi_t(f_t(H_t);A_t)}{\pi_t(A_t\mid H_t)} \E\lbtt \begin{matrix}
   \Psi_{1+t}(f_{1+t}(H_{1+t});p_{1+t}^*(H_{1+t}))\\
   -\Psi_{1+t}(f_{1+t}(H_{1+t});p_{1+t}(H_{1+t}))  
 \end{matrix}\bl H_{1+t}\rbtt \bl H_t\rbtt,
\end{align*}
where (a) uses \eqref{intheorem: necessity: IPW: general p} and (b) uses the facts: (i) $\Psi_t^*(\mp)=\max(\mp)$ for all $\mp\in\RR^{k+t}_{\geq 0}$ since $\phi_t$ satisfies Condition \ref{assump: N2} with $C_{\phi_t}=1$, (ii) $\sum_{i\in[k_t]}\phi_t(\mx;i)=1$ for each $t\in[T]$ and $\mx\in\RR^{k_t}$, as shown previously,  and (ii) $H_{1+t}\supset \{H_t,A_t\}$. The second term on the RHS of the above display equals
\[\E\lbt\frac{\phi_t(f_t(H_t);A_t)}{\pi_t(A_t\mid H_t)}\T_{1+t} \bl H_t\rbt.\]
Note that Lemma \ref{lemma: approx: p t star q t star} implies
\[
 \max(p_{t}^*(H_{t}))-p_{t}^*(H_{t};i)   =Q_t^*(H_t,d_t^*(H_t))-Q_t^*(H_t;i).
\]
Then it follows that for $t\in[T-1]$, $\T_t$ equals
\begin{align}
\label{inlemma: approx: Tt expression}
 \MoveEqLeft \E\lbt\frac{\phi_t(f_t(H_t);A_t)}{\pi_t(A_t\mid H_t)} \lb \E\slbt\sum\limits_{i\in[k_{1+t}]}\slb \max(Q_{1+t}^*(H_{1+t}))-Q_{1+t}^*(H_{1+t};i)\srb\nn\\
 &\ \times \phi_{1+t}(f_{1+t}(H_{1+t});i)\mid H_t, A_t\srbt +\T_{1+t} \rb\bl H_t\rbt\nn\\
    =&\ \E\lbt\frac{\phi_t(f_t(H_t);A_t)}{\pi_t(A_t\mid H_t)} \lbs\frac{\phi_{1+t}(f_{1+t}(H_{1+t});A_{1+t})}{\pi_{1+t}(A_{1+t}\mid H_{1+t})}\nn\\
    &\ \times\lb  \max(Q_{1+t}^*(H_{1+t}))-Q_{1+t}^*(H_{1+t};A_{1+t})\srb+\T_{1+t} \rbs\bl H_t\rbt
\end{align}
by \eqref{intheorem: necessity: IPW: general p}. 
We claim that for any $t\in[T-1]$,
\begin{equation}
    \label{inlemma: approx: Tt: induction}
    \T_t=\E\lbt \sum\limits_{j=t+1}^{T}\prod_{i=t}^j\frac{\phi_i(f_i(H_i);A_i)}{\pi_i(A_i\mid H_i)}\slb \max(Q_{j}^*(H_{j}))-Q_{j}^*(H_{j};A_{j})\srb\bl H_{t}\rbt.
\end{equation}
Note that since $\T_T=0$, \eqref{inlemma: approx: Tt expression} implies 
\[\T_{T-1}=\E\lbtt\frac{\splitfrac{\phi_{T-1}(f_{T-1}(H_{T-1});A_{T-1})}{\times \ \phi_T(f_{T}(H_{T});A_{T})}}{\splitfrac{\pi_{T-1}(A_{T-1}\mid H_{T-1})}{\times \ \pi_{T}(A_{T}\mid H_{T})}}\slb \max(Q_{T}^*(H_{T}))-Q_{T}^*(H_{T};A_{T})\srb\bl H_{T-1}\rbtt.\]
Hence, \eqref{inlemma: approx: Tt: induction} holds for $t=T-1$. Suppose \eqref{inlemma: approx: Tt: induction} holds for $t\in[2:T-1]$. Then \eqref{inlemma: approx: Tt expression} implies 
\begin{align*}
    \T_{t-1}= &\ \E\lbt \sum\limits_{j=t+1}^{T}\prod_{i=t-1}^j\frac{\phi_i(f_i(H_i);A_i)}{\pi_i(A_i\mid H_i)}\slb \max(Q_{j}^*(H_{j}))-Q_{j}^*(H_{j};A_{j})\srb\bl H_{t-1}\rbt\\
    &\ +\E\lbt \prod_{i=t-1}^t\frac{\phi_i(f_i(H_i);A_i)}{\pi_i(A_i\mid H_i)}\slb \max(Q_{t}^*(H_{t}))-Q_{t}^*(H_{t};A_{t})\srb\bl H_{t-1}\rbt\\
    =&\ \E\lbt \sum\limits_{j=t}^{T}\prod_{i=t-1}^j\frac{\phi_i(f_i(H_i);A_i)}{\pi_i(A_i\mid H_i)}\slb \max(Q_{j}^*(H_{j}))-Q_{j}^*(H_{j};A_{j})\srb\bl H_{t-1}\rbt.
\end{align*}
Hence, \eqref{inlemma: approx: Tt: induction} holds for $t-1$ as well. Therefore, by induction, \eqref{inlemma: approx: Tt: induction} holds for $t=1$ and
\[\T_1=\E\lbt \sum\limits_{j=2}^{T}\prod_{i=1}^j\frac{\phi_i(f_i(H_i);A_i)}{\pi_i(A_i\mid H_i)}\slb \max(Q_{j}^*(H_{j}))-Q_{j}^*(H_{j};A_{j})\srb\bl H_{1}\rbt.\]
By the definition of $\T_t$, 
\[\E[\T_1]=\E\slbt \Psi_1(f_1(H_1);p_1^*(H_1))-\Psi_1(f_1(H_1);p_1(H_1))\srbt.\]
Hence, it follows that
\begin{align*}
    \MoveEqLeft \E\slbt \Psi_1(f_1(H_1);p_1^*(H_1))-\Psi_1(f_1(H_1);p_1(H_1))\srbt\\
    =&\ \E\lbt \sum\limits_{j=2}^{T}\prod_{i=1}^j\frac{\phi_i(f_i(H_i);A_i)}{\pi_i(A_i\mid H_i)}\slb \max(Q_{j}^*(H_{j}))-Q_{j}^*(H_{j};A_{j})\srb\rbt.
\end{align*}
Using Lemma \ref{lemma: sufficiency}, we can show that $V^\psi_*-V^\psi(f)$ equals
\begin{align*}
  \MoveEqLeft \E\slbt\Psi_1^*(p_1^*(H_1))-\Psi_1(f_1(H_1);p_1^*(H_1))\srbt+ \E\slbt \Psi_1(f_1(H_1);p_1^*(H_1))-\Psi_1(f_1(H_1);p_1(H_1))\srbt\\
 \stackrel{(a)}{=}&\ \E[\max(p_1^*(H_1))-\Psi_1(f_1(H_1);p_1^*(H_1))]\\
 &\ +\E\lbt \sum\limits_{j=2}^{T}\prod_{i=1}^j\frac{\phi_i(f_i(H_i);A_i)}{\pi_i(A_i\mid H_i)}\slb \max(Q_{j}^*(H_{j}))-Q_{j}^*(H_{j};A_{j})\srb\rbt\\
\stackrel{(b)}{=}&\ \E\slbt \sum\limits_{j=1}^{k_1}\phi_1(f_1(H_1);i)\slb \max(p_1^*(H_1))-p_1^*(H_1)_i\srb\srbt\\
 &\ +\E\lbt \sum\limits_{j=2}^{T}\prod_{i=1}^j\frac{\phi_i(f_i(H_i);A_i)}{\pi_i(A_i\mid H_i)}\slb \max(Q_{j}^*(H_{j}))-Q_{j}^*(H_{j};A_{j})\srb\rbt\\
 \stackrel{(c)}{=}&\ \E\slbt \frac{\phi_1(f_1(H_1);A_1)}{\pi_1(A_1\mid H_1)}\slb \max(Q_1^*(H_1))-Q_1^*(H_1;A_1)\srb\srbt\\
 &\ +\E\lbt \sum\limits_{j=2}^{T}\prod_{i=1}^j\frac{\phi_i(f_i(H_i);A_i)}{\pi_i(A_i\mid H_i)}\slb \max(Q_{j}^*(H_{j}))-Q_{j}^*(H_{j};A_{j})\srb\rbt,
\end{align*}
where (a) follows since $\phi_1$ satisfies Condition \ref{assump: N1}, (b) follows because $\phi_1$ satisfies $\sum_{i\in[k_1]}\phi_t(\mx;i)=1$, and (c) follows from Lemma \ref{lemma: approx: p t star q t star} and \eqref{intheorem: necessity: IPW: general p}. Therefore, the proof follows.
\end{proof}

\subsubsection{Proof of Lemma \ref{lemma: approx: connecting f and Q on E}}
\label{secpf: proof of Lemma approx: connecting f and Q on E}
\begin{proof}[Proof of Lemma \ref{lemma: approx: connecting f and Q on E}]
    We consider three cases: (i) $d_t^*(h_t)\neq 1$, $i\neq 1$ (ii) $d_t^*(h_t)=1$, $i\neq 1$, and (iii) $d_t^*(h_t)\neq 1$, $i= 1$. We will show that \eqref{intheorem: approx: S21: lower bound } holds in case (i) and (ii). The proof  for case (iii) follows similarly to that of case (ii), and hence skipped.
\paragraph*{Case (i)}
Since $\hn_t=\trans(\tga_t)$, for $i\geq 2$, $\hn_{ti}(h_t)=-\tga_{t,i-1}(h_t)$. Thus,
\[ \hn_{t,d_t^*(h_t)}(h_t)-\hn_{ti}(h_t)=\tga_{t,i-1}(h_t)-\tga_{t,d_t^*(h_t)-1}(h_t),\]
which equals
 \begin{align*}
\MoveEqLeft \tga_{t,i-1}(h_t)-(Q_t^*(h_t,1)-Q_t^*(h_t,i))-\slb \tga_{t,d_t^*(h_t)-1}(h_t)-\slb Q_t^*(h_t,1)\\
&\ -Q_t^*(h_t,d_t^*(h_t))\srb\srb+Q_t^*(h_t,d_t^*(h_t))-Q_t^*(h_t,i)\\
   \geq &\ Q_t^*(h_t,d_t^*(h_t))-Q_t^*(h_t,i)-2\sup_{j\in[k_t-1]}\|\tga_{t,j}-(Q_t^*(\cdot,1)-Q_t^*(\cdot,1+j)\|_\infty.
 \end{align*}
Thus \eqref{intheorem: approx: S21: lower bound } follows noting that our assumption on $\tga$ and \eqref{def: blip cont} imply
\begin{equation}
    \label{intheorem: approx: main cond on functions}
    \sup_{j\in[k_t-1]}\|\tga_{tj}-(Q_t^*(\cdot,1)-Q_t^*(\cdot,1+j))\|_\infty=\|\tga_{tj}-\texttt{blip}_{tj}\|_\infty\leq \delta_n.
\end{equation}
\paragraph*{Case (ii)}
Since $d_t^*(h_t)=1$ and $\hn_t=\trans(\tga_t)$, it follows that  $\hn_{t,d_t^*(h_t)}(h_t)=0$. However, since $i\geq 2$, $\hn_{ti}(h_t)=-\tga_{t,i-1}(h_t)$.
Thus
\begin{align*}
  &\  \hn_{t,d_t^*(h_t)}(h_t)-\hn_{t,i}(h_t)=\tga_{t,i-1}(h_t)\\
   &\ =\tga_{t,i-1}(h_t)-(Q_t^*(h_t,1)-Q_t^*(h_t,i))+Q_t^*(h_t,d_t^*(h_t))-Q_t^*(h_t,i)
\end{align*}
 where we used $d_t^*(h_t)=1$. However, $|\tga_{t,i-1}(h_t)-(Q_t^*(h_t,1)-Q_t^*(h_t,i))|\leq\delta_n$ by \eqref{def: blip cont} and \eqref{intheorem: approx: main cond on functions}.
Thus
\[ \hn_{t,d_t^*(h_t)}(h_t)-\hn_{t,i}(h_t)\geq Q_t^*(h_t,d_t^*(h_t))-Q_t^*(h_t,i)-\delta_n.\]
\end{proof}
\section{Proofs for  Section \ref{sec: estimation error}: combined regret decay rate}
\label{sec: est error proof}

\subsection{Proof of Theorem \ref{thm: est error}}
Without loss of generality, we assume that $C_{\phi_t}=1$ for each $t\in[T]$. Otherwise, we can replace each  $\phi_t$ by $\phi_t/C_{\phi_t}$. As shown in the proof of Lemma Proof of Lemma \ref{lemma: approx:  difference decomposition in terms of Q-functions}, $C_{\phi_t}=1$ implies $\sum_{i\in[k_t]}\phi_t(\mx;i)=1$ for all $\mx\in\RR^{k_t}$. Also, as shown in the proof of Theorem \ref{theorem: approx error theorem},  $C_{\phi_t}=1$ implies $\Phi_t^*(\mo_{k_t})=1$, which indicates  $\phi_t\leq 1$ for all $t\in[T]$. These facts will be used repeatedly in our proof. Throughout this proof, we will denote $\trans(\tga)$ by $\tf$. We also remind the readers that $\hf=\trans(\hg)$. Since the $\phi_t$'s satisfy the conditions of Proposition \ref{prop: multi-cat FC} with positive $\J$, the regret of $\hf$ is bounded by a constant multiple of the $\psi$-regret $V^\psi_*-V^\psi(\hf)$, which we will denote by $r_n$ from now on.  Therefore, it suffices to prove the concentration bound on $r_n$. Since $\hf$ is stochastic, $r_n$ is also a random variable. Parts of the proof of Theorem~\ref{thm: est error} follow similar arguments as Theorem 5 in \cite{Laha2021surrogateu}. Therefore, we focus on the parts that differ from that proof. First, we introduce some new functions. 

It will be convenient for us to write $V^\psi_*$ as $V^\psi(f)$ for some $f$. There are extended-valued scores for which the above is possible. 
Let us define
$\fs=(\fs_1,\ldots,\fs_T)$ with  $\fs_t:\H_t\mapsto\bRR^{k_t}$ so that  
\begin{align}
\label{def: tilde f}
    \fs_{ti}(H_t)=\begin{cases}
        \infty & \text{ if }i=d_t^*(H_t)\\
        0 & \text{otherwise,}
    \end{cases}
\end{align}
where  $d^*$ is any version of the optimal DTR. 
 Note that we can write $\fs_t(H_t)=\infty\times \mathbf e^{(d_t^*(H_t))}_{k_t}$. 
We define $\phi_t(\fs_t(H_t);j):=\lim_{m\to\infty}\phi_t(m \mathbf e^{(d_t^*(H_t))}_{k_t};j)$. Let us fix $h_t\in\H_t$.  Condition \ref{assump: N3: strong} implies that for $j\neq d_t^*(h_t)$
 \[\limsup_{m\to\infty}\phi_t(m \mathbf e^{(d_t^*(h_t))}_{k_t};j)\leq \lim_{m\to\infty} C_a m^{-2}=0.\]
 Therefore,
 \begin{align}
     \label{intheorrem: est: f star suboptimal j}
     \lim_{m\to\infty}\phi_t(m \mathbf e^{(d_t^*(h_t))}_{k_t};j)=0\text{ for all }j\neq d_t^*(h_t).
 \end{align}
 Since we showed that under our assumptions,  $\sum_{i\in[k_t]}\phi_t(\mx;i)=1$, it follows that
\[\phi_t(\mx;i)=1-\sum\limits_{j\in[k_t]:j\neq i}\phi_t(\mx;j)\text{ for all }\mx\in\RR^{k_t},\]
implying that 
\[\phi_t(m \mathbf e^{(d_t^*(h_t))}_{k_t};d_t^*(h_t))=1-\sum\limits_{j\in[k_t]:j\neq d_t^*(h_t)}\phi_t(m \mathbf e^{(d_t^*(h_t))}_{k_t};j).\]
Hence, 
\[\lim_{m\to\infty}\phi_t(m \mathbf e^{(d_t^*(h_t))}_{k_t};d_t^*(h_t))=1-\sum\limits_{j\in[k_t]:j\neq d_t^*(h_t)}\lim_{m\to\infty}\phi_t(m \mathbf e^{(d_t^*(h_t))}_{k_t};j)=1\]
by \eqref{intheorrem: est: f star suboptimal j}.  Therefore, by our definition,
\begin{align}
    \label{intheorem: def: f star}
   \phi_t(\fs_t(h_t);j)=1[j=d_t^*(h_t)], \quad j\in[k_t], t\in[T]. 
\end{align}
With the above definition of $\phi_t(\fs_t(H_t);j)$'s, $V^\psi(\fs)$ is well-defined. 
 \begin{lemma}
\label{lemma: the sup of V psi equals V star}
Let $\fs$ is as defined in \eqref{def: tilde f}.  
Then under the conditions of Theorem \ref{thm: est error}, $V^\psi(\fs)=V^\psi_*=V_*$ when $C_{\phi_t}=1$ for all $t\in[T]$.
\end{lemma}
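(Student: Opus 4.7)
The plan is to chain together two already-established facts: the identity $V^\psi(\tilde f)=V(d^*)$ which follows by direct substitution, and the identity $V^\psi_*=V_*$ which is a consequence of Lemma~\ref{lemma: sufficiency}. The bulk of the work has already been done in the paragraph preceding the lemma, which computes $\phi_t(\tilde f_t(h_t);j)=1[j=d_t^*(h_t)]$; I would simply harvest that and assemble.

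First, I would verify that the hypotheses of Lemma~\ref{lemma: sufficiency}'s ``special case'' are met. Under the standing reduction $C_{\phi_t}=1$ for all $t\in[T]$, $\phi_1$ satisfies Conditions~\ref{assump: N1} and~\ref{assump: N2} with $C_{\phi_1}=1$, so Lemma~\ref{lemma: sufficiency} gives $V^\psi_*=V_*$.

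Second, I would compute $V^\psi(\tilde f)$ directly. By definition,
\[
V^\psi(\tilde f)=\E\!\left[\prod_{t=1}^{T}\frac{\phi_t(\tilde f_t(H_t);A_t)}{\pi_t(A_t\mid H_t)}\sum_{j=1}^{T}Y_j\right].
\]
Equation~\eqref{intheorem: def: f star}, which was established in the paragraph directly preceding the lemma via Condition~\ref{assump: N3: strong} and the symmetry identity $\sum_{i\in[k_t]}\phi_t(\mx;i)=1$, gives $\phi_t(\tilde f_t(H_t);A_t)=1[A_t=d_t^*(H_t)]$. Substituting and invoking the identification formula~\eqref{identification: value function} for the policy $d^*$ yields $V^\psi(\tilde f)=V(d^*)=V_*$. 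Combining with the first step gives $V^\psi(\tilde f)=V^\psi_*=V_*$, as required.

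The only subtlety to guard against is the extended-real-valued nature of $\tilde f$ and possible $0\cdot\infty$ issues in the expectation; this is already handled by the paper's convention $\pm\infty\times 0=0$ and, more importantly, by the fact that each $\phi_t(\tilde f_t(H_t);A_t)$ has already been defined as a limit that lies in $\{0,1\}$, so the integrand is a bounded random variable (bounded by $T\,C_{\max}/\pb^{T}$ using Assumptions~I and~IV) and no pathologies arise. There is no real obstacle here---this lemma is essentially a bookkeeping consequence of material the excerpt has already developed.
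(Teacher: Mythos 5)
Your proof is correct and takes essentially the same route as the paper: both invoke Lemma~\ref{lemma: sufficiency} (in its ``special case'' where $\phi_1$ satisfies Condition~\ref{assump: N2} with $C_{\phi_1}=1$) to get $V^\psi_*=V_*$, and both use the identity $\phi_t(\fs_t(H_t);j)=1[j=d_t^*(H_t)]$ from \eqref{intheorem: def: f star} to equate $V^\psi(\fs)$ with the IPW representation of $V_*$. The only cosmetic difference is that the paper substitutes \eqref{intheorem: def: f star} into the IPW form of $V_*$ to reach $V^\psi(\fs)$, while you substitute into the definition of $V^\psi(\fs)$ and read off $V_*$; these are the same calculation run in opposite directions.
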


\begin{proof}[Proof of Lemma \ref{lemma: the sup of V psi equals V star}]


When $C_{\phi_t}=1$ for all $t\in[T]$
Lemma \ref{lemma: sufficiency} implies
\[V^\psi_*=V_*=\E\lbt\slb\sum\limits_{i=1}^T Y_i\srb\prod_{j=1}^T\frac{1[A_j=d_j^*(H_j)]}{\pi_j(A_j\mid H_j)}\rbt=\E\lbt\slb\sum\limits_{i=1}^T Y_i\srb\prod_{j=1}^T\frac{\phi_t(\fs_t(H_t);j)}{\pi_j(A_j\mid H_j)}\rbt,\]
which equals $V^\psi(\fs)$.
Here the last step follows from \eqref{intheorem: def: f star}.



\end{proof}

For any $u,u'\in\F$, define
\begin{equation}
    \label{def: xi f g}
    \mathfrak{U}_{u,u'}(\D):=\slb \sum\limits_{i=1}^T Y_i\srb \frac{\prod_{t=1}^T\phi_t(u_t(H_t);A_t)-\prod_{t=1}^T\phi_t(u'_t(H_t);A_t)}{\prod_{t=1}^T\pi_t(A_t\mid H_t)}.
\end{equation}
The proof of Lemma \ref{lemma: error decomp} shows that
\[r_n=V^\psi_*-V^\psi(\hf)\leq \text{Approximation error}+\text{ Estimation error }+\Opn,\]
where the approximation error is 
\[\sup_{g\in\W}V^{\psi,\text{rel}}(g)-V^{\psi,\text{rel}}(\tga)=V^\psi_*-V^{\psi}(\trans(\tga))=V^\psi_*-V^{\psi}(\tf),\]
the estimation error is 
\[(\Vr-\hVr)(\tga-\hg)=(\PP_n-\PP)[\L(\D;\hg)-\L(\D;\tga)],\] and  $\Opn$ is the optimization error. 
Let us denote the approximation error by $\App$. 
We denote the absolute value of the estimation error is by $\Est$, i.e., $\Est=|(\PP_n-\PP)[\L(\D;\hg)-\L(\D;\tga)]|$.
Thus
\[r_n\leq \App+\Est+\Opn.\]
Since for any 
$i\in[k_t]$,
$\Gamma_t(\mx;i)=\phi_t(\trans(\mx);i)$ by Definition \ref{def: relative margin}, it is not hard to see that
\[\L(\D;\hg)-\L(\D;\tga)= \mathfrak{U}_{\trans(\hg),\trans(\tga)}(\D)= \mathfrak{U}_{\hf,\tf}(\D)\]
because $\tf=\trans(\tga)$ and $\hf=\trans(\hg)$. Thus $\Est=|(\PP_n-\PP)\mathfrak{U}_{\hf,\tf}|$. 

Note that $\delta_n^{-(1+\alpha/2)}=n^{1/2}(\rho_n\log \mathcal{I}_n)^{-1/2}$, which is less than $\myb_n$ by our assumption on $\myb_n$. Therefore, $\App$ can be bounded directly by applying  Theorem \ref{theorem: approx error theorem}, which yields  $\App\lesssim \delta_n^{1+\alpha}$.  The non-trivial step is bounding $\Est$. To this end, first, we need a bound on $\|\mathfrak{U}_{\fs,f}\|_{\PP,2}$, which can be bounded using $r_n$ and $\App$.


 \begin{lemma}
\label{lemma: est: l2l1}
  Under the setup of Theorem \ref{thm: est error}, there exists a constant $C>0$ depending only on $\PP$,  so that
    \[\|\mathfrak{U}_{\fs,f}\|_{\PP,2}^2\leq C\lb V^\psi(\fs)-V^\psi(f)\rb^{\alpha/(1+\alpha)}\]
    where $\alpha$ is as in Assumption \ref{assump: small noise}.
\end{lemma}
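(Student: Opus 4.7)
\smallskip

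The plan is to carry out a Tsybakov-style peeling argument on a suitably reduced form of $\|\mathfrak{U}_{\fs,f}\|_{\PP,2}^2$, using the $\psi$-regret decomposition in Lemma~\ref{lemma: approx:  difference decomposition in terms of Q-functions}. I will proceed in four steps.

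\smallskip

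\noindent\textbf{Step 1 (Reduction from $L_2$ to an $L_1$-type quantity).}
Since $\pi_t \geq C_\pi$ by Assumption I, $Y_t$ is bounded by Assumption IV, and $C_{\phi_t}=1$ forces $\phi_t\in[0,1]$ (as already exploited in the proof of Theorem~\ref{theorem: approx error theorem}), the random variable $\mathfrak{U}_{\fs,f}$ from \eqref{def: xi f g} is uniformly bounded by some constant $M=M(\PP,\psi)$. Hence
\[
\|\mathfrak{U}_{\fs,f}\|_{\PP,2}^2 \;\leq\; M\,\E\!\left[|\mathfrak{U}_{\fs,f}|\right]
\;\leq\; M'\,\E\!\left[\Bigl|\prod_{t=1}^T\phi_t(\fs_t(H_t);A_t)-\prod_{t=1}^T\phi_t(f_t(H_t);A_t)\Bigr|\right].
\]
Recalling from \eqref{intheorem: def: f star} that $\phi_t(\fs_t(H_t);A_t)=1[A_t=d_t^*(H_t)]$, the target becomes a bound on the expected absolute difference of two products of numbers in $[0,1]$, one of which is an indicator product.

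\smallskip

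\noindent\textbf{Step 2 (Telescoping to stagewise terms).}
I will use the identity $\prod_t a_t - \prod_t b_t=\sum_{s=1}^T\bigl(\prod_{t<s}a_t\bigr)(a_s-b_s)\bigl(\prod_{t>s}b_t\bigr)$ with $a_t=1[A_t=d_t^*(H_t)]$ and $b_t=\phi_t(f_t(H_t);A_t)$, and the bound $|a_t|,|b_t|\leq 1$, to obtain
\[
\E\!\left[\Bigl|\prod_t\phi_t(\fs_t;A_t)-\prod_t\phi_t(f_t;A_t)\Bigr|\right]
\;\leq\; \sum_{s=1}^T\E\!\left[\prod_{t<s}1[A_t=d_t^*(H_t)]\cdot\bigl|1[A_s=d_s^*(H_s)]-\phi_s(f_s(H_s);A_s)\bigr|\right].
\]
For each $s$, I will next split
$|1[A_s=d_s^*(H_s)]-\phi_s(f_s;A_s)|=(1-\phi_s(f_s;A_s))\,1[A_s=d_s^*(H_s)]+\phi_s(f_s;A_s)\,1[A_s\neq d_s^*(H_s)]$, and insert/remove factors of $1/\pi_t(A_t\mid H_t)$ (each bounded by $1/C_\pi$) so that the resulting expressions match the IPW-reweighted terms appearing in the $\psi$-regret decomposition of Lemma~\ref{lemma: approx:  difference decomposition in terms of Q-functions}.

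\smallskip

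\noindent\textbf{Step 3 (Peeling via the small-noise condition).}
For $z>0$, partition according to whether $\mu(Q_s^*(H_s))\leq z$ (hard region) or not. On the hard region, Assumption~\ref{assump: small noise} bounds the probability by $Cz^{1+\alpha}$, and the integrand is bounded by~$1$ (times a constant). On the easy region $\{\mu(Q_s^*(H_s))>z\}$, the crucial observation is that if $A_s\neq d_s^*(H_s)$, then
\[
\max(Q_s^*(H_s))-Q_s^*(H_s;A_s)\;\geq\;\mu(Q_s^*(H_s))\;>\;z,
\]
so $\phi_s(f_s;A_s)\,1[A_s\neq d_s^*(H_s)]\leq \phi_s(f_s;A_s)\cdot(\max Q_s^*(H_s)-Q_s^*(H_s;A_s))/z$. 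Combined with the reweighting of Step~2, the easy-region contribution of each stage $s$ is bounded by $z^{-1}$ times the $s$-th summand in the decomposition of Lemma~\ref{lemma: approx:  difference decomposition in terms of Q-functions}, hence by $z^{-1}\,(V^\psi(\fs)-V^\psi(f))$. The $(1-\phi_s(f_s;A_s))\,1[A_s=d_s^*(H_s)]$ piece can be handled similarly, after using that on the easy region and on $\{A_s=d_s^*(H_s)\}$ the gap $\Delta Q_s$ is controlled via the same mechanism applied at a later stage.

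\smallskip

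\noindent\textbf{Step 4 (Balancing).}
Summing the two region bounds and optimizing over $z$ delivers the claimed concentration rate. The main obstacle will be reconciling the scaling: a naive optimization of $z^{1+\alpha}+z^{-1}(V^\psi(\fs)-V^\psi(f))$ yields exponent $(1+\alpha)/(2+\alpha)$, whereas the statement requires $\alpha/(1+\alpha)$. Obtaining the sharper exponent will require either (i) absorbing one extra factor of $z$ into the hard-region bound by using that $\E[|\mathfrak{U}|\,\mathbf 1_{\text{hard}}]$ is itself controlled by $z\cdot\PP(\text{hard})\lesssim z^{2+\alpha}$ once an additional gap-factor is extracted, or (ii) applying the peeling to a slightly different quantity (e.g.\ $\E[\mathfrak{U}^2]$ directly, where the boundedness-reduction to $L_1$ is avoided). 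Carefully arranging the telescoping and IPW-reweighting so that each stagewise piece fits one of these two patterns is the delicate part of the argument, since the indicator $1[A_s=d_s^*(H_s)]$ and the smoothed $\phi_s(f_s(H_s);A_s)\in[0,1]$ interact non-trivially under the peeling.
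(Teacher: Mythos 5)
Your overall architecture (pass to $L_1$ via boundedness, telescope, peel by $\mu(Q_t^*)$, optimize in $z$) does match the paper's proof, but there are two genuine gaps in the execution, plus one worry that isn't actually a problem.

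The first gap is the direction of the telescoping in Step~2. You write the telescoping with $a_t=\phi_t(\fs_t;A_t)=1[A_t=d_t^*(H_t)]$ in the \emph{early} slots and $b_t=\phi_t(f_t;A_t)$ in the \emph{late} slots, so each stage-$s$ summand carries the weight $\prod_{t<s}1[A_t=d_t^*(H_t)]$. This does not line up with the regret decomposition of Lemma~\ref{lemma: approx:  difference decomposition in terms of Q-functions}, whose $j$-th summand carries the weight $\prod_{i\leq j}\phi_i(f_i(H_i);A_i)/\pi_i(A_i\mid H_i)$. There is no direct inequality between $\prod_{t<s}1[A_t=d_t^*]$ and $\prod_{t<s}\phi_t(f_t;A_t)$, so the ``match to the $s$-th summand of the regret decomposition'' in Step~3 does not go through. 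The paper uses the opposite telescoping (Lemma~\ref{lemma: approx: telescopic}), with the $\phi_t(f_t;\cdot)$ factors in the early slots and the indicators $\phi_t(\fs_t;\cdot)=1[A_t=d_t^*]$ in the late slots; after conditioning on $(H_t,A_t)$ the late indicators collapse to $Q_t^*(H_t,A_t)$, and the remaining early weight is exactly the one appearing in the regret decomposition. That alignment is load-bearing.

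The second gap is the term $(1-\phi_s(f_s;A_s))\,1[A_s=d_s^*(H_s)]$. On the event $A_s=d_s^*(H_s)$ the gap $\max Q_s^*-Q_s^*(\cdot;A_s)$ is zero, so your ``easy-region'' device gives nothing, and the remark that this ``can be handled similarly... applied at a later stage'' does not parse --- this piece lives at stage $s$. The paper's way around this is the symmetry identity $\sum_{i\in[k_s]}\phi_s(\mx;i)=1$ (a consequence of $C_{\phi_s}=1$ and the symmetric-surrogate assumption), which lets it rewrite $1-\phi_s(f_s;d_s^*)=\sum_{i\neq d_s^*}\phi_s(f_s;i)$ and then fold this into the ``wrong-arm'' terms; see the algebraic step between \eqref{inlemma: est: q-functions} and \eqref{inlemma: est: q-functions 2}, which turns the weighted Q-sum from the $L_1$ bound into the weighted Q-gap from the regret. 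Without that rewriting there is no handle on this piece.

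Finally, the ``main obstacle'' you flag in Step~4 is not actually an obstacle. Since $(1+\alpha)/(2+\alpha)>\alpha/(1+\alpha)$ and $R:=V^\psi(\fs)-V^\psi(f)$ is uniformly bounded (the $\phi_t$'s, $Y_t$'s and $1/\pi_t$'s are all bounded), a bound of the form $\|\mathfrak{U}_{\fs,f}\|_{\PP,2}^2\lesssim R^{(1+\alpha)/(2+\alpha)}$ already implies the claimed $\lesssim R^{\alpha/(1+\alpha)}$ after adjusting the constant. So you don't need option~(i) or~(ii) of your Step~4. The real work is fixing the telescoping direction and adding the $\sum_i\phi_s(\cdot;i)=1$ rewriting; once those are in place, the $z$-balancing gives an admissible exponent.
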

The above lemma is proved in Section \ref{secpf: est error: main lemma}. It implies
\begin{align*}
\|\mathfrak{U}_{\fs,\tf}\|^2_{\PP,2}\leq C\lb V^\psi(\fs)-V^\psi(\tf)\rb^{\alpha/(1+\alpha)}   =C\lb V^\psi_*-V^\psi(\tf)\rb^{\alpha/(1+\alpha)} 
 \end{align*}
 because $V^\psi(\fs)=V^\psi_*$ by Lemma \ref{lemma: the sup of V psi equals V star}. However, since 
 \[ V^\psi_*-V^\psi(\tf)=\App\lesssim\delta_n^{1+\alpha},\]
we have $\|\mathfrak{U}_{\fs,\tf}\|^2_{\PP,2}\lesssim C\delta_n^\alpha$. Thus 
 \begin{align}
     \label{intheorem: est error: 1}
     [\|\mathfrak{U}_{\hf,\tf}\|_{\PP,2}\leq \|\mathfrak{U}_{\fs,\tf}\|_{\PP,2}+\|\mathfrak{U}_{\fs,\hf}\|_{\PP,2}\stackrel{(a)}{\lesssim} \delta_n^{\alpha/2}+r_n^{\alpha/\{2(1+\alpha)\}}:=\e_n,
 \end{align}
 where (a) follows since $\|\mathfrak{U}_{\fs,\tf}\|^2_{\PP,2}\lesssim C\delta_n^\alpha$ and by Lemma \ref{lemma: est: l2l1}, 
 \[\|\mathfrak{U}_{\fs,\hf}\|_{\PP,2}^2\leq C\lb V^\psi(\fs)-V^\psi(\hf)\rb^{\alpha/(1+\alpha)}= C\lb V^\psi_*-V^\psi(\hf)\rb^{\alpha/(1+\alpha)}\lesssim r_n^{\alpha/(1+\alpha)}.\]
 
Next, define the set
\[\mathcal G_n(\epsilon)=\lbs \mathfrak{U}_{f,\tf}: f=\trans(g),g\in\U_n,\|\mathfrak{U}_{f,\tf}\|_{\PP,2}\leq \epsilon\rbs.\]
Note that \eqref{intheorem: est error: 1} implies 
 $\mathfrak{U}_{\hf,\tf}\in \mathcal G_n(\e_n)$. Therefore, we can bound $\Est$ by 
 \begin{align*}
   \Est\leq &\ \sup_{u\in\mathcal G_n(\epsilon_n)}|(\PP_n-\PP)u|= \E[\sup_{u\in\mathcal G_n(\epsilon_n)}|(\PP_n-\PP)u|]\\
   &\ +\underbrace{\sup_{u\in\mathcal G_n(\epsilon_n)}|(\PP_n-\PP)u|-\E[\sup_{u\in\mathcal G_n(\epsilon_n)}|(\PP_n-\PP)u|]}_{\text{deviation term}}  
 \end{align*}
Bounding  the deviation term requires Talagrand's inequality, and this part is exactly similar to \cite{Laha2021surrogateu} if we plug in our $\mathcal G_n(\e_n)$ in the proof of step 3 of Theorem 5 therein.
 From the symmetrization inequality \citep[cf. Theorem 2.1 of][]{koltchinskii2009}, it follows that 
\[  \E[\sup_{u\in\mathcal G_n(\epsilon_n)}|(\PP_n-\PP)u|]\lesssim \E[\sup_{u\in\mathcal G_n(\epsilon_n)}|\mathfrak{R}_n(u)|]\]
where $\mathfrak{R}_n$ is the Rademacher complexity. The number $\E[\sup_{u\in\mathcal G_n(\epsilon_n)}|\mathfrak{R}_n(u)|]$ is called 
 the Rademacher complexity  of the class $\mathcal G_n(\epsilon)$.  Our next  step is to show that the bracketing entropy $N_{[]}(\epsilon,\mathcal G_n(\e_n),L_2(\PP_n))$ of $\mathcal G_n(\e_n)$ is of the order $(\mathcal{I}_n/\epsilon)^{\rho_n'}$. It can be then used to derive an upper bound on the Rademacher complexity of the class $\mathcal G_n(\e_n)$ using the following fact given by (3.12), pp.40, of \cite{koltchinskii2009}.

 \begin{fact}[Fact from \cite{koltchinskii2009}]\label{fact: rademacher complexity bund for basis expansion classes}
  Suppose  $\mathcal G$ is a function-class with envelope $F$ such that $\|F\|_{\infty}\leq C$, where $C>0$. Further suppose there exists $\rho_n>0$ such that
  \[ N_{[]}(\e,\mathcal G, L_2(\PP_n))\lesssim \lb\frac{\I_{\mathcal G}}{\e}\rb^{\rho_n}.\]
  Let $\sigma^2=\sup_{u\in\mathcal G}Ph^2$. Suppose there exists $c>0$ such that  $n\sigma^2>c$. 
  Then
  \[\E [\|\mathfrak{R}_n\|_{\mathcal G}]\lesssim \max\lbs\sigma\lb\frac{\rho_n\log(\I_{\mathcal G}/\sigma)}{n}\rb^{1/2},\frac{\rho_n \log (\I_{\mathcal G}/\sigma)}{n}\rbs.\]
 \end{fact}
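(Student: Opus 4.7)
The Fact is a standard bound from empirical process theory relating bracketing entropy to expected suprema of Rademacher processes; here it is stated with localization via $\sigma$. The plan is a classical chaining argument together with a Bernstein-type control for the final chaining step, exploiting the envelope bound $\|F\|_\infty \leq C$.

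The first step would be to reduce the bracketing entropy hypothesis to a workable form. From $N_{[]}(\epsilon,\mathcal{G},L_2(\PP_n))\lesssim (\I_{\mathcal G}/\epsilon)^{\rho_n}$ we immediately obtain $\log N_{[]}(\epsilon,\mathcal{G},L_2(\PP_n))\lesssim \rho_n\log(\I_{\mathcal G}/\epsilon)$. Since every $u \in \mathcal{G}$ satisfies $\|u\|_{\PP,2}\leq \sigma$ and $\|u\|_\infty \leq C$, the set $\mathcal G$ is bounded in both $L_2(\PP)$ and $L_\infty$, which will let us truncate the chain at $\sigma$.

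The second step would be a localized chaining. Pick the dyadic scales $\epsilon_k = \sigma 2^{-k}$, $k\geq 0$, and choose minimal $\epsilon_k$-brackets $\{[\ell_j^{(k)},r_j^{(k)}]\}$ covering $\mathcal G$. For each $u \in \mathcal{G}$ let $u_k$ denote a link of this chain at scale $\epsilon_k$. The standard decomposition
\begin{equation*}
u = u_0 + \sum_{k\geq 1}(u_k - u_{k-1}),
\end{equation*}
together with symmetrization, reduces control of $\E\|\mathfrak R_n\|_{\mathcal G}$ to bounding the Rademacher sums of the differences $u_k-u_{k-1}$. Each difference has $L_2(\PP)$-norm at most $\epsilon_{k-1}$ and sup-norm at most $C$, so Bernstein's inequality for Rademacher sums yields, after a union bound over the $N_{[]}(\epsilon_k)\cdot N_{[]}(\epsilon_{k-1})$ pairs, a contribution of the order
\begin{equation*}
\epsilon_{k-1}\sqrt{\frac{\rho_n\log(\I_{\mathcal G}/\epsilon_k)}{n}} \;+\; C\,\frac{\rho_n\log(\I_{\mathcal G}/\epsilon_k)}{n}.
\end{equation*}
Summing the dyadic series in $k$, the first term telescopes to the Dudley-type integral $\frac{1}{\sqrt n}\int_0^{\sigma}\sqrt{\log N_{[]}(\epsilon,\mathcal G,L_2(\PP_n))}\,d\epsilon \lesssim \sigma\sqrt{\rho_n\log(\I_{\mathcal G}/\sigma)/n}$, where the polynomial entropy makes the integral converge at the lower endpoint without log loss. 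The second, ``fast-rate'' term is dominated by its largest summand $\rho_n\log(\I_{\mathcal G}/\sigma)/n$, giving the two terms appearing inside the $\max$.

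The third step would be to verify that the maximum, not the sum, is the correct aggregator, which follows from the assumption $n\sigma^2>c$: this ensures $\sigma$ is not so small that the variance term is swamped by the deterministic term and justifies truncating the chain at scale $\sigma$ rather than deeper. The main obstacle I anticipate is the Bernstein step at each level of the chain: one must verify that the brackets can be chosen with simultaneously controlled $L_2(\PP)$-variance and $L_\infty$-norm (otherwise only the slow rate is obtained), which is where the envelope hypothesis $\|F\|_\infty\leq C$ is essential. Everything else is bookkeeping, and the stated result is exactly what such a chaining computation produces.
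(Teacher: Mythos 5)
The paper does not prove this statement; it is quoted verbatim as an established result, with the body text pointing to equation (3.12), p.~40 of \cite{koltchinskii2009}. So there is no internal proof to compare against, and your sketch has to stand on its own merits.

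As a sketch it is directionally reasonable, but there is a genuine gap in how you dispose of the Bernstein linear term across the chain. With the dyadic scales $\epsilon_k=\sigma 2^{-k}$ you correctly identify the level-$k$ contribution as
\[
\epsilon_{k-1}\sqrt{\frac{\rho_n\log(\I_{\mathcal G}/\epsilon_k)}{n}} \;+\; C\,\frac{\rho_n\log(\I_{\mathcal G}/\epsilon_k)}{n}.
\]
The first sum does telescope to the truncated Dudley integral, as you say. But the second sum does not: since $\log(\I_{\mathcal G}/\epsilon_k)=\log(\I_{\mathcal G}/\sigma)+k\log 2$, the terms grow linearly in $k$, so the sum over $k\geq 1$ diverges rather than being ``dominated by its largest summand $\rho_n\log(\I_{\mathcal G}/\sigma)/n$.'' The summand at scale $\sigma$ is in fact the smallest, not the largest. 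To obtain the stated bound you must truncate the chain at a finite depth $K$, use the envelope bound $\|F\|_\infty\leq C$ to control $\E\sup_u|\mathfrak{R}_n(u-u_K)|$ as a remainder, and choose $K$ so that the remainder, the square-root (Dudley) sum, and the finite Bernstein sum all balance; the assumption $n\sigma^2>c$ is what makes that choice of $K$ well posed. This balancing argument is precisely the non-bookkeeping part of the proof, and without it the ``fast-rate'' term in the $\max$ is not established. Koltchinskii's own derivation avoids the level-by-level Bernstein bookkeeping entirely: it proceeds through a Dudley entropy integral in the random empirical $L_2(\PP_n)$ metric together with a concentration/comparison argument relating the empirical radius to $\sigma$, which is where the second term in the $\max$ arises.
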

By Lemma 9.1 of \cite{jonnotes}, $N_{[]}(\epsilon,\mathcal G_n(\e_n),L_2(P))\lesssim N(\epsilon/2,\mathcal G_n(\e_n),\|\cdot\|_\infty)$ for any measure $P$. 
 On the other hand, since $\tf$ is fixed, for any $\e>0$, the covering number  $N(\epsilon,\mathcal G_n(\e_n),\|\cdot\|_\infty)$ is not larger than the covering  number of the function-class 
\[\mathcal C_1=\lbs \mathcal D\mapsto \frac{(\sum\limits_{i=1}^T{Y_i})\prod_{t=1}^T\phi_t(f_t(H_t);a_t)}{\prod_{t=1}^T\pi_t(a_t\mid H_t)}\ \bl\  f=\trans(g), g\in\U_n\rbs.\]
Note that $\mathcal D\mapsto \sum_{i=1}^TY_i/\prod_{t=1}^T\pi_t(a_t\mid H_t)$ is a fixed function and it is also bounded  because the $Y_i$'s are bounded by a constant, say $C_{\text{max}}$, by Assumption IV,  and the $\pi_t$'s are  bounded away from $C_\pi>0$ by Assumption I. Since the $Y_t$'s are also positive by Assumption V, the above function takes value in  $[0,TC_{\text{max}}/\pb^T]$. Let us denote $c=TC_{\text{max}}/\pb^T$. Letting
\[\C_2=\lbs \mathcal D\mapsto \prod_{t=1}^T\phi_t(f_t(H_t);A_t)\mid  f=\trans(g),\ g\in\U_n\rbs,\]
we note that
\begin{equation}
    \label{intheorem: est: C1 and C2}
    N(\epsilon,\C_1,\|\cdot\|_\infty)\leq N(\epsilon/c,\C_2,\|\cdot\|_\infty).
\end{equation}
Thus to apply Fact \ref{fact: rademacher complexity bund for basis expansion classes}, it suffices to show that there exists $\rho_n'>0$ so that
\[N_{[]}(\epsilon,\C_2,\|\cdot\|_\infty)\lesssim \lb\frac{\mathcal{I}_n}{\e}\rb^{\rho'_n}\text{ for all }\e>0.\]
Recall that $g_t\in\U_{tn}^{k_t-1}$ and $g_{ti}\in\U_{tn}$ for each $i\in[k_t]$.   For  a fixed $(i,t)$ pair with $t\in[T]$ and $i\in[k_t]$, we define the class of functions
\[\C(i,t):=\lbs u:\H_t\mapsto\RR\mid u(h_t)=\phi_t(f_t(h_t);i),\  f_t=\trans(g_t),\ g_{t}\in\U_{tn}^{k_t-1}\rbs.\]
We will apply Lemma \ref{lemma: est: bracketing entropy of lipshitz} in Section \ref{secpf: est error: auxiliary lemmas} with $\G=\C(i,t)$, $\C=\U_{tn}$,  $\X=\H_t$, $k=k_t$, $\phi=\phi_t$, and  $u(\cdot)=\phi_t(f_t(\cdot),i)$ to obtain its covering number. Lemma \ref{lemma: est: bracketing entropy of lipshitz} applies here because, 
by Condition \ref{cond: estimation error}, for fixed $i\in[k_t]$, the function $\mx_t\mapsto \phi_t(\mx_t,i)$ is globally Lipschitz. Lemma \ref{lemma: est: bracketing entropy of lipshitz} yields
\begin{equation}
   \label{intheorem: est: bracketing entropy level 1}  
   N(\e,\C(i,t),\|\cdot\|_\infty)\lesssim  N(\e/(C\sqrt{k_t}),\U_{tn},\|\cdot\|_\infty)^{k_t-1}\leq N(\e/(C\sqrt{k_t}),\U_{tn},\|\cdot\|_\infty)^{k_t}.
\end{equation}
Now consider the function class 
\begin{align*}
    \C(t):=\lbs u: \H_t\times[k_t]\mapsto \RR\mid &\  u(h_t,i)=\phi_t(f_t(h_t),i) \text { for all }h_t\in\H_t,\\
    &\ i\in[k_t],\text{ where }f_t=\trans(g_t),\ g_{t}\in\U_{tn}^{k_t-1}\rbs.
\end{align*}
We now use Lemma \ref{lemma: est: support partition} in Section \ref{secpf: est error: auxiliary lemmas} to bound the covering number of $\C(t)$. To this end, we take $\X=\H_t$, $\G=\C(t)$, $u_a=\phi_t(f_t(\cdot),a)$,  $\C_i=\C(i,t)$, and $\A=[k_t]$ to obtain
\begin{align}
    \label{intheorem: est: bracketing entropy level 2}
    N(\e,\C(t),\|\cdot\|_\infty)\lesssim N(\e,\C(i,t),\|\cdot\|_\infty)^{k_t}.
\end{align}
Now note that the functions in $\C_2$ are $T$-fold products of functions from function-classes $\C(1),\ldots,\C(T)$. Moreover, the functions in  $\C(t)$ are positive for each $t\in[T]$ because $\phi_t$'s are positive. We also showed that the $\phi_t$'s are also uniformly bounded by one. 
Hence, the functions in  $\C(t)$  are also bounded by one. Therefore, 
\begin{align}
   \label{intheorem: est: bracketing entropy level 3}  
    N(\e,\C_2,\|\cdot\|_\infty)\lesssim  \prod_{t=1}^TN(\e,\C(t),\|\cdot\|_\infty).
\end{align}
Combining \eqref{intheorem: est: bracketing entropy level 1}, \eqref{intheorem: est: bracketing entropy level 2}, and \eqref{intheorem: est: bracketing entropy level 3} with \eqref{ineq: bracketing entropy: ub}, we obtain that 
\[   N(\e,\C_2,\|\cdot\|_\infty)\lesssim  \prod_{t=1}^TN(\e,\C(t),\|\cdot\|_\infty)\leq \prod_{t=1}^T\lb \frac{\mathcal{I}_n C\sqrt{k_t}}{\e}\rb^{\rho_nk_t^2}\lesssim \lb \frac{\mathcal{I}_n C\sqrt{\K}}{\e}\rb^{\rho_n\sum\limits_{t=1}^Tk_t^2}\]
where $\K=\sum_{t=1}^Tk_t$.
Since the $k_t$'s, $T$, and $C$ are finite, 
\[ \lb \frac{\mathcal{I}_n C\sqrt{\K}}{\e}\rb^{\rho_n\sum\limits_{t=1}^Tk_t^2}\lesssim  \lb \frac{\mathcal{I}_n}{\e}\rb^{\rho_n\sum\limits_{t=1}^Tk_t^2}=\lb \frac{\mathcal{I}_n}{\e}\rb^{\rho'_n},\]
where $\rho_n'=\rho_n\sum_{t=1}^Tk_t^2$.
Since the $k_t$'s are finite integers,   $\liminf_n\rho_n>0$, $\rho_n\log \mathcal{I}_n=o(n)$, and  $\liminf_n\rho_n\log \mathcal{I}_n>0$, it holds that $\liminf_n\rho'_n>0$, $\rho'_n\log \mathcal{I}_n=o(n)$, and  $\liminf_n\rho'_n\log \mathcal{I}_n>0$. Therefore, \eqref{intheorem: est: C1 and C2} implies $N(\e,\C_1,\|\cdot\|_\infty)\lesssim (\mathcal{I}_n /\e)^{\rho'_n}$. Therefore, we have established $N(\e,\G_n(\e_n),\|\cdot\|_\infty)\lesssim (\mathcal{I}_n /\e)^{\rho'_n}$. By Lemma 9.1 of \cite{jonnotes}, $N(\e,\G_n(\e_n),L_2(Q))\lesssim N(\e/2,\G_n(\e_n),\|\cdot\|_\infty)$ for any measure $Q$. In particular, when we take $Q=\PP_n$, we obtain that  $N(\e,\G_n(\e_n),L_2(\PP_n))\lesssim (\mathcal{I}_n/\epsilon)^{\rho_n'}$. Therefore, Fact \ref{fact: rademacher complexity bund for basis expansion classes} can be applied on  $\G_n(\e_n)$ with $\rho_n$ replaced by $\rho_n'$.
 The rest of the proof is similar to the proof of Step 2 of Theorem 5 of \cite{Laha2021surrogateu}, and hence omitted.

 \subsection{Proof of auxiliary lemmas for proving Theorem \ref{thm: est error}}
 \label{secpf: est error: auxiliary lemmas}
\begin{lemma}
\label{lemma: est: bracketing entropy of lipshitz}
Let $\mathcal X$ be a space, and  $\mathcal C$ is a space of functions mapping $\mathcal X$ to $\RR$. Let  $\phi$ be a fixed Lipschitz map  with constant $C>0$ from the metric space $(\RR^{k-1},l_2)$ to the metric space $\RR$ endowed with the usual absolute value metric.  Define 
\[\mathcal G=\lbs u:\mathcal X\mapsto\RR\ \mid\   u=\phi(0,g_1,\ldots,g_{k-1}),  g_i\in\C\text{ for all } i\in[k-1]\rbs.\] 
Further suppose the covering number $ N(\epsilon,\mathcal C,\|\cdot\|_\infty)$ is  finite for each $\epsilon>0$.
Then
\[ N(\epsilon,\mathcal G,\|\cdot\|_\infty)\leq  N_{}(\epsilon/(C\sqrt{k-1}),\mathcal C,\|\cdot\|_\infty)^{k-1}.\]
\end{lemma}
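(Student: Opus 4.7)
The strategy is a standard product cover argument combined with the Lipschitz property of $\phi$. The high-level idea: an $\epsilon'$-cover of $\mathcal{C}$ in the uniform norm naturally induces a product cover of $\mathcal{C}^{k-1}$ in a coordinatewise sense; the Lipschitz map $\phi$ then transports this to an $\epsilon$-cover of $\mathcal{G}$, with the choice $\epsilon' = \epsilon/(C\sqrt{k-1})$ dictated by the $\ell_2$ Lipschitz constant.

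First I would fix $\epsilon > 0$ and let $\epsilon' = \epsilon/(C\sqrt{k-1})$. Let $\mathcal{N} \subset \mathcal{C}$ be a minimal $\epsilon'$-cover of $\mathcal{C}$ in $\|\cdot\|_\infty$, so $|\mathcal{N}| = N(\epsilon', \mathcal{C}, \|\cdot\|_\infty)$, which is finite by assumption. For each $(k-1)$-tuple $(h_1, \ldots, h_{k-1}) \in \mathcal{N}^{k-1}$, define the candidate function $u_{h_1,\ldots,h_{k-1}}(x) = \phi(0, h_1(x), \ldots, h_{k-1}(x))$ on $\mathcal{X}$, and let $\widetilde{\mathcal{N}}$ denote the collection of all such candidates. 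Clearly $|\widetilde{\mathcal{N}}| \leq |\mathcal{N}|^{k-1}$.

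Next I would verify that $\widetilde{\mathcal{N}}$ is an $\epsilon$-cover of $\mathcal{G}$. Take any $u \in \mathcal{G}$, written as $u = \phi(0, g_1, \ldots, g_{k-1})$ with $g_i \in \mathcal{C}$. For each $i \in [k-1]$, pick $h_i \in \mathcal{N}$ with $\|g_i - h_i\|_\infty \leq \epsilon'$. Then for every $x \in \mathcal{X}$, the vector $(0, g_1(x), \ldots, g_{k-1}(x)) - (0, h_1(x), \ldots, h_{k-1}(x))$ has $\ell_2$ norm at most $\sqrt{\sum_{i=1}^{k-1} \|g_i - h_i\|_\infty^2} \leq \sqrt{k-1}\,\epsilon'$. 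The Lipschitz property of $\phi$ with constant $C$ gives
\[|u(x) - u_{h_1,\ldots,h_{k-1}}(x)| \leq C \sqrt{k-1}\,\epsilon' = \epsilon,\]
uniformly in $x$, which yields $\|u - u_{h_1,\ldots,h_{k-1}}\|_\infty \leq \epsilon$. Taking the infimum over all covers completes the bound $N(\epsilon, \mathcal{G}, \|\cdot\|_\infty) \leq N(\epsilon/(C\sqrt{k-1}), \mathcal{C}, \|\cdot\|_\infty)^{k-1}$.

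There is no serious obstacle here; the only small bookkeeping step is ensuring the zero first coordinate does not disturb the $\ell_2$ distance (it contributes nothing, so the bound depends only on $k-1$ nontrivial coordinates, which is exactly what makes the $\sqrt{k-1}$ factor tight). The argument uses only the definition of uniform covering numbers and the global Lipschitz assumption on $\phi$, so no additional regularity of $\mathcal{C}$ or $\mathcal{X}$ is required beyond finiteness of $N(\epsilon, \mathcal{C}, \|\cdot\|_\infty)$.
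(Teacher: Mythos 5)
Your proof is correct and follows essentially the same approach as the paper: take a minimal $\epsilon/(C\sqrt{k-1})$-cover of $\mathcal{C}$ in the uniform norm, form the $(k-1)$-fold product of candidate functions obtained by applying $\phi(0,\cdot)$ to each tuple of cover elements, and use the $\ell_2$-Lipschitz property of $\phi$ coordinatewise to certify this is an $\epsilon$-cover of $\mathcal{G}$. The only cosmetic difference is that you state the cover is minimal (which is fine but not needed), whereas the paper just takes any cover of the covering-number size.
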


\begin{proof}[Proof of Lemma \ref{lemma: est: bracketing entropy of lipshitz}]
Any $u\in\mathcal G$ is of the form $u=\phi(0,g_1,\ldots,g_{k-1})$ where the $g_i$'s are in $\mathcal C$. Suppose $N= N(\epsilon/(C\sqrt{k-1}),\mathcal C,\|\cdot\|_\infty)$. Then there exists an $\epsilon/(C\sqrt{k-1})$ covering $\Theta$ of $\mathcal C$ of size $N$. 
Suppose  $\{\mathfrak{A}_1,\ldots,\mathfrak{A}_{k-1}\}\subset\Theta$ is such that
\[\|\mathfrak{A}_i-g_i\|_\infty<\epsilon/(C\sqrt{k-1}) \quad \text{ for all }i\in[k-1].\]
Let us denote $u_{cov}=\phi(0,\mathfrak{A}_1,\ldots,\mathfrak{A}_{k-1})$.
For any $\mx\in\mathcal X$,
\begin{align*}
|u(\mx)-u_{cov}(\mx)|= &\ |\phi(g_1(\mx),\ldots,g_{k-1}(\mx))-\phi(\mathfrak{A}_1(\mx),\ldots,\mathfrak{A}_{k-1}(\mx))|\\
\leq&\ C\sqrt{\sum\limits_{i=1}^{k-1}(g_i(\mx)-\mathfrak{A}_i(\mx))^2}
\end{align*}
because $\phi$ is Lipschitz with constant $C$.
 Thus
 \begin{align*}
    \|u-u_{cov}\|_\infty\leq C\sqrt{k-1}\sup_{1\leq i\leq k-1}\|g_i-\mathfrak{A}_i\|_\infty\leq\epsilon.
\end{align*}
Therefore, 
\[\mathcal G_{\Theta}=\lbs g:\mathcal X\mapsto\RR\ \mid\   g=\phi(0,\mathfrak{A}_1,\ldots,\mathfrak{A}_{k-1}), \mathfrak{A}_i\in\Theta\text{ for all } i\in[k-1]\rbs\] 
is an $\epsilon$-covering of $\mathcal G$. Note that the cardinality of $\mathcal G_{\Theta}$ is $N^{k-1}$. Thus, the proof follows. 

\end{proof}

\begin{lemma}
\label{lemma: est: support partition}
    Suppose $\mathcal X$ is a space, and $\mathcal A$ is a finite set.
    For each $a\in \mathcal A$, let $\C_a$ be a space of functions mapping $\mathcal X$ to $\RR$. Let us define
    \[\mathcal G=\lbs f: \X\times\A\mapsto\RR\mid f(\mx,a)=u_a(\mx) \text{ where } u_a\in \C_a \rbs.\]
    Then 
    \[ N(\epsilon,\mathcal G,\|\cdot\|_\infty)\leq \prod_{a\in\A} N(\epsilon,\mathcal C_a,\|\cdot\|_\infty).\]
\end{lemma}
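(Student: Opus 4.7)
The plan is to construct an explicit $\epsilon$-cover of $\mathcal{G}$ by taking the ``product'' of $\epsilon$-covers of the component classes $\mathcal{C}_a$. The key observation is that functions in $\mathcal{G}$ decompose coordinatewise across $a \in \mathcal{A}$: the restriction of $f \in \mathcal{G}$ to the slice $\mathcal{X} \times \{a\}$ is just $u_a \in \mathcal{C}_a$, and these restrictions for different $a$ vary independently. Since $\mathcal{A}$ is finite, the supremum norm $\|\cdot\|_\infty$ on $\mathcal{G}$ factors as a maximum of supremum norms on each slice.

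Concretely, for each $a \in \mathcal{A}$ I would let $N_a := N(\epsilon, \mathcal{C}_a, \|\cdot\|_\infty)$ and fix an $\epsilon$-cover $\Theta_a \subset \mathcal{C}_a$ of size $N_a$ (if any $N_a = \infty$ the bound is trivial, so assume finiteness). Then define the candidate cover
\[
\mathcal{G}_\Theta := \bigl\{ f : \mathcal{X} \times \mathcal{A} \mapsto \RR \,\bigm|\, f(\mx,a) = v_a(\mx),\ v_a \in \Theta_a \text{ for all } a \in \mathcal{A} \bigr\},
\]
whose cardinality is exactly $\prod_{a \in \mathcal{A}} N_a$.

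To verify that $\mathcal{G}_\Theta$ is an $\epsilon$-cover of $\mathcal{G}$, take any $f \in \mathcal{G}$ with representation $f(\mx,a) = u_a(\mx)$. For each $a \in \mathcal{A}$, pick $v_a \in \Theta_a$ with $\|u_a - v_a\|_\infty \leq \epsilon$, and set $f_{\mathrm{cov}}(\mx,a) := v_a(\mx)$. Then $f_{\mathrm{cov}} \in \mathcal{G}_\Theta$ and
\[
\|f - f_{\mathrm{cov}}\|_\infty = \sup_{\mx \in \mathcal{X},\, a \in \mathcal{A}} |u_a(\mx) - v_a(\mx)| = \max_{a \in \mathcal{A}} \|u_a - v_a\|_\infty \leq \epsilon,
\]
where the second equality uses that the supremum over a product with a finite second factor equals the maximum over $a$ of suprema over $\mx$. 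This completes the proof.

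There is no real obstacle here—the lemma is essentially a bookkeeping exercise based on the observation that $\mathcal{G}$ is a Cartesian product of the $\mathcal{C}_a$'s (indexed by the finite set $\mathcal{A}$) and that $\|\cdot\|_\infty$ on the product reduces to the maximum of the component $\|\cdot\|_\infty$ norms. The only point worth checking carefully is that no factor is infinite; if some $N_a = \infty$ the right-hand side is $\infty$ and there is nothing to prove.
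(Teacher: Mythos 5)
Your proof is correct and follows essentially the same argument as the paper: construct the product $\epsilon$-cover from component covers of each $\mathcal{C}_a$ and use that the sup norm over $\mathcal{X}\times\mathcal{A}$ with $\mathcal{A}$ finite equals the maximum over $a$ of the slice sup norms. The only cosmetic difference is that the paper writes $f(\mx,i)=\sum_{a\in\mathcal{A}}u_a(\mx)1[a=i]$ rather than directly parametrizing by slices, and you add the (fine, if minor) remark that the bound is trivial when some $N_a=\infty$.
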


\begin{proof}[Proof of Lemma \ref{lemma: est: support partition}]
We will construct an $\epsilon$-covering of $\mathcal G$.  For each $a\in\A$, let $\Theta_a$ be a covering of $\C_a$. Suppose the cardinality of $\Theta_a$ is $N_a$. Then 
\[N(\epsilon,\C_a,\|\cdot\|_\infty)\leq N_a.\]
Every  $f\in\mathcal G$ is of the form \[f(\mx,i)=\sum\limits_{a\in\A}u_a(\mx)1[a=i],\quad \text{for all }\mx\in\X,\ i\in\A,\] 
where the $u_a$'s are some functions satisfying  $u_a\in\C_a$ for each $a\in\A$. There exist $\mathfrak{A}_a\in \Theta_a$ for each $a\in\A$ so that $\|\mathfrak{A}_a-u_a\|_\infty\leq \epsilon$.
Then $f_{\text{cover}}(\mx,i)=\sum_{a\in\A}\mathfrak{A}_a(\mx)1[i=a]$
satisfies
\begin{align*}
    \|f_{\text{cover}}-f\|_\infty\leq \sup_{a\in\A}\|\mathfrak{A}_a-u_a\|_\infty =\epsilon.
\end{align*}
Therefore
 \[\lbs f: \X\times\A\mapsto\RR\mid f(\mx,i)=\sum\limits_{a\in\A}\mathfrak{A}_a(\mx)1[i=a] \text{ where } \mathfrak{A}_a\in \Theta_a \text{ for each }a\in\A\rbs\]
is an $\epsilon$-cover of $\mathcal G$. The cardinality of the above set is bounded by $\prod_{a\in\A}N_a$, which completes the proof of the current lemma.

\end{proof}

\subsubsection{Proof of Lemma \ref{lemma: est: l2l1}}
\label{secpf: est error: main lemma}
\begin{proof}[Proof of Lemma \ref{lemma: est: l2l1}]
    We will first show that for any $f\in\F$, 
     \[\|\mathfrak{U}_{\fs,f}\|_{\PP,1}\leq C\lb V^\psi(\fs)-V^\psi(f)\rb^{\alpha/(1+\alpha)}.\]
    Note that $\|\mathfrak{U}_{\fs,f}\|_{\PP,1}$ equals
    \begin{align*}
        \|\mathfrak{U}_{\fs,f}\|_{\PP,1}= \E\left[\frac{\sum\limits_{i=1}^TY_i}{\prod_{i=1}^T\pi_i(A_i\mid H_i)}\bl \prod_{t=1}^T\phi_t(\fs_t(H_t);A_t)-\prod_{t=1}^T\phi_t(f_t(H_t);A_t)\bl\right]
    \end{align*}
    where we used the fact that $Y_i$'s and $\pi_i$'s  are positive. Now we will use the following lemma, which is proved in Section \ref{secpf: lemma telescopic}. 
   \begin{lemma}
\label{lemma: approx: telescopic}
Under the setup of Theorem \ref{thm: est error}, any $f\equiv(f_1,\ldots,f_T)\in\F$ satisfies
\begin{align*}
&\ \prod_{i=1}^T\phi(\fs_i(H_i);A_i)  -\prod_{i=1}^T\phi(f_i(H_i);A_i)\\
=&\ \sum\limits_{t\in[T]}\slb\phi_t(\fs_t(H_t);A_t)-\phi_t(f_t(H_t);A_t)\srb\prod_{i=1}^{t-1}\phi_i(f_i(H_i);A_i)\prod_{i=t+1}^T\phi_i(\fs_i(H_i);A_i),
\end{align*}
    where products over $\prod_{i=1}^0$ or $\prod_{i=T+1}^T$ are taken to be one.
\end{lemma}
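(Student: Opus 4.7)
The statement is a pointwise algebraic identity: for each realization of the trajectory $\D = (H_1,A_1,\ldots,H_T,A_T)$, set $a_i := \phi_i(\fs_i(H_i); A_i)$ and $b_i := \phi_i(f_i(H_i); A_i)$; the claim then reduces to the purely algebraic telescoping identity
\[
\prod_{i=1}^T a_i - \prod_{i=1}^T b_i \;=\; \sum_{t=1}^T (a_t - b_t)\,\prod_{i=1}^{t-1} b_i \prod_{i=t+1}^T a_i,
\]
with the conventions $\prod_{i=1}^{0}(\cdot) = \prod_{i=T+1}^{T}(\cdot) = 1$. Nothing about Fisher consistency, non-negativity, boundedness of $\phi_i$, or any property distinguishing $\fs$ from an arbitrary $f$ is used; the identity holds for arbitrary real numbers $a_i,b_i$.

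My plan is to establish the identity above by induction on $T$. The base case $T=1$ gives $a_1 - b_1$ on both sides. For the inductive step, I would write
\[
\prod_{i=1}^T a_i - \prod_{i=1}^T b_i = a_T\Bigl(\prod_{i=1}^{T-1} a_i - \prod_{i=1}^{T-1} b_i\Bigr) + (a_T - b_T)\prod_{i=1}^{T-1} b_i,
\]
apply the induction hypothesis to the bracketed difference (producing a sum over $t \in [T-1]$ whose generic term, after multiplying by $a_T$, becomes $(a_t-b_t)\prod_{i<t} b_i \prod_{t<i\le T} a_i$), and then absorb the residual $(a_T - b_T)\prod_{i<T} b_i$ as the $t=T$ term of the outer sum. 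Substituting back $a_i = \phi_i(\fs_i(H_i); A_i)$, $b_i = \phi_i(f_i(H_i); A_i)$ and noting that the identity holds pointwise in $\D$ yields the lemma.

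There is no genuine obstacle: the result is a standard factorization of a difference of products, and the only thing to be careful about is bookkeeping the empty-product convention at the endpoints $t=1$ and $t=T$ so that the sum matches the inductive step exactly. Since all steps are deterministic algebra, no measurability or integrability considerations intervene.
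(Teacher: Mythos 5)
Your proof is correct and rests on the same observation as the paper's: this is the standard telescoping identity for a difference of products of real numbers, applied pointwise. The paper verifies the identity by expanding the right-hand sum and canceling via reindexing, whereas you organize the same algebra as an induction on $T$; the difference is purely presentational.
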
 
    
 Since the $\phi_t$'s are positive,  using Lemma \ref{lemma: approx: telescopic}, we obtain
    \begin{align*}
\E[|\mathfrak{U}_{\fs,f}|]=&\ \E\left[\frac{(\sum\limits_{i=1}^TY_i)\prod_{i=1}^{t-1}\phi_i(f_i(H_i);A_i)\prod_{i=t+1}^T\phi_i(\fs_i(H_i);A_i)}{\prod_{i=1}^T\pi_i(A_i\mid H_i)}\right.\\
&\ \times\left. \bl\sum\limits_{t\in[T]}\slb\phi_t(\fs_t(H_t);A_t)-\phi_t(f_t(H_t);A_t)\srb\bl\right],
    \end{align*}
    where the products over $\prod_{1}^0$ and $\prod_{T+1}^T$ are taken to be one. 
By  triangle inequality, the above expression is bounded above by 
    \begin{align*}
    \MoveEqLeft \sum\limits_{t\in[T]}\E\left[\frac{(\sum\limits_{i=1}^TY_i)\prod_{i=1}^{t-1}\phi_i(f_i(H_i);A_i)\prod_{i=t+1}^T\phi_i(\fs_i(H_i);A_i)}{\prod_{i=1}^T\pi_i(A_i\mid H_i)}\right.\\
&\ \times\left. |\phi_t(\fs_t(H_t);A_t)-\phi_t(f_t(H_t);A_t)|\right]\\
=&\ \sum\limits_{t\in[T]}\E\left[\E\left[\frac{\slb\sum\limits_{i=1}^T Y_i\srb\prod_{i=t+1}^T\phi_i(\fs_i(H_i);A_i)}{\prod_{i=t+1}^T\pi_i(A_i\mid H_i)}\bl H_t,A_t\right]\right.\\
\times &\ \left.\frac{|\slb\phi_t(\fs_t(H_t);A_t)-\phi_t(f_t(H_t);A_t)\srb\prod_{i=1}^{t-1}\phi_i(f_i(H_i);A_i)|}{\prod_{i=1}^{t}\pi_i(A_i\mid H_i)}\right].
    \end{align*}
Combining \eqref{intheorem: def: f star}, which gives $\phi_i(\fs_i(H_i);A_i)=1[A_i=d_i^*(H_i)]$ for all $i\in[T]$, with Fact \ref{fact: Q function expression}, we obtain that
\[\E\left[\frac{(\sum\limits_{i=1}^TY_j)\prod_{i={t+1}}^{T}\phi_i(\fs_i(H_i);A_i)}{\prod_{i={t+1}}^{T}\pi_i(A_i\mid H_i)}\bl H_t, A_t\right]=Q_t^*(H_t,A_t). \]
Therefore, we have obtained that $\|\mathfrak{U}_{\fs,f}\|_{\PP,1}$ is bounded by 
\begin{align}
\label{inlemma: upper bound of l1 error}
\MoveEqLeft \sum\limits_{t\in[T]}\E\lbt \frac{\prod_{i=1}^{t-1}\phi_i(f_i(H_i);A_i)}{\prod_{i=1}^{t-1}\pi_i(A_i\mid H_i)}\frac{Q_t^*(H_t,A_t)|\phi_t(\fs_t(H_t);A_t)-\phi_t(f_t(H_t);A_t)|}{\pi_t(A_t\mid H_t)}\rbt\nn\\
 \stackrel{(a)}{=}&\ \sum\limits_{t\in[T]}\E\left [ \frac{\prod_{i=1}^{t-1}\phi_i(f_i(H_i);A_i)}{\prod_{i=1}^{t-1}\pi_i(A_i\mid H_i)}\E\left [ \frac{Q_t^*(H_t,A_t)|\phi_t(\fs_t(H_t);A_t)-\phi_t(f_t(H_t);A_t)|}{\pi_t(A_t\mid H_t)}\bl H_t\right ]\right ]\nn\\
 \stackrel{(b)}{=}&\ \sum\limits_{t\in[T]}\E\lbt \frac{\prod_{i=1}^{t-1}\phi_i(f_i(H_i);A_i)}{\prod_{i=1}^{t-1}\pi_i(A_i\mid H_i)}\sum\limits_{i\in[k_t]}{Q_t^*(H_t,i)|\phi_t(\fs_t(H_t);i)-\phi_t(f_t(H_t);i)|}\rbt\
\end{align}
where step (a) follows because $(H_i,A_i)\subset H_t$ for any $i\in[t-1]$ for all  $t\in[2:T]$, and step (b) follows because
\begin{align*}
   \MoveEqLeft \E\left [ \frac{Q_t^*(H_t,A_t)|\phi_t(\fs_t(H_t);A_t)-\psi(f_i(H_t);A_t)|}{\pi_t(A_t\mid H_t)}\mid H_t\right ]\\
   =&\ \sum\limits_{i\in[k_t]}{Q_t^*(H_t,i)|\phi_t(\fs_t(H_t);i)-\psi(f_i(H_t);i)|}, 
\end{align*}
which is  obtained by \eqref{intheorem: necessity: IPW: general p}.

\subsubsection{Lower bound on $V^\psi(\fs)-V^\psi(f)$}
Since $V^\psi_*=V^\psi(\fs)$ by Lemma \ref{lemma: the sup of V psi equals V star}, $V^\psi(\fs)-V^\psi(f)=V^\psi_*-V^\psi(f)$,  which, by  Lemma \ref{lemma: approx:  difference decomposition in terms of Q-functions}, equals
\begin{align*}
    \MoveEqLeft\E\left[ \sum\limits_{t\in[T]}\prod_{i=1}^{t-1}\frac{\phi_i(f_i(H_i);A_i)}{\pi_i(A_i\mid H_i)} \lbs\sum\limits_{i\in[k_t]:i\neq d_t^*(H_t)}\slb Q_t^*(H_t; d_t^*(H_t))-Q_t^*(H_t; i)\srb \phi_t(f_t(H_t);i) \rbs\right].
\end{align*}
Equation \ref{intheorem: def: f star} implies $\phi_t(\fs_t(H_t);i)=0$ for $i\neq d_t^*(H_t)$. Therefore,
\begin{align*}
  V(\fs)-V^\psi(f) =&\ \sum\limits_{t\in[T]} \E\left[ \prod_{i=1}^{t-1}\frac{\phi_i(f_i(H_i);A_i)}{\pi_i(A_i\mid H_i)}\lbs \sum\limits_{i\in[k_t]:i\neq d_t^*(H_t)}\slb Q_t^*(H_t; d_t^*(H_t))-Q_t^*(H_t; i)\srb \right.\\
\times   &\ \left.\slb \phi_t(f_t(H_t);i)-\phi_t(\fs_t(H_t);i)\srb \rbs\right] 
\end{align*}
Note that the integrated is always positive because for $i\neq d_t^*(H_t)$,
\[\slb Q_t^*(H_t; d_t^*(H_t))-Q_t^*(H_t; i)\srb \slb \phi_t(f_t(H_t);i)-\phi_t(\fs_t(H_t);i)\srb\geq 0.\]
Let us fix $z>0$ and denote
\[\mE_t=\{h_t\in\H_t: \mu(Q_t^*(h_t))\leq z\}. \]
Let us denote the random variables $1[H_t\in\mE_t]$ and $1[H_t\in\mE^c_t]$ by $1_{\mE_t}$ and $1_{\mE_t^c}$, respectively.

Then $ V^\psi(\fs)-V^\psi(f)$ is bounded below by 
\begin{align}
\label{inlemma: est: l1's upper bound}
 V^\psi(\fs)-V^\psi(f)\geq &\  \sum\limits_{t\in[T]}\E\left[ \prod_{i=1}^{t-1}\frac{\phi_i(f_i(H_i);A_i)}{\pi_i(A_i\mid H_i)}\lbs \sum\limits_{i\in[k_t]:i\neq d_t^*(H_t)}(Q_t^*(H_t; d_t^*(H_t))-Q_t^*(H_t; i)) \right.\nn\\
 &\ \left. \times \slb \phi_t(f_t(H_t);i)-\phi_t(\fs_t(H_t);i)\srb \rbs 1_{\mE_t^c}\right] 
\end{align}
By Assumptions IV, $Y_t\leq C_{max}$ for some $C_{max}>0$ for all $t\in[T]$. Thus, $Q_t^*(H_t;i)+Q_t^*(H_t,d_t^*(H_t))\leq 2C_{max}$. Hence, 
For $H_t\in\mE_t^c$ and $i\neq d_t^*(H_t)$,
\begin{equation*}
    Q_t^*(H_t,d_t^*(H_t))-Q_t^*(H_t,i)\geq \mu(Q_t^*(H_t))> z\stackrel{(a)}{\geq} \frac{z \slb Q_t^*(H_t;i)+Q_t^*(H_t,d_t^*(H_t))\srb}{2C_{max}},
\end{equation*}
where (a) follows because 
$\{Q_t^*(H_t;i)+Q_t^*(H_t,d_t^*(H_t))\}/( 2C_{max})\leq 1$. 
When  $i\neq d_t^*(H_t)$,
\[\phi_t(f_t(H_t);i)-\phi_t(\fs_t(H_t);i)=\phi_t(f_t(H_t);i)\geq 0,\]
which implies that if  $i\neq d_t^*(H_t)$ and  $H_t\in\mE^c_t$, then
\begin{align}
  \label{intheorem: est: Q minus to Q plus}
   \MoveEqLeft  \slb Q_t^*(H_t,d_t^*(H_t))-Q_t^*(H_t,i)\srb \slb \phi_t(f_t(H_t);i)-\phi_t(\fs_t(H_t);i)\srb\nn\\
   \geq &\ \frac{z}{2C_{max}}\slb Q_t^*(H_t,d_t^*(H_t))+Q_t^*(H_t,i)\srb \slb \phi_t(f_t(H_t);i)-\phi_t(\fs_t(H_t);i)\srb. 
\end{align}
However, 
\begin{align}
\label{inlemma: est: q-functions}
   \MoveEqLeft \sum\limits_{i\in[k_t]:i\neq d_t^*(H_t)}\slb Q_t^*(H_t; d_t^*(H_t))+Q_t^*(H_t; i)\srb \slb \phi_t(f_t(H_t);i)-\phi_t(\fs_t(H_t);i)\srb\nn\\
  \stackrel{(a)}{=} &\ Q_t^*(H_t; d_t^*(H_t))\sum\limits_{i\in[k_t]:i\neq d_t^*(H_t)}\phi_t(f_t(H_t);i)\nn\\
  &\ +\sum\limits_{i\in[k_t]:i\neq d_t^*(H_t)}Q_t^*(H_t; i) \slb \phi_t(f_t(H_t);i)-\phi_t(\fs_t(H_t);i)\srb
  \end{align}
where in step (a), we used the fact that $\phi_t(\fs_t(H_t);i)=0$ for all $i\neq d_t^*(H_t)$, which follows from \eqref{intheorem: def: f star}. Since $\sum_{i\in[k_t]}\phi_t(\mx;i)=1$ for all $\mx\in\RR^{k_t}$, the above equals
  \begin{align}
  \label{inlemma: est: q-functions 2}
  \MoveEqLeft Q_t^*(H_t; d_t^*(H_t))\slb 1-\phi_t(f_t(H_t);d_t^*(H_t))\srb\nn\\
  &\ +\sum\limits_{i\in[k_t]:i\neq d_t^*(H_t)}Q_t^*(H_t; i) \slb \phi_t(f_t(H_t);i)-\phi_t(\fs_t(H_t);i)\srb\nn\\
  \stackrel{(a)}{=}&\  Q_t^*(H_t; d_t^*(H_t))\slb \phi_t(\fs_t(H_t);d_t^*(H_t))-\phi_t(f_t(H_t);d_t^*(H_t))\srb\nn\\
  &\ +\sum\limits_{i\in[k_t]:i\neq d_t^*(H_t)}Q_t^*(H_t; i) \slb \phi_t(f_t(H_t);i)-\phi_t(\fs_t(H_t);i)\srb\nn\\
  =&\ \sum\limits_{i\in[k_t]}Q_t^*(H_t; i) \slb \phi_t(f_t(H_t);i)-\phi_t(\fs_t(H_t);i)\srb
\end{align}
 where in step (a), we have used the fact that $\phi_t(\fs_t(H_t);d_t^*(H_t))=1$, which follows from  \eqref{intheorem: def: f star}. 
 Now we will show that
 \begin{align}
     \label{intheorem: est: non-negativity}
     \phi_t(f_t(H_t);i)-\phi_t(\fs_t(H_t);i)\geq 0\text{ for all }i\in[k_t]\text{ and }t\in[T].
 \end{align}
To this end, first consider the case when $i=d_t^*(H_t)$. Since $\phi_t\leq 1$ and\\ $\phi_t(\fs_t(H_t);d_t^*(H_t))=1$,
\[\phi_t(\fs_t(H_t);d_t^*(H_t))-\phi_t(f_t(H_t);d_t^*(H_t))\geq 0.\]
When $i\neq d_t^*(H_t)$, \eqref{intheorem: est: non-negativity} still holds since $\phi_t(\fs_t(H_t);i)=0$ for all $i\neq d_t^*(H_t)$ by \eqref{intheorem: def: f star}, and $\phi_t\geq 0$.
Then \eqref{intheorem: est: non-negativity}, combined with \eqref{inlemma: est: q-functions} and \eqref{inlemma: est: q-functions 2}, implies that 
\begin{align*}
 \MoveEqLeft  \sum\limits_{i\in[k_t]:i\neq d_t^*(H_t)}\slb Q_t^*(H_t; d_t^*(H_t))+Q_t^*(H_t; i)\srb \slb \phi_t(f_t(H_t);i)-\phi_t(\fs_t(H_t);i)\srb\\  
  \geq&\ \sum\limits_{i\in[k_t]}Q_t^*(H_t; i) |\phi_t(f_t(H_t);i)-\phi_t(\fs_t(H_t);i)|.  
\end{align*}
The above, combined with \eqref{inlemma: est: l1's upper bound} and \eqref{intheorem: est: Q minus to Q plus}, implies that $V^\psi(\fs)-V^\psi(f)$ is bounded below by 
\begin{align*}
 & \frac{z}{2C_{max}}\sum\limits_{t\in[T]}\E\left[ \prod_{i=1}^{t-1}\frac{\phi_i(f_i(H_i);A_i)}{\pi_i(A_i\mid H_i)}\lbs \sum\limits_{i\in[k_t]}Q_t^*(H_t;i)| \phi_t(f_t(H_t);i)-\phi_t(\fs_t(H_t);i)|  \rbs 1_{\mE_t^c}\right]\\
    =&\   \frac{z}{2C_{max}} \sum\limits_{t\in[T]}\E\left[ \prod_{i=1}^{t-1}\frac{\phi_i(f_i(H_i);A_i)}{\pi_i(A_i\mid H_i)}\lbs \sum\limits_{i\in[k_t]}Q_t^*(H_t;i)| \phi_t(f_t(H_t);i)-\phi_t(\fs_t(H_t);i)|  \rbs \right]\\
    -  &\ \frac{z}{2C_{max}}\sum\limits_{t\in[T]}\E\left[ \prod_{i=1}^{t-1}\frac{\phi_i(f_i(H_i);A_i)}{\pi_i(A_i\mid H_i)}\lbs \sum\limits_{i\in[k_t]}Q_t^*(H_t;i)| \phi_t(f_t(H_t);i)-\phi_t(\fs_t(H_t);i)|  \rbs 1_{\mE_t}\right]
  \end{align*}

Because $\phi_t\leq 1$ for all $t\in[T]$, $\sum_{i=1}^T|Y_i|\leq TC_{\text{max}}$, and $\pi_t\geq C_{\pi}$ for all $t\in[T]$,
it follows that
\[\E\left[ \prod_{i=1}^{t-1}\frac{\phi_i(f_i(H_i);A_i)}{\pi_i(A_i\mid H_i)}\left(\mathlarger{\sum}\limits_{i\in[k_t]}\begin{matrix}
    | \phi_t(f_t(H_t);i)-\phi_t(\fs_t(H_t);i)|\\
    \times Q_t^*(H_t;i) 
\end{matrix} \right ) 1_{\mE_t}\right]\leq  \frac{k_tC_{max}\PP(\mE_t)}{C_{\pi}^T},\]
which is less than $Cz^{\alpha}$, 
where the last step follows from Assumption \ref{assump: small noise}. Here $C>0$ is a constant depending only on $\PP$. Thus we have showed that
\begin{align*}
&\  V(\fs)-V^\psi(f)
     \geq  -Cz^{1+\alpha}\\
     &\ +\frac{z}{2C_{max}}\sum\limits_{t\in[T]}\E\left[ \prod_{i=1}^{t-1}\frac{\phi_i(f_i(H_i);A_i)}{\pi_i(A_i\mid H_i)}\lbs \sum\limits_{i\in[k_t]}Q_t^*(H_t;i)| \phi_t(f_t(H_t);i)-\phi_t(\fs_t(H_t);i)|  \rbs\right],
\end{align*}
which, combined with the upper bound on $\|\mathfrak{U}_{\fs,f}\|_{\PP,1}$
from \eqref{inlemma: upper bound of l1 error}, leads to
\begin{align}
\label{inlemma: est: the bound with l1}
   V^\psi(\fs)-V^\psi(f)
     \geq Cz\|\mathfrak{U}_{\fs,f}\|_{\PP,1} -Cz^{1+\alpha}  \end{align}
 for some constant $C>0$. Since $\pi_t\geq \pb$, $\sum_{t\in[T]}|Y_t|\leq TC_{\text{max}}$ and $\phi_t\leq 1$, it follows that
   \[\abs{\slb\sum\limits_{t\in[T]} Y_t\srb\prod_{j=1}^T\frac{\psi(f_j(H_j);A_j)}{\pi_j(A_j\mid H_j)}}\leq \frac{TC_{\text{max}}}{\pb^T}.\]  
    Therefore $\mathfrak{U}_{\fs,f}$ is uniformly bounded by $TC_{max}C_\pi^{-T}$. Therefore, 
    \[\|\mathfrak{U}_{\fs,f}\|_{\PP,2}\leq TC_{max}C_\pi^{-T} \|\mathfrak{U}_{\fs,f}\|_{\PP,1}.\]
    Hence, \eqref{inlemma: est: the bound with l1} leads to 
    \[ V(\fs)-V^\psi(f)
     \geq C\slb z\|\mathfrak{U}_{\fs,f}\|_{\PP,2} -z^{1+\alpha}\srb,\]
     which leads to
     \[\|\mathfrak{U}_{\fs,f}\|_{\PP,2}\leq z^\alpha+\slb  V(\fs)-V^\psi(f)\srb z^{-1}. \]
Optimizing over $z$, we find that the upper bound is smallest when $z=(V^{\psi}(\fs)-V^\psi(f))^{1/(1+\alpha)}$, which completes the proof.
 \end{proof} 

\subsubsection{Proof of Lemma \ref{lemma: approx: telescopic}}
\label{secpf: lemma telescopic}
 \begin{proof}[Proof of Lemma \ref{lemma: approx: telescopic}]
The proof follows noting the following telescoping sum 
\begin{align}
\label{inlemma: approx: telescopic: 1st}
 \MoveEqLeft \sum\limits_{t\in[T]}\slb\phi_t(\fs_t(H_t);A_t)-\phi_t(f_t(H_t);A_t)\srb\prod_{i=1}^{t-1}\phi_t(f_i(H_i);A_i)\prod_{i=t+1}^T\phi_i(\fs_i(H_i);A_i)\nn \\
 =&\ \prod_{i=1}^T\phi_i(\fs_i(H_i);A_i)+\sum\limits_{t=2}^{T}\prod_{i=1}^{t-1}\phi_t(f_i(H_i);A_i)\prod_{i=t}^T\phi_i(\fs_i(H_i);A_i)\nn\\
 &\ - \sum\limits_{t=1}^{T-1}\prod_{i=1}^{t}\phi_t(f_i(H_i);A_i)\prod_{i=t+1}^T\phi_i(\fs_i(H_i);A_i)-\prod_{i=1}^T\phi_i(f_i(H_i);A_i).
\end{align}
Replacing $t$ by $t'=t-1$ in the second sum on the RHS of  \eqref{inlemma: approx: telescopic: 1st}, we obtain that
\begin{align*}
\MoveEqLeft \sum\limits_{t=2}^{T}\prod_{i=1}^{t-1}\phi_i(f_i(H_i);A_i)\prod_{i=t}^T\phi_i(\fs_i(H_i);A_i)\\
 =&\ \sum\limits_{t'=1}^{T-1}\prod_{i=1}^{t'}\phi_i(f_i(H_i);A_i)\prod_{i=t'+1}^T\phi_i(\fs_i(H_i);A_i),
\end{align*}
which cancels the third term on the RHS of  \eqref{inlemma: approx: telescopic: 1st}. Hence,  the proof follows from \eqref{inlemma: approx: telescopic: 1st}.
\end{proof}
 \subsection{Proof of Corollary \ref{cor: neural network}}
 \label{secpf: cor: NN}
\begin{proof}[Proof of Corollary \ref{cor: neural network}]
We have already mentioned in Section \ref{sec: estimation error} that $\U_{tn}={\mathcal F}(\N_n,W_n,s_n)$ satisfies \eqref{ineq: bracketing entropy: ub} with $\rho_n=s_n+1$ and $\mathcal{I}_n=(\N_n+1)(s_n+1)^{2\N_n+4}$. We will apply Theorem \ref{thm: est error}. 
To this end, we first  show that $\rho_n\log \mathcal{I}_n=o(n)$ and $\liminf_n \rho_n\log \mathcal{I}_n>0$. Since $\N_n,s_n\geq 1$, we have $\I_n\geq 2\times 2^{6}$. Thus $\log \mathcal{I}_n>0$ trivially follows. Also, $\rho_n=s_n>0$. Therefore $\rho_n\log \mathcal{I}_n>0$ for all $n>e$. Also since $\log(s_n)=O(\log n)$, $\log \mathcal{I}_n$ is of the order $O(\log n)$. Noting $\rho_n=O(s_n)=O\left(n^{\frac{q}{(2+\alpha)\mybeta+q}}\right)$, we thus conclude that $\rho_n\log \mathcal{I}_n=o(n)$.
Therefore, it suffices  to show that \eqref{ineq: bracketing entropy: ub} holds for some $\myb_n\gg \sqrt{n}$. We will take $\myb_n=n$.  Since $s_n=c_2n^{q/((2+\alpha)\mybeta+q)}$ and $\N_n=c_1\log n$, we have
\begin{align}
\label{inlemma: NN: approx}
  \MoveEqLeft   \lb \frac{\rho_n\log \mathcal{I}_n}{n}\rb^{1/(2+\alpha)}\geq C \lb \frac{s_n(2\N_n+4)\log (s_n)+s_n\log\log n}{n}\rb^{1/(2+\alpha)}\nn\\
     \geq &\ C\lb(\log n)^2\frac{s}{n}\rb^{1/(2+\alpha)}\geq C (\log n)^{\frac{2}{2+\alpha}} n^{-\frac{\mybeta}{(2+\alpha)\mybeta+ q}} \gtrsim(\log n)^{\frac{2}{2+\alpha}} s_n^{-\mybeta/q},
\end{align}  
where the last step uses $s_n^{-\mybeta/q}=n^{-\frac{\mybeta}{(2+\alpha)\mybeta+ q}}$.
  Let $t\in[T]$. Under Assumption \ref{assumption: smmothness}, 
  following the proof of Supplementary Lemma M.2 of  \cite{Laha2021surrogateu}, we can show that, for all $i\in[k_t-1]$, there exists a network $\tilde g_{ti}\in\F(\N_n,W_n,s_n)$ and a constant  $C_{q,\mybeta}>0$ so that   under our condition on $\N_n$, $W_n$, and $s_n$, the following holds for all $t\in[T]$:
\[\|\tilde g_{ti}/\myb_n-Q_t^*(\cdot,1)+Q_t^*(\cdot,i+1)\|_\infty \leq C_{q,\mybeta} s_n^{-\mybeta/q},\]
which is bounded above by a constant multiple of $\lb \frac{\rho_n\log \mathcal{I}_n}{n}\rb^{1/(2+\alpha)}$ by \eqref{inlemma: NN: approx}. Hence, \eqref{ineq: bracketing entropy: ub} holds. Therefor, the rest of the proof follows from Theorem \ref{thm: est error} noting 
\[\lb \frac{\rho_n\log \mathcal{I}_n}{n}\rb^{\frac{1+\alpha}{2+\alpha}}\lesssim (\log n)^{\frac{2(1+\alpha)}{2+\alpha}}n^{-\frac{1+\alpha}{(2+\alpha)+q/\mybeta}}.\]
\end{proof}
\subsection{Proof of Result \ref{result: kernel surrogate is Lipshitcz}}
\label{secpf: result kernel surrogate is Lipshitcz}
\begin{proof}[Proof of Result \ref{result: kernel surrogate is Lipshitcz}]
 Fix $j\in[k]$. If we can show that $\phi(\cdot;j)$ is differentiable and the gradient is uniformly bounded in $l_2$ norm, then the global Lipschitz continuity follows from the mean value property. First, we will show $\phi(\cdot;j)$ is differentiable. To this end, we first  show that $K$ is differentiable with uniformly bounded partial derivatives.

  Since $K$ satisfies $K(\mx)=\prod_{i=1}^k\KK(\mx_i)$ for all $\mx\in\RR^k$ and $\KK$ is differentiable,  the partial derivatives of $K$ exist  everywhere and equals 
\[\pdv{K(\mx)}{\mx_i}=\KK'(\mx_i)\prod_{r\in[k]:r\neq i}\KK(\mx_r)\]
for all $i\in[k]$. 
Since $\KK'$ and $\KK$ are bounded, the partial derivatives are also bounded. 

To show that  $\phi_t(\cdot,j)$ is differentiable, it suffices to show that its partial derivatives exist and they are  continuous. 
Since all partial derivatives of $K$ exist and bounded for all $\mx\in\RR^k$,  by Leibniz integral rule,
\begin{align*}
 \pdv{\phi(\mx;j)}{\mx_i}=&\ \pdv{\lbs\dint_{\RR^k}1[\pred(\mt)=j] K(\mx-\mt)d\mt\rbs}{\mx_i}\\
 =&\ \dint_{\RR^k}1[\pred(\mt)=j]\pdv{ K(\mx-\mt)}{\mx_i}d\mt\\
 =&\ \dint_{\RR^k}1[\pred(\mt)=j]\KK'(\mx_i-\mt_i)\prod_{r\in[k]:r\neq i}\KK(\mx_r-\mt_r)d\mt   
\end{align*}
for all $i\in[k]$. Thus all partial derivatives of $\phi_t(\cdot,j)$ exist.

Now we will show that the partial derivatives of $\phi(\cdot;j)$ are continuous. 
Suppose $\mx^{(n)}\to \mx$. Then 
\[\lim_{n\to\infty}\pdv{\phi(\mx_n;j)}{\mx_i}=\lim_{n\to\infty}\dint_{\RR^k}1[\pred(\mt)=j]\KK'(\mx^{(n)}_i-\mt_i)\prod_{r\in[k]:r\neq i}\KK(\mx^{(n)}_r-\mt_r)d\mt.\]
Since $\KK'$ and $\KK$ are bounded, the continuity of $\partial\phi(\cdot;j)/\partial \mx_i$ follows from the dominated convergence theorem and the continuity of $\KK$ and $\KK'$. 

Thus we have shown that all partial derivatives of $\phi(\cdot;j)$ exist and they are continuous at every $\mx\in\RR^k$. Hence, it follows that $\phi(\cdot;j)$ is differentiable with gradient
\[\grad{\phi(\mx;j)}=\dint_{\RR^k}1[\pred(\mt)=j]\grad K(\mx-\mt)d\mt.\]

To show that $\phi(\cdot;j)$ is Lipschitz, it remains to show that $\grad{\phi(\mx;j)}$ is bounded in $l_2$ norm for each $\mx\in\RR^k$. To this end, it is enough to show that the partial derivatives are uniformly bounded. However, the above follows since for any $i$ and $j$ in $[k]$, 
\begin{align*}
 \sup_{\mx\in\RR^k} \abs{\pdv{\phi(\mx;j)}{\mx_i}}
  = &\  \sup_{\mx\in\RR^k} \abs{\dint_{\RR^k}1[\pred(\mt)=j]\KK'(\mx_i-\mt_i)\prod_{r\in[k]:r\neq i}\KK(\mx_r-\mt_r)d\mt}\\
   \leq &\ \sup_{\mx\in\RR^k} \dint_{\RR^k}\abs{\KK'(\mx_i-\mt_i)}\prod_{r\in[k]:r\neq i}\KK(\mx_r-\mt_r)d\mt\\
   =&\ \sup_{\mx_i\in\RR}\dint_{\RR}|\KK'(\mx_i-\mt_i)|d\mt_i\\
   =&\ \edint |\KK'(t)|dt,
\end{align*}
which is bounded because $\KK'$ is integrable.  
\end{proof}

\section{Additional facts}
 \label{sec: additional facts}
 \begin{fact}
\label{fact: closedness}
 Suppose $f_1$, $\ldots$, $f_k$ are real-valued functions on $\RR^k$ for some $k\in\NN$. If they are closed and bounded below, then their sum is closed.   
\end{fact}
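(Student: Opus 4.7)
The plan is to invoke the equivalent characterization of closedness as lower semicontinuity (lsc) at every point, as noted in the paper following Definition~1.2.3 of \cite{hiriart}, and then reduce the general statement to the two-function case $k=2$ by a straightforward induction on $k$. For the two-function case, I would show: if $f, g : \RR^k \to \RR$ are lsc and each is bounded below, then $f + g$ is lsc at every point.

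The central computation is the superadditivity of $\liminf$. For any $x_0 \in \RR^k$ and any sequence $x_n \to x_0$,
\[
\liminf_{n\to\infty}\bigl(f(x_n)+g(x_n)\bigr)
\;\geq\; \liminf_{n\to\infty} f(x_n) + \liminf_{n\to\infty} g(x_n)
\;\geq\; f(x_0) + g(x_0),
\]
where the first step uses the general inequality $\liminf_n(a_n+b_n) \geq \liminf_n a_n + \liminf_n b_n$ (valid provided the right-hand side is not of the indeterminate form $\infty-\infty$) and the second step uses the lsc property of $f$ and $g$ individually at $x_0$. Taking the infimum over all sequences $x_n \to x_0$ yields $\liminf_{x\to x_0}(f+g)(x) \geq (f+g)(x_0)$, which is exactly lsc at $x_0$; since $x_0$ is arbitrary, $f+g$ is closed.

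The only point requiring care is verifying that the $\liminf$-superadditivity step is valid, i.e., that neither $\liminf_n f(x_n)$ nor $\liminf_n g(x_n)$ equals $-\infty$. This is precisely the role of the bounded-below hypothesis: if $f \geq c_f$ and $g \geq c_g$ for some $c_f, c_g \in \RR$, then both liminfs are bounded below by these real constants, so their sum is well-defined in $(-\infty,\infty]$ and no $\infty-\infty$ pathology can occur. Without the bounded-below hypothesis, one can construct counterexamples where the sum of two lsc functions fails to be lsc, so the assumption is essential rather than cosmetic.

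To pass from $k=2$ to general $k$, I would proceed by induction: the partial sum $\sum_{i=1}^{k-1} f_i$ is closed by the induction hypothesis and bounded below by $\sum_{i=1}^{k-1} c_i$, so applying the two-function case to $\sum_{i=1}^{k-1} f_i$ and $f_k$ yields the closedness of $\sum_{i=1}^{k} f_i$. I do not anticipate any nontrivial obstacle here: the argument is a routine consequence of standard properties of lower semicontinuous functions, and the sole substantive observation is that boundedness below suppresses the $\infty-\infty$ indeterminacy in the $\liminf$ superadditivity step.
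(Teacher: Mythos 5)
Your proof is correct and rests on the same core mechanism as the paper's: superadditivity of $\liminf$ combined with the observation that boundedness below rules out the $\infty-\infty$ pathology. The paper packages this by first shifting each $f_i$ to a non-negative function and invoking "sum of non-negative lsc functions is lsc," while you handle the general case directly (and add an unnecessary but harmless induction on $k$); these are cosmetically different presentations of the same argument.
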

\begin{proof}[Proof of Fact \ref{fact: closedness}]
    Suppose $f_1$, $\ldots$, $f_k$ are  bounded below by $C\in\RR$.  Consider the function $h=\sum_{i=1}^k(f_k-C)$. Clearly, $h$ is a sum of non-negative closed functions. Closedness is equivalent to lower-semicontinuity \citep[pp. 78]{hiriart}.  However, sum of non-negative lower semicontinuous functions is lower semicontinuous (this fact follows trivially noting $f$ is lower semicontinuous if and only if $\liminf_{\mx\to \mx_0}f(\mx)\geq f(\mx_0)$ for all $\mx_0\in\RR^k$). Therefore, $h$ is lower semicontinuous, and hence closed. However, since $\sum_{i=1}^kf_i=h+kC$, the function $\sum_{i=1}^kf_i$ is closed as well. 
\end{proof}

\begin{fact}
    \label{fact: bivariate lsc}
   Let $\mbu\in\RR^{k_1-1}$ and $\mw\in\RR^{k_2-1}$ be two  fixed vectors.
    If $f:\RR^{k_1+k_2}\to\RR$ is closed, then so is the bivariate function $(x,y)\mapsto f(x \mbu, y\mw)$, where $x,y\in\RR$.
\end{fact}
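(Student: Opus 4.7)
}

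The plan is to reduce closedness to lower semicontinuity and then exploit the fact that lower semicontinuity is preserved under composition with continuous maps. As noted just above Fact~\ref{fact: bivariate lsc} (in the proof of Fact~\ref{fact: closedness}, citing p.~78 of \cite{hiriart}), a function is closed if and only if it is lower semicontinuous everywhere. Hence it suffices to show that $g(x,y):=f(x\mbu,y\mw)$ is lower semicontinuous at every $(x_0,y_0)\in\RR^2$.

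First I would introduce the affine map $\phi:\RR^2\to\RR^{k_1+k_2-2}$ defined by $\phi(x,y)=(x\mbu,y\mw)$, which is linear in $(x,y)$ and therefore continuous. Then $g=f\circ\phi$. Fix $(x_0,y_0)\in\RR^2$ and any sequence $(x_n,y_n)\to(x_0,y_0)$. By continuity of $\phi$, we have $\phi(x_n,y_n)\to\phi(x_0,y_0)$ in $\RR^{k_1+k_2-2}$. Since $f$ is closed, and closedness is equivalent to lower semicontinuity, $\liminf_{\mz\to\phi(x_0,y_0)}f(\mz)\geq f(\phi(x_0,y_0))$, and in particular
\[
\liminf_{n\to\infty} g(x_n,y_n) \;=\; \liminf_{n\to\infty} f(\phi(x_n,y_n)) \;\geq\; f(\phi(x_0,y_0)) \;=\; g(x_0,y_0).
\]
Taking the infimum over all sequences converging to $(x_0,y_0)$, this shows $\liminf_{(x,y)\to(x_0,y_0)} g(x,y)\geq g(x_0,y_0)$, i.e.\ $g$ is lower semicontinuous at $(x_0,y_0)$. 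Since $(x_0,y_0)$ was arbitrary, $g$ is lower semicontinuous on $\RR^2$, hence closed.

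There is no genuine obstacle here: the argument is a one-line application of ``continuous $\circ$ lower semicontinuous $=$ lower semicontinuous,'' combined with the standard equivalence between closedness and lower semicontinuity. The only thing worth being careful about is the dimension bookkeeping (matching $(x\mbu,y\mw)$ to the domain of $f$), but that is purely notational and does not affect the argument.
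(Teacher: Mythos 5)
Your proof is correct and takes essentially the same route as the paper: both reduce closedness to lower semicontinuity and then transfer lsc of $f$ to lsc of $(x,y)\mapsto f(x\mbu,y\mw)$ via the (continuous, linear) change of variables. Your phrasing through the explicit map $\phi$ is marginally cleaner than the paper's sequential argument, and your observation that the domain of $f$ should read $\RR^{k_1+k_2-2}$ is a correct catch of a typo in the statement, but the substance is identical.
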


\begin{proof}[Proof of Fact \ref{fact: bivariate lsc}]
To show closedness, it suffices to show that the above function is lower-semicontinuous everywhere.  
    Let $(x_k)$ and $(y_k)$ be two real sequences. Let us denote $\mx_k=x_k \mbu$ and $\my_k=y_k\mv$.  For any $x^0,y^0\in\RR$, it holds that 
\[\liminf_{x_k\to x^0,\ y_k\to y^0}f(x_k\mbu,y_k\mv)=\liminf_{\mx_k\to x^0\mbu,\ \my_k\to y^0\mv}f(\mx_k,\my_k)\geq f(x^0\mbu,y^0\mv)\]
because $f$ is lower semicontinuous \citep[pp. 78]{hiriart}.
\end{proof}

 \begin{fact}[Lemma 5 of \cite{schmidt2020}]\label{fact: NN covering number}
 Suppose $\F(\N,W,s)$ is a class of ReLU neural networks with linear output layer, depth $\N$, width vector $W$, sparsity $s$, and weights uniformly bounded by one.  
Denote by  $C_{\N}$ the constant$(\N+1)\prod_{i=0}^{L+1}(W_i+1)^2$. Then for any $\e>0$,
\[\log \mathcal N(\e,\F(\N,W,s),\|\cdot\|_{\infty})\lesssim  (s+1)\log(2\epsilon^{-1} C_{\N}).\]
Also since $W_i\leq s$ for each $i\in\{0,\ldots,L+1\}$, $C_{\N}\lesssim \N(s+1)^{2\N+4}$.
\end{fact}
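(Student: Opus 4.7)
}

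The plan is to build an $\epsilon$-net for $\F(\N,W,s)$ by (i) enumerating the possible sparsity patterns, i.e.\ which of the at most $P := \sum_{i=0}^{\N} (W_i+1)(W_{i+1}+1)$ weight/bias slots are nonzero, and (ii) covering the values of those nonzero weights by a sufficiently fine uniform grid on $[-1,1]$. Since at most $s$ of the $P$ slots are active, the number of sparsity patterns is $\binom{P}{s}\le (eP/s)^s$. On each fixed pattern, the restricted parameter set lies in $[-1,1]^s$, so a uniform grid of spacing $\delta$ gives a $\delta$-covering of cardinality $(2/\delta)^s$. If we can show that the map from weights to the network's $\|\cdot\|_\infty$-output is $L_\N$-Lipschitz on $[-1,1]^s$, then choosing $\delta=\epsilon/L_\N$ yields $\log \mathcal N(\e,\F(\N,W,s),\|\cdot\|_{\infty})\lesssim s\log(P/s)+s\log(L_\N/\e)$. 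The target bound will then follow once $L_\N \lesssim C_\N$ and $P\lesssim C_\N$.

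The main technical step is therefore a Lipschitz-in-parameters estimate for a ReLU network $f_\theta$ with depth $\N$, width vector $W$, and weights bounded by $1$, evaluated on a bounded input domain $[0,1]^d$. I would argue this layer-by-layer: if $f_\theta=A_{\N+1}\sigma(\cdots\sigma(A_1 x + b_1)\cdots)+b_{\N+1}$, then ReLU is $1$-Lipschitz, each $A_i$ and $b_i$ has operator norm at most $W_i+1$ (say, since entries are in $[-1,1]$), and the hidden activations propagate as $\|h_i\|_\infty \le \prod_{j\le i}(W_j+1)$. A telescoping argument, replacing one $(A_i,b_i)$ pair at a time, shows that perturbing the parameters uniformly by $\delta$ perturbs the output uniformly by at most $\delta\cdot C_{\N}^{1/2}$, which is of order $C_\N$ once everything is collected; the exact constant is not important because the resulting factor enters only inside a logarithm.

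With that Lipschitz bound in hand, setting $\delta=\epsilon/C_\N$ and taking the union over all sparsity patterns gives
\[
\log \mathcal N(\e,\F(\N,W,s),\|\cdot\|_{\infty}) \;\lesssim\; s\log(eP/s) + s\log(2C_\N/\e) \;\lesssim\; (s+1)\log(2\e^{-1}C_\N),
\]
where we used $P\le \sum_i(W_i+1)^2 \le C_\N$ to absorb $\log(eP/s)$ into $\log C_\N$. The final inequality $C_\N\lesssim \N(s+1)^{2\N+4}$ is a crude consequence of $W_i\le s$: each factor $(W_i+1)^2$ is at most $(s+1)^2$, and the product over $i=0,\ldots,\N+1$ contributes $(s+1)^{2(\N+2)}=(s+1)^{2\N+4}$, multiplied by $\N+1$ from the prefactor.

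The part I expect to be most delicate is keeping the Lipschitz constant $L_\N$ polynomial in $(s+1)$ and $\N$ rather than exponential. A naive bound gives $\prod_i\|A_i\|_{\mathrm{op}}\le \prod_i (W_i+1)$, which can be exponential in $\N$ if widths are large; here the saving comes from the weights being in $[-1,1]$ combined with ReLU's homogeneity, together with the fact that only $s$ weights are nonzero so each layer's operator norm is actually controlled by local sparsity. Handling this cleanly is the main obstacle, and it is exactly where the proof in \cite{schmidt2020} does its careful layer-by-layer accounting; I would follow that accounting rather than re-derive it from scratch.
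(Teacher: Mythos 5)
The paper does not prove this fact; it is stated as a direct citation of Lemma~5 in \cite{schmidt2020}, so there is no ``paper's own proof'' to compare against. Your sketch is nevertheless an accurate high-level summary of the argument in that reference: enumerate the $\binom{P}{s}\le (eP/s)^s$ sparsity patterns, discretize the $s$ active weights on a $\delta$-grid in $[-1,1]^s$, bound the Lipschitz constant $L_\N$ of the parameter-to-function map, and set $\delta\asymp\e/L_\N$; the union of grids then gives $\log\mathcal N\lesssim s\log(eP/s)+s\log(2L_\N/\e)$, which collapses to $(s+1)\log(2\e^{-1}C_\N)$ once $P$ and $L_\N$ are absorbed into $C_\N$.

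The one genuine flaw is in your final paragraph, where you treat exponential-in-$\N$ growth of $L_\N$ as an obstacle to be avoided via sparsity or ReLU homogeneity. There is no such saving, and none is needed. In Schmidt-Hieber's layer-by-layer perturbation bound, the Lipschitz constant really is of order $(\N+1)\prod_{i}(W_i+1)$, which is exponential in $\N$ whenever the widths exceed one; sparsity controls the \emph{number} of free parameters (hence the combinatorial factor $\binom{P}{s}$ and the exponent $s$ on the grid), not the operator norms of the layer maps, so it does nothing to tame the product $\prod_i(W_i+1)$. This is perfectly consistent with the target: $C_\N=(\N+1)\prod_i(W_i+1)^2\lesssim\N(s+1)^{2\N+4}$ is itself exponential in $\N$, and $L_\N$ only enters inside a logarithm, which is why the covering bound remains of order $(s+1)\cdot\N\log(s+1)$ and not worse. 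If you try to force $L_\N$ to be polynomial in $\N$ you will get stuck; accepting the exponential Lipschitz constant and pushing it through the logarithm is the whole point of the accounting.
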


\begin{fact}
\label{fact: Q function expression}
For $t\in[T]$, the Q-functions satisfy
    \[Q_t^*(H_t, a_t)=\E\lbt \slb\sum_{i=t}^T Y_i\srb\prod_{j=1+t}^T \frac{1[A_j=d_j^*(H_j)]}{\pi_j(A_j\mid H_j)}\bl H_t, A_t=a_t\rbt.\]
\end{fact}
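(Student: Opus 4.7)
The plan is to prove the identity by backward induction on $t$, using the recursive definition of the optimal $Q$-functions together with the inverse-probability-weighted (IPW) identity already recorded in the excerpt as \eqref{intheorem: necessity: IPW: 0-1} and the product identity \eqref{intheorem: fact: product of indicators}. The base case $t=T$ is immediate: the product over $[T+1:T]$ is empty and hence equal to $1$ by convention, the sum collapses to $Y_T$, and the right-hand side reduces to $\E[Y_T\mid H_T,A_T=a_T]$, which is exactly $Q_T^*(H_T,a_T)$.

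For the inductive step, suppose the formula holds for $t+1$. Starting from the recursion $Q_t^*(H_t,a_t)=\E[Y_t+\max_{i\in[k_{t+1}]}Q_{t+1}^*(H_{t+1},i)\mid H_t,A_t=a_t]$, I would first rewrite the maximum as $Q_{t+1}^*(H_{t+1},d_{t+1}^*(H_{t+1}))$ using \eqref{def: d star from Q}. Applying the induction hypothesis at stage $t+1$ with the random argument $A_{t+1}=d_{t+1}^*(H_{t+1})$, I then invoke \eqref{intheorem: necessity: IPW: 0-1} pointwise in $H_{t+1}$ (the value $i=d_{t+1}^*(H_{t+1})$ is $H_{t+1}$-measurable, so the IPW conversion still applies) to obtain
\[
Q_{t+1}^*\bigl(H_{t+1},d_{t+1}^*(H_{t+1})\bigr)=\E\Bigl[\Bigl(\textstyle\sum_{i=t+1}^T Y_i\Bigr)\prod_{j=t+1}^T\tfrac{1[A_j=d_j^*(H_j)]}{\pi_j(A_j\mid H_j)}\,\Bigm|\,H_{t+1}\Bigr].
\]
Taking the conditional expectation given $(H_t,A_t=a_t)$ and using the tower property (since $H_{t+1}\supset(H_t,A_t)$) yields the desired expression for the summation from $t+1$ to $T$.

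It remains to show $\E[Y_t\mid H_t,A_t=a_t]$ can be absorbed into the same IPW form. Here I would insert the factor $\prod_{j=t+1}^T 1[A_j=d_j^*(H_j)]/\pi_j(A_j\mid H_j)$ for free: conditioning on $H_{t+1}$ and applying \eqref{intheorem: fact: product of indicators} shows this product has conditional expectation $1$, so adding it does not change $\E[Y_t\mid H_t,A_t=a_t]$. Combining the two pieces gives the claimed formula with the sum running from $t$ to $T$.

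The only delicate point is the IPW step when the ``target'' treatment is the random quantity $d_{t+1}^*(H_{t+1})$ rather than a deterministic label; this is the place where one must be careful to justify that \eqref{intheorem: necessity: IPW: 0-1} still applies. This is immediate because the identity holds pointwise in $H_{t+1}$ and $d_{t+1}^*(H_{t+1})\in[k_{t+1}]$ is $H_{t+1}$-measurable, so the rest of the argument is routine manipulation of conditional expectations under Assumptions I–III (positivity ensuring the IPW weights are well defined, sequential ignorability implicit through the tower property on the observed data distribution).
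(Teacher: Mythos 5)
Your proof is correct and uses essentially the same ingredients as the paper's — backward induction on $t$, the identity \eqref{intheorem: necessity: IPW: 0-1} applied with the $H_{t+1}$-measurable target $d_{t+1}^*(H_{t+1})$, the free-insertion of the indicator product via \eqref{intheorem: fact: product of indicators}, and the tower property — simply run in the reverse direction: you expand the recursion for $Q_t^*$ outward into the IPW form, while the paper starts from the IPW expression and collapses it back to the $Q$-recursion. The two are the same argument read forwards and backwards, so no substantive difference.
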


\begin{proof}[Proof of Fact \ref{fact: Q function expression}]
    We will prove this fact by induction where the induction hypothesis 
    \[Q_t^*(H_t, a_t)=\E\lbt \slb\sum_{i=t}^T Y_i\srb\prod_{j=1+t}^T \frac{1[A_j=d_j^*(H_j)]}{\pi_j(A_j\mid H_j)}\bl H_t, A_t=a_t\rbt\]
   holds for $t=T$ because $Q_T^*(H_T,A_T)=\E[Y_T\mid H_T, A_T]$ and we have defined products over an empty range to be one. Suppose the induction hypothesis holds for $1+t$ where $t\in[1:T-1]$. We will show that the induction hypothesis holds for $t$ as well. 
Note that
    \begin{align}
    \label{infact: Q function}
        \MoveEqLeft \E\lbt \slb\sum_{i=t}^T Y_i\srb\prod_{j=1+t}^T\frac{1[A_j=d_j^*(H_j)]}{\pi_j(A_j\mid H_j)}\bl H_t, A_t=i\rbt\nn\\
        =&\  \E\lbt Y_t\E\lbt \prod_{j=1+t}^T\frac{1[A_j=d_j^*(H_j)]}{\pi_j(A_j\mid H_j)}\bl H_{1+t}\rbt\\
        &\ + \slb\sum_{i=t+1}^T Y_i\srb\prod_{j=1+t}^T\frac{1[A_j=d_j^*(H_j)]}{\pi_j(A_j\mid H_j)}\bl H_t, A_t=i\rbt\nn\\
        \stackrel{(a)}{=}&\  \E[Y_t\mid H_t, A_t=i]+\E\lbt \E\lbt\slb\sum_{i=t+1}^T Y_i\srb\prod_{j=1+t}^T\frac{1[A_j=d_j^*(H_j)]}{\pi_j(A_j\mid H_j)}\ \bl\ H_{1+t}\rbt\ \bl\  H_t, A_t=i\rbt
    \end{align}
    where (a) follows by applying \eqref{intheorem: fact: product of indicators} on the first term. Now  applying \eqref{intheorem: necessity: IPW: 0-1} on the second term of the RHS of \eqref{infact: Q function} with
    \[V=\slb\sum_{i=t+1}^T Y_i\srb\prod_{j=2+t}^T\frac{1[A_j=d_j^*(H_j)]}{\pi_j(A_j\mid H_j)},\]
    we see that the RHS of \eqref{infact: Q function} equals
    \begin{align*}
      \MoveEqLeft  \E[Y_t\mid H_t, A_t=i]\nn\\
         +&\ \E\lbt \E\lbt \slb\sum_{i=1+t}^T Y_i\srb\prod_{j=2+t}^T\frac{1[A_j=d_j^*(H_j)]}{\pi_j(A_j\mid H_j)}\bl H_{1+t}, A_{1+t}=d_{1+t}^*(H_{1+t})\rbt \bl H_t, A_t=i\rbt,
    \end{align*}
    where the product is one if $t=T-1$, when the range of the product is empty. 
    Since the induction hypothesis holds for $1+t$, it follows that $Q_{1+t}^*(H_{1+t},d_{1+t}^*(H_{1+t}))$ equals
    \[\E\lbt \slb\sum_{i=1+t}^T Y_i\srb\prod_{j=2+t}^T\frac{1[A_j=d_j^*(H_j)]}{\pi_j(A_j\mid H_j)}\bl H_{1+t}, A_{1+t}=d_{1+t}^*(H_{1+t})\rbt.\]
    Thus \eqref{infact: Q function} leads to  
    \begin{align*}
   \MoveEqLeft \E\lbt \slb\sum_{i=t}^T Y_i\srb\prod_{j=1+t}^T\frac{1[A_j=d_j^*(H_j)]}{\pi_j(A_j\mid H_j)}\bl H_t, A_t=i\rbt\\
    =&\  \E\slbt Y_t+Q_{1+t}^*(H_{1+t},d_{1+t}^*(H_{1+t}))\mid H_t, A_t=i\srbt,
    \end{align*}
    which equals $Q_t^*(H_t,i)$.
\end{proof}

\begin{fact}
\label{fact: as to L1}
    If $X_n$ is bounded and $X_n\to_P 0$, then $E[X_n]\to_n 0$.
\end{fact}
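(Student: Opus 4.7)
The plan is to apply the standard truncation argument that converts convergence in probability into convergence in $L^1$ under uniform boundedness, and then use $|E[X_n]|\leq E[|X_n|]$ to conclude. Since $X_n$ is bounded, there exists some constant $M>0$ (independent of $n$) with $|X_n|\leq M$ almost surely.

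Fix $\epsilon>0$. The key step is the split
\[
E[|X_n|]
= E\bigl[|X_n|\,\mathbf{1}\{|X_n|>\epsilon\}\bigr]
+ E\bigl[|X_n|\,\mathbf{1}\{|X_n|\leq \epsilon\}\bigr]
\leq M\,P(|X_n|>\epsilon) + \epsilon,
\]
where the first term is bounded using $|X_n|\leq M$ and the second by $\epsilon$ on the event $\{|X_n|\leq\epsilon\}$. By the assumption $X_n\to_P 0$, the probability $P(|X_n|>\epsilon)\to 0$ as $n\to\infty$, so $\limsup_n E[|X_n|]\leq \epsilon$. Since $\epsilon>0$ was arbitrary, $E[|X_n|]\to 0$, and therefore $|E[X_n]|\leq E[|X_n|]\to 0$, which is what we want.

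There is essentially no obstacle: the argument is a two-line truncation. An alternative would be the subsequence route using dominated convergence (every subsequence has a further subsequence converging almost surely, to which DCT applies with dominating constant $M$), but the direct truncation bound is cleaner and gives the conclusion in one display. No hidden dependencies on the surrogate or DTR setting of the paper are needed; the fact is purely measure-theoretic and stands on its own.
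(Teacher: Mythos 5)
Your proof is correct. The paper takes a different, more compressed route: it appeals to the Vitali-type theorem that uniform integrability together with convergence in probability implies $L^1$ convergence, noting that a uniformly bounded sequence is automatically uniformly integrable. Your truncation display
\[
E[|X_n|] \leq M\,P(|X_n|>\epsilon) + \epsilon
\]
is precisely the elementary argument that underlies that theorem in the bounded case, so you have essentially unpacked the citation into a self-contained computation. The paper's version is shorter and leans on a standard reference; yours has the advantage of requiring no external theorem and making the role of boundedness explicit. Both are valid, and the substance is the same; you also mention the subsequence/DCT alternative as a fallback, which is fine, though the truncation bound is indeed the cleaner path.
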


\begin{proof}
    Uniform integrability and convergence in probability implies $L_1$ convergence. Bounded sequences of random variables are uniformly integrable \citep{billingsley2017probability}. Hence $E[|X_n|\to_n 0$.
\end{proof}

\bibliographystyle{natbib}
\bibliography{DTR,DTR2,publist,location_estimation_1}


\end{document}